%%%%%%%%%%%%%%%%%%%%%%%%%%%%%%%%%%%%%%%%%%%
%\documentclass[reqno,10pt,centertags,draft]{amsart}
\documentclass[reqno,10pt,centertags]{amsart}
\usepackage{amsmath,amsthm,amscd,amssymb,latexsym,upref} 
% Date corrected: 6-16-11
%%%%%%%%%%%%%%%%%%%%%%%%%%%%%%%%%%%%%%%%%%%
%HERE you TURN ON/OFF the tags for eqs., refs., etc.%
%\usepackage[nomsgs,ignoreunlbld]{refcheck}
%\usepackage{showkeys}

\usepackage{hyperref}

\setlength{\oddsidemargin}{-0.1in}
\setlength{\evensidemargin}{-0.1in}
\setlength{\textwidth}{6.5in}
\setlength{\textheight}{9.4in}
\setlength{\topmargin}{-0.1in}

%%%%%%%%%%%%% fonts/sets %%%%%%%%%%%%%%%%%%%%%%%

\newcommand{\bbN}{{\mathbb{N}}}
\newcommand{\bbR}{{\mathbb{R}}}

\newcommand{\bbC}{{\mathbb{C}}}

\newcommand{\cA}{{\mathcal A}}
\newcommand{\cB}{{\mathcal B}}

\newcommand{\cD}{{\mathcal D}}

\newcommand{\cH}{{\mathcal H}}

\newcommand{\cN}{{\mathcal N}}

\newcommand{\cS}{{\mathcal S}}

\newcommand{\cW}{{\mathcal W}}
\newcommand{\cX}{{\mathcal X}}

%%%%%%%%%%%%%%%%%%  abbreviations %%%%%%%%%%%%%%%

\newcommand{\dott}{\,\cdot\,}
\newcommand{\no}{\notag}
\newcommand{\lb}{\label}
\newcommand{\f}{\frac}

\newcommand{\ol}{\overline}

\newcommand{\wti}{\widetilde}

\newcommand{\loc}{\text{\rm{loc}}}

\newcommand{\ran}{\text{\rm{ran}}}

\newcommand{\dom}{\text{\rm{dom}}}

\newcommand{\supp}{\text{\rm{supp}}}

\newcommand{\bi}{\bibitem}
\newcommand{\hatt}{\widehat}
\newcommand{\beq}{\begin{equation}}
\newcommand{\eeq}{\end{equation}}
\newcommand{\ba}{\begin{align}}
\newcommand{\ea}{\end{align}}

\newcommand{\tr}{\text{\rm{tr}}}

% Absolute value notation
\newcommand{\abs}[1]{\lvert#1\rvert}

% use \hat in subscripts
% and upperlimits of int.

%%%%%%%%%%%%%%% renewed commands %%%%%%%%%

\renewcommand{\Re}{\text{\rm Re}}
\renewcommand{\Im}{\text{\rm Im}}
\renewcommand{\ln}{\text{\rm ln}}
\renewcommand{\ge}{\geqslant}
\renewcommand{\le}{\leqslant}

%%%%%%%%%%%%%%%% operators %%%%%%%%%%%%%%%%

%\DeclareMathOperator{\ln}{ln}

\newcommand{\norm}[1]{\left\Vert#1\right\Vert}

\newcommand{\Om}{\Omega}
\newcommand{\dOm}{{\partial\Omega}}
\newcommand{\si}{\sigma}

\newcommand{\ga}{\gamma}

\newcommand{\eps}{\varepsilon}

\newcommand{\LOm}{L^2(\Om;d^nx)}
\newcommand{\LdOm}{L^2(\dOm;d^{n-1} \omega)}

\allowdisplaybreaks \numberwithin{equation}{section}
%%%%%%%%%%%%%%%%%%%%%%%%%%%%%%%%%%%%%
%%%%%%%%%%%%%%%%%%%% end of  definitions
%%%%%%%%%%%%%%%%%%%%%%%%%%%%%%%%%%%%%

\newtheorem{theorem}{Theorem}[section]

\newtheorem{lemma}[theorem]{Lemma}
\newtheorem{corollary}[theorem]{Corollary}
\newtheorem{definition}[theorem]{Definition}
\newtheorem{hypothesis}[theorem]{Hypothesis}

\theoremstyle{remark}
\newtheorem{remark}[theorem]{Remark}

\begin{document}

\title[Self-Adjoint Extensions of the Laplacian and Krein Formulas]
{A Description of All Self-Adjoint Extensions of the Laplacian and 
Krein-Type Resolvent Formulas \\ on Non-smooth Domains}
\author[F.\ Gesztesy and M.\ Mitrea]{Fritz Gesztesy and Marius Mitrea}
\address{Department of Mathematics,
University of Missouri, Columbia, MO 65211, USA}
\email{gesztesyf@missouri.edu}
\urladdr{http://www.math.missouri.edu/personnel/faculty/gesztesyf.html}
\address{Department of Mathematics, University of
Missouri, Columbia, MO 65211, USA}
\email{mitream@missouri.edu}
\urladdr{http://www.math.missouri.edu/personnel/faculty/mitream.html} 
\thanks{Based upon work partially supported by the US National Science
Foundation under Grant No.\ DMS-0400639}
\dedicatory{Dedicated to the memory of M.\ Sh.\ Birman (1928--2009)}
\thanks{{\it J. Analyse Math.} {\bf 113}, 53--172 (2011).}
%\date{\today}
%\date{May 9, 2008.}
\subjclass[2010]{Primary: 35J10, 35J25, 35Q40; Secondary: 35P05, 47A10, 47F05.}
\keywords{Multi-dimensional Schr\"odinger operators, bounded Lipschitz domains,
Dirichlet-to-Neumann maps, self-adjoint extensions of the Laplacian, Krein-type
resolvent formulas.}

%%%%%%%%%%%%%%%%%%%%%%%%%%%%%%%%%%%%%%%%
%%%%%%%%%%%%%%%%%%%%%%%%%%%%%%%%%%%%%%%%
\begin{abstract}
This paper has two main goals. First, we are concerned with a description 
of all self-adjoint extensions of the Laplacian $-\Delta\big|_{C^\infty_0(\Omega)}$ 
in $L^2(\Omega; d^n x)$. Here, the domain 
$\Omega$ belongs to a subclass of bounded Lipschitz domains (which we 
term quasi-convex domains), which contains all convex domains, as well as 
all domains of class $C^{1,r}$, for $r > 1/2$. Second, we establish 
Krein-type formulas for the resolvents of the various self-adjoint 
extensions of the Laplacian in quasi-convex domains and study the well-posedness 
of boundary value problems for the Laplacian, as well as basic properties of the corresponding Weyl--Titchmarsh operators (or energy-dependent 
Dirichlet-to-Neumann maps). 

One significant innovation in this paper is an extension of the classical 
boundary trace theory for functions in spaces which lack Sobolev 
regularity in a traditional sense, but are suitably adapted to the Laplacian. 
\end{abstract}
%%%%%%%%%%%%%%%%%%%%%%%%%%%%%%%%%%%%%%%%
%%%%%%%%%%%%%%%%%%%%%%%%%%%%%%%%%%%%%%%%

\maketitle

{\scriptsize{\tableofcontents}}
\normalsize

%%%%%%%%%%%%%%%%%%%%%%%%%%%%%%%%%%%%%%%%
%%%%%%%%%%%%%%%%%%%%%%%%%%%%%%%%%%%%%%%%
\section{Introduction}\label{s1}
%%%%%%%%%%%%%%%%%%%%%%%%%%%%%%%%%%%%%%%%
%%%%%%%%%%%%%%%%%%%%%%%%%%%%%%%%%%%%%%%%

One fundamental problem, with far-reaching implications in functional 
analysis, spectral theory, mathematical physics, etc., is that of 
characterizing all self-adjoint extensions of a given symmetric, densely 
defined, closed, unbounded operator $S$ in a separable, complex Hilbert space $\cH$. 
Historically, J.\ von Neumann pioneered the theory of self-adjoint extensions of densely defined, 
closed symmetric operators. Interestingly, in \cite{Ne29}, he was also the first to produce what 
we now call the Krein--von-Neumann self-adjoint extension of a given  
densely defined and strictly positive operator $S$. The construction of this extension in the 
general case, where the underlying densely defined operator $S$ is only nonnegative, 
was given by M.\ Krein \cite{Kr47} in 1947. Krein's construction turned out to be extremal 
among all nonnegative self-adjoint extensions in the following sense: A second distinguished  
extension had earlier been found by K.\ Friedrichs (cf.\ \cite{Fr34}) in 1934, and Krein 
(cf.\ \cite{Kr47}) proved in 1947 that all other nonnegative self-adjoint extensions of $S$ 
necessarily lie in between 
the Krein--von-Neumann and Friedrichs extensions in the sense of order between 
semibounded self-adjoint operators in $\cH$ (cf., the comments surrounding \eqref{AleqB}), 
illustrating the extremal roles played by these two extensions. 

Building on the earlier work of Vishik, Birman, and Lions-Magenes, 
G.\ Grubb in \cite{Gr68} characterized all self-adjoint extensions of 
the minimal realization of a symmetric properly elliptic even-order differential operator  
in $L^2(\Om;d^nx)$  in the case in which the underlying domain $\Om$ is smooth. Actually, in this smooth setting, the results in \cite{Gr68} 
also cover the non-symmetric case (cf.\ also \cite{Gr70} for related results), but in this paper 
we will focus on the symmetric case. 
The setting in \cite{Gr68} is that of a differential operator of order 
$2m$, $m\in\bbN$, 
\begin{align} \label{Wey-3EE} 
\begin{split} 
& \cA = \sum_{0 \leq |\alpha| \leq 2m} a_{\alpha}(\cdot) D^{\alpha},   \\
& D^{\alpha} = (-i \partial/\partial x_1)^{\alpha_1} \cdots  
(-i\partial/\partial x_n)^{\alpha_n}, 
\quad \alpha =(\alpha_1,\dots,\alpha_n) \in \bbN_0^n,
\end{split}
\end{align} 
whose coefficients belong to $C^\infty(\ol{\Om})$, and where
$\Omega\subset\bbR^n$ is a bounded $C^\infty$ domain. In addition, for some of the results 
derived in \cite{Gr68} it is assumed that $\cA$ is symmetric, that is, 
\begin{equation} \label{Wey-4EE}
(\cA u,v)_{L^2(\Om;d^nx)}=(u, \cA v)_{L^2(\Om;d^nx)},
\quad u,v\in C^\infty_0(\Om),
\end{equation} 
has a strictly positive lower bound, that is, there exists $\kappa_0>0$ such that
\begin{equation} \label{Wey-4bEE}
(\cA u,u)_{L^2(\Om;d^nx)}\geq\kappa_0\,\|u\|^2_{L^2(\Om;d^nx)},
\quad u\in C^\infty_0(\Om),
\end{equation} 
and is strongly elliptic, in the sense that there exists $\kappa_1>0$ such that 
\begin{equation} \label{Wey-5EE}
a^0(x,\xi):= \Re\bigg(\sum_{|\alpha|=2m}
a_{\alpha}(x) \xi^{\alpha}\bigg) \geq \kappa_1\,|\xi|^{2m},
\quad x\in\ol{\Om}, \; \xi \in\bbR^n.
\end{equation} 
Denote by $A_{min,\Om}$, $A_{max,\Om}$, and $A_{c,\Om}$, respectively, 
the $L^2(\Om;d^nx)$-realizations of $\cA$ with domains 
\begin{align}\label{Wey-6EE} 
\dom (A_{min,\Om})&:=H^{2m}_0(\Om),\quad \dom (A_{c,\Om}):=C^\infty_0(\Om),
\\
\dom (A_{max,\Om})&:
=\big\{u\in L^2(\Omega;d^nx)\,\big|\,\cA u\in L^2(\Omega;d^nx)\big\}, 
\end{align}
where $H^s(\Om)$ stands for the $L^2$-based Sobolev space of order 
$s\in\bbR$ in $\Om$, and $H_0^{s}(\Omega)$ is the subspace of 
$H^{s}(\bbR^n)$ consisting of distributions supported in $\ol{\Om}$.  
For a domain $\Om$ which is smooth, elliptic regularity implies
\begin{equation} \label{Kre-DEE}
(A_{min,\Om})^*=A_{max,\Om}\, \mbox{ and }\, \ol{A_{c,\Om}}=A_{min,\Om}.
\end{equation} 
This is a crucial ingredient in Grubb's work. Another basic result readily 
available in the context of smooth domains is a powerful, well-developed 
Sobolev boundary trace theory which, among many other things, allows for 
the characterization 
\begin{equation} \label{Wey-8EE}
H^{2m}_0(\Omega)
=\big\{u\in H^{2m}(\Omega)\,\big|\,\gamma^{2m-1}_D u=0\big\}, 
\end{equation} 
where $\gamma^{2m-1}_D u:=\bigl(\gamma_N^ju\bigr)_{0\leq j\leq 2m-1}$ is 
the Dirichlet trace operator of order $2m-1$ (with $\gamma_N^j$ denoting 
the $j$-th normal derivative on $\partial\Omega$).   

Our paper has several principal aims, namely a description of all 
self-adjoint extensions of the Laplacian $-\Delta\big|_{C^\infty_0(\Omega)}$ in 
$L^2(\Om; d^n x)$, establishing Krein-type  
formulas for the resolvents of these self-adjoint extensions, investigating the 
well-posedness of boundary value problems for the Laplacian, and  
studying some properties of the corresponding Weyl--Titchmarsh operators.   
In contrast with Grubb's work mentioned earlier, our goal is to go beyond 
the smooth setting and allow domains with irregular boundaries.
In part, this is motivated by the applications of this theory to spectral 
analysis. For example, in the paper \cite{AGMT09}, we make essential 
use of the results developed here in order to prove Weyl-type asymptotic formulas 
for the eigenvalue counting function of 
perturbed Krein Laplacians in non-smooth domains. In this connection it is worth 
pointing out that, generally speaking, the smoothness of the boundary of 
the underlying domain $\Omega$ fundamentally affects the nature of the remainder in 
the Weyl asymptotics, as well as the types of differential 
operators and boundary conditions for which such an asymptotic formula holds. 
Indeed, understanding the interplay between these structures has became 
a central theme of research in this area. 
Before proceeding to describing our main results, we will now 
highlight some of the subtleties and difficulties which the presence of 
boundary singularities entail. 

In the case of an irregular domain $\Omega\subset\bbR^n$, several new 
significant difficulties are encountered:

$\bullet$ First, there is the issue of the 
(global) Sobolev-regularity exhibited by functions belonging to the domains of 
$-\Delta_{D,\Om}$, $-\Delta_{N,\Om}$, $-\Delta_{K,\Om}$ (the Dirichlet, Neumann 
and Krein Laplacian, respectively). If $\Omega$ has a $C^\infty$-smooth 
boundary, then it is well-known that the domains of the aforementioned 
operators are subspaces of $H^2(\Om)$. For this particular regularity result  
the smoothness requirement on $\partial\Om$ can be substantially reduced: 
e.g., $C^{1,r}$ with $r>1/2$ will do. On the other hand, if $\Omega$ has a 
Lipschitz boundary then $\dom (- \Delta_{D,\Om})\subset H^{3/2}(\Om)$ and this 
result is optimal, as simple examples of (two-dimensional) domains with 
re-entrant corners show. At a more sophisticated level, B.\ Dahlberg, 
D.\ Jerison and C.\ Kenig (cf.\ the discussion in \cite{JK95}) have constructed 
a bounded $C^1$ domain $\Omega\subset\bbR^n$ and $f\in C^\infty(\ol{\Om})$ 
such that 
\begin{eqnarray}\lb{DJK-1}
(-\Delta_{D,\Om})^{-1}f\notin W^{2,1}(\Om),
\end{eqnarray}
where $W^{k,p}(\Om)$, $1\leq p\leq\infty$, stands for the $L^p$-based Sobolev 
space of order $k\in\bbN$. This shows that the endpoint $p=1$ of the 
implication 
\begin{align}\lb{DJK-1X}
\begin{split} 
& \Omega\mbox{ a bounded $C^{1,r}$ domain with $1>r>1-1/p$} \\
& \quad \text{implies } \, (-\Delta_{D,\Om})^{-1}\in 
\cB\big(L^p(\Om;d^nx),W^{2,p}(\Om)\big),
\end{split}
\end{align}
which (as our arguments show) is valid whenever $p\in(1,\infty)$, is sharp. 
In fact, the results in Section\ 7 of \cite{MS85} show that \eqref{DJK-1X} is
in the nature of best possible (as far as the regularity of $\dOm$, measured
on the H\"older scale, is concerned). 
Nonetheless, if a bounded Lipschitz domain has all its singularities directed 
outwardly (more precisely, locally, it satisfies either a uniform exterior 
ball condition, or is of class $C^{1,r}$ for some $r>1/2$), then the inclusion 
$\dom (- \Delta_{D,\Om})\subset H^2(\Om)$ holds. 

In this paper we shall work with (what we term) the class of quasi-convex 
domains, which is a hybrid of the two categories of domains mentioned above. 
More specifically, an open subset of $\bbR^n$ is called a quasi-convex domain 
if it behaves locally either like a $C^{1,r}$ domain (with $r>1/2$) or 
like a Lipschitz domain satisfying a uniform exterior ball condition. 

$\bullet$ Second, the nature of the Dirichlet and Neumann trace operators, $\ga_D$ and
$\ga_N$, changes fundamentally in the
presence of boundary irregularities. To shed some light on this phenomenon, we 
recall that if $\Omega$ is a $C^\infty$ domain, then the second-order boundary 
trace operator 
\begin{eqnarray}\lb{DJK-3}
\gamma_2=(\gamma_D,\gamma_N): H^2(\Om)\to  
H^{3/2}(\dOm)\times H^{1/2}(\dOm)
\end{eqnarray}
is well-defined, bounded, and has a linear, continuous right-inverse. 
The problem with the case where $\Omega$ is only Lipschitz is that 
the range of $\gamma_2$ acting on $H^2(\Om)$ no longer decouples into 
a Cartesian product of boundary Sobolev spaces. In fact, it was rather 
recently that $\gamma_2(H^2(\Om))$ has been identified in \cite{MMS05}  
(where, in fact, higher smoothness Sobolev spaces are considered) as 
\begin{eqnarray}\lb{DJK-4}
\bigl\{(g_0,g_1)\in H^1(\partial\Omega)
\dotplus L^2(\partial\Omega;d^{n-1}\omega) \,\big|\, 
\nabla_{tan}g_0+g_1\nu\in \bigl(H^{1/2}(\partial\Omega)\bigr)^n\bigl\},
\end{eqnarray}
where $\nabla_{tan}$ and $\nu$ are the tangential gradient and outward
unit normal on $\dOm$, respectively. As opposed to the case of smoother 
domains, in the situation where $\Omega$ is merely Lipschitz,  
no optimal (Sobolev) smoothness conditions on the individual functions 
$g_0,g_1$ can be inferred from the knowledge that $g_0\in H^1(\partial\Omega)$ 
and $g_1\in L^2(\partial\Omega;d^{n-1}\omega)$ are such that 
$\nabla_{tan}g_0+g_1\nu\in \bigl(H^{1/2}(\partial\Omega)\bigr)^n$.

In addition to the aforementioned issue, we are now forced to consider 
Dirichlet and Neumann traces for functions in $\dom (- \Delta_{max})
:= \big\{u\in L^2(\Om;d^nx) \,\big|\, \Delta u\in L^2(\Om;d^nx)\big\}$. 
When $\Omega$ is a bounded Lipschitz domain, it can be shown that  
$\gamma_D$ and $\gamma_N$ extend as linear bounded maps 
\begin{eqnarray}\lb{DJK-5}
&& \wti\gamma_D:\bigl\{u\in H^{1/2}(\Omega) \,\big|\, 
\Delta u\in L^2(\Omega;d^nx)\bigr\}\to L^2(\partial\Omega;d^{n-1}\omega), 
\\[4pt]
&& \wti\gamma_N:\bigl\{u\in H^{3/2}(\Omega) \,\big|\, 
\,\Delta u\in L^2(\Omega;d^nx)\bigr\}\to L^2(\partial\Omega;d^{n-1}\omega). 
\lb{DJK-6}
\end{eqnarray}
However, it should be noted that in general,  
\begin{eqnarray}\lb{M-M1}
\mbox{the inclusion }\,
\dom (- \Delta_{max})\subseteq H^s(\Om) \,\mbox{ fails for every }\,s>0.
\end{eqnarray}
Indeed, the function $u(z):={\rm Re}\,(z^{-\alpha})
=r^{-\alpha}\cos\,(\alpha\,\theta)$ if $z=re^{i\theta}$ is
harmonic and square integrable in the (smooth) domain  
$\Omega:=\{z\in\bbC \,|\, |z-1|<1\}$ if $\alpha<1$,  
but fails to be in $H^s(\Om)$ if $\alpha>1-s$. Thus, in light of the fact 
that functions in $\dom (- \Delta_{max})$ do not, 
generally speaking, exhibit any global Sobolev regularity besides
mere square integrability, the trace results \eqref{DJK-5}, \eqref{DJK-6}
are not satisfactory for our goals. 
Nonetheless, we are able to augment \eqref{DJK-5}, \eqref{DJK-6}
by proving that $\wti\gamma_D$, $\wti\gamma_N$ above,  
further extend as bounded maps in the following contexts: 
\begin{align}\lb{DJK-7}
& \widehat\gamma_D: \dom (- \Delta_{max})\to 
\bigl(N^{1/2}(\dOm)\bigr)^*,
\\
& \widehat\gamma_N: \dom (- \Delta_{max})\to 
\bigl(N^{3/2}(\dOm)\bigr)^*,
\lb{DJK-8}
\end{align}
where 
\begin{align}\lb{DJK-9}
& N^{1/2}(\partial\Omega)
:=\bigl\{g\in L^2(\partial\Omega;d^{n-1}\omega) \,\big|\, 
g\nu\in \bigl(H^{1/2}(\partial\Omega)\bigr)^n\bigl\}, 
\\
& N^{3/2}(\partial\Omega):=\bigl\{g\in H^1(\partial\Omega) \,\big|\, 
\nabla_{tan}g\in \bigl(H^{1/2}(\partial\Omega)\bigr)^n\bigl\}.
\lb{DJK-10}
\end{align}
These ``exotic'' spaces provide the natural context for describing 
the mapping properties for the Dirichlet and Neumann trace operators
acting on $\dom (- \Delta_{max})$. They are natural, in the sense
that $N^{1/2}(\partial\Omega)=H^{1/2}(\partial\Omega)$ and
$N^{3/2}(\partial\Omega)=H^{3/2}(\partial\Omega)$ if 
$\Omega$ is a $C^{1,r}$ domain for some $r>1/2$.

Dealing with the aforementioned topics occupies the bulk of the next six sections 
of the paper, where we develop a trace theory which goes considerably 
beyond the scope of the traditional treatment of (reasonably) smooth domains. 
Having dealt with this host of issues in Sections \ref{s2}--\ref{s8}, we then 
proceed to the next item on our agenda, namely the task of 
characterizing all self-adjoint extensions of the minimal Laplacian 
in quasi-convex domains. Based on our trace theory and an abstract, 
functional analytic result of Grubb \cite{Gr68}, we prove in Section \ref{s14} the following
theorem, which provides a universal parametrization of all 
self-adjoint extensions of $-\Delta\big|_{C^\infty_0(\Om)}$ in $L^2(\Om; d^n x)$:

%%%%%%%%%%%%%%%%%%%%%%%%%%%%%%%%%
\begin{theorem}\lb{CC.wII} 
Assume that $\Omega\subseteq{\mathbb{R}}^n$ is a $($bounded\,$)$quasi-convex domain 
and let $z\in\bbR\backslash\si(-\Delta_{D,\Om})$. Suppose that $X$ 
is a closed subspace of $\bigl(N^{1/2}(\partial\Omega)\bigr)^*$ and denote 
by $X^*$ the conjugate dual space of $X$. In addition, consider a  
 self-adjoint operator 
\begin{eqnarray}\lb{4.Aw1I}
L:\dom (L)\subseteq X\to  X^*, 
\end{eqnarray} 
and define the linear operator 
$- \Delta^D_{X,L,z}:\dom (- \Delta^D_{X,L,z})\subset L^2(\Om;d^nx) \to  L^2(\Om;d^nx)$, 
by taking 
\begin{align} \lb{4.Aw3I}
\begin{split}
& -\Delta^D_{X,L,z} u:=(-\Delta-z )u,    \\ 
& \; u \in \dom (- \Delta^D_{X,L,z}):= \big\{
v\in \dom (- \Delta_{max})\,\big|\, \widehat\gamma_D v \in \dom (L),\,
\tau^N_z v  \big|_{X}=L\big(\widehat\gamma_D v \big)\big\}.     \\ 
\end{split}
\end{align} 
Above, $\tau^N_z$ is a regularized Neumann trace operator 
$($for details see \eqref{3.Aw1}, \eqref{3.Aw2}$)$, and the boundary condition 
$\tau^N_z u  \big|_{X}=L\bigl(\widehat\gamma_D u \bigr)$ is interpreted as
\begin{eqnarray}\lb{4.Aw4BI}
{}_{N^{1/2}(\dOm)}\langle\tau^N_z u  ,f\rangle_{(N^{1/2}(\dOm))^*}
=\ol{{}_{X}\langle f,L(\widehat\gamma_D u )\rangle_{X^*}}, 
\quad  f \in X.
\end{eqnarray}
Then 
\begin{equation}\lb{4.Aw4I}
- \Delta^D_{X,L,z}\,\mbox{ is self-adjoint in $L^2(\Om;d^nx)$}, 
\end{equation}
and
\begin{equation}
-\Delta_{min}-zI_{\Om}\subsetneqq -\Delta^D_{X,L,z}\subsetneqq -\Delta_{max}-zI_{\Om}.
\end{equation}

Conversely, if 
\begin{eqnarray}\lb{4.Aw5I}
\wti S:\dom \big(\wti S\big)\subseteq L^2(\Om;d^nx)\to  L^2(\Om;d^nx) 
\end{eqnarray}
is a self-adjoint operator with the property that 
\begin{eqnarray}\lb{4.Aw6I}
-\Delta_{min}-zI_{\Om}\subseteq \wti S\subseteq -\Delta_{max}-zI_{\Om}, 
\end{eqnarray}
then there exist $X$, a closed subspace of $\bigl(N^{1/2}(\dOm)\bigr)^*$, 
and $L:\dom (L)\subseteq X\to X^*$, a self-adjoint operator, such that 
\begin{eqnarray}\lb{4.Aw7I}
\wti S = -\Delta^D_{X,L,z}.
\end{eqnarray}

In the above scheme, the operator $\wti S$ and the pair $X,L$ correspond 
uniquely to each other. In fact, 
\begin{eqnarray}\lb{4.Aw8I}
\dom (L)=\widehat\gamma_D\big(\dom \big(\wti S\big)\big),\quad
X=\ol{\widehat\gamma_D\big(\dom \big(\wti S)\big)} \quad \text{ $\big($with closure in 
$\bigl(N^{1/2}(\dOm)\bigr)^*$$\big)$}.
\end{eqnarray}
\end{theorem}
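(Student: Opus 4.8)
The plan is to recast the statement within the abstract framework for parametrizing self-adjoint extensions due to Grubb \cite{Gr68}, supplying its hypotheses from the trace theory of Sections \ref{s2}--\ref{s8}. Write $A:=-\Delta_{min}-zI_\Om$; quasi-convexity of $\Om$ ensures $(-\Delta_{min})^*=-\Delta_{max}$, hence $A^*=-\Delta_{max}-zI_\Om$, while $A_0:=-\Delta_{D,\Om}-zI_\Om$ is a self-adjoint extension of $A$ with $0\in\rho(A_0)$ because $z\in\bbR\backslash\si(-\Delta_{D,\Om})$. The three structural facts I would use, all established in the earlier sections, are: \emph{(i)} the decomposition $\dom(-\Delta_{max})=\dom(-\Delta_{D,\Om})\dotplus\ker(-\Delta_{max}-zI_\Om)$, together with $\dom(-\Delta_{D,\Om})=\{u\in\dom(-\Delta_{max})\,|\,\widehat\gamma_D u=0\}\subseteq H^2(\Om)$; \emph{(ii)} the Poisson-type isomorphism --- $\widehat\gamma_D$ restricts to a topological isomorphism of $\ker(-\Delta_{max}-zI_\Om)$ onto $\bigl(N^{1/2}(\dOm)\bigr)^*$, and, more generally, $(\widehat\gamma_D,\tau^N_z)$ maps $\dom(-\Delta_{max})$ onto $\bigl(N^{1/2}(\dOm)\bigr)^*\times N^{1/2}(\dOm)$; and \emph{(iii)} the Green--Lagrange identity, which after the regularization reads
\begin{align}\lb{planGreen}
& (-\Delta u,v)_{L^2(\Om;d^nx)}-(u,-\Delta v)_{L^2(\Om;d^nx)}  \notag \\
& \quad = {}_{N^{1/2}(\dOm)}\langle\tau^N_z u,\widehat\gamma_D v\rangle_{(N^{1/2}(\dOm))^*}  \notag \\
& \qquad\quad - \ol{{}_{N^{1/2}(\dOm)}\langle\tau^N_z v,\widehat\gamma_D u\rangle_{(N^{1/2}(\dOm))^*}}
\end{align}
for all $u,v\in\dom(-\Delta_{max})$. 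Taken together, these identify $\bigl(N^{1/2}(\dOm)\bigr)^*$ --- paired with its conjugate dual $N^{1/2}(\dOm)$ --- as the abstract boundary space, and make closed subspaces $X\subseteq\bigl(N^{1/2}(\dOm)\bigr)^*$ correspond, via the Poisson operator of \emph{(ii)}, to closed subspaces of $\ker(-\Delta_{max}-zI_\Om)$, which is exactly the data Grubb's theorem takes as input.

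For the direct assertion I would argue as follows. Symmetry of $-\Delta^D_{X,L,z}$ is immediate from \eqref{planGreen}: for $u,v$ in its domain one has $\widehat\gamma_D u,\widehat\gamma_D v\in\dom(L)\subseteq X$, so by \eqref{4.Aw4BI} the right-hand side of \eqref{planGreen} collapses to the expression of the symmetry of $L$ on the pair $\widehat\gamma_D u,\widehat\gamma_D v$, which vanishes. To promote this to self-adjointness I would compute the adjoint directly: if $v\in\dom\bigl((-\Delta^D_{X,L,z})^*\bigr)$, then testing against $C^\infty_0(\Om)$ forces $v\in\dom(-\Delta_{max})$, whereupon \eqref{planGreen} requires its right-hand side to vanish for every $u\in\dom(-\Delta^D_{X,L,z})$; since $\widehat\gamma_D$ maps $\dom(-\Delta^D_{X,L,z})$ onto $\dom(L)$ and, by \emph{(ii)}, the trace $\tau^N_z u$ can be prescribed modulo $X^\perp$ independently of $\widehat\gamma_D u$, this yields $\widehat\gamma_D v\in\dom(L^*)=\dom(L)$ together with $\tau^N_z v|_X=L(\widehat\gamma_D v)$, i.e.\ $v\in\dom(-\Delta^D_{X,L,z})$. (Alternatively, \emph{(i)}--\emph{(iii)} are precisely the hypotheses of the abstract result in \cite{Gr68}, which could simply be invoked.) The two strict inclusions then follow from \emph{(i)}, \emph{(ii)} and the density of $\dom(L)$ in $X$, by a short case check on whether $X=\{0\}$ and on whether $\dom(L)$ exhausts $X$.

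For the converse, given a self-adjoint $\wti S$ with $-\Delta_{min}-zI_\Om\subseteq\wti S\subseteq-\Delta_{max}-zI_\Om$, I would set $\dom(L):=\widehat\gamma_D\bigl(\dom(\wti S)\bigr)$, $X:=\ol{\dom(L)}$ (closure in $\bigl(N^{1/2}(\dOm)\bigr)^*$), and $L\bigl(\widehat\gamma_D u\bigr):=\tau^N_z u|_X$ for $u\in\dom(\wti S)$ (in the sense of \eqref{4.Aw4BI}), and then verify in turn: $L$ is well defined --- if $\widehat\gamma_D u=\widehat\gamma_D u'$ with $u,u'\in\dom(\wti S)$ then $w:=u-u'\in\dom(\wti S)\cap\ker\widehat\gamma_D$, and \eqref{planGreen} with the symmetry of $\wti S$ gives $\langle\tau^N_z w,\widehat\gamma_D u''\rangle=(\wti S w,u'')-(w,\wti S u'')=0$ for all $u''\in\dom(\wti S)$, so $\tau^N_z w$ annihilates $X$; $L$ is symmetric on its (by construction dense-in-$X$) domain, by the same identity; and $L$ is self-adjoint precisely because $\wti S$ is --- given $\psi\in\dom(L^*)$ with $L^*\psi=:\eta$, use \emph{(ii)} to build $u^*\in\dom(-\Delta_{max})$ with $\widehat\gamma_D u^*=\psi$ and $\tau^N_z u^*|_X=\eta$ (lift $\psi$ through the Poisson operator, then correct by a suitable element of $\dom(-\Delta_{D,\Om})$), and then \eqref{planGreen} together with the definition of $L^*$ shows $(\wti S w,u^*)=(w,(-\Delta-z)u^*)$ for all $w\in\dom(\wti S)$, so $u^*\in\dom(\wti S^*)=\dom(\wti S)$ and hence $\psi=\widehat\gamma_D u^*\in\dom(L)$. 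Since $\dom(\wti S)\subseteq\dom(-\Delta^D_{X,L,z})$ by construction and both operators are self-adjoint, this inclusion is an equality, i.e.\ $\wti S=-\Delta^D_{X,L,z}$; the formulas \eqref{4.Aw8I} hold by construction, and uniqueness follows because any admissible pair $(X,L)$ must satisfy them (again using that $\widehat\gamma_D$ maps $\dom(-\Delta^D_{X,L,z})$ onto $\dom(L)$).

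The step I expect to be the main obstacle is not the abstract bookkeeping above --- which is Grubb's theorem --- but rather the verification of \emph{(i)}--\emph{(iii)} in the nonsmooth category: establishing the stated mapping properties and the Green--Lagrange identity for $\widehat\gamma_D$ and $\tau^N_z$ on $\dom(-\Delta_{max})$, a space which, by \eqref{M-M1}, carries no Sobolev regularity whatsoever. That is exactly the purpose of the trace theory of Sections \ref{s2}--\ref{s8}, and it is there that quasi-convexity enters decisively (through $(-\Delta_{min})^*=-\Delta_{max}$, $\dom(-\Delta_{D,\Om})\subseteq H^2(\Om)$, and the identification of $N^{1/2}(\dOm),N^{3/2}(\dOm)$ as the correct boundary spaces). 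Within the present argument the most delicate point is the self-adjointness of $L$ in the converse, where maximality has to be transported from $\wti S$ to $L$ across the non-self-dual boundary pair $\bigl(N^{1/2}(\dOm),(N^{1/2}(\dOm))^*\bigr)$; this transport hinges on the surjectivity asserted in \emph{(ii)}, which is the only device that lets one convert an abstract boundary functional back into a genuine element of $\dom(-\Delta_{max})$ on which \eqref{planGreen} can be applied.
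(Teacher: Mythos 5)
Your argument is correct, and it draws on exactly the same analytic inputs as the paper's proof of Theorem \ref{CC.w} (the decomposition of $\dom(-\Delta_{max})$, the isomorphism $\widehat\gamma_D:\ker(-\Delta_{max}-zI_\Om)\to\bigl(N^{1/2}(\dOm)\bigr)^*$ of Corollary \ref{New-CV22}, the ontoness \eqref{3.ON} of $\tau^N_z$ on $H^2(\Om)\cap H^1_0(\Om)$, and the Green identity of Theorem \ref{LL.w}), but the execution is genuinely different. The paper routes both directions through the abstract Theorem \ref{T-Grubb}: $L$ is pulled back via $\widehat\gamma_D\big|_V$ to an operator $T$ on a closed subspace $V$ of $\ker(-\Delta_{max}-zI_\Om)$, self-adjointness is transported between $L$ and $T$, and the boundary-condition domain is matched against Grubb's abstract domain using \eqref{T-Green2}; in the converse, the pair $(V,T)$ produced by Grubb is translated back into $(X,L)$. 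You stay entirely on the boundary: you compute $\bigl(-\Delta^D_{X,L,z}\bigr)^*$ by hand, exploiting that $(\widehat\gamma_D,\tau^N_z)$ maps $\dom(-\Delta_{max})$ onto $\bigl(N^{1/2}(\dOm)\bigr)^*\times N^{1/2}(\dOm)$, and in the converse you define $L(\widehat\gamma_D u):=\tau^N_z u\big|_X$ directly and transport maximality from $\wti S$ to $L$ by a lifting argument, bypassing $V$ and $T$ altogether. The paper's route buys a ready-made abstract bookkeeping device; yours buys a self-contained argument that isolates where maximality crosses the boundary pair, at the cost of one ingredient you should state explicitly: to realize $\eta\in X^*$ as $\tau^N_z u^*\big|_X$ you must first extend $\eta$, by Hahn--Banach and the reflexivity of $N^{1/2}(\dOm)$ (Lemma \ref{L-refN}), to an element of $N^{1/2}(\dOm)$, and only then invoke the joint surjectivity of the traces; the same reflexivity (via a bipolar argument) underlies your step ``$\widehat\gamma_D v\in X$'' in the adjoint computation. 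Two minor remarks: the strict inclusions need no case check, since $(-\Delta_{min})^*=-\Delta_{max}\neq-\Delta_{min}$ by Theorem \ref{T-DD1}, so no self-adjoint extension can coincide with either endpoint; and the overall sign of the boundary terms in your displayed Green identity is opposite to that of \eqref{T-Green}, which matches the convention of \eqref{4.Aw4BI} rather than \eqref{4.Aw4B} --- harmless, since $L\mapsto -L$ preserves self-adjointness, but worth aligning for consistency.
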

%%%%%%%%%%%%%%%%%%%%%%%%%%%%%%%

In the context of Theorem \ref{CC.wII}, it is illuminating to indicate 
how various distinguished self-adjoint extensions of the Laplacian occur 
in this scheme. For example, for every  
$z_0\in\bbR\backslash \si(-\Delta_{D,\Om})$ one has 
\begin{eqnarray}\lb{4.A.Y}
X:=\dom (L):=\{0\}\, \mbox{ and }\,  
L:=0 \, \text{ imply } \, - \Delta^D_{X,L,z_0} + z_0I_{\Om} = - \Delta_{D,\Om},
\end{eqnarray}
the Dirichlet Laplacian, and 
\begin{eqnarray}\lb{4.A.U}
\left.
\begin{array}{c}
X:=\bigl(N^{1/2}(\partial\Omega)\bigr)^*\mbox{ and }
L:=M^{(0)}_{D,N,\Om}(z_0)
\\[4pt]
\mbox{ with }\dom (L):=N^{3/2}(\partial\Omega)
\end{array}
\right\}   \text{ imply } \, - \Delta^D_{X,L,z_0} + z_0I_{\Om} = - \Delta_{N,\Om},
\end{eqnarray}
the Neumann Laplacian. Furthermore, 
\begin{eqnarray}\lb{4.A.Z}
X:=\dom (L):=\bigl(N^{1/2}(\partial\Omega)\bigr)^* \, \mbox{ and }\, 
L:=0  \, \text{ imply } \,  - \Delta^D_{X,L,z_0} = - \Delta_{K,\Om,z_0},
\end{eqnarray}
the Krein Laplacian. Related versions of these results (starting from a 
different ``reference'' operator) can be found in Theorem \ref{CC.wF}
and Corollary \ref{rE.1F}. 

As an interesting application of the material developed in this paper, the final Section \ref{s16} is devoted to proving Krein-type formulas for the resolvents of various self-adjoint 
extensions of the Laplacian in quasi-convex domains, and to studying the 
properties of the corresponding spectral parameter dependent Dirichlet-to-Neumann maps 
(i.e., Weyl--Titchmarsh operators). To put this application in a proper perspective requires some preparation and so we will next recall some pertinent facts derived in \cite{GMT98} (see also 
\cite{GKMT01} and \cite{GT00}) on abstract 
Krein-type resolvent formulas. For hints to the literature on this vast and currently very active area of research, we refer to Section \ref{s16}. 

Let $S$ be a closed, symmetric operator in a separable, complex Hilbert space $\cH$ with equal deficiency indices ${\rm def} (S)=(r,r)$, $r\in \bbN\cup\{\infty\}$, and    
suppose that $S_\ell$, $\ell=1,2$, are two distinct, relatively prime self-adjoint extensions 
of $S$, that is, $\dom (S_1)\cap \dom (S_2) = \dom (S)$. At this instance none of $S$ and 
$S_j$, $j=1,2$,  need to be bounded from below and we emphasize that $S_1$ and $S_2$ are chosen to be relatively prime for simplicity only.  Then the deficiency subspaces $\cN_{\pm}$ of $S$ are given by
\begin{equation}
\cN_{\pm}=\ker({S}^*\mp i I_{\cH}), \quad \dim (\cN_{\pm})= {\rm def} (S) = r, 
\lb{3.1}
\end{equation}
and for any self-adjoint extension $\wti S$ of $S$ in $\cH$ ,
the corresponding Cayley transform $C_{\wti S}$ in $\cH$ is defined by
\begin{equation}
C_{\wti S}= \big(\wti S+i I_{\cH}\big) \big(\wti S-i I_{\cH}\big)^{-1},
\lb{3.2}
\end{equation}
implying
\begin{equation}
C_{\wti S}\cN_-=\cN_+.
\lb{3.3}
\end{equation} 

Given a self-adjoint extension $\wti S$ of $S$ and a closed linear subspace 
$\cN$ of $\cN_+$, $\cN\subseteq \cN_+$, the Donoghue-type Weyl--Titchmarsh 
operator $M_{\wti S,\cN}(z) \in \cB(\cN)$ associated with the pair $(\wti S,\cN)$  is defined by
\begin{align}
M_{\wti S,\cN}(z)&=P_\cN \big(z\wti S+I_\cH\big)\big(\wti S-z I_{\cH}\big)^{-1} P_\cN\big\vert_\cN  \no \\
&=zI_\cN+(1+z^2)P_\cN \big(\wti S-z I_{\cH}\big)^{-1} P_\cN\big\vert_\cN\,, \quad  
z\in \bbC\backslash \bbR,
\lb{3.5}
\end{align}
with $I_\cN$ the identity operator in $\cN$, and $P_\cN$ the orthogonal projection in 
$\cH$ onto $\cN$.

Following Saakjan \cite{Sa65} (in a version presented in Theorem 5 and Corollary 6 in \cite{GMT98}), and using the notions introduced in \eqref{3.1}--\eqref{3.5}, Krein's formula for the difference of the resolvents of $S_1$ and $S_2$ then reads as follows (cf.\ \cite{GMT98}, \cite{Sa65}):
\begin{align}
(S_2-z I_{\cH})^{-1}&=(S_1-z I_{\cH})^{-1}   % \no \\
% & \quad 
+(S_1-i I_{\cH})(S_1-z I_{\cH})^{-1}P_{1,2}(z)
(S_1+i I_{\cH})(S_1-z I_{\cH})^{-1}  \no \\
&=(S_1-z I_{\cH})^{-1}+(S_1-i I_{\cH})(S_1-z I_{\cH})^{-1} P_{\cN_+}   \no \\
&\quad \times (\tan (\alpha_{1,2})- M_{S_1,\cN_+}(z))^{-1}
P_{\cN_+}(S_1+i I_{\cH})(S_1-z I_{\cH})^{-1}, \quad z\in \rho(S_1)\cap\rho(S_2),   \label{3.8b}  
\end{align}
where
\begin{equation}
e^{-2i \alpha_{1,2}} = - C_{S_2} C_{S_1}^{-1}\big|_{\cN_+}.    \lb{3.9}
\end{equation}

One can show that $M_{S_1,\cN_+}(\cdot)$ (and hence 
$(\tan (\alpha_{1,2})- M_{S_1,\cN_+}(\cdot))^{-1}$) is an operator-valued Herglotz function, that is, 
$M_{S_1,\cN_+}(\cdot)$ is analytic on $\bbC_+ = \{z\in\bbC \,|\, \Im(z)>0\}$ and 
$\Im(M_{S_1,\cN_+}(z)) \geq 0$, $z\in\bbC_+$. In addition, $M_{S_1,\cN_+}(\cdot)$ has the 
symmetry property,
\begin{equation}
[M_{S_1,\cN_+}(z)]^*  = M_{S_1,\cN_+}(\ol z),   \quad z \in\bbC\backslash\bbR.    \lb{3.10}
\end{equation}

Next, assume in addition that $S\ge \varepsilon I_{\cH}$ for some $\varepsilon>0$, and denote by 
$S_K$ the Krein--von Neumann extension, and by $S_F$ the Friedrichs extension of $S$, respectively 
(cf.\ Section \ref{s9} for more details). Since $S_K$ and $S_F$  are relatively prime (see, e.g., 
\cite[Lemma\ 2.8]{AGMT09}), \eqref{3.9} yields the following version of Krein's formula 
connecting the resolvents of $S_K$ and $S_F$: 
\begin{align}
(S_K-z I_{\cH})^{-1} &=(S_F-z I_{\cH})^{-1}   \no \\ 
& \quad +(S_F-i I_{\cH})(S_F-z I_{\cH})^{-1} P_{\cN_+}
\big[M_{S_F,\cN_+}(0) - M_{S_F,\cN_+}(z)\big]^{-1}    \label{3.8c} \\
&\qquad \times P_{\cN_+}(S_F+i I_{\cH})(S_F-z I_{\cH})^{-1},  \quad 
z \in \rho(S_K)\cap\rho(S_F).   \no 
\end{align}
Equation \eqref{3.8c} follows by combining  Theorem\ 4.7 and Corollary\ 4.8 in \cite{GKMT01} and will be further discussed in more detail elsewhere.  

In a nutshell, this represents the approach to Krein-type resolvent formulas with emphasis on the deficiency subspace $\cN_+$, and so the only Hilbert spaces involved are $\cN_+$ and $\cH$ (with $\cN_+ \subset \cH$). However, in connection with PDE applications, where, for example, $S$ is generated by a suitable second-order strongly elliptic differential operator on the domain $\Om \subset \bbR^n$, one would naturally prefer to replace the pair of spaces $(\cN_+, \cH)$ by the pair 
$\big(L^2(\partial\Om; d^{n-1} \omega), L^2(\Om; d^n x)\big)$, and at the same time, replace the abstract Donoghue-type Weyl--Titchmarsh operator $M_{S_1,\cN_+}(\cdot)$ by appropriate energy-dependent Dirichlet-to-Neumann maps. This change of emphasis then also necessitates the introduction of appropriate (extensions of) Dirichlet and Neumann boundary trace operators $\gamma_D$ and $\gamma_N$. 

We note that in the PDE context these ideas were realized only very recently in the 
work of Amrein and Pearson \cite{AP04}, 
Behrndt and Langer \cite{BL07}, Brown, Grubb, and Wood \cite{BGW09}, 
Brown, Hinchliffe, Marletta, Naboko, and Wood \cite{BHMNW09}, 
Brown, Marletta, Naboko, and Wood \cite{BMNW08}, 
Gesztesy and Mitrea \cite{GM08}, \cite{GM09}, 
Grubb \cite{Gr08a} (including a discussion of non-self-adjoint extensions), 
Posilicano \cite{Po08}, Posilicano and Raimondi \cite{PR09}, and Ryzhov \cite{Ry10}.  With the exception of Grubb,  Posilicano and Raimondi, and Ryzhov, who treat $C^{1,1}$ domains $\Om$, the remaining authors are dealing with the case of $C^\infty$-smooth domains $\Om$. 

One of our motivations for introducing the class of non-smooth, quasi-convex domains $\Om$, and the associated boundary trace theory, was precisely to be able to prove Krein-type resolvent formulas for Laplacians on such non-smooth domains. As a typical result we prove in Section \ref{s16} we thus mention the 
following theorem:  

%%%%%%%%%%%%%%%%%%% 
\begin{theorem}\lb{T.Na-qI}
Assume that $\Omega$ is a $($bounded\,$)$ quasi-convex domain and suppose that 
$z_0\in\bbR\backslash\si(-\Delta_{D,\Om})$. In addition, consider two bounded,  
self-adjoint operators 
\begin{eqnarray}\lb{AG-1.1I}
L_j:\bigl(N^{1/2}(\partial\Omega)\bigr)^*\to  N^{1/2}(\partial\Omega),
\quad j=1,2.
\end{eqnarray} 
Associated with these, define the  
operators $-\Delta^D_{X_j,L_j,z_0}$, $j=1,2$, as in \eqref{4.Aw3I} 
corresponding to $z=z_0$ and $X_1=X_2=\bigl(N^{1/2}(\partial\Omega)\bigr)^*$.
Finally, fix a complex number
$z\in\bbC\backslash \big(\si(-\Delta^D_{X_1,L_1,z_0})\cup
\si(-\Delta^D_{X_2,L_2,z_0})\big)$.
Then the following Krein-type  resolvent formula holds on $L^2(\Omega;d^nx)$, 
\begin{align}\lb{N-Bbb1.1I}
&  
\big(-\Delta^D_{X_2,L_2,z_0}-zI_\Om\big)^{-1} 
=\big(-\Delta^D_{X_1,L_1,z_0}-zI_\Om\big)^{-1}  
\\
& \quad
+\big[\widehat\gamma_D\big(-\Delta^D_{X_1,L_1,z_0}-zI_\Om\big)^{-1}\big]^*
M^D_{L_1,L_2,z_0}(z)
\big[\widehat\gamma_D\big(-\Delta^D_{X_1,L_1,z_0}-zI_\Om\big)^{-1}\big],  \no
\end{align}
where 
\begin{align}\lb{N-Bbb1.2I}
& M^D_{L_1,L_2,z_0}(z):= 
(L_2-L_1)\big[I_\Om + \big[M^{(0)}_{D,N,\Om}(z_0)
-M^{(0)}_{D,N,\Om}(z+z_0)-L_2\big]^{-1} % \no \\
% && \hspace*{9cm} \times 
(L_2-L_1)\big]
\no \\
& \quad
=(L_2-L_1)\big[M^{(0)}_{D,N,\Om}(z_0)-M^{(0)}_{D,N,\Om}(z+z_0)-L_2\big]^{-1} 
\no \\
& \qquad \times \big[M^{(0)}_{D,N,\Om}(z_0)-M^{(0)}_{D,N,\Om}(z+z_0)-L_1\big], 
 \quad z \in \bbC\backslash\bbR, 
\end{align}
has the property that 
\begin{eqnarray}\lb{N-Bbb1.3I}
\bbC_+ \ni z\mapsto M^D_{L_1,L_2,z_0}(z)
\in \cB\bigl((N^{1/2}(\dOm))^*,N^{1/2}(\dOm)\bigr)
\end{eqnarray}
is an operator-valued Herglotz function which satisfies 
\begin{eqnarray}\lb{N-Bbb1.4I}
\big[M^D_{L_1,L_2,z_0}(z)\big]^*=M^D_{L_1,L_2,z_0}(\ol{z}). 
\end{eqnarray}
\end{theorem}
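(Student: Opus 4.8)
The plan is to obtain Theorem~\ref{T.Na-qI} as the PDE realization, via the boundary trace calculus of Sections~\ref{s2}--\ref{s8}, of the abstract Krein-type resolvent formula \eqref{3.8b}. Put $S:=-\Delta_{min}-z_0I_\Om$, a closed symmetric operator in $L^2(\Om;d^nx)$ with $S^*=-\Delta_{max}-z_0I_\Om$, and $S_j:=-\Delta^D_{X_j,L_j,z_0}$, $j=1,2$, which by Theorem~\ref{CC.wII} (applied with $X_1=X_2=(N^{1/2}(\dOm))^*$, the $L_j$ being bounded and everywhere defined) are self-adjoint with $S\subsetneqq S_j\subsetneqq S^*$. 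Rather than feeding the pair $(S_1,S_2)$ into \eqref{3.8b} directly — which presupposes $S_1,S_2$ relatively prime, a property that fails once $L_1-L_2$ is not injective — I would route everything through the common reference operator $-\Delta_{D,\Om}-z_0I_\Om$, which \emph{is} relatively prime to each $S_j$: their common part is $S$, since $\widehat\gamma_D v=0$ forces $v\in\dom(-\Delta_{D,\Om})$, and then $\tau^N_{z_0}v=L_j\widehat\gamma_D v=0$ forces $v\in\dom(S)$.

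The concrete engine is a ``building-block'' Krein formula relating each $(-\Delta^D_{X_j,L_j,z_0}-zI_\Om)^{-1}$ to the Dirichlet resolvent: for $z$ with $z+z_0\notin\si(-\Delta_{D,\Om})$ and $z\in\rho(S_j)$, schematically (the precise signs matching the normalization of $\tau^N_{z_0}$ and $M^{(0)}_{D,N,\Om}$ fixed in \eqref{3.Aw1}, \eqref{3.Aw2}),
\begin{equation*}
\big(-\Delta^D_{X_j,L_j,z_0}-zI_\Om\big)^{-1}
=\big(-\Delta_{D,\Om}-(z+z_0)I_\Om\big)^{-1}
\pm\Pi_{z+z_0}\big[\mathcal{M}(z)-L_j\big]^{-1}\Pi_{\ol{z}+z_0}^{\,*},\qquad
\mathcal{M}(z):=M^{(0)}_{D,N,\Om}(z_0)-M^{(0)}_{D,N,\Om}(z+z_0),
\end{equation*}
where $\Pi_\zeta$ is the Poisson-type operator $\phi\mapsto u$ with $(-\Delta-\zeta)u=0$, $\widehat\gamma_D u=\phi$, and $\Pi_{\ol{z}+z_0}^{\,*}$ is, up to sign, a regularized Neumann trace of the Dirichlet resolvent. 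To prove it I would write $u:=(-\Delta^D_{X_j,L_j,z_0}-zI_\Om)^{-1}f$, peel off $v:=(-\Delta_{D,\Om}-(z+z_0)I_\Om)^{-1}f\in\dom(-\Delta_{D,\Om})$ so that $u-v\in\ker(-\Delta_{max}-(z+z_0)I_\Om)$, insert the boundary condition of \eqref{4.Aw3I}, use the defining property $\tau^N_{z_0}w=\mathcal{M}(z)\widehat\gamma_D w$ of the regularized Neumann trace on null solutions $w$ at energy $z+z_0$ together with $\widehat\gamma_D v=0$, and solve $\big(\mathcal{M}(z)-L_j\big)\widehat\gamma_D u=\tau^N_{z_0}v$ for $\widehat\gamma_D u$; invertibility of $\mathcal{M}(z)-L_j$ on $(N^{1/2}(\dOm))^*$ is exactly $z\in\rho(S_j)$. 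This is the step that genuinely needs the earlier sections: the extended trace $\widehat\gamma_D:\dom(-\Delta_{max})\to(N^{1/2}(\dOm))^*$ of \eqref{DJK-7}; the mapping properties of $\Pi_\zeta$ on the exotic spaces \eqref{DJK-9}, \eqref{DJK-10}; the fact that the \emph{difference} $\mathcal{M}(z)$ — unlike $M^{(0)}_{D,N,\Om}(\zeta)$ alone, which cannot be controlled in a Sobolev scale (cf.\ \eqref{M-M1}) — maps $(N^{1/2}(\dOm))^*$ boundedly into $N^{1/2}(\dOm)$; the interpretation \eqref{4.Aw4BI} of the boundary condition through the conjugate-dual pairing; and, so that $u-v\in H^2(\Om)$ admits a classical Neumann trace, the quasi-convexity of $\Om$.

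Granting this, the rest is algebra. Subtracting the building-block formula for $j=2$ from that for $j=1$ and applying the second resolvent identity to the middle factors gives
\begin{equation*}
\big[\mathcal{M}(z)-L_2\big]^{-1}-\big[\mathcal{M}(z)-L_1\big]^{-1}
=\big[\mathcal{M}(z)-L_2\big]^{-1}(L_2-L_1)\big[\mathcal{M}(z)-L_1\big]^{-1}.
\end{equation*}
Applying $\widehat\gamma_D$ to the $j=1$ formula (using $\widehat\gamma_D\Pi_{z+z_0}=I$ and $\widehat\gamma_D(-\Delta_{D,\Om}-(z+z_0)I_\Om)^{-1}=0$) identifies $\big[\mathcal{M}(z)-L_1\big]^{-1}\Pi_{\ol{z}+z_0}^{\,*}$, up to sign, with $\widehat\gamma_D(-\Delta^D_{X_1,L_1,z_0}-zI_\Om)^{-1}$; a matching Green-type identity (whose $z\leftrightarrow\ol{z}$ bookkeeping is governed by the conjugate-linearity built into \eqref{4.Aw4BI}) identifies the surviving left factor with $\big[\widehat\gamma_D(-\Delta^D_{X_1,L_1,z_0}-zI_\Om)^{-1}\big]^*$. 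Collecting terms yields \eqref{N-Bbb1.1I} with $M^D_{L_1,L_2,z_0}(z)=(L_2-L_1)\big[\mathcal{M}(z)-L_2\big]^{-1}\big[\mathcal{M}(z)-L_1\big]$, and the two displayed forms in \eqref{N-Bbb1.2I} coincide by the elementary identity $(L_2-L_1)\big[I_\Om+(\mathcal{M}(z)-L_2)^{-1}(L_2-L_1)\big]=(L_2-L_1)(\mathcal{M}(z)-L_2)^{-1}(\mathcal{M}(z)-L_1)$. The Herglotz property \eqref{N-Bbb1.3I} and the symmetry \eqref{N-Bbb1.4I} are then read off the equivalent form $M^D_{L_1,L_2,z_0}(z)=(L_2-L_1)+(L_2-L_1)(\mathcal{M}(z)-L_2)^{-1}(L_2-L_1)$, using analyticity of $z\mapsto M^{(0)}_{D,N,\Om}(z+z_0)$ on $\bbC_+$ and invertibility of $\mathcal{M}(z)-L_2$ there, the standard computation $\Im\big[(\mathcal{M}(z)-L_2)^{-1}\big]=-(\mathcal{M}(z)-L_2)^{-1}\big[\Im\mathcal{M}(z)\big](\mathcal{M}(\ol{z})-L_2)^{-1}$ combined with the Herglotz property of $M^{(0)}_{D,N,\Om}(\cdot)$ (which forces $\Im\mathcal{M}(z)\le0$ on $\bbC_+$) and self-adjointness of $L_2-L_1$, and the reflection symmetry $\mathcal{M}(z)^*=\mathcal{M}(\ol{z})$ — itself a consequence of $M^{(0)}_{D,N,\Om}(z_0)^*=M^{(0)}_{D,N,\Om}(z_0)$ and $M^{(0)}_{D,N,\Om}(w)^*=M^{(0)}_{D,N,\Om}(\ol{w})$. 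Finally \eqref{N-Bbb1.1I} extends from $\{z:z+z_0\notin\si(-\Delta_{D,\Om})\}$ to all of $\bbC\backslash(\si(S_1)\cup\si(S_2))$ by analytic continuation, both sides being analytic operator functions there.

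The hard part, I expect, is the building-block formula in the quasi-convex (rather than $C^{1,1}$) category, together with the attendant bookkeeping of dualities in the Gelfand-type scale $N^{1/2}(\dOm)\hookrightarrow L^2(\dOm;d^{n-1}\omega)\hookrightarrow(N^{1/2}(\dOm))^*$: since $\widehat\gamma_D$ maps $\dom(-\Delta_{max})$ only into $(N^{1/2}(\dOm))^*$ and no further (cf.\ \eqref{M-M1}), one must verify that every composition occurring above — $\mathcal{M}(z)$ after $\widehat\gamma_D$, $\Pi_\zeta$ acting on $(N^{1/2}(\dOm))^*$, the adjoints $\Pi_\zeta^{\,*}$ and $\big[\widehat\gamma_D(\cdots)^{-1}\big]^*$, and the reading of the boundary condition \eqref{4.Aw4BI} — lands in the correct member of this scale and is used consistently, and that the invertibility of $\mathcal{M}(z)-L_j$ is correctly matched to $z\in\rho(S_j)$. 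Once that infrastructure (essentially all of Sections~\ref{s2}--\ref{s8}, plus the quasi-convexity input $\dom(-\Delta_{D,\Om})\subset H^2(\Om)$) is in hand, the manipulations above are routine.
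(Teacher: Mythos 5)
Your proposal is correct in substance, and a large part of it coincides with the paper's own machinery: your ``building-block'' formula relating $\big(-\Delta^D_{X_j,L_j,z_0}-zI_\Om\big)^{-1}$ to the Dirichlet resolvent with middle factor $[\mathcal{M}(z)-L_j]^{-1}$, $\mathcal{M}(z)=M^{(0)}_{D,N,\Om}(z_0)-M^{(0)}_{D,N,\Om}(z+z_0)$, is exactly Theorem \ref{Th.Nak} combined with Theorem \ref{Th.DH3} (i.e.\ Corollary \ref{C.Nak}), and your identification of $[\mathcal{M}(z)-L_1]^{-1}\tau^N_{z_0}\big(-\Delta_{D,\Om}-(z+z_0)I_\Om\big)^{-1}$ with $\widehat\gamma_D\big(-\Delta^D_{X_1,L_1,z_0}-zI_\Om\big)^{-1}$ is \eqref{NaK4F} plus Theorem \ref{Th.DH3}. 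Where you genuinely differ is the assembly: the paper does not subtract two single-extension Krein formulas, but proves a direct two-extension resolvent relation by a Green-formula computation (Lemma \ref{l.Na-1}) and then applies $\tau^N_{z_0}+L_2\widehat\gamma_D$-type boundary operators together with Lemma \ref{LA.s} and Theorem \ref{Th.DH3} to extract \eqref{N-Bbb1.1I}. Your subtraction-plus-second-resolvent-identity route reaches the same middle operator (both expressions collapse to $(L_2-L_1)+(L_2-L_1)[\mathcal{M}(z)-L_2]^{-1}(L_2-L_1)$) with cleaner algebra, at the price of needing the single-extension formula for both $j=1,2$. One bookkeeping remark: your assembly, like the paper's own \eqref{Na-v5}, naturally produces the left factor as the adjoint of the trace of the resolvent at $\ol{z}$; the factor at $z$ appearing in the stated formula is a discrepancy you inherit from the paper, not a defect of your argument.

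The only place where you lean on facts the paper never establishes is the Herglotz step: you invoke analyticity of $z\mapsto \mathcal{M}(z)$ as a $\cB\big((N^{1/2}(\dOm))^*,N^{1/2}(\dOm)\big)$-valued map on $\bbC_+$ together with $\Im\,\mathcal{M}(z)\leq 0$ in the extended pairing. The paper instead proves analyticity and nonnegativity of $\Im\, M^D_{X_2,L_2,z_0}(z)$ directly (Theorem \ref{Th.DH2}: convergence of difference quotients in the graph norm of $\dom(-\Delta_{max})$, then a Green-formula computation) and transfers these properties to $M^D_{L_1,L_2,z_0}$ through the identity $M^D_{L_1,L_2,z_0}(z)=(L_2-L_1)\big[I_\Om+M^D_{X_2,L_2,z_0}(z)(L_2-L_1)\big]$ (in the conventions of the statement, where $M^D_{X_2,L_2,z_0}(z)=[\mathcal{M}(z)-L_2]^{-1}$), together with Theorem \ref{Th.DH} for the symmetry. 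Your two ingredients are provable by the very same techniques --- the sign of $\Im\,\mathcal{M}(z)$ is the computation \eqref{UU.4X6}/\eqref{NDD-5} run with $(N^{1/2}(\dOm))^*$ data via \eqref{T-GreenX}, and the analyticity of $\mathcal{M}(\cdot)$ can be obtained as in Theorem \ref{Th.DH2} using $\dom(-\Delta_{D,\Om})\subset H^2(\Om)$ --- but as written they are asserted rather than proved, and they carry essentially the same weight as Theorem \ref{Th.DH2}. Supplying them (or simply quoting the Herglotz property of $M^D_{X_2,L_2,z_0}$ and your equivalent form of the middle operator, which renders the $\mathcal{M}$-based argument unnecessary) closes the only real gap; everything else is sound.
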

%%%%%%%%%%%%%%%%%%%%%%%

Here $M^{(0)}_{D,N,\Om}(\cdot)$ denotes the Dirichlet-to-Neumann map for 
the Laplacian in the domain $\Omega$ described in detail in Section \ref{s5}.  

Finally, we emphasize that an application of the principal results of this paper to Weyl-type spectral asymptotics for perturbed Krein Laplacian on non-smooth domains $\Om$ has been presented 
in \cite{AGMT09}.  

We close this section by briefly elaborating on the most basic notational 
conventions used throughout this paper. Let $\cH$ be a separable complex 
Hilbert space, $(\dott,\dott)_{\cH}$ the inner product in $\cH$ 
(linear in the second factor), and $I_{\cH}$ the identity operator in $\cH$.
Next, let $T$ be a linear operator mapping (a subspace of) a Banach space 
into another, with $\dom(T)$, $\ran(T)$, and $\ker(T)$ denoting the domain, range, 
and the kernel (null space) of $T$. The closure of a closable operator 
$S$ is denoted by $\ol S$. The spectrum of a closed linear operator in $\cH$ will be 
denoted 
by $\sigma(\cdot)$. The Banach spaces of bounded and compact linear operators 
on $\cH$ are denoted by $\cB(\cH)$ and $\cB_\infty(\cH)$, respectively. 
Similarly, $\cB(\cX_1,\cX_2)$ and $\cB_\infty (\cX_1,\cX_2)$ 
will be used for bounded and compact operators between two Banach spaces 
$\cX_1$ and $\cX_2$. Moreover, $\cX_1 \hookrightarrow \cX_2$ denotes the 
continuous embedding of the Banach space $\cX_1$ into the Banach space 
$\cX_2$. In addition, $U_1 \dotplus U_2$ denotes the direct sum of the 
subspaces $U_1$ and $U_2$ of a Banach space $\cX$; and $V_1 \oplus V_2$ represents the orthogonal direct sum of the subspaces $V_j$, $j=1,2$, of a 
Hilbert space $\cH$. We shall employ the notation $\|\cdot\|_1 \approx \|\cdot\|_2$ 
in order to indicate that two norms $\|\cdot\|_1$ and $\|\cdot\|_2$ on a vector space 
are equivalent.  

Throughout this manuscript, if $X$ denotes a Banach space, $X^*$ 
denotes the {\it adjoint space} of continuous conjugate linear functionals 
on $X$, that is, the {\it conjugate dual space} of $X$ (rather than the usual 
dual space of continuous linear functionals 
on $X$). This avoids the well-known awkward distinction between adjoint 
operators in Banach and Hilbert spaces (cf., e.g., the pertinent discussion 
in \cite[p.\ 3, 4]{EE89}). 

We denote by $I_\Om$ the identity operator in $L^2(\Om;d^nx)$, and similarly, 
by $I_{\partial\Om}$ the identity operator in $L^2(\partial\Omega;d^{n-1}\omega)$.

Finally, a notational comment: For obvious reasons in connection 
with quantum mechanical applications, we will, with a slight abuse of 
notation, dub $-\Delta$ (rather than $\Delta$) as the ``Laplacian'' in 
this paper.

%%%%%%%%%%%%%%%%%%%%%%%%%%%%%%%%%%%%%%%%
%%%%%%%%%%%%%%%%%%%%%%%%%%%%%%%%%%%%%%%%
\section{Sobolev Spaces on Lipschitz and $C^{1,r}$ Domains}
\label{s2}
%%%%%%%%%%%%%%%%%%%%%%%%%%%%%%%%%%%%%%%%
%%%%%%%%%%%%%%%%%%%%%%%%%%%%%%%%%%%%%%%%

In this section we summarize some fundamental results on Lipschitz and $C^{1,r}$ domains $\Om \subset \bbR^n$, $n\in\bbN$, and on the corresponding Sobolev spaces 
$H^s(\Omega)$ and $H^r(\partial\Omega)$ needed in the remainder of this paper. 

Before we focus primarily on bounded Lipschitz domains, we briefly recall some basic facts in connection with Sobolev spaces corresponding to open sets 
$\Om\subseteq\bbR^n$, $n\in\bbN$: For an arbitrary $m\in\bbN\cup\{0\}$, we follow the customary way of defining $L^2$-Sobolev spaces of order $\pm m$ in $\Om$ as
\begin{align}\label{hGi-1}
H^m(\Om) &:=\big\{u\in L^2(\Om;d^nx)\,\big|\,\partial^\alpha u\in L^2(\Om;d^nx), 
\, 0 \leq |\alpha|\leq m\big\}, \\
H^{-m}(\Om) &:=\biggl\{u\in\cD^{\prime}(\Om)\,\bigg|\,u=\sum_{0 \leq |\alpha|\leq m}
\partial^\alpha u_{\alpha}, \mbox{ with }u_\alpha\in L^2(\Om;d^nx), 
\,  0 \leq |\alpha|\leq m\biggr\},
\label{hGi-2}
\end{align}
equipped with natural norms (cf., e.g., \cite[Ch.\ 3]{AF03}, \cite[Ch.\ 1]{Ma85}). 
Here $\cD^\prime(\Om)$ denotes the usual set of distributions on 
$\Omega\subseteq \bbR^n$ (i.e., the dual of $\cD(\Om)$, the space of test functions 
$C_0^\infty (\Om)$, equipped with the usual inductive limit topology). Then we set
\begin{equation}\label{hGi-3}
H^m_0(\Om):=\,\mbox{the closure of $C^\infty_0(\Om)$ in $H^m(\Om)$}, 
\quad m \in \bbN\cup\{0\}.
\end{equation}
As is well-known, all three spaces above are Banach, reflexive and,
in addition,
\begin{equation} \label{hGi-4}
\bigl(H^m_0(\Om)\bigr)^*=H^{-m}(\Om).
\end{equation} 
Again, see, for instance, \cite[Ch.\ 3]{AF03}, \cite[Ch.\ 1]{Ma85}.

We recall that an open, nonempty set $\Omega\subseteq\bbR^n$ is
called a {\it Lipschitz domain} if the following property holds: 
There exists an open covering $\{{\mathcal O}_j\}_{1\leq j\leq N}$
of the boundary $\partial\Omega$ of $\Om$ such that for every
$j\in\{1,...,N\}$, ${\mathcal O}_j\cap\Omega$ coincides with the portion
of ${\mathcal O}_j$ lying in the over-graph of a Lipschitz function
$\varphi_j:\bbR^{n-1}\to\bbR$ $($considered in a new system of coordinates
obtained from the original one via a rigid motion$)$. The number
$\max\,\{\|\nabla\varphi_j\|_{L^\infty (\bbR^{n-1};d^{n-1}x')^{n-1}}\,|\,1\leq j\leq N\}$
is said to represent the {\it Lipschitz character} of $\Omega$.

As regards $L^2$-based Sobolev spaces of fractional order $s\in\bbR$,
on arbitrary Lipschitz domains $\Om\subseteq\bbR^n$, we introduce
\begin{align}\label{HH-h1}
H^{s}(\bbR^n) &:=\bigg\{U\in \cS^\prime(\bbR^n)\,\bigg|\,
\norm{U}_{H^{s}(\bbR^n)}^2 = \int_{\bbR^n}d^n\xi\,
\big|\hatt U(\xi)\big|^2\big(1+\abs{\xi}^{2s}\big)<\infty \bigg\},
\\
H^{s}(\Om) &:=\big\{u\in \cD^\prime(\Om)\,\big|\,u=U|_\Om\text{ for some }
U\in H^{s}(\bbR^n)\big\} = R_{\Om} \, H^s(\bbR^n),
\label{HH-h2}
\end{align} 
where $R_{\Om}$ denotes the restriction operator (i.e., $R_{\Om} \, U=U|_{\Om}$, 
$U\in H^{s}(\bbR^n)$),   
$\cS^\prime(\bbR^n)$ is the space of tempered distributions on $\bbR^n$,
and $\hatt U$ denotes the Fourier transform of $U\in\cS^\prime(\bbR^n)$.
These definitions are consistent with \eqref{hGi-1}, \eqref{hGi-2}. 
Next, retaining that $\Om\subseteq \bbR^n$ is an arbitrary Lipschitz domain, we introduce
\begin{equation}\label{incl-xxx}
H^{s}_0(\Omega):=\big\{u\in H^{s}(\bbR^n)\,\big|\,\supp (u)\subseteq\ol{\Omega}\big\}, 
\quad s\in\bbR,
\end{equation} 
equipped with the natural norm induced by $H^{s}(\bbR^n)$. The space 
$H^{s}_0(\Omega)$ is reflexive, being a closed subspace of $H^{s}(\bbR^n)$. 
Finally, we introduce for all $s\in\bbR$,
\begin{align} 
\mathring{H}^{s} (\Omega) &= \mbox{the closure of $C^\infty_0(\Omega)$ in $H^s(\Omega)$},   \\
H^{s}_{z} (\Om) &= R_{\Om} \, H^{s}_0(\Omega).
\end{align}

Assuming from now on that $\Om\subset\bbR^n$ is a Lipschitz domain with a compact boundary, we recall the existence of a universal linear extension operator 
$E_{\Om}:\cD^\prime (\Om) \to \cS^\prime (\bbR^n)$ such that 
$E_{\Om}: H^s(\Om) \to H^s(\bbR^n)$ is bounded for all $s\in\bbR$, and 
$R_{\Om}  E_{\Om}=I_{H^s(\Om)}$ (cf.\ \cite{Ry99}). If $\widetilde{C_0^\infty(\Om)}$ denotes the set of $C_0^\infty(\Om)$-functions extended to all of $\bbR^n$ by setting functions zero outside of $\Omega$, then for all $s\in\bbR$, 
$\widetilde{C_0^\infty(\Om)} \hookrightarrow H^s_0(\Om)$ densely.  

Moreover, one has
\begin{equation}\label{incl-Ya}
\big(H^{s}_0(\Omega)\big)^*=H^{-s}(\Omega),  \quad s\in\bbR.
\end{equation}
(cf., e.g., \cite{JK95}) consistent with \eqref{hGi-3}, and also, 
\begin{equation}
\big(H^s(\Om)\big)^* = H^{-s}_0(\Om),  \quad s\in\bbR, 
\end{equation}
in particular, $H^s(\Om)$ is a reflexive Banach space. We shall also use the fact that  
for a Lipschitz domain $\Om\subset\bbR^n$ with compact boundary, the space 
$\mathring{H}^{s} (\Omega)$ 
satisfies 
\begin{equation}\label{incl-Yb}
\mathring{H}^{s} (\Omega) = H^s_{z}(\Om)
\, \mbox{ if } \, s > -1/2,\,\,s\notin{\textstyle\big\{{\frac12}}+\bbN_0\big\}. 
\end{equation}
For a Lipschitz domain $\Omega\subseteq\bbR^n$ with compact boundary it is also known that
\begin{equation}\label{dual-xxx}
\bigl(H^{s}(\Omega)\bigr)^*=H^{-s}(\Omega), \quad - 1/2 <s< 1/2.
\end{equation}
See \cite{Tr02} for this and other related properties. Throughout this paper 
we agree to use the {\it adjoint} (rather than the dual) space $X^*$ of a Banach space 
$X$. 

From this point on (and unless explicitly stated otherwise) we will always make at least the following assumption on the set $\Omega$. (This will be strengthened later on in Section \ref{s8}):  

%%%%%%%%%%%%%%%%%%%%%%%%%%%%%%%%%%%%%%%
\begin{hypothesis}\lb{h2.1}
Let $n\in\bbN$, $n\geq 2$, and assume that $\Om\subset{\bbR}^n$ is
a bounded Lipschitz domain.
\end{hypothesis}
%%%%%%%%%%%%%%%%%%%%%%%%%%%%%%%%%%%%%%%

At times we will invoke also the notion of $C^{1,r}$ domains and then introduce 
the following stronger hypothesis on $\Om$: 

%%%%%%%%%%%%%%%%%%%%%%%%%%%%%%%%%%%%%%%
\begin{hypothesis}\lb{h2.1C}
Let $n\in\bbN$, $n\geq 2$, and assume that $\Om\subset{\bbR}^n$ is
a bounded domain of class $C^{1,r}$, $r\in(1/2,1)$. 
\end{hypothesis}
%%%%%%%%%%%%%%%%%%%%%%%%%%%%%%%%%%%%%%%

The definition of a domain of class $C^{1,r}$, $0<r<1$, is similar to that 
of a Lipschitz domain, except that, this time, the functions $\varphi_j$ 
used to locally describe the boundary of $\Omega$ are such that 
$\nabla\varphi_j$ is H\"older continuous of order $r$.

Parenthetically, we note that a $C^{1,r}$ domain $\Omega\subset\bbR^n$, 
with $r\in(0,1)$, is characterized by the following set of conditions: 
\begin{align}\lb{FLP-1}
\begin{array}{l}
(i)\,\,
\Omega\mbox{ is of finite local perimeter}, (\mbox{i.e., $\nabla\chi_{\Om}$ 
is a Radon measure in $\bbR^n$}), 
\\
(ii)\,\,
\mbox{$\Om$ lies on only one side of its topological boundary}, (\mbox{i.e., }
\partial\ol{\Om}=\partial\Om), 
\\
(iii)\,\,
\mbox{the outward unit normal (in the sense of Federer \cite{Fe69}) belongs to } 
C^r(\dOm).
\end{array}
\end{align}
For a more detailed discussion in this regard, the reader is referred to 
\cite{HMT07}. 

The classical theorem of Rademacher on almost everywhere differentiability of Lipschitz
functions ensures that for any  Lipschitz domain $\Omega$, the
surface measure $d^{n-1} \omega$ is well-defined on  $\partial\Omega$ and
that there exists an outward  pointing unit normal vector $\nu$ at
almost every point of $\partial\Omega$. 

In the case where $\Omega\subset\bbR^n$ is the domain lying above the graph of 
a function $\varphi\colon\bbR^{n-1}\to\bbR$ of class $C^{1,r}$, $r\in(0,1)$, we 
define the Sobolev space $H^s(\partial\Omega)$ 
for $0\leq s<1+r$, as the space 
of functions $f\in L^2(\partial\Omega;d^{n-1}\omega)$ with the property that 
$f(x',\varphi(x'))$, as a function of $x'\in\bbR^{n-1}$, belongs to 
$H^s(\bbR^{n-1})$. This definition is easily adapted to the case where 
$\Omega$ is a domain of class $C^{1,r}$, $r\in(0,1)$, whose boundary is compact,
by using a smooth partition of unity. Finally, for $-1-r<s<0$, we set
$H^s(\partial\Omega)=\big(H^{-s}(\partial\Omega)\big)^*$. 
The same construction concerning $H^s(\partial\Omega)$ applies in the case where  
$\Om\subset\bbR^n$ is a Lipschitz domain (i.e., $\varphi\colon\bbR^{n-1}\to\bbR$
is only Lipschitz) provided $0\le s \le 1$. In this scenario we set 
\begin{equation}
H^s(\dOm) = \big(H^{-s}(\dOm)\big)^*, \quad -1 \le s \le 0.   \lb{A.6}
\end{equation}
It is useful to observe that, in the Lipschitz upper-graph case we are 
currently considering, this entails ($\approx$ denoting equivalent norms) 
\begin{equation}\label{Pk-D2}
\|f\|_{H^{-s}(\partial\Omega)}\approx
\|\sqrt{1+|\nabla\varphi(\cdot)|^2}f(\cdot,\varphi(\cdot))\|
_{H^{-s}(\bbR^{n-1})},\quad 0\leq s\leq 1.
\end{equation}

To define $H^s(\dOm)$, $0\leq s \le 1$, when $\Om$ is a Lipschitz domain with 
compact boundary, we use a smooth partition of unity to reduce matters to the 
graph case. More precisely, if $0\leq s\leq 1$ then $f\in H^s(\partial\Omega)$ 
if and only if the assignment 
${\mathbb{R}}^{n-1}\ni x'\mapsto (\psi f)(x',\varphi(x'))$ is in 
$H^s({\mathbb{R}}^{n-1})$ whenever $\psi\in C^\infty_0({\mathbb{R}}^n)$
and $\varphi\colon {\mathbb{R}}^{n-1}\to{\mathbb{R}}$ is a Lipschitz function
with the property that if $\Sigma$ is an appropriate rotation and
translation of $\{(x',\varphi(x'))\in\bbR^n \,|\,x'\in{\mathbb{R}}^{n-1}\}$, 
then $(\supp (\psi) \cap\partial\Omega)\subset\Sigma$ (this appears to 
be folklore, but a proof will appear in \cite[Proposition 2.4]{MM07}). 
Then Sobolev spaces with a negative amount of smoothness are defined as 
in \eqref{A.6} above. 

From the above characterization of $H^s(\partial\Omega)$ it follows that 
any property of Sobolev spaces (of order $s\in[-1,1]$) defined in Euclidean 
domains, which are invariant under multiplication by smooth, compactly 
supported functions as well as composition by bi-Lipschitz diffeomorphisms, 
readily extends to the setting of $H^s(\partial\Omega)$ (via localization and
pullback). As a concrete example, for each Lipschitz domain $\Omega$ 
with compact boundary, one has  
\begin{equation} \label{EQ1}
H^s(\partial\Omega)\hookrightarrow L^2(\partial\Omega;d^{n-1} \omega)
\, \text{ compactly if }\,0<s\leq 1.  
\end{equation}
For additional background 
information in this context we refer, for instance, to \cite{Au04}, 
\cite{Au06}, \cite[Chs.\ V, VI]{EE89}, \cite[Ch.\ 1]{Gr85}, 
\cite[Ch.\ 3]{Mc00}, \cite[Sect.\ I.4.2]{Wl87}.

For a Lipschitz domain $\Om\subset\bbR^n$ with compact boundary, an equivalent 
definition of the Sobolev space $H^1(\partial\Omega)$ is the collection of 
functions in $L^2(\partial\Omega;d^{n-1}\omega)$ with the property that the
(pointwise, Euclidean) norm of their tangential gradient belongs to 
$L^2(\partial\Omega;d^{n-1}\omega)$. To make this precise, 
consider the first-order tangential derivative operators 
$\partial/\partial\tau_{j,k}$, $1\leq j,k\leq n$, acting on a function 
$\psi$ of class $C^1$ in a neighborhood of $\partial\Omega$ by 
\begin{eqnarray}\label{def-TAU}
\partial\psi/\partial\tau_{j,k}=\nu_j(\partial_k\psi)\Bigl|_{\partial\Omega}
-\nu_k(\partial_j\psi)\Bigl|_{\partial\Omega}.
\end{eqnarray}
For every $f\in L^1(\partial\Omega)$ define the functional 
$\partial f/\partial\tau_{j,k}$ by setting 
\begin{eqnarray}\label{IBP-tau}
\partial f/\partial\tau_{j,k}:C^1({\mathbb{R}}^{n})\ni\psi\mapsto
\int_{\partial\Omega}d^{n-1}\omega\,f\,(\partial\psi/\partial\tau_{k,j}). 
\end{eqnarray}
When $f\in L^1(\partial\Omega;d^{n-1}\omega)$ has 
$\partial f/\partial\tau_{j,k}\in L^1(\partial\Omega;d^{n-1}\omega)$, 
the following integration by parts formula holds: 
\begin{eqnarray}\label{IBP-t2}
\int_{\partial\Omega} d^{n-1}\omega\,f\,(\partial\psi/\partial\tau_{k,j})
=\int_{\partial\Omega} d^{n-1}\omega\,(\partial f/\partial\tau_{j,k})\,\psi, 
\quad \psi\in C^1({\mathbb{R}}^{n}).
\end{eqnarray}
One then has the Sobolev-type description of $H^1(\partial\Omega)$: 
\begin{equation}\label{M1.1}
H^{1}(\dOm) = \big\{f\in\LdOm \,\big|\, \partial f/\partial\tau_{j,k}
\in\LdOm, \; j,k = 1,\dots,n\big\}, 
\end{equation}
with 
\begin{equation}\label{M1.1y}
\|f\|_{H^{1}(\dOm)}\approx\|f\|_{\LdOm}+\sum_{j,k = 1}^n
\|\partial f/\partial\tau_{j,k}\|_{\LdOm}, 
\end{equation}
or equivalently,
\begin{align}
H^{1}(\dOm) &= \bigg\{f\in\LdOm \,\bigg|\, \, 
\text{there exists a constant $c>0$ such that}   \no \\ 
& \hspace*{6.4cm} \;  \text{for every $v\in C_0^\infty(\bbR^n)$,}   \lb{A.64} \\
& \qquad \;\;\, \bigg|\int_\dOm d^{n-1} \omega f\,\partial
v/\partial\tau_{j,k}\bigg| \leq c \norm{v}_{\LdOm},\;
j,k=1,\dots,n\bigg\}. \no 
\end{align}

We also point out that if $\Omega\subset\bbR^n$ is a bounded
Lipschitz domain, then for any $j,k\in\{1,...,n\}$, the operator 
\begin{equation}\label{Pf-2}
\partial/\partial\tau_{j,k}:H^s(\partial\Omega)\to H^{s-1}(\partial\Omega),
\quad 0\leq s\leq 1, 
\end{equation}
is well-defined, linear, and bounded. This is proved by interpolating 
the case $s=1$ and its dual version. In fact, the following more general
result (extending \eqref{M1.1}) holds: 

%%%%%%%%%%%%%%%%%%%%
\begin{lemma}\label{Lg-T}
Assume Hypothesis \ref{h2.1}. Then for every $s\in[0,1]$, 
\begin{equation}\label{Pf-2.3}
H^s(\partial\Omega)=\{f\in L^2(\partial\Omega;d^{n-1}\omega)\,|\,
\partial f/\partial\tau_{j,k}\in H^{s-1}(\partial\Omega), \, 
1\leq j,k\leq n\}
\end{equation}
and 
\begin{equation}\label{Pf-2.4}
\|f\|_{H^s(\partial\Omega)}\approx \|f\|_{L^2(\partial\Omega;d^{n-1}\omega)}
+\sum_{j,k=1}^n\|\partial f/\partial\tau_{j,k}\|_{H^{s-1}(\partial\Omega)}.
\end{equation}
\end{lemma}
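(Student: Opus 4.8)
The statement comprises the set identity \eqref{Pf-2.3} together with the norm equivalence \eqref{Pf-2.4}, and one of the two inclusions is immediate. If $f\in H^s(\partial\Omega)$, then $\|f\|_{L^2(\partial\Omega;d^{n-1}\omega)}\le\|f\|_{H^s(\partial\Omega)}$ trivially, while the mapping property \eqref{Pf-2} (already established for $0\le s\le 1$ by interpolating the case $s=1$ with its dual version) gives $\partial f/\partial\tau_{j,k}\in H^{s-1}(\partial\Omega)$ with $\|\partial f/\partial\tau_{j,k}\|_{H^{s-1}(\partial\Omega)}\le C\|f\|_{H^s(\partial\Omega)}$. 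Hence $H^s(\partial\Omega)$ is contained in the right-hand side of \eqref{Pf-2.3} and the ``$\lesssim$'' half of \eqref{Pf-2.4} holds. Thus the whole substance of the lemma is the converse implication: if $f\in L^2(\partial\Omega;d^{n-1}\omega)$ has $\partial f/\partial\tau_{j,k}\in H^{s-1}(\partial\Omega)$ for all $j,k\in\{1,\dots,n\}$, then $f\in H^s(\partial\Omega)$ with $\|f\|_{H^s(\partial\Omega)}\le C\bigl(\|f\|_{L^2(\partial\Omega;d^{n-1}\omega)}+\sum_{j,k}\|\partial f/\partial\tau_{j,k}\|_{H^{s-1}(\partial\Omega)}\bigr)$.

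I would obtain this converse estimate by interpolation between the endpoints $s=0$ and $s=1$. At $s=0$ there is nothing to prove: by \eqref{Pf-2} the condition $\partial f/\partial\tau_{j,k}\in H^{-1}(\partial\Omega)$ is automatic for $f\in L^2$, so the right-hand side of \eqref{Pf-2.3} is all of $L^2(\partial\Omega;d^{n-1}\omega)$, and in \eqref{Pf-2.4} the $H^{-1}$-contributions are dominated by $\|f\|_{L^2(\partial\Omega;d^{n-1}\omega)}$. At $s=1$ the assertion is exactly \eqref{M1.1}, \eqref{M1.1y}. To pass to $s\in(0,1)$, I would realize the right-hand side of \eqref{Pf-2.3} as a subspace of a Hilbert space: put $Y_s:=L^2(\partial\Omega;d^{n-1}\omega)\oplus\bigoplus_{1\le j,k\le n}H^{s-1}(\partial\Omega)$ and let $\iota_s\colon f\mapsto\bigl(f,(\partial f/\partial\tau_{j,k})_{j,k}\bigr)$, which by the easy direction is bounded from $H^s(\partial\Omega)$ into $Y_s$ with an $s$-independent norm and, at $s\in\{0,1\}$, is a Banach-space isomorphism onto its (closed) range. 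Using $[L^2(\partial\Omega),H^1(\partial\Omega)]_s=H^s(\partial\Omega)$ and $[H^{-1}(\partial\Omega),L^2(\partial\Omega)]_s=H^{s-1}(\partial\Omega)$ — standard for Sobolev spaces on a Lipschitz boundary, the second following from the first by duality — one gets $[Y_0,Y_1]_s=Y_s$, so the claim for $s\in(0,1)$ reduces to showing that $\iota_s\bigl(H^s(\partial\Omega)\bigr)$ coincides, with equivalent norms, with the interpolation space $[\iota_0(L^2(\partial\Omega)),\iota_1(H^1(\partial\Omega))]_s$ inside $Y_s$.

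The main obstacle is precisely this last step: that the ``graph-type'' characterization in \eqref{Pf-2.3} is itself stable under interpolation. It is not a formality, because the orthogonal projection of $Y_s$ onto $\iota_s\bigl(H^s(\partial\Omega)\bigr)$ depends on $s$ while the naive coordinate projection is unbounded, and because the only concrete device available at the endpoints — localizing by a smooth partition of unity and pulling back to a boundary chart — converts the operators $\partial/\partial\tau_{j,k}$ into combinations of Euclidean derivatives with coefficients built from the components of the outward unit normal, which on a merely Lipschitz boundary are only $L^\infty(\partial\Omega;d^{n-1}\omega)$ and so fail to multiply the negative-order spaces $H^{s-1}(\partial\Omega)$, $s\in(0,1)$. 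To circumvent this I would construct a bounded retraction $\pi_s\colon Y_s\to H^s(\partial\Omega)$ with $\pi_s\circ\iota_s=\mathrm{id}$, given by a single formula valid along the whole scale, built from suitable (fractional) powers of the coercive ``tangential Laplace--Beltrami'' operator $\cA:=I_{\partial\Om}+\sum_{j,k}(\partial/\partial\tau_{j,k})^{\mathsf t}(\partial/\partial\tau_{j,k})$ on $L^2(\partial\Omega;d^{n-1}\omega)$: by \eqref{M1.1y} and Lax--Milgram, $\cA\colon H^1(\partial\Omega)\to H^{-1}(\partial\Omega)$ is an isomorphism with $\langle\cA f,f\rangle\approx\|f\|_{H^1(\partial\Omega)}^2$, and $\dom(\cA^{s/2})=H^s(\partial\Omega)$ with $\|\cA^{s/2}f\|_{L^2(\partial\Omega;d^{n-1}\omega)}\approx\|f\|_{H^s(\partial\Omega)}$ for $s\in[0,1]$; interpolating $\pi_0$ and $\pi_1$ (which have to be arranged to be restrictions of one and the same operator on $Y_0+Y_1$, the delicate point being that the scale $H^t(\partial\Omega)$ does not extend below $t=-1$ on a Lipschitz boundary) should then yield the required lower bound for $s\in(0,1)$. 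A more hands-on alternative, bypassing $\cA$, is to estimate the real-interpolation $K$-functional $K\bigl(t,f;L^2(\partial\Omega),H^1(\partial\Omega)\bigr)$ directly, by means of a family of smoothing operators $\{R_t\}_{t>0}$ on $\partial\Omega$ that almost commute with the $\partial/\partial\tau_{j,k}$ and satisfy $\|R_t\|_{L^2\to L^2}=O(1)$, $\|R_t\|_{H^{-1}\to L^2}=O(1/t)$, $\|I_{\partial\Om}-R_t\|_{H^1\to L^2}=O(t)$, using them to transport a near-optimal $(H^{-1}(\partial\Omega),L^2(\partial\Omega))$-splitting of each $\partial f/\partial\tau_{j,k}$ into an $(L^2(\partial\Omega),H^1(\partial\Omega))$-splitting of $f$ itself. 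This is where the roughness of $\partial\Omega$ is felt most directly, and where the bulk of the work resides.
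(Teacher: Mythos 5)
Your reduction is fine as far as it goes: the inclusion of $H^s(\partial\Omega)$ into the right-hand side of \eqref{Pf-2.3}, together with the ``$\lesssim$'' half of \eqref{Pf-2.4}, is immediate from \eqref{Pf-2}, and the endpoints are respectively trivial ($s=0$) and \eqref{M1.1}, \eqref{M1.1y} ($s=1$). But the converse inclusion for $0<s<1$ -- which is the entire content of the lemma -- is never actually proved in your text. You correctly reduce it to the statement that $\iota_s\bigl(H^s(\partial\Omega)\bigr)$ coincides with the interpolation space of the two endpoint subspaces inside $Y_s$, and you correctly note that interpolation of closed subspaces is not a formality; at that point the argument stops, and neither of the two devices you offer is constructed. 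For the retraction: any formula of the type $\pi\bigl(f,(g_{j,k})\bigr)=(I_{\partial\Om}-\Delta_{\partial\Omega})^{-1}\bigl(f+c\sum_{j,k}\partial g_{j,k}/\partial\tau_{k,j}\bigr)$ needs either $\partial/\partial\tau_{k,j}\colon H^{s-1}(\partial\Omega)\to H^{s-2}(\partial\Omega)$ or fractional powers such as $(I_{\partial\Om}-\Delta_{\partial\Omega})^{(s-2)/2}$ landing back in $H^s(\partial\Omega)$, i.e.\ mapping properties outside the window $[-1,1]$ in which the Sobolev scale on a Lipschitz surface is defined and in which Theorem \ref{TH-GMMM} operates; this is exactly the ``delicate point'' you flag and then leave unresolved (and the more direct pairing manipulations, e.g.\ writing $\Lambda^{s}=\Lambda^{s-2}(I_{\partial\Om}-\Delta_{\partial\Omega})$ and integrating by parts, fail for the same reason: the resulting dualities do not match for any $s\in(0,1)$ without commutator estimates between $\Lambda^{s-1}$ and $\partial/\partial\tau_{j,k}$, which on a Lipschitz boundary are serious harmonic analysis, not bookkeeping). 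For the $K$-functional route: a family $R_t$ with the stated bounds whose commutators $[R_t,\partial/\partial\tau_{j,k}]$ are under control would have to cope with multiplication by the components of $\nu$, which are merely $L^\infty$ and do not multiply $H^{t}(\partial\Omega)$ for $t<0$ (compare Lemma \ref{lA.6}, where even a $H^{1/2}$-multiplier statement requires $C^{1,r}$, $r>1/2$); you neither construct such a family nor estimate these commutators.

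So, concretely, the gap is the hard inclusion of \eqref{Pf-2.3} together with the ``$\gtrsim$'' half of \eqref{Pf-2.4} for $s\in(0,1)$: your proposal identifies the obstruction but does not overcome it, and within the tools quoted (the scale $H^t(\partial\Omega)$, $|t|\le 1$, the mapping property \eqref{Pf-2}, and Theorem \ref{TH-GMMM}) the plan cannot be closed as written. Note also that the paper itself does not prove Lemma \ref{Lg-T} in-house but defers to \cite{GM08}; the point of that reference is precisely that this converse requires genuine quantitative input on the Lipschitz surface (functional-calculus/extension-of-scale arguments for the boundary Laplace--Beltrami operator, or harmonic-extension and layer-potential estimates), rather than a soft interpolation of the two endpoint characterizations -- which is consistent with your own closing admission that ``the bulk of the work resides'' in the step you have not supplied.
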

%%%%%%%%%%%%%%%%%%%%%%

A proof can be found in \cite{GM08}, where the following result is also 
established. 

%%%%%%%%%%%%%%%%%%%
\begin{lemma}\label{K-t1}
Assume Hypothesis \ref{h2.1C}. Then 
\begin{equation}\lb{M.2}
H^{3/2}(\dOm)=\{f\in H^1(\dOm)\,|\,\partial f/\partial\tau_{j,k}\in 
H^{1/2}(\dOm),\,\,1\leq j,k\leq n\}
\end{equation}
and
\begin{equation}\lb{M.2x}
\|f\|_{H^{3/2}(\dOm)}\approx\|f\|_{H^1(\dOm)}
+\sum_{j,k=1}^n\|\partial f/\partial\tau_{j,k}\|_{H^{1/2}(\dOm)}. 
\end{equation}
\end{lemma}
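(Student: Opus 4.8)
\textbf{Proof proposal for Lemma \ref{K-t1}.}

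The plan is to deduce the $C^{1,r}$ characterization of $H^{3/2}(\dOm)$ from the already-available Lipschitz result, Lemma \ref{Lg-T}, by an interpolation/lifting argument combined with the graph-localization machinery set up earlier in this section. First, since both sides of \eqref{M.2} are defined through a smooth partition of unity subordinate to a boundary cover by $C^{1,r}$ graph patches, and since multiplication by functions in $C_0^\infty$ and composition with $C^{1,r}$ bi-Lipschitz diffeomorphisms preserve membership in $H^s(\bbR^{n-1})$ for $|s|\le 1+r$ (here $s=3/2 < 1+r$ because $r>1/2$), I would reduce to the model situation where $\Om$ is the region above the graph of a single $C^{1,r}$ function $\varphi\colon\bbR^{n-1}\to\bbR$ and the functions in question are compactly supported. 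In that setting $H^s(\dOm)$ is, by definition, the pullback of $H^s(\bbR^{n-1})$ under $x'\mapsto (x',\varphi(x'))$.

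Next I would establish the two inclusions in \eqref{M.2}. For the inclusion ``$\subseteq$'', note that $f\in H^{3/2}(\dOm)$ certainly lies in $H^1(\dOm)$, so by the $s=1$ case of \eqref{Pf-2} (or Lemma \ref{Lg-T}) each $\partial f/\partial\tau_{j,k}$ is a well-defined element of $L^2(\dOm)$; the point is to upgrade this to $H^{1/2}(\dOm)$. Using the definition \eqref{def-TAU}, in the graph chart $\partial/\partial\tau_{j,k}$ is, modulo multiplication by components of the normal $\nu$ (which are merely $C^r$, but $C^r$ multiplication is bounded on $H^{1/2}$ since $1/2<r$), a first-order constant-coefficient tangential derivative applied to $f(x',\varphi(x'))$. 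Differentiating $H^{3/2}(\bbR^{n-1})$ once in a tangential direction lands in $H^{1/2}(\bbR^{n-1})$, and the chain-rule terms involving $\partial_j\varphi$ are bounded multipliers on $H^{1/2}$ because $\nabla\varphi\in C^r$ with $r>1/2$; pulling back gives $\partial f/\partial\tau_{j,k}\in H^{1/2}(\dOm)$, together with the estimate controlling the right-hand side of \eqref{M.2x} by the left. For the reverse inclusion ``$\supseteq$'', suppose $f\in H^1(\dOm)$ with all $\partial f/\partial\tau_{j,k}\in H^{1/2}(\dOm)$. Working in the chart, $g(x'):=f(x',\varphi(x'))\in H^1(\bbR^{n-1})$ (compactly supported), and the tangential derivative hypotheses translate, after inverting the bounded $C^r$ multiplier matrix relating the $\partial/\partial\tau_{j,k}$ to the Euclidean partials $\partial_{x'_m}$, into $\partial_{x'_m} g\in H^{1/2}(\bbR^{n-1})$ for $m=1,\dots,n-1$; since $g\in H^1\subset L^2$ as well, this forces $g\in H^{3/2}(\bbR^{n-1})$ with the corresponding norm bound, hence $f\in H^{3/2}(\dOm)$. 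Summing the local pieces via the partition of unity yields \eqref{M.2} and the norm equivalence \eqref{M.2x}.

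The main obstacle — and the reason the hypothesis $r>1/2$ is exactly what is needed — is the handling of the variable, only H\"older-continuous coefficients: the normal components $\nu_j$ and the derivatives $\partial_k\varphi$ that appear when passing between the intrinsic tangential operators $\partial/\partial\tau_{j,k}$ and the flat Euclidean derivatives in the chart. One must verify that multiplication by a function in $C^r(\dOm)$ is a bounded operator on $H^{1/2}(\dOm)$ (it is, precisely when $r>1/2$, by the standard product/multiplier estimates for Sobolev spaces on Lipschitz boundaries — e.g.\ via the characterization of $H^{1/2}$ through the Gagliardo seminorm, or by interpolating the bounded actions on $L^2$ and on $H^1$), and that the ``tangential coframe'' matrix is invertible with entries enjoying the same regularity so that the inverse is again a bounded $H^{1/2}$-multiplier. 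Once these multiplier facts are in hand, the rest is the routine bookkeeping of localization, pullback, and reassembly already used implicitly in the derivation of Lemma \ref{Lg-T}; indeed, as the excerpt notes, this result is recorded in \cite{GM08}, and the argument there proceeds along exactly these lines.
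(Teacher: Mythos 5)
Your argument is essentially the right one, and in fact it cannot be checked against an in-text proof: the paper does not prove Lemma \ref{K-t1} itself but refers for it (as for Lemma \ref{Lg-T}) to \cite{GM08}. Your reconstruction is consistent with the toolkit the paper sets up around this lemma: the chart/pullback definition of $H^s(\dOm)$ for $C^{1,r}$ domains with $s<1+r$, the identities linking $\partial/\partial\tau_{j,k}$, $\nabla_{tan}$ and $\nu$ (cf.\ \eqref{def-TAU}, \eqref{EEs-5}, \eqref{D-pv4}), the fact that $\|g\|_{H^{s+1}(\bbR^{n-1})}\approx\|g\|_{L^2}+\sum_m\|\partial_m g\|_{H^s}$, and the $C^r$-multiplier bound on $H^{1/2}$, which is exactly Lemma \ref{lA.6} and is where $r>1/2$ enters; indeed the paper's own proof of Lemma \ref{Lk-t1} runs the same circle of ideas in the opposite direction, taking Lemma \ref{K-t1} as input. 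One caveat: your parenthetical alternative justification of the multiplier fact, ``interpolating the bounded actions on $L^2$ and on $H^1$,'' does not work, since multiplication by a function that is merely $C^r$ with $r<1$ is \emph{not} bounded on $H^1(\dOm)$ (the product rule would require a gradient of the multiplier); the correct elementary route is the Gagliardo-seminorm estimate you mention first, where the H\"older term produces the kernel $|\xi-\eta|^{2r-(n-1)-1}$, integrable near the diagonal precisely because $2r>1$ — or simply to invoke Lemma \ref{lA.6} as a black box, as the paper does. Also, the phrase ``inverting the multiplier matrix'' is looser than needed: one passes from the $\partial/\partial\tau_{j,k}$ to the flat partials of $g(x')=f(x',\varphi(x'))$ via $(\nabla_{tan}f)_j=\sum_k\nu_k\,\partial f/\partial\tau_{k,j}$ and $\partial_m g=(\nabla_{tan}f)_m+(\partial_m\varphi)(\nabla_{tan}f)_n$, so only multiplication by the $C^r$ functions $\nu_k$ and $\partial_m\varphi$ is used, no inversion. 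With these small repairs your proof is complete and matches the intended argument.
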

%%%%%%%%%%%%%%%%%%%

In the sequel, the sesquilinear form
\begin{equation}\label{PPL}
\langle \dott, \dott \rangle_{s}={}_{H^{s}(\dOm)}\langle\dott,\dott
\rangle_{(H^{s}(\dOm))^*}\colon H^{s}(\dOm)\times \big(H^{s}(\dOm)\big)^*
\to \bbC,  
\end{equation}
(antilinear in the first, linear in the second factor), will occasionally denote the duality
pairing between $H^s(\dOm)$ and $\big(H^s(\dOm)\big)^*$ for appropriate $s \geq 0$. 
In particular, 
\begin{equation} 
\langle f,g\rangle_s=\int_{\dOm} d^{n-1}\omega(\xi)\,\ol{f(\xi)}g(\xi) 
= (f,g)_{L^2(\dOm;d^{n-1}\omega)},  \quad 
f\in H^s(\dOm),\,g\in L^2(\dOm;d^{n-1}\omega),   \lb{2.4}
\end{equation}
and
\begin{equation}
L^2(\dOm;d^{n-1}\omega)\hookrightarrow 
\big(H^{s}(\dOm)\big)^* = H^{-s}(\dOm), \quad  s\in [0,1],
\end{equation}
where, as before, $d^{n-1}\omega$ stands for the surface measure on $\dOm$.

\medskip 

In the final part of this section we wish to further comment on the nature 
of the spaces $H^s(\dOm)$, $-1\leq s\leq 1$, in the case where $\Om\subseteq\bbR^n$ 
is a Lipschitz domain with compact boundary. Specifically, the goal is to 
indicate that the aforementioned Sobolev spaces have a canonical Hilbert 
space structure. Describing it, requires some preparations. Let $\nabla_{tan}$ 
denote the tangential gradient operator on $\partial\Omega$, 
mapping scalar-valued functions to vector fields, defined as 
\begin{equation}\label{EEs-5}
\nabla_{tan}:=\bigg(\sum_{k=1}^n\nu_k\partial/\partial\tau_{k,1},\cdots,
\sum_{k=1}^n\nu_k\partial/\partial\tau_{k,n}\bigg),
\end{equation}
where $\nu=(\nu_1,...,\nu_n)$ denotes the outward unit normal to $\Omega$.  
Hence, if we consider the space of tangential vector fields with 
square-integrable components on $\partial\Omega$, that is, 
\begin{equation}\label{Agg3}
L^2_{tan}(\partial\Omega;d^{n-1}\omega)
:= \big\{f=(f_1,...,f_n) \,\big|\, f_j\in L^2(\partial\Omega;d^{n-1}\omega), \,
1\leq j\leq n,\,\,\nu\cdot f=0 
\mbox{ $\omega$-a.e.\ on } \partial\Omega\big\}, 
\end{equation}
and equip it with the norm inherited from 
$\big[L^2(\partial\Omega;d^{n-1}\omega)\big]^n$, then 
\begin{equation}\label{Aggab}
\nabla_{tan}:H^1(\partial\Omega) \to  
L^2_{tan}(\partial\Omega;d^{n-1}\omega),
\end{equation}
is a well-defined and bounded operator. 

%%%%%%%%%%
\begin{theorem}\label{TH-GMMM}
Let $\Omega\subseteq{\mathbb{R}}^n$, $n\geq 2$, be a Lipschitz domain with 
compact boundary. Define the Laplace--Beltrami operator on the Lipschitz surface 
$\dOm$ as the linear unbounded operator in $L^2(\partial\Omega;d^{n-1}\omega)$ 
given by 
\begin{align}
& -\Delta_{\partial\Omega}f:=g,\quad 
f\in{\rm dom}\,(-\Delta_{\partial\Omega}), 
\;  g\mbox{ as in \eqref{Tha-2}}, 
\label{Tha-3} \\
& \,\, {\rm dom}\,(-\Delta_{\partial\Omega}):=\bigg\{f\in H^1(\partial\Omega)\,\bigg|\,
\mbox{there exists $g\in L^2(\partial\Omega;d^{n-1}\omega)$ such that}  
\nonumber\\
& \hspace*{2.8cm} 
\int_{\partial\Omega}d^{n-1}\omega\,
\overline{\nabla_{tan}f}\,\nabla_{tan}h
=\int_{\partial\Omega}d^{n-1}\omega\,\overline{g}h 
\mbox{ for all $h\in H^1(\partial\Omega)$}\bigg\},
\label{Tha-2}
\end{align}
This is a nonnegative self-adjoint operator in $L^2(\partial\Omega;d^{n-1}\omega)$
which has the following additional properties:

\begin{enumerate}
\item[$(i)$] For every $s\in[0,1]$, one has 
\begin{eqnarray}\label{maRf-1}
{\rm dom}\,\bigl((-\Delta_{\partial\Omega}+ I_{\partial\Om})^{s/2}\bigr)
=H^s(\partial\Omega)
\end{eqnarray}
and 
\begin{eqnarray}\label{maRf-2}
\begin{array}{l}
(-\Delta_{\partial\Omega}+ I_{\partial\Om})^{s/2}\in\cB\big(H^s(\partial\Omega), 
L^2(\partial\Omega;d^{n-1}\omega)\big) \, \mbox{ is an isomorphism}, 
\\[6pt]
\mbox{with inverse } \, 
(-\Delta_{\partial\Omega}+ I_{\partial\Om})^{-s/2}
\in\cB\big(L^2(\partial\Omega;d^{n-1}\omega), 
H^s(\partial\Omega)\big).
\end{array}
\end{eqnarray}
As a consequence, the isomorphism 
\begin{eqnarray}\label{ma-2SZ}
(-\Delta_{\partial\Omega}+ I_{\partial\Om})^{-s/2}=
\bigl[(-\Delta_{\partial\Omega}+ I_{\partial\Om})^{s/2}\bigr]^{-1}
:L^2(\partial\Omega;d^{n-1}\omega)
\to H^s(\partial\Omega),\quad 0\leq s\leq 1,
\end{eqnarray}
can be thought of as a lifting $($or smoothing\,$)$ operator of order $s$. 

\item[$(ii)$] More generally, for every $r, s \in[0,1]$, 
\begin{eqnarray}\label{maRf-2K}
\begin{array}{l}
(-\Delta_{\partial\Omega}+ I_{\partial\Om})^{r/2}\in\cB\big(
H^s(\partial\Omega)\,,\,H^{s-r}(\partial\Omega)\big) \, 
\mbox{ is an isomorphism}, 
\\[6pt]
\mbox{with inverse } \, 
(-\Delta_{\partial\Omega}+ I_{\partial\Om})^{-r/2}\in\cB\big(H^{s-r}(\partial\Omega), 
H^s(\partial\Omega)\big).
\end{array}
\end{eqnarray}

\item[$(iii)$] For each $s\in[-1,1]$, the norm induced by the inner product 
\begin{eqnarray}\label{EEs-3X}
(f,g)_{H^s(\partial\Omega)}:=
\int_{\partial\Omega}d^{n-1}\omega\,
\overline{(-\Delta_{\partial\Omega}+ I_{\partial\Om})^{s/2}f}\,
(-\Delta_{\partial\Omega}+ I_{\partial\Om})^{s/2}g,  \quad f,g\in H^s(\partial\Omega),
\end{eqnarray}
is equivalent with the original norm on $H^s(\partial\Omega)$. Hence 
$H^s(\partial\Omega)$ has a canonical Hilbert space structure for 
every $s\in[-1,1]$.
\end{enumerate}
\end{theorem}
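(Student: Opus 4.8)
The plan is to build $-\Delta_{\partial\Omega}$ via the quadratic form $a(f,h) = \int_{\partial\Omega} d^{n-1}\omega\,\overline{\nabla_{tan}f}\,\nabla_{tan}h$ on the form domain $H^1(\partial\Omega)$, and then extract the spaces $H^s(\partial\Omega)$ from fractional powers of $-\Delta_{\partial\Omega}+I_{\partial\Omega}$ by interpolation-free, operator-theoretic means. First I would verify that $a(\cdot,\cdot)$ is a densely defined, closed, nonnegative, symmetric form in $L^2(\partial\Omega;d^{n-1}\omega)$: symmetry and nonnegativity are clear; closedness follows because the form norm $\|f\|_{L^2(\partial\Omega;d^{n-1}\omega)}^2 + \|\nabla_{tan}f\|_{L^2(\partial\Omega;d^{n-1}\omega)}^2$ is, by the Sobolev-type description \eqref{M1.1}--\eqref{M1.1y} (equivalently \eqref{A.64}) together with the definition \eqref{EEs-5} of $\nabla_{tan}$ in terms of the $\partial/\partial\tau_{j,k}$, equivalent to the intrinsic norm on $H^1(\partial\Omega)$, which is complete. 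Density of $H^1(\partial\Omega)$ in $L^2(\partial\Omega;d^{n-1}\omega)$ holds since $C^1$ functions restricted to $\partial\Omega$ already lie in $H^1(\partial\Omega)$ and are dense in $L^2$. By the first representation theorem, the form $a$ defines a unique nonnegative self-adjoint operator $-\Delta_{\partial\Omega}$ whose action and domain are exactly \eqref{Tha-3}--\eqref{Tha-2}, and whose form domain is $H^1(\partial\Omega)$; moreover $\spec(-\Delta_{\partial\Omega})\subseteq[0,\infty)$, so $-\Delta_{\partial\Omega}+I_{\partial\Omega}\ge I_{\partial\Omega}$ is boundedly invertible and its real powers are well-defined via the spectral theorem.

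Next, for part $(i)$, the key identity is $\dom\big((-\Delta_{\partial\Omega}+I_{\partial\Omega})^{1/2}\big) = H^1(\partial\Omega)$, which is precisely the statement that the form domain equals the domain of the square root (the second representation theorem / Kato's square root theorem for nonnegative self-adjoint operators), with norm equivalence $\|(-\Delta_{\partial\Omega}+I_{\partial\Omega})^{1/2}f\|_{L^2}\approx \|f\|_{H^1(\partial\Omega)}$. This gives the endpoint $s=1$ of \eqref{maRf-1}--\eqref{maRf-2}, while $s=0$ is trivial. The intermediate cases $0<s<1$ then follow by complex interpolation: on the one hand $[L^2,\dom((-\Delta_{\partial\Omega}+I_{\partial\Omega})^{1/2})]_s = \dom((-\Delta_{\partial\Omega}+I_{\partial\Omega})^{s/2})$ by the standard interpolation theory of domains of fractional powers of a positive self-adjoint operator; on the other hand $[L^2(\partial\Omega;d^{n-1}\omega),H^1(\partial\Omega)]_s = H^s(\partial\Omega)$ by the very construction of $H^s(\partial\Omega)$ (Section \ref{s2}), reducing via partition of unity and bi-Lipschitz pullback to the flat case $[L^2(\bbR^{n-1}),H^1(\bbR^{n-1})]_s = H^s(\bbR^{n-1})$. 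The isomorphism and inverse statements in \eqref{maRf-2} are then immediate from the functional calculus.

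For part $(ii)$, I would first treat $r=1$: the bound $(-\Delta_{\partial\Omega}+I_{\partial\Omega})^{1/2}\in\cB(H^1(\partial\Omega),L^2(\partial\Omega;d^{n-1}\omega))$ is the $r=s=1$ case of $(i)$, its adjoint gives $(-\Delta_{\partial\Omega}+I_{\partial\Omega})^{1/2}\in\cB(L^2(\partial\Omega;d^{n-1}\omega),H^{-1}(\partial\Omega))$ using the duality $(H^1(\partial\Omega))^* = H^{-1}(\partial\Omega)$ from \eqref{A.6}, and interpolating these two endpoints yields $(-\Delta_{\partial\Omega}+I_{\partial\Omega})^{1/2}\in\cB(H^s(\partial\Omega),H^{s-1}(\partial\Omega))$ for $0\le s\le 1$; iterating (composing with lower-order powers from $(i)$ and using the semigroup law for fractional powers) and interpolating once more in $r$ covers all $r,s\in[0,1]$. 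Self-adjointness of $-\Delta_{\partial\Omega}$ makes $(-\Delta_{\partial\Omega}+I_{\partial\Omega})^{-r/2}$ the two-sided inverse. Finally, part $(iii)$ is a direct corollary: for $s\in[0,1]$ the norm from \eqref{EEs-3X} is $\|(-\Delta_{\partial\Omega}+I_{\partial\Omega})^{s/2}f\|_{L^2(\partial\Omega;d^{n-1}\omega)}$, equivalent to $\|f\|_{H^s(\partial\Omega)}$ by \eqref{maRf-2}; for $s\in[-1,0)$ one uses \eqref{maRf-2K} with the pairing \eqref{PPL} to identify $H^s(\partial\Omega)=(H^{-s}(\partial\Omega))^*$ isometrically with the completion of $L^2$ under that norm. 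The main obstacle I anticipate is not any single deep theorem but the bookkeeping needed to make the two interpolation identifications $[L^2,H^1]_s = H^s$ and $[L^2,\dom((-\Delta_{\partial\Omega}+I_{\partial\Omega})^{1/2})]_s = \dom((-\Delta_{\partial\Omega}+I_{\partial\Omega})^{s/2})$ rigorous on a merely Lipschitz surface — in particular, checking that the partition-of-unity/pullback construction of $H^s(\partial\Omega)$ genuinely commutes with complex interpolation (which is where invariance under multiplication by smooth compactly supported functions and composition with bi-Lipschitz maps, noted after \eqref{EQ1}, does the work), and confirming that $\dom((-\Delta_{\partial\Omega}+I_{\partial\Omega})^{1/2})$ coincides with the form domain, i.e. the second representation theorem applies with the stated norm equivalence.
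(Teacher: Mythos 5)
You should first note that the paper itself contains no proof of Theorem \ref{TH-GMMM}: immediately after the statement the authors simply write that a detailed discussion can be found in \cite{GMMM10}, so there is no in-paper argument to compare yours against line by line. Judged on its own, your proposal follows the natural (and almost certainly the intended) route: the form method plus the second representation theorem to get a nonnegative self-adjoint operator with $\dom\big((-\Delta_{\partial\Omega}+I_{\partial\Om})^{1/2}\big)=H^1(\partial\Omega)$, then identification of the intermediate spaces. The two places where the real work sits are exactly where you locate them. First, closedness of the form and the retract structure both hinge on the equivalence of the two descriptions of $H^1(\partial\Omega)$ (the tangential-derivative norm \eqref{M1.1}, \eqref{EEs-5} versus the localized-pullback definition), which the paper supplies. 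Second, the identification $[L^2(\partial\Omega),H^1(\partial\Omega)]_s=H^s(\partial\Omega)$ is not automatic from the remark following \eqref{EQ1}: you need an honest retraction/coretraction pair (localization-and-pullback as coretraction, cutoff-weighted push-forward as retraction) bounded at the endpoints $s=0,1$, so that complex interpolation of the boundary couple is inherited from $[L^2(\bbR^{n-1}),H^1(\bbR^{n-1})]_s=H^s(\bbR^{n-1})$; this uses boundedness of multiplication by Lipschitz cutoffs and of bi-Lipschitz composition on $H^s(\bbR^{n-1})$, $0\le s\le 1$, and is doable but is the crux rather than bookkeeping.

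Two smaller points. In part $(ii)$, ``interpolating once more in $r$'' is interpolation in the operator parameter, which as stated would require Stein interpolation for the analytic family $z\mapsto(-\Delta_{\partial\Omega}+I_{\partial\Om})^{z/2}$ together with uniform bounds on the imaginary powers on $H^s(\partial\Omega)$ (these do hold, since the imaginary powers are unitary on $L^2$ and commute with $(-\Delta_{\partial\Omega}+I_{\partial\Om})^{s/2}$); it is cleaner, and avoids this machinery, to deduce \eqref{maRf-2K} directly from part $(i)$ by the semigroup law for fractional powers when $s-r\geq 0$ and by a duality computation (with the pairing \eqref{PPL}, \eqref{2.4}, which is compatible with the $L^2$ pairing) when $s-r<0$. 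Similarly, in part $(iii)$ for $s\in[-1,0)$ you should make explicit that $(-\Delta_{\partial\Omega}+I_{\partial\Om})^{s/2}$ acting on $H^s(\partial\Omega)$ means the extension-by-duality furnished by \eqref{maRf-2K}, and that the duality $H^s(\partial\Omega)=\big(H^{-s}(\partial\Omega)\big)^*$ is taken with respect to that same $L^2$-compatible pairing. With these clarifications your argument is sound.
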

%%%%%%%%%%%%%

A detailed discussion of Theorem \ref{TH-GMMM} (as well as other related results)
can be found in \cite{GMMM10}. We wish to stress, however, that throughout the 
present paper we prefer to work with a duality pairing between 
$H^s(\dOm)$ and $(H^s(\dOm))^*$ which is compatible with the canonical 
pairing in $L^2(\dOm;d^{n-1}\omega)$ (cf. \eqref{PPL}, \eqref{2.4}).

%%%%%%%%%%%%%%%%%%%%%%%%%%%%%%%%%%%%%%%%
%%%%%%%%%%%%%%%%%%%%%%%%%%%%%%%%%%%%%%%%
\section{The Dirichlet and Neumann Trace Operators % \\ 
on Lipschitz Domains}
\label{s3}
%%%%%%%%%%%%%%%%%%%%%%%%%%%%%%%%%%%%%%%%
%%%%%%%%%%%%%%%%%%%%%%%%%%%%%%%%%%%%%%%%

We briefly recall basic properties of Dirichlet and Neumann trace operators in 
Lipschitz domains. 

Assuming Hypothesis \ref{h2.1}, we introduce the boundary trace 
operator $\ga_D^0$ (the Dirichlet trace) by
\begin{equation}
\ga_D^0\colon C(\ol{\Om})\to C(\dOm), \quad \ga_D^0 u = u|_\dOm.   \lb{2.5}
\end{equation}
Then there exists a bounded, linear operator $\gamma_D$ (cf., e.g., 
\cite[Theorem 3.38]{Mc00}),
\begin{align}
\begin{split}
& \ga_D\colon H^{s}(\Om)\to H^{s-(1/2)}(\dOm) \hookrightarrow \LdOm,
\quad 1/2<s<3/2, \lb{2.6}  \\
& \ga_D\colon H^{3/2}(\Om)\to H^{1-\varepsilon}(\dOm) \hookrightarrow \LdOm,
\quad \varepsilon \in (0,1), 
\end{split}
\end{align}
whose action is compatible with that of $\ga_D^0$. That is, the two
Dirichlet trace  operators coincide on the intersection of their
domains. Moreover, we recall that 
\begin{equation}\lb{2.6a}
\ga_D\colon H^{s}(\Om)\to H^{s-(1/2)}(\dOm) \, \text{ is onto for  
$1/2<s<3/2$}, 
\end{equation}
and point out that the adjoint of \eqref{2.6} maps boundedly as follows
\begin{equation}\lb{ga*}
\ga_D^* \colon \big(H^{s-1/2}(\dOm)\big)^* \to (H^{s}(\Om)\big)^*,
\quad 1/2<s<3/2. 
\end{equation} 

While in the class of bounded Lipschitz subdomains in $\bbR^n$  
the endpoint cases $s=1/2$ and $s=3/2$ of 
$\gamma_D\in\cB\bigl(H^{s}(\Om),H^{s-(1/2)}(\dOm)\bigr)$ fail, we nonetheless
have
\begin{eqnarray}\label{A.62x}
\ga_D\in \cB\big(H^{(3/2)+\eps}(\Om), H^{1}(\dOm)\big), \quad \eps>0.
\end{eqnarray}
See \cite{GM08} for a proof. It is useful to augment this with the 
following result, also proved in \cite{GM08}:

%%%%%%%%%%%%%%%%%%%%
\begin{lemma}\label{Gam-L1}
Assume Hypothesis \ref{h2.1}. Then for each $s>-3/2$, $\gamma_D^{0}$ in \eqref{2.5} extends to a linear operator 
\begin{eqnarray}\label{Mam-1}
\gamma_D:\bigl\{u\in H^{1/2}(\Omega)\,|\,\Delta u\in H^{s}(\Omega)\bigr\}
\to L^2(\partial\Omega;d^{n-1}\omega),
\end{eqnarray}
is compatible with \eqref{2.6}, and is bounded when 
$\{u\in H^{1/2}(\Omega)\,|\,\Delta u\in H^{s}(\Omega)\bigr\}$ is equipped
with the natural graph norm $u\mapsto \|u\|_{H^{1/2}(\Omega)}
+\|\Delta u\|_{H^{s}(\Omega)}$. In addition, this operator has a
linear, bounded right-inverse $($thus, in particular, it is onto\,$)$. 

Furthermore, for each $s>-3/2$, $\gamma_D^{0}$ in \eqref{2.5} also extends to a linear operator 
\begin{eqnarray}\label{Mam-2}
\gamma_D:\bigl\{u\in H^{3/2}(\Omega)\,|\,\Delta u\in H^{1+s}(\Omega)\bigr\}
\to H^1(\partial\Omega), 
\end{eqnarray}
which again is compatible with \eqref{2.6}, and is bounded when 
$\{u\in H^{3/2}(\Omega)\,|\,\Delta u\in H^{1+s}(\Omega)\bigr\}$ is equipped
with the natural graph norm $u\mapsto \|u\|_{H^{3/2}(\Omega)}
+\|\Delta u\|_{H^{1+s}(\Omega)}$. Once again, this operator has a
linear, bounded right-inverse $($hence, in particular, it is onto\,$)$. 
\end{lemma}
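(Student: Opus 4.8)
\textbf{Proof proposal for Lemma \ref{Gam-L1}.}

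The plan is to reduce both assertions to the already-available mapping properties of $\gamma_D$ on Sobolev scales, namely \eqref{2.6}, \eqref{2.6a}, and \eqref{A.62x}, by splitting an arbitrary element $u$ of the given graph space into a ``rough harmonic'' part plus a ``smooth Sobolev'' part. Concretely, to treat \eqref{Mam-1}, fix $s > -3/2$ and let $u \in H^{1/2}(\Omega)$ with $\Delta u = f \in H^{s}(\Omega)$. First I would solve an auxiliary Dirichlet (or rather, whole-space) problem: extend $f$ to $F := E_\Om f \in H^s(\bbR^n)$ using the universal extension operator, pick a fixed parametrix for the Laplacian (convolution with the Newtonian kernel, or $(-\Delta + I)^{-1}$ on $\bbR^n$ followed by a correction), and set $w := R_\Om\big((-\Delta)^{-1}F\big)$ or an analogous construction, so that $w \in H^{s+2}(\Omega)$ with $\Delta w = f$ in $\Omega$. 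Then $v := u - w$ satisfies $v \in H^{1/2}(\Omega)$ and $\Delta v = 0$ in $\Omega$, i.e.\ $v$ is harmonic. The Dirichlet trace of the smooth part is controlled by \eqref{2.6}: since $s+2 > 1/2$, when $s + 2 \in (1/2, 3/2)$ we get $\gamma_D w \in H^{s+3/2}(\dOm) \hookrightarrow L^2(\dOm;d^{n-1}\omega)$, and when $s+2 \ge 3/2$ we get $\gamma_D w \in H^1(\dOm)$ or at least $\gamma_D w \in L^2(\dOm;d^{n-1}\omega)$ via \eqref{A.62x} and interpolation; in all cases $\gamma_D w \in L^2(\dOm; d^{n-1}\omega)$ with norm controlled by $\|f\|_{H^s(\Omega)}$.

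The heart of the matter is then the trace of the harmonic part $v$. Here I would invoke the nontangential-maximal-function theory for harmonic functions in Lipschitz domains (Dahlberg, Jerison--Kenig, Verchota): a harmonic function $v$ on a bounded Lipschitz domain which lies in $H^{1/2}(\Omega)$ has a well-defined nontangential boundary trace in $L^2(\dOm; d^{n-1}\omega)$, with the estimate $\|\gamma_D v\|_{L^2(\dOm;d^{n-1}\omega)} \le C \|v\|_{H^{1/2}(\Omega)}$. Equivalently, one can phrase this via the isomorphism, for bounded Lipschitz $\Omega$, between $L^2(\dOm;d^{n-1}\omega)$-Dirichlet data and the space of harmonic functions in $H^{1/2}(\Omega)$ (the solvability of the $L^2$ Dirichlet problem and its square-function/nontangential estimates). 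Compatibility of this trace with $\gamma_D^0$ on $C(\ol\Omega)$ is immediate from the nontangential convergence. Adding back $w$, we obtain a bounded map as in \eqref{Mam-1}. The bounded right-inverse is constructed directly: given $g \in L^2(\dOm;d^{n-1}\omega)$, let $u$ be the harmonic function with $\gamma_D u = g$ produced by solving the $L^2$-Dirichlet problem; then $u \in H^{1/2}(\Omega)$ (again by the square-function estimates) and $\Delta u = 0 \in H^s(\Omega)$ for every $s$, so $g \mapsto u$ is the desired right-inverse, which in particular shows $\gamma_D$ is onto.

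For \eqref{Mam-2} the argument is parallel but one notch smoother. Given $u \in H^{3/2}(\Omega)$ with $\Delta u \in H^{1+s}(\Omega)$, split off $w \in H^{3+s}(\Omega)$ with $\Delta w = \Delta u$ as above (using $E_\Om$ and a parametrix); since $3+s > 3/2$, the trace $\gamma_D w$ lands in $H^1(\dOm)$ by \eqref{A.62x} (and in higher-order boundary Sobolev spaces when $3+s$ is larger, but $H^1(\dOm)$ suffices), with the appropriate graph-norm bound. The remainder $v = u - w$ is harmonic and lies in $H^{3/2}(\Omega)$; for such $v$ one uses the regularity version of the Dirichlet problem in Lipschitz domains, which gives $\gamma_D v \in H^1(\dOm)$ with $\|\gamma_D v\|_{H^1(\dOm)} \le C\|v\|_{H^{3/2}(\Omega)}$ (the $L^2$-regularity Dirichlet problem: $H^{3/2}$ harmonic functions correspond to $H^1(\dOm)$ boundary data). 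Summing, $\gamma_D u \in H^1(\dOm)$, and the right-inverse is obtained by solving the regularity Dirichlet problem with data $g \in H^1(\dOm)$, yielding a harmonic $u \in H^{3/2}(\Omega)$ with $\Delta u = 0 \in H^{1+s}(\Omega)$.

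The main obstacle I anticipate is not the decomposition bookkeeping but establishing the two harmonic-trace estimates with the correct spaces on a general bounded Lipschitz domain: that an $H^{1/2}(\Omega)$ harmonic function has an $L^2(\dOm)$ trace, and that an $H^{3/2}(\Omega)$ harmonic function has an $H^1(\dOm)$ trace, together with the corresponding extension operators. These are exactly the $L^2$-Dirichlet and $L^2$-regularity problems of Dahlberg--Kenig--Verchota, and citing them in the precise quantitative form needed (nontangential maximal function bounds, square-function equivalences, and the resulting isomorphisms onto the harmonic subspaces of $H^{1/2}(\Omega)$ and $H^{3/2}(\Omega)$) is where the real work and the careful referencing lie; the paper defers the details to \cite{GM08}, and I would do the same, quoting those boundary-layer-potential results as the engine.
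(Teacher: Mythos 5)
Your proposal is sound: the splitting $u=w+v$ with $w$ a Newtonian-potential solution of $\Delta w=\Delta u$ built from the universal extension $E_\Om$ (correctly avoiding extension by zero, which fails for negative $s$) and $v$ harmonic, followed by the Sobolev trace results \eqref{2.6}, \eqref{A.62x} for $w$ and the Dahlberg--Jerison--Kenig--Verchota $L^2$-Dirichlet and regularity theory (nontangential maximal function and square-function estimates) for $v$, is exactly the standard argument. The paper itself gives no proof of this lemma but defers to \cite{GM08}, and your reconstruction is essentially the argument carried out there, so there is no substantive divergence to report.
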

%%%%%%%%%%%%%%%%%%%%

For later purposes, let us record here the following useful version 
of the Divergence Theorem:
\begin{align}\label{Ht-r3}
& \left.
\begin{array}{l}
\Om\, \mbox{ as in }\, \mbox{Hypothesis \ref{h2.1},}
\\
G\in\bigl\{u\in H^{1/2}(\Omega)\,|\,\Delta u\in H^{s}(\Omega)\bigr\}^n,\,\,
s>-{\textstyle\frac32}, 
\\
\mbox{with the property that }\,\,{\rm div} (G)\in L^1(\Omega; d^n x)
\end{array}  \right\}  % \\ 
% & \qquad 
\, \text{ imply } \,  
\int_{\Omega}dx^n\,{\rm div} (G)
=\int_{\partial\Omega}d^{n-1}\omega\,\nu\cdot\wti\gamma_D G.  
\end{align} 
A proof can be found in \cite{GM08}.

Next, retaining Hypothesis \ref{h2.1}, we introduce the operator 
$\ga_N$ (the strong Neumann trace) by
\begin{align}\lb{2.7} 
\ga_N = \nu\cdot\ga_D\nabla \colon H^{s+1}(\Om)\to \LdOm, \quad 1/2<s<3/2, 
\end{align}
where $\nu$ denotes the outward pointing normal unit vector to
$\partial\Om$. It follows from \eqref{2.6} that $\ga_N$ is also a
bounded operator. We seek to extend the action of the Neumann trace
operator \eqref{2.7} to other (related) settings. To set the stage, 
assume Hypothesis \ref{h2.1} and observe that the inclusion 
\begin{equation}\lb{inc-1}
\iota:H^{s_0}(\Omega)\hookrightarrow \bigl(H^r(\Omega)\bigr)^*, \quad 
s_0>-1/2,\; r>1/2, 
\end{equation}
is well-defined and bounded. One then introduces the weak Neumann trace 
operator 
\begin{equation}\lb{2.8}
\wti\ga_N\colon\big\{u\in H^{s+1/2}(\Om)\,\big|\,\Delta u\in H^{s_0}(\Om)\big\} 
\to H^{s-1}(\dOm),\quad s\in(0,1),\; s_0>-1/2,  
\end{equation}
as follows: Given $u\in H^{s+1/2}(\Om)$ with $\Delta u \in H^{s_0}(\Om)$ 
for some $s\in(0,1)$ and $s_0>-1/2$, we set (with $\iota$ as in 
\eqref{inc-1} for $r:=3/2-s>1/2$)
\begin{align}\lb{2.9}
\langle \phi, \wti\ga_N u \rangle_{1-s}
={}_{H^{1/2-s}(\Om)}\langle \nabla \Phi,\nabla u\rangle_{(H^{1/2-s}(\Om))^*}
+ {}_{H^{3/2-s}(\Om)}\langle \Phi, \iota(\Delta u)\rangle_{(H^{3/2-s}(\Om))^*}, 
\end{align}
for all $\phi\in H^{1-s}(\dOm)$ and $\Phi\in H^{3/2-s}(\Om)$ such that
$\ga_D\Phi = \phi$. We note that the first pairing on the right-hand side 
above is meaningful since 
\begin{eqnarray}\lb{2.9JJ}
\bigl(H^{1/2-s}(\Om)\bigr)^*=H^{s-1/2}(\Om),\quad s\in (0,1),
\end{eqnarray}
that the definition \eqref{2.9} is independent of the particular 
extension $\Phi$ of $\phi$, and that $\wti\ga_N$ is a bounded extension 
of the Neumann trace operator $\ga_N$ defined in \eqref{2.7}.

Corresponding to the endpoint cases $s=0,1$ of \eqref{2.8}, the 
following result has been established in \cite{GM08}:  

%%%%%%%%%%%%%%%%%%%%%%%%%%
\begin{lemma}\label{Neu-tr}
Assume Hypothesis \ref{h2.1}. 
Then the Neumann trace operator \eqref{2.7} also extends to
\begin{eqnarray}\label{MaX-1}
\wti\gamma_N:\bigl\{u\in H^{3/2}(\Omega)\,|\,\Delta u\in L^2(\Omega;d^nx)\bigr\}
\to L^2(\partial\Omega;d^{n-1}\omega) 
\end{eqnarray}
in a bounded fashion when the space 
$\{u\in H^{3/2}(\Omega)\,|\,\Delta u\in L^2(\Omega;d^nx)\bigr\}$ is equipped
with the natural graph norm $u\mapsto \|u\|_{H^{3/2}(\Omega)}
+\|\Delta u\|_{L^2(\Omega;d^nx)}$. This extension is compatible 
with \eqref{2.8} and has a linear, bounded right-inverse $($hence, it is onto\,$)$.  

Moreover, the Neumann trace operator \eqref{2.7} further extends to
\begin{eqnarray}\label{MaX-1U}
\wti\gamma_N:\bigl\{u\in H^{1/2}(\Omega)\,|\,\Delta u\in L^2(\Omega;d^nx)\bigr\}
\to H^{-1}(\dOm) 
\end{eqnarray}
in a bounded fashion when the space 
$\{u\in H^{1/2}(\Omega)\,|\,\Delta u\in L^2(\Omega;d^nx)\bigr\}$ is equipped
with the natural graph norm $u\mapsto \|u\|_{H^{1/2}(\Omega)}
+\|\Delta u\|_{L^2(\Omega;d^nx)}$. Once again, this extension is compatible 
with \eqref{2.8} and has a linear, bounded right-inverse $($thus, in particular,
it is onto\,$)$.  
\end{lemma}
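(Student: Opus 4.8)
\textbf{Proof strategy for Lemma \ref{Neu-tr}.}
The plan is to deduce both extension statements from the weak Neumann trace $\wti\ga_N$ already constructed in \eqref{2.8}--\eqref{2.9} by exploiting the fact that, in the two endpoint regimes, the target range of $\wti\ga_N$ self-improves to $L^2(\dOm;d^{n-1}\omega)$ (resp.\ $H^{-1}(\dOm)$), and by producing an explicit bounded right-inverse. First I would treat \eqref{MaX-1}: given $u\in H^{3/2}(\Om)$ with $\Delta u\in L^2(\Om;d^nx)$, I would note that \eqref{2.8} with $s\uparrow 1$ already gives $\wti\ga_N u$ as an element of $H^{s-1}(\dOm)$ for all $s<1$, so the real content is the $L^2$-boundedness. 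For this I would use the definition \eqref{2.9}: pick $\Phi\in H^{3/2-s}(\Om)$ with $\ga_D\Phi=\phi$ chosen, via the bounded right-inverse of $\ga_D$ in \eqref{2.6a}, so that $\|\Phi\|_{H^{3/2-s}(\Om)}\lesssim\|\phi\|_{H^{1-s}(\dOm)}$; then estimate the two pairings on the right of \eqref{2.9} using $u\in H^{3/2}(\Om)$ (so $\nabla u\in (H^{1/2}(\Om))^n$, which pairs with $\nabla\Phi$ as $s\to 1$ because $H^{1/2-s}(\Om)$ tends to $H^{-1/2}(\Om)=(H^{1/2}(\Om))^*$) and $\Delta u\in L^2(\Om;d^nx)\hookrightarrow (H^{3/2-s}(\Om))^*$. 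Passing to the limit $s\to 1$, the right-hand side is controlled by $\|\phi\|_{L^2(\dOm;d^{n-1}\omega)}\big(\|u\|_{H^{3/2}(\Om)}+\|\Delta u\|_{L^2(\Om;d^nx)}\big)$, and since $\phi\in H^{1-\eps}(\dOm)$ is dense in $L^2(\dOm;d^{n-1}\omega)$ this identifies $\wti\ga_N u$ with an $L^2$ function obeying the asserted graph-norm bound. Compatibility with \eqref{2.8}, and a fortiori with the strong trace \eqref{2.7}, is automatic since the formula \eqref{2.9} is the same.

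For the right-inverse in \eqref{MaX-1}, I would solve an auxiliary boundary value problem: given $g\in L^2(\dOm;d^{n-1}\omega)$, let $w$ solve $(-\Delta+1)w=0$ in $\Om$ with $\wti\ga_N w=g$ (the inhomogeneous Neumann problem for the Helmholtz-type operator on a Lipschitz domain with $L^2$ data), which by the classical $L^2$ Neumann theory on Lipschitz domains has a unique solution with $w\in H^{3/2}(\Om)$ and $\|w\|_{H^{3/2}(\Om)}\lesssim\|g\|_{L^2(\dOm;d^{n-1}\omega)}$; since $\Delta w=w\in L^2(\Om;d^nx)$, the map $g\mapsto w$ lands in the domain of \eqref{MaX-1} with the graph norm controlled, and $\wti\ga_N w=g$ gives the right-inverse, whence surjectivity. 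The second statement \eqref{MaX-1U} is handled by the identical two-step scheme, only now the input $u$ lies merely in $H^{1/2}(\Om)$, so in the estimate of \eqref{2.9} the gradient term pairs $\nabla\Phi\in (H^{1/2-s}(\Om))^n$ against $\nabla u\in (H^{-1/2}(\Om))^n$, which forces $s=1$ and the target $H^{s-1}(\dOm)=H^0(\dOm)$ to be replaced by $H^{-1}(\dOm)$; concretely one keeps $\phi\in H^1(\dOm)$, uses the right-inverse of $\ga_D:H^{3/2-s}(\Om)\to H^{1-s}(\dOm)$ (equivalently $\ga_D:H^{1/2}(\Om)\to H^0(\dOm)$ surjective in the sense compatible with Lemma \ref{Gam-L1}), and bounds the right-hand side of \eqref{2.9} by $\|\phi\|_{H^1(\dOm)}\big(\|u\|_{H^{1/2}(\Om)}+\|\Delta u\|_{L^2(\Om;d^nx)}\big)$; by density of $H^1(\dOm)$ in the predual this exhibits $\wti\ga_N u\in H^{-1}(\dOm)$ with the claimed bound. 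The right-inverse of \eqref{MaX-1U} is obtained, as before, by solving $(-\Delta+1)w=0$ with prescribed weak Neumann datum in $H^{-1}(\dOm)$ and using that the Helmholtz--Neumann problem is well-posed from $H^{-1}(\dOm)$ into $H^{1/2}(\Om)$ on Lipschitz domains.

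\textbf{Main obstacle.}
The delicate point is the limiting argument $s\to 1$ in \eqref{2.9}: the spaces $H^{1/2-s}(\Om)$ and $H^{3/2-s}(\Om)$ degenerate to the endpoints $H^{-1/2}(\Om)$ and $H^{1/2}(\Om)$ exactly where, for general bounded Lipschitz domains, the Dirichlet trace $\ga_D:H^{3/2}(\Om)\to H^1(\dOm)$ is \emph{not} surjective onto $H^1(\dOm)$ (only \eqref{A.62x} and Lemma \ref{Gam-L1} are available), so one must be careful to select the extension $\Phi$ of $\phi$ in a space where a bounded right-inverse genuinely exists, and to verify that the two dual pairings in \eqref{2.9} remain jointly continuous in the limit — i.e.\ that no boundary term is lost and the identification of the limit functional as an honest element of $L^2(\dOm;d^{n-1}\omega)$ (resp.\ $H^{-1}(\dOm)$) is legitimate. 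Equivalently, one may bypass the limit by invoking the precise mapping properties of the single- and double-layer potentials on Lipschitz domains, solving the Neumann problem directly with $L^2$ (resp.\ $H^{-1}$) data and reading off the trace and its boundedness from the layer-potential jump relations; either way, the Lipschitz-boundary layer-potential estimates are the crux, and all remaining verifications (compatibility with \eqref{2.7}, \eqref{2.8}, and the graph-norm continuity of the right-inverse) are routine.
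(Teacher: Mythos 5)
A point of reference first: the paper itself contains no proof of this lemma — it is quoted verbatim from \cite{GM08} (``the following result has been established in \cite{GM08}''), so your argument has to stand on its own. Its outer architecture is reasonable: extend the weak Neumann trace \eqref{2.8}--\eqref{2.9} to the two endpoint cases, and obtain ontoness by solving a Helmholtz--Neumann problem. The right-inverse half is essentially fine, since unique solvability of the Neumann problem with $L^2(\dOm)$ (resp.\ $H^{-1}(\dOm)$) data and solutions in $H^{3/2}(\Om)$ (resp.\ $H^{1/2}(\Om)$) is exactly the Jerison--Kenig/Verchota layer-potential theory underlying Theorem \ref{t3.2H}, and $z=-1$ avoids the Neumann spectrum.

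The genuine gap is in the boundedness half, which is the entire content of the lemma. Your limiting argument $s\to 1$ estimates the pairing \eqref{2.9} through an extension $\Phi\in H^{3/2-s}(\Om)$ furnished by the right inverse of \eqref{2.6a}; this yields a bound of the form $C_s\,\|\phi\|_{H^{1-s}(\dOm)}\big(\|u\|_{H^{3/2}(\Om)}+\|\Delta u\|_{L^2(\Om;d^nx)}\big)$, and neither does $\|\phi\|_{H^{1-s}(\dOm)}$ collapse to $\|\phi\|_{L^2(\dOm)}$ nor are the constants $C_s$ uniform: the trace $\gamma_D\in\cB\big(H^{\sigma}(\Om),H^{\sigma-1/2}(\dOm)\big)$ and its right inverse degenerate precisely at the endpoint you are approaching (the paper records the failure of $\sigma=1/2,\,3/2$). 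Moreover the dualities you invoke in the limit are false on Lipschitz domains: \eqref{2.9JJ}, i.e.\ \eqref{dual-xxx}, holds only for $|1/2-s|<1/2$, and $(H^{1/2}(\Om))^*$ is the space of $H^{-1/2}(\bbR^n)$-distributions supported in $\ol{\Om}$, not $H^{-1/2}(\Om)$, so the pairing of $\nabla\Phi$ against $\nabla u$ does not survive ``passing to the limit'' without a new argument; flagging this as the main obstacle does not close it. The repair, with tools already in the paper, is to choose $\Phi$ not by the Sobolev right inverse but via Lemma \ref{Gam-L1}: for $\phi\in L^2(\dOm)$ (resp.\ $\phi\in H^1(\dOm)$) take $\Phi\in H^{1/2}(\Om)$ (resp.\ $H^{3/2}(\Om)$) with $\Delta\Phi$ controlled and $\gamma_D\Phi=\phi$, and justify the Green identity for the pair $(\Phi,u)$ by an approximation/divergence-theorem argument in the spirit of \eqref{Ht-r3}, not by abstract duality. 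For \eqref{MaX-1} there is an even shorter route: write $\wti\gamma_N u=\sum_{j}\nu_j\,\gamma_D(\partial_j u)$, note $\partial_j u\in H^{1/2}(\Om)$ with $\Delta(\partial_j u)=\partial_j\Delta u\in H^{-1}(\Om)$, apply Lemma \ref{Gam-L1} (with $s=-1>-3/2$) to get $\gamma_D(\partial_j u)\in L^2(\dOm)$ with the graph-norm bound, and use $\nu_j\in L^\infty(\dOm)$; compatibility with \eqref{2.8} then follows from \eqref{Ht-r3}. Finally, a bookkeeping slip: \eqref{MaX-1U} is the $s=0$ endpoint of \eqref{2.8} (test functions $\phi\in H^1(\dOm)$, extensions $\Phi\in H^{3/2}(\Om)$), not ``$s=1$'' as you write.
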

%%%%%%%%%%%%%%%%%%%%%%%%%%
 
For future purposes, we shall need yet another extension of the concept of a 
Neumann trace. This requires some preparation  
(throughout, Hypothesis \ref{h2.1} is enforced). First, we recall that, 
as is well-known (see, e.g., \cite{JK95}), one has the natural identification 
\begin{eqnarray}\lb{jk-9}
\big(H^{1}(\Om)\big)^*\equiv
\big\{u\in H^{-1}(\bbR^n)\,\big|\, \supp (u) \subseteq\overline{\Omega}\big\}.
\end{eqnarray} 
We note that the latter is a closed subspace of $H^{-1}(\bbR^n)$. 
In particular, if $R_{\Omega}u:=u|_{\Omega}$ denotes the operator 
of restriction to $\Omega$ (considered in the sense of distributions), then 
\begin{eqnarray}\lb{jk-10}
R_{\Omega}:\big(H^{1}(\Om)\big)^*\to  H^{-1}(\Om)
\end{eqnarray}
is well-defined, linear, and bounded. Furthermore, the 
composition of $R_\Omega$ in \eqref{jk-10} with $\iota$ in \eqref{inc-1}
is the natural inclusion of $H^s(\Om)$ into $H^{-1}(\Om)$. 
Next, given $z\in\bbC$, set 
\begin{eqnarray}\lb{2.88X}
W_z(\Om):=\bigl\{(u,f)\in H^1(\Om)\times\bigl(H^1(\Om)\bigr)^* \,\big|\,
(-\Delta-z)u=f|_{\Omega}\mbox{ in }\mathcal{D}'(\Om)\bigr\}, 
\end{eqnarray}
equipped with the norm inherited from 
$H^1(\Om)\times\bigl(H^1(\Om)\bigr)^*$. We then denote by
\begin{equation}\lb{2.8X}
\wti\ga_{\cN}\colon W_z(\Om)\to  H^{-1/2}(\dOm)  
\end{equation}
the ultra weak Neumann trace operator defined by
\begin{align}\lb{2.9X}
\begin{split}
& \langle\phi,\wti\ga_{\cN} (u,f)\rangle_{1/2}
:= \int_\Om d^n x\,\ol{\nabla \Phi(x)} \cdot \nabla u(x)  \\
& \quad -z\,\int_\Om d^n x\,\ol{\Phi(x)}u(x)  
-{}_{H^1(\Om)}\langle \Phi, f\rangle_{(H^1(\Om))^*}, \quad 
(u,f)\in W_z(\Om), 
\end{split} 
\end{align}
for all $\phi\in H^{1/2}(\dOm)$ and $\Phi\in H^1(\Om)$ such that 
$\ga_D\Phi=\phi$. Once again, this definition is independent of the 
particular extension $\Phi$ of $\phi$. As a corollary, the following Green's 
formula
\begin{equation}
\langle \ga_D\Phi, \wti\gamma_{\cN}(u,f)\rangle_{1/2} 
= (\nabla \Phi, \nabla u)_{\LOm^n} 
+  {}_{H^1(\Om)}\langle \Phi, f\rangle_{(H^1(\Om))^*},
\lb{wGreen}
\end{equation}
is valid, granted Hypothesis \ref{h2.1}, for any $u\in H^{1}(\Om)$, 
$f\in \big(H^{1}(\Om)\big)^*$ with $\Delta u=f|_{\Om}$, and any 
$\Phi\in H^{1}(\Om)$. The pairing on the left-hand side of \eqref{wGreen} 
is between functionals in $\big(H^{1/2}(\dOm)\big)^*$ and elements in
$H^{1/2}(\dOm)$, whereas the last pairing on the right-hand side
is between functionals in $\big(H^{1}(\Om)\big)^*$ and elements in
$H^{1}(\Om)$. Furthermore, as in the case of the Dirichlet trace, the 
ultra weak Neumann trace operator \eqref{2.8X}, \eqref{2.9X} is onto 
(this is a corollary of Theorem \ref{t3.XV}).

The relationship between the ultra weak Neumann trace operator 
\eqref{2.8X}, \eqref{2.9X} and the weak Neumann trace operator 
\eqref{2.8}, \eqref{2.9} can be described as follows: Given 
$s>-1/2$ and $z\in\bbC$, denote by 
\begin{eqnarray}\lb{2.10X}
j_z:\{u\in H^1(\Om)\,\big|\,\Delta u\in H^s(\Om)\big\} \to W_z(\Om)
\end{eqnarray}
the injection 
\begin{eqnarray}\lb{2.11X}
j_z(u):=(u,\iota(-\Delta u -zu)),\quad u\in H^1(\Om),
\,\,\,\Delta u\in H^s(\Om),
\end{eqnarray}
where $\iota$ is as in \eqref{inc-1}. Then 
\begin{eqnarray}\lb{2.12X}
\wti\gamma_{\cN}  j_z=\wti\ga_N.
\end{eqnarray}
Thus, from this particular perspective, $\wti\ga_{\cN}$ can also be regarded 
as a bounded extension of the Neumann trace operator $\ga_N$ defined 
in \eqref{2.7}.

%%%%%%%%%%%%%%%%%%%%%%%%
\begin{remark}\lb{GaNu}
Since the ultra weak Neumann trace $\wti\ga_{\cN}(u,f)$ is defined for 
a class of (pairs of) functions $(u,f)\in W_z(\Om)$ for which the notion 
of the strong Neumann trace $\ga_N u$ is utterly ill-defined, it is 
appropriate to remark that $(u,f)\mapsto \wti\ga_{\cN}(u,f)$ is not an 
extension of the operation of taking the trace $u\mapsto\ga_N u$ in an 
ordinary sense. In fact, it is more appropriate to regard the former as a 
``renormalization'' of the latter trace, in a fashion that depends strongly on 
the choice of $f$. 

To further shed light on this issue, we recall that for $u\in H^1(\Omega)$, 
$\Delta u$ is naturally defined as 
a linear functional in $(H^1_0(\Omega))^*$, where $H^1_0(\Omega)$ is 
the closure of $C_0^\infty(\Omega)$ in $H^1(\Omega)$. The choice of $f$ 
is the choice of an extension of this linear functional to a functional in 
$(H^1(\Omega))^*$ (a space which is naturally identified with 
$H^{-1}_0(\Om):= 
\big\{g\in H^{-1}(\bbR^n) \,\big|\, \supp (g) \subseteq\ol{\Omega}\big\}$).  
As an example, consider $u\in H^1(\Omega)$ and suppose that actually  
$u\in H^2(\Omega)$, so $\ga_N u \in L^2(\dOm;d^{n-1}\omega)$ is well defined.  
In this case, $\Delta u\in L^2(\Om;d^nx)$ has a ``natural''  extension 
$f_0\in H^{-1}_0(\Omega)$ (i.e., $f_0$ is the extension of $\Delta u$ to 
$\bbR^n$ by setting this equal zero outside $\Om$). Any other extension  
$f_1\in H^{-1}_0(\Omega)$ differs from $f_0$ by a distribution   
$\eta\in H^{-1}(\bbR^n)$ supported on $\dOm$. We have  
\begin{eqnarray}\lb{gjjh}
\wti\ga_{\cN}(u,f_0)=\ga_N u,  
\end{eqnarray}
but if $\eta\neq 0$ then $\wti\ga_{\cN}(u,f_1)$ is not equal to 
$\wti\ga_{\cN}(u,f_0)$. Indeed, by linearity one concludes that
$\wti\ga_{\cN}(u,f_1)=\wti\ga_{\cN}((u,f_0)+(0,\eta))
=\wti\ga_{\cN}(u,f_0)+\wti\ga_{\cN}(0,\eta)$ and \eqref{2.9X} shows that
\begin{eqnarray}\lb{demo-1}
\langle\phi,\wti\ga_{\cN}(0,\eta)\rangle_{1/2}
=-{}_{H^1(\Om)}\langle \Phi,\eta\rangle_{(H^1(\Om))^*}, 
\end{eqnarray}
for each $\phi\in H^{1/2}(\dOm)$ and $\Phi\in H^1(\Om)$ such that 
$\ga_D\Phi=\phi$. Consequently, $\wti\ga_{\cN}(0,\eta)\not=0$ if $\eta\not=0$.
\end{remark}
%%%%%%%%%%%%%%%%%%%%%%%%%%%

We conclude this section by recording the following result, proved in 
\cite{GM08}:  

%%%%%%%%%%%%%%%%%%%%%%%%%%%
\begin{lemma} \lb{lA.6}
Assume Hypothesis \ref{h2.1}. Then for each $r\in(1/2,1)$, the H\"older 
space $C^r(\partial\Omega)$ is a module over $H^{1/2}(\partial\Omega)$. 
More precisely, if $M_f$ denotes the operator of multiplication by $f$, then 
there exists $C=C(\Omega,r)>0$ such that
\begin{equation}\lb{M.x1}
M_f\in{\mathcal{B}}\big(H^{1/2}(\partial\Omega)\big)\mbox{ and } \, 
\|M_f\|_{{\mathcal{B}}\bigl(H^{1/2}(\partial\Omega)\bigl)}
\leq C\|f\|_{C^r(\partial\Omega)}, \quad f\in C^r(\partial\Omega).
\end{equation}
As a consequence, if Hypothesis \ref{h2.1C} is enforced, 
then the Neumann and Dirichlet trace operators $\ga_N$, $\ga_D$ satisfy
\begin{equation}\lb{A.62}
\ga_N\in \cB\big(H^{2}(\Om), H^{1/2}(\dOm)\big), \quad 
\ga_D\in \cB\big(H^{2}(\Om), H^{3/2}(\dOm)\big). 
\end{equation}
\end{lemma}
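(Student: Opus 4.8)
The plan is to establish Lemma \ref{lA.6} in two stages: first the module property \eqref{M.x1}, then the consequences \eqref{A.62} for the Neumann and Dirichlet traces. For the module property, I would start from a local, upper-graph description of a Lipschitz boundary (as set up in Section \ref{s2}): using a smooth partition of unity subordinate to the boundary charts, matters reduce to showing that for a Lipschitz function $\varphi\colon\bbR^{n-1}\to\bbR$, multiplication by a $C^r(\bbR^{n-1})$ function is bounded on $H^{1/2}(\bbR^{n-1})$, with operator norm controlled by the $C^r$ norm. The key point is the strict inequality $r>1/2$: the Gagliardo–Slobodeckij seminorm $[fg]_{H^{1/2}}$ involves the double integral of $|f(x)g(x)-f(y)g(y)|^2/|x-y|^{n}$ over $\bbR^{n-1}\times\bbR^{n-1}$, and after the splitting $f(x)g(x)-f(y)g(y) = f(x)(g(x)-g(y)) + g(y)(f(x)-f(y))$ one handles the first term by $\|f\|_{L^\infty}[g]_{H^{1/2}}$ and the second by bounding $|f(x)-f(y)| \leq \|f\|_{C^r}|x-y|^r$ and checking that $\int |x-y|^{2r-n}|g(y)|^2\,dy$ converges locally precisely because $2r - n > -n$ — here one uses compact support from the partition of unity, or equivalently works on a bounded chart, so the singular integral $\int_{|x-y|\le 1}|x-y|^{2r-(n-1)-1}\,d^{n-1}y$ is finite iff $2r>1$. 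This gives $\|M_f\|_{\cB(H^{1/2})} \le C\|f\|_{C^r}$ on $\bbR^{n-1}$; transferring back through the bi-Lipschitz graph parametrizations and the partition of unity (invoking the invariance principle noted after \eqref{M1.1y}) yields \eqref{M.x1}.

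For the second assertion, assume Hypothesis \ref{h2.1C}, so $\Om$ is of class $C^{1,r}$ with $r>1/2$; in particular the components of the outward unit normal $\nu=(\nu_1,\dots,\nu_n)$ lie in $C^r(\dOm)$ (this is exactly condition (iii) in \eqref{FLP-1}). First handle $\ga_D$. By \eqref{A.62x} we already know $\ga_D\in\cB\big(H^{(3/2)+\eps}(\Om),H^1(\dOm)\big)$; I would upgrade this to $\ga_D\in\cB\big(H^2(\Om),H^{3/2}(\dOm)\big)$ using Lemma \ref{K-t1}, which characterizes $H^{3/2}(\dOm)$ for $C^{1,r}$ domains as those $f\in H^1(\dOm)$ with $\partial f/\partial\tau_{j,k}\in H^{1/2}(\dOm)$, with the norm equivalence \eqref{M.2x}. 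So for $u\in H^2(\Om)$, write $f=\ga_D u\in H^1(\dOm)$; one needs $\partial f/\partial\tau_{j,k}\in H^{1/2}(\dOm)$. The standard commutation identity for tangential derivatives gives $\partial(\ga_D u)/\partial\tau_{j,k} = \nu_j\,\ga_D(\partial_k u) - \nu_k\,\ga_D(\partial_j u)$; since $\partial_k u\in H^1(\Om)$, we have $\ga_D(\partial_k u)\in H^{1/2}(\dOm)$ by \eqref{2.6} (with $s=1$), and then multiplication by $\nu_j\in C^r(\dOm)$ keeps us in $H^{1/2}(\dOm)$ by the module property \eqref{M.x1} just established. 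Collecting the terms and using \eqref{M.2x} gives the bound $\|\ga_D u\|_{H^{3/2}(\dOm)}\le C\|u\|_{H^2(\Om)}$.

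Finally, for $\ga_N$: by definition \eqref{2.7}, $\ga_N u = \nu\cdot\ga_D(\nabla u) = \sum_{k=1}^n \nu_k\,\ga_D(\partial_k u)$. For $u\in H^2(\Om)$, each $\partial_k u\in H^1(\Om)$, so $\ga_D(\partial_k u)\in H^{1/2}(\dOm)$ by \eqref{2.6}, and multiplying by $\nu_k\in C^r(\dOm)$ and summing keeps the result in $H^{1/2}(\dOm)$, again by \eqref{M.x1}, with the quantitative bound $\|\ga_N u\|_{H^{1/2}(\dOm)}\le C\big(\sum_k\|\nu_k\|_{C^r(\dOm)}\big)\|u\|_{H^2(\Om)}$. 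This proves \eqref{A.62}. The main obstacle is the first stage — proving the module property with the correct endpoint behavior — since it rests essentially on the strict inequality $r>1/2$ making the relevant Gagliardo-type singular integral converge; the $C^{1,r}$ consequences are then a matter of bookkeeping with the commutation formula for tangential derivatives and the known mapping properties of $\ga_D$.
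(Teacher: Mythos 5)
Your argument is correct. Note that the paper does not prove Lemma \ref{lA.6} in-text at all --- it simply records the statement and cites \cite{GM08} for the proof --- so there is no internal proof to compare against; but your route is the standard one and is consistent with how the paper itself uses the lemma: your multiplier estimate (localize to Lipschitz graph charts, split $f(x)g(x)-f(y)g(y)$ in the Gagliardo seminorm for $H^{1/2}(\bbR^{n-1})$, and use $r>1/2$ to make $\int_0^1 t^{2r-2}\,dt$ converge, with the far-field piece absorbed into $\|f\|_{L^\infty}\|g\|_{L^2}$) is exactly the classical pointwise-multiplier argument, and your derivation of \eqref{A.62} via $\gamma_N u=\sum_k\nu_k\,\gamma_D(\partial_k u)$, the identity $\partial(\gamma_D u)/\partial\tau_{j,k}=\nu_j\gamma_D(\partial_k u)-\nu_k\gamma_D(\partial_j u)$, $\nu\in C^r(\dOm)$, and Lemma \ref{K-t1} is precisely the scheme the paper employs later in the proof of Lemma \ref{L-C1} (cf.\ \eqref{D-tr3}, \eqref{D-tr4}). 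The only small points worth making explicit are that the commutation identity for $u\in H^2(\Om)$ follows from the smooth case by the density \eqref{Tan-C26} (or density of $C^\infty(\ol\Om)$ in $H^2(\Om)$) together with \eqref{A.62x}, and that $C^r$ regularity of $f$ is preserved under composition with the Lipschitz chart, which you implicitly use when transferring the estimate to $\partial\Omega$.
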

%%%%%%%%%%%%%%%%%%%%%%%%%%%

%%%%%%%%%%%%%%%%%%%%%%%%%%%%%%%%%%%%%%%%
%%%%%%%%%%%%%%%%%%%%%%%%%%%%%%%%%%%%%%%%
\section{Boundary Layer Potential Operators on Lipschitz Domains}
\label{s4}
%%%%%%%%%%%%%%%%%%%%%%%%%%%%%%%%%%%%%%%%
%%%%%%%%%%%%%%%%%%%%%%%%%%%%%%%%%%%%%%%%

We recall the fundamental solution of the Helmholtz equation and some fundamental 
properties of boundary layer potential operators on Lipschitz domains.

Let $E_n(z;x)$ be the fundamental solution associated with the Helmholtz differential operator  
$(-\Delta -z)$ in $\bbR^n$, $n\in\bbN$, $n\geq 2$, that is,
\begin{align}
& E_n(z;x) = \begin{cases}
(i/4) \big(2\pi |x|/z^{1/2}\big)^{(2-n)/2} H^{(1)}_{(n-2)/2} 
\big(z^{1/2}|x|\big), & n\geq 2, 
\; z\in\bbC\backslash \{0\}, \\
\f{-1}{2\pi} \ln(|x|), & n=2, \; z=0, \\ 
\f{1}{(n-2)\omega_{n-1}}|x|^{2-n}, & n\geq 3, \; z=0, 
\end{cases}    \no  \\
& \hspace*{6.8cm} \Im\big(z^{1/2}\big)\geq 0,\; x\in\bbR^n\backslash\{0\}. 
 \lb{2.52}
\end{align}
Here $H^{(1)}_{\nu}(\cdot)$ denotes the Hankel function of the first kind 
with index $\nu\geq 0$ (cf.\ \cite[Sect.\ 9.1]{AS72}). 

Assuming Hypothesis \ref{h2.1}, we next introduce the  
single layer potential by setting 
\begin{equation}\label{sing-layer}
({\mathcal S_z}g)(x)=\int_{\partial\Omega}d^{n-1}\omega(y)\,E_n(z;x-y)g(y),
\quad x\in\Omega, \; z\in\bbC, 
\end{equation}
where $g$ is an arbitrary measurable function on $\partial\Omega$. 
We shall also be interested in the adjoint double layer on 
$\partial\Omega$, given by 
\begin{equation}\label{Ksharp}
(K^{\#}_zg)(x)={\rm p.v.}\int_{\partial\Omega}d^{n-1} \omega(y)\,
\partial_{\nu_x}E_n(z;x-y)g(y),
\quad x\in\partial\Omega, \; z\in\bbC. 
\end{equation}

We denote by $I_{\dOm}$ the identity operator in the various spaces of 
functions (and distributions) on $\dOm$. In \cite{GM08}, the following 
results boundedness and jump-relations have been established: 

%%%%%%%%%%%%%%%%%
\begin{lemma}\label{L-Kb}
Assume Hypothesis \ref{h2.1} and fix $z\in\bbC$. Then 
\begin{align}
& K^{\#}_z\in\cB\big(\LdOm\big),      \lb{MM-1.8} \\
& \gamma_D\cS_{z}\in \cB\big(\LdOm,H^1(\dOm)\big),  \lb{MM-1.9X} \\
& \wti\gamma_N {\mathcal S_{z}}g
=\big({\textstyle{-\frac12}}I_\dOm+K^{\#}_{z}\big)g, \quad g\in\LdOm.
\label{jump}
\end{align}
\end{lemma}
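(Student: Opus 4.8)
The plan is to establish the three assertions of Lemma~\ref{L-Kb} in turn, relying on the layer potential calculus for Lipschitz domains together with the extended Neumann trace theory of Lemma~\ref{Neu-tr}. First I would record the basic mapping properties of the single layer potential $\cS_z$ in \eqref{sing-layer}: since $E_n(z;\cdot)$ differs from $E_n(0;\cdot)$ by a less singular kernel, the classical Calder\'on--Zygmund and Mellin analysis on Lipschitz surfaces (Fabes--Jodeit--Rivi\`ere, Verchota, and the extension to general $z$) yields that $\cS_z$ maps $L^2(\dOm;d^{n-1}\omega)$ boundedly into $H^{3/2}(\Om)$ and that $K^{\#}_z$, being a principal-value singular integral operator with an $L^\infty$ boundary kernel against a Lipschitz graph, is bounded on $L^2(\dOm;d^{n-1}\omega)$; this is \eqref{MM-1.8}. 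The inclusion $\cS_z g\in H^{3/2}(\Om)$ for $g\in L^2(\dOm;d^{n-1}\omega)$ is what makes the Dirichlet trace $\ga_D\cS_z$ meaningful via \eqref{2.6}, and composing with the boundedness $\ga_D\in\cB(H^{3/2}(\Om),H^{1}(\dOm))$ from the second line of \eqref{2.6} gives \eqref{MM-1.9X}. (Alternatively one invokes directly the boundedness of the boundary single layer operator $S_z=\ga_D\cS_z$ from $L^2(\dOm;d^{n-1}\omega)$ into $H^1(\dOm)$, which is standard Lipschitz layer-potential theory.)

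For the jump relation \eqref{jump}, I would first note that $\cS_z g$ is harmonic-type, more precisely $(-\Delta-z)\cS_z g=0$ in $\Om$, so the pair $\cS_z g$ together with $\Delta(\cS_z g)=-z\cS_z g\in L^2(\Om;d^nx)$ lies in the domain of the extended Neumann trace $\wti\ga_N$ from \eqref{MaX-1} (since $\cS_z g\in H^{3/2}(\Om)$). Hence $\wti\ga_N\cS_z g$ is well defined as an element of $L^2(\dOm;d^{n-1}\omega)$. The classical interior jump formula for the (co)normal derivative of the single layer potential on smooth domains reads $\ga_N(\cS_z g)\big|_{\text{from }\Om} = (-\tfrac12 I_\dOm + K^{\#}_z)g$; the task is to show this persists in the Lipschitz setting with $\ga_N$ replaced by $\wti\ga_N$. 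I would prove this by a density/continuity argument: for $g\in C^\infty(\dOm)$ (or simply for $g$ smooth enough that $\cS_z g\in H^2(\Om_{loc})$ near $\dOm$) the identity is the classical jump relation obtained from the Divergence Theorem applied on a collar of $\dOm$, or from the weak formulation \eqref{2.9} of $\wti\ga_N$ combined with Green's identity for $\cS_z g$ against an arbitrary test extension $\Phi$; then one extends by continuity, using that both sides are bounded operators $L^2(\dOm;d^{n-1}\omega)\to L^2(\dOm;d^{n-1}\omega)$ — the left side by Lemma~\ref{Neu-tr} (with the graph-norm bound and the boundedness $\cS_z\in\cB(L^2(\dOm;d^{n-1}\omega),H^{3/2}(\Om))$), the right side by \eqref{MM-1.8} — and that $C^\infty(\dOm)$ is dense in $L^2(\dOm;d^{n-1}\omega)$.

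The main obstacle I expect is justifying that the \emph{extended} (weak) Neumann trace $\wti\ga_N$ of $\cS_z g$, defined abstractly through the pairing \eqref{2.9}, actually produces the principal-value singular integral $(-\tfrac12 I_\dOm+K^{\#}_z)g$ rather than just ``some'' $L^2$ boundary function; one must reconcile the functional-analytic definition of $\wti\ga_N$ with the concrete boundary integral. The cleanest route is: (i) for smooth $g$, compute both sides classically and verify agreement via the Divergence Theorem \eqref{Ht-r3} — here $\cS_z g$ is regular enough near $\dOm$ that the strong trace $\ga_N$ exists and equals $\wti\ga_N$; (ii) observe the right-hand side $(-\tfrac12 I_\dOm+K^{\#}_z)g$ depends continuously on $g\in L^2(\dOm;d^{n-1}\omega)$ by \eqref{MM-1.8}, while $g\mapsto \wti\ga_N\cS_z g$ is continuous $L^2(\dOm;d^{n-1}\omega)\to L^2(\dOm;d^{n-1}\omega)$ by Lemma~\ref{Neu-tr} together with the boundedness of $\cS_z$ into $H^{3/2}(\Om)$; (iii) conclude by density. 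All three assertions of the lemma thus reduce to standard Lipschitz layer potential estimates plus the trace machinery already assembled in Sections~\ref{s2}--\ref{s3}, and the detailed verifications are carried out in \cite{GM08}.
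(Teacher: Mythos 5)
The paper itself offers no proof of Lemma \ref{L-Kb}: it is quoted from \cite{GM08}, so the only basis for comparison is the standard Lipschitz layer-potential theory that \cite{GM08} relies on. Measured against that, your outline has the right overall shape (Coifman--McIntosh--Meyer/Calder\'on--Zygmund bounds for \eqref{MM-1.8}, layer-potential estimates for \eqref{MM-1.9X}, and a jump relation for \eqref{jump}), but two of the concrete steps you write down fail on Lipschitz domains. For \eqref{MM-1.9X}, you propose composing $\cS_z\in\cB\big(\LdOm,H^{3/2}(\Om)\big)$ with ``$\gamma_D\in\cB\big(H^{3/2}(\Om),H^1(\dOm)\big)$''. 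That endpoint trace bound is precisely what the paper says does \emph{not} hold for Lipschitz domains: from \eqref{2.6} one only gets $H^{1-\varepsilon}(\dOm)$ out of $H^{3/2}(\Om)$, and \eqref{A.62x} needs $H^{(3/2)+\eps}(\Om)$ to reach $H^1(\dOm)$. Your parenthetical alternative --- that $\gamma_D\cS_z$ maps $\LdOm$ into $H^1(\dOm)$ directly, because the tangential derivatives of the boundary single layer are again singular integrals bounded on $L^2$ --- is the correct argument, and it cannot be replaced by the trace composition.

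The more serious gap is in your step $(i)$ for \eqref{jump}. You assert that for smooth $g$ the potential $\cS_z g$ is ``regular enough near $\dOm$ that the strong trace $\gamma_N$ exists and equals $\wti\gamma_N$.'' On a merely Lipschitz boundary this is false: the smoothing of the single layer is capped by the regularity of $\dOm$, not of the density, and the mapping $\cS_z\colon H^{1/2}(\dOm)\to H^2(\Om)$ you would implicitly need is exactly the property \eqref{goal-2} of Lemma \ref{K-CPT}, which requires $\Om$ to be of class $C^{1,r}$ with $r>1/2$. Even $g\in C^\infty(\dOm)$ only gives $\cS_z g\in H^{3/2}(\Om)$ in general, so there is no dense class of densities on which both sides can be computed ``classically,'' and approximating $g$ cannot circumvent this; your continuity steps $(ii)$--$(iii)$ are fine but have nothing to be extended from. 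The actual mechanism (as in Verchota's work and \cite{GM08}, cf.\ also \cite{Mi96}) is the nontangential-limit jump relation, obtained via nontangential maximal function estimates and approximation of the \emph{domain} by smooth subdomains, followed by an argument identifying the weak Neumann trace \eqref{2.9}, \eqref{MaX-1} with the nontangential boundary trace under nontangential maximal control. That identification is the missing idea in your sketch.
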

%%%%%%%%%%%%%%%%%

%%%%%%%%%%%%%%%%%%%%%%%%%%%%%%%%%
\begin{lemma}\label{K-CPT}
Assume Hypothesis \ref{h2.1C}. Then 
\begin{equation}\label{Ks-4B}
K^{\#}_z\in\cB_\infty\big(H^{1/2}(\dOm)\big),\quad z\in\bbC.
\end{equation}
Furthermore, 
\begin{equation}\label{goal-2}
{\mathcal S_{z}}\in \cB\big(H^{1/2}(\partial\Omega), H^{2}(\Omega)\big),
\quad z\in\bbC.
\end{equation}
\end{lemma}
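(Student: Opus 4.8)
The plan is to establish Lemma~\ref{K-CPT} by leveraging the mapping properties of the single layer potential and the adjoint double layer on the flat (or graph) model, together with the module property from Lemma~\ref{lA.6} and the characterization of $H^{3/2}(\dOm)$ from Lemma~\ref{K-t1}. The two assertions \eqref{Ks-4B} and \eqref{goal-2} are closely linked, since $\gamma_D \cS_z$ on $H^{1/2}(\dOm)$, the jump relation $\wti\gamma_N \cS_z g = (-\tfrac12 I_\dOm + K^\#_z) g$ from \eqref{jump}, and the boundedness of the Neumann and Dirichlet traces on $H^2(\Om)$ from \eqref{A.62} all interact.

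First I would reduce to the case $z = 0$: the difference $E_n(z;\cdot) - E_n(0;\cdot)$ has a strictly weaker singularity (it is bounded, with locally integrable gradient of better order), so the associated layer potentials are smoothing and cause no difficulty — this is a standard perturbation argument, and one only needs the principal singularity to exhibit the claimed regularity. For $z=0$, $E_n(0;x)$ is the Newtonian kernel, and the relevant facts are classical in the Lipschitz setting (Fabes--Jodeit--Rivière, Verchota): $\cS_0 \in \cB(L^2(\dOm), H^{1}(\Om))$ by the single layer having one more derivative than its density in the $L^2$ theory, and $\gamma_D \cS_0$, $K^\#_0$ are bounded on $L^2(\dOm)$. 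The point of Hypothesis~\ref{h2.1C}, i.e.\ $C^{1,r}$ with $r > 1/2$, is to gain one more half-derivative. For \eqref{goal-2}, I would show $\cS_0$ maps $H^{1/2}(\dOm)$ boundedly into $H^2(\Om)$ by a commutator/duality argument: the normal derivatives of $\cS_0 g$ are governed by the gradient of the Newtonian potential, whose boundary behavior is controlled by $K^\#_0$ and by singular integral operators whose kernels involve $\nu(x) - \nu(y)$ (hence one factor of $|x-y|^r$ of cancellation), so on a $C^{1,r}$ boundary these operators gain $r > 1/2$ derivatives of smoothing. Concretely, I would write $\partial_j \partial_k (\cS_0 g)$ as a sum of a principal-value singular integral of Calder\'on--Zygmund type acting on $g$, plus terms that, after integration by parts along the boundary using the tangential derivative operators $\partial/\partial\tau_{j,k}$ of Section~\ref{s2}, act on $g \in H^{1/2}(\dOm)$ and land in $L^2(\Om)$; combined with the $L^2(\Om)$-boundedness of $\cS_0$ and $\nabla\cS_0$ on $L^2(\dOm)$, this yields $\cS_0 g \in H^2(\Om)$.

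For \eqref{Ks-4B}, I would combine the regularity \eqref{goal-2} just obtained with the jump relation \eqref{jump} and the trace bound \eqref{A.62}: for $g \in H^{1/2}(\dOm)$, $\cS_z g \in H^2(\Om)$, so $\wti\gamma_N \cS_z g = \ga_N \cS_z g \in H^{1/2}(\dOm)$ by \eqref{A.62}, whence $(-\tfrac12 I_\dOm + K^\#_z) g \in H^{1/2}(\dOm)$, giving $K^\#_z \in \cB(H^{1/2}(\dOm))$. To upgrade boundedness to compactness, I would again pass to $z = 0$ modulo smoothing terms, and then exhibit $K^\#_0$ on $H^{1/2}(\dOm)$ as a norm limit of operators with smooth (or truncated) kernels — the $C^{1,r}$ regularity with $r > 1/2$ makes the kernel of $K^\#_0$ weakly singular in the precise sense that it gains positive smoothing, so that $K^\#_0$ factors through a compact embedding $H^{1/2+\eps}(\dOm) \hookrightarrow H^{1/2}(\dOm)$ of the type recorded in \eqref{EQ1}; alternatively, approximate $\dOm$ by $C^\infty$ surfaces and use that $K^\#_0$ is compact on smooth boundaries together with stability of the estimates, invoking Lemma~\ref{lA.6} to control the multiplication operators by the coefficients $\nu_j$ that appear when localizing and pulling back.

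The main obstacle I expect is proving the full $H^2(\Om)$ regularity in \eqref{goal-2} — equivalently, controlling the second derivatives of the single layer potential up to the boundary. In the smooth case this is routine pseudodifferential calculus, but in the $C^{1,r}$ setting one cannot simply invoke symbol calculus; instead one must carefully track the cancellation coming from $\nu \in C^r(\dOm)$ with $r > 1/2$ in the kernels of the operators obtained after differentiating twice, and verify that, after integrating by parts along $\dOm$, everything is expressible through the tangential derivative operators of Lemma~\ref{Lg-T} acting on the $H^{1/2}(\dOm)$ density with values in $L^2$. This hinges precisely on the threshold $r > 1/2$, which is why it is built into Hypothesis~\ref{h2.1C}, and it is the technical heart of the argument; the compactness in \eqref{Ks-4B} is then comparatively soft, following from a smoothing-gain plus the compact Sobolev embedding \eqref{EQ1}. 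I would cite \cite{GM08} for the detailed estimates, since, as the excerpt indicates, the full proofs of these layer potential bounds are carried out there.
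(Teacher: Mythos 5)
There is nothing in this paper to compare your argument against: Lemma \ref{K-CPT} (like Lemma \ref{L-Kb}) is stated without proof and imported from \cite{GM08}, which is also where you ultimately send the reader, so in substance your proposal follows the same route as the paper's source. Your outline --- reduction to $z=0$ via the milder singularity of $E_n(z;\cdot)-E_n(0;\cdot)$; the $H^2(\Om)$ bound for $\cS_0$ on $H^{1/2}(\dOm)$ through tangential integration by parts combined with the $C^r$-multiplier property of Lemma \ref{lA.6}; boundedness of $K^{\#}_z$ on $H^{1/2}(\dOm)$ from \eqref{goal-2}, \eqref{jump} and \eqref{A.62}; compactness from the weakly singular kernel $\partial_{\nu_x}E_n(0;x-y)=O\big(|x-y|^{1+r-n}\big)$ --- is the standard one. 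Two points in your write-up should be tightened. First, the ``principal-value Calder\'on--Zygmund term acting on $g$'' cannot be allowed to land in $L^2(\Om)$ on its own: an estimate $\|\partial_j\partial_k\cS_0 g\|_{L^2(\Om)}\le C\|g\|_{L^2(\dOm)}$ is false (it would give $\cS_0\in\cB\big(L^2(\dOm),H^2(\Om)\big)$, whereas only $H^{3/2}(\Om)$ holds even for smooth domains), so the entire half-derivative gain must be routed through the boundary integration by parts, that is, through $\partial g/\partial\tau_{j,k}\in H^{-1/2}(\dOm)$, the mapping property $\cS_0\in\cB\big(H^{-1/2}(\dOm),H^1(\Om)\big)$, and $M_{\nu_j}\in\cB\big(H^{1/2}(\dOm)\big)$ from Lemma \ref{lA.6}. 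Second, for compactness, \eqref{EQ1} is an embedding into $L^2(\dOm)$, not into $H^{1/2}(\dOm)$; you need either an honest smoothing gain $K^{\#}_0\in\cB\big(H^{1/2}(\dOm),H^{1/2+\eps}(\dOm)\big)$ --- which uses $\nu\in C^r(\dOm)$ with $r>1/2$ quantitatively, not merely the weak singularity --- together with the compact embedding $H^{1/2+\eps}(\dOm)\hookrightarrow H^{1/2}(\dOm)$, or else compactness on $L^2(\dOm)$ (Fabes--Jodeit--Rivi\`ere) combined with an interpolation-of-compactness argument; the smooth-domain approximation you mention as an alternative is the least robust of these options. With those details supplied (or, as you and the paper both do, delegated to \cite{GM08}), the proposal is sound.
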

%%%%%%%%%%%%%%%%%%%%%%%%%%%%%%%%%

%%%%%%%%%%%%%%%%%%%%%%%%%%%%%%%%%%%%%%%%
%%%%%%%%%%%%%%%%%%%%%%%%%%%%%%%%%%%%%%%%
\section{Dirichlet and Neumann Boundary Value Problems %\\ 
on Lipschitz Domains}
\label{s5}
%%%%%%%%%%%%%%%%%%%%%%%%%%%%%%%%%%%%%%%%
%%%%%%%%%%%%%%%%%%%%%%%%%%%%%%%%%%%%%%%%

This section is devoted to Dirichlet and Neumann Laplacians and to Dirichlet and 
Neumann boundary value problems. We also briefly recall some results on nonlocal 
Robin Laplacians. 

We start by reviewing the Dirichlet Laplacian $-\Delta_{D,\Om}$ associated with 
a domain $\Om$ in $\bbR^n$. 

%%%%%%%%%%%%%%%%%%%%
\begin{theorem} \lb{t2.5}
Assume Hypothesis \ref{h2.1}. Then the Dirichlet Laplacian, 
$-\Delta_{D,\Om}$, defined by 
\begin{align}
& -\Delta_{D,\Om} = -\Delta,   \no \\ 
& \; \dom(-\Delta_{D,\Om}) = 
\big\{u\in H^1(\Om)\,\big|\, \Delta u \in L^2(\Om;d^n x); \, 
\gamma_D u =0 \text{ in $H^{1/2}(\dOm)$}\big\}   \no \\
& \hspace*{2.13cm} = \big\{u\in H_0^1(\Om)\,\big|\, \Delta u \in L^2(\Om;d^n x)\big\},  \lb{2.39}
\end{align}
is self-adjoint and strictly positive in $L^2(\Om;d^nx)$. Moreover,
\begin{equation}
\dom\big((-\Delta_{D,\Om})^{1/2}\big) = H^1_0(\Om).   \lb{2.40}
\end{equation}
\end{theorem}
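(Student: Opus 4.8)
The plan is to obtain $-\Delta_{D,\Om}$ as the operator associated (in the sense of Kato's first representation theorem) with the closed, nonnegative, densely defined sesquilinear form
\begin{equation*}
a_D(u,v) = (\nabla u, \nabla v)_{L^2(\Om;d^nx)^n}, \quad u,v \in \dom(a_D) := H^1_0(\Om).
\end{equation*}
First I would check that this form is densely defined (since $C_0^\infty(\Om) \subset H^1_0(\Om)$ is dense in $L^2(\Om;d^nx)$), symmetric, and nonnegative, and that it is closed: closedness amounts to the statement that $H^1_0(\Om)$ equipped with the form norm $(\|u\|_{L^2(\Om;d^nx)}^2 + \|\nabla u\|_{L^2(\Om;d^nx)^n}^2)^{1/2}$ is complete, which holds because $H^1_0(\Om)$ is by definition a closed subspace of $H^1(\Om)$ and this form norm is precisely (equivalent to) the $H^1(\Om)$ norm. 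Kato's theorem then produces a unique self-adjoint, nonnegative operator $T_D \geq 0$ with $\dom(T_D) \subseteq \dom(a_D) = H^1_0(\Om)$ and $a_D(u,v) = (T_D u, v)$ for $u \in \dom(T_D)$, $v \in H^1_0(\Om)$; moreover $\dom\big(T_D^{1/2}\big) = \dom(a_D) = H^1_0(\Om)$, which is exactly \eqref{2.40} once I identify $T_D = -\Delta_{D,\Om}$.

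Next I would identify $T_D$ with the operator $-\Delta_{D,\Om}$ as defined by \eqref{2.39}. For the inclusion $T_D \subseteq -\Delta_{D,\Om}$: if $u \in \dom(T_D)$ then $u \in H^1_0(\Om)$ and, testing against $v \in C_0^\infty(\Om)$, the form identity $(\nabla u, \nabla v) = (T_D u, v)$ says exactly that $-\Delta u = T_D u \in L^2(\Om;d^nx)$ in the sense of distributions; since $u \in H^1_0(\Om)$ entails $\gamma_D u = 0$ (the Dirichlet trace vanishes on $H^1_0(\Om)$ by density of $C_0^\infty(\Om)$ and continuity of $\gamma_D$ from \eqref{2.6}, \eqref{2.6a}), this places $u$ in the domain described in \eqref{2.39}. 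For the reverse inclusion: if $u$ lies in the set \eqref{2.39}, then $u \in H^1_0(\Om)$ with $\Delta u \in L^2(\Om;d^nx)$, and for every $v \in C_0^\infty(\Om)$ integration by parts gives $(\nabla u, \nabla v) = (-\Delta u, v)$; extending this to all $v \in H^1_0(\Om)$ by density and continuity shows $u \in \dom(T_D)$ with $T_D u = -\Delta u$. (Here I would also record the equality of the two descriptions in \eqref{2.39}: the passage between $\{u \in H^1(\Om)\,|\,\Delta u \in L^2,\ \gamma_D u = 0\}$ and $\{u \in H^1_0(\Om)\,|\,\Delta u \in L^2\}$ uses that, for bounded Lipschitz $\Om$, $H^1_0(\Om) = \ker(\gamma_D : H^1(\Om) \to H^{1/2}(\dOm))$, which is the Lipschitz-domain version of the standard trace characterization and is available from the trace theory recalled in Section \ref{s3}.)

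Finally I would promote nonnegativity to strict positivity. This follows from the Poincaré inequality on the bounded domain $\Om$: there exists $c = c(\Om) > 0$ with $\|u\|_{L^2(\Om;d^nx)}^2 \leq c\,\|\nabla u\|_{L^2(\Om;d^nx)^n}^2$ for all $u \in H^1_0(\Om)$ (valid since $\Om$ is bounded, hence contained in a slab, and $C_0^\infty(\Om)$ is dense in $H^1_0(\Om)$). Consequently $a_D(u,u) \geq c^{-1}\|u\|_{L^2(\Om;d^nx)}^2$, so $T_D = -\Delta_{D,\Om} \geq c^{-1} I_\Om > 0$, which is the asserted strict positivity and in particular gives $0 \notin \sigma(-\Delta_{D,\Om})$. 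Self-adjointness has already been delivered by Kato's theorem.

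The routine parts here are the form-theoretic machinery and the integration-by-parts manipulations; the one point that genuinely requires the Lipschitz hypothesis (rather than just formal verification) is the identification $H^1_0(\Om) = \ker\gamma_D$ — i.e. that a function in $H^1(\Om)$ with vanishing Dirichlet trace can be approximated in $H^1(\Om)$ by functions in $C_0^\infty(\Om)$ — so that the two domain descriptions in \eqref{2.39} coincide and the form domain is correctly pinned down; I would invoke the trace theory of Section \ref{s3} (and the density/approximation results quoted there) for this, rather than reproving it.
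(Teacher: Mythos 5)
Your proposal is correct, and it is the standard argument: the paper itself offers no proof of Theorem \ref{t2.5} (it is declared ``essentially known'' with a reference to \cite{GM08}), and the cited treatment proceeds exactly along the form-theoretic lines you describe — the closed nonnegative form $(\nabla u,\nabla v)_{L^2(\Om;d^nx)^n}$ on $H^1_0(\Om)$, Kato's representation theorem giving self-adjointness and $\dom\big((-\Delta_{D,\Om})^{1/2}\big)=H^1_0(\Om)$, and the Poincar\'e inequality giving strict positivity. You also correctly isolate the only point where the Lipschitz hypothesis genuinely enters, namely the identification $H^1_0(\Om)=\ker\big(\gamma_D\colon H^1(\Om)\to H^{1/2}(\dOm)\big)$ needed for the equality of the two domain descriptions in \eqref{2.39}; this is indeed a known trace-theoretic fact for bounded Lipschitz domains and is appropriately invoked rather than reproved, so there is no gap.
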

%%%%%%%%%%%%%%%%%%%%

This is essentially known; we refer to \cite{GM08} for a discussion  
where also the following result concerning the Neumann Laplacian $-\Delta_{N,\Om}$ 
associated with $\Om$ can be found: 

%%%%%%%%%%%%%%%%%%%%
\begin{theorem}\lb{t2.3}
Assume Hypothesis \ref{h2.1}. Then the Neumann Laplacian, 
$-\Delta_{N,\Om}$, defined by 
\begin{align}
& -\Delta_{N,\Om} = -\Delta,   \lb{2.20} \\ 
& \; \dom(-\Delta_{N,\Om}) = 
\big\{u\in H^1(\Om)\,\big|\, \Delta u \in L^2(\Om;d^nx); \, 
\wti\gamma_N u =0 \text{ in $H^{-1/2}(\dOm)$}\big\},  \no 
\end{align}
is self-adjoint and bounded from below in $L^2(\Om;d^nx)$. Moreover,
\begin{equation}
\dom\big(|-\Delta_{N,\Om}|^{1/2}\big) = H^1(\Om).   \lb{2.21}
\end{equation}
\end{theorem}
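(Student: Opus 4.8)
The plan is to establish Theorem~\ref{t2.3} by realizing $-\Delta_{N,\Om}$ as the self-adjoint operator associated, via the first representation theorem (Kato), with the symmetric, closed, densely defined, and bounded-from-below sesquilinear form
\begin{equation}\label{Npf-1}
\mathfrak{a}_N(u,v) = \int_\Om d^n x\,\overline{\nabla u(x)}\cdot\nabla v(x),
\quad u,v\in\dom(\mathfrak{a}_N) := H^1(\Om).
\end{equation}
First I would check that $\mathfrak{a}_N$ is indeed a densely defined, nonnegative, closed form in $L^2(\Om;d^n x)$: density is clear since $C^\infty_0(\Om)\subset H^1(\Om)$ is dense in $L^2(\Om;d^n x)$, nonnegativity is obvious, and closedness amounts to the statement that $H^1(\Om)$ is complete under the norm $(\|u\|^2_{L^2(\Om;d^n x)} + \mathfrak{a}_N(u,u))^{1/2}$, which is exactly the standard $H^1(\Om)$-norm. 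By the first representation theorem there is then a unique self-adjoint, bounded-from-below operator $A$ in $L^2(\Om;d^n x)$ with $\dom(A)\subset H^1(\Om)$, $(Au,v)_{L^2(\Om;d^n x)} = \mathfrak{a}_N(u,v)$ for $u\in\dom(A)$, $v\in H^1(\Om)$, and moreover $\dom(|A|^{1/2}) = \dom(\mathfrak{a}_N) = H^1(\Om)$, which will immediately yield \eqref{2.21}.

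The substantive part is to identify $\dom(A)$ with the set described in \eqref{2.20}. I would argue by double inclusion. For $u\in\dom(A)$: from $(Au,\phi)_{L^2(\Om;d^n x)} = \mathfrak{a}_N(u,\phi)$ for all $\phi\in C^\infty_0(\Om)$ one reads off that $-\Delta u = Au$ in $\mathcal{D}'(\Om)$, hence $\Delta u\in L^2(\Om;d^n x)$ and $u\in H^1(\Om)$ with $\Delta u\in L^2(\Om;d^n x)$, so that $\wti\gamma_N u\in H^{-1/2}(\dOm)$ makes sense by Lemma~\ref{Neu-tr} (the case \eqref{MaX-1U}). The weak Neumann trace identity \eqref{2.9} with $s=1/2$, $s_0 = 0$, rewritten for general test functions, gives for every $\Phi\in H^1(\Om)$,
\begin{equation}\label{Npf-2}
{}_{H^{1/2}(\dOm)}\langle \gamma_D\Phi, \wti\gamma_N u\rangle_{(H^{1/2}(\dOm))^*}
= \int_\Om d^n x\,\overline{\nabla\Phi}\cdot\nabla u
- (\Phi, -\Delta u)_{L^2(\Om;d^n x)}.
\end{equation}
Since $Au = -\Delta u$ and $(Au,\Phi)_{L^2(\Om;d^n x)} = \mathfrak{a}_N(u,\Phi)$ for all $\Phi\in H^1(\Om)$, the right-hand side of \eqref{Npf-2} vanishes for every $\Phi\in H^1(\Om)$; because $\gamma_D$ maps $H^1(\Om)$ onto a dense subspace of $H^{1/2}(\dOm)$ (indeed $\gamma_D\colon H^{3/2}(\Om)\to H^1(\Om)$ is onto by Lemma~\ref{Gam-L1}, and $H^1(\dOm)\hookrightarrow H^{1/2}(\dOm)$ densely), this forces $\wti\gamma_N u = 0$ in $H^{-1/2}(\dOm)$. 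For the reverse inclusion, take $u\in H^1(\Om)$ with $\Delta u\in L^2(\Om;d^n x)$ and $\wti\gamma_N u = 0$; then \eqref{Npf-2} shows $\mathfrak{a}_N(u,\Phi) = (-\Delta u,\Phi)_{L^2(\Om;d^n x)}$ for all $\Phi\in H^1(\Om)$, and by the characterization of the form domain and the definition of $A$ this means $u\in\dom(A)$ with $Au = -\Delta u$.

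The main obstacle I anticipate is the density of $\gamma_D(H^1(\Om))$ in $H^{1/2}(\dOm)$, or equivalently the precise interplay between the various weak/ultra-weak Neumann trace operators introduced in Section~\ref{s3} and the form \eqref{Npf-1}: one must be careful that the pairing in \eqref{2.9}/\eqref{Npf-2} is genuinely the duality pairing between $H^{1/2}(\dOm)$ and $(H^{1/2}(\dOm))^* = H^{-1/2}(\dOm)$, and that $\wti\gamma_N u$ computed via Lemma~\ref{Neu-tr} agrees with the functional defined by the form deficit, which is exactly what \eqref{2.9} asserts once one verifies the hypotheses ($u\in H^{3/2}(\Om)$ is \emph{not} available here, only $u\in H^{1/2}(\Om)$ with $\Delta u\in L^2$, so the relevant extension is \eqref{MaX-1U}). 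Since all these trace facts are quoted from \cite{GM08} and recorded above as Lemmas~\ref{Gam-L1} and~\ref{Neu-tr}, the argument reduces to assembling them correctly; the lower semiboundedness and self-adjointness are then automatic from the form construction, and \eqref{2.21} is the form-domain identity. (That $-\Delta_{N,\Om}$ need not be strictly positive—unlike $-\Delta_{D,\Om}$—is consistent with $\mathfrak{a}_N(1,1) = 0$ on the bounded domain $\Om$, so $0$ is an eigenvalue.)
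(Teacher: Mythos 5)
Your proposal is correct and follows essentially the same route as the source the paper relies on: the paper itself offers no proof of Theorem \ref{t2.3} (it simply refers to \cite{GM08}), and there the operator is likewise obtained from the nonnegative closed form $\mathfrak{a}_N$ on $H^1(\Om)$ via the representation theorems, with the domain identified through the weak Neumann trace identity \eqref{2.9} (taken with $s=1/2$, $s_0=0$) and the surjectivity of $\gamma_D\colon H^1(\Om)\to H^{1/2}(\dOm)$ from \eqref{2.6a}, exactly as you do. One harmless slip: for $u\in H^1(\Om)$ with $\Delta u\in L^2(\Om;d^nx)$ the relevant trace is the map \eqref{2.8} itself (landing in $H^{-1/2}(\dOm)$, which is where the boundary condition in \eqref{2.20} is posed), not the further extension \eqref{MaX-1U} into $H^{-1}(\dOm)$; since the two are compatible, this does not affect your argument.
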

%%%%%%%%%%%%%%%%%%%%

In our next two theorems, we review well-posedness results for 
the Dirichlet and Neumann problems. To state these we will denote the 
identity operator in the various spaces of functions (and distributions) 
in $\Om$ as $I_{\Om}$. 

%%%%%%%%%%%%%%%%%%%%%%%%%%%%%%%%%
\begin{theorem} \lb{t3.3}
Assume Hypothesis \ref{h2.1} and suppose that 
$z\in\bbC\backslash\si(-\Delta_{D,\Om})$. Then for every $f \in H^s(\dOm)$, 
$0\leq s\leq 1$, the following Dirichlet boundary value problem,
\begin{equation} \lb{3.31}
\begin{cases}
(-\Delta - z)u = 0 \text{ in }\, \Om, \quad u \in H^{s+1/2}(\Om), \\
\ga_D u = f \text{ on }\, \dOm,
\end{cases}
\end{equation}
has a unique solution $u=u_D$. This solution $u_D$ satisfies 
\begin{equation}\lb{Hh.3} 
\wti\ga_N u_D \in H^{s-1}(\dOm)\, \mbox{ and }\,  
\|\wti\ga_N u_D\|_{H^{s-1}(\dOm)}\leq C_D\|f\|_{H^s(\dOm)},
\end{equation}
for some constant $C_D=C_D(\Omega,s,z)>0$. Moreover, 
\begin{equation}\lb{3.33}
\|u_D\|_{H^{s+1/2}(\Omega)} \leq C_D \|f\|_{H^s(\partial\Omega)}.  
\end{equation} 
Finally,      
\begin{equation}
\big[\wti\ga_N (-\Delta_{D,\Om}-{\ol z}I_\Om)^{-1}\big]^* \in 
\cB\big(H^s(\dOm), H^{s+1/2}(\Om)\big),   \lb{3.34} 
\end{equation}
and the solution $u_D$ is given by the formula
\begin{equation}
u_D = -\big[\wti\ga_N (-\Delta_{D,\Om}-\ol{z}I_\Om)^{-1}\big]^*f. 
\lb{3.35}
\end{equation}
\end{theorem}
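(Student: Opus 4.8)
The plan is to establish the well-posedness of \eqref{3.31} by reducing it to an invertibility statement for an operator built from the single layer potential, exploiting the trace and layer-potential machinery already assembled in Sections \ref{s3} and \ref{s4}. First I would treat the case $s=1$ and $s=0$ separately and interpolate. For $s=1$, given $f\in H^1(\dOm)$, I look for $u_D$ in the form $u_D=\cS_z g$ for a suitable density $g\in\LdOm$; by \eqref{MM-1.9X} one has $\ga_D\cS_z g\in H^1(\dOm)$, and the boundary condition becomes $\ga_D\cS_z g=f$. The operator $\ga_D\cS_z\in\cB(\LdOm,H^1(\dOm))$ needs to be inverted; here one invokes that $z\notin\si(-\Delta_{D,\Om})$, which is exactly what guarantees that $\ga_D\cS_z$ is an isomorphism onto $H^1(\dOm)$ (if $\ga_D\cS_z g=0$ then $\cS_z g$ solves the homogeneous Dirichlet problem for $-\Delta-z$, hence vanishes in $\Om$, and then a jump-relation argument across $\dOm$ together with the behavior in the exterior domain forces $g=0$; surjectivity follows from Fredholm theory since $\ga_D\cS_z$ differs from the (invertible) $z=$ suitable reference case by a compact operator). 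Solving $g=(\ga_D\cS_z)^{-1}f$ and setting $u_D=\cS_z g$ yields a solution with $u_D\in H^{3/2}(\Om)$ and the estimate $\|u_D\|_{H^{3/2}(\Om)}\le C\|f\|_{H^1(\dOm)}$; uniqueness is immediate from $z\notin\si(-\Delta_{D,\Om})$. For $s=0$, the same scheme works with $\ga_D\cS_z\in\cB(H^{-1/2}(\dOm),H^{1/2}(\dOm))$ being an isomorphism, giving $u_D\in H^{1/2}(\Om)$.

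Next I would obtain the Neumann trace bound \eqref{Hh.3}. Since $u_D\in H^{s+1/2}(\Om)$ with $\Delta u_D=-z u_D\in H^{s+1/2}(\Om)\subset H^{s_0}(\Om)$ for $s_0>-1/2$ as soon as $s\ge 0$, Lemma \ref{Neu-tr} (and, for intermediate $s$, the weak Neumann trace \eqref{2.8}--\eqref{2.9}) applies and gives $\wti\ga_N u_D\in H^{s-1}(\dOm)$ together with the continuity estimate, with the constant absorbing $\|u_D\|_{H^{s+1/2}(\Om)}\le C\|f\|_{H^s(\dOm)}$ from \eqref{3.33}. Alternatively, and more in keeping with \eqref{jump}, one can read off $\wti\ga_N u_D=\wti\ga_N\cS_z g=(-\tfrac12 I_\dOm+K^\#_z)g$, which for $s=0$ lands in $\LdOm$ by \eqref{MM-1.8} and, after composing with $(\ga_D\cS_z)^{-1}$, in $H^{s-1}(\dOm)$ for the full range $0\le s\le 1$.

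Finally, for the operator-theoretic reformulation \eqref{3.34}--\eqref{3.35}, I would argue by duality. The map $F:f\mapsto u_D$ is bounded $H^s(\dOm)\to H^{s+1/2}(\Om)$ by \eqref{3.33}; the claim is that $F=-[\wti\ga_N(-\Delta_{D,\Om}-\ol z I_\Om)^{-1}]^*$. To see this, pick $w\in L^2(\Om;d^nx)$ and set $v:=(-\Delta_{D,\Om}-\ol z I_\Om)^{-1}w$, so $v\in\dom(-\Delta_{D,\Om})\subset H^1_0(\Om)$, $(-\Delta-\ol z)v=w$, $\ga_D v=0$. Apply the (weak) Green's formula — the second Green identity encoded in \eqref{wGreen} and the divergence theorem \eqref{Ht-r3} — to the pair $u_D,v$: since $(-\Delta-z)u_D=0$ and $\ga_D v=0$, all interior terms cancel and one is left with
\[
(u_D,w)_{\LOm}=-\,{}_{H^s(\dOm)}\langle f,\wti\ga_N v\rangle_{(H^s(\dOm))^*},
\]
i.e. $(F f,w)_{\LOm}=-\langle f,\wti\ga_N(-\Delta_{D,\Om}-\ol z I_\Om)^{-1}w\rangle$, which is exactly the adjoint identity \eqref{3.35}, and \eqref{3.34} is then just the mapping property of $F$ together with the boundedness of the resolvent and of $\wti\ga_N$.

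The main obstacle I anticipate is the invertibility of $\ga_D\cS_z$ on the correct scale of boundary Sobolev spaces: one must carefully justify injectivity (ruling out nontrivial null densities via a transmission/jump argument using \eqref{jump} and the decay of $E_n(z;\cdot)$ at infinity, which requires $\Im(z^{1/2})>0$ or a separate treatment of real $z$) and surjectivity (a Fredholmness-plus-trivial-kernel argument, comparing with a reference parameter and using compactness of the difference of the relevant layer operators). Once $\ga_D\cS_z$ is known to be an isomorphism $\LdOm\to H^1(\dOm)$ and $H^{-1/2}(\dOm)\to H^{1/2}(\dOm)$, interpolation handles $0<s<1$ and the rest is bookkeeping with the trace lemmas. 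The $z=0$ and real-$z$ cases, where $E_n$ does not decay, are handled by noting that $z\notin\si(-\Delta_{D,\Om})$ is precisely the hypothesis that excludes the bad parameters, so the interior Dirichlet problem remains uniquely solvable and the layer-potential inversion goes through after the usual modifications.
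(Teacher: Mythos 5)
First, a remark on the comparison itself: the paper does not give a self-contained proof of Theorem \ref{t3.3} — it states that the endpoint cases $s\in\{0,1\}$ are established in \cite{GM08} and that the intermediate range follows along similar lines — so your proposal has to be judged on its own merits. Its overall shape (layer-potential representation for existence, plus the Green's-formula duality with $v=(-\Delta_{D,\Om}-\ol z I_\Om)^{-1}w$ to obtain \eqref{3.34}--\eqref{3.35}) is in the right spirit and the duality computation for \eqref{3.35} is essentially the standard one. However, there are concrete gaps. The most serious is your $s=0$ endpoint: with a density $g\in H^{-1/2}(\dOm)$ you get $\ga_D\cS_z g\in H^{1/2}(\dOm)$ and $\cS_z g\in H^{1}(\Om)$, so what you have solved is the problem with data in $H^{1/2}(\dOm)$ (i.e.\ the case $s=1/2$), not with data merely in $\LdOm$. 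The true $s=0$ case (data in $\LdOm$, solution in $H^{1/2}(\Om)$) is the hard harmonic-analysis endpoint; it rests on the solvability of the $L^2$-Dirichlet problem in Lipschitz domains with nontangential maximal function estimates (Dahlberg/Verchota for $z=0$, extended to the Helmholtz operator in \cite{Mi96}), implemented for instance through the invertibility of $\tfrac12 I_{\dOm}+K_z$ on $\LdOm$ for the boundary double layer $K_z$ (the adjoint of $K^{\#}_z$), and it does not follow from mapping properties of $\ga_D\cS_z$ on the $H^{-1/2}$ scale. Without that endpoint your interpolation only covers $1/2\le s\le 1$.

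Two further points need repair. The assertion that $z\notin\si(-\Delta_{D,\Om})$ is ``exactly'' what makes $\ga_D\cS_z$ an isomorphism is not accurate: injectivity of the boundary single layer also requires control of the exterior problem (decay of $\cS_z g$ when $\Im (z)\neq 0$, a Rellich/radiation-condition argument for real $z>0$, and the jump relation \eqref{jump}), and for $n=2$, $z=0$ the operator $\ga_D\cS_0$ can genuinely fail to be invertible (the logarithmic-capacity obstruction) even though $0\notin\si(-\Delta_{D,\Om})$ always holds; so the ansatz must be modified there (rescaling of $\Om$, a rank-one correction, or a different potential representation). Finally, uniqueness for $0\le s<1/2$ is not ``immediate'' from $z\notin\si(-\Delta_{D,\Om})$: a solution $u\in H^{s+1/2}(\Om)$ of the homogeneous problem with vanishing (extended) Dirichlet trace is not a priori in $H^{1}_0(\Om)$, hence not a priori in $\dom(-\Delta_{D,\Om})$, so you cannot simply quote the resolvent; you need an extra argument, e.g.\ precisely the pairing of $u$ against $(-\Delta-\ol z)v$ with $v=(-\Delta_{D,\Om}-\ol z I_\Om)^{-1}w$, $w\in\LOm$ arbitrary, using the generalized Green's formula for the low-regularity traces, which shows $(u,w)_{\LOm}=0$ for all $w$ and hence $u=0$.
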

%%%%%%%%%%%%%%%%%%%%%

%%%%%%%%%%%%%%%%%%%%%%%%%%%%%%%%%%%%%%%
\begin{theorem}\lb{t3.2H} 
Assume Hypothesis \ref{h2.1} and suppose that 
$z\in\bbC\backslash\si(-\Delta_{N,\Om})$. Then for every 
$g\in H^{s-1}(\dOm)$, $0\leq s\leq 1$, the following Neumann boundary 
value problem,
\begin{equation} \lb{3.6H}
\begin{cases}
(-\Delta - z)u = 0 \text{ in }\,\Om,\quad u \in H^{s+1/2}(\Om), \\
\wti\ga_N u = g \text{ on } \,\dOm,
\end{cases}
\end{equation}
has a unique solution  $u=u_N$. This solution $u_N$ satisfies
\begin{eqnarray}\lb{3.6aH}
\begin{array}{l}
\ga_D u_N\in H^s(\dOm)\mbox{ and }
\|\ga_D u_N\|_{H^s(\dOm)}\leq C\|g\|_{H^{s-1}(\dOm)}, 
\end{array}
\end{eqnarray}
as well as 
\begin{equation}\lb{3.7H}
\|u_N\|_{H^{s+1/2}(\Omega)} \leq C\|g\|_{H^{s-1}(\dOm)}, 
\end{equation}
for some constant constant $C= C(\Omega,s,z)>0$. Finally,    
\begin{equation}\lb{3.8H}
\big[\ga_D (-\Delta_{N,\Om}-\ol{z}I_\Om)^{-1}\big]^* \in
\cB\big(H^{s-1}(\dOm), H^{s+1/2}(\Om)\big),   
\end{equation}
and the solution $u_N$ is given by the formula  
\begin{equation}\lb{3.9H}
u_N= \big(\ga_D (-\Delta_{N,\Om}-\ol{z}I_\Om)^{-1}\big)^*g. 
\end{equation}
\end{theorem}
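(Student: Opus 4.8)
\textbf{Proof plan for Theorem \ref{t3.2H}.}

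The strategy is to parallel the proof of Theorem \ref{t3.3}, but with the roles of the Dirichlet and Neumann data interchanged, using the self-adjoint Neumann Laplacian $-\Delta_{N,\Om}$ from Theorem \ref{t2.3} as the reference operator. First I would establish \emph{uniqueness}: if $u$ solves \eqref{3.6H} with $g=0$ and $u\in H^{s+1/2}(\Om)$, then since $s+1/2\in[1/2,3/2]$ one has $u\in H^1(\Om)$ (after checking the low endpoint $s=0$ via the inclusion $H^{1/2}(\Om)\supset H^1(\Om)$ only the other way — more carefully, for $s=0$ one uses that $u\in H^{1/2}(\Om)$ with $\Delta u=zu\in L^2$ lies in a space where $\wti\gamma_N$ makes sense by Lemma \ref{Neu-tr}), and $\Delta u=zu\in L^2(\Om;d^nx)$; hence $u\in\dom(-\Delta_{N,\Om})$ with $(-\Delta-z)u=0$, so $u=0$ because $z\notin\si(-\Delta_{N,\Om})$. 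The delicate point here is the $s=0$ endpoint, where $u$ is only assumed in $H^{1/2}(\Om)$, not $H^1(\Om)$; I expect one must invoke the trace/extension apparatus of Lemma \ref{Neu-tr} (specifically \eqref{MaX-1U}) together with an elliptic regularity step showing that a solution of the homogeneous Helmholtz equation with vanishing ultra-weak Neumann trace actually gains Sobolev smoothness and lands in $\dom(-\Delta_{N,\Om})$.

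Next, for \emph{existence} and the \emph{representation formula}, I would define $u_N:=\big(\ga_D(-\Delta_{N,\Om}-\ol z I_\Om)^{-1}\big)^* g$ and verify it solves \eqref{3.6H}. The mapping property \eqref{3.8H} is obtained by duality: by \eqref{2.6a} and Theorem \ref{t2.3}, $\ga_D(-\Delta_{N,\Om}-\ol z I_\Om)^{-1}$ maps $L^2(\Om;d^nx)$ boundedly into $H^s(\dOm)$ when composed appropriately — more precisely one shows $\ga_D(-\Delta_{N,\Om}-\ol z I_\Om)^{-1}\in\cB\big((H^{s+1/2}(\Om))^*,\, (H^{s-1}(\dOm))^*\big)$, hence dualizing and using reflexivity of all spaces involved (Section \ref{s2}) yields \eqref{3.8H}. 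That $(-\Delta-z)u_N=0$ in $\Om$ follows because $u_N$ is orthogonal (in the appropriate duality sense) to $\dom(-\Delta_{N,\Om})\cap H^1_0$-type test functions: for $v\in C_0^\infty(\Om)$, $(v,(-\Delta-\ol z)u_N)_{L^2}$ unwinds via the definition of the adjoint to $(\ga_D(-\Delta_{N,\Om}-\ol z I_\Om)^{-1}(-\Delta-\ol z)v,\, g)$, and since $(-\Delta_{N,\Om}-\ol z I_\Om)^{-1}(-\Delta-\ol z)v=v$ has vanishing Dirichlet trace, this is $0$. The identity $\wti\ga_N u_N=g$ is then extracted from Green's formula \eqref{wGreen} (or its variants \eqref{2.9}, \eqref{2.9X}): testing against an arbitrary $\Phi\in H^{3/2-s}(\Om)$ and using $\wti\gamma_N$ on $\dom(-\Delta_{N,\Om})$ is zero, the boundary pairing collapses to $\langle\ga_D\Phi, g\rangle$, which identifies $\wti\ga_N u_N$ with $g$ in $H^{s-1}(\dOm)$.

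Finally, the quantitative bounds \eqref{3.6aH}, \eqref{3.7H} come for free from the operator bound \eqref{3.8H} applied to $g$ (giving \eqref{3.7H}) and from composing with the trace bound \eqref{2.6} (giving \eqref{3.6aH}): $\|\ga_D u_N\|_{H^s(\dOm)}\lesssim\|u_N\|_{H^{s+1/2}(\Om)}\lesssim\|g\|_{H^{s-1}(\dOm)}$, where the first inequality uses $\ga_D\in\cB(H^{s+1/2}(\Om),H^s(\dOm))$ — valid for $1/2<s+1/2<3/2$, and at the endpoints $s=0,1$ one falls back on Lemma \ref{Gam-L1} resp.\ \eqref{A.62x}. \textbf{The main obstacle} is the pair of endpoint cases $s=0$ and $s=1$: at $s=0$ the solution has minimal regularity $H^{1/2}(\Om)$ and the Neumann data lives in $H^{-1}(\dOm)$, so one must lean on the extended trace operators of Lemma \ref{Neu-tr} and on the fact that these extensions are onto with bounded right inverse; at $s=1$ the Dirichlet trace $\ga_D:H^{3/2}(\Om)\to H^1(\dOm)$ is not the ``generic'' interior estimate \eqref{2.6} and one needs Lemma \ref{Gam-L1}. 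Interior cases $0<s<1$ should follow by a straightforward interpolation/duality argument once the two endpoints are secured.
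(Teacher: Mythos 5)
Your overall architecture is reasonable and in fact mirrors how the representation formula \eqref{3.9H} is meant to arise: define $u_N:=\big[\ga_D(-\Delta_{N,\Om}-\ol z I_\Om)^{-1}\big]^*g$, check $(-\Delta-z)u_N=0$ by testing against $C^\infty_0(\Om)$, and read off \eqref{3.7H}, \eqref{3.6aH} from \eqref{3.8H} and the trace bounds. But you should be aware that the paper itself does not reprove this theorem: it explicitly defers the endpoint cases $s\in\{0,1\}$ to \cite{GM08}, where they are obtained by boundary layer potential methods (invertibility of $\pm\tfrac12 I_{\dOm}+K^{\#}_z$ on $L^2(\dOm)$ and $H^1(\dOm)$, Rellich--Verchota--Jerison--Kenig estimates), and states that intermediate $s$ follows along similar lines. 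Your plan attempts to replace that input by ``soft'' arguments built on $-\Delta_{N,\Om}$ and duality, and this is exactly where it has genuine gaps.

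Concretely: (i) uniqueness for $s<1/2$ (in particular $s=0$) requires showing that $u\in H^{1/2}(\Om)$ with $(-\Delta-z)u=0$ and $\wti\ga_N u=0$ in $H^{-1}(\dOm)$ vanishes. The ``elliptic regularity step'' you invoke is not available: under Hypothesis \ref{h2.1} boundary Sobolev regularity is precisely the delicate Lipschitz-domain issue, and nothing in the paper's toolkit gives it here. A duality attempt (pair $u$ against $w=(-\Delta_{N,\Om}-\ol z I_\Om)^{-1}f$) also stalls, because Theorem \ref{t2.3} only yields $w\in H^1(\Om)$, so $\ga_D w\in H^{1/2}(\dOm)$ cannot be paired with $\wti\ga_N u\in H^{-1}(\dOm)$ and no Green identity from Sections \ref{s3} or \ref{s6} applies (those pair $\dom(-\Delta_{max})$ against $H^2$-functions, which is unavailable for merely Lipschitz $\Om$); one needs either $\dom(-\Delta_{N,\Om})\subset H^{3/2}(\Om)$ or a layer-potential representation of $u$ --- i.e., exactly the \cite{GM08} input. (ii) Your duality proof of \eqref{3.8H} requires $\ga_D(-\Delta_{N,\Om}-\ol z I_\Om)^{-1}\in\cB\big((H^{s+1/2}(\Om))^*,H^{1-s}(\dOm)\big)$; at $s=0$ this encodes $H^{3/2}$-regularity of the Neumann resolvent, and at $s=1$ it says the resolvent applied to $(H^{3/2}(\Om))^*$-functionals has an $L^2(\dOm)$ Dirichlet trace. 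Neither follows from Theorem \ref{t2.3} or Theorem \ref{t3.XV}; these endpoint bounds are equivalent in strength to the theorem itself, so the argument is circular there. Once the endpoints are granted (from \cite{GM08}), your interpolation/density strategy for $0<s<1$, the verification of $\wti\ga_N u_N=g$, and the estimates do go through --- with the small correction that at $s=1$ the bound \eqref{3.6aH} should come from the second mapping in Lemma \ref{Gam-L1} (since $u_N\in H^{3/2}(\Om)$ with $\Delta u_N\in H^{3/2}(\Om)$), not from \eqref{A.62x}, which requires $H^{(3/2)+\eps}(\Om)$.
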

%%%%%%%%%%%%%%%%%%%%%%%%%%%%%%%%%%%%%%%

When $s\in\{0,1\}$, the above results have been established in 
\cite{GM08}. The more general case $0\leq s\leq 1$ is, however, 
proved along very similar lines. 

We shall now review well-posedness results for the inhomogeneous 
Dirichlet and Neumann problems (again, see \cite{GM08} for proofs). 
In the following we denote by $\wti I_{\Om}$ the continuous inclusion 
(embedding) map of $H^1(\Omega)$ into $\bigl(H^1(\Omega)\bigr)^*$. 
By a slight abuse of notation, we also denote the continuous inclusion 
map of $H^1_0(\Omega)$ into $\bigl(H^1_0(\Omega)\bigr)^*$ by the same 
symbol $\wti I_{\Om}$. We recall the ultra weak Neumann trace operator 
$\wti\ga_{\cN}$ in \eqref{2.8X}, \eqref{2.9X}. Finally, assuming 
Hypothesis \ref{h2.1}, we denote by 
\begin{equation} \lb{3.JqY}
- \wti \Delta_{N,\Om}\in\cB\big(H^1(\Om),\big(H^1(\Om)\big)^*\big)
\end{equation}
the operator defined uniquely by the requirement that 
\begin{equation} \lb{3.JqZ}
{}_{H^1(\Om)}\langle u,- \wti \Delta_{N,\Om}v\rangle_{(H^1(\Om))^*}
=\int_{\Om}d^nx\,\ol{\nabla u(x)}\cdot\nabla v(x)
= (\nabla u, \nabla v)_{L^2(\Om; d^n x)},\quad u,v\in H^1(\Om).
\end{equation}
We note that $- \Delta_{N,\Om}$ then becomes the part of 
$- \wti \Delta_{N,\Om}$ in $L^2(\Om;d^nx)$. 

%%%%%%%%%%%%%%%%%%%%
\begin{theorem} \lb{t3.XV} 
Assume Hypothesis \ref{h2.1} and suppose that 
$z\in\bbC\backslash\si(-\Delta_{N,\Om})$. Then for every 
$w\in (H^{1}(\Omega))^*$, the following generalized inhomogeneous Neumann 
problem,
\begin{equation} \lb{3.Jq}
\begin{cases}
(-\Delta - z)u = w|_{\Om} \text{ in }\,{\mathcal{D}}'(\Om),
\quad u \in H^{1}(\Om), \\
\wti\ga_{\cN} (u,w)= 0 \text{ on } \,\dOm,
\end{cases}
\end{equation}
has a unique solution $u=u_{N,w}$. Moreover, there exists a constant 
$C= C(\Om,z)>0$ such that
\begin{equation}\lb{3.Jq2}
\|u_{N,w}\|_{H^{1}(\Omega)} \leq C\|w\|_{(H^{1}(\partial\Omega))^*}.  
\end{equation}
In particular, the operator $(-\Delta_{N,\Om}-zI_\Om)^{-1}$,
$z\in\bbC\backslash\si(-\Delta_{N,\Om})$, originally defined as a bounded 
operator on $\LOm$,  
\begin{align}\label{faH}
(-\Delta_{N,\Om}-zI_\Om)^{-1} \in \cB\big(L^2(\Om;d^nx)\big),
\end{align}
can be extended to a map in $\cB\big(\big(H^{1}(\Om)\big)^*,H^1(\Om)\big)$, 
which in fact coincides with 
\begin{equation}\label{fcH}
\big(- \wti \Delta_{N,\Om} -z \wti I_\Om\big)^{-1}
\in\cB\big(\big(H^{1}(\Om)\big)^*,H^1(\Om)\big).
\end{equation} 
\end{theorem}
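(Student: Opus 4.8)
The plan is to establish existence and uniqueness for \eqref{3.Jq} first, and then read off the resolvent extension \eqref{fcH} as a consequence. Existence and uniqueness should be obtained from a Lax--Milgram-type argument for the sesquilinear form associated with $-\wti\Delta_{N,\Om}$. Concretely, given $w\in(H^1(\Om))^*$, I would seek $u\in H^1(\Om)$ such that
\begin{equation}\label{prop-LM}
(\nabla u,\nabla v)_{L^2(\Om;d^nx)^n} - z\,(u,v)_{L^2(\Om;d^nx)}
= {}_{H^1(\Om)}\langle v, w\rangle_{(H^1(\Om))^*}, \quad v\in H^1(\Om),
\end{equation}
which, in view of the defining relation \eqref{3.JqZ} for $-\wti\Delta_{N,\Om}$, is exactly the operator equation $\big(-\wti\Delta_{N,\Om}-z\wti I_\Om\big)u = w$ in $(H^1(\Om))^*$. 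The point is that \eqref{prop-LM} simultaneously encodes the PDE $(-\Delta-z)u=w|_\Om$ in $\mathcal D'(\Om)$ (test against $v\in C_0^\infty(\Om)$) and the generalized boundary condition $\wti\ga_{\cN}(u,w)=0$ (compare with the Green formula \eqref{wGreen}: for general $v\in H^1(\Om)$ the difference between the two sides of \eqref{prop-LM} is precisely $\langle\ga_D v,\wti\ga_{\cN}(u,w)\rangle_{1/2}$, up to the sign convention, so the weak formulation is equivalent to the boundary value problem \eqref{3.Jq}). Thus solving \eqref{prop-LM} is equivalent to solving \eqref{3.Jq}.

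Next I would argue that $\big(-\wti\Delta_{N,\Om}-z\wti I_\Om\big)\in\cB\big(H^1(\Om),(H^1(\Om))^*\big)$ is boundedly invertible for $z\in\bbC\backslash\si(-\Delta_{N,\Om})$. The operator $-\wti\Delta_{N,\Om}+\wti I_\Om$ is, by \eqref{3.JqZ}, the Riesz-type isomorphism associated with the (equivalent) inner product $(\nabla\cdot,\nabla\cdot)_{L^2}+(\cdot,\cdot)_{L^2}$ on $H^1(\Om)$, hence a topological isomorphism $H^1(\Om)\to(H^1(\Om))^*$; moreover $-\Delta_{N,\Om}$ is its part in $L^2(\Om;d^nx)$ as already recorded in the text. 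Writing
\begin{equation}\label{prop-factor}
-\wti\Delta_{N,\Om}-z\wti I_\Om
=\big(-\wti\Delta_{N,\Om}+\wti I_\Om\big)
-\,(z+1)\,\wti I_\Om,
\end{equation}
I would factor out the isomorphism $-\wti\Delta_{N,\Om}+\wti I_\Om$ and reduce invertibility of the left side to invertibility on $L^2(\Om;d^nx)$ of $I_\Om-(z+1)\,R$, where $R:=\big(-\Delta_{N,\Om}+I_\Om\big)^{-1}\in\cB(L^2(\Om;d^nx))$ is compact (by \eqref{EQ1}-type compact Sobolev embeddings, or because $-\Delta_{N,\Om}$ has compact resolvent on the bounded Lipschitz domain $\Om$). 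The crucial identity here is that the embedding $\wti I_\Om$ "factors through" $L^2(\Om;d^nx)$: composing $(H^1(\Om))^*\xrightarrow{\,(-\wti\Delta_{N,\Om}+\wti I_\Om)^{-1}\,}H^1(\Om)\hookrightarrow L^2(\Om;d^nx)$ gives exactly $R$ followed by the inclusion, so the eigenvalue condition $\mu=z+1\notin\si(-\Delta_{N,\Om}+I_\Om)$, i.e. $z\notin\si(-\Delta_{N,\Om})$, is precisely what makes $I_\Om-(z+1)R$ invertible on $L^2$, and by the analytic/meromorphic Fredholm alternative this upgrades to bounded invertibility of \eqref{prop-factor} on the $H^1$--$(H^1)^*$ scale. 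Once invertibility is in hand, existence, uniqueness, and the a priori estimate \eqref{3.Jq2} all follow from $\|u\|_{H^1(\Om)}\le\big\|\big(-\wti\Delta_{N,\Om}-z\wti I_\Om\big)^{-1}\big\|_{\cB((H^1(\Om))^*,H^1(\Om))}\,\|w\|_{(H^1(\Om))^*}$.

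Finally, for the last assertion, one must check that $\big(-\wti\Delta_{N,\Om}-z\wti I_\Om\big)^{-1}$ genuinely extends the $L^2$-resolvent $(-\Delta_{N,\Om}-zI_\Om)^{-1}$. This is a compatibility statement: if $w\in L^2(\Om;d^nx)\hookrightarrow(H^1(\Om))^*$ and $u$ solves \eqref{prop-LM}, then testing against $v\in C_0^\infty(\Om)$ shows $(-\Delta-z)u=w$ in $\mathcal D'(\Om)$ with $u\in H^1(\Om)$ and $\Delta u\in L^2(\Om;d^nx)$; it then remains to see that $\wti\ga_{\cN}(u,w)=0$ forces $\wti\ga_N u=0$ in $H^{-1/2}(\dOm)$ when $w$ is the natural $L^2$-extension, so that $u\in\dom(-\Delta_{N,\Om})$ (cf.\ Theorem \ref{t2.3}) and hence $u=(-\Delta_{N,\Om}-zI_\Om)^{-1}w$. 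This is exactly the point discussed in Remark \ref{GaNu}: for $w=\iota(-\Delta u-zu)$ the "natural" extension, $\wti\ga_{\cN}(u,w)$ coincides with the usual weak Neumann trace $\wti\ga_N u$ (relation \eqref{gjjh} together with \eqref{2.12X}), so the boundary condition in \eqref{3.Jq} really does select the Neumann realization. I expect the main obstacle to be the bookkeeping around the three different Neumann traces ($\ga_N$, $\wti\ga_N$, $\wti\ga_{\cN}$) and verifying that the weak formulation \eqref{prop-LM} is genuinely equivalent to the boundary value problem \eqref{3.Jq} with the density of $\ga_D\big(H^1(\Om)\big)$ in $H^{1/2}(\dOm)$ used to conclude $\wti\ga_{\cN}(u,w)=0$ from \eqref{prop-LM}; the functional-analytic core (Lax--Milgram plus Fredholm alternative via compactness of $R$) is routine by comparison.
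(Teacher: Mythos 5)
Your argument is correct, but note that the paper itself offers no proof of Theorem \ref{t3.XV} to compare against: it simply defers to \cite{GM08} ("see \cite{GM08} for proofs"). Your route -- rewriting \eqref{3.Jq} as the operator equation $\big(-\wti\Delta_{N,\Om}-z\wti I_\Om\big)u=w$ in $\big(H^1(\Om)\big)^*$ via \eqref{3.JqZ} and \eqref{2.9X}, establishing that $-\wti\Delta_{N,\Om}+\wti I_\Om$ is the Riesz isomorphism for the $H^1$-inner product, and then treating $-(z+1)\wti I_\Om$ as a compact perturbation handled by the Fredholm alternative together with $z\notin\si(-\Delta_{N,\Om})$, and finally using \eqref{2.12X} (cf.\ Remark \ref{GaNu}) to see that for $w\in L^2(\Om;d^nx)$ the condition $\wti\ga_{\cN}(u,w)=0$ reduces to $\wti\ga_N u=0$, so the solution lies in $\dom(-\Delta_{N,\Om})$ as in Theorem \ref{t2.3} -- is the standard variational argument and is sound. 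Two small points to tighten. First, your weak formulation has the arguments in the wrong slots for the paper's conventions: with the pairing antilinear in the first factor and the inner product linear in the second, the identity should read $(\nabla v,\nabla u)_{(L^2(\Om;d^nx))^n}-z\,(v,u)_{L^2(\Om;d^nx)}={}_{H^1(\Om)}\langle v,w\rangle_{(H^1(\Om))^*}$ for all $v\in H^1(\Om)$; as written, your left side is linear in $v$ while the right side is antilinear in $v$. Second, the phrase "by the analytic/meromorphic Fredholm alternative this upgrades" to invertibility on the $H^1$--$(H^1)^*$ scale is the one genuinely loose step: what you actually need is invertibility of $I_{H^1(\Om)}-(z+1)K$ on $H^1(\Om)$, where $K=(-\wti\Delta_{N,\Om}+\wti I_\Om)^{-1}\wti I_\Om$ is compact; since $K$ and $R$ differ only by the order of composition of $(-\Delta_{N,\Om}+I_\Om)^{-1}$ with the embedding $H^1(\Om)\hookrightarrow L^2(\Om;d^nx)$, one can either invoke $\si(AB)\backslash\{0\}=\si(BA)\backslash\{0\}$, or, more directly, note that any $u\in\ker\big(I_{H^1(\Om)}-(z+1)K\big)$ satisfies $(\nabla v,\nabla u)=z(v,u)$ for all $v\in H^1(\Om)$, hence $\Delta u=-zu\in L^2(\Om;d^nx)$ with $\wti\ga_N u=0$, so $u\in\dom(-\Delta_{N,\Om})$ is a Neumann eigenfunction and $u=0$ because $z\notin\si(-\Delta_{N,\Om})$; the Fredholm alternative for the compact operator $K$ then yields bounded invertibility, and with that line added the proof is complete.
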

%%%%%%%%%%%%%%%%%%%%%%

Continuing to retain Hypothesis \ref{h2.1}, we denote by 
\begin{equation}\lb{3.JqD}
- \wti \Delta_{D,\Om}\in\cB\big(H^{-1}(\Om),H^1_0(\Om)\big)
\end{equation}
the operator defined uniquely by the requirement that 
\begin{equation} \lb{3.JqD2}
{}_{H^1_0(\Om)}\langle u,- \wti \Delta_{D,\Om}v\rangle_{(H^1_0(\Om))^*}
=\int_{\Om}d^nx\,\ol{\nabla u(x)}\cdot\nabla v(x)
= (\nabla u, \nabla v)_{L^2(\Om; d^n x)},\quad u,v\in H^1_0(\Om).
\end{equation}
In this context, $-\Delta_{D,\Om}$ then becomes the part of 
$- \wti \Delta_{D,\Om}$ in $L^2(\Om;d^nx)$. 

%%%%%%%%%%%%%%%%%%%%
\begin{theorem} \lb{t3.XD} 
Assume Hypothesis \ref{h2.1} and suppose that 
$z\in\bbC\backslash\si(-\Delta_{D,\Om})$. Then for every 
$w\in H^{-1}(\Omega)$, the following inhomogeneous Dirichlet problem,
\begin{equation} \lb{3.JqD3} 
(-\Delta - z)u = w\text{ in }\,\Om,
\quad u \in H^1_0(\Om), 
\end{equation}
has a unique solution $u=u_{D,w}$. Moreover, there exists a constant 
$C= C(\Om,z)>0$ such that
\begin{equation}\lb{3.JqD4}
\|u_{D,w}\|_{H^{1}(\Omega)} \leq C\|w\|_{H^{-1}(\partial\Omega)}.  
\end{equation}
In particular, the operator $(-\Delta_{D,\Om}-zI_\Om)^{-1}$,
$z\in\bbC\backslash\si(-\Delta_{D,\Om})$, originally defined as a bounded 
operator on $\LOm$,  
\begin{align}\label{faH-D5}
(-\Delta_{D,\Om}-zI_\Om)^{-1} \in \cB\big(L^2(\Om;d^nx)\big),
\end{align}
can be extended to a map in $\cB\big(H^{-1}(\Om),H^1_0(\Om)\big)$, 
which in fact coincides with 
\begin{equation}\label{fcH-D6}
\big(- \wti \Delta_{D,\Om} -z \wti I_\Om\big)^{-1}
\in\cB\big(H^{-1}(\Om),H^1_0(\Om)\big).
\end{equation} 
\end{theorem}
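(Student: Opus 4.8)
The plan is to establish Theorem \ref{t3.XD} in close parallel with the proof of Theorem \ref{t3.XV}, using the variational (Lax--Milgram) setup associated with the sesquilinear form $(\nabla u,\nabla v)_{L^2(\Om;d^nx)}$ on $H^1_0(\Om)$. First I would recall that $-\wti\Delta_{D,\Om}\in\cB\big(H^{-1}(\Om),H^1_0(\Om)\big)$ as introduced in \eqref{3.JqD}, \eqref{3.JqD2} is an isomorphism; this is immediate from Lax--Milgram applied to the coercive form on $H^1_0(\Om)$ (coercivity being a consequence of the Poincar\'e inequality on the bounded domain $\Om$, together with $\big(H^1_0(\Om)\big)^*=H^{-1}(\Om)$). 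Then the operator $-\wti\Delta_{D,\Om}-z\wti I_\Om$ differs from $-\wti\Delta_{D,\Om}$ by the compact operator $-z\wti I_\Om$ acting from $H^1_0(\Om)$ into $H^{-1}(\Om)$ (compactness here follows from Rellich's theorem, $H^1_0(\Om)\hookrightarrow L^2(\Om;d^nx)\hookrightarrow H^{-1}(\Om)$ with the first embedding compact), so it is Fredholm of index zero.

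Next I would identify its kernel. If $u\in H^1_0(\Om)$ satisfies $(-\Delta-z)u=0$ in $\cD'(\Om)$, then $u\in\dom(-\Delta_{D,\Om})$ by \eqref{2.39} (since $\Delta u=-zu\in L^2(\Om;d^nx)$ and $u\in H^1_0(\Om)$), hence $u\in\ker(-\Delta_{D,\Om}-zI_\Om)$; since $z\notin\si(-\Delta_{D,\Om})$, this forces $u=0$. Thus $-\wti\Delta_{D,\Om}-z\wti I_\Om$ is injective, and being Fredholm of index zero it is an isomorphism of $H^1_0(\Om)$ onto $H^{-1}(\Om)$. Existence and uniqueness of the solution $u_{D,w}$ to \eqref{3.JqD3} follows, with $u_{D,w}=\big(-\wti\Delta_{D,\Om}-z\wti I_\Om\big)^{-1}w$, and \eqref{3.JqD4} is just the boundedness of this inverse (with the norm on the right-hand side of \eqref{3.JqD4} understood, as in the rest of the paper, as the $H^{-1}(\Om)$-norm, consistent with the identification $\big(H^1_0(\Om)\big)^*=H^{-1}(\Om)$).

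Finally I would verify the consistency claim \eqref{faH-D5}--\eqref{fcH-D6}: for $w\in L^2(\Om;d^nx)\hookrightarrow H^{-1}(\Om)$, the solution $u_{D,w}$ produced above lies in $H^1_0(\Om)$ and satisfies $(-\Delta-z)u_{D,w}=w$ with $\Delta u_{D,w}=w+zu_{D,w}\in L^2(\Om;d^nx)$, so $u_{D,w}\in\dom(-\Delta_{D,\Om})$ and hence $u_{D,w}=(-\Delta_{D,\Om}-zI_\Om)^{-1}w$; thus $\big(-\wti\Delta_{D,\Om}-z\wti I_\Om\big)^{-1}$ restricts to $(-\Delta_{D,\Om}-zI_\Om)^{-1}$ on $L^2(\Om;d^nx)$, which is precisely the assertion. (I note in passing the evident typo in \eqref{fcH-D6}, where $-\wti\Delta_{N,\Om}$ should read $-\wti\Delta_{D,\Om}$.)

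The routine parts are the Lax--Milgram and Fredholm bookkeeping; the one point requiring a little care is the kernel identification, i.e.\ checking that a variational solution of the homogeneous equation in $H^1_0(\Om)$ genuinely belongs to $\dom(-\Delta_{D,\Om})$ as characterized in \eqref{2.39}, so that the hypothesis $z\notin\si(-\Delta_{D,\Om})$ can be invoked — but this is immediate from the second description of $\dom(-\Delta_{D,\Om})$ in \eqref{2.39}. So I do not anticipate a genuine obstacle here; the theorem is essentially the $H^{-1}$-to-$H^1_0$ mapping-property reformulation of the already-established self-adjointness and invertibility of $-\Delta_{D,\Om}-zI_\Om$, packaged exactly as Theorem \ref{t3.XV} packages the Neumann case.
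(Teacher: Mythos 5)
Your argument is correct and complete: the Lax--Milgram isomorphism for $-\wti\Delta_{D,\Om}$ on $H^1_0(\Om)$ (via Poincar\'e), the compact perturbation $-z\wti I_\Om$ giving Fredholm index zero, the kernel identification through the second description of $\dom(-\Delta_{D,\Om})$ in \eqref{2.39} together with $z\notin\si(-\Delta_{D,\Om})$, and the consistency check for $w\in L^2(\Om;d^nx)$ are exactly the right ingredients, and you correctly spot the typos in \eqref{3.JqD4} and \eqref{fcH-D6}. The paper itself gives no proof of this theorem (it defers to \cite{GM08}, as it does for Theorem \ref{t3.XV}), and your variational route is the standard one consistent with the framework set up in \eqref{3.JqD}--\eqref{3.JqD2}, so nothing further is needed.
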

%%%%%%%%%%%%%%%%%%%%%%

Assuming Hypothesis \ref{h2.1}, we now introduce the Dirichlet-to-Neumann 
map $M_{D,N,\Om}^{(0)}(z)$ associated with $(-\Delta-z)$ on $\Om$, as follows,
\begin{align}
M_{D,N,\Om}^{(0)}(z) \colon
\begin{cases}
H^1(\dOm) \to \LdOm,  \\
\hspace*{10mm} f \mapsto - \wti\ga_N (u_D),
\end{cases}  \quad z\in\bbC\backslash\si(-\Delta_{D,\Om}), \lb{3.44}
\end{align}
where $u_D$ is the unique solution of
\begin{align}
(-\Delta-z)u = 0 \,\text{ in }\Om, \quad u \in
H^{3/2}(\Om), \quad \ga_D u = f \,\text{ on }\dOm.   \lb{3.45}
\end{align}

Still assuming Hypothesis \ref{h2.1}, we next introduce the 
Neumann-to-Dirichlet map $M_{N,D,\Om}^{(0)}(z)$ associated with 
$(-\Delta-z)$ on $\Om$, as follows,
\begin{align}
M_{N,D,\Om}^{(0)}(z) \colon \begin{cases} \LdOm \to H^1(\dOm),
\\
\hspace*{20.8mm} g \mapsto \ga_D u_N, \end{cases}  \quad
z\in\bbC\backslash\si(-\Delta_{N,\Om}), \lb{3.48}
\end{align}
where $u_N$ is the unique solution of
\begin{align}
(-\Delta-z)u = 0 \,\text{ in }\Om, \quad u \in
H^{3/2}(\Om), \quad\wti\ga_N u  = g 
\,\text{ on }\dOm.  \lb{3.49}
\end{align}

%%%%%%%%%%%%%%
\begin{theorem} \lb{t3.5} 
Assume Hypothesis \ref{h2.1}. Then 
\begin{equation}
M_{D,N,\Om}^{(0)}(z) \in \cB\big(H^1(\partial\Om), \LdOm \big), \quad
z\in\bbC\backslash\si(-\Delta_{D,\Om}),   \lb{3.46}
\end{equation}
and 
\begin{equation}
M_{D,N,\Om}^{(0)}(z) =
\wti\gamma_N\big[\wti\gamma_N(-\Delta_{D,\Om} - \ol{z}I_\Om)^{-1}\big]^*, 
\quad z\in\bbC\backslash\si(-\Delta_{D,\Om}). \lb{3.47}
\end{equation}
Moreover, 
\begin{equation}
M_{N,D,\Om}^{(0)}(z) \in \cB\big(\LdOm, H^1(\partial\Om) \big), \quad 
z\in\bbC\backslash\si(-\Delta_{N,\Om}),    \lb{3.50}
\end{equation}
hence, in particular, 
\begin{equation}
M_{N,D,\Om}^{(0)}(z) \in \cB_\infty\big(\LdOm\big), \quad 
z\in\bbC\backslash\si(-\Delta_{N,\Om}).  \lb{3.51}
\end{equation}
In addition, 
\begin{equation}
M_{N,D,\Om}^{(0)}(z) = \gamma_D\big[\gamma_D
(-\Delta_{N,\Om} - \ol{z}I_\Om)^{-1}\big]^*, \quad
z\in\bbC\backslash\si(-\Delta_{N,\Om}). \lb{3.52}
\end{equation} 

Finally, let 
$z\in\bbC\backslash(\si(-\Delta_{D,\Om})\cup\si(-\Delta_{N,\Om}))$. Then
\begin{equation}
M_{N,D,\Om}^{(0)}(z) = - M_{D,N,\Om}^{(0)}(z)^{-1}.   \lb{3.53}  
\end{equation}
\end{theorem}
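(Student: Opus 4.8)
The plan is to verify each mapping property and identity in turn, building on the well-posedness results for the Dirichlet and Neumann problems (Theorems \ref{t3.3}, \ref{t3.2H}) and on the operator-theoretic identities recorded there. First I would establish \eqref{3.46}: for $f\in H^1(\dOm)$, Theorem \ref{t3.3} (with $s=1$) guarantees a unique solution $u_D\in H^{3/2}(\Om)$ of \eqref{3.45} with $\|u_D\|_{H^{3/2}(\Om)}\le C_D\|f\|_{H^1(\dOm)}$, and then $\wti\ga_N u_D\in L^2(\dOm;d^{n-1}\omega)$ with $\|\wti\ga_N u_D\|_{L^2(\dOm)}\le C_D\|f\|_{H^1(\dOm)}$ by the trace bound \eqref{Hh.3} for $s=1$ combined with the mapping property of $\wti\ga_N$ on $\{u\in H^{3/2}(\Om)\mid\Delta u\in L^2\}$ from Lemma \ref{Neu-tr} (here $\Delta u_D=-zu_D\in L^2(\Om;d^nx)$). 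This gives boundedness of $M^{(0)}_{D,N,\Om}(z)$ from $H^1(\dOm)$ to $\LdOm$. The representation \eqref{3.47} then follows by inserting the explicit solution formula \eqref{3.35}, $u_D=-[\wti\ga_N(-\Delta_{D,\Om}-\ol z I_\Om)^{-1}]^*f$, into the definition $M^{(0)}_{D,N,\Om}(z)f=-\wti\ga_N u_D$.

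Next I would treat the Neumann-to-Dirichlet map analogously, using Theorem \ref{t3.2H} with $s=1$: for $g\in\LdOm$ (which is $H^{s-1}(\dOm)$ with $s=1$), the solution $u_N\in H^{3/2}(\Om)$ of \eqref{3.49} exists, is unique, satisfies $\|u_N\|_{H^{3/2}(\Om)}\le C\|g\|_{\LdOm}$, and by \eqref{3.6aH} one has $\ga_D u_N\in H^1(\dOm)$ with the corresponding norm bound. This yields \eqref{3.50}, and then \eqref{3.51} is immediate from the compact embedding $H^1(\dOm)\hookrightarrow\LdOm$ of \eqref{EQ1} (with $s=1$). The identity \eqref{3.52} comes from inserting the solution formula \eqref{3.9H}, $u_N=(\ga_D(-\Delta_{N,\Om}-\ol z I_\Om)^{-1})^*g$, into $M^{(0)}_{N,D,\Om}(z)g=\ga_D u_N$.

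For the final identity \eqref{3.53}, assume $z\in\bbC\backslash(\si(-\Delta_{D,\Om})\cup\si(-\Delta_{N,\Om}))$. The key observation is that for $f\in H^1(\dOm)$, the Dirichlet solution $u_D$ of \eqref{3.45} and the Neumann solution $u_N$ of \eqref{3.49} corresponding to Neumann data $g:=\wti\ga_N u_D=-M^{(0)}_{D,N,\Om}(z)f$ both solve $(-\Delta-z)u=0$ in $\Om$ with $u\in H^{3/2}(\Om)$ and $\wti\ga_N u=g$; by the uniqueness assertion in Theorem \ref{t3.2H}, $u_N=u_D$. Hence $M^{(0)}_{N,D,\Om}(z)g=\ga_D u_N=\ga_D u_D=f$, i.e. $M^{(0)}_{N,D,\Om}(z)\big(-M^{(0)}_{D,N,\Om}(z)f\big)=f$ for all $f\in H^1(\dOm)$. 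Running the same argument in the reverse direction (start with $g\in\LdOm$, solve the Neumann problem for $u_N\in H^{3/2}(\Om)$, set $f:=\ga_D u_N$, and use uniqueness in Theorem \ref{t3.3} to identify $u_D=u_N$) gives $-M^{(0)}_{D,N,\Om}(z)\big(M^{(0)}_{N,D,\Om}(z)g\big)=g$ for all $g\in\LdOm$. Together with the already-established boundedness \eqref{3.46}, \eqref{3.50}, these two relations show that $-M^{(0)}_{D,N,\Om}(z)$ is a two-sided bounded inverse of $M^{(0)}_{N,D,\Om}(z)$, which is \eqref{3.53}.

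The main obstacle is the careful bookkeeping of Sobolev exponents in the two endpoint cases and making sure that the solutions furnished by Theorems \ref{t3.3} and \ref{t3.2H} genuinely lie in $H^{3/2}(\Om)$ so that the \emph{strong-sense} trace operators $\ga_D$ (via \eqref{2.6}) and $\wti\ga_N$ (via Lemma \ref{Neu-tr}, \eqref{MaX-1}) apply with values in $H^1(\dOm)$ and $\LdOm$ respectively; this is exactly the regularity the $s=1$ instances of those theorems deliver, so the matching is clean, but it must be stated explicitly. The uniqueness arguments underlying \eqref{3.53} are then essentially immediate, since both boundary value problems are posed in the \emph{same} solution class $H^{3/2}(\Om)$ and each is uniquely solvable in that class for $z$ avoiding the respective spectrum.
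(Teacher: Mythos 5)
Your proposal is correct. Note that the paper itself supplies no proof of Theorem \ref{t3.5} (it is recalled from \cite{GM08}, just as the $s\in\{0,1\}$ cases of Theorems \ref{t3.3} and \ref{t3.2H} are), so there is no in-text argument to compare against; your derivation -- boundedness from the $s=1$ instances of Theorems \ref{t3.3} and \ref{t3.2H}, the representations \eqref{3.47} and \eqref{3.52} by inserting the solution formulas \eqref{3.35} and \eqref{3.9H}, compactness \eqref{3.51} from \eqref{EQ1}, and \eqref{3.53} by the two uniqueness arguments in the common solution class $H^{3/2}(\Om)$ -- is exactly the intended route, and your closing remark about the compatibility of the strong and weak trace operators on $H^{3/2}(\Om)$ (Lemma \ref{Neu-tr}) is the only delicate point and is handled correctly.
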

%%%%%%%%%%%%%%

For closely related recent work on Weyl--Titchmarsh operators associated with non-smooth 
domains we refer to \cite{GM08}, \cite{GM09}, \cite{GM09a}, and \cite{GMZ07}. For an 
extensive list of references on $z$-dependent Dirichlet-to-Neumann maps predominantly 
associated with smooth domains we also refer, for instance, to \cite{Ag03}, \cite{ABMN05}, 
\cite{AP04}, \cite{BL07}, \cite{BMN02}, 
\cite{BGW09}, \cite{BHMNW09}, \cite{BMNW08}, \cite{BGP08}, 
\cite{DM91}, \cite{DM95}, \cite{GLMZ05}--\cite{GMZ07}, \cite{Gr08a}, \cite{Po08}, 
\cite{Ry07}, \cite{Ry09}, \cite{Ry10}. We will return to this topic in Section \ref{s11} 
in the context of quasi-convex domains.

In the last part of this section we include a brief discussion of 
nonlocal Robin Laplacians in bounded Lipschitz subdomains of $\bbR^n$. 
Concretely, we describe a family of self-adjoint Laplace operators 
$-\Delta_{\Theta,\Om}$ in $L^2(\Om;d^nx)$ indexed by the boundary operator 
$\Theta$. We will refer to $-\Delta_{\Theta,\Om}$ as the (nonlocal) 
Robin Laplacian. To facilitate the presentation, we isolate a technical 
condition in the hypothesis below:

%%%%%%%%%%%%%%
\begin{hypothesis} \lb{h2.2FF}
Assume Hypothesis \ref{h2.1}, suppose that $\delta>0$ is a given number,
and assume that $\Theta\in\cB\big(H^{1/2}(\dOm),H^{-1/2}(\dOm)\big)$
is a self-adjoint operator which can be written as
\begin{equation}\label{Filo-1}
\Theta=\Theta_1+\Theta_2+\Theta_3,
\end{equation}
where the operators $\Theta_j$, $j=1,2,3$, have the following properties: There exists a 
closed sesquilinear form $a_{\Theta_0}$ in $\LdOm$, with domain 
$H^{1/2}(\dOm)\times H^{1/2}(\dOm)$, bounded from below
by $c_{\Theta_0}\in\bbR$ $($hence, $a_{\Theta_0}$
is symmetric$)$ such that if $\Theta_0\ge c_{\Theta}I_{\dOm}$ denotes
the self-adjoint operator in $\LdOm$ uniquely associated with $a_{\Theta_0}$, 
then $\Theta_1$ is the extension of $\Theta_0$ to an operator in 
$\cB\big(H^{1/2}(\dOm),H^{-1/2}(\dOm)\big)$. In addition,
\begin{equation}\label{Filo-2}
\Theta_2\in\cB_{\infty}\big(H^{1/2}(\dOm),H^{-1/2}(\dOm)\big),
\end{equation}
whereas $\Theta_3\in\cB\big(H^{1/2}(\dOm),H^{-1/2}(\dOm)\big)$ satisfies
\begin{equation}\label{Filo-3}
\|\Theta_3\|_{\cB(H^{1/2}(\dOm),H^{-1/2}(\dOm))}<\delta.
\end{equation}
\end{hypothesis}
%%%%%%%%%%%%%%%%%%%%

The following result has been proved in \cite{GM09a}:
 
%%%%%%%%%%%%%%%%%%%%
\begin{theorem}\lb{t2.3FF}
Assume Hypothesis~\ref{h2.2FF}, where the number $\delta>0$ is taken
to be sufficiently small relative to the Lipschitz character of $\Om$.
Then the nonlocal Robin Laplacian, $-\Delta_{\Theta,\Om}$, defined by
\begin{align}\lb{2.20FF}
& -\Delta_{\Theta,\Om}=-\Delta,    \\
& \; \dom(-\Delta_{\Theta,\Om})=
\big\{u\in H^1(\Om) \,\big|\, \Delta u\in L^2(\Om;d^nx),\,
\big(\wti\gamma_N+\Theta \gamma_D\big)u=0\text{ in $H^{-1/2}(\dOm)$}\big\} \no
\end{align}
is self-adjoint and bounded from below in $L^2(\Om;d^nx)$. Moreover,
\begin{equation}\lb{2.21FF}
\dom\big(|-\Delta_{\Theta,\Om}|^{1/2}\big)=H^1(\Om),
\end{equation}
and $-\Delta_{\Theta,\Om}$, has purely discrete spectrum bounded from below,
in particular,
\begin{equation}\lb{2.21aFF}
\sigma_{\rm ess}(-\Delta_{\Theta,\Om})=\emptyset.
\end{equation}
\end{theorem}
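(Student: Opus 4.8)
\emph{Proof strategy} (following \cite{GM09a}).\ The plan is to realize $-\Delta_{\Theta,\Om}$ via the first representation theorem for semibounded closed forms, starting from the sesquilinear form
\begin{equation}
a_\Theta(u,v):=(\nabla u,\nabla v)_{\LOm}+\langle\ga_D u,\Theta\ga_D v\rangle_{1/2},
\quad u,v\in\dom(a_\Theta):=H^1(\Om),
\end{equation}
where $\langle\dott,\dott\rangle_{1/2}$ denotes the duality pairing between $H^{1/2}(\dOm)$ and $(H^{1/2}(\dOm))^*=H^{-1/2}(\dOm)$. Since $\ga_D\in\cB(H^1(\Om),H^{1/2}(\dOm))$ (the case $s=1$ of \eqref{2.6}) and $\Theta\in\cB(H^{1/2}(\dOm),H^{-1/2}(\dOm))$ is self-adjoint, $a_\Theta$ is a well-defined symmetric sesquilinear form satisfying $|a_\Theta(u,u)|\le C\|u\|_{H^1(\Om)}^2$; in particular $a_\Theta(u,u)\in\bbR$. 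The argument then falls into three parts: (I) show $a_\Theta$ is bounded from below and closed, with form norm equivalent to $\|\dott\|_{H^1(\Om)}$; (II) identify the self-adjoint operator furnished by the representation theorem with $-\Delta_{\Theta,\Om}$; (III) deduce discreteness of the spectrum.

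Part (I) is where I expect the main difficulty. Setting $g:=\ga_D u$, I would use the decomposition \eqref{Filo-1}: the term coming from $\Theta_1$ obeys $\langle g,\Theta_1 g\rangle_{1/2}=a_{\Theta_0}(g,g)\ge c_{\Theta_0}\|g\|_{\LdOm}^2$ by construction; the term coming from $\Theta_2$ is infinitesimally form-bounded relative to $\|\dott\|_{H^{1/2}(\dOm)}^2$, i.e.\ $|\langle g,\Theta_2 g\rangle_{1/2}|\le\eps\|g\|_{H^{1/2}(\dOm)}^2+C_\eps\|g\|_{\LdOm}^2$ for every $\eps>0$ (a contradiction argument using the compactness \eqref{Filo-2} of $\Theta_2$ and the compact embedding \eqref{EQ1}); and $|\langle g,\Theta_3 g\rangle_{1/2}|\le\delta\|g\|_{H^{1/2}(\dOm)}^2$ by \eqref{Filo-3}. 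Two trace estimates then complete the argument: the boundedness $\|\ga_D u\|_{H^{1/2}(\dOm)}^2\le C_{tr}\|u\|_{H^1(\Om)}^2$ with $C_{tr}$ governed by the Lipschitz character of $\Om$; and, since $\ga_D\colon H^1(\Om)\to\LdOm$ is compact (the composition of \eqref{2.6} with \eqref{EQ1}) while $H^1(\Om)\hookrightarrow\LOm$ is compact by Rellich, the estimate $\|\ga_D u\|_{\LdOm}^2\le\eta\|\nabla u\|_{\LOm}^2+C_\eta\|u\|_{\LOm}^2$ for every $\eta>0$. Choosing first $\delta$ so small that $\delta C_{tr}$ is small (this is exactly the smallness of $\delta$ relative to the Lipschitz character assumed in the theorem), then $\eps$ small, then $\eta$ small, and absorbing terms, one arrives at $a_\Theta(u,u)\ge\tfrac12\|\nabla u\|_{\LOm}^2-\kappa\|u\|_{\LOm}^2$ for some $\kappa=\kappa(\Om,\Theta)\in\bbR$. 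Hence $a_\Theta$ is bounded from below, and $a_\Theta(u,u)+(\kappa+1)\|u\|_{\LOm}^2$ is a norm equivalent to $\|u\|_{H^1(\Om)}^2$; completeness of $H^1(\Om)$ then yields closedness of $a_\Theta$.

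For part (II), the first representation theorem produces a unique self-adjoint operator $A_\Theta$ in $\LOm$, bounded from below by $-\kappa$, with $\dom((A_\Theta+(\kappa+1)I_\Om)^{1/2})=\dom(a_\Theta)=H^1(\Om)$; since this form domain coincides with $\dom(|{-\Delta_{\Theta,\Om}}|^{1/2})$, this is \eqref{2.21FF}. To see that $A_\Theta=-\Delta_{\Theta,\Om}$: testing $a_\Theta(u,v)=(u,A_\Theta v)_{\LOm}$ against $u\in C_0^\infty(\Om)$ gives $-\Delta v=A_\Theta v$ in $\cD'(\Om)$, hence $\Delta v\in\LOm$ and $\wti\ga_N v\in H^{-1/2}(\dOm)$ is defined through \eqref{2.8}, \eqref{2.9} (with $s=1/2$ and $s_0=0$ there). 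Green's formula \eqref{2.9} reads $(\nabla\Phi,\nabla v)_{\LOm}=\langle\ga_D\Phi,\wti\ga_N v\rangle_{1/2}-(\Phi,\Delta v)_{\LOm}$ for every $\Phi\in H^1(\Om)$; inserting this into $a_\Theta(\Phi,v)=(\Phi,A_\Theta v)_{\LOm}$ and using $A_\Theta v=-\Delta v$ leaves $\langle\ga_D\Phi,\wti\ga_N v+\Theta\ga_D v\rangle_{1/2}=0$ for all $\Phi\in H^1(\Om)$, and since $\ga_D\colon H^1(\Om)\to H^{1/2}(\dOm)$ is onto (the case $s=1$ of \eqref{2.6a}) this forces $(\wti\ga_N+\Theta\ga_D)v=0$ in $H^{-1/2}(\dOm)$, i.e.\ $v\in\dom(-\Delta_{\Theta,\Om})$ and $A_\Theta v=-\Delta v$. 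Conversely, running the same chain of identities in reverse shows that every $v\in\dom(-\Delta_{\Theta,\Om})$ satisfies $a_\Theta(\Phi,v)=(\Phi,-\Delta v)_{\LOm}$ for all $\Phi\in H^1(\Om)=\dom(a_\Theta)$, hence $v\in\dom(A_\Theta)$. Thus $A_\Theta=-\Delta_{\Theta,\Om}$, which establishes \eqref{2.20FF} along with self-adjointness and lower semiboundedness. Part (III) is then immediate: $\dom(a_\Theta)=H^1(\Om)$ with norm equivalent to the form norm, and $H^1(\Om)\hookrightarrow\LOm$ compactly (Rellich--Kondrachov on the bounded Lipschitz domain $\Om$), so the resolvent $(A_\Theta+(\kappa+1)I_\Om)^{-1}$ factors through this compact embedding and is compact. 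Therefore $-\Delta_{\Theta,\Om}$ has compact resolvent, so its spectrum is purely discrete and bounded from below, and in particular $\sigma_{\rm ess}(-\Delta_{\Theta,\Om})=\emptyset$, which is \eqref{2.21aFF}.
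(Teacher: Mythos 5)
Your proposal is correct, and it follows essentially the route the paper relies on: Theorem \ref{t2.3FF} is not proved in this paper but quoted from \cite{GM09a}, where the argument is precisely this sesquilinear-form construction (lower bound and closedness of $a_\Theta$ via the splitting $\Theta=\Theta_1+\Theta_2+\Theta_3$, the trace estimates with $\delta$ small relative to the Lipschitz character, Kato's representation theorems, and identification of the boundary condition through Green's formula \eqref{2.9} and surjectivity of $\gamma_D$), with discreteness of the spectrum deduced from the compact embedding $H^1(\Om)\hookrightarrow\LOm$ exactly as you do.
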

%%%%%%%%%%%%%%%%%%%%

%%%%%%%%%%%%%%%%%%%%%%%%%%%%%%%%%%%%%%
%%%%%%%%%%%%%%%%% appendix E %%%%%%%%%%%%%%%
\section{Higher-Order Smoothness Spaces on Lipschitz Surfaces}
\lb{s6}
%%%%%%%%%%%%%%%%%%%%%%%%%%%%%%%%%%%%%%
%%%%%%%%%%%%%%%%%%%%%%%%%%%%%%%%%%%%%%

This section presents some new results on higher-order smoothness spaces on 
Lipschitz surfaces.

Assuming Hypothesis \ref{h2.1} we recall the tangential derivative 
operators $\partial/\partial\tau_{j,k}$ in \eqref{Pf-2}. We can then 
define the tangential gradient operator 
\begin{align}\lb{Tan-C1} 
\nabla_{tan}:\begin{cases} H^1(\partial\Omega)\to  
\big(L^2(\partial\Omega;d^{n-1}\omega)\big)^n   \\[1mm]
\hspace*{1cm}
f \mapsto \nabla_{tan}f=\Big(\sum_{k=1}^n\nu_k\frac{\partial f}{\partial\tau_{k,j}}
\Big)_{1\leq j\leq n}.
\end{cases}
\end{align}
The following result has been proved in \cite{MMS05}.

%%%%%%%%%%%%%%%%%%%%%%%%%%%%%%%%%
\begin{theorem}\lb{T-MMS}
Assume Hypothesis \ref{h2.1} and denote by $\nu$ the outward unit normal 
to $\partial\Omega$. Then the operator 
\begin{align} 
\gamma_2: \begin{cases} H^2(\Omega)\to \big\{(g_0,g_1)\in H^1(\partial\Omega)
\dotplus    L^2(\partial\Omega;d^{n-1}\omega)\,\big|\, 
\nabla_{tan}g_0  % \\
% \hspace*{7.5cm} 
+g_1\nu\in \big(H^{1/2}(\partial\Omega)\big)^n\big\}    \\
\hspace*{8mm} u \mapsto \gamma_2 u =\big(\gamma_D u,\gamma_N u \big)
\end{cases}   \lb{Tan-C2} 
\end{align}
is well-defined, linear, bounded, onto, and has a linear, bounded 
right-inverse. In \eqref{Tan-C2}, the space $\bigl\{(g_0,g_1)\in H^1(\partial\Omega)
\dotplus    L^2(\partial\Omega;d^{n-1}\omega) \,\big|\, 
\nabla_{tan}g_0+g_1\nu\in \bigl(H^{1/2}(\partial\Omega)\bigr)^n\bigl\}$ 
is considered equipped with the natural norm 
\begin{eqnarray}\lb{NoRw-1}
(g_0,g_1)\mapsto \|g_0\|_{H^1(\partial\Omega)}
+\|g_1\|_{L^2(\partial\Omega;d^{n-1}\omega)}
+\|\nabla_{tan}g_0+g_1\nu\|_{(H^{1/2}(\partial\Omega))^n}.
\end{eqnarray}
Furthermore, the null space of the operator \eqref{Tan-C2}
is given by 
\begin{eqnarray}\lb{Tan-C3} 
\ker (\gamma_2) = 
\big\{u\in H^2(\Omega) \,\big|\, \gamma_D u =\gamma_N u =0\big\}
=H^2_0(\Omega),
\end{eqnarray}
with the latter space denoting the closure of $C^\infty_0(\Omega)$ 
in $H^2(\Omega)$.
\end{theorem}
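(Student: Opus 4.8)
The plan is to proceed in three steps: boundedness (via a ``boundary chain rule''), the identification of the kernel, and surjectivity together with a bounded linear right-inverse; the last of these is the substantial one.

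\textbf{Step 1 (well-definedness and boundedness).} The cornerstone is the identity, valid for every $u\in H^2(\Om)$,
\begin{equation}\label{PP-chain}
\gamma_D(\nabla u)=\nabla_{tan}(\gamma_D u)+(\gamma_N u)\,\nu \quad\text{in }\big(H^{1/2}(\dOm)\big)^n
\end{equation}
(componentwise, $\gamma_D(\partial_j u)=(\nabla_{tan}(\gamma_D u))_j+\nu_j\,\gamma_N u$, $1\le j\le n$). First I would verify \eqref{PP-chain} for $u\in C^\infty(\ol\Om)$, where it merely records that the tangential component of the ambient gradient, restricted to $\dOm$, coincides with the intrinsic tangential gradient of the restriction; then I would pass to arbitrary $u\in H^2(\Om)$ by density of $C^\infty(\ol\Om)$ in $H^2(\Om)$ (mollify $E_\Om u$ and restrict, $E_\Om$ being the extension operator recalled in Section~\ref{s2}), invoking continuity of $\gamma_D$ on $H^1(\Om)$ applied to $\nabla u$ (cf.\ \eqref{2.6}), of $\gamma_D$ on $H^2(\Om)\hookrightarrow H^{(3/2)+\eps}(\Om)$ (cf.\ \eqref{A.62x}), of $\nabla_{tan}$ on $H^1(\dOm)$ (cf.\ \eqref{Aggab}), and of $\gamma_N$ on $H^2(\Om)$ (cf.\ \eqref{2.7} with $s=1$). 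Since the right-hand side of \eqref{PP-chain} lies in $(H^{1/2}(\dOm))^n$, this shows $\gamma_2 u=(\gamma_D u,\gamma_N u)$ belongs to the target space $\cR$ of \eqref{Tan-C2}; moreover \eqref{PP-chain} identifies the norm \eqref{NoRw-1} of $\gamma_2 u$ with $\|\gamma_D u\|_{H^1(\dOm)}+\|\gamma_N u\|_{L^2(\dOm;d^{n-1}\omega)}+\|\gamma_D(\nabla u)\|_{(H^{1/2}(\dOm))^n}$, which is dominated by $\|u\|_{H^2(\Om)}$.

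\textbf{Step 2 (the kernel).} The inclusion $H^2_0(\Om)\subseteq\ker(\gamma_2)$ follows from Step~1 and the vanishing of $\gamma_D$, $\gamma_N$ on $C^\infty_0(\Om)$. For the converse, let $u\in H^2(\Om)$ with $\gamma_D u=\gamma_N u=0$. From $\gamma_D u=0$ one gets $u\in\mathring H^1(\Om)$, so the extension $\wti u$ of $u$ by zero to $\bbR^n$ lies in $H^1(\bbR^n)$ with $\supp(\wti u)\subseteq\ol\Om$, and $\nabla\wti u$ equals the extension of $\nabla u$ by zero. By \eqref{PP-chain}, $\gamma_D(\partial_j u)=0$ for each $j$, hence the extension of each $\partial_j u$ by zero lies in $H^1(\bbR^n)$; therefore $\partial_j\wti u\in H^1(\bbR^n)$ for all $j$, i.e.\ $\wti u\in H^2(\bbR^n)$ with $\supp(\wti u)\subseteq\ol\Om$. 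Appealing to \eqref{incl-Yb} with $s=2$ (note $2\notin\{\tfrac12\}+\bbN_0$) yields $u=R_\Om\wti u\in\mathring H^2(\Om)=H^2_0(\Om)$.

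\textbf{Step 3 (surjectivity and a bounded linear right-inverse).} This is the crux. Given $(g_0,g_1)\in\cR$, put $\vec h:=\nabla_{tan}g_0+g_1\nu\in(H^{1/2}(\dOm))^n$, whose tangential part is $\nabla_{tan}g_0$ and whose normal part is $g_1$. Since $\gamma_D\colon H^1(\Om)\to H^{1/2}(\dOm)$ is onto (cf.\ \eqref{2.6a}) and, as is classical for Lipschitz domains, admits a bounded linear right-inverse, applying it componentwise produces $\vec F\in(H^1(\Om))^n$ with $\gamma_D\vec F=\vec h$ and $\|\vec F\|_{(H^1(\Om))^n}\lesssim\|\vec h\|_{(H^{1/2}(\dOm))^n}$. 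The decisive step is to correct $\vec F$ into a curl-free field (with vanishing periods, should $\Om$ fail to be simply connected) having the same Dirichlet trace: I would solve ${\rm curl}\,\vec c={\rm curl}\,\vec F$ in $\Om$ for $\vec c\in(\mathring H^1(\Om))^n$, i.e.\ exhibit the closed $L^2$ two-form ${\rm curl}\,\vec F$ as exact within the class of one-forms carrying homogeneous Dirichlet trace. The solvability of this div--curl problem on the bounded Lipschitz domain $\Om$ --- closedness, the boundary-compatibility conditions, and the cohomological conditions --- is guaranteed precisely because the tangential part of $\vec h$ is the tangential gradient $\nabla_{tan}g_0$; at this point one invokes the $L^2$-based Hodge/de Rham theory on Lipschitz domains together with a Bogovski\u\i-type solution operator, chosen linear and bounded. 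Then $\vec V:=\vec F-\vec c\in(H^1(\Om))^n$ is curl-free with $\gamma_D\vec V=\vec h$, so on each connected component $\vec V=\nabla u$ for some $u\in H^2(\Om)$ (unique up to an additive constant per component) with $\|u\|_{H^2(\Om)}\lesssim\|\vec V\|_{(H^1(\Om))^n}$. Finally, \eqref{PP-chain} applied to this $u$, together with $\gamma_D\vec V=\vec h=\nabla_{tan}g_0+g_1\nu$, forces $\nabla_{tan}(\gamma_D u-g_0)=0$ and $\gamma_N u=g_1$; subtracting the appropriate locally constant function from $u$ achieves $\gamma_D u=g_0$ without altering $\nabla u$, so $\gamma_2 u=(g_0,g_1)$. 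Tracking linearity and the norm bounds through the construction delivers the asserted bounded linear right-inverse.

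\textbf{The main obstacle} is the curl-correction in Step~3: on a merely Lipschitz domain the div--curl system has no spare regularity, so the delicate point is to verify that the structural constraint defining $\cR$ --- namely $\nabla_{tan}g_0+g_1\nu\in(H^{1/2}(\dOm))^n$ with $g_0\in H^1(\dOm)$ --- is exactly what removes the boundary and cohomological obstructions to solvability; this, and the attendant $L^2$-Hodge theory for Lipschitz domains, is where the bulk of the technical work lies.
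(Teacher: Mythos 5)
You should first be aware that the paper does not prove Theorem \ref{T-MMS} at all: it is imported verbatim from \cite{MMS05} (``The following result has been proved in \cite{MMS05}''), so your argument is an independent attempt rather than a variant of an internal proof. Your Steps 1 and 2 are correct: the identity $\gamma_D(\nabla u)=\nabla_{tan}(\gamma_D u)+(\gamma_N u)\nu$ (which the paper itself uses, cf.\ the computation leading to \eqref{D-tr3}) gives well-definedness and boundedness, and the zero-extension argument combined with \eqref{incl-Yb} for $s=2$ correctly identifies $\ker(\gamma_2)$ with $H^2_0(\Omega)$.

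Step 3, however, has genuine gaps beyond the acknowledged reliance on Hodge/Bogovski\u\i\ machinery. First, even granting the solvability of ${\rm curl}\,\vec c={\rm curl}\,\vec F$ with $\vec c\in (H^1_0(\Omega))^n$, the assertion that the curl-free field $\vec V=\vec F-\vec c$ satisfies $\vec V=\nabla u$ ``on each connected component'' is unjustified when components are multiply connected: your construction does not control the periods of $\vec V$ over interior $1$-cycles, and the parenthetical ``with vanishing periods'' is not delivered by anything you do. The statement is in fact salvageable, but only via an additional argument: since the tangential trace of $\vec V$ is the exact field $\nabla_{tan}g_0$ and every $1$-cycle of $\overline{\Omega}$ is homologous, with real coefficients, to a cycle lying on $\partial\Omega$ (Mayer--Vietoris for $\overline{\Omega}\cup(S^n\setminus\Omega)=S^n$), all periods of $\vec V$ vanish; this step is needed and absent. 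Second, the concluding adjustment is wrong as written: $\nabla_{tan}(\gamma_D u-g_0)=0$ only gives that $\gamma_D u-g_0$ is constant on each connected component of $\partial\Omega$, and when $\Omega$ is connected but $\partial\Omega$ is not (annulus, spherical shell), no locally constant function on $\Omega$ --- i.e.\ a single constant --- can remove different constants on different boundary components. Concretely, for a spherical shell take $g_0=0$ on the inner sphere, $g_0=1$ on the outer sphere, $g_1=0$: this datum is admissible ($\vec h=0$), yet your construction produces $\vec V=0$, hence $u$ constant, and $\gamma_D u\neq g_0$ no matter which constant is subtracted. The repair is easy (add $\sum_j c_j\chi_j$ with $\chi_j$ smooth, equal to $1$ near the $j$-th boundary component and locally constant near $\partial\Omega$, so that $\gamma_N\chi_j=0$ and the $c_j$ depend linearly and boundedly on the data), but it is missing, and without it the claimed right-inverse does not hit the prescribed $g_0$. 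Finally, the core solvability of the curl equation with full zero Dirichlet trace on a general (non--star-shaped) Lipschitz domain, including the verification at $L^2$/$H^{1/2}$ regularity that the weak tangential trace of ${\rm curl}\,\vec F$ vanishes because $\iota^*\omega_F=\nabla_{tan}g_0$ is exact, and that the relative cohomological obstructions vanish, is invoked as a black box; you concede this is where the bulk of the work lies, but as it stands the heart of the surjectivity claim is outsourced to machinery essentially on the level of the cited result \cite{MMS05} itself.
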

%%%%%%%%%%%%%%%%%%%%%%%%%%%%%%%%%%

Assuming Hypothesis \ref{h2.1}, we now introduce 
\begin{equation}   
N^{1/2}(\partial\Omega):=\bigl\{g\in L^2(\partial\Omega;d^{n-1}\omega) \,\big|\, 
g\nu_j\in H^{1/2}(\partial\Omega),\,\,1\leq j\leq n\bigl\},   \lb{Tan-C4} 
\end{equation}
where the $\nu_j$'s are the components of $\nu$. We equip this space with the 
natural norm 
\begin{eqnarray}\lb{Tan-C4B} 
\|g\|_{N^{1/2}(\partial\Omega)}
:=\sum_{j=1}^n\|g\nu_j\|_{H^{1/2}(\partial\Omega)}. 
\end{eqnarray} 

%%%%%%%%%%%%%%%%%%%%%%%%%%%%%%%%%%%
\begin{lemma}\lb{L-refN}
Assuming Hypothesis \ref{h2.1}. Then $N^{1/2}(\partial\Omega)$ is a 
reflexive Banach space which embeds continuously into 
$L^2(\partial\Omega;d^{n-1}\omega)$. Furthermore, 
\begin{equation}\lb{Tan-C5} 
N^{1/2}(\partial\Omega)=H^{1/2}(\partial\Omega), \, 
\mbox{ whenever $\Omega$ is a bounded $C^{1,r}$ domain with $r>1/2$}. 
\end{equation}
\end{lemma}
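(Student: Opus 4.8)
The plan is to establish the three asserted properties of $N^{1/2}(\dOm)$ in turn: that it is a Banach space, that it embeds continuously into $L^2(\dOm;d^{n-1}\omega)$, that it is reflexive, and finally the identification \eqref{Tan-C5} in the $C^{1,r}$ case.

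First I would verify completeness. Given a Cauchy sequence $\{g_m\}$ in $N^{1/2}(\dOm)$ with respect to the norm \eqref{Tan-C4B}, each sequence $\{g_m\nu_j\}$, $1\le j\le n$, is Cauchy in $H^{1/2}(\dOm)$, hence converges to some $h_j\in H^{1/2}(\dOm)$. Since $H^{1/2}(\dOm)\hookrightarrow L^2(\dOm;d^{n-1}\omega)$ by \eqref{EQ1}, the convergence also holds in $L^2$. Using $\sum_j \nu_j^2 = 1$ $\omega$-a.e., one recovers a candidate limit $g := \sum_j \nu_j h_j \in L^2(\dOm;d^{n-1}\omega)$ and checks that $g\nu_j = h_j$ $\omega$-a.e.\ (multiply $g$ by $\nu_j$ and use that the $\nu_j$ are bounded measurable functions, together with the relations $\nu_k h_j = \nu_j h_k$ obtained by passing to the limit in $\nu_k(g_m\nu_j) = \nu_j(g_m\nu_k)$). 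This shows $g\in N^{1/2}(\dOm)$ and $g_m\to g$ in that space. The continuous embedding $N^{1/2}(\dOm)\hookrightarrow L^2(\dOm;d^{n-1}\omega)$ is then immediate: $\|g\|_{L^2} = \|\,(\sum_j |g\nu_j|^2)^{1/2}\|_{L^2} \le \sum_j \|g\nu_j\|_{L^2} \le C\sum_j \|g\nu_j\|_{H^{1/2}(\dOm)} = C\|g\|_{N^{1/2}(\dOm)}$, again by \eqref{EQ1}.

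For reflexivity, the natural approach is to realize $N^{1/2}(\dOm)$ as (isomorphic to) a closed subspace of the reflexive Banach space $\bigl(H^{1/2}(\dOm)\bigr)^n$ via the linear map $\Phi: g\mapsto (g\nu_1,\dots,g\nu_n)$. By definition of the norm \eqref{Tan-C4B}, $\Phi$ is an isometry onto its range, so it suffices to show that $\Phi\bigl(N^{1/2}(\dOm)\bigr)$ is closed in $\bigl(H^{1/2}(\dOm)\bigr)^n$; but this is exactly the completeness argument above rephrased. Since closed subspaces of reflexive Banach spaces are reflexive — and $\bigl(H^{1/2}(\dOm)\bigr)^n$ is reflexive because $H^{1/2}(\dOm)$ is (being a Hilbert space, or by the reflexivity statements recalled in Section \ref{s2}) — the claim follows. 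The main obstacle here is purely bookkeeping: one must be careful that the defining condition "$g\nu_j\in H^{1/2}(\dOm)$ for all $j$" genuinely cuts out a closed subspace, which again reduces to the a.e.\ identities among the components forced by $\nu_k(g\nu_j)=\nu_j(g\nu_k)$.

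Finally, for \eqref{Tan-C5}, assume $\Omega$ is a bounded $C^{1,r}$ domain with $r>1/2$; then the components $\nu_j$ of the outward unit normal belong to $C^r(\dOm)$ by \eqref{FLP-1}(iii). The inclusion $H^{1/2}(\dOm)\subseteq N^{1/2}(\dOm)$ follows from Lemma \ref{lA.6}: multiplication by $\nu_j\in C^r(\dOm)$ maps $H^{1/2}(\dOm)$ boundedly into itself, so $g\in H^{1/2}(\dOm)$ implies $g\nu_j\in H^{1/2}(\dOm)$ for each $j$, with $\|g\|_{N^{1/2}(\dOm)} = \sum_j \|g\nu_j\|_{H^{1/2}(\dOm)} \le C\bigl(\sum_j\|\nu_j\|_{C^r(\dOm)}\bigr)\|g\|_{H^{1/2}(\dOm)}$. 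For the reverse inclusion $N^{1/2}(\dOm)\subseteq H^{1/2}(\dOm)$, write $g = \sum_{j=1}^n \nu_j (g\nu_j)$ using $\sum_j\nu_j^2=1$; each summand is a product of $\nu_j\in C^r(\dOm)$ with $g\nu_j\in H^{1/2}(\dOm)$, hence lies in $H^{1/2}(\dOm)$ by Lemma \ref{lA.6} again, and $\|g\|_{H^{1/2}(\dOm)} \le \sum_j \|\nu_j(g\nu_j)\|_{H^{1/2}(\dOm)} \le C\sum_j \|g\nu_j\|_{H^{1/2}(\dOm)} = C\|g\|_{N^{1/2}(\dOm)}$. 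This proves the two spaces coincide with equivalent norms. The only subtle point — and the one I would expect to need the $r>1/2$ hypothesis — is the applicability of Lemma \ref{lA.6}, which is precisely where the H\"older regularity threshold $r>1/2$ enters (and which is exactly the reason $C^{1,r}$ with $r>1/2$ is singled out throughout the paper).
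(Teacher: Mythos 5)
Your proof is correct and follows essentially the same route as the paper: the identity $g=\sum_{j}\nu_j(g\nu_j)$ for the $L^2$-embedding, component-wise Cauchy arguments in $H^{1/2}(\partial\Omega)$ for completeness, the isometric embedding $g\mapsto(g\nu_j)_{1\le j\le n}$ into $\bigl(H^{1/2}(\partial\Omega)\bigr)^n$ for reflexivity, and Lemma \ref{lA.6} (with $\nu\in C^r(\partial\Omega)$, $r>1/2$) combined with that same identity for \eqref{Tan-C5}. The only difference is cosmetic: you build the limit as $\sum_j\nu_j h_j$, whereas the paper first uses the $L^2$-embedding to get the limit $g$ directly and then identifies $g\nu_j$ with the $H^{1/2}$-limits; both are fine.
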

%%%%%%%%%%%%%%%%%%%%%%%%%%%%%%%%%%%
\begin{proof}
Obviously we have 
\begin{equation}\lb{Ob-1}
g=\sum_{j=1}^n\nu_j(g\nu_j)\,\mbox{ for any function }\,
g\in L^2(\partial\Omega;d^{n-1}\omega) 
\end{equation}
so that, in particular, $\|g\|_{L^2(\partial\Omega;d^{n-1}\omega)}
\leq n\|g\|_{N^{1/2}(\partial\Omega)}$. This proves that the natural inclusion 
$N^{1/2}(\partial\Omega)\hookrightarrow L^2(\partial\Omega;d^{n-1}\omega)$ is
bounded. If $\{g_k\}_{k\in{\mathbb{N}}}$ is a Cauchy sequence in 
$N^{1/2}(\partial\Omega)$ then, for each $j\in\{1,...,n\}$, 
$\{g_k\nu_j\}_{k\in{\mathbb{N}}}$ is a Cauchy sequence in 
$H^{1/2}(\partial\Omega)$ and, from what we have proved so far, 
$\{g_k\}_{k\in\bbN}$ converges in $L^2(\partial\Omega;d^{n-1}\omega)$ 
to some $g\in L^2(\partial\Omega;d^{n-1}\omega)$. It follows
that $\{g_k\nu_j\}_{k\in{\mathbb{N}}}$ converges in 
$L^2(\partial\Omega;d^{n-1}\omega)$ to $g\nu_j$ for each $j\in\{1,...,n\}$.
With this at hand, it is then easy to conclude that $g$ is the 
limit of $\{g_k\}_{k\in{\mathbb{N}}}$ in $N^{1/2}(\partial\Omega)$.
This proves that $N^{1/2}(\dOm)$ is a Banach space. 

Next, by relying on the same simple identity in \eqref{Ob-1}
and Lemma \ref{lA.6}, we also see that \eqref{Tan-C5} holds. If we now consider
\begin{eqnarray}\lb{Ob-2}
\Phi:N^{1/2}(\partial\Omega)\to  \bigl[H^{1/2}(\dOm)\bigr]^n,
\quad \Phi(g):=(g\nu_j)_{1\leq j\leq n},
\end{eqnarray}
it follows that $\Phi$ is an isometric embedding, which allows identifying 
$N^{1/2}(\partial\Omega)$ with a closed subspace of the reflexive space
$\bigl[H^{1/2}(\partial\Omega)\bigr]^n$, implying that 
$N^{1/2}(\partial\Omega)$ is also reflexive. 
\end{proof}
%%%%%%%%%%%%%%%%%%%%%%%%%%%%%%%%

It should be mentioned that, in spite of \eqref{Tan-C5}, the spaces 
$H^{1/2}(\partial\Omega)$ and $N^{1/2}(\partial\Omega)$ can be quite 
different for an arbitrary Lipschitz domain $\Omega$. 
Our interest in the latter space stems from the fact that this 
arises naturally when considering the Neumann trace operator acting on  
\begin{eqnarray}\lb{Tan-C6} 
\big\{u\in H^2(\Omega) \,\big|\, \gamma_D u =0\big\}=H^2(\Omega)\cap H^1_0(\Omega), 
\end{eqnarray}
considered as a closed subspace of $H^2(\Omega)$ (hence, a Banach space
when equipped with the $H^2$-norm). More specifically, we have the following result: 

%%%%%%%%%%%%%%%%%%%%%%%%%%%%%% 
\begin{lemma}\lb{Lo-Tx}
Assume Hypothesis \ref{h2.1}. Then the Neumann trace operator $\gamma_N$ 
considered in the context 
\begin{eqnarray}\lb{Tan-C7} 
\gamma_N:H^2(\Omega)\cap H^1_0(\Omega)\to N^{1/2}(\partial\Omega)
\end{eqnarray}
is well-defined, linear, bounded, onto, and with a linear, bounded 
right-inverse. In addition, the null space of $\gamma_N$ in \eqref{Tan-C7} is 
precisely $H^2_0(\Omega)$, the closure of $C^\infty_0(\Omega)$ in 
$H^2(\Omega)$.
\end{lemma}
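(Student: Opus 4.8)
The plan is to prove Lemma~\ref{Lo-Tx} by combining the trace characterization in Theorem~\ref{T-MMS} with the structure of the space $N^{1/2}(\dOm)$ introduced in \eqref{Tan-C4}. First I would observe that for $u\in H^2(\Om)\cap H^1_0(\Om)$ one has $\ga_D u=0$, so Theorem~\ref{T-MMS} tells us that $\ga_2 u=(0,\ga_N u)$ lies in the range of $\ga_2$; concretely, the defining condition of that range reduces (when $g_0=0$) to $\ga_N u\,\nu\in\bigl(H^{1/2}(\dOm)\bigr)^n$, which is exactly the statement that $\ga_N u\in N^{1/2}(\dOm)$. Boundedness of \eqref{Tan-C7} is then immediate from the boundedness of $\ga_N:H^2(\Om)\to L^2(\dOm;d^{n-1}\omega)$ together with the boundedness of $\ga_2$ into the range space equipped with the norm \eqref{NoRw-1}: indeed $\|g_1\nu\|_{(H^{1/2}(\dOm))^n}$ is precisely $\|g_1\|_{N^{1/2}(\dOm)}$ (up to the constant from \eqref{Tan-C4B}) when $g_0=0$.

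Next I would establish surjectivity and the bounded right-inverse. Given $g\in N^{1/2}(\dOm)$, the pair $(0,g)$ satisfies $\nabla_{tan}0+g\nu=g\nu\in\bigl(H^{1/2}(\dOm)\bigr)^n$, so $(0,g)$ belongs to the range of $\ga_2$; by Theorem~\ref{T-MMS} there is $u=E(0,g)\in H^2(\Om)$ depending linearly and boundedly on $(0,g)$ with $\ga_D u=0$ and $\ga_N u=g$. Since $\ga_D u=0$ forces $u\in H^1_0(\Om)$ (using \eqref{2.39}, or simply that $\ga_D u=0$ characterizes $H^1_0(\Om)\cap H^1(\Om)$ elements), we get $u\in H^2(\Om)\cap H^1_0(\Om)$, and $g\mapsto E(0,g)$ is the desired linear bounded right-inverse. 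Here I would just need to track that the inclusion of $g\mapsto(0,g)$ from $N^{1/2}(\dOm)$ into the range space of \eqref{Tan-C2} (with norm \eqref{NoRw-1}) is bounded, which again is \eqref{Tan-C4B}.

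Finally, for the null space: $u\in H^2(\Om)\cap H^1_0(\Om)$ with $\ga_N u=0$ means $\ga_D u=\ga_N u=0$, i.e. $\ga_2 u=0$, and by \eqref{Tan-C3} this is exactly $H^2_0(\Om)$. Conversely every $u\in H^2_0(\Om)$ has vanishing Dirichlet and Neumann traces and lies in $H^1_0(\Om)$, so it is in the null space of \eqref{Tan-C7}. This gives $\ker(\ga_N)=H^2_0(\Om)$ as claimed.

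The only genuinely nontrivial point — and the part I would present carefully rather than wave through — is the identification of the range of $\ga_2$ restricted to $\{\ga_D u=0\}$ with $N^{1/2}(\dOm)$ together with \emph{norm equivalence}. One direction (a trace of an admissible $u$ lies in $N^{1/2}$) is the easy inclusion; the reverse, producing a genuine $H^2$ function with prescribed vanishing Dirichlet trace and prescribed Neumann trace in $N^{1/2}$, is where Theorem~\ref{T-MMS}'s surjectivity and bounded right-inverse do the real work. Everything else is bookkeeping with the norm \eqref{NoRw-1} specialized to $g_0=0$, which collapses to \eqref{Tan-C4B}. So I expect no serious obstacle here provided Theorem~\ref{T-MMS} is invoked cleanly; the lemma is essentially a corollary of it.
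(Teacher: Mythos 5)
Your proof is correct and follows essentially the same route as the paper: boundedness via the observation that $\gamma_2 u=(0,\gamma_N u)$ lands in the range space of Theorem~\ref{T-MMS}, surjectivity via the right-inverse ${\mathcal{E}}_2$ composed with the injection $g\mapsto(0,g)$, and the kernel identification via \eqref{Tan-C3}. Your extra remark that $\gamma_D u=0$ forces $u\in H^1_0(\Omega)$ is a detail the paper leaves implicit, but it is standard for Lipschitz domains and consistent with \eqref{2.39}.
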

%%%%%%%%%%%%%%%%%%%%%%%%
\begin{proof} To prove that \eqref{Tan-C7} is well-defined, we note that if 
$u\in H^2(\Omega)\cap H^1_0(\Omega)$, then Theorem \ref{T-MMS} yields
\begin{align}\lb{Tan-C8} 
%\begin{split}
& (0,\gamma_N u )=\gamma_2 u   % \\
% & \quad  
\in\bigl\{(g_0,g_1)\in H^1(\partial\Omega)
\dotplus    L^2(\partial\Omega;d^{n-1}\omega) \,\big|\, 
\nabla_{tan}g_0+g_1\nu\in \bigl(H^{1/2}(\partial\Omega)\bigr)^n\bigl\}, 
%\end{split}
\end{align}
implying that $\gamma_N u $ belongs to $N^{1/2}(\partial\Omega)$ 
and $\|\gamma_N u \|_{N^{1/2}(\partial\Omega)}\leq C\|u\|_{H^2(\Omega)}$
for some $C=C(\Omega)>0$ independent of $u$. This shows that \eqref{Tan-C7} 
is well-defined, linear, and bounded. Next, denote by 
${\mathcal{E}}_2$ a linear, bounded right-inverse for $\gamma_2$
in \eqref{Tan-C2}. Then, if 
\begin{align}\lb{Tan-C9} 
& \iota: N^{1/2}(\partial\Omega)\to 
\bigl\{(g_0,g_1)\in H^1(\partial\Omega)
\dotplus    L^2(\partial\Omega;d^{n-1}\omega) \,\big|\, 
\nabla_{tan}g_0  % \\
% & \hspace*{8.3cm} 
+g_1\nu\in \bigl(H^{1/2}(\partial\Omega)\bigr)^n\bigl\}  % \no
\end{align}
is the injection given by $\iota(g):=(0,g)$, for every 
$g\in N^{1/2}(\partial\Omega)$, it follows that the composition 
${\mathcal{E}}_2 \iota: N^{1/2}(\partial\Omega)\to 
H^2(\Omega)\cap H^1_0(\Omega)$ is a linear, bounded right-inverse for the 
operator $\gamma_N$ in \eqref{Tan-C7}. Consequently, this operator is 
onto. Finally, the fact that the null space of $\gamma_N$ in \eqref{Tan-C7} 
is precisely $H^2_0(\Omega)$ follows from its definition and the last part in 
the statement of Theorem \ref{T-MMS}.  
\end{proof}
%%%%%%%%%%%%%%%%%%%%%%%%%

Our goal is to use the above Neumann trace result in order to extend the 
action of the Dirichlet trace operator \eqref{2.6} to $\dom(- \Delta_{max})$,
the domain of the maximal Laplacian, that is,  
$\big\{u\in L^2(\Omega;d^nx) \,\big|\, \Delta u\in L^2(\Omega;d^nx)\big\}$, 
which we consider equipped with the graph norm 
$u\mapsto \|u\|_{L^2(\Omega;d^nx)}+\|\Delta u\|_{L^2(\Omega;d^nx)}$.
Specifically, with $\bigl(N^{1/2}(\partial\Omega)\bigr)^*$ denoting the 
conjugate dual space of $N^{1/2}(\partial\Omega)$, we have the following result:
 
%%%%%%%%%%%%%%%%%%%%%%%%%
\begin{theorem}\lb{New-T-tr}
Assume Hypothesis \ref{h2.1}. Then there exists a unique linear, bounded 
operator 
\begin{eqnarray}\lb{Tan-C10} 
\widehat{\gamma}_D: 
\big\{u\in L^2(\Omega;d^nx) \,\big|\, \Delta u\in L^2(\Omega;d^nx)\big\}
\to  \bigl(N^{1/2}(\partial\Omega)\bigr)^*
\end{eqnarray}
which is compatible with the Dirichlet trace, introduced in \eqref{2.6}
and further extended in Lemma~\ref{Gam-L1}, in the sense that 
for each $s\geq 1/2$ one has  
\begin{eqnarray}\lb{Tan-C11}
\widehat{\gamma}_D u =\gamma_D u \, \mbox{ for every $u\in H^s(\Omega)$ 
with $\Delta u\in L^2(\Omega;d^nx)$}.
\end{eqnarray}
Furthermore, this extension of the Dirichlet trace operator 
has dense range and allows for the following generalized integration 
by parts formula
\begin{eqnarray}\lb{Tan-C12} 
{}_{N^{1/2}(\partial\Omega)}\langle\gamma_N w,\widehat{\gamma}_D u 
\rangle_{(N^{1/2}(\partial\Omega))^*}
= (\Delta w,u)_{L^2(\Om;d^nx)}
- (w,\Delta u)_{L^2(\Om;d^nx)},
\end{eqnarray}
valid for every $u\in L^2(\Omega;d^nx)$ with $\Delta u\in L^2(\Omega;d^nx)$ 
and every $w\in H^2(\Omega)\cap H^1_0(\Omega)$. 
\end{theorem}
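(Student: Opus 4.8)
The plan is to \emph{define} $\widehat{\gamma}_D$ by duality, using the Neumann trace operator $\gamma_N\colon H^2(\Omega)\cap H^1_0(\Omega)\to N^{1/2}(\partial\Omega)$ from Lemma~\ref{Lo-Tx} together with the ordinary Green's formula, and then verify the stated properties. First, for $u\in\dom(-\Delta_{max})$ I would consider the conjugate-linear functional on $N^{1/2}(\partial\Omega)$ defined, for $g\in N^{1/2}(\partial\Omega)$, by picking any $w\in H^2(\Omega)\cap H^1_0(\Omega)$ with $\gamma_N w=g$ (which exists by the surjectivity in Lemma~\ref{Lo-Tx}) and setting the value equal to $(\Delta w,u)_{L^2(\Om;d^nx)}-(w,\Delta u)_{L^2(\Om;d^nx)}$. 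The first task is well-definedness: if $\gamma_N w=0$ then $w\in H^2_0(\Omega)$ (the null-space statement in Lemma~\ref{Lo-Tx}), and for such $w$ one has $(\Delta w,u)-(w,\Delta u)=0$ by a density argument (approximate $w$ by $C_0^\infty(\Omega)$ functions in the $H^2$-norm and integrate by parts twice, using only $u,\Delta u\in L^2$). Boundedness of this functional, with norm controlled by $\|u\|_{L^2}+\|\Delta u\|_{L^2}$, follows from choosing $w=\mathcal{E}_2\iota(g)$ (the bounded right-inverse constructed in the proof of Lemma~\ref{Lo-Tx}), so that $\|w\|_{H^2(\Omega)}\le C\|g\|_{N^{1/2}(\partial\Omega)}$, whence $|(\Delta w,u)-(w,\Delta u)|\le \|w\|_{H^2}(\|u\|_{L^2}+\|\Delta u\|_{L^2})$ up to constants. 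This defines $\widehat{\gamma}_D u\in\bigl(N^{1/2}(\partial\Omega)\bigr)^*$ as a bounded linear operator of $u$, and the formula \eqref{Tan-C12} holds by construction; uniqueness of the operator satisfying \eqref{Tan-C12} is immediate since $\gamma_N$ in \eqref{Tan-C7} is onto.

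Next I would check compatibility \eqref{Tan-C11}. For $u\in H^s(\Omega)$ with $s\ge 1/2$ and $\Delta u\in L^2(\Omega;d^nx)$, the ordinary Dirichlet trace $\gamma_D u\in L^2(\partial\Omega;d^{n-1}\omega)$ is defined via Lemma~\ref{Gam-L1}, and $\gamma_D u$ induces a functional on $N^{1/2}(\partial\Omega)\hookrightarrow L^2(\partial\Omega;d^{n-1}\omega)$ by the $L^2$-pairing. So I must show that for every $w\in H^2(\Omega)\cap H^1_0(\Omega)$,
\begin{equation*}
(\gamma_N w,\gamma_D u)_{L^2(\partial\Omega;d^{n-1}\omega)}
= (\Delta w,u)_{L^2(\Om;d^nx)}-(w,\Delta u)_{L^2(\Om;d^nx)}.
\end{equation*}
This is a version of Green's second identity. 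I would establish it first for $u\in H^2(\Omega)$ with $\gamma_D u$ arbitrary --- here it is the classical Green formula relating $\gamma_D,\gamma_N$ on the smoother scale (using that $\gamma_D w=0$ kills one boundary term) --- and then extend to general $u\in H^s(\Omega)$, $s\ge 1/2$, with $\Delta u\in L^2$ by a density/limiting argument, invoking the bounded right-inverse (hence density of smoother functions in the graph norm) furnished by Lemma~\ref{Gam-L1} and the continuity of $\wti\gamma_D$ in \eqref{DJK-5}. The case $1/2\le s<2$ with the weak Neumann trace of Lemma~\ref{Neu-tr} may require pairing through the weak Green formula \eqref{2.9}, \eqref{wGreen} rather than the strong one; this is the point where one has to be a little careful about which extension of which trace operator is being used.

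Finally, for the dense-range assertion I would argue that the annihilator of $\ran(\widehat{\gamma}_D)$ in $N^{1/2}(\partial\Omega)$ is trivial: if $g\in N^{1/2}(\partial\Omega)$ satisfies ${}_{N^{1/2}(\partial\Omega)}\langle g,\widehat{\gamma}_D u\rangle_{(N^{1/2}(\partial\Omega))^*}=0$ for all $u\in\dom(-\Delta_{max})$, then by \eqref{Tan-C12} and the surjectivity of $\gamma_N$ in \eqref{Tan-C7} we get $(\Delta w,u)-(w,\Delta u)=0$ for all such $w,u$; taking $u$ to range over $H^2(\Omega)$ (a dense enough supply inside $\dom(-\Delta_{max})$) and then over solutions of $(-\Delta-z)u=0$ with prescribed Dirichlet data (Theorem~\ref{t3.3}) forces $\gamma_N w$, hence $g$, to vanish. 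Concretely, since $\{\gamma_D u\mid u\in H^{3/2}(\Omega),\ \Delta u\in L^2\}$ exhausts $H^1(\partial\Omega)$, which is dense in $N^{1/2}(\partial\Omega)^{**}$-pairings in the relevant sense, one concludes $g=0$. The main obstacle I anticipate is precisely the bookkeeping in the extended Green formula of the previous paragraph --- matching the ``renormalized'' Neumann/Dirichlet traces on the various function-space scales ($H^{1/2}$, $H^{3/2}$, $\dom(-\Delta_{max})$) so that all pairings are consistent with the $L^2(\partial\Omega;d^{n-1}\omega)$ pairing, as emphasized in Remark~\ref{GaNu}; the well-definedness and boundedness of $\widehat{\gamma}_D$ itself is comparatively routine once Lemma~\ref{Lo-Tx} is in hand.
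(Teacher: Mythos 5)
Your construction of $\widehat{\gamma}_D$ is exactly the paper's: define the functional through $\gamma_N$ on $H^2(\Omega)\cap H^1_0(\Omega)$ via Lemma~\ref{Lo-Tx}, check independence of the choice of $w$ through the kernel characterization $\ker(\gamma_N)=H^2_0(\Omega)$ and smooth approximation, and get boundedness from the right-inverse $\mathcal{E}_2\iota$; the formula \eqref{Tan-C12} then holds by fiat, and your annihilator argument for dense range (pairing against solutions of Dirichlet problems from Theorem~\ref{t3.3} with $H^1(\partial\Omega)$ data) is correct and in fact a bit more economical than the paper's detour through $\gamma_D[C^\infty(\overline{\Omega})]$. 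The genuine gap is in the compatibility step \eqref{Tan-C11}, which you yourself flag as the main obstacle and then leave at the level of intent. Your proposed mechanism --- ``invoking the bounded right-inverse (hence density of smoother functions in the graph norm) furnished by Lemma~\ref{Gam-L1}'' --- is a non sequitur: a bounded right-inverse of $\gamma_D$ on $\{u\in H^{1/2}(\Omega)\,|\,\Delta u\in L^2(\Omega;d^nx)\}$ produces elements of that same low-regularity class, not $H^2$-functions, and gives no approximation of a general $u$ in the graph norm by smoother ones. What is actually needed is either the nontrivial density result \eqref{Tan-C26} (that $C^\infty(\overline{\Omega})$ is dense in $\{u\in H^s(\Omega)\,|\,\Delta u\in L^2(\Omega;d^nx)\}$ for $s<2$, due to Grisvard and Costabel--Dauge), combined with the boundedness of $\gamma_D$ in Lemma~\ref{Gam-L1}, or else the paper's two-level approximation: approximate the boundary datum $\gamma_D u\in H^{s-1/2}(\partial\Omega)$ by $f_j\in H^{1/2}(\partial\Omega)$, solve auxiliary Dirichlet problems $u_j\in H^1(\Omega)$ with $\Delta u_j=\Delta u$ and $\gamma_D u_j=f_j$, prove the Green identity for these $H^1$-solutions directly via the divergence theorem \eqref{Ht-r3} applied to $\overline{u_j}\,\nabla w_k$ with $w_k$ smooth, and only then pass to the limit in $j$. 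Without one of these inputs the extension from $u\in H^2(\Omega)$ to general $u\in H^s(\Omega)$, $s\geq 1/2$, does not go through, and this is precisely where the difficulty of the theorem lies.

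A second, smaller mismatch concerns uniqueness. You prove uniqueness of the operator satisfying \eqref{Tan-C12} (immediate from ontoness of $\gamma_N$ in \eqref{Tan-C7}), but the theorem asserts uniqueness of the \emph{bounded} operator \eqref{Tan-C10} satisfying the compatibility \eqref{Tan-C11}; the identity \eqref{Tan-C12} is listed as an additional property, not as the defining one. Compatibility only pins the operator down on $\bigcup_{s\geq 1/2}\{u\in H^s(\Omega)\,|\,\Delta u\in L^2(\Omega;d^nx)\}$, so to conclude uniqueness on all of $\dom(-\Delta_{max})$ you again need these functions to be dense in the graph norm, i.e.\ \eqref{Tan-C26} once more (note the problematic part is the $L^2$-harmonic component of $\dom(-\Delta_{max})$, which need not lie in any $H^s(\Omega)$ with $s>0$, cf.\ \eqref{M-M1}). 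So both of your outstanding issues are repaired by the same citation, but as written the argument does not yet prove the theorem as stated.
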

%%%%%%%%%%%%%%%%%%%%%%%%%
\begin{proof}
Let $u\in L^2(\Omega;d^nx)$ be such that $\Delta u\in L^2(\Omega;d^nx)$. 
We attempt to define a functional 
$\widehat{\gamma}_D u \in\bigl(N^{1/2}(\partial\Omega)\bigr)^*$ 
as follows: Assume that $g\in N^{1/2}(\partial\Omega)$ 
has been given arbitrarily. By Lemma \ref{Lo-Tx} there exists a  
$w\in H^2(\Omega)\cap H^1_0(\Omega)$ such that $\gamma_N w =g$ and 
$\|w\|_{H^2(\Omega)}\leq C\|g\|_{N^{1/2}(\partial\Omega)}$
for some finite constant $C=C(\Omega)>0$, independent of $g$. 
We then set
\begin{eqnarray}\lb{Tan-C13} 
{}_{N^{1/2}(\partial\Omega)}\langle g,\widehat{\gamma}_D u 
\rangle_{(N^{1/2}(\partial\Omega))^*}
:= (\Delta w,u)_{L^2(\Om;d^nx)}
- (w,\Delta u)_{L^2(\Om;d^nx)}.
\end{eqnarray}
First we need to show that the above definition 
does not depend on the particular choice of $w$, with the properties 
listed above. By linearity, this comes down to proving the following claim: 
If $u$ is as before and $w\in  H^2(\Omega)\cap H^1_0(\Omega)$ is such that 
$\gamma_N w                 =0$ then 
\begin{eqnarray}\lb{Tan-C14} 
(\Delta w,u)_{L^2(\Om;d^nx)}
= (w,\Delta u)_{L^2(\Om;d^nx)}.
\end{eqnarray}
However, since Lemma \ref{Lo-Tx} yields that $w\in H^2_0(\Omega)$, 
formula \eqref{Tan-C14} readily follows by approximating $w$ with 
functions form $C^\infty_0(\Omega)$ in the norm of $H^2(\Omega)$. 
Thus, formula \eqref{Tan-C13} yields a well-defined, linear, and bounded
operator in the context of \eqref{Tan-C13}. By definition, this 
operator will satisfy \eqref{Tan-C12}. 

Next, we will show that \eqref{Tan-C11} is valid for each $s\geq 1/2$. 
Fix $s\in [1/2,1]$ along with some function $u\in H^s(\Omega)$ 
with $\Delta u\in L^2(\Omega;d^nx)$. In particular, 
$\gamma_D u \in H^{s-(1/2)}(\partial\Omega)$ and we select a sequence 
$\{f_j\}_{j\in{\mathbb{N}}}$ such that 
\begin{eqnarray}\lb{Tan-C15} 
f_j\in H^{1/2}(\partial\Omega), \; 
j\in{\mathbb{N}}, \, \text{ and }\, f_j\to \gamma_D u 
\,\mbox{ in $H^{s-(1/2)}(\partial\Omega)$ as }j\to\infty.  
\end{eqnarray}
Next, for each $j\in{\mathbb{N}}$, select a function $u_j$ such that 
\begin{eqnarray}\lb{Tan-C16}
\Delta u_j=\Delta u\,\mbox{ in }\Omega,\quad u_j\in H^{1}(\Omega), 
\; \gamma_D u_j = f_j\,\mbox{ on }\partial\Omega.   
\end{eqnarray}
From \eqref{Tan-C15} and the continuous dependence of the solution on the 
data, we may conclude that 
\begin{eqnarray}\lb{Tan-C17}
u_j\to u\, \mbox{ in $H^{s}(\partial\Omega)$ as $j\to\infty$}.
\end{eqnarray}
In addition, given an arbitrary $w\in H^2(\Omega)\cap H^1_0(\Omega)$, 
we also select a sequence $\{w_k\}_{k\in{\mathbb{N}}}$ such that 
\begin{eqnarray}\lb{Tan-C18} 
w_k\in C^\infty(\ol{\Omega}), \; k\in{\mathbb{N}}, \, \text{ and }\, 
w_k\to w\, \mbox{ in $H^2(\Omega)$ as $k\to\infty$}.
\end{eqnarray}
With $j$ fixed, we now introduce the vector fields 
$G_k:=\ol{u_j}\nabla w_k\in \bigl(H^{1}(\Omega)\bigr)^n$, $k\in{\mathbb{N}}$, 
and obtain  
\begin{eqnarray}\lb{Tan-C19} 
\begin{array}{l}
{\rm div} (G_k)=\ol{u_j}\Delta w_k+\ol{\nabla u_j}\cdot\nabla w_k
\in L^2(\Omega;d^n)\hookrightarrow L^1(\Omega;d^nx),
\\[6pt]
\Delta G_k=\ol{\Delta u_j}\nabla w_k+\ol{u_j}\nabla\Delta w_k
+2\sum_{i=1}^n\ol{\nabla u_j}\cdot\nabla\partial_i w_k\in L^2(\Omega;d^nx),
\\[6pt]
\nu\cdot\wti\gamma_D(G_k)=\nu\cdot\gamma_D(G_k)
=\ol{\gamma_D u_j}\gamma_N w_k=\ol{f_j}\gamma_N w_k .
\end{array}
\end{eqnarray}
Based on this and \eqref{Ht-r3} we may then write 
\begin{align}\lb{Tan-C20}
(u_j,\Delta w)_{L^2(\Om;d^nx)} & =  
\lim_{k\to\infty} (u_j,\Delta w_k)_{L^2(\Om;d^nx)}  \no \\
& =\lim_{k\to\infty}\bigg(\int_{\Om}d^nx\,{\rm div} (G_k)
- (\nabla u_j,\nabla w_k)_{(L^2(\Om;d^nx))^n}\bigg)
\nonumber\\
&= \lim_{k\to\infty}\bigg(\int_{\partial\Omega}d\omega^{n-1}\,
\ol{f_j}{\gamma}_N w_k 
- (\nabla u_j,\nabla w_k)_{(L^2(\Om;d^nx))^n}\bigg)
\nonumber\\
&= \int_{\partial\Omega}d\omega^{n-1}\,\ol{f_j}{\gamma}_N w                  
- (\nabla u_j,\nabla w)_{(L^2(\Om;d^nx))^n}. 
\end{align}
In order to continue, we now select a sequence 
$\{v_k\}_{k\in{\mathbb{N}}}$ such that 
\begin{eqnarray}\lb{Tan-C21} 
v_k\in C^\infty_0(\Omega), \; k\in{\mathbb{N}}, \, \text{ and }\, 
v_k\to w\, \mbox{ in $H^1(\Omega)$ as $k\to\infty$}.
\end{eqnarray}
Then we can further transform the last integral in \eqref{Tan-C20} into 
\begin{align}\lb{Tan-C22} 
(\nabla u_j,\nabla w)_{(L^2(\Om;d^nx))^n}
&= \lim_{k\to\infty}(\nabla u_j,\nabla v_k)_{(L^2(\Om;d^nx))^n}  
=-\lim_{k\to\infty}(\Delta u_j,v_k)_{L^2(\Om;d^nx)}
\nonumber\\
&= - (\Delta u_j,w)_{L^2(\Om;d^nx)}
= (\Delta u,w)_{L^2(\Om;d^nx)},  
\end{align}
where the last equality utilizes the fact that $\Delta u_j=\Delta u$ 
(cf.\eqref{Tan-C16}). Together, \eqref{Tan-C20} and \eqref{Tan-C22} 
yield that 
\begin{eqnarray}\lb{Tan-C23} 
(u_j,\Delta w)_{L^2(\Om;d^nx)} 
=\int_{\partial\Omega}d\omega^{n-1}\,\ol{f_j}{\gamma}_N w                  
+ (\Delta u,w)_{L^2(\Om;d^nx)}, 
\end{eqnarray}
for every $j\in{\mathbb{N}}$. By passing to the limit as $j\to\infty$  
in \eqref{Tan-C23}, we arrive, on account of \eqref{Tan-C15} and 
\eqref{Tan-C17}, at 
\begin{eqnarray}\lb{Tan-C24} 
(u,\Delta w)_{L^2(\Om;d^nx)}
=\int_{\partial\Omega}d\omega^{n-1}\,\ol{\gamma_D u }{\gamma}_N w                  
+ (\Delta u,w)_{L^2(\Om;d^nx)}. 
\end{eqnarray}
Consequently, 
\begin{align}\lb{Tan-C25} 
\int_{\partial\Omega}d\omega^{n-1}\,\ol{\gamma_D u }{\gamma}_N w                  
&= (u,\Delta w)_{L^2(\Om;d^nx)} 
- (\Delta u,w)_{L^2(\Om;d^nx)}
\nonumber\\
&= {}_{N^{1/2}(\partial\Omega)}\langle\gamma_N w                 ,\widehat{\gamma}_D u 
\rangle_{(N^{1/2}(\partial\Omega))^*}. 
\end{align}
We recall that this formula 
is valid for any $w\in H^2(\Omega)\cap H^1_0(\Omega)$ and 
the operator \eqref{Tan-C7} maps this space onto $N^{1/2}(\partial\Omega)$. 
We may therefore deduce from \eqref{Tan-C25} that 
$\widehat{\gamma}_D u $, originally viewed as a functional on 
$\bigl(N^{1/2}(\partial\Omega)\bigr)^*$, is given by integration 
against ${\gamma}_D u \in L^2(\partial\Omega;d^{n-1}\omega)$ in the 
case where $u\in H^s(\Omega)$ satisfies $\Delta u\in L^2(\Omega;d^nx)$. 
This concludes the proof of \eqref{Tan-C11}. 

Next, we wish to establish the uniqueness of an operator satisfying 
\eqref{Tan-C10}, \eqref{Tan-C11}. For this it suffices to show
that $C^\infty(\ol{\Omega})$ embeds densely into 
$\big\{u\in L^2(\Omega;d^nx) \,\big|\, \Delta u\in L^2(\Omega;d^nx)\big\}$. 
In fact, a more general result holds, namely, 
\begin{eqnarray}\lb{Tan-C26}
C^\infty(\ol{\Omega})\hookrightarrow 
\big\{u\in H^s(\Omega) \,\big|\, \Delta u\in L^2(\Omega;d^nx)\big\}\,\mbox{ densely, 
whenever $s<2$},
\end{eqnarray}
where the latter space is equipped with the natural graph norm 
$u\mapsto \|u\|_{H^s(\Omega)}+\|\Delta u\|_{L^2(\Omega;d^nx)}$. 
When $s=1$, this appears as Lemma 1.5.3.9 on p.\ 60 of \cite{Gr85}, 
and the extension to $s<2$ has been worked out, along similar lines, in 
\cite{CD98}.  

Finally, we shall show that the Dirichlet trace operator 
\eqref{Tan-C10} has dense range. To this end, granted \eqref{Tan-C26}, it 
suffices to prove that 
\begin{eqnarray}\lb{Nh-1B}
\bigl\{u|_{\partial\Omega} \,\big|\, u\in C^\infty(\ol{\Om})\bigr\}
\, \mbox{ is a dense subspace of }\, 
\bigl(N^{1/2}(\partial\Omega)\bigr)^*.
\end{eqnarray}
In turn, \eqref{Nh-1B} will follow as soon as we show that
\begin{eqnarray}\lb{NSaZ.1}
\mbox{if $\Phi\in\big(\bigl(N^{1/2}(\partial\Omega)\bigr)^*\big)^*$ 
vanishes on $\gamma_D[C^\infty(\overline{\Omega})]$, then necessarily 
$\Phi=0$}.
\end{eqnarray}
With this goal in mind, we fix a functional $\Phi$ as in the first 
part of \eqref{NSaZ.1} and note that since $N^{1/2}(\partial\Omega)$ 
is a reflexive Banach space, continuously embedded 
into $L^2(\partial\Omega;d^{n-1}\omega)$ (cf.\ Lemma \ref{L-refN}), 
we may conclude that $\Phi\in N^{1/2}(\partial\Omega)\hookrightarrow 
L^2(\partial\Omega;d^{n-1}\omega)$. Together with Lemma \ref{Lo-Tx} this 
shows that there exists $w\in H^2(\Omega)\cap H^1_0(\Omega)$ with the 
property that $\gamma_N(w)=\Phi$. In particular, 
\begin{eqnarray}\lb{NSaZ.2}
{}_{N^{1/2}(\partial\Omega)}\langle\gamma_N w,
\gamma_D u \rangle_{(N^{1/2}(\partial\Omega))^*}=0 \, \text{ for all } \,
u\in C^\infty(\overline{\Omega}). 
\end{eqnarray}
Having established \eqref{NSaZ.2}, the integration by parts 
formula \eqref{Tan-C12} in Theorem \ref{New-T-tr} yields that 
\begin{eqnarray}\lb{NSaZ.3} 
(\Delta w,u)_{L^2(\Om;d^nx)}=(w,\Delta u)_{L^2(\Om;d^nx)}, \quad 
u\in C^\infty(\overline{\Omega}). 
\end{eqnarray}
On the other hand, since $w\in H^2(\Omega)$, for every 
$u\in C^\infty(\overline{\Omega})$ we may write 
\begin{eqnarray}\lb{NSaZ.4} 
(\Delta w,u)_{L^2(\Om;d^nx)}
=\int_{\partial\Omega}d\omega^{n-1}\,\ol{{\gamma}_N w}\gamma_D u   
-\int_{\partial\Omega}d\omega^{n-1}\,\ol{\gamma_D w}{\gamma}_N u               
+(w,\Delta u)_{L^2(\Om;d^nx)}. 
\end{eqnarray}
Upon recalling that we also have $w\in H^1_0(\Omega)$ and 
$\Phi=\gamma_N w$, it follows from \eqref{NSaZ.3}--\eqref{NSaZ.4} that 
\begin{eqnarray}\lb{NSaZ.5} 
\int_{\partial\Omega}d\omega^{n-1}\,\ol{\Phi}\gamma_D u=0, \quad 
u\in C^\infty(\overline{\Omega}). 
\end{eqnarray}
Thus, if 
\begin{eqnarray}\lb{NSaZ.5T}
\mbox{$\gamma_D[C^\infty(\overline{\Omega})]$ is dense in 
$L^2(\partial\Omega;d^{n-1}\omega)$}, 
\end{eqnarray}
we see from \eqref{NSaZ.5} that $\Phi=0$ in 
$L^2(\partial\Omega;d^{n-1}\omega)$. Hence, \eqref{NSaZ.1} 
follows, completing the proof of the theorem, modulo the justification of 
\eqref{NSaZ.5T}. Finally, as far as the claim \eqref{NSaZ.5T} is concerned, we 
start by recalling that on any metric measure space (such as $\partial\Omega$, 
equipped with the surface measure and the Euclidean distance) Lipschitz
functions are dense in $L^2$. Hence, if suffices to further approximate
(in the uniform norm) a given Lipschitz function $f$ on $\partial\Omega$ with 
restrictions of functions from $C^\infty({\mathbb{R}}^n)$ to $\partial\Omega$.
This, however, can be achieved by first extending $f$ to a Lipschitz
function in the entire Euclidean space (which can be done even with 
preservation of the Lipschitz constant), and then mollifying this extension.
\end{proof}
%%%%%%%%%%%%%%%%%%%%%%%%%

Our next result underscores the point that the space $N^{1/2}(\partial\Omega)$
is quite rich.

%%%%%%%%%%%%%%%%%%%%%%%%%
\begin{corollary}\lb{L-Den}
Assume Hypothesis \ref{h2.1}. Then 
\begin{eqnarray}\lb{Nyyy}
N^{1/2}(\partial\Omega)\hookrightarrow
L^2(\partial\Omega;d^{n-1}\omega)\hookrightarrow
\bigl(N^{1/2}(\partial\Omega)\bigr)^*\, \mbox{ continuously and 
densely in each case}. 
\end{eqnarray}
Moreover, the duality paring between 
$N^{1/2}(\partial\Omega)$ and $\bigl(N^{1/2}(\partial\Omega)\bigr)^*$ 
is compatible with the natural integral paring 
in $L^2(\partial\Omega;d^{n-1}\omega)$.
\end{corollary}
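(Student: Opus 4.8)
The plan is to read off the continuity assertions and the compatibility of the pairings from structures already in place, to dispose of the density of $L^2(\dOm;d^{n-1}\omega)$ in $\bigl(N^{1/2}(\dOm)\bigr)^*$ essentially for free, and then to devote the real effort to the density of $N^{1/2}(\dOm)$ in $L^2(\dOm;d^{n-1}\omega)$. First, Lemma \ref{L-refN} gives the continuous inclusion $N^{1/2}(\dOm)\hookrightarrow L^2(\dOm;d^{n-1}\omega)$ (in fact $\|g\|_{L^2(\dOm;d^{n-1}\omega)}\le n\|g\|_{N^{1/2}(\dOm)}$, cf.\ \eqref{Ob-1}). Hence the assignment $\iota\colon\phi\mapsto\bigl(g\mapsto(g,\phi)_{L^2(\dOm;d^{n-1}\omega)}\bigr)$ defines a bounded linear map $L^2(\dOm;d^{n-1}\omega)\to\bigl(N^{1/2}(\dOm)\bigr)^*$, and by its very definition one has ${}_{N^{1/2}(\dOm)}\langle g,\iota(\phi)\rangle_{(N^{1/2}(\dOm))^*}=(g,\phi)_{L^2(\dOm;d^{n-1}\omega)}$, which is precisely the asserted compatibility of the $N^{1/2}(\dOm)$--$\bigl(N^{1/2}(\dOm)\bigr)^*$ duality with the integral pairing on $\dOm$; this $\iota$ is taken to be the second embedding in \eqref{Nyyy}. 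That $\iota$ has dense range follows at once from the fact, established en route to Theorem \ref{New-T-tr} (see \eqref{Nh-1B}), that $\ga_D[C^\infty(\ol\Om)]\subseteq L^2(\dOm;d^{n-1}\omega)$ is already dense in $\bigl(N^{1/2}(\dOm)\bigr)^*$ (alternatively, it is automatic once $\iota$ is known to be injective, since $N^{1/2}(\dOm)$ is reflexive).

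It remains to prove that $N^{1/2}(\dOm)$ is dense in $L^2(\dOm;d^{n-1}\omega)$, equivalently that $\iota$ is injective. Suppose $h\in L^2(\dOm;d^{n-1}\omega)$ satisfies $(g,h)_{L^2(\dOm;d^{n-1}\omega)}=0$ for every $g\in N^{1/2}(\dOm)$; one must show $h=0$. By Lemma \ref{Lo-Tx}, $N^{1/2}(\dOm)=\ga_N\bigl(H^2(\Om)\cap H^1_0(\Om)\bigr)$, so the hypothesis reads $(\ga_N w,h)_{L^2(\dOm;d^{n-1}\omega)}=0$ for all $w\in H^2(\Om)\cap H^1_0(\Om)$. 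Since $0\notin\si(-\Delta_{D,\Om})$ by Theorem \ref{t2.5}, Theorem \ref{t3.3} (with $s=0$, $z=0$) furnishes a unique $u\in H^{1/2}(\Om)$ with $\Delta u=0$ in $\Om$ and $\ga_D u=h$; then $u\in\dom(-\Delta_{max})$ and, by \eqref{Tan-C11}, $\widehat\ga_D u=\iota(h)$. Feeding this $u$ into the generalized integration by parts formula \eqref{Tan-C12} and using $\Delta u=0$ gives, for every $w\in H^2(\Om)\cap H^1_0(\Om)$,
\[
(\Delta w,u)_{L^2(\Om;d^nx)}={}_{N^{1/2}(\dOm)}\langle\ga_N w,\iota(h)\rangle_{(N^{1/2}(\dOm))^*}=(\ga_N w,h)_{L^2(\dOm;d^{n-1}\omega)}=0.
\]
Thus the density of $N^{1/2}(\dOm)$ in $L^2(\dOm;d^{n-1}\omega)$ is equivalent to the uniqueness statement: the only harmonic $u\in\dom(-\Delta_{max})$ (here even $u\in H^{1/2}(\Om)$) with $(\Delta w,u)_{L^2(\Om;d^nx)}=0$ for all $w\in H^2(\Om)\cap H^1_0(\Om)$ is $u\equiv 0$.

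Establishing this uniqueness is where genuine analysis enters — the manipulations above merely recast the density claim — and I expect it to be the main obstacle, since every routine attempt to deduce $h=0$ loops back to it. The approach is to approximate $u$ in the graph norm $\|\dott\|_{L^2(\Om;d^nx)}+\|\Delta\dott\|_{L^2(\Om;d^nx)}$ by functions $u_k\in C^\infty(\ol\Om)$ via \eqref{Tan-C26} with $s=0$ (so $u_k\to u$ and $\Delta u_k\to\Delta u=0$ in $L^2(\Om;d^nx)$), to replace each $u_k$ by the harmonic function $v_k\in H^{1/2}(\Om)$ with the same Dirichlet trace $\ga_D u_k\in\ga_D[C^\infty(\ol\Om)]$ (Theorem \ref{t3.3}), and to exploit the continuity estimate \eqref{3.33} together with the density \eqref{NSaZ.5T} of $\ga_D[C^\infty(\ol\Om)]$ in $L^2(\dOm;d^{n-1}\omega)$ in order to pass to the limit and force $\ga_D u=h=0$. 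A complementary, more hands-on route localizes matters to boundary Lipschitz graphs $\{x_n=\vp(x')\}$, on which $\nu_n=(1+|\nabla\vp|^2)^{-1/2}$ is bounded away from $0$, thereby reducing $N^{1/2}(\dOm)$ near such a chart to $\{\psi\in H^{1/2}(\bbR^{n-1})\mid\psi\,\nabla\vp\in(H^{1/2}(\bbR^{n-1}))^{n-1}\}$ and then checking density of the latter in $L^2(\bbR^{n-1})$ directly. With $h=0$ in hand, $\iota$ is injective, hence $N^{1/2}(\dOm)$ is dense in $L^2(\dOm;d^{n-1}\omega)$, and the remaining assertions of \eqref{Nyyy} together with the compatibility statement have already been recorded above.
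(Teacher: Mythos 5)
Your treatment of the continuity assertions, of the compatibility of the pairings, and of the dense range of the second embedding is correct (and your direct Cauchy--Schwarz bound for $\iota$ is simpler than the paper's detour through the harmonic extension). The genuine gap is exactly where you say you expect it: the density of $N^{1/2}(\dOm)$ in $L^2(\dOm;d^{n-1}\omega)$, equivalently the injectivity of $\iota$, is never proved. Your two sketches do not close it. The first reduces matters to the claim that a harmonic $u\in H^{1/2}(\Om)$ with $\gamma_D u=h\in L^2(\dOm;d^{n-1}\omega)$ and $(\Delta w,u)_{L^2(\Om;d^nx)}=0$ for all $w\in H^2(\Om)\cap H^1_0(\Om)$ must vanish. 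Under Hypothesis \ref{h2.1} alone this is a serious assertion: for a planar polygon with a re-entrant corner (the setting of Remark \ref{B-gh}) it is classical that $\Delta\big(H^2(\Om)\cap H^1_0(\Om)\big)$ is a proper closed subspace of $L^2(\Om;d^nx)$ whose orthogonal complement is spanned by nonzero harmonic (``dual singular'') functions, so the orthogonality relation by itself does not force $u=0$; one must genuinely exploit the extra information that $u\in H^{1/2}(\Om)$ with an $L^2$ Dirichlet trace, and your approximation scheme ($u_k\in C^\infty(\ol{\Om})$ in the graph norm, harmonic replacements with the same traces, estimate \eqref{3.33}) supplies no mechanism for extracting $h=0$ in the limit --- it merely reproduces the hypothesis. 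The second sketch (reduction in a boundary chart to the density of $\{\psi\in H^{1/2}(\bbR^{n-1})\,|\,\psi\,\nabla\varphi\in (H^{1/2}(\bbR^{n-1}))^{n-1}\}$ in $L^2(\bbR^{n-1})$) is the original question restated in local coordinates; for an arbitrary Lipschitz $\varphi$, i.e.\ an arbitrary $\nabla\varphi\in L^\infty$, ``checking density directly'' is not routine and no argument is offered.

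The paper takes a different, softer route at precisely this point and never touches your uniqueness question. Its proof consists of (i) the general fact that for reflexive Banach spaces a continuous embedding with dense range dualizes to a continuous embedding with dense range, and (ii) the verification that $L^2(\dOm;d^{n-1}\omega)\hookrightarrow \bigl(N^{1/2}(\dOm)\bigr)^*$ is well defined and continuous (via Theorem \ref{t3.3} together with \eqref{Tan-C11}, \eqref{Tan-C10}) and has dense range (via \eqref{Tan-C26}, \eqref{Tan-C11} and the dense range of the map \eqref{Tan-C10}); the first density in \eqref{Nyyy} is then read off by dualizing, the compatibility of the pairings identifying the adjoint with the natural inclusion $N^{1/2}(\dOm)\hookrightarrow L^2(\dOm;d^{n-1}\omega)$. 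Note also that your duality bookkeeping is slightly off: the dense range of $\iota$ is automatic because its adjoint is the injective inclusion $N^{1/2}(\dOm)\to L^2(\dOm;d^{n-1}\omega)$, whereas the first density in \eqref{Nyyy} is equivalent to the \emph{injectivity} of $\iota$ --- exactly the point your attempt could not settle and which, in the paper's argument, is absorbed into regarding $\iota$ as an inclusion of $L^2(\dOm;d^{n-1}\omega)$ into $\bigl(N^{1/2}(\dOm)\bigr)^*$. Your instinct that this is the crux is sound, but as submitted the proposal does not prove it; the relevant uniqueness statement you are steering toward (Theorem \ref{tH.A}) only becomes available later and under quasi-convexity, not under Hypothesis \ref{h2.1}.
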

%%%%%%%%%%%%%%%%%%%%%%%%%
\begin{proof}
We recall that given two reflexive Banach spaces $X,Y$, with $Y\subseteq X$, 
such that the inclusion map $\iota:Y\hookrightarrow X$ is continuous with 
dense range, it follows that its adjoint, that is, $\iota^\ast:X^\ast\to Y^\ast$, 
is also one-to-one, continuous, and with dense range. This can be interpreted 
as saying that $X^\ast$ embeds continuously and densely into $Y^\ast$. 
Furthermore, in the case when $X$ is actually a Hilbert space (so that 
$X^*$ is canonically identified with $X$) then the duality paring between 
$Y$ and $Y^\ast$ is compatible with the inner product in $X$. 
In light of this general result and Lemma \ref{L-refN}, it suffices to 
show that the second inclusion in \eqref{Nyyy} is well-defined, continuous,  
and with dense range.
To this end, if $f\in L^2(\partial\Omega;d^{n-1}\omega)$ is arbitrary and
$u\in H^{1/2}(\Omega)$ is such that $\Delta u=0$ in $\Omega$ and $\gamma_Du=f$
(cf.\ Theorem \ref{t3.3}) then, by virtue of \eqref{Tan-C11} and \eqref{Tan-C10}, 
$f=\widehat{\gamma}_Du\in \bigl(N^{1/2}(\partial\Omega)\bigr)^*$ and 
$\|f\|_{\bigl(N^{1/2}(\partial\Omega)\bigr)^*}
\leq C\|f\|_{L^2(\partial\Omega;d^{n-1}\omega)}$. This proves that the
second inclusion in \eqref{Nyyy} is well-defined and continuous, and we are left 
with proving that it also has dense range. The latter property is 
an immediate consequence of \eqref{Tan-C26}, \eqref{Tan-C11}
and the fact that the map \eqref{Tan-C10} has dense range.
\end{proof}
%%%%%%%%%%%%%%%%%%%%%%%%%

For smoother domains, Theorem \ref{New-T-tr} takes a somewhat more familiar 
form (compare with \cite{LM72} and \cite{Gr68}):

%%%%%%%%%%%%%%%%%%%%%%%%%
\begin{corollary}\lb{Neq-T2}
Assume that $\Omega\subset{\mathbb{R}}^n$, $n\geq 2$, is a bounded 
$C^{1,r}$ domain with $r>1/2$. Then the Dirichlet trace $\gamma_D$ in \eqref{2.6} 
extends in a unique fashion to a linear, bounded operator 
\begin{eqnarray}\lb{Tan-C10X} 
\widehat{\gamma}_D: 
\big\{u\in L^2(\Omega;d^nx) \,\big|\, \Delta u\in L^2(\Omega;d^nx)\big\}
\to  H^{-1/2}(\partial\Omega). 
\end{eqnarray}
Moreover, for this extension of the Dirichlet trace operator one has
the following generalized integration by parts formula
\begin{eqnarray}\lb{Tan-C12X} 
\langle\gamma_N w                 ,\widehat{\gamma}_D u \rangle_{1/2}
= (\Delta w,u)_{L^2(\Om;d^nx)}
- (w,\Delta u)_{L^2(\Om;d^nx)},
\end{eqnarray}
whenever $u\in L^2(\Omega;d^nx)$ satisfies $\Delta u\in L^2(\Omega;d^nx)$,  
and $w\in H^2(\Omega)\cap H^1_0(\Omega)$. 
\end{corollary}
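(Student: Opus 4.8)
The plan is to derive this corollary directly from Theorem \ref{New-T-tr}, using the identification $N^{1/2}(\dOm) = H^{1/2}(\dOm)$ which becomes available once the domain is assumed to be of class $C^{1,r}$ with $r > 1/2$. Concretely, under the stated hypothesis, Lemma \ref{L-refN} (specifically \eqref{Tan-C5}) yields $N^{1/2}(\dOm) = H^{1/2}(\dOm)$ with equivalent norms, so that, passing to conjugate dual spaces, $\bigl(N^{1/2}(\dOm)\bigr)^* = H^{-1/2}(\dOm)$, again with equivalent norms. Theorem \ref{New-T-tr} then immediately supplies the desired bounded linear operator $\widehat{\gamma}_D$ from $\{u \in L^2(\Omega;d^nx) \,\big|\, \Delta u \in L^2(\Omega;d^nx)\}$ into $\bigl(N^{1/2}(\dOm)\bigr)^* = H^{-1/2}(\dOm)$, and \eqref{Tan-C11} guarantees it is compatible with $\gamma_D$ on $H^s(\Omega)$-functions with $\Delta u \in L^2(\Omega;d^nx)$ for every $s \ge 1/2$. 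This establishes the mapping statement \eqref{Tan-C10X}.

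Next I would transcribe the integration by parts identity \eqref{Tan-C12} into the form \eqref{Tan-C12X}. The pairing $_{N^{1/2}(\dOm)}\langle \dott, \dott \rangle_{(N^{1/2}(\dOm))^*}$ appearing in \eqref{Tan-C12} is, by Corollary \ref{L-Den}, compatible with the integral pairing in $L^2(\dOm;d^{n-1}\omega)$; the pairing $\langle \dott, \dott \rangle_{1/2} = {}_{H^{1/2}(\dOm)}\langle \dott, \dott \rangle_{(H^{1/2}(\dOm))^*}$ has the same compatibility by \eqref{PPL}, \eqref{2.4}. Since the two scales of spaces coincide with equivalent norms and $L^2(\dOm;d^{n-1}\omega)$ is dense in $H^{-1/2}(\dOm) = \bigl(N^{1/2}(\dOm)\bigr)^*$, each of the two (continuous) sesquilinear forms on $H^{1/2}(\dOm) \times H^{-1/2}(\dOm)$ is determined by its restriction to $H^{1/2}(\dOm) \times L^2(\dOm;d^{n-1}\omega)$, where they agree; hence the two pairings coincide. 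Therefore \eqref{Tan-C12}, valid for all $u \in L^2(\Omega;d^nx)$ with $\Delta u \in L^2(\Omega;d^nx)$ and all $w \in H^2(\Omega) \cap H^1_0(\Omega)$, is exactly \eqref{Tan-C12X}.

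Finally, uniqueness of the extension is argued precisely as in the proof of Theorem \ref{New-T-tr}: by \eqref{Tan-C26} (with, say, $s = 1/2 < 2$), $C^\infty(\overline{\Omega})$ is dense in $\{u \in L^2(\Omega;d^nx) \,\big|\, \Delta u \in L^2(\Omega;d^nx)\}$ for the graph norm, and on $C^\infty(\overline{\Omega})$ any extension compatible with $\gamma_D$ must agree with $\gamma_D^0 u = u|_{\dOm}$. Since both the operator furnished above and any competing bounded extension take values in $H^{-1/2}(\dOm)$ and agree on the dense subspace $C^\infty(\overline{\Omega})$, they coincide everywhere.

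There is no genuine obstacle here: the entire content is carried by Theorem \ref{New-T-tr}, Lemma \ref{L-refN}, and Corollary \ref{L-Den}. The only point deserving a moment's care is the verification — carried out in the second paragraph above — that the abstract duality pairing between $N^{1/2}(\dOm)$ and its conjugate dual truly reduces to the standard $H^{1/2}$--$H^{-1/2}$ pairing once the two scales of boundary Sobolev spaces are identified, which follows from their common compatibility with the $L^2(\dOm;d^{n-1}\omega)$ integral pairing.
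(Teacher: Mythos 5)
Your argument is correct and follows the same route as the paper, whose proof of Corollary \ref{Neq-T2} simply invokes Theorem \ref{New-T-tr} together with the identification \eqref{Tan-C5}; you have merely spelled out the details (dualization of $N^{1/2}(\partial\Omega)=H^{1/2}(\partial\Omega)$, compatibility of the two pairings via Corollary \ref{L-Den}, and uniqueness from the density \eqref{Tan-C26}) that the paper leaves implicit.
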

%%%%%%%%%%%%%%%%%%%%%%%%%
\begin{proof}
This is an immediate consequence of Theorem \ref{New-T-tr} and equation \eqref{Tan-C5}. 
\end{proof}
%%%%%%%%%%%%%%%%%%%%%%%%%

We next turn our attention to case of the Neumann trace, whose action 
we would like to extend to $\dom (- \Delta_{max})$. To this end, we need 
to address a number of preliminary matters. First, assuming 
Hypothesis \ref{h2.1}, we make the following definition 
(compare with \eqref{Tan-C4}):  
\begin{equation}   
N^{3/2}(\partial\Omega):=\bigl\{g\in H^1(\partial\Omega) \,\big|\, 
\nabla_{tan}g\in \bigl(H^{1/2}(\partial\Omega)\bigr)^n\bigl\},  \lb{3an-C4}
\end{equation}
equipped with the natural norm 
\begin{eqnarray}\lb{3an-C4B} 
\|g\|_{N^{3/2}(\partial\Omega)}
:=\|g\|_{L^2(\partial\Omega;d^{n-1}\omega)}+
\|\nabla_{tan}g\|_{(H^{1/2}(\partial\Omega))^n}.  
\end{eqnarray} 

%%%%%%%%%%%%%%%%%%%%%%%%%%%%%%%%%%%%%%%
\begin{lemma}\lb{L-refNN}
Assume Hypothesis \ref{h2.1}. Then $N^{3/2}(\partial\Omega)$ is a 
reflexive Banach space which embeds continuously into $H^1(\partial\Omega)$. 
\end{lemma}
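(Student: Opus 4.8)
The plan is to mirror, almost verbatim, the three-step argument used for $N^{1/2}(\partial\Omega)$ in Lemma~\ref{L-refN}: first establish the continuous embedding $N^{3/2}(\partial\Omega)\hookrightarrow H^1(\partial\Omega)$, then prove completeness, and finally deduce reflexivity by identifying $N^{3/2}(\partial\Omega)$ isometrically with a closed subspace of a product of reflexive spaces. The single algebraic fact that makes everything work, which I would record at the outset, is the pointwise identity
\[
\partial g/\partial\tau_{j,k} = \nu_j\,(\nabla_{tan}g)_k - \nu_k\,(\nabla_{tan}g)_j, \quad 1\leq j,k\leq n,
\]
valid $\omega$-a.e.\ on $\partial\Omega$ for every $g\in H^1(\partial\Omega)$. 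It follows by expanding $(\nabla_{tan}g)_l=\sum_{m=1}^n\nu_m\,\partial g/\partial\tau_{m,l}$ via \eqref{def-TAU}, \eqref{EEs-5} and using $\sum_m\nu_m^2=1$ (so that $(\nabla_{tan}g)_l=\partial_l g-\nu_l(\nu\cdot\nabla g)$ formally, and the normal parts cancel upon forming the antisymmetric combination).

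For the embedding, note first that $\nabla_{tan}g\in(\LdOm)^n$ for every $g\in H^1(\partial\Omega)$ by \eqref{Aggab}, so the norm \eqref{3an-C4B} is finite on all of $N^{3/2}(\partial\Omega)$. Since the $\nu_j$ are bounded measurable functions (Rademacher's theorem), the displayed identity gives $\|\partial g/\partial\tau_{j,k}\|_{\LdOm}\le 2\|\nabla_{tan}g\|_{(\LdOm)^n}$, and then, combining with \eqref{EQ1} and Lemma~\ref{Lg-T} at $s=1$,
\[
\|g\|_{H^1(\dOm)}\approx \|g\|_{\LdOm}+\sum_{j,k=1}^n\|\partial g/\partial\tau_{j,k}\|_{\LdOm}\leq C\,\|g\|_{N^{3/2}(\dOm)},
\]
which is the asserted continuous inclusion.

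Next I would prove completeness. Given a Cauchy sequence $\{g_m\}_{m\in\bbN}$ in $N^{3/2}(\partial\Omega)$, it is Cauchy in $\LdOm$, hence $g_m\to g$ in $\LdOm$, and $\{\nabla_{tan}g_m\}_m$ is Cauchy in $(H^{1/2}(\partial\Omega))^n$, hence, by \eqref{EQ1}, converges in $(\LdOm)^n$ to some $\vec h$. Applying the identity above to differences $g_m-g_{m'}$ shows $\{\partial g_m/\partial\tau_{j,k}\}_m$ is Cauchy in $\LdOm$; on the other hand $g_m\to g$ in $\LdOm$ forces $\partial g_m/\partial\tau_{j,k}\to\partial g/\partial\tau_{j,k}$ in the sense of the functionals \eqref{IBP-tau}, so $\partial g/\partial\tau_{j,k}\in\LdOm$ with $L^2$-convergence. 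By \eqref{M1.1}, \eqref{M1.1y} this gives $g\in H^1(\partial\Omega)$ and $g_m\to g$ in $H^1(\partial\Omega)$, whence $\nabla_{tan}g_m\to\nabla_{tan}g$ in $(\LdOm)^n$ by \eqref{Aggab}; comparison with $\nabla_{tan}g_m\to\vec h$ yields $\vec h=\nabla_{tan}g\in(H^{1/2}(\partial\Omega))^n$, i.e.\ $g\in N^{3/2}(\partial\Omega)$, and $g_m\to g$ in the $N^{3/2}$-norm.

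For reflexivity I would introduce
\[
\Phi:N^{3/2}(\partial\Omega)\to \LdOm\times\bigl(H^{1/2}(\partial\Omega)\bigr)^n,\qquad \Phi(g):=(g,\nabla_{tan}g),
\]
which by the definition of the norm \eqref{3an-C4B} is an isometric embedding; its range is closed by exactly the limiting argument of the previous paragraph. Since $\LdOm$ is a Hilbert space and $H^{1/2}(\partial\Omega)$ is reflexive (it carries a canonical Hilbert space structure by Theorem~\ref{TH-GMMM}\,$(iii)$), the product is reflexive, hence so is its closed subspace, and therefore $N^{3/2}(\partial\Omega)$ is reflexive. I expect the only genuinely delicate point to be the passage to distributional limits for the tangential derivatives in the completeness and closed-range steps (ensuring the $L^2$-limit of $\partial g_m/\partial\tau_{j,k}$ really is $\partial g/\partial\tau_{j,k}$); the remainder is a direct transcription of the proof of Lemma~\ref{L-refN}.
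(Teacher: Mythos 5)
Your proposal is correct and follows essentially the same route as the paper's proof: the continuous embedding into $H^1(\partial\Omega)$ via the algebraic identity $\partial g/\partial\tau_{j,k}=\nu_j(\nabla_{tan}g)_k-\nu_k(\nabla_{tan}g)_j$ together with the tangential-derivative characterization of $H^1(\partial\Omega)$, completeness by a Cauchy-sequence argument, and reflexivity through the isometric embedding $g\mapsto(g,\nabla_{tan}g)$ into $L^2(\partial\Omega;d^{n-1}\omega)\dotplus\bigl(H^{1/2}(\partial\Omega)\bigr)^n$. The only cosmetic difference is that in the completeness step you re-derive the $H^1$-convergence through distributional limits of the $\partial/\partial\tau_{j,k}$, whereas the paper simply invokes the already-established bounded embedding to get convergence in $H^1(\partial\Omega)$ directly; both are fine.
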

%%%%%%%%%%%%%%%%%%%%%%%%%%%%%%%%%%%%%%%
\begin{proof}
That the natural injection $N^{3/2}(\partial\Omega)\hookrightarrow 
H^1(\partial\Omega)$ is bounded 
is clear from \eqref{3an-C4B}, \eqref{Tan-C1} and \eqref{Pf-2.4}. 
Next, let $\{g_m\}_{m\in{\mathbb{N}}}$ be a Cauchy sequence in 
$N^{3/2}(\partial\Omega)$. Then $\{g_m\}_{m\in\bbN}$ converges in 
$H^1(\dOm)$ to some $g\in H^1(\dOm)$. Consequently, for each 
$j,k\in\{1,...,n\}$, the sequence 
$\{\partial g_m/\partial\tau_{j,k}\}_{m\in\bbN}$ converges to 
$\partial g/\partial\tau_{j,k}$ in $L^2(\dOm;d^{n-1}\omega)$. 
This and \eqref{Tan-C1} then imply that 
$\{\nabla_{tan}g_m\}_{m\in\bbN}$ converges to 
$\nabla g$ in $\bigl[L^2(\dOm;d^{n-1}\omega)\bigr]^n$. 
Since $\{\nabla_{tan}g_m\}_{m\in\bbN}$ is also known to be Cauchy in 
$H^1(\dOm)$, we may finally conclude that $g\in N^{3/2}(\dOm)$ and 
$\{g_m\}_{m\in\bbN}$ converges to $g$ in $N^{3/2}(\dOm)$. 
This proves that $N^{3/2}(\dOm)$ is a Banach space. Next, one observes that 
\begin{eqnarray}\lb{Ob-2X}
\Phi:N^{3/2}(\partial\Omega)\to  
L^2(\dOm;d^{n-1}\omega)\dotplus   \bigl[H^{1/2}(\dOm)\bigr]^n,
\quad \Phi(g):=(g,\nabla_{tan}g), 
\end{eqnarray}
is an isometric embedding, allowing for the identification of 
$N^{3/2}(\partial\Omega)$ with a closed subspace of the reflexive space
$L^2(\dOm;d^{n-1}\omega)\dotplus   \bigl[H^{1/2}(\dOm)\bigr]^n$. 
Consequently, $N^{3/2}(\partial\Omega)$ is also reflexive. 
\end{proof}
%%%%%%%%%%%%%%%%%%%%%%%%%%%%%%%%%%%%%

Our next result shows that this is a natural substitute for the 
more familiar space $H^{3/2}(\partial\Omega)$ in the case where $\Omega$ 
is sufficiently smooth. Concretely, we have the following result:
 
%%%%%%%%%%%%%%%%%%%%%%%%%%%%%%%%%%%%
\begin{lemma}\lb{Lk-t1}
Let $\Omega\subset{\mathbb{R}}^n$, $n\geq 2$, be a bounded $C^{1,r}$ domain 
with $r>1/2$. Then  
\begin{eqnarray}\lb{3an-C5} 
N^{3/2}(\partial\Omega)=H^{3/2}(\partial\Omega), 
\end{eqnarray}
as vector spaces with equivalent norms.
\end{lemma}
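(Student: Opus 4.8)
The plan is to identify $N^{3/2}(\partial\Omega)$ with $H^{3/2}(\partial\Omega)$ by matching the two available first-order descriptions of these spaces and exploiting the fact that, on a $C^{1,r}$ domain with $r>1/2$, the components of the unit normal act as multipliers on $H^{1/2}(\partial\Omega)$. On the one side, since $\Omega$ is of class $C^{1,r}$ with $r>1/2$ (Hypothesis~\ref{h2.1C}), Lemma~\ref{K-t1} describes $H^{3/2}(\partial\Omega)$ as the set of $f\in H^1(\partial\Omega)$ with $\partial f/\partial\tau_{j,k}\in H^{1/2}(\partial\Omega)$ for all $1\le j,k\le n$, together with the norm equivalence \eqref{M.2x}. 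On the other side, $N^{3/2}(\partial\Omega)$ is, by definition \eqref{3an-C4}, the set of $g\in H^1(\partial\Omega)$ with $\nabla_{tan}g\in\bigl(H^{1/2}(\partial\Omega)\bigr)^n$, normed as in \eqref{3an-C4B}. The connection between the two families of first-order derivatives is provided, on the one hand, by the very definition \eqref{Tan-C1} of the tangential gradient, $(\nabla_{tan}g)_m=\sum_{k=1}^n\nu_k\,\partial g/\partial\tau_{k,m}$, and, on the other hand, by the dual identity
\[
\partial g/\partial\tau_{j,k}=\nu_j\,(\nabla_{tan}g)_k-\nu_k\,(\nabla_{tan}g)_j,\qquad 1\le j,k\le n,
\]
which follows from \eqref{def-TAU} and $|\nu|=1$ (equivalently, from the cyclic identity $\nu_i\,\partial g/\partial\tau_{j,k}+\nu_j\,\partial g/\partial\tau_{k,i}+\nu_k\,\partial g/\partial\tau_{i,j}=0$) — first for functions $g$ that are smooth near $\partial\Omega$, then for $g\in H^1(\partial\Omega)$ by density, all identities being read in $L^2(\partial\Omega;d^{n-1}\omega)$.

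Granting these two identities, the set-theoretic equality is immediate once one records the decisive input: because $\Omega$ is $C^{1,r}$ with $r>1/2$, each component $\nu_j$ of the outward unit normal belongs to $C^r(\partial\Omega)$ with $r>1/2$, so by Lemma~\ref{lA.6} multiplication by $\nu_j$ is a bounded operator on $H^{1/2}(\partial\Omega)$. Hence, if $g\in N^{3/2}(\partial\Omega)$ then $g\in H^1(\partial\Omega)$ and $(\nabla_{tan}g)_\ell\in H^{1/2}(\partial\Omega)$ for all $\ell$; the dual identity together with the multiplier property gives $\partial g/\partial\tau_{j,k}\in H^{1/2}(\partial\Omega)$ for all $j,k$, so $g\in H^{3/2}(\partial\Omega)$ by Lemma~\ref{K-t1}. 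Conversely, if $g\in H^{3/2}(\partial\Omega)$ then $g\in H^1(\partial\Omega)$ and $\partial g/\partial\tau_{k,m}\in H^{1/2}(\partial\Omega)$ by Lemma~\ref{K-t1}; the defining formula \eqref{Tan-C1} together with the multiplier property gives $(\nabla_{tan}g)_m\in H^{1/2}(\partial\Omega)$ for all $m$, so $g\in N^{3/2}(\partial\Omega)$.

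For the equivalence of norms I would carry the constants through the same two computations: the bound $\|\nu_j h\|_{H^{1/2}(\partial\Omega)}\le C(\Omega,r)\,\|h\|_{H^{1/2}(\partial\Omega)}$ from Lemma~\ref{lA.6}, the $H^1(\partial\Omega)$-norm equivalence \eqref{M1.1y}, the $H^{3/2}(\partial\Omega)$-norm equivalence \eqref{M.2x}, and the continuous embedding $H^{1/2}(\partial\Omega)\hookrightarrow L^2(\partial\Omega;d^{n-1}\omega)$ (to absorb the lower-order $L^2$ terms) yield $\|g\|_{H^{3/2}(\partial\Omega)}\le C\,\|g\|_{N^{3/2}(\partial\Omega)}$ and $\|g\|_{N^{3/2}(\partial\Omega)}\le C\,\|g\|_{H^{3/2}(\partial\Omega)}$; alternatively, since $N^{3/2}(\partial\Omega)$ (Lemma~\ref{L-refNN}) and $H^{3/2}(\partial\Omega)$ are Banach spaces which have just been shown to coincide as sets with mutually continuous inclusions, the open mapping theorem forces the two norms to be equivalent. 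I expect the only genuine subtlety — and the only place where the hypothesis $r>1/2$ is really used — to be the $H^{1/2}(\partial\Omega)$-multiplier property of $\nu_j$, i.e. Lemma~\ref{lA.6}; this is exactly what fails for a general Lipschitz domain, in accordance with the observation following Lemma~\ref{L-refN} that $N^{1/2}(\partial\Omega)$ and $H^{1/2}(\partial\Omega)$ (and, similarly, $N^{3/2}(\partial\Omega)$ and $H^{3/2}(\partial\Omega)$) can be quite different there. Everything else reduces to bookkeeping with the two first-order identities and the previously established descriptions of $H^1(\partial\Omega)$, $H^{3/2}(\partial\Omega)$, and $N^{3/2}(\partial\Omega)$.
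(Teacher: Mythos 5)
Your proposal is correct and coincides with the paper's own argument: the proof there rests on exactly the same identity $\partial g/\partial\tau_{j,k}=\nu_j(\nabla_{tan}g)_k-\nu_k(\nabla_{tan}g)_j$ (obtained from \eqref{Tan-C1} by elementary algebra), the multiplier property of the $C^r$ normal components on $H^{1/2}(\partial\Omega)$ from Lemma \ref{lA.6}, and the characterization of $H^{3/2}(\partial\Omega)$ in Lemma \ref{K-t1}, applied in both directions together with the accompanying norm equivalences. Your extra remarks (deriving the dual identity by density and invoking the open mapping theorem as an alternative for the norm equivalence) are fine but not needed beyond what the paper records.
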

%%%%%%%%%%%%%%%%%%%%%%%%%%%%%%%%%%%%
\begin{proof}
Denote by $(\nu_1,...,\nu_n)$ the components of the outward unit normal 
$\nu$ to $\partial\Omega$. If $g\in H^{1}(\partial\Omega)$ then \eqref{Tan-C1} 
and elementary algebra yield that 
\begin{eqnarray}\label{D-pv4}
\frac{\partial g}{\partial\tau_{j,k}}
=\nu_j(\nabla_{tan}g)_k-\nu_k(\nabla_{tan}g)_j,\quad j,k=1,...,n,
\end{eqnarray}
where $(\nabla_{tan}g)_r$ stands for the $r$-th component of the vector 
field $\nabla_{tan}g$. As a consequence of this identity and Lemma \ref{lA.6}, 
if $g\in N^{3/2}(\partial\Omega)$, then 
$\partial g/\partial\tau_{j,k}\in H^{1/2}(\partial\Omega)$. 
Thus, Lemma \ref{K-t1} yields that
$g\in H^{3/2}(\partial\Omega)$ with 
$\|g\|_{H^{3/2}(\partial\Omega)}\approx\|g\|_{N^{3/2}(\partial\Omega)}$,
whenever $g\in N^{3/2}(\partial\Omega)$. This proves the left-to-right 
inclusion in \eqref{3an-C5}. 

Conversely, if $g\in H^{3/2}(\partial\Omega)$, then 
$g\in H^{1}(\partial\Omega)$ and 
$\partial g/\partial\tau_{j,k}\in H^{1/2}(\partial\Omega)$ for 
$1\leq j,k\leq n$, by Lemma \ref{K-t1} (and a natural equivalence of norms). 
Based on this and \eqref{Tan-C1}, we may then conclude that 
$g\in N^{3/2}(\partial\Omega)$ with 
$\|g\|_{N^{3/2}(\partial\Omega)}\approx\|g\|_{H^{3/2}(\partial\Omega)}$, 
whenever $g\in H^{3/2}(\partial\Omega)$. This proves the right-to-left 
inclusion in \eqref{3an-C5}. 
\end{proof}
%%%%%%%%%%%%%%%%%%%%%%%%%%%%%%%%%%

%It should be pointed out that, generally speaking, the spaces
%$H^{3/2}(\partial\Omega)$ and $N^{3/2}(\partial\Omega)$ can be quite different
%in the class of Lipschitz domains. 

The reason we are interested in $N^{3/2}(\partial\Omega)$ is that this space
arises naturally when considering the Dirichlet trace operator acting on 
\begin{eqnarray}\lb{3an-C6} 
\big\{u\in H^2(\Omega) \,\big|\, \gamma_N u = 0\big\}, 
\end{eqnarray}
considered as a closed subspace of $H^2(\Omega)$ (thus, a Banach space
when equipped with the norm inherited from $H^2(\Omega)$). 
More precisely, we have the following result:
 
%%%%%%%%%%%%%%%%%%%%%%%%%%%%% 
\begin{lemma}\lb{3o-Tx}
Assume Hypothesis \ref{h2.1}. Then the Dirichlet trace operator $\gamma_D$ 
considered in the context 
\begin{eqnarray}\lb{3an-C7} 
\gamma_D: \big\{u\in H^2(\Omega) \,\big|\, \gamma_N u = 0\big\}
\to  N^{3/2}(\partial\Omega)
\end{eqnarray}
is well-defined, linear, bounded, onto, and with a linear, bounded 
right-inverse. In addition, the null space of $\gamma_D$ in \eqref{3an-C7} is 
precisely $H^2_0(\Omega)$, the closure of $C^\infty_0(\Omega)$ in 
$H^2(\Omega)$.
\end{lemma}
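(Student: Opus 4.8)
The statement to be proved is Lemma \ref{3o-Tx}, which is the ``dual'' of Lemma \ref{Lo-Tx} with the roles of $\gamma_D$ and $\gamma_N$ (and of $N^{3/2}$ and $N^{1/2}$) interchanged. The natural strategy is to mirror the proof of Lemma \ref{Lo-Tx}, invoking Theorem \ref{T-MMS} (the Mitrea--Mitrea--Shi description of $\gamma_2(H^2(\Omega))$) as the key structural input, and then extracting from it the three desired conclusions: well-definedness/boundedness, surjectivity with bounded right-inverse, and the identification of the null space.

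\emph{Step 1 (well-definedness and boundedness).} Let $u\in H^2(\Omega)$ with $\gamma_N u=0$. By Theorem \ref{T-MMS}, the pair $\gamma_2 u=(\gamma_D u,\gamma_N u)=(\gamma_D u,0)$ lies in the space described in \eqref{Tan-C2}, i.e. $\gamma_D u\in H^1(\partial\Omega)$ and $\nabla_{tan}(\gamma_D u)+0\cdot\nu=\nabla_{tan}(\gamma_D u)\in\bigl(H^{1/2}(\partial\Omega)\bigr)^n$. By the definition \eqref{3an-C4} of $N^{3/2}(\partial\Omega)$, this says exactly that $\gamma_D u\in N^{3/2}(\partial\Omega)$, and the norm estimate $\|\gamma_D u\|_{N^{3/2}(\partial\Omega)}\le C\|u\|_{H^2(\Omega)}$ follows from the continuity of $\gamma_2$ together with the definition \eqref{3an-C4B} of the $N^{3/2}$-norm. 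Hence \eqref{3an-C7} is well-defined, linear, and bounded.

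\emph{Step 2 (surjectivity and bounded right-inverse).} Let $\mathcal{E}_2$ be a linear bounded right-inverse for $\gamma_2$, as furnished by Theorem \ref{T-MMS}. Let $\iota\colon N^{3/2}(\partial\Omega)\to\{(g_0,g_1)\in H^1(\partial\Omega)\dotplus L^2(\partial\Omega;d^{n-1}\omega)\,|\,\nabla_{tan}g_0+g_1\nu\in(H^{1/2}(\partial\Omega))^n\}$ be the injection $\iota(g):=(g,0)$; this is bounded and well-defined precisely because $g\in N^{3/2}(\partial\Omega)$ forces $\nabla_{tan}g\in(H^{1/2}(\partial\Omega))^n$, so $(g,0)$ lies in the target space, with the norm \eqref{NoRw-1} of $(g,0)$ comparable to $\|g\|_{N^{3/2}(\partial\Omega)}$. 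Then $\mathcal{E}_2\iota\colon N^{3/2}(\partial\Omega)\to H^2(\Omega)$ is linear and bounded; moreover for $g\in N^{3/2}(\partial\Omega)$ one has $\gamma_2(\mathcal{E}_2\iota(g))=(g,0)$, so in particular $\gamma_N(\mathcal{E}_2\iota(g))=0$ (so the range of $\mathcal{E}_2\iota$ lies in $\{u\in H^2(\Omega)\,|\,\gamma_N u=0\}$) and $\gamma_D(\mathcal{E}_2\iota(g))=g$. Thus $\mathcal{E}_2\iota$ is a linear bounded right-inverse for the operator \eqref{3an-C7}, which is therefore onto.

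\emph{Step 3 (null space).} The kernel of \eqref{3an-C7} is $\{u\in H^2(\Omega)\,|\,\gamma_N u=0,\ \gamma_D u=0\}=\{u\in H^2(\Omega)\,|\,\gamma_2 u=0\}=\ker(\gamma_2)$, which by \eqref{Tan-C3} in Theorem \ref{T-MMS} equals $H^2_0(\Omega)$. This completes the proof. I do not expect a genuine obstacle here: all the hard analysis is packaged in Theorem \ref{T-MMS} and in the basic functional-analytic properties of $N^{3/2}(\partial\Omega)$ from Lemma \ref{L-refNN} and Lemma \ref{Lg-T} (the latter guaranteeing $N^{3/2}(\partial\Omega)\hookrightarrow H^1(\partial\Omega)$ so that the tangential gradient makes sense). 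The only point requiring a moment's care is the bookkeeping in Step 2, namely checking that $\iota(g)=(g,0)$ genuinely lands in the constraint space appearing in Theorem \ref{T-MMS} and that the two norms match up — but this is immediate from comparing \eqref{3an-C4B} with \eqref{NoRw-1}.
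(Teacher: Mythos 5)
Your proposal is correct and follows essentially the same route as the paper's own proof: boundedness via Theorem \ref{T-MMS} applied to $\gamma_2 u=(\gamma_D u,0)$, surjectivity via the composition $\mathcal{E}_2\iota$ with $\iota(g)=(g,0)$, and the kernel identification from $\ker(\gamma_2)=H^2_0(\Omega)$ in \eqref{Tan-C3}. No gaps.
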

%%%%%%%%%%%%%%%%%%%%%%%%%%%%%
\begin{proof} If $u\in H^2(\Omega)$ is such that $\gamma_N u =0$, then 
Theorem \ref{T-MMS} yields
\begin{align}\lb{3an-C8} 
& (\gamma_D u ,0)=\gamma_2 u   % \no \\
% & \quad 
\in\big\{(g_0,g_1) \in H^1(\partial\Omega)
\dotplus    L^2(\partial\Omega;d^{n-1}\omega) \,\big|\, 
\nabla_{tan}g_0+g_1\nu\in \bigl(H^{1/2}(\partial\Omega)\bigr)^n\big\}
\end{align}
which entails $\gamma_D u \in N^{3/2}(\partial\Omega)$. 
Furthermore, there exists $C=C(\Omega)>0$, independent of $u$, such that 
$\|\gamma_D u \|_{N^{3/2}(\partial\Omega)}\leq C\|u\|_{H^2(\Omega)}$. 
As a consequence, the operator \eqref{Tan-C7} is well-defined, linear, and 
bounded. Next, we recall that ${\mathcal{E}}_2$ stands for a linear, bounded  
right-inverse for $\gamma_2$ in \eqref{Tan-C2}. Then, if 
\begin{align}\lb{3an-C9} 
& \iota': N^{3/2}(\partial\Omega)\to 
\big\{(g_0,g_1)\in H^1(\partial\Omega)
\dotplus    L^2(\partial\Omega;d^{n-1}\omega) \,\big|\, 
\nabla_{tan}g_0  % \\ 
% & \hspace*{8.35cm} 
+g_1\nu\in \big(H^{1/2}(\partial\Omega)\big)^n\big\}  % \no 
\end{align}
is the injection given by $\iota'(g):=(g,0)$, for every 
$g\in N^{3/2}(\partial\Omega)$, it follows that the composition 
${\mathcal{E}}_2 \iota': N^{3/2}(\partial\Omega)\to 
\big\{u\in H^2(\Omega) \,\big|\, \gamma_N u = 0\big\}$ is a linear, bounded right-inverse 
for the operator $\gamma_D$ in \eqref{3an-C7}. Hence, this operator is also
onto. Finally, the fact that the null space of $\gamma_D$ in \eqref{3an-C7} 
is precisely $H^2_0(\Omega)$ follows from its definition and the last part in 
the statement of Theorem \ref{T-MMS}.  
\end{proof}
%%%%%%%%%%%%%%%%%%%%%%%%%

Next, we shall use the Neumann trace result in Lemma \ref{3o-Tx}   
to extend the action of the Neumann trace operator \eqref{2.7} to 
$\dom (- \Delta_{max}) 
= \big\{u\in L^2(\Omega;d^nx) \,\big|\, \Delta u\in L^2(\Omega;d^nx)\big\}$.
As before, this space is equipped with the natural graph norm. 
We denote by $\bigl(N^{3/2}(\partial\Omega)\bigr)^*$ the 
conjugate dual space of $N^{3/2}(\partial\Omega)$. 

%%%%%%%%%%%%%%%%%%%%%%%%%
\begin{theorem}\lb{3ew-T-tr}
Assume Hypothesis \ref{h2.1}. Then there exists a unique linear and bounded 
operator 
\begin{eqnarray}\lb{3an-C10} 
\widehat{\gamma}_N: 
\big\{u\in L^2(\Omega;d^nx) \,\big|\, \Delta u\in L^2(\Omega;d^nx)\big\}
\to  \bigl(N^{3/2}(\partial\Omega)\bigr)^*
\end{eqnarray}
which is compatible with the Neumann trace, originally introduced in \eqref{2.7}
and then further extended in \eqref{MaX-1}, in the sense that, for each 
$s\geq 3/2$, one has  
\begin{eqnarray}\lb{3an-C11}
\widehat{\gamma}_N u=\wti\gamma_N u \, \mbox{ for every $u\in H^s(\Omega)$ 
with $\Delta u\in L^2(\Omega;d^nx)$}.
\end{eqnarray}
Furthermore, this extension of the Neumann trace operator has dense range 
and allows for the following generalized integration by parts formula
\begin{eqnarray}\lb{3an-C12} 
{}_{N^{3/2}(\partial\Omega)}\langle\gamma_D w,
\widehat\gamma_N u\rangle_{(N^{3/2}(\partial\Omega))^*}
=(w,\Delta u)_{L^2(\Om;d^nx)}-(\Delta w,u)_{L^2(\Om;d^nx)},
\end{eqnarray}
valid for every $u\in L^2(\Omega;d^nx)$ with $\Delta u\in L^2(\Omega;d^nx)$ 
and every $w\in H^2(\Omega)$ with $\gamma_N w                 =0$.
\end{theorem}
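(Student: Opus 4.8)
The plan is to mirror, with the roles of the Dirichlet and Neumann traces interchanged, the proof of Theorem~\ref{New-T-tr}, using Lemma~\ref{3o-Tx} in place of Lemma~\ref{Lo-Tx} and taking \eqref{Ht-r3}, \eqref{Tan-C26} together with Lemma~\ref{Neu-tr} as the main analytic inputs.

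First I would \emph{define} $\widehat\gamma_N$. Fix $u\in L^2(\Om;d^nx)$ with $\Delta u\in L^2(\Om;d^nx)$. Given $g\in N^{3/2}(\dOm)$, use Lemma~\ref{3o-Tx} to choose $w\in H^2(\Om)$ with $\gamma_N w=0$, $\gamma_D w=g$, and $\|w\|_{H^2(\Om)}\le C\|g\|_{N^{3/2}(\dOm)}$, and set
\[
{}_{N^{3/2}(\dOm)}\langle g,\widehat\gamma_N u\rangle_{(N^{3/2}(\dOm))^*}
:=(w,\Delta u)_{L^2(\Om;d^nx)}-(\Delta w,u)_{L^2(\Om;d^nx)}.
\]
Independence of the particular $w$ reduces, by linearity, to the claim that $(w,\Delta u)_{L^2(\Om;d^nx)}=(\Delta w,u)_{L^2(\Om;d^nx)}$ whenever $w\in H^2(\Om)$ has $\gamma_D w=\gamma_N w=0$; since Lemma~\ref{3o-Tx} then gives $w\in H^2_0(\Om)$, this follows by approximating $w$ by functions in $C^\infty_0(\Om)$ in the $H^2(\Om)$-norm and using that $(\phi,\Delta u)_{L^2(\Om;d^nx)}=(\Delta\phi,u)_{L^2(\Om;d^nx)}$ for $\phi\in C^\infty_0(\Om)$. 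The bound $\big|{}_{N^{3/2}(\dOm)}\langle g,\widehat\gamma_N u\rangle_{(N^{3/2}(\dOm))^*}\big|\le C\|g\|_{N^{3/2}(\dOm)}\big(\|u\|_{L^2(\Om;d^nx)}+\|\Delta u\|_{L^2(\Om;d^nx)}\big)$ shows that $\widehat\gamma_N u\in(N^{3/2}(\dOm))^*$ and that \eqref{3an-C10} is linear and bounded; identity \eqref{3an-C12} holds by construction, once one recalls from Lemma~\ref{3o-Tx} that $\gamma_D$ maps $\{w\in H^2(\Om)\,|\,\gamma_N w=0\}$ onto $N^{3/2}(\dOm)$.

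Next I would establish the compatibility \eqref{3an-C11}. Since for $s\ge 3/2$ one has $u\in H^{3/2}(\Om)$, it suffices to treat $u\in H^{3/2}(\Om)$ with $\Delta u\in L^2(\Om;d^nx)$, in which case $\wti\gamma_N u$ is the element furnished by Lemma~\ref{Neu-tr}. By \eqref{Tan-C26} (with $s=3/2<2$) choose $u_j\in C^\infty(\ol{\Om})$ with $u_j\to u$ in $H^{3/2}(\Om)$ and $\Delta u_j\to\Delta u$ in $L^2(\Om;d^nx)$; then $\wti\gamma_N u_j\to\wti\gamma_N u$ in $L^2(\dOm;d^{n-1}\omega)$ by Lemma~\ref{Neu-tr}. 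For $w\in H^2(\Om)$ with $\gamma_N w=0$, applying the Divergence Theorem \eqref{Ht-r3} to the vector field $\ol{w}\,\nabla u_j-u_j\,\nabla\ol{w}$ yields $(w,\Delta u_j)_{L^2(\Om;d^nx)}-(\Delta w,u_j)_{L^2(\Om;d^nx)}=(\gamma_D w,\gamma_N u_j)_{L^2(\dOm;d^{n-1}\omega)}$; letting $j\to\infty$ gives $(w,\Delta u)_{L^2(\Om;d^nx)}-(\Delta w,u)_{L^2(\Om;d^nx)}=(\gamma_D w,\wti\gamma_N u)_{L^2(\dOm;d^{n-1}\omega)}$. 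By the definition of $\widehat\gamma_N u$ and the surjectivity of $\gamma_D$ in Lemma~\ref{3o-Tx}, this says exactly that $\widehat\gamma_N u$ is represented by $\wti\gamma_N u\in L^2(\dOm;d^{n-1}\omega)\hookrightarrow(N^{3/2}(\dOm))^*$, which is \eqref{3an-C11}. Uniqueness of an operator satisfying \eqref{3an-C10}, \eqref{3an-C11} then follows since $C^\infty(\ol{\Om})\subset\{u\in H^{3/2}(\Om)\,|\,\Delta u\in L^2(\Om;d^nx)\}$ and, by \eqref{Tan-C26} with $s=0$, $C^\infty(\ol{\Om})$ is dense in $\dom(-\Delta_{max})$ in the graph norm.

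Finally, for the density of the range I would argue as follows. By \eqref{3an-C11} with $s=3/2$, $\widehat\gamma_N$ agrees with $\wti\gamma_N$ on $\{u\in H^{3/2}(\Om)\,|\,\Delta u\in L^2(\Om;d^nx)\}\subset\dom(-\Delta_{max})$, and the latter operator maps onto $L^2(\dOm;d^{n-1}\omega)$ by Lemma~\ref{Neu-tr}; hence $\ran(\widehat\gamma_N)\supseteq L^2(\dOm;d^{n-1}\omega)$ inside $(N^{3/2}(\dOm))^*$. It therefore suffices to check that $L^2(\dOm;d^{n-1}\omega)$ is dense in $(N^{3/2}(\dOm))^*$; since $N^{3/2}(\dOm)$ is reflexive (Lemma~\ref{L-refNN}) and embeds continuously into $L^2(\dOm;d^{n-1}\omega)$, any $\psi\in\big((N^{3/2}(\dOm))^*\big)^*\cong N^{3/2}(\dOm)$ annihilating $L^2(\dOm;d^{n-1}\omega)$ satisfies, on taking $\psi$ itself as the test functional, ${}_{N^{3/2}(\dOm)}\langle\psi,\psi\rangle_{(N^{3/2}(\dOm))^*}=\|\psi\|_{L^2(\dOm;d^{n-1}\omega)}^2=0$, forcing $\psi=0$. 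I do not anticipate a serious obstacle here: all the required analytic ingredients (Lemmas~\ref{3o-Tx}, \ref{Neu-tr}, the Divergence Theorem \eqref{Ht-r3}, and the density \eqref{Tan-C26}) are already in place, and the only delicate point is keeping the bookkeeping of the $L^2(\dOm;d^{n-1}\omega)$-normalized duality pairings — and the attendant complex conjugations — consistent throughout, especially in the identification underlying \eqref{3an-C11}.
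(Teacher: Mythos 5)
Your proposal is correct, and its skeleton (the definition of $\widehat\gamma_N$ via Lemma~\ref{3o-Tx}, well-definedness through $H^2_0(\Omega)$ and approximation by $C^\infty_0(\Omega)$, and uniqueness via the density \eqref{Tan-C26}) coincides with the paper's; but you verify the two substantive points by different means. For the compatibility \eqref{3an-C11}, the paper applies the ultra weak Green formula \eqref{wGreen} twice, directly to $u\in H^{3/2}(\Omega)$ with $\Delta u\in L^2(\Omega;d^nx)$, with no approximation of $u$; you instead approximate $u$ by $C^\infty(\ol\Omega)$ functions via \eqref{Tan-C26} and apply the Divergence Theorem \eqref{Ht-r3} to $\ol{w}\,\nabla u_j-u_j\,\nabla\ol{w}$, i.e.\ you transplant the paper's argument for the Dirichlet analogue (Theorem~\ref{New-T-tr}) to the Neumann setting. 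This works: the field has components in $H^{1/2}(\Omega)$ with Laplacian in $H^{-1}(\Omega)$ and $L^1$ divergence, and the trace product rules you need are of the same kind the paper uses in \eqref{Tan-C19}; the only cost is carrying the approximation and these product-rule checks, whereas the paper's route is shorter once \eqref{wGreen} is available. For the density of the range, the paper tests functionals against $\gamma_N[C^\infty(\ol\Omega)]$ and needs the auxiliary density \eqref{GFF.12} (itself resting on \eqref{Tan-CDA}); you instead observe that, by Lemma~\ref{Neu-tr} and \eqref{3an-C11}, the range already contains $L^2(\partial\Omega;d^{n-1}\omega)$, and then prove directly — using reflexivity of $N^{3/2}(\partial\Omega)$ and the $L^2$-compatible pairing — that $L^2(\partial\Omega;d^{n-1}\omega)$ is dense in $\bigl(N^{3/2}(\partial\Omega)\bigr)^*$. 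This is slightly more economical, and you correctly avoid the circularity of invoking Corollary~\ref{L-Den2} (which the paper derives \emph{from} this theorem). Your closing caution about conjugation conventions is the right one, and as written your pairings are consistent with \eqref{PPL}, \eqref{2.4}.
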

%%%%%%%%%%%%%%%%%%%%%%%%%
\begin{proof}
Consider $u\in L^2(\Omega;d^nx)$ arbitrary  
such that $\Delta u\in L^2(\Omega;d^nx)$. We shall then define a functional 
$\widehat{\gamma}_N u  \in\bigl(N^{3/2}(\partial\Omega)\bigr)^*$ in the 
following fashion: Given an arbitrary function $g\in N^{3/2}(\partial\Omega)$, 
we invoke Lemma \ref{3o-Tx} in order to find 
$w\in H^2(\Omega)$ such that $\gamma_N w =0$ and $\gamma_D w =g$. 
In addition, matters can be arranged so that  
$\|w\|_{H^2(\Omega)}\leq C\|g\|_{N^{3/2}(\partial\Omega)}$
for some finite constant $C=C(\Omega)>0$, independent of $g$. 
We then define 
\begin{eqnarray}\lb{3an-C13} 
{}_{N^{3/2}(\partial\Omega)}\langle g,\widehat{\gamma}_N u  
\rangle_{(N^{3/2}(\partial\Omega))^*}
:= (w,\Delta u)_{L^2(\Om;d^nx)}
- (\Delta w,u)_{L^2(\Om;d^nx)}.
\end{eqnarray}
We claim that the above definition is unambiguous in the sense that
the action of the functional $\widehat{\gamma}_N u  $ does not depend on 
the particular choice of $w$, with the properties listed above. 
By linearity, this comes down to proving the following claim: 
If $u$ is as before and $w\in  H^2(\Omega)$ is such that 
$\gamma_D w =\gamma_N w =0$, then 
\begin{eqnarray}\lb{3an-C14} 
(\Delta u,w)_{L^2(\Om;d^nx)}
= (u,\Delta w)_{L^2(\Om;d^nx)}.
\end{eqnarray}
However, since Theorem \ref{T-MMS} ensures that $w\in H^2_0(\Omega)$, 
formula \eqref{3an-C14} follows similarly to \eqref{Tan-C14}, by approximating 
$w$ with functions form $C^\infty_0(\Omega)$ in the norm of $H^2(\Omega)$. 

The above reasoning shows that formula \eqref{3an-C13} yields a well-defined, 
linear, and bounded operator in the context of \eqref{3an-C13}. 
By definition, this operator will satisfy \eqref{3an-C12}. 

Next, we will show that \eqref{3an-C11} is valid for each $s\geq 3/2$. 
Fix $s\geq 3/2$ and let $u\in H^s(\Omega)$ 
with $\Delta u\in L^2(\Omega;d^nx)$. Then for every  $w\in H^2(\Omega)$ with 
$\gamma_N w =0$, Green's formula \eqref{wGreen} yields
\begin{align}\lb{Bgr-T1}
(\Delta w,u)_{L^2(\Om;d^nx)} &= 
{}_{H^1(\Omega)}\langle w,\Delta u\rangle_{(H^1(\Omega))^*}
\nonumber\\ 
&= -(\nabla w,\nabla u)_{(L^2(\Omega;d^nx))^n}
+\ol{\langle\gamma_D u ,\wti\gamma_N w \rangle_{1/2}}
\nonumber\\ 
&= - (\nabla w,\nabla u)_{(L^2(\Om;d^nx))^n}.
\end{align}
On the other hand, we may once again employ \eqref{wGreen} in order to 
rewrite the last integral above as  
\begin{align}\lb{Bgr-T2}
(\nabla w,\nabla u)_{(L^2(\Om;d^nx))^n}
&= \langle\gamma_D w, \wti{\gamma}_N u  \rangle_{1/2}
-{}_{H^1(\Omega)}\langle w,\Delta u\rangle_{(H^1(\Omega))^*}
\nonumber\\ 
&= \int_{\partial\Omega}d^{n-1}\omega\,\ol{\gamma_D w}\,\wti{\gamma}_N u  
- (w,\Delta u)_{L^2(\Om;d^nx)}, 
\end{align}
given the smoothness properties of the functions involved. 
Altogether, \eqref{Bgr-T1} and \eqref{Bgr-T2} imply that 
\begin{eqnarray}\lb{Bgr-T3}
\int_{\partial\Omega}d^{n-1}\omega\,\ol{\gamma_D w}\,\wti{\gamma}_N u  
= (w,\Delta u)_{L^2(\Om;d^nx)}
- (\Delta w,u)_{L^2(\Om;d^nx)}. 
\end{eqnarray}
This formula is valid for any $w\in H^2(\Omega)$ with $\gamma_N w =0$, and 
the operator \eqref{3an-C7} maps this space onto $N^{3/2}(\partial\Omega)$. 
We may therefore deduce from \eqref{Bgr-T3} that 
$\widehat{\gamma}_N u$, originally viewed as a functional on 
$\bigl(N^{3/2}(\partial\Omega)\bigr)^*$, is given by integration 
against $\wti{\gamma}_N u\in L^2(\partial\Omega;d^{n-1}\omega)$ in the 
case where $u\in H^s(\Omega)$, $s\geq 3/2$, satisfies 
$\Delta u\in L^2(\Omega;d^nx)$. This concludes the proof of \eqref{3an-C11}. 
Moreover, the uniqueness of an operator satisfying 
\eqref{3an-C10} and \eqref{3an-C11} is guaranteed by what we have 
proved thus far and the density result in \eqref{Tan-C26}. 

It remains to prove that the Neumann trace operator \eqref{3an-C10} 
has dense range. Due to \eqref{Tan-C26}, this amounts to showing that 
\begin{eqnarray}\lb{Nh-1B.Cv}
\gamma_N[C^\infty(\ol{\Om})]
=\big\{\nu\cdot(\nabla u)|_{\partial\Omega} \,\big|\, 
u\in C^\infty(\ol{\Om})\big\}\, \mbox{ is a dense subspace of }\, 
\big(N^{3/2}(\partial\Omega)\big)^*.
\end{eqnarray}
As in the case of \eqref{Nh-1B}, this will follow as soon 
as we establish that 
\begin{eqnarray}\lb{NSaZ.1.Cv}
\mbox{if $\Phi\in\big(\bigl(N^{3/2}(\partial\Omega)\bigr)^*\big)^*$ 
vanishes on $\gamma_N[C^\infty(\overline{\Omega})]$, then necessarily 
$\Phi=0$}.
\end{eqnarray}
To justify this, we fix a functional $\Phi$ as in the first 
part of \eqref{NSaZ.1.Cv} and note that since $N^{3/2}(\partial\Omega)$ 
is a reflexive Banach space, continuously embedded 
into $H^1(\partial\Omega)$ (cf.\ Lemma \ref{L-refNN}), 
one concludes that $\Phi\in N^{3/2}(\partial\Omega)\hookrightarrow 
H^1(\partial\Omega)$. Together with Lemma \ref{3o-Tx} this implies that
there exists $w\in H^2(\Omega)$ for which $\gamma_N w=0$ and  
$\gamma_D(w)=\Phi$. As a consequence,
\begin{eqnarray}\lb{NSaZ.2.Cv}
{}_{N^{3/2}(\partial\Omega)}\langle\gamma_D w,
\gamma_N u \rangle_{(N^{3/2}(\partial\Omega))^*}=0 \, \text{ for all } \,
u\in C^\infty(\overline{\Omega}). 
\end{eqnarray}
Based on \eqref{NSaZ.2.Cv} and the integration by parts 
formula \eqref{3an-C12} in Theorem \ref{3ew-T-tr} one then concludes that
\begin{eqnarray}\lb{NSaZ.3.Cv} 
(\Delta w,u)_{L^2(\Om;d^nx)}=(w,\Delta u)_{L^2(\Om;d^nx)}, \quad
u\in C^\infty(\overline{\Omega}). 
\end{eqnarray}
Since $w\in H^2(\Omega)$, for every 
$u\in C^\infty(\overline{\Omega})$ we may write 
\begin{eqnarray}\lb{NSaZ.4.Cv} 
(\Delta w,u)_{L^2(\Om;d^nx)}
=\int_{\partial\Omega}d\omega^{n-1}\,\ol{{\gamma}_N w}\gamma_D u   
-\int_{\partial\Omega}d\omega^{n-1}\,\ol{\gamma_D w}{\gamma}_N u               
+(w,\Delta u)_{L^2(\Om;d^nx)}. 
\end{eqnarray}
Keeping in mind that $\gamma_N w=0$ and $\Phi=\gamma_D w$,
one then deduces from \eqref{NSaZ.3.Cv}--\eqref{NSaZ.4.Cv} that 
\begin{eqnarray}\lb{NSaZ.5.Cv} 
\int_{\partial\Omega}d\omega^{n-1}\,\ol{\Phi}\gamma_N u=0, \quad
u\in C^\infty(\overline{\Omega}). 
\end{eqnarray}
At this stage it remains to observe that 
\begin{eqnarray}\lb{GFF.12}
\mbox{$\gamma_N[C^\infty(\overline{\Omega})]$ is dense in 
$L^2(\partial\Omega;d^{n-1}\omega)$}
\end{eqnarray}
which, on account of \eqref{NSaZ.5.Cv}, implies that $\Phi=0$ in 
$L^2(\partial\Omega;d^{n-1}\omega)$, hence proving \eqref{NSaZ.1}.
Regarding the claim in \eqref{GFF.12}, we note that as a particular 
case of \eqref{Tan-C26} one has 
\begin{eqnarray}\lb{Tan-CDA}
C^\infty(\ol{\Omega})\hookrightarrow 
\big\{u\in H^{3/2}(\Omega) \,\big|\, \Delta u=0\mbox{ in }\Omega\big\} \,
\mbox{ densely},
\end{eqnarray}
where the latter space inherits the norm from $H^{3/2}(\Omega)$.
Upon recalling (cf.\ Lemma \ref{Neu-tr}) that $\wti\gamma_N$ maps 
$\big\{u\in H^{3/2}(\Omega) \,\big|\, \Delta u=0\mbox{ in }\Omega\big\}$
onto $L^2(\partial\Omega;d^{n-1}\omega)$ and that this version of the 
Neumann trace operator is compatible with $\gamma_N$ 
(again, see Lemma \ref{Neu-tr}), \eqref{GFF.12} follows. 
\end{proof}
%%%%%%%%%%%%%%%%%%%%%%%%%

For smoother domains, Theorem \ref{3ew-T-tr} takes a somewhat more 
familiar form (compare with Lions-Magenes \cite{LM72} and Grubb \cite{Gr68}, as before). We recall that 
$H^{-3/2}(\partial\Omega)=\bigl(H^{3/2}(\partial\Omega)\bigr)^*$ in the smooth setting. 

%%%%%%%%%%%%%%%%%%%%%%%%%
\begin{corollary}\lb{3eq-T2}
Assume that $\Omega\subset{\mathbb{R}}^n$, $n\geq 2$, is a bounded 
$C^{1,r}$ domain with $r>1/2$. Then the Neumann trace in \eqref{2.7} 
extends in a unique fashion to a linear, bounded operator 
\begin{eqnarray}\lb{3an-C10X} 
\widehat{\gamma}_N: 
\big\{u\in L^2(\Omega;d^nx) \,\big|\, \Delta u\in L^2(\Omega;d^nx)\big\}
\to  H^{-3/2}(\partial\Omega). 
\end{eqnarray}
Moreover, for this extension of the Neumann trace operator one has
the following generalized integration by parts formula
\begin{eqnarray}\lb{3an-C12X} 
{}_{H^{3/2}(\partial\Omega)}\langle{\gamma}_D w                    ,\widehat\gamma_N u 
\rangle_{(H^{3/2}(\partial\Omega))^*}
= (w,\Delta u)_{L^2(\Om;d^nx)}
- (\Delta w,u)_{L^2(\Om;d^nx)},
\end{eqnarray}
whenever $u\in L^2(\Omega;d^nx)$ satisfies $\Delta u\in L^2(\Omega;d^nx)$,  
and $w\in H^2(\Omega)$ has $\gamma_N w                 =0$. 
\end{corollary}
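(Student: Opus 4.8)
The plan is to obtain this corollary as an immediate specialization of Theorem~\ref{3ew-T-tr}, exactly as Corollary~\ref{Neq-T2} was deduced from Theorem~\ref{New-T-tr}. The key input is Lemma~\ref{Lk-t1}, which asserts that for a bounded $C^{1,r}$ domain with $r>1/2$ one has $N^{3/2}(\partial\Omega)=H^{3/2}(\partial\Omega)$ as vector spaces, with equivalent norms. Passing to conjugate dual spaces in this identification, and recalling that in the present smooth setting $\bigl(H^{3/2}(\partial\Omega)\bigr)^*=H^{-3/2}(\partial\Omega)$, I would conclude that $\bigl(N^{3/2}(\partial\Omega)\bigr)^*=H^{-3/2}(\partial\Omega)$, again with equivalent norms. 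Therefore the operator $\widehat\gamma_N$ produced by \eqref{3an-C10} in Theorem~\ref{3ew-T-tr} is automatically a bounded linear map from $\bigl\{u\in L^2(\Omega;d^nx)\,\big|\,\Delta u\in L^2(\Omega;d^nx)\bigr\}$ into $H^{-3/2}(\partial\Omega)$, it is compatible with \eqref{2.7} and \eqref{MaX-1} by \eqref{3an-C11}, and its uniqueness as such an extension follows from the uniqueness clause of Theorem~\ref{3ew-T-tr} together with the density statement \eqref{Tan-C26}.

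It remains to transcribe the generalized integration by parts formula \eqref{3an-C12} into the form \eqref{3an-C12X}. Under the identification above, the duality pairing ${}_{N^{3/2}(\partial\Omega)}\langle\cdot,\cdot\rangle_{(N^{3/2}(\partial\Omega))^*}$ must be shown to agree with ${}_{H^{3/2}(\partial\Omega)}\langle\cdot,\cdot\rangle_{(H^{3/2}(\partial\Omega))^*}$; this is where a small amount of care is needed, since a priori the two reflexive spaces being isomorphic does not by itself force their canonical pairings to coincide. I would dispatch this point by recalling that both pairings, by construction, restrict to the integral pairing on $L^2(\partial\Omega;d^{n-1}\omega)$, and that on the dense class of $u\in H^s(\Omega)$, $s\geq 3/2$, with $\Delta u\in L^2(\Omega;d^nx)$ the functional $\widehat\gamma_N u$ is represented by the genuine $L^2(\partial\Omega;d^{n-1}\omega)$ function $\wti\gamma_N u$ (see \eqref{Bgr-T3} and Lemma~\ref{Neu-tr}); since $\widehat\gamma_N$ has dense range by Theorem~\ref{3ew-T-tr}, compatibility of the two pairings propagates to all of $\dom(-\Delta_{max})$ by continuity. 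Substituting this into \eqref{3an-C12}, and using $\gamma_D w\in H^{3/2}(\partial\Omega)$ for $w\in H^2(\Omega)$ (which holds here by Lemma~\ref{lA.6}, or directly from Lemma~\ref{3o-Tx} together with Lemma~\ref{Lk-t1}), yields \eqref{3an-C12X}.

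Thus the only genuine obstacle is the bookkeeping identification of the two duality pairings above; once that is settled, the corollary is purely a matter of rewriting Theorem~\ref{3ew-T-tr} in the notation appropriate to $C^{1,r}$ domains. No new estimates, compactness arguments, or approximation schemes beyond those already established are required.
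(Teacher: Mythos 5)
Your argument is correct and is essentially the paper's own proof: the corollary is deduced directly from Theorem \ref{3ew-T-tr} together with the identification $N^{3/2}(\partial\Omega)=H^{3/2}(\partial\Omega)$ of Lemma \ref{Lk-t1} (i.e.\ \eqref{3an-C5}). Your extra paragraph verifying that the two duality pairings agree (via their common restriction to the $L^2(\partial\Omega;d^{n-1}\omega)$ pairing and density) merely fills in a bookkeeping point the paper leaves implicit.
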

%%%%%%%%%%%%%%%%%%%%%%%%%
\begin{proof}
This is a direct consequence of Theorem \ref{3ew-T-tr} and \eqref{3an-C5}. 
\end{proof}
%%%%%%%%%%%%%%%%%%%%%%%%% 

%%%%%%%%%%%%%%%%%%%%%%%%%
\begin{corollary}\lb{L-Den2}
Assume Hypothesis \ref{h2.1}. Then 
\begin{eqnarray}\lb{Nh-1B.q}
N^{3/2}(\partial\Omega)\hookrightarrow
L^2(\partial\Omega;d^{n-1}\omega)\hookrightarrow
\bigl(N^{3/2}(\partial\Omega)\bigr)^*\, \mbox{ continuously and 
densely in each case}. 
\end{eqnarray}
Furthermore, the duality paring between 
$N^{3/2}(\partial\Omega)$ and $\bigl(N^{3/2}(\partial\Omega)\bigr)^*$ 
is compatible with the natural integral paring 
in $L^2(\partial\Omega;d^{n-1}\omega)$.
\end{corollary}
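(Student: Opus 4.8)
The plan is to transcribe, essentially verbatim, the proof of Corollary~\ref{L-Den}, with the space $N^{1/2}(\dOm)$ replaced by $N^{3/2}(\dOm)$, the Dirichlet trace $\widehat\gamma_D$ of Theorem~\ref{New-T-tr} replaced by the Neumann trace $\widehat\gamma_N$ of Theorem~\ref{3ew-T-tr}, and Lemma~\ref{L-refN} replaced by Lemma~\ref{L-refNN}. Recall the abstract fact used there: if $X,Y$ are reflexive Banach spaces with $Y\subseteq X$ and $\iota\colon Y\hookrightarrow X$ continuous with dense range, then $\iota^*\colon X^*\to Y^*$ is again one-to-one, continuous, and has dense range, and if in addition $X$ is a Hilbert space (so $X^*$ is identified with $X$), the $Y$--$Y^*$ duality pairing restricts to the inner product of $X$. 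Granted this fact, together with the reflexivity of $N^{3/2}(\dOm)$ and the continuous embeddings $N^{3/2}(\dOm)\hookrightarrow H^1(\dOm)\hookrightarrow\LdOm$ furnished by Lemma~\ref{L-refNN} and the definition of $H^1(\dOm)$, it suffices to prove that the \emph{second} inclusion in \eqref{Nh-1B.q}, namely $\LdOm\hookrightarrow\bigl(N^{3/2}(\dOm)\bigr)^*$, is well defined, continuous, and has dense range. The first inclusion in \eqref{Nh-1B.q}, as well as the asserted compatibility of the pairings, will then follow by applying the abstract fact to $\iota\colon N^{3/2}(\dOm)\hookrightarrow\LdOm$, using reflexivity to identify $\bigl(\bigl(N^{3/2}(\dOm)\bigr)^*\bigr)^*$ with $N^{3/2}(\dOm)$ and $(\LdOm)^*$ with $\LdOm$.

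For the well-definedness and continuity of $\LdOm\hookrightarrow\bigl(N^{3/2}(\dOm)\bigr)^*$ I would simply note that, for $f\in\LdOm$, the integral pairing $g\mapsto (f,g)_{\LdOm}$ defines a functional on $N^{3/2}(\dOm)$ bounded by $\|f\|_{\LdOm}\,\|g\|_{\LdOm}\leq C\,\|f\|_{\LdOm}\,\|g\|_{N^{3/2}(\dOm)}$, by the embedding $N^{3/2}(\dOm)\hookrightarrow\LdOm$; this simultaneously exhibits the embedding as being given by the natural integral pairing.

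The single point that genuinely uses the machinery of Section~\ref{s6} is the density of the range. Here I would invoke that, by Theorem~\ref{3ew-T-tr}, the Neumann trace $\widehat\gamma_N\colon\{u\in\LOm \,\big|\, \Delta u\in\LOm\}\to\bigl(N^{3/2}(\dOm)\bigr)^*$ has dense range; that $C^\infty(\ol{\Om})$ is dense in $\{u\in\LOm \,\big|\, \Delta u\in\LOm\}$ (the case $s=0$ of \eqref{Tan-C26}); and that $\widehat\gamma_N$ is compatible with $\wti\gamma_N$ on $H^s(\Om)$ for $s\geq 3/2$, by \eqref{3an-C11}. Since every $u\in C^\infty(\ol{\Om})$ lies in all $H^s(\Om)$, we get $\widehat\gamma_N u=\wti\gamma_N u=\gamma_N u\in\LdOm$, so the subspace $\widehat\gamma_N\bigl[C^\infty(\ol{\Om})\bigr]$ is contained in $\LdOm$; being the continuous image of a dense set under a map with dense range, it is dense in $\bigl(N^{3/2}(\dOm)\bigr)^*$. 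Hence $\LdOm$ is dense in $\bigl(N^{3/2}(\dOm)\bigr)^*$, which completes the reduction, and the remaining assertions follow from the abstract fact as above.

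As for the main obstacle: there essentially is none beyond bookkeeping, since the argument is a formal analogue of the proof of Corollary~\ref{L-Den}. The one mildly delicate point is checking that the abstract duality/reflexivity step really delivers the \emph{left} embedding $N^{3/2}(\dOm)\hookrightarrow\LdOm$ with dense range out of the density of $\LdOm$ in $\bigl(N^{3/2}(\dOm)\bigr)^*$, and that the three pairings involved — the $N^{3/2}(\dOm)$--$\bigl(N^{3/2}(\dOm)\bigr)^*$ duality, the $L^2$ inner product, and the restrictions linking them — are mutually consistent; this is precisely the content of the Hilbert-space clause in the abstract fact, so no additional work is needed.
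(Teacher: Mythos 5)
Your proposal is correct and follows the paper's own route: the same abstract duality reduction via Lemma \ref{L-refNN}, and the same density argument for $L^2(\partial\Omega;d^{n-1}\omega)\hookrightarrow\bigl(N^{3/2}(\partial\Omega)\bigr)^*$ based on the dense range of the map \eqref{3an-C10}, the density \eqref{Tan-C26} (with $s=0$) of $C^\infty(\overline{\Omega})$ in $\dom(-\Delta_{max})$, and the compatibility \eqref{3an-C11}. The only, immaterial, deviation is in the continuity of the second embedding, which you get by a direct Cauchy--Schwarz estimate using $N^{3/2}(\partial\Omega)\hookrightarrow L^2(\partial\Omega;d^{n-1}\omega)$, whereas the paper realizes $f\in L^2(\partial\Omega;d^{n-1}\omega)$ as $\widehat\gamma_N u$ for the solution $u\in H^{3/2}(\Omega)$ of the Neumann problem in Theorem \ref{t3.2H} and then invokes the boundedness of \eqref{3an-C10}; both arguments are valid and yield the same bound.
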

%%%%%%%%%%%%%%%%%%%%%%%%%
\begin{proof}
By proceeding as in the first part of the proof of Corollary \ref{L-Den},
and by relying on Lemma \ref{L-refNN}, it suffices to only 
show that the second inclusion in \eqref{Nh-1B.q} is well-defined, continuous,  
and with dense range. To verify this, fix $f\in L^2(\partial\Omega;d^{n-1}\omega)$  arbitrary and let $u\in H^{3/2}(\Omega)$ be such that $(-\Delta-1)u=0$ in $\Omega$ 
and $\wti\gamma_N u=f$ (cf.\ Theorem \ref{t3.2H}). Then, due to 
\eqref{3an-C11} and \eqref{3an-C10}, one has  
$f=\widehat{\gamma}_N u\in \bigl(N^{3/2}(\partial\Omega)\bigr)^*$ and the estimate
$\|f\|_{\bigl(N^{3/2}(\partial\Omega)\bigr)^*}
\leq C\|f\|_{L^2(\partial\Omega;d^{n-1}\omega)}$ holds. Thus, the
second inclusion in \eqref{Nh-1B.q} is well-defined and continuous. 
It remains to show that this inclusion also has dense range. 
This property is, however, an immediate consequence of \eqref{GFF.12}, 
\eqref{3an-C11} and the fact that the map \eqref{3an-C10} has dense range.
\end{proof}
%%%%%%%%%%%%%%%%%%%%%%%% 

%%%%%%%%%%%%%%%%%%%%%%%% 
\begin{lemma}\lb{3U-x}
Assume Hypothesis \ref{h2.1}. Then 
\begin{eqnarray}\lb{3U-y} 
N^{3/2}(\partial\Omega)\subseteq\big(N^{1/2}(\partial\Omega)\big)^*
\, \mbox{ and } \, 
N^{1/2}(\partial\Omega)\subseteq\big(N^{3/2}(\partial\Omega)\big)^* \, 
\mbox{ densely}.
\end{eqnarray}
Moreover, in each case, the inclusion is given by canonical 
continuous injections. 
\end{lemma}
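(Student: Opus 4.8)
The plan is to obtain both inclusions in \eqref{3U-y} by routing through $\LdOm$, using only the continuous dense chains already established in Corollaries \ref{L-Den} and \ref{L-Den2}, namely $N^{1/2}(\dOm)\hookrightarrow \LdOm\hookrightarrow \bigl(N^{1/2}(\dOm)\bigr)^*$ and $N^{3/2}(\dOm)\hookrightarrow \LdOm\hookrightarrow \bigl(N^{3/2}(\dOm)\bigr)^*$, in which each arrow is a continuous injection with dense range, and in which the duality pairings between $N^{s}(\dOm)$ and $(N^{s}(\dOm))^*$, $s\in\{1/2,3/2\}$, are compatible with the integral pairing on $\LdOm$.

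For the first inclusion, I would define the map $N^{3/2}(\dOm)\to \bigl(N^{1/2}(\dOm)\bigr)^*$ as the composition of the continuous injection $N^{3/2}(\dOm)\hookrightarrow \LdOm$ (Corollary \ref{L-Den2}) followed by the continuous injection $\LdOm\hookrightarrow \bigl(N^{1/2}(\dOm)\bigr)^*$ (Corollary \ref{L-Den}). This composition is linear, continuous, and injective; moreover, by the two compatibility clauses it sends $g\in N^{3/2}(\dOm)$ to the functional $N^{1/2}(\dOm)\ni f\mapsto (f,g)_{\LdOm}$, which is precisely the canonical pairing-induced injection asserted in the statement. (Continuity is also visible directly from $\|g\|_{(N^{1/2}(\dOm))^*}\le C\|g\|_{\LdOm}\le C'\|g\|_{N^{3/2}(\dOm)}$, using Lemma \ref{L-refN} for the first estimate and the definition of the $N^{3/2}$-norm for the second.) Density then follows from the elementary observation that the composition of two continuous maps with dense range again has dense range: $N^{3/2}(\dOm)$ is dense in $\LdOm$ by Corollary \ref{L-Den2}, while $\LdOm$ is dense in $\bigl(N^{1/2}(\dOm)\bigr)^*$ by Corollary \ref{L-Den}.

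The second inclusion is handled by the symmetric argument: compose $N^{1/2}(\dOm)\hookrightarrow \LdOm$ (Corollary \ref{L-Den}) with $\LdOm\hookrightarrow \bigl(N^{3/2}(\dOm)\bigr)^*$ (Corollary \ref{L-Den2}) to obtain a canonical, continuous, injective, dense embedding $N^{1/2}(\dOm)\hookrightarrow \bigl(N^{3/2}(\dOm)\bigr)^*$, acting by $g\mapsto (\dott,g)_{\LdOm}$.

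I do not anticipate a genuine difficulty here; the one point deserving care is verifying that the map obtained by composition through $\LdOm$ coincides with the \emph{canonical} injection referred to in the statement, which is exactly what the ``compatibility with the natural integral pairing'' clauses of Corollaries \ref{L-Den} and \ref{L-Den2} supply. As a consistency check, in the smooth case ($\Omega$ of class $C^{1,r}$, $r>1/2$, so that $N^{1/2}(\dOm)=H^{1/2}(\dOm)$ and $N^{3/2}(\dOm)=H^{3/2}(\dOm)$ by Lemmas \ref{L-refN} and \ref{Lk-t1}) the assertion \eqref{3U-y} reduces to the familiar dense inclusions $H^{3/2}(\dOm)\hookrightarrow H^{-1/2}(\dOm)$ and $H^{1/2}(\dOm)\hookrightarrow H^{-3/2}(\dOm)$.
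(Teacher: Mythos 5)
Your argument is correct and is exactly the paper's route: the paper proves Lemma \ref{3U-x} simply by citing Corollaries \ref{L-Den} and \ref{L-Den2}, and your proposal just spells out the underlying composition through $\LdOm$ (continuity, injectivity, density of a composition of dense-range continuous injections, and identification with the canonical pairing via the compatibility clauses). Nothing is missing.
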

%%%%%%%%%%%%%%%%%%%%%%%%
\begin{proof} 
This is a direct consequence of Corollaries~\ref{L-Den} and \ref{L-Den2}. 
\end{proof}
%%%%%%%%%%%%%%%%%%%%%%%%%

The following comment addresses an issue raised by G.\ Grubb:

%%%%%%%%%%%
\begin{remark} \lb{r6.14}
Under Hypothesis \ref{h2.1}, the spaces $N^{1/2}(\partial\Omega)$ and 
$N^{3/2}(\partial\Omega)$ have Hilbert space structures induced by the 
following inner products:
\begin{align}
\begin{split}
(f,g)_{N^{1/2}(\partial\Omega)} & := \sum_{j=1}^n \int_{\partial\Om} \int_{\partial\Om} 
d^{n-1} \omega (\xi) \, d^{n-1} \omega (\eta) \, 
\f{\big(\ol{(\nu_j f)(\xi)} - \ol{(\nu_j f)(\eta)}\big)
\big((\nu_j g)(\xi) - (\nu_j g)(\eta)\big)}{|\xi - \eta|^n}  \\
& \quad \, + (f,g)_{L^2(\partial\Omega; d^{n-1} \omega)}, \quad 
f, g \in N^{1/2}(\partial\Omega), 
\end{split}
\end{align}
and 
\begin{align}
\begin{split}
(f,g)_{N^{3/2}(\partial\Omega)} & := \int_{\partial\Om} \int_{\partial\Om} 
d^{n-1} \omega (\xi) \, d^{n-1} \omega (\eta) \, 
\f{\big(\ol{(\nabla_{tan} f)(\xi)} - \ol{(\nabla_{tan} f)(\eta)}\big)
\big((\nabla_{tan} g)(\xi) - (\nabla_{tan} g)(\eta)\big)}{|\xi - \eta|^n}  \\[1mm]
& \quad \, + (f,g)_{L^2(\partial\Omega; d^{n-1} \omega)},  \quad 
f, g \in N^{3/2}(\partial\Omega), 
\end{split}
\end{align}
where $\nabla_{tan}$ has been introduced in \eqref{Tan-C1}. This is a consequence 
of the definitions of $N^{1/2}(\partial\Omega)$ and $N^{3/2}(\partial\Omega)$ (cf.\ 
\eqref{Tan-C4} and \eqref{3an-C4}) and the fact that under Hypothesis \ref{h2.1},  
\begin{equation}
\|h\|^2_{H^{1/2}(\partial\Om)} \approx \int_{\partial\Om}  \int_{\partial\Om} 
d^{n-1} \omega (\xi) \, d^{n-1} \omega (\eta) \, 
\f{|h(\xi) - h(\eta)|^2}{|\xi - \eta|^n}  
+ \|h\|^2_{L^2(\partial\Om; d^{n-1} \omega)}, 
\quad h \in H^{1/2} (\partial\Om).  
\end{equation}

We wish to stress, however, that in the current paper these Hilbert space structures 
play no role. Instead, we exclusively rely on the duality structure expressed in 
Corollaries \ref{L-Den} and \ref{L-Den2}. 
\end{remark}
%%%%%%%%%%%

%%%%%%%%%%%%%%%%%%%%%%%%%%%%%%%%%%%%%%
%%%%%%%%%%%%%%%%% appendix E %%%%%%%%%%%%%%%
\section{The Minimal and Maximal Laplacians on Lipschitz Domains}
\lb{s7}
%%%%%%%%%%%%%%%%%%%%%%%%%%%%%%%%%%%%%%
%%%%%%%%%%%%%%%%%%%%%%%%%%%%%%%%%%%%%%

Minimal and maximal $L^2(\Omega;d^nx)$-realizations of the Laplacian on Lipschitz domains are considered in this short section. 

Given an open set $\Omega\subset{\mathbb{R}}^n$, consider the maximal Laplacian 
$-\Delta_{max}$ in $L^2(\Omega;d^nx)$ defined by 
\begin{align}
\begin{split} 
& - \Delta_{max}u := - \Delta u,     \lb{Yan-1}  \\
& \; u \in \dom (- \Delta_{max}):=\big\{v\in L^2(\Omega;d^nx) \,\big|\, 
\Delta v\in L^2(\Omega;d^nx)\big\}.
\end{split}
\end{align} 

%%%%%%%%%%%%%%%%%%%%%%%%%%%%%%%%%%%%%
\begin{lemma}\lb{Max-M1}
Assume Hypothesis \ref{h2.1}. Then the maximal Laplacian associated with 
$\Omega$ is a closed, densely defined operator for which 
\begin{align}\lb{Yan-2} 
H^2_0(\Omega)& \subseteq \dom ((-\Delta_{max})^*)   \no \\
& \subseteq\big\{u\in L^2(\Omega;d^nx) \,\big|\, 
\Delta u\in L^2(\Omega;d^nx),\, 
\widehat{\gamma}_D u =\widehat{\gamma}_N u  =0\big\}.
\end{align}
\end{lemma}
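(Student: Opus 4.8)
The plan is to establish the two inclusions in \eqref{Yan-2} separately, starting from the observation that $-\Delta_{max}$ is closed and densely defined because $C_0^\infty(\Om)$ is already dense in $L^2(\Om;d^nx)$ and the graph of $-\Delta_{max}$ is closed by the continuity of $\Delta$ on distributions.

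\textbf{The lower inclusion $H^2_0(\Om)\subseteq\dom((-\Delta_{max})^*)$.} First I would show that for $w\in H^2_0(\Om)$ and any $u\in\dom(-\Delta_{max})$ one has $(w,-\Delta u)_{L^2(\Om;d^nx)}=(-\Delta w,u)_{L^2(\Om;d^nx)}$. Since $-\Delta w\in L^2(\Om;d^nx)$, this exhibits $w$ as an element of $\dom((-\Delta_{max})^*)$ with $(-\Delta_{max})^*w=-\Delta w$. To prove the identity, approximate $w$ by $w_k\in C_0^\infty(\Om)$ in the $H^2(\Om)$-norm (possible by the very definition of $H^2_0(\Om)$); for each $k$, integration by parts moves both Laplacians off $w_k$ with no boundary contribution, and one passes to the limit using $w_k\to w$ in $H^2$, hence $-\Delta w_k\to-\Delta w$ in $L^2$, while $u$ and $\Delta u$ are fixed $L^2$ functions. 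This step is routine.

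\textbf{The upper inclusion.} Here I would take an arbitrary $w\in\dom((-\Delta_{max})^*)$, so there is $v:=(-\Delta_{max})^*w\in L^2(\Om;d^nx)$ with $(w,-\Delta u)_{L^2}=(v,u)_{L^2}$ for all $u\in\dom(-\Delta_{max})$. Testing against $u\in C_0^\infty(\Om)$ immediately gives $-\Delta w=v$ in $\cD'(\Om)$, hence $\Delta w\in L^2(\Om;d^nx)$, i.e.\ $w\in\dom(-\Delta_{max})$; in particular both traces $\widehat\gamma_D w$ and $\widehat\gamma_N w$ are defined by Theorems \ref{New-T-tr} and \ref{3ew-T-tr}. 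It remains to see that they vanish. For $\widehat\gamma_D w$: apply the generalized integration by parts formula \eqref{Tan-C12} with the roles of the two functions as in that theorem --- that is, for every $\psi\in H^2(\Om)\cap H^1_0(\Om)$, ${}_{N^{1/2}(\dOm)}\langle\gamma_N\psi,\widehat\gamma_D w\rangle_{(N^{1/2}(\dOm))^*}=(\Delta\psi,w)_{L^2}-(\psi,\Delta w)_{L^2}$. But any such $\psi$ lies in $\dom(-\Delta_{max})$, so the right-hand side equals $(\Delta\psi,w)_{L^2}-(\psi,v)_{L^2}=\overline{(w,-\Delta\psi)_{L^2}}-\overline{(v,\psi)}=0$ by the defining adjoint relation (taking care with the conjugate-linearity conventions in the pairing). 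Since $\gamma_N$ maps $H^2(\Om)\cap H^1_0(\Om)$ \emph{onto} $N^{1/2}(\dOm)$ by Lemma \ref{Lo-Tx}, the pairing annihilates all of $N^{1/2}(\dOm)$, forcing $\widehat\gamma_D w=0$. Symmetrically, \eqref{3an-C12} together with the surjectivity of $\gamma_D:\{u\in H^2(\Om)\,|\,\gamma_N u=0\}\to N^{3/2}(\dOm)$ from Lemma \ref{3o-Tx} gives $\widehat\gamma_N w=0$.

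The main obstacle I anticipate is not conceptual but bookkeeping: correctly tracking the sesquilinear (conjugate-linear in the first slot) conventions through \eqref{Tan-C12}, \eqref{3an-C12}, and the definition of the Hilbert-space adjoint, so that the vanishing of the right-hand sides is genuinely a consequence of $w\in\dom((-\Delta_{max})^*)$ rather than an artifact of a sign or conjugation slip. Once the pairing identities are written with the right conjugations, the surjectivity statements in Lemmas \ref{Lo-Tx} and \ref{3o-Tx} do all the remaining work, and no regularity beyond $w\in\dom(-\Delta_{max})$ is needed.
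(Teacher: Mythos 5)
Your proposal is correct and follows essentially the same route as the paper: closedness and density of $-\Delta_{max}$ by the distributional/approximation argument, the lower inclusion via the Green identity for $H^2_0(\Omega)$ functions (which the paper obtains from \eqref{Tan-C12} with $\gamma_N w=0$, exactly the approximation argument you describe), and the upper inclusion by testing first against $C^\infty_0(\Omega)$ and then using \eqref{Tan-C12} with the surjectivity in Lemma \ref{Lo-Tx}, respectively \eqref{3an-C12} with Lemma \ref{3o-Tx}, to kill $\widehat\gamma_D w$ and $\widehat\gamma_N w$. The only blemish is a harmless sign slip in your intermediate display (since $v=-\Delta w$, substituting into \eqref{Tan-C12} gives $(\Delta\psi,w)_{L^2(\Om;d^nx)}+(\psi,v)_{L^2(\Om;d^nx)}$, which vanishes by the adjoint relation), and it does not affect the argument.
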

%%%%%%%%%%%%%%%%%%%%%%%%%%%%%%%%%%%%%
\begin{proof} 
If $u_j\in L^2(\Omega;d^nx)$, $j\in{\mathbb{N}}$, are functions such that
$\Delta u_j\in L^2(\Omega;d^nx)$ for each $j$ and 
\begin{eqnarray}\lb{Yan-2B}
u_j\to u,\quad 
\Delta u_j\to v\, \mbox{ in $L^2(\Omega;d^nx)$ as $j\to\infty$}, 
\end{eqnarray}
then for every $w\in C^\infty_0(\Omega)$ one has 
\begin{align}\lb{Yan-2C}
(u,\Delta w)_{L^2(\Om;d^nx)}
& =\lim_{j\to\infty}(u_j,\Delta w)_{L^2(\Om;d^nx)}
=\lim_{j\to\infty}(\Delta u_j,w)_{L^2(\Om;d^nx)} \no \\
& =\lim_{j\to\infty}(v,w)_{L^2(\Om;d^nx)}.
\end{align}
This shows that $\Delta u=v\in L^2(\Omega;d^nx)$ in the sense of 
distributions. Hence, $u\in \dom (- \Delta_{max})$ and $\Delta_{max}u=v$, 
proving that the operator \eqref{Yan-1} is closed. By \eqref{Tan-C26}, this 
operator is also densely defined. 

Consider next \eqref{Yan-2}.
If $w\in H^2_0(\Omega)$ then for every $u\in \dom (- \Delta_{max})$ 
Theorem \ref{New-T-tr} yields
\begin{eqnarray}\lb{Yan-3B}
(\Delta w,u)_{L^2(\Om;d^nx)}
- (w,\Delta u)_{L^2(\Om;d^nx)}
={}_{N^{1/2}(\partial\Omega)}\langle\gamma_N w,\widehat{\gamma}_D u 
\rangle_{(N^{1/2}(\partial\Omega))^*}=0. 
\end{eqnarray}
This shows that $w\in \dom ((-\Delta_{max})^*)$ and 
$(-\Delta_{max})^*(w) = - \Delta w$, justifying the first inclusion in \eqref{Yan-2}. 

Next, if $u\in \dom ((- \Delta_{max})^*)$ then $u\in L^2(\Omega;d^nx)$ and, 
in addition, there exists $v\in L^2(\Omega;d^nx)$ such that 
\begin{eqnarray}\lb{Yan-3}
(u,\Delta w)_{L^2(\Om;d^nx)}
= (v,w)_{L^2(\Om;d^nx)}
\end{eqnarray}
for every $w\in L^2(\Omega;d^nx)$ with $\Delta w\in L^2(\Omega;d^nx)$. 
We shall now specialize this to three distinguished classes of functions $w$. 
First, taking $w\in C^\infty_0(\Omega)$ arbitrarily, it follows
that $\Delta u=v\in L^2(\Omega;d^nx)$ in the sense of distributions. 
With this at hand, and taking $w\in H^2(\Omega)\cap H^1_0(\Omega)$ 
arbitrarily, it follows from \eqref{Tan-C12} that 
\begin{eqnarray}\lb{Yan-4} 
{}_{N^{1/2}(\partial\Omega)}\langle\gamma_N w,\widehat{\gamma}_D u 
\rangle_{(N^{1/2}(\partial\Omega))^*}
= (\Delta w,u)_{L^2(\Om;d^nx)}
- (w,\Delta u)_{L^2(\Om;d^nx)}=0. 
\end{eqnarray}
Hence, $\widehat\gamma_D u =0$ by Lemma \ref{Lo-Tx}. 
Finally, consider arbitrary functions $w\in H^2(\Omega)$ satisfying 
$\widehat\gamma_N w =0$. In this scenario, it follows from \eqref{3an-C12} that
\begin{eqnarray}\lb{Yan-5} 
{}_{N^{3/2}(\partial\Omega)}\langle\gamma_D w,\widehat\gamma_N u 
\rangle_{(N^{3/2}(\partial\Omega))^*}
= (w,\Delta u)_{L^2(\Om;d^nx)}
- (\Delta w,u)_{L^2(\Om;d^nx)}=0. 
\end{eqnarray}
Thus, by Lemma \ref{3o-Tx}, we also have $\widehat\gamma_N u =0$.
This concludes the justification of the second inclusion in \eqref{Yan-2}. 
\end{proof}
%%%%%%%%%%%%%%%%%%%%%%%%%%%%%%%%%%% 

For an open set $\Omega\subset{\mathbb{R}}^n$, we also bring in 
the minimal Laplacian in $L^2(\Omega;d^nx)$, that is, 
\begin{equation}\lb{Yan-6} 
-\Delta_{min}u:= - \Delta u,    \quad u \in \dom (- \Delta_{min}):=H^2_0(\Omega). 
\end{equation}

%%%%%%%%%%%%%%%%%%%%%%%%%%%%%%%%%%%
\begin{corollary}\lb{Max-M2}
Assume Hypothesis \ref{h2.1}. Then $-\Delta_{min}$ is a densely defined, 
symmetric operator which satisfies
\begin{equation}\lb{Yan-7} 
- \Delta_{min} \subseteq (- \Delta_{max})^*\, \text{ and } \, 
- \Delta_{max} \subseteq (-\Delta_{min})^*. 
\end{equation}
Equality holds in one $($and hence in both\,$)$ inclusions in \eqref{Yan-7} if 
\begin{eqnarray}\lb{Yan-8}
H^2_0(\Omega) \, \text{ equals } \, \big\{u\in L^2(\Omega;d^nx) \,\big|\, \Delta u\in L^2(\Omega;d^nx),\, 
\widehat{\gamma}_D u = \widehat{\gamma}_N u = 0\bigr\}.
\end{eqnarray}
\end{corollary}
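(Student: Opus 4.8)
The plan is to handle the four assertions in turn: the structural properties of $-\Delta_{min}$, the two operator inclusions, the equivalence ``equality in one $\Leftrightarrow$ equality in both,'' and finally the characterization by \eqref{Yan-8}, the last being where the real work lies.

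First, since $C_0^\infty(\Om)\subseteq H^2_0(\Om)=\dom(-\Delta_{min})$ is dense in $L^2(\Om;d^nx)$, the operator $-\Delta_{min}$ is densely defined, and symmetry follows by two integrations by parts for $u,v\in C_0^\infty(\Om)$, giving $(-\Delta u,v)_{L^2(\Om;d^nx)}=(\nabla u,\nabla v)_{(L^2(\Om;d^nx))^n}=(u,-\Delta v)_{L^2(\Om;d^nx)}$, an identity that then extends to all of $H^2_0(\Om)$ by $H^2(\Om)$-density and the continuity of $-\Delta\colon H^2(\Om)\to L^2(\Om;d^nx)$. I would also record that $-\Delta_{min}$ is closed: on $C_0^\infty(\Om)$ the graph norm $u\mapsto\|u\|_{L^2(\Om;d^nx)}+\|\Delta u\|_{L^2(\Om;d^nx)}$ is equivalent to $\|\cdot\|_{H^2(\Om)}$ (pass to zero extensions to $\bbR^n$ and use Plancherel), hence the same holds on $H^2_0(\Om)$ by density, so $\dom(-\Delta_{min})=H^2_0(\Om)$ is complete in the graph norm; and $-\Delta_{max}$ is closed by Lemma~\ref{Max-M1}. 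The inclusion $-\Delta_{min}\subseteq(-\Delta_{max})^*$ is exactly the first containment proved inside Lemma~\ref{Max-M1}, together with the fact, shown there, that $(-\Delta_{max})^*$ acts as $-\Delta$ on $H^2_0(\Om)$. The inclusion $-\Delta_{max}\subseteq(-\Delta_{min})^*$ can be obtained by passing to adjoints in the first, or read off directly from formula \eqref{Tan-C14} in the proof of Theorem~\ref{New-T-tr}: for $u\in\dom(-\Delta_{max})$ and $w\in H^2_0(\Om)$ one has $(\Delta w,u)_{L^2(\Om;d^nx)}=(w,\Delta u)_{L^2(\Om;d^nx)}$, which says precisely that $u\in\dom((-\Delta_{min})^*)$ with $(-\Delta_{min})^*u=-\Delta u$.

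Next, because both $-\Delta_{min}$ and $-\Delta_{max}$ are closed and densely defined, taking Hilbert-space adjoints carries an operator equality to an operator equality; thus $-\Delta_{min}=(-\Delta_{max})^*$ is equivalent to $(-\Delta_{min})^*=(-\Delta_{max})^{**}=-\Delta_{max}$, so equality in one of the inclusions in \eqref{Yan-7} forces equality in both. It then suffices to decide when $-\Delta_{min}=(-\Delta_{max})^*$, i.e.\ (both operators being $-\Delta$) when $\dom((-\Delta_{max})^*)=H^2_0(\Om)$. By Lemma~\ref{Max-M1} one has the chain $H^2_0(\Om)\subseteq\dom((-\Delta_{max})^*)\subseteq\{u\in L^2(\Om;d^nx)\,\big|\,\Delta u\in L^2(\Om;d^nx),\ \widehat\gamma_D u=\widehat\gamma_N u=0\}$, so if \eqref{Yan-8} holds the two ends agree and the chain collapses, yielding equality in \eqref{Yan-7}: this settles the ``if'' direction.

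The ``only if'' direction is the step I expect to be the main obstacle. Assuming equality in \eqref{Yan-7}, hence $\dom((-\Delta_{max})^*)=H^2_0(\Om)$, one must show that the second containment above is in fact an equality, i.e.\ that any $u$ with $\Delta u\in L^2(\Om;d^nx)$ and $\widehat\gamma_D u=\widehat\gamma_N u=0$ lies in $\dom((-\Delta_{max})^*)$; equivalently, that $(\Delta w,u)_{L^2(\Om;d^nx)}=(w,\Delta u)_{L^2(\Om;d^nx)}$ holds for \emph{every} $w\in\dom(-\Delta_{max})$. By the density statement \eqref{Tan-C26} it is enough to check this for $w\in C^\infty(\ol\Om)$. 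For $w\in H^2(\Om)\cap H^1_0(\Om)$, resp.\ $w\in H^2(\Om)$ with $\gamma_N w=0$, the identity is immediate from the generalized Green formulas \eqref{Tan-C12}, resp.\ \eqref{3an-C12}, since there the boundary term equals $\langle\gamma_N w,\widehat\gamma_D u\rangle_{(N^{1/2}(\dOm))^*}$, resp.\ $-\langle\gamma_D w,\widehat\gamma_N u\rangle_{(N^{3/2}(\dOm))^*}$, and both vanish. The delicate part is a general $w\in H^2(\Om)$, whose boundary pair $\gamma_2 w=(\gamma_D w,\gamma_N w)$ need not split into pieces coming from those two subspaces; here one should use the precise description of the range of $\gamma_2$ on $H^2(\Om)$ from Theorem~\ref{T-MMS}, together with the mapping properties of $\widehat\gamma_D$, $\widehat\gamma_N$ into $(N^{1/2}(\dOm))^*$, $(N^{3/2}(\dOm))^*$ from Theorems~\ref{New-T-tr} and \ref{3ew-T-tr}, to identify the boundary contribution in Green's formula with $\langle\gamma_N w,\widehat\gamma_D u\rangle_{(N^{1/2}(\dOm))^*}-\langle\gamma_D w,\widehat\gamma_N u\rangle_{(N^{3/2}(\dOm))^*}=0$; alternatively, one extends $u$ by zero to $\bbR^n$, uses the vanishing of both traces to identify the distributional Laplacian of the extension with the zero-extension of $\Delta u$, and then concludes $u\in H^2_0(\Om)$ via interior elliptic regularity and \eqref{incl-Yb}. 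Closing this identification for arbitrary $w$ is the heart of the argument.
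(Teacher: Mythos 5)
Your treatment of everything up to the last step is correct and is, in substance, the paper's own (much shorter) argument: density and symmetry are immediate; the first inclusion in \eqref{Yan-7} is the first containment established in Lemma \ref{Max-M1}; the second follows by duality (or directly from \eqref{Tan-C14}); closedness of $-\Delta_{min}$ and $-\Delta_{max}$ turns ``equality in one'' into ``equality in both'' by taking adjoints; and the ``if'' direction is the collapse of the chain \eqref{Yan-2}. The paper stops at this soft level: it reads the equivalence off Lemma \ref{Max-M1} plus duality and never attempts a Green-type identity against an arbitrary $w\in\dom(-\Delta_{max})$ (indeed, later on the corollary is only invoked in the ``if'' direction, in Theorem \ref{T-DD1}).

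The genuine gap is the ``only if'' direction, which you explicitly leave open, and the routes you sketch cannot close it under Hypothesis \ref{h2.1} alone. What you set out to verify --- $(\Delta w,u)_{L^2(\Om;d^nx)}=(w,\Delta u)_{L^2(\Om;d^nx)}$ for every $w\in C^\infty(\ol{\Om})$ and every $u$ with $\Delta u\in L^2(\Om;d^nx)$, $\widehat\gamma_D u=\widehat\gamma_N u=0$ --- is independent of your standing assumption and, combined with it, is literally the hard inclusion in \eqref{Yan-8}; that is the content of Theorem \ref{T-DD1}, which the paper obtains only for quasi-convex domains (Hypothesis \ref{h.Conv}) and by a different mechanism, namely $H^2$-regularity of the Dirichlet problem (Lemma \ref{Bjk}); the preamble of Section \ref{s8} stresses that the two spaces in \eqref{Yan-8} may genuinely differ at Lipschitz generality. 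Concretely, your route (a) breaks down because for a general $w\in H^2(\Om)$ the proposed boundary expression $\langle\gamma_N w,\widehat\gamma_D u\rangle-\langle\gamma_D w,\widehat\gamma_N u\rangle$ is not even defined: in general $\gamma_N w\notin N^{1/2}(\dOm)$ and $\gamma_D w\notin N^{3/2}(\dOm)$, since Theorem \ref{T-MMS} only controls the combination $\nabla_{tan}\gamma_D w+(\gamma_N w)\nu$; and the decomposition $w=w_1+w_2$ with $w_1\in H^2(\Om)\cap H^1_0(\Om)$ and $\gamma_N w_2=0$, which would reduce matters to \eqref{Tan-C12} and \eqref{3an-C12}, fails precisely because $\gamma_2(H^2(\Om))$ does not decouple into $N^{3/2}(\dOm)\times N^{1/2}(\dOm)$ (cf.\ the discussion around \eqref{DJK-4}). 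Your route (b) is circular and insufficient: identifying the distributional Laplacian of the zero-extension of $u$ with the zero-extension of $\Delta u$ requires $(u,\Delta\varphi)_{L^2(\Om;d^nx)}=(\Delta u,\varphi)_{L^2(\Om;d^nx)}$ for all $\varphi\in C^\infty_0(\bbR^n)$, whose restrictions to $\Om$ are exactly the general smooth-up-to-the-boundary test functions you are trying to handle; and even granting it, interior elliptic regularity yields only $H^2_{\loc}(\Om)$, whereas membership in $H^2_0(\Om)$ needs regularity up to the boundary, which is exactly what fails at re-entrant corners (Remark \ref{B-gh}).
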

%%%%%%%%%%%%%%%%%%%%%%%%%%%%%%%%%%%%
\begin{proof} 
The fact that $-\Delta_{min}$ is a densely defined, symmetric operator is 
obvious. As far as equality in \eqref{Yan-7} is concerned, it suffices to prove only the 
first inclusion (since the second one is a consequence of this and duality). 
This, however, is implied by Lemma \ref{Max-M1}. This lemma also shows
that equality holds if one has equality in \eqref{Yan-8}.
\end{proof}
%%%%%%%%%%%%%%%%%%%%%%%%%%%%%%%%%%%% 

%%%%%%%%%%%
\begin{remark} \lb{r7.3}
It was kindly pointed out to us by Jussi Behrndt and Till Micheler \cite{BM12} 
that a distributional argument and an application of Poincar\'e's inquality imply equality in \eqref{Yan-7} for any bounded domain (not necessarily Lipschitz), that is, one actually has 
\begin{equation}\lb{Yan-7a} 
- \Delta_{min} = (- \Delta_{max})^*\, \text{ and } \, 
- \Delta_{max} = (-\Delta_{min})^*, 
\end{equation}
whenever $\Omega \subset \bbR^n$, $n \geq 2$, is open and bounded. 
\end{remark}
%%%%%%%%%%%

%%%%%%%%%%%%%%%%%%%%%%%%%%%%%%%%%%%%%%
%%%%%%%%%%%%%%%%%%%%%%%%%%%%%%%%%%%%%%
\section{The Class of Quasi-Convex Domains}
\lb{s8}
%%%%%%%%%%%%%%%%%%%%%%%%%%%%%%%%%%%%%%
%%%%%%%%%%%%%%%%%%%%%%%%%%%%%%%%%%%%%%

This section is devoted to one of our principal new results, the introduction and study 
of  the class of quasi-convex domains. 

In the class of Lipschitz domains, the two spaces appearing in \eqref{Yan-8}
could be quite different. Obviously, the left-to-right inclusion always holds,
\begin{eqnarray}\lb{Yan-8A}
H^2_0(\Omega) \subseteq \big\{u\in L^2(\Omega;d^nx) \,\big|\, \Delta u\in L^2(\Omega;d^nx),\, 
\widehat{\gamma}_D u = \widehat{\gamma}_N u = 0\bigr\}.   
\end{eqnarray}
The question now arises: What extra qualities of the Lipschitz domain 
will guarantee the equality in \eqref{Yan-8A}? 
To address this issue, we need some preparations: Given $n\geq 1$, denote by 
$MH^{1/2}(\bbR^n)$ the class of pointwise multipliers of the Sobolev 
space $H^{1/2}(\bbR^n)$. That is, 
\begin{eqnarray}\label{MaS-1}
MH^{1/2}(\bbR^n):=\bigl\{f\in L^1_{\loc}(\bbR^n) \,\big|\, 
M_f\in\cB\bigl(H^{1/2}(\bbR^n)\bigr)\bigr\},
\end{eqnarray}
where $M_f$ is the operator of pointwise multiplication by $f$. This 
space is equipped with the natural norm, that is, 
\begin{eqnarray}\label{MaS-2}
\|f\|_{MH^{1/2}(\bbR^n)}:=\|M_f\|_{\cB(H^{1/2}(\bbR^n))}.
\end{eqnarray}

%%%%%%%%%%%%%%%%%%%%%%%%%%%%% 
\begin{definition}\label{Def-MS}
Given $\delta>0$, a bounded Lipschitz domain $\Omega\subset\bbR^n$ 
is called to be of class $MH^{1/2}_\delta$, and one writes  
\begin{eqnarray}\label{MaS-3}
\dOm\in MH^{1/2}_\delta,
\end{eqnarray}
provided the following holds: There exists a finite open covering 
$\{{\mathcal O}_j\}_{1\leq j\leq N}$ of the boundary $\partial\Omega$ of 
$\Om$ such that for every $j\in\{1,...,N\}$, ${\mathcal O}_j\cap\Omega$ 
coincides with the portion of ${\mathcal O}_j$ lying in the over-graph of 
a Lipschitz function $\varphi_j:\bbR^{n-1}\to\bbR$ $($considered in a new 
system of coordinates obtained from the original one via a rigid motion\,$)$ 
which has the property that 
\begin{eqnarray}\label{MaS-4}
\varphi_j\in MH^{1/2}(\bbR^{n-1})\, \mbox{ and }\quad 
\|\varphi_j\|_{MH^{1/2}(\bbR^{n-1})}\leq\delta.
\end{eqnarray}
\end{definition}
%%%%%%%%%%%%%%%%%%%%%%%%%%%%%

Continuing, we consider the following classes of domains
\begin{eqnarray}\label{MaS-5}
MH^{1/2}_\infty:=\bigcup_{\delta>0}MH^{1/2}_\delta,\quad
MH^{1/2}_0:=\bigcap_{\delta>0}MH^{1/2}_\delta.
\end{eqnarray}
Finally, we also introduce the following definition: 

%%%%%%%%%%%%%%%%%%%%%%%%%%%%
\begin{definition}\label{Def-MS2}
A bounded Lipschitz domain $\Omega\subset\bbR^n$ is called 
{\it square-Dini}, and one writes  
\begin{eqnarray}\label{MaS-6}
\dOm\in {\rm SD},
\end{eqnarray}
provided the following holds: There exists a finite open covering 
$\{{\mathcal O}_j\}_{1\leq j\leq N}$ of the boundary $\partial\Omega$ of 
$\Om$ such that for every $j\in\{1,...,N\}$, ${\mathcal O}_j\cap\Omega$ 
coincides with the portion of ${\mathcal O}_j$ lying in the over-graph of 
a Lipschitz function $\varphi_j:\bbR^{n-1}\to\bbR$ $($considered in a new 
system of coordinates obtained from the original one via a rigid motion\,$)$ 
which, additionally, has the property that 
\begin{eqnarray}\label{MaS-7}
\int_0^1\bigg(\frac{\omega(\nabla\varphi_j;t)}{t^{1/2}}\bigg)^2\,\frac{dt}{t}
<\infty. 
\end{eqnarray}
Here, given a $($possibly vector-valued\,$)$ function $f$ in $\bbR^{n-1}$, 
\begin{eqnarray}\label{MaS-8}
\omega(f;t):=\sup\,\{|f(x)-f(y)| \,|\, x,y\in\bbR^{n-1}, \, |x-y|\leq t\},
\quad t\in(0,1),
\end{eqnarray}
is the modulus of continuity of $f$, at scale $t$. 
\end{definition}
%%%%%%%%%%%%%%%%%%%%%%%%%%%

From the work of V. Maz'ya and T. Shaposhnikova \cite{MS85}, \cite{MS05},
it is known that if $r>1/2$ then 
\begin{eqnarray}\label{MaS-9}
C^{1,r}\subseteq{\rm SD}\subseteq MH^{1/2}_0\subseteq MH^{1/2}_\infty.
\end{eqnarray}
As pointed out in \cite{MS05}, domains of class $MH^{1/2}_\infty$ can have
certain types of vertices and edges when $n\geq 3$. 

Next, we recall that a domain is said to satisfy a uniform exterior 
ball condition provided there exists a number $r>0$ with the property that 
\begin{align} \label{UEBC}
& \mbox{for every $x\in\dOm$ there exists $y\in\bbR^n$ such that} \\
& \quad \mbox{$B(y,r)\cap\Om=\emptyset$ and $x\in\partial B(y,r)\cap\dOm$}.
\end{align}
Next, we review the class of almost-convex domains
introduced in \cite{MTV}. 

%%%%%%%%%%%%%%%
\begin{definition}\label{Def-AC}
A bounded Lipschitz domain $\Omega\subset{\mathbb{R}}^n$ is called 
an almost-convex domain provided there exists a family 
$\{\Omega_\ell\}_{\ell\in{\mathbb{N}}}$
of open sets in ${\mathbb{R}}^n$ with the following properties: 
\begin{enumerate}
\item[$(i)$] $\partial\Omega_\ell\in C^2$ and 
$\overline{\Omega_{\ell}}\subset\Omega$ for every $\ell\in{\mathbb{N}}$. 
\item[$(ii)$] $\Omega_\ell\nearrow\Omega$ as $\ell\to\infty$, in the sense
that $\overline{\Omega_{\ell}}\subset\Omega_{\ell+1}$ for each 
$\ell\in{\mathbb{N}}$ and $\bigcup_{\ell\in{\mathbb{N}}}\Omega_{\ell}=\Omega$. 
\item[$(iii)$] There exists a neighborhood $U$ of $\partial\Omega$ and, 
for each $\ell\in{\mathbb{N}}$, a $C^2$ real-valued function $\rho_{\ell}$ 
defined in $U$ with the property that $\rho_{\ell}<0$ on $U\cap\Omega_{\ell}$, 
$\rho_{\ell}>0$ in $U\backslash \overline{\Omega_{\ell}}$, and which 
vanishes on $\partial\Omega_\ell$. In addition, it is assumed that 
there exists some constant $C_1\in (1,\infty)$ such that 
\begin{eqnarray}\label{MTV3.1}
C_1^{-1}\leq |\nabla\rho_\ell(x)|\leq C_1,
\quad\forall\,x\in \partial\Omega_\ell,\quad\forall\,\ell\in{\mathbb{N}}. 
\end{eqnarray}
\item[$(iv)$] There exists $C_2\geq 0$ such that for every number
$\ell\in{\mathbb{N}}$, every point $x\in\partial\Omega_{\ell}$, 
and every vector $\xi\in{\mathbb{R}}^n$ which is tangent to 
$\partial\Omega_{\ell}$ at $x$, there holds 
\begin{eqnarray}\label{MTV3.2}
\big\langle{\rm Hess}\,(\rho_\ell)\xi,  \xi\big\rangle\geq -C_2|\xi|^2, 
\end{eqnarray}
\noindent where $\langle\dott,\dott \rangle$ is the standard inner 
product in ${\mathbb{R}}^n$ and  
\begin{eqnarray}\label{MTV3.3}
{\rm Hess}\,(\rho_\ell):=\left(\frac{\partial^2\rho_\ell}
{\partial x_i\partial x_j}\right)_{1\leq i,j\leq n},
\end{eqnarray}
\noindent is the Hessian of $\rho_{\ell}$. 
\end{enumerate}
\end{definition}
%%%%%%%%%%%%%%

\noindent A few remarks are in order here: First, it is not difficult to see
that \eqref{MTV3.1} ensures that each domain $\Omega_\ell$ is Lipschitz, 
with Lipschitz constant bounded uniformly in $\ell$. Second, \eqref{MTV3.2}
simply says that, as quadratic forms on the tangent bundle 
$T\partial\Omega_\ell$ to $\partial\Omega_{\ell}$, we have 
\begin{eqnarray}\label{MT-SR}
{\rm Hess}\,(\rho_\ell)\geq -C_2\,I_n,
\end{eqnarray}
\noindent where $I_n$ is the identity matrix in $\bbR^n$. Hence, another equivalent 
formulation of \eqref{MTV3.2} is the following requirement: 
\begin{eqnarray}\label{MTV3.4}
\sum\limits_{i,j=1}^n\frac{\partial^2\rho_\ell}{\partial x_i\partial x_j}
\xi_i\xi_j\geq -C_2 \sum\limits_{i=1}^n\xi_i^2,\, \mbox{ whenever }\,
\rho_\ell=0\,\mbox{ and }\,\sum\limits_{i=1}^n
\frac{\partial\rho_\ell}{\partial x_i}\xi_i=0.
\end{eqnarray}
\noindent We note that since the second fundamental form $II_{\ell}$ on 
$\partial\Omega_{\ell}$ is given by 
$II_\ell={{\rm Hess}\,\rho_\ell}/{|\nabla\rho_\ell|}$,
almost-convexity is, in view of \eqref{MTV3.1}, equivalent to 
requiring that $II_\ell$ be bounded below, uniformly in $\ell$.

We now discuss some important special classes of almost-convex
domains. 

%%%%%%%%%%%%%%%%%%%%
\begin{definition}\label{eu-RF}
A bounded Lipschitz domain $\Omega\subset{\mathbb{R}}^n$ satisfies 
a local exterior ball condition, henceforth referred to as LEBC,
if every boundary point $x_0\in\partial\Omega$ has an open 
neighborhood ${\mathcal{O}}$ which satisfies the following two conditions:
\begin{enumerate}
\item[$(i)$] There exists a Lipschitz function
$\varphi:{{\mathbb{R}}}^{n-1}\to{{\mathbb{R}}}$ with
$\varphi(0)=0$ and such that if $D$ is the domain above the graph of $\varphi$
then $D$ satisfies a UEBC. 
\item[$(ii)$] There exists a $C^{1,1}$ diffeomorphism $\Upsilon$ mapping 
${\mathcal{O}}$ onto the unit ball $B(0,1)$ in ${{\mathbb{R}}}^n$ and such 
that $\Upsilon(x_0)=0$, $\Upsilon({\mathcal{O}}\cap\Omega)=B(0,1)\cap D$,
$\Upsilon({\mathcal{O}}\backslash \ol{\Omega})=B(0,1)\backslash \overline{D}$.
\end{enumerate}
\end{definition}
%%%%%%%%%%%%%%%%%%%%%

\noindent It is clear from Definition \ref{eu-RF} that the class of 
bounded domains satisfying a LEBC is invariant under $C^{1,1}$ diffeomorphisms.
This makes this class of domains amenable to working on manifolds. 
This is the point of view adopted in \cite{MTV}, where the following 
result is also proved:  

%%%%%%%%%%%%%%%%%%%%%
\begin{lemma}\label{MTVp3.1}
If the bounded Lipschitz domain $\Omega\subset{\mathbb{R}}^n$ satisfies 
a LEBC then it is almost-convex.
\end{lemma}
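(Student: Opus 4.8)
The plan is to verify the four defining properties in Definition~\ref{Def-AC} by exploiting the $C^{1,1}$-diffeomorphism invariance built into the notion of LEBC. First I would fix a boundary point $x_0 \in \partial\Omega$ and use a LEBC neighborhood $\mathcal{O}$, with the associated graph domain $D$ (satisfying a uniform exterior ball condition with some radius $r>0$) and the $C^{1,1}$ diffeomorphism $\Upsilon: \mathcal{O} \to B(0,1)$ as in Definition~\ref{eu-RF}. Using compactness of $\partial\Omega$, extract a finite subcover $\{\mathcal{O}_m\}_{1\le m\le M}$ of $\partial\Omega$ by such neighborhoods.

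The key idea is that a domain satisfying a UEBC of radius $r$ can be exhausted from inside by $C^2$ domains with uniformly bounded-below second fundamental form. Concretely, for the model graph domain $D$, define $D^{(t)} := \{x \in D \,|\, \dist(x,\partial D) > t\}$ for small $t>0$; because every boundary point of $D$ is touched from outside by a ball of radius $r$, the distance function $\dist(\cdot,\partial D)$ is $C^{1,1}$ near $\partial D$, and the level sets $\partial D^{(t)}$ are $C^{1,1}$ hypersurfaces whose principal curvatures are bounded below by $-1/(r-t) \ge -2/r$ for $t < r/2$. A standard mollification then replaces each $D^{(t)}$ by a genuinely $C^2$ (indeed $C^\infty$) domain with second fundamental form still bounded below uniformly in $t$. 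Pulling these back through the $C^{1,1}$ diffeomorphisms $\Upsilon_m^{-1}$ and patching (here one glues the local pieces $\Upsilon_m^{-1}(B(0,1)\cap D^{(t)})$ together with a fixed interior $C^2$ core, choosing a common parameter $\ell \leftrightarrow t = t_\ell \downarrow 0$) produces the exhausting family $\{\Omega_\ell\}_{\ell \in \bbN}$ together with the defining functions $\rho_\ell$; one takes $\rho_\ell$ to be (a mollified, then pulled-back version of) $t_\ell - \dist(\cdot,\partial D)$ in each chart, normalized so that \eqref{MTV3.1} holds, and checks \eqref{MTV3.2}, equivalently \eqref{MT-SR}, from the curvature bound on $\partial D^{(t)}$ transported via the chain rule. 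Properties $(i)$ and $(ii)$ of Definition~\ref{Def-AC} are then immediate from the construction, $(iii)$ follows since $|\nabla \dist(\cdot,\partial D)| = 1$ near $\partial D$ and the diffeomorphisms are bi-Lipschitz (giving the two-sided bound $C_1^{-1} \le |\nabla\rho_\ell| \le C_1$), and $(iv)$ is exactly the uniform lower curvature bound just described.

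The main obstacle I expect is controlling how the second fundamental form transforms under the $C^{1,1}$ diffeomorphisms $\Upsilon_m$: a $C^{1,1}$ map has only bounded (not continuous) second derivatives, so one cannot literally push forward the classical curvature tensor. The resolution is that we do not need a pointwise-defined curvature after transport; we only need the \emph{one-sided} quadratic-form inequality $\mathrm{Hess}(\rho_\ell) \ge -C_2 I_n$ in the sense of \eqref{MTV3.4}, and this is stable under composition with maps whose first derivatives are Lipschitz, because the correction terms arising from $D^2\Upsilon_m$ are bounded (uniformly in $\ell$) and the Jacobians $D\Upsilon_m$, $D\Upsilon_m^{-1}$ are bounded above and below. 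A careful bookkeeping of these bounded contributions — done once in a single chart and then quoted for the finitely many $m$ — yields a uniform constant $C_2$. The only remaining care is at the overlaps of the charts and at the transition to the fixed interior $C^2$ core, where one uses a partition of unity adapted to $\{\mathcal{O}_m\}$ to splice the local defining functions; since away from $\partial\Omega$ the domains $\Omega_\ell$ are $C^2$ with uniformly controlled geometry by construction, this introduces no new difficulty. This completes the verification that $\Omega$ is almost-convex.
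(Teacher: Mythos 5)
First, a point of reference: the paper does not prove this lemma at all -- it is quoted verbatim from \cite{MTV}, so there is no in-paper argument to compare against, and your proposal has to stand on its own. Measured that way, the overall strategy (inner approximation by level sets of a distance-type function, one-sided Hessian bounds from the exterior ball, transport through the $C^{1,1}$ charts using only the quadratic-form inequality) is a reasonable one, and your observation that the one-sided bound $\mathrm{Hess}(\rho_\ell)\ge -C_2 I_n$ is stable under composition with $C^{1,1}$ diffeomorphisms is correct and is indeed the right way to deal with the charts.

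However, the step you build everything on is false as stated: a UEBC of radius $r$ does \emph{not} make $\dist(\cdot,\partial D)$ of class $C^{1,1}$ near $\partial D$, nor are the inner parallel sets $\partial D^{(t)}$ $C^{1,1}$ hypersurfaces. A convex polygon (or a thin Lipschitz wedge) satisfies the UEBC, yet the ridge set (medial axis) of the distance function reaches $\partial D$ at the corners, so $d$ fails to be differentiable in every neighborhood $\{0<d<t\}$, and the level sets $\{d=t\}$ are again polygons with corners. What the exterior ball actually gives is only the one-sided viscosity bound $\mathrm{Hess}\,d\le (r)^{-1}I$ near $\partial D$ (each point is touched from above by the smooth barrier $|\cdot-y|-r$ built from the exterior ball at a nearest boundary point; note also that your constant $-1/(r-t)$ is the interior-ball formula -- exterior balls give $1/(r+t)$-type bounds). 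The argument can probably be repaired by mollifying $t-d$, since convolution preserves the one-sided Hessian bound, but then two things you treat as automatic become genuine issues: (a) the nondegeneracy $|\nabla\rho_\ell|\ge C_1^{-1}$ on the zero level set -- which you need both for \eqref{MTV3.1} and for $\partial\Omega_\ell$ to be a $C^2$ hypersurface at all -- no longer follows from $|\nabla d|=1$ a.e., because near ridge points that reach $\partial D$ at corner-type singularities mollification averages gradients pointing in different directions; one must invoke the Lipschitz cone condition to show all a.e.\ gradient directions of $d$ lie in a fixed cone of aperture $<\pi$, so that the averaged gradient stays uniformly away from zero; and (b) the chart-gluing: in overlaps the locally defined functions are distances to \emph{different} model boundaries distorted by \emph{different} $C^{1,1}$ maps, so they are only comparable, and splicing them with a partition of unity does not obviously yield a global defining function with nonvanishing gradient, a $C^2$ zero set, and the nested exhaustion $\overline{\Omega_\ell}\subset\Omega_{\ell+1}$ with $\bigcup_\ell\Omega_\ell=\Omega$ -- asserting that this ``introduces no new difficulty'' skips the part of the proof where most of the work lies. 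As written, the proposal therefore has a genuine gap; a corrected version should start from the one-sided (semiconcavity-type) bound rather than from $C^{1,1}$ regularity, or, alternatively, work directly with the Lipschitz graph functions $\varphi_j$, for which the UEBC gives a uniform lower Hessian bound (semiconvexity) that survives mollification and makes the verification of \eqref{MTV3.1}--\eqref{MTV3.2} elementary.
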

%%%%%%%%%%%%%%%%%%%%%

\noindent Hence, in the class of bounded Lipschitz domains 
in ${\mathbb{R}}^n$, we have
\begin{eqnarray}\label{ewT-1}
\mbox{convex}  \, \Longrightarrow \,
\mbox{UEBC}  \, \Longrightarrow \,
\mbox{LEBC}  \, \Longrightarrow \,
\mbox{almost-convex}.
\end{eqnarray}

For a vector field $w=(w_1,w_2,\dots,w_n)$ whose components are 
distributions in an open set $\Omega\subset{\mathbb{R}}^n$,  
we define its curl, ${\rm curl} (w)$, to be the tensor field with 
$n^2$ components (entries) given by 
\begin{eqnarray}\label{akry7}
({\rm curl} (w))_{j,k}=\partial_jw_k-\partial_kw_j,\quad
j,k=1,\dots,n.
\end{eqnarray}
\noindent In addition, if $\Psi=(\Psi_{j,k})_{1\leq j,k\leq n}$ is a tensor field 
whose $n^2$ components are distributions in $\Omega$, we set 
\begin{eqnarray}\label{a-YT}
({\rm div} (\Psi))_{k}=\sum\limits_{j=1}^n\partial_j(\Psi_{j,k}-\Psi_{k,j}),
\quad k=1,\dots,n.
\end{eqnarray}
We also use the notation
\begin{equation}
\langle A, B\rangle_{\bbC^{n^2}} = \sum_{j,k =1}^n \ol{A_{k,j}} \, B_{k,j} 
= \tr(A^* B)
\end{equation}
for two tensor fields $A$ and $B$ with $n^2$ components. 
We then introduce the following definition:

%%%%%%%%%%%%%%%%%%%
\begin{definition}\label{86t8}
Assume that $\Omega\subset{\mathbb{R}}^n$ is a bounded Lipschitz domain with 
outward unit normal $\nu$. Let $w=(w_1,\dots,w_n)$ be a vector field with 
components in $L^2(\Omega;d^nx)$ such that ${\rm curl} (w)$ also has 
components in $L^2(\Omega;d^nx)$. Then $\nu\times w$ is the unique tensor 
field with $n^2$ components in 
$H^{-1/2}(\partial\Omega)=\bigl(H^{1/2}(\partial\Omega)\big)^*$ 
which satisfies the following property: If $\Psi$ is any tensor field with 
$n^2$ components in $H^1(\Omega)$ and $\psi=\gamma_D \Psi$, with 
the Dirichlet trace taken componentwise, then 
\begin{eqnarray}\label{liry9}
{}_{H^{1/2}(\partial\Omega)^{n^2}}\langle 
\psi,\nu\times w \rangle_{H^{-1/2}(\partial\Omega)^{n^2}}
=\int_\Omega d^nx \, \langle \Psi, {\rm curl} (w) \rangle_{\bbC^{n^2}}
+\int_\Omega \,d^nx \, \ol{{\rm div} (\Psi)} \cdot w.
\end{eqnarray}
\end{definition}
%%%%%%%%%%%%%%%%%%%%

\noindent It is not difficult to check (using the fact that 
$\gamma_D:H^1(\Omega)\to H^{1/2}(\partial\Omega)$ is onto, and that 
$C^\infty_0(\Omega)$ is dense in the space $H^1(\Omega)$
with vanishing Dirichlet trace) that \eqref{liry9} uniquely defines 
$\nu\times w$ as a functional in $H^{-1/2}(\partial\Omega)^{n^2}$.

We shall also need the following companion of Definition \ref{86t8}:

%%%%%%%%%%%%%%%%%%
\begin{definition}\label{86t8Y}
Assume that $\Omega\subset{\mathbb{R}}^n$ is a bounded Lipschitz domain with 
outward unit normal $\nu$. Let $w=(w_1,\dots,w_n)$ be a vector field with 
components in $L^2(\Omega;d^nx)$ such that ${\rm div} (w) \in L^2(\Omega;d^nx)$.
Then $\nu\cdot w$ is the unique functional in 
$H^{-1/2}(\partial\Omega)=\bigl(H^{1/2}(\partial\Omega)\big)^*$ 
which satisfies the following property. If $\Phi\in H^1(\Omega)$ and 
$\phi=\gamma_D \Phi$ then 
\begin{eqnarray}\label{liry9Y}
{}_{H^{1/2}(\partial\Omega)}\langle \phi, \nu\cdot w \rangle_{H^{-1/2}(\partial\Omega)}
=\int_\Omega d^nx \, {\ol \Phi} \, {\rm div} (w) 
+\int_\Omega \,d^nx \, \ol{\nabla \Phi} \cdot w.
\end{eqnarray}
\end{definition}
%%%%%%%%%%%%%%%%%%%

\noindent As before, one can check that this uniquely defines $\nu\cdot w$ as a 
functional in $H^{-1/2}(\partial\Omega)$.

Next, we note the following regularity result, which is a consequence 
of Theorem 4.1 on p.\ 1458 in \cite{MTV} (which contains a more general 
result, formulated in the language of differential forms). 

%%%%%%%%%%%%%%%%%%%
\begin{lemma}\label{jawyr}
Assume that $\Omega\subset{\mathbb{R}}^n$ is an almost-convex domain
with outward unit normal $\nu$. Then,
\begin{align}\label{kyawg}
& \big\{w\in L^2(\Omega;d^nx)^n \,\big|\, {\rm div} (w) \in L^2(\Omega;d^nx),\, 
{\rm curl} (w) \in L^2(\Omega;d^nx)^{n^2},\, \nu\times w=0\}
\nonumber\\
& \quad
=\big\{w\in H^1(\Omega)^n \,\big|\, \nu\times w=0\big\}\quad
\end{align} 
and, in addition, there exists a finite constant $C=C(\Omega)>0$ such that 
\begin{equation}\label{kTv-2}
\|w\|_{H^1(\Omega)^n}\leq 
C\big(\|w\|_{L^2(\Omega;d^nx)^n}+\|{\rm div} (w)\|_{L^2(\Omega;d^n)}
+\|{\rm curl} (w)\|_{L^2(\Omega;d^nx)^{n^2}}\big), 
\end{equation} 
whenever $\nu\times w=0$. Furthermore,  
\begin{align}\label{kyawgX}
& \big\{w\in L^2(\Omega;d^nx)^n \,\big|\, {\rm div} (w) \in L^2(\Omega;d^nx),\, 
{\rm curl} (w) \in L^2(\Omega;d^nx)^{n^2},\, \nu\cdot w=0\big\}
\nonumber\\ 
& \quad
= \big\{w\in H^1(\Omega)^n \,\big|\, \nu\cdot w=0\big\},  
\end{align}
and there exists a finite constant $C=C(\Omega)>0$ such that  
\begin{equation}\label{kTv-2X}
\|w\|_{H^1(\Omega)^n}\leq 
C\big(\|w\|_{L^2(\Omega;d^nx)^n}+\|{\rm div} (w)\|_{L^2(\Omega;d^n)}
+\|{\rm curl} (w)\|_{L^2(\Omega;d^nx)^{n^2}}\big), 
\end{equation}
whenever $\nu\cdot w=0$. 
\end{lemma}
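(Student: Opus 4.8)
The plan is to recast both assertions in the language of differential forms and then quote Theorem~4.1 of \cite{MTV}. First I would identify a vector field $w=(w_1,\dots,w_n)$ on $\Omega\subset\bbR^n$ with the $1$-form $\omega_w=\sum_{j=1}^n w_j\,dx_j$. Under this identification $\mathrm{div}(w)$ corresponds, up to a sign, to the codifferential $\delta\omega_w$; the tensor field $\mathrm{curl}(w)$ from \eqref{akry7} encodes the exterior derivative $d\omega_w$, viewed as a $2$-form; the operation $\mathrm{div}$ on tensor fields in \eqref{a-YT} is the form-theoretic counterpart of $\delta$ acting on $2$-forms; and --- the crucial dictionary entry --- the condition $\nu\times w=0$ translates into the vanishing of the tangential boundary trace $t\,\omega_w$, whereas $\nu\cdot w=0$ translates into the vanishing of the normal boundary trace $n\,\omega_w$, equivalently into the vanishing of the tangential trace of the Hodge dual $(n-1)$-form $\star\,\omega_w$. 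With this dictionary, \eqref{liry9} becomes, up to normalization, the weak Green/Stokes identity relating the $L^2$-pairing of a test $2$-form $\Psi\in H^1(\Omega)$ with $d\omega_w$ and of $\delta\Psi$ with $\omega_w$ to the boundary pairing of $t\,\Psi$ with the tangential trace of $\omega_w$, while \eqref{liry9Y} is the analogous weak Green identity pairing $d\Phi$ against $\omega_w$ and $\Phi$ against $\delta\omega_w$, which encodes the normal trace $n\,\omega_w=\nu\cdot w$.

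The first genuine task is to verify that the weak notions of $\nu\times w$ and $\nu\cdot w$ introduced in Definitions~\ref{86t8} and \ref{86t8Y} coincide with the distributional tangential and normal boundary traces of forms used in \cite{MTV}. Since $\gamma_D\colon H^1(\Omega)\to H^{1/2}(\partial\Omega)$ is onto and $C^\infty_0(\Omega)$ is dense in $\ker(\gamma_D)\subset H^1(\Omega)$, the functionals in \eqref{liry9}, \eqref{liry9Y} are well-defined (as already noted in the excerpt), and a term-by-term comparison of the test-function pairings identifies them with the corresponding Hodge-theoretic traces once the signs and normalization constants relating $\mathrm{div}$, $\mathrm{curl}$ to $\delta$, $d$ have been tracked. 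I would also record the trivial reverse inclusions: if $w\in H^1(\Omega)^n$ then automatically $\mathrm{div}(w)\in L^2(\Omega;d^nx)$ and $\mathrm{curl}(w)\in L^2(\Omega;d^nx)^{n^2}$, so each set on the right-hand side of \eqref{kyawg} and \eqref{kyawgX} is contained in the corresponding set on the left.

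With the dictionary in hand, I would invoke Theorem~4.1 of \cite{MTV}: on an almost-convex domain (in the sense of Definition~\ref{Def-AC}), any $\ell$-form $\eta$ with $\eta,d\eta,\delta\eta\in L^2(\Omega)$ and vanishing tangential (respectively, normal) boundary trace lies in $H^1(\Omega)$, together with the estimate $\|\eta\|_{H^1(\Omega)}\le C\big(\|\eta\|_{L^2(\Omega)}+\|d\eta\|_{L^2(\Omega)}+\|\delta\eta\|_{L^2(\Omega)}\big)$. Applying this to the $1$-form $\omega_w$ with the tangential condition yields the identity \eqref{kyawg} together with the estimate \eqref{kTv-2}; applying it with the normal condition --- equivalently, applying it to the $(n-1)$-form $\star\,\omega_w$ with the tangential condition --- yields \eqref{kyawgX} together with \eqref{kTv-2X}.

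The main obstacle is precisely the bookkeeping in the middle step: one has to be certain that \eqref{liry9} and \eqref{liry9Y} really are the weak Green/Stokes identities that \emph{define} the tangential and normal boundary traces in \cite{MTV}, with the correct degrees and signs, and that the notion of almost-convexity in Definition~\ref{Def-AC} is exactly the hypothesis under which Theorem~4.1 of \cite{MTV} is proved --- in \cite{MTV} that result is stated for differential forms on Riemannian manifolds, so one must also check that its flat Euclidean specialization reads as claimed here. None of this is conceptually difficult, but it is where care is needed; once it is settled, the lemma follows by direct citation.
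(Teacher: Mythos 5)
Your proposal matches the paper's treatment: the lemma is obtained there precisely by citing Theorem 4.1 on p.\ 1458 of \cite{MTV} (stated for differential forms on almost-convex domains), with the vector field $w$ viewed as a $1$-form and $\nu\times w=0$, $\nu\cdot w=0$ corresponding to vanishing tangential, respectively normal, boundary traces. Your added bookkeeping (the dictionary between \eqref{liry9}, \eqref{liry9Y} and the Hodge-theoretic traces, and the trivial reverse inclusions) is exactly the routine verification the paper leaves implicit, so this is the same argument, just spelled out.
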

%%%%%%%%%%%%%%%%%%%%%%

We are now in a position to specify the class of domains in which most
of our subsequent analysis will be carried out. 

%%%%%%%%%%%%%%%%%%%%%%
\begin{definition}\lb{d.Conv}
Let $n\in\bbN$, $n\geq 2$, and assume that $\Omega\subset{\bbR}^n$ is a 
bounded Lipschitz domain. Then $\Om$ is called a quasi-convex domain if there 
exists $\delta>0$ sufficiently small $($relative to $n$ and the Lipschitz 
character of $\Om$$)$, with the property that for every $x\in\dOm$ there 
exists an open subset $\Om_x$ of $\Omega$ such that $\dOm\cap\dOm_x$ is an 
open neighborhood of $x$ in $\dOm$, and for which one of the following two 
conditions holds: \\
$(i)$ \, $\Omega_x$ is of class $MH^{1/2}_\delta$ if $n\geq 3$, and 
of class $C^{1,r}$ for some $1/2<r<1$ if $n=2$.   \\
$(ii)$ $\Omega_x$ is an almost-convex domain. 
\end{definition}
%%%%%%%%%%%%%%%%%%%%%% 

Given Definition \ref{d.Conv}, we thus introduce the following basic assumption:

%%%%%%%%%%%%%%%%%%%%%%
\begin{hypothesis}\lb{h.Conv}
Let $n\in\bbN$, $n\geq 2$, and assume that $\Omega\subset{\bbR}^n$ is
a quasi-convex domain. 
\end{hypothesis}
%%%%%%%%%%%%%%%%%%%%%% 

The following lemma will play a basic role in our work:
 
%%%%%%%%%%%%%%%%%%%%%%%%%%%%%%% 
\begin{lemma}\lb{Bjk}
Assume Hypothesis \ref{h.Conv}. Then,
\begin{equation}\lb{Yan-9}
\dom\big(- \Delta_{D,\Om}\big)\subset H^{2}(\Omega), \quad 
\dom\big(- \Delta_{N,\Om}\big)\subset H^{2}(\Omega).   
\end{equation}
\end{lemma}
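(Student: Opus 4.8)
The plan is to prove the two inclusions in \eqref{Yan-9} separately, but by the same device, which is to rewrite the membership ``$u\in\dom(-\Delta_{D,\Om})$'' (respectively $\dom(-\Delta_{N,\Om})$) as a statement about a vector field built from $\nabla u$ and to invoke the almost-convex (or $MH^{1/2}_\delta$) regularity result of Lemma \ref{jawyr}. Since quasi-convexity is a local condition, the first step is to localize: fix $x\in\dOm$ and the associated set $\Om_x\subseteq\Om$ with $\dOm\cap\dOm_x$ an open neighborhood of $x$, and a cutoff $\psi\in C_0^\infty(\bbR^n)$ supported near $x$ so that $\psi u$ is supported in $\ol{\Om_x}$ and only ``sees'' the portion of the boundary governed by condition $(i)$ or $(ii)$ of Definition \ref{d.Conv}. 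Away from $\dOm$ interior elliptic regularity already gives $H^2_{\loc}$, so it suffices to handle a neighborhood of each boundary point and then patch with a partition of unity.

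For the Dirichlet case, let $u\in\dom(-\Delta_{D,\Om})$, so $u\in H_0^1(\Om)$ and $\Delta u\in L^2(\Om;d^nx)$. Set $w:=\nabla u\in L^2(\Om;d^nx)^n$. Then $\mathrm{div}(w)=\Delta u\in L^2$ and $\mathrm{curl}(w)=0\in L^2$ (mixed partials of $u$ commute as distributions). Moreover, since $u\in H_0^1(\Om)$, one has $\gamma_D u=0$, and I claim this forces the tangential part of $w=\nabla u$ to vanish on $\dOm$, i.e.\ $\nu\times w=0$ in the sense of Definition \ref{86t8}; indeed, on the boundary the gradient of a function with zero trace is normal, and this can be checked by testing \eqref{liry9} against tensor fields $\Psi$ with $\gamma_D\Psi$ arbitrary, integrating by parts, and using $\gamma_D u=0$. (For the $MH^{1/2}_\delta$ subclass in $n\ge3$, or the $C^{1,r}$ case in $n=2$, one instead invokes the known inclusion $\dom(-\Delta_{D,\Om})\subset H^2(\Om)$ under a uniform exterior ball condition or under $C^{1,r}$, $r>1/2$ — this is precisely the regularity indicated in the introduction around \eqref{DJK-1X} and the surrounding discussion, and for which Maz'ya--Shaposhnikova \cite{MS85}, \cite{MS05} is the reference via \eqref{MaS-9}.) In the almost-convex subcase, Lemma \ref{jawyr}, specifically \eqref{kyawg}--\eqref{kTv-2}, now yields $w\in H^1(\Om_x)^n$ locally, hence $u\in H^2$ near $x$. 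For the Neumann case the argument is parallel: for $u\in\dom(-\Delta_{N,\Om})$ we again put $w:=\nabla u$, so $\mathrm{div}(w)=\Delta u\in L^2$ and $\mathrm{curl}(w)=0$, but now the boundary condition $\wti\gamma_N u=0$ translates into $\nu\cdot w=0$ in the sense of Definition \ref{86t8Y} (directly from the definition \eqref{2.9X}/\eqref{2.8} of the weak Neumann trace, which is exactly the pairing appearing in \eqref{liry9Y}); then \eqref{kyawgX}--\eqref{kTv-2X} of Lemma \ref{jawyr} gives $w\in H^1$ locally, hence $u\in H^2$ near $x$.

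The remaining step is the patching. Cover $\dOm$ by finitely many such neighborhoods $\{\dOm\cap\dOm_{x_i}\}$, add one interior piece, choose a subordinate partition of unity $\{\psi_i\}$, apply the local $H^2$ conclusion to each $\psi_i u$ (checking that multiplication by $\psi_i$ preserves the relevant trace conditions and introduces only lower-order commutator terms $[\Delta,\psi_i]u = (\Delta\psi_i)u + 2\nabla\psi_i\cdot\nabla u\in L^2$), and sum to get $u\in H^2(\Om)$. The main obstacle I expect is the careful verification of the two boundary-trace translations — that $\gamma_D u=0$ for $u\in H^1_0(\Om)$ implies $\nu\times\nabla u=0$ in the exact distributional sense of Definition \ref{86t8}, and that $\wti\gamma_N u=0$ implies $\nu\cdot\nabla u=0$ in the sense of Definition \ref{86t8Y} — since these are the hinges connecting the PDE data to the hypotheses of Lemma \ref{jawyr}; once these identifications are in place, the rest is a routine localization argument. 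A secondary technical point is handling the $n=2$ and $MH^{1/2}_\delta$, $n\ge3$, subcases of Definition \ref{d.Conv}, where one does not use Lemma \ref{jawyr} directly but rather the $H^2$-regularity under exterior ball / $C^{1,r}$ conditions, so the proof naturally splits according to which of conditions $(i)$, $(ii)$ is in force near each boundary point.
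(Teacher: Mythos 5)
Your Dirichlet argument is essentially the paper's route and is sound: localize with a cutoff (Dirichlet membership $u\in H^1_0$ is stable under multiplication by $\psi$), then invoke the per-class regularity on each patch -- Lemma \ref{jawyr} applied to the gradient with vanishing tangential trace on almost-convex patches, and the known $C^{1,r}$ / $MH^{1/2}_\delta$ results on the type-$(i)$ patches.

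The Neumann half, however, has a genuine gap, and it sits exactly at the point you declare routine. Your patching step asserts that multiplication by $\psi_i$ ``preserves the relevant trace conditions,'' but the homogeneous Neumann condition is \emph{not} stable under truncation: if $\wti\gamma_N u=0$, then $\wti\gamma_N(\psi u)=(\gamma_N\psi)(\gamma_D u)$, which is nonzero in general (by the multiplier Lemma \ref{lA.6} it lies in $H^{1/2}$, but it does not vanish). Two consequences follow. First, on the patches of type $(i)$ ($MH^{1/2}_\delta$ for $n\ge3$, $C^{1,r}$ for $n=2$) you cannot simply quote the known inclusion $\dom(-\Delta_{N,\Om_x})\subset H^2(\Om_x)$: the localized function is not in that domain, and what is actually needed is the stronger statement that $v\in H^1(\Om_x)$, $\Delta v\in L^2(\Om_x)$, and $\wti\gamma_N v\in H^{1/2}(\partial\Om_x)$ together force $v\in H^2(\Om_x)$ (the paper's \eqref{VaK-2}); the paper obtains this through a nontrivial layer-potential argument -- invertibility of $-\tfrac12 I_{\dOm}+K^{\#}_{-1}$ on $L^2(\partial\Om_x)$ combined with its Fredholmness on $H^{1/2}(\partial\Om_x)$ to conclude it is an isomorphism there, plus the mapping property $\cS_{-1}\in\cB\big(H^{1/2}(\partial\Om_x),H^2(\Om_x)\big)$ -- and nothing in your outline supplies a substitute. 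Second, on the almost-convex patches your plan to apply Lemma \ref{jawyr} ``locally'' to $w=\nabla u$ does not work as stated: the lemma requires $\nu\cdot w=0$ on the \emph{entire} boundary of the patch $\Om_x$, whereas $\nu\cdot\nabla u$ is only known to vanish on $\dOm\cap\dOm_x$, not on the artificial interior part of $\partial\Om_x$. One must instead take $w=(\psi\nabla u)|_{\Om_x}$ (so the normal trace vanishes everywhere), accepting that ${\rm curl}\,(w)$ is no longer zero but is still in $L^2$ -- which is precisely how the paper argues. Without these two ingredients the second inclusion in \eqref{Yan-9} is not established.
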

%%%%%%%%%%%%%%%%%%%%%%%%%%%%%%%
\begin{proof} 
When $\Omega$ is a domain of class $C^{1,r}$ for some $1/2<r<1$  
this result appeared in \cite{GMZ07}. Assume now that $\Omega$ is an almost-convex 
domain. Then, if $u\in\dom\big(- \Delta_{D,\Om}\big)$ or 
$u\in\dom\big(- \Delta_{N,\Om}\big)$, Lemma \ref{jawyr} applied to 
$w:=\nabla u$ yields that $w\in H^1(\Omega)^n$ so that, ultimately, 
$u\in H^{2}(\Omega)$, as desired. Next, for domains of class $MH^{1/2}_\delta$
with $\delta>0$ sufficiently small (relative to dimension and the Lipschitz
character), \eqref{Yan-9} is a consequence of work done in \cite{MS85} and
\cite{MS05}. The more general types of domains considered
here can be treated by using this and a localization argument:  
More specifically, assume that $u\in H^1_0(\Om)$ is such that 
$f:=\Delta u\in L^2(\Om;d^nx)$. Fix an arbitrary boundary point 
$x_0\in \dOm$ and let $\Om_{x_0}$ be the associated domain as in 
Hypothesis \ref{h.Conv}. Finally, fix $\psi\in C^\infty_0(\bbR^n)$ with 
$\dOm\cap \supp (\psi) \subset \dOm \cap \dOm_{x_0}$. Then 
$v:=(\psi u)|_{\Om_x}\in H^1_0(\Om_x)$ has the property that 
$\Delta v=[(\Delta\psi)u+2\nabla\psi\cdot\nabla u+\psi f]|_{\Om_{x_0}}
\in L^2(\Om_{x_0};d^nx)$. Thus, by the result recalled at the beginning 
of the current proof, $v\in H^2(\Om_{x_0})$. Using a partition of unity 
argument (and interior elliptic regularity), one deduces from this 
that $u\in H^2(\Om)$. This proves the first inclusion in \eqref{Yan-9}.

The second inclusion in \eqref{Yan-9} is a bit more delicate since, 
as opposed to the Dirichlet case, Neumann boundary conditions are not stable
under truncations by smooth functions. We shall, nonetheless, overcome this 
difficulty by proceeding as follows: Let $u\in H^1(\Om)$ be such that 
$f:=\Delta u\in L^2(\Om;d^nx)$ and $\wti\gamma_N u =0$. As before, 
fix $x_0\in\dOm$, denote by $\Om_{x_0}$ the domain associated with $x_0$ 
as in Hypothesis \ref{h.Conv}, and select $\psi\in C^\infty_0(\bbR^n)$ with 
$\dOm\cap \supp (\psi) \subset\dOm\cap\dOm_{x_0}$. 
Our goal is to show that $\psi u\in H^2(\Om_{x_o})$. From this and a 
partition of unity argument we may then once again conclude that 
$u\in H^2(\Om)$, as desired. At this point, the discussion branches out into 
several cases, which we treat separately below. 

\vskip 0.08in
\noindent{${\mbox{Case~I}}$}: {\it Assume that $\Om_{x_0}$ is a 
domain of class $C^{1,r}$, for some $r>1/2$}. In this scenario, we consider 
the function $v:=(\psi u)|_{\Om_{x_0}}\in H^1(\Om_{x_o})$ and note that, as 
before, $\Delta v\in L^2(\Om_{x_0};d^nx)$. We wish to show that 
$v\in H^2(\Om_{x_o})$. To this end, one observes that the support of 
$\wti\gamma_N v        $ is a relatively compact subset of 
$\dOm_{x_0}\cap\dOm$ and that 
\begin{align}\lb{VaK-1}
\wti\gamma_N v        |_{\dOm_{x_0}\cap\dOm}
&= [(\ga_D \psi)(\wti\gamma_N u) +(\wti\gamma_N \psi)(\ga_D u)]|_{\dOm_{x_0}\cap\dOm}
\no \\
&= [(\gamma_N \psi)(\ga_D u)]|_{\dOm_{x_0}\cap\dOm}\in H^{1/2}(\dOm_{x_o}\cap\dOm),
\end{align}
by Lemma \ref{lA.6} and the fact that $\wti\gamma_N u =0$ on $\dOm$.
Consequently, in order to conclude that $v\in H^2(\Om_{x_o})$,  
it suffices to show the following: 
\begin{eqnarray}\lb{VaK-2}
\left.
\begin{array}{l}
\Omega\subset\bbR^n\,\mbox{ bounded $C^{1,r}$ domain, for some }1/2<r<1
\\[4pt]
w\in H^1(\Om),\quad \Delta w\in L^2(\Om;d^nx),\quad
\wti\ga_N w\in H^{1/2}(\dOm)
\end{array}
\right\}  \text{ imply } \, w\in H^2(\Om).
\end{eqnarray}
To prove \eqref{VaK-2}, we shall employ an integral representation of $w$ in 
terms of layer potentials. Concretely, set $f:=(-\Delta + 1)w\in L^2(\Om;d^nx)$,  
and denote by $w_0$ the restriction to $\Om$ of the convolution of $f$ 
(extended by zero outside $\Om$, to the entire $\bbR^n$) with $E_n(-1;\cdot)$. 
Then $w_0\in H^2(\Om)$ and $(-\Delta + 1)w_0=f$ in $\Omega$. Then
$w_1:=w-w_0$ satisfies
\begin{eqnarray}\lb{VaK-3}
w_1\in H^1(\Om),\quad (-\Delta + 1)w_1=0\, \mbox{ in }\, \Om,\quad 
\wti\ga_N w_1\in H^{1/2}(\dOm), 
\end{eqnarray}
where the last membership makes use of Lemma \ref{lA.6} once again. 
Bring in the adjoint double layer potential operator $K^{\#}_{-1}$ in  
\eqref{Ksharp}. From \cite{Mi96} we know that 
\begin{eqnarray}\lb{VaK-4}
-{\textstyle\frac12}I_{\dOm}+K^{\#}_{-1}\in\cB\bigl(L^2(\dOm;d^{n-1}\omega)\bigr)
\mbox{ is an isomorphism}.
\end{eqnarray}
In addition, by Lemma \ref{K-CPT}, one concludes that 
\begin{eqnarray}\lb{VaK-5}
-{\textstyle\frac12}I_{\dOm}+K^{\#}_{-1}\in\cB\bigl(H^{1/2}(\dOm)\bigr)
\mbox{ is a Fredholm operator with index zero}.
\end{eqnarray}
Together, \eqref{VaK-4}, \eqref{VaK-5} then show that 
\begin{eqnarray}\lb{VaK-6}
-{\textstyle\frac12}I_{\dOm}+K^{\#}_{-1}\in\cB\bigl(H^{1/2}(\dOm)\bigr)
\mbox{ is an isomorphism}.
\end{eqnarray}
Thus, $w_1$ permits the representation 
\begin{eqnarray}\lb{VaK-7}
w_1=\cS_{-1}\big[\bigl(-{\textstyle\frac12}I_{\dOm}+K^{\#}_{-1}\bigr)^{-1}
(\wti\ga_N(w_1))\big]\in H^2(\Om),
\end{eqnarray}
by \eqref{goal-2} and \eqref{VaK-6}, and hence, $w=w_0+w_1\in H^2(\Om)$, 
completing the treatment of Case~I. 

\vskip 0.08in
\noindent{${\mbox{Case~II}}$}: {\it Assume that $n\geq 3$ 
and $\Om_{x_0}$ is a domain of class $MH^{1/2}_\delta$, for some sufficiently
small $\delta>0$}. In this situation we proceed as in Case~I, since
the main ingredients in the proof carried out there, that is, 
\eqref{VaK-5} and \eqref{VaK-7}, continue to hold in this setting 
due to the work in \cite{MS05}. 

\vskip 0.08in
\noindent{${\mbox{Case~III}}$}: {\it Assume that $\Om_{x_0}$ 
is an almost-convex domain}. In this situation, if $\psi$ is as before, 
we consider the vector field $w:=(\psi\nabla u)|_{\Om_{x_o}}
\in (L^2(\Om_{x_o};d^nx))^n$. This vector field satisfies 
\begin{eqnarray}\lb{VaK-8}
\begin{array}{l}
{\rm curl} (w) = (\partial_jw_k-\partial_kw_j)_{1\leq j,k\leq n}\in 
L^2(\Om_{x_0};d^nx)^{n^2},
\\[4pt]
{\rm div} (w) \in L^2(\Om;d^nx)\, \mbox{ and }\, 
\nu\cdot w=0\, \mbox{ in }\, H^{-1/2}(\dOm_{x_0}). 
\end{array}
\end{eqnarray}
Hence, Lemma \ref{jawyr} yields $w\in (H^1(\Om_{x_0}))^n$, implying 
$\psi u\in H^2(\Om_{x_o})$. 
\end{proof}
%%%%%%%%%%%%%%%%%%%%%%%%%%%%%%%

%%%%%%%%%%%%%%%%%%%%%%%%%%%%%%%
\begin{remark}\lb{B-gh}
Let $\Omega\subset\bbR^2$ be a polygonal domain with at least one
re-entrant corner. Let $\omega_1,...,\omega_N$ be the internal angles
of $\Omega$ satisfying $\pi<\omega_j<2\pi$, $1\leq j\leq N$, 
and denote by $P_1,...,P_N$ the corresponding vertices. Then the solution
to the Poisson problem 
\begin{eqnarray}\lb{bf-G}
-\Delta u=f\in L^2(\Om;d^2x),\quad u\in H^1_0(\Om),
\end{eqnarray}
permits the representation 
\begin{eqnarray}\lb{bf-G2}
u=\sum_{j=1}^N\lambda_jv_j+w,\quad\lambda_j\in\bbR, 
\end{eqnarray}
where $w\in H^2(\Om)\cap H^1_0(\Om)$ and, for each $j$, $v_j$ is a function 
exhibiting a singular behavior at $P_j$ of the following nature:  
Given $j\in\{1,...,N\}$, choose polar coordinates $(r_j,\theta_j)$ 
taking $P_j$ as the origin and so that the internal angle is spanned by 
the half-lines $\theta_j=0$ and $\theta_j=\omega_j$. Then 
\begin{eqnarray}\lb{bf-G3}
v_j(r_j,\theta_j)=\phi_j(r_j)r_j^{\pi/\omega_j}\sin(\pi\theta_j/\omega_j),
\quad 1\leq j\leq N,
\end{eqnarray}
where $\phi_j$ is a $C^\infty$-smooth cut-off function 
of small support, which is identically one near $P_j$. 

In this scenario, $v_j\in H^s(\Om)$ for every $s<1+(\pi/\omega_j)$, 
though $v_j\notin H^{1+(\pi/\omega_j)}(\Om)$. This implies that the 
best regularity statement regarding the solution of \eqref{bf-G} is 
\begin{eqnarray}\lb{bf-G4}
u\in H^s(\Om)\mbox{ for every } s<1+\frac{\pi}{\max\,\{\omega_1,...,\omega_N\}}
\end{eqnarray}
and this fails for the critical value of $s$. In particular, this
provides a quantifiable way of measuring the failure of the conclusion 
in Lemma \ref{Bjk} for Lipschitz, piecewise $C^\infty$ domains 
exhibiting inwardly directed irregularities.
\end{remark}
%%%%%%%%%%%%%%%%%%%%%%%%%%%%%%%% 

We wish to augment Lemma \ref{Bjk} with an analogous result 
for the Robin Laplacian $-\Delta_{\Theta,\Om}$ (cf.\ Theorem \ref{t2.3FF}) 
corresponding to the case where $\Theta$ is a nonnegative, constant function 
on $\partial\Omega$, that is, 
\begin{eqnarray}\lb{Rob-T1}
\Theta(x)\equiv\theta\geq 0,\, \mbox{ for } \, x\in\dOm. 
\end{eqnarray}
From Theorem \ref{t2.3FF} we know that assuming Hypothesis \ref{h2.1}, 
this is a self-adjoint operator in $L^2(\Om; d^n x)$, with domain  
\begin{equation}\lb{2.20B}
\dom(- \Delta_{\Theta,\Om})=\big\{u\in H^1(\Om) \,\big|\, 
\Delta u \in L^2(\Om;d^nx),
\big(\wti\gamma_N +\theta\gamma_D\big) u =0 
\text{ in $H^{-1/2}(\dOm)$}\big\}. 
\end{equation}
In the case where Hypothesis \ref{h.Conv} is assumed, the following regularity
result has been recently established in \cite{JMM08} (see also Theorem 3.2.3.1 
on p\,156 of \cite{Gr85} for the case of convex domains). 

%%%%%%%%%%%%%%%%%%%%%%%%%%%%%%%%
\begin{lemma}\lb{Lt2.3}
Assume Hypothesis \ref{h.Conv} and \eqref{Rob-T1}. 
Then the Robin Laplacian satisfies
\begin{equation}\lb{2.20C}
\dom(- \Delta_{\Theta,\Om})\subset H^2(\Om).
\end{equation}
\end{lemma}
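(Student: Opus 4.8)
The plan is to reduce the Robin case to the Dirichlet and Neumann cases already treated in Lemma \ref{Bjk}, using a layer-potential representation together with a localization argument, exactly in the spirit of the proof of that lemma. First I would fix $u\in\dom(-\Delta_{\Theta,\Om})$, so that $u\in H^1(\Om)$, $f:=(-\Delta+1)u\in L^2(\Om;d^nx)$, and $(\wti\gamma_N+\theta\gamma_D)u=0$ in $H^{-1/2}(\dOm)$. Since $u\in H^1(\Om)$, one has $\gamma_D u\in H^{1/2}(\dOm)$ by \eqref{2.6}, hence $\wti\gamma_N u=-\theta\gamma_D u\in H^{1/2}(\dOm)$. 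Thus the crucial structural fact is that a Robin solution automatically has a Neumann trace one full order smoother than the generic $H^{-1/2}(\dOm)$ guaranteed by Lemma \ref{Neu-tr}; this is precisely the hypothesis \eqref{VaK-3}/\eqref{VaK-2} exploited in Case~I of the proof of Lemma \ref{Bjk}.

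The next step is to run the localization machinery verbatim from the proof of Lemma \ref{Bjk}. Because, as noted there, Neumann-type boundary conditions are not stable under multiplication by cutoffs, I would multiply by $\psi\in C^\infty_0(\bbR^n)$ supported so that $\dOm\cap\supp(\psi)\subset\dOm\cap\dOm_{x_0}$ for a boundary chart $\Om_{x_0}$ as in Hypothesis \ref{h.Conv}, and study $v:=(\psi u)|_{\Om_{x_0}}$. One computes $\Delta v=[(\Delta\psi)u+2\nabla\psi\cdot\nabla u+\psi f]|_{\Om_{x_0}}\in L^2(\Om_{x_0};d^nx)$, and, using the product rule for the weak Neumann trace together with $\wti\gamma_N u=-\theta\gamma_D u\in H^{1/2}(\dOm)$ and Lemma \ref{lA.6} (to control $(\gamma_N\psi)(\gamma_D u)$ and $\theta(\gamma_D\psi)(\gamma_D u)$ in $H^{1/2}$), one gets $\wti\gamma_N v|_{\dOm_{x_0}\cap\dOm}\in H^{1/2}(\dOm_{x_0}\cap\dOm)$ with relatively compact support in $\dOm_{x_0}\cap\dOm$. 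At this point $v$ satisfies exactly the hypotheses of \eqref{VaK-2} when $\Om_{x_0}$ is of class $C^{1,r}$, $r>1/2$ (Case~I), of the $MH^{1/2}_\delta$ analogue (Case~II, using \cite{MS05}), or — when $\Om_{x_0}$ is almost-convex — the vector field $w:=(\psi\nabla u)|_{\Om_{x_0}}$ satisfies \eqref{VaK-8}: $\mathrm{curl}(w)\in L^2$, $\mathrm{div}(w)\in L^2$, but now with $\nu\cdot w=\wti\gamma_N(\psi u)$ not necessarily zero — so Lemma \ref{jawyr} as stated (which assumes $\nu\cdot w=0$) does not quite apply and requires a small modification.

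The main obstacle I anticipate is therefore the almost-convex Case~III with inhomogeneous Neumann data. There are two natural fixes. One is to subtract off a correction: since $\wti\gamma_N v\in H^{1/2}(\dOm_{x_0})$ (extended by zero, using compactness of support), solve an auxiliary problem — e.g. use the single-layer representation $v_1=\cS_{-1}g$ on a slightly smoother auxiliary domain, or invoke the well-posedness theory of Section \ref{s5} — to produce $v_0\in H^2(\Om_{x_0})$ with $\wti\gamma_N v_0=\wti\gamma_N v$; but layer-potential mapping properties into $H^2$ on merely almost-convex domains are exactly what is not available, so this route is delicate. The cleaner fix is to observe that \eqref{kTv-2X} in Lemma \ref{jawyr} is really an a priori $H^1$-estimate, and the underlying result of \cite[Thm.~4.1]{MTV} (formulated for differential forms) yields $H^1$-regularity of $w$ from $\mathrm{div}(w),\mathrm{curl}(w)\in L^2$ together with $\nu\cdot w\in H^{1/2}(\partial\Om)$ (not merely $=0$); this is the natural form-language statement and is what \cite{JMM08} uses. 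Granting that, $w=(\psi\nabla u)|_{\Om_{x_0}}\in H^1(\Om_{x_0})^n$, hence $\psi u\in H^2(\Om_{x_0})$. In all three cases a partition-of-unity argument together with interior elliptic regularity then upgrades the local conclusions to $u\in H^2(\Om)$, completing the proof. I would present the almost-convex case by citing \cite{MTV} and \cite{JMM08} for the required inhomogeneous boundary version of Lemma \ref{jawyr}, and otherwise mirror Cases~I--II of Lemma \ref{Bjk} essentially word for word.
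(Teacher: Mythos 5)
You have set out to prove something the paper itself never proves: the paper's ``proof'' of Lemma \ref{Lt2.3} is a citation, namely to the Robin regularity result established in \cite{JMM08} (with \cite[Theorem 3.2.3.1]{Gr85} quoted for the convex case). Measured against that, the first two thirds of your plan are genuinely sound and add content: since $u\in H^1(\Om)$ one has $\gamma_D u\in H^{1/2}(\dOm)$, hence by \eqref{Rob-T1} $\wti\gamma_N u=-\theta\gamma_D u\in H^{1/2}(\dOm)$, and after localization the cases where $\Om_{x_0}$ is $C^{1,r}$ or of class $MH^{1/2}_\delta$ do reduce to \eqref{VaK-2} and its analogue, because those parts of the proof of Lemma \ref{Bjk} were indeed established for arbitrary $H^{1/2}$ Neumann data, not just for vanishing data. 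Your diagnosis that the only place where $\wti\gamma_N u=0$ is used in an essential way is the almost-convex Case~III is also correct.

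The gap is exactly there, and your preferred fix does not close it. Lemma \ref{jawyr}, i.e.\ \cite[Theorem 4.1]{MTV}, is stated and used only for $\nu\cdot w=0$ (or $\nu\times w=0$); an inhomogeneous version with $\nu\cdot w\in H^{1/2}(\dOm)$ on an almost-convex domain is neither proved in the paper nor obviously contained in \cite{MTV}, and asserting that ``this is what \cite{JMM08} uses'' is speculation. The lifting alternative you mention fails for a concrete reason: it would require an $H^2$ solution of a Neumann problem with prescribed normal derivative in $H^{1/2}(\dOm_{x_0})$, but the paper's inhomogeneous $H^2$ theory (Theorem \ref{tH.G2}, regularity statement \eqref{3.3fbis}) only covers data in $\gamma_N\bigl(H^2(\Om_{x_0})\bigr)$, and on a merely Lipschitz (almost-convex) boundary $\nu$ is only $L^\infty$, so $H^{1/2}(\dOm)$ is not contained in $N^{1/2}(\dOm)=\gamma_N\bigl(H^2\cap H^1_0\bigr)$ (Lemma \ref{Lo-Tx}) and no result in the paper places it in the range of $\gamma_N$ on $H^2$. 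Note also that your argument nowhere uses $\theta\geq 0$, only that $\theta$ is a constant multiplier; this is a further indication that the almost-convex case cannot be a soft corollary of the homogeneous Neumann regularity, since the known proofs (already in the convex setting of \cite{Gr85}) run a Rellich/Hessian identity in which the boundary term generated by the Robin condition and the uniform lower bound on the second fundamental form must be handled together. As written, your Case~III ultimately amounts to citing \cite{JMM08} for an unproved strengthening of Lemma \ref{jawyr} — at which point one may as well cite \cite{JMM08} for the whole lemma, which is precisely what the paper does.
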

%%%%%%%%%%%%%%%%%%%%%%%%%%%%%%%% 

We are now ready to address the issue raised at the beginning of 
Section \ref{s8}:

%%%%%%%%%%%%%%%%%%%%%%%%%%%%%%%% 
\begin{theorem}\lb{T-DD1}
Assume Hypothesis \ref{h.Conv}. Then equality holds in \eqref{Yan-8}, that is, one has 
\begin{eqnarray}\lb{Yan-8B}
H^2_0(\Omega) = \big\{u\in L^2(\Omega;d^nx) \,\big|\, \Delta u\in L^2(\Omega;d^nx),\, 
\widehat{\gamma}_D u = \widehat{\gamma}_N u = 0\bigr\}.   
\end{eqnarray}
In particular, 
\begin{eqnarray}\lb{Yan-10} 
- \Delta_{min} = (- \Delta_{max})^*\, \mbox{ and }\, 
- \Delta_{max} = (- \Delta_{min})^*, 
\end{eqnarray}
with 
\begin{equation}
\dom(- \Delta_{min}) = H^2_0(\Omega).   \lb{8.46}
\end{equation}
\end{theorem}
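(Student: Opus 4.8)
The plan is to prove the key equality \eqref{Yan-8B}, since \eqref{Yan-10} and the identification of $\dom(-\Delta_{min})$ with $H^2_0(\Omega)$ then follow immediately from Corollary \ref{Max-M2}. By \eqref{Yan-8A} the left-to-right inclusion in \eqref{Yan-8B} is automatic, so the whole content lies in showing: if $u \in L^2(\Omega;d^nx)$ with $\Delta u \in L^2(\Omega;d^nx)$ and $\widehat\gamma_D u = \widehat\gamma_N u = 0$, then $u \in H^2_0(\Omega)$.

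The strategy is to decompose $u$ into pieces governed by boundary value problems whose solution operators are now known to land in $H^2(\Omega)$ thanks to Lemma \ref{Bjk}. First I would reduce to the operator $(-\Delta+1)$, setting $f := (-\Delta+1)u \in L^2(\Omega;d^nx)$; since $1 \notin \si(-\Delta_{D,\Om})$ (the Dirichlet Laplacian is strictly positive by Theorem \ref{t2.5}), one has the bounded solution operator and quasi-convexity gives $(-\Delta_{D,\Om}+I_\Om)^{-1} f \in \dom(-\Delta_{D,\Om}) \subset H^2(\Omega)$. Write $u_D := (-\Delta_{D,\Om}+I_\Om)^{-1} f$, so $u_D \in H^2(\Omega) \cap H^1_0(\Omega)$, $(-\Delta+1)u_D = f$, $\gamma_D u_D = 0$, and set $v := u - u_D$. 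Then $v \in L^2(\Omega;d^nx)$, $(-\Delta+1)v = 0$, and by compatibility of $\widehat\gamma_D$ with $\gamma_D$ on $H^2$ functions (Theorem \ref{New-T-tr}, \eqref{Tan-C11}) together with the hypothesis $\widehat\gamma_D u = 0$, we get $\widehat\gamma_D v = 0$. The task is now reduced to showing that a harmonic-type function ($(-\Delta+1)v=0$) in $L^2$ with vanishing $\widehat\gamma_D$ is in fact zero. This should follow from uniqueness for the Dirichlet problem: pair $\widehat\gamma_D v$ against the dense range of $\gamma_N$ on $H^2(\Omega)\cap H^1_0(\Omega)$ using the integration-by-parts formula \eqref{Tan-C12}, concluding $(v, \Delta w)_{L^2} = (\Delta v, w)_{L^2}$ for all such $w$; since $\Delta v = v$, running this over a suitable class of $w$ (e.g., $w = (-\Delta_{N,\Om}+I_\Om)^{-1}\phi$, which lies in $H^2(\Omega)$ by Lemma \ref{Bjk}, but one must check $\gamma_D w$ need not vanish — alternatively use the solvability of the Dirichlet problem with data matching $v$) forces $v = 0$, hence $u = u_D \in H^2(\Omega)$.

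Once $u \in H^2(\Omega)$ is established, I would invoke the trace theory for $H^2$: by Theorem \ref{T-MMS}, $\gamma_2 u = (\gamma_D u, \gamma_N u)$, and the compatibility statements \eqref{Tan-C11} and \eqref{3an-C11} give $\gamma_D u = \widehat\gamma_D u = 0$ and $\gamma_N u = \widehat\gamma_N u = 0$. Then \eqref{Tan-C3} — the identification $\ker(\gamma_2) = \{u \in H^2(\Omega) \mid \gamma_D u = \gamma_N u = 0\} = H^2_0(\Omega)$ — yields $u \in H^2_0(\Omega)$, completing the proof of \eqref{Yan-8B}. Finally, \eqref{Yan-10} is precisely the equality case in Corollary \ref{Max-M2}, which holds iff \eqref{Yan-8} holds, and $\dom(-\Delta_{min}) = H^2_0(\Omega)$ is the definition \eqref{Yan-6}.

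The main obstacle I anticipate is the uniqueness argument for $v$ — i.e., showing that $(-\Delta+1)v = 0$ with $v \in L^2$ and $\widehat\gamma_D v = 0$ implies $v = 0$ — because $v$ has no a priori Sobolev regularity beyond $L^2$, so one cannot directly apply the classical $H^{1/2}$-Dirichlet uniqueness from Theorem \ref{t3.3}. The cleanest route is probably to instead work on the Neumann side first: decompose using $(-\Delta_{N,\Om}+I_\Om)^{-1}$ (also landing in $H^2$ by Lemma \ref{Bjk}) to kill $\widehat\gamma_N u$, reducing to a function with both traces involved, and then combine the two integration-by-parts formulas \eqref{Tan-C12} and \eqref{3an-C12}; the surjectivity of $\gamma_N$ on $H^2 \cap H^1_0$ onto $N^{1/2}(\dOm)$ (Lemma \ref{Lo-Tx}) and of $\gamma_D$ on $\{u \in H^2 \mid \gamma_N u = 0\}$ onto $N^{3/2}(\dOm)$ (Lemma \ref{3o-Tx}) should let one test against enough boundary data to force $v$ into $H^2_0$ directly. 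Care is needed to organize the decomposition so that at each stage the subtracted piece has the right trace vanishing and lands in $H^2$, which is exactly where quasi-convexity (Lemma \ref{Bjk}) is essential and where the argument would fail for a general Lipschitz domain.
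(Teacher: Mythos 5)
Your proposal is correct and follows essentially the same route as the paper's proof: subtract the Dirichlet-problem solution (which lies in $H^2(\Omega)\cap H^1_0(\Omega)$ by Lemma \ref{Bjk}), show the remainder $v$ vanishes by testing against $H^2(\Omega)\cap H^1_0(\Omega)$ functions via \eqref{Tan-C12}, and conclude with the trace identification $\ker(\gamma_2)=H^2_0(\Omega)$ from Theorem \ref{T-MMS}; your shift to $-\Delta+1$ in place of the paper's $-\Delta$ is immaterial. The uniqueness step you worry about needs no Neumann-side detour: taking $w=(-\Delta_{D,\Om}+I_\Om)^{-1}\phi$ for arbitrary $\phi\in L^2(\Om;d^nx)$ (these exhaust $H^2(\Omega)\cap H^1_0(\Omega)$ under quasi-convexity), formula \eqref{Tan-C12} with $\widehat{\gamma}_D v=0$ and $\Delta v=v$ yields $(\phi,v)_{L^2(\Om;d^nx)}=0$, hence $v=0$ — which is precisely the paper's argument, carried out there with $z=0$.
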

%%%%%%%%%%%%%%%%%%%%%%%%%%%%%%%%
\begin{proof}
As already pointed out before, the left-to-right inclusion in \eqref{Yan-8B}
holds under the mere assumption of Lipschitzianity for $\Omega$. 
To prove the opposite inclusion, consider $u\in L^2(\Omega;d^nx)$ 
satisfying $\Delta u\in L^2(\Omega;d^nx)$ and 
$\widehat{\gamma}_D u =\widehat{\gamma}_N u  =0$. 
Next, let $w\in H^1_0(\Omega)$ solve $\Delta w=\Delta u$ in $\Omega$. 
Then $w\in \dom (- \Delta_{D,\Om})\subset H^2(\Omega)$ by 
Lemma \ref{Bjk} and the current assumptions. Set $v:=u-w$ in $\Omega$ 
and notice that 
\begin{eqnarray}\lb{Yan-11} 
v\in L^2(\Omega;d^nx),\quad \Delta v=0\mbox{ in }\Omega,\quad
\widehat{\gamma}_D v         =0\mbox{ on }\partial\Omega.
\end{eqnarray}
We claim that these conditions imply that $v=0$. 
To show that any $v$ as in \eqref{Yan-11} necessarily vanishes, 
consider an arbitrary $f\in L^2(\Omega;d^nx)$ and let $u_f$ be the unique
solution of 
\begin{eqnarray}\lb{Yan-12} 
u_f\in H^1_0(\Omega),\quad \Delta u_f=f\, \mbox{ in } \,\Omega. 
\end{eqnarray}
Then $u_f\in \dom (- \Delta_{D,\Om})\subset H^2(\Omega)$ by 
Lemma \ref{Bjk}, so that $u_f\in H^2(\Omega)\cap H^1_0(\Omega)$. 
Then the integration by parts formula \eqref{Tan-C12} yields
\begin{align}\lb{Yan-13} 
( f,v)_{L^2(\Om;d^nx)} &= (\Delta u_f,v)_{L^2(\Om;d^nx)}
\nonumber\\ 
&= (u_f,\Delta v)_{L^2(\Om;d^nx)}
+{}_{N^{1/2}(\partial\Omega)}\langle\gamma_N u_f,\widehat{\gamma}_D v         
\rangle_{(N^{1/2}(\partial\Omega))^*}
\nonumber\\ 
&=0. 
\end{align}
Since $f\in L^2(\Omega;d^nx)$ was arbitrary, it follows that $v=0$, as
wanted. In turn, this implies that $u=w\in H^2(\Omega)$.
We recall that $u$ also satisfies $\widehat\gamma_D u =\widehat\gamma_N u =0$,  
or $\gamma_D u =\gamma_N u =0$, by Theorem \ref{New-T-tr} and 
Theorem \ref{3ew-T-tr}. Invoking Theorem \ref{T-MMS} we may then finally 
conclude that $u\in \ker (\gamma_2) = H^2_0(\Omega)$. 
\end{proof} 
%%%%%%%%%%%%%%%%%%%%%%%%%%%%%%%

It is easy to see that, for any given open set $\Omega\subseteq\bbR^n$, 
and any $z\in\bbC$, the orthogonal complement of 
$\{(-\Delta-\ol{z})\varphi \,|\, \varphi\in C^\infty_0(\Om)\}$ in $L^2(\Om;d^nx)$ 
is $\ker  (-\Delta_{max}-zI_{\Omega})$. This implies that for every
$z\in\bbC$ one has 
\begin{eqnarray}\lb{Man-1} 
L^2(\Omega;d^nx)=\ker  (-\Delta_{max}-zI_{\Omega})\oplus   
\ol{\{(-\Delta-\ol{z})\varphi \,|\, \varphi\in C^\infty_0(\Om)\}},
\end{eqnarray}
where the closure is taken in $L^2(\Om;d^nx)$. Below, we provide a more 
precise version of this result under stronger assumptions on $\Om$ and $z$. 

%%%%%%%%%%%%%%%%%%%%%%%%%%%%%%% 
\begin{theorem}\lb{th-CL}
Assume Hypothesis \ref{h.Conv} and suppose that 
$z\in\bbC\backslash \si(-\Delta_{D,\Om})$. 
Then $\ran (- \Delta_{min}-\ol{z}I_{\Om})$ is closed and 
\begin{eqnarray}\lb{Man-2} 
L^2(\Omega;d^nx)=\ker  (-\Delta_{max}-zI_{\Omega})\oplus   
\big\{(-\Delta-\ol{z})u \,\big|\, u\in H^2_0(\Om)\big\}. 
\end{eqnarray}
\end{theorem}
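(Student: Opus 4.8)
The plan is to deduce Theorem \ref{th-CL} from Theorem \ref{T-DD1} together with the fact that $z \notin \si(-\Delta_{D,\Om})$. First I would rewrite the claim in operator-theoretic language: by Theorem \ref{T-DD1} we have $-\Delta_{min} = (-\Delta_{max})^*$ and $-\Delta_{max} = (-\Delta_{min})^*$ with $\dom(-\Delta_{min}) = H^2_0(\Om)$, so the two summands in \eqref{Man-2} are exactly $\ker(-\Delta_{max} - zI_\Om)$ and $\ran(-\Delta_{min} - \ol{z}I_\Om)$. The general orthogonality relation \eqref{Man-1}, applied with $C^\infty_0(\Om)$ replaced by its $H^2$-closure $H^2_0(\Om)$ (which is legitimate since $-\Delta$ acts continuously from $H^2_0(\Om)$ to $L^2(\Om;d^nx)$, so the closures of the two ranges in $L^2$ coincide), already gives
\begin{equation}\lb{plan-1}
L^2(\Om;d^nx) = \ker(-\Delta_{max} - zI_\Om) \oplus \ol{\ran(-\Delta_{min} - \ol{z}I_\Om)}.
\end{equation}
Thus the entire content of the theorem reduces to showing that $\ran(-\Delta_{min} - \ol{z}I_\Om)$ is already closed, i.e. that the overbar in \eqref{plan-1} is superfluous.

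To prove closedness of the range, the key point is that $-\Delta_{min} - \ol{z}I_\Om$ is bounded below on its domain $H^2_0(\Om)$ in the graph norm, once $\ol{z} \notin \si(-\Delta_{D,\Om})$. Concretely, for $u \in H^2_0(\Om) = \dom(-\Delta_{min}) \subseteq \dom(-\Delta_{D,\Om})$, the Dirichlet resolvent gives $u = (-\Delta_{D,\Om} - \ol{z}I_\Om)^{-1}(-\Delta u - \ol{z}u)$, so that $\|u\|_{L^2(\Om;d^nx)} \le \|(-\Delta_{D,\Om} - \ol{z}I_\Om)^{-1}\|_{\cB(L^2)} \, \|(-\Delta_{min} - \ol{z}I_\Om)u\|_{L^2(\Om;d^nx)}$. (Here one uses that $u \in H^2_0(\Om) \subseteq H^1_0(\Om)$ with $\Delta u \in L^2(\Om;d^nx)$ places $u$ in $\dom(-\Delta_{D,\Om})$, by \eqref{2.39}.) This is precisely the estimate needed: if $(-\Delta_{min} - \ol{z}I_\Om)u_j = f_j \to f$ in $L^2(\Om;d^nx)$, then $\{u_j\}$ is Cauchy in $L^2(\Om;d^nx)$, hence $u_j \to u$ for some $u \in L^2(\Om;d^nx)$, and then $\Delta u_j = -f_j - \ol{z} u_j \to -f - \ol{z}u$ in $L^2(\Om;d^nx)$, so $u \in \dom(-\Delta_{max})$ with $(-\Delta - \ol{z})u = f$; finally $u = (-\Delta_{D,\Om} - \ol{z}I_\Om)^{-1} f \in \dom(-\Delta_{D,\Om}) \subset H^2(\Om)$ by Lemma \ref{Bjk}, and since $\widehat\gamma_D u = 0$ one checks $u \in H^2(\Om) \cap H^1_0(\Om)$; to land in $H^2_0(\Om)$ one still needs $\gamma_N u = 0$, which however follows from $\widehat\gamma_N u = \widehat\gamma_N(-\Delta_{D,\Om} - \ol{z}I_\Om)^{-1}f$ and the observation that $f = \lim f_j \in \ol{\ran(-\Delta_{min} - \ol{z}I_\Om)}$ forces $f \perp \ker(-\Delta_{max} - zI_\Om)$ and hence, by the generalized integration-by-parts formulas \eqref{Tan-C12}, \eqref{3an-C12}, $\widehat\gamma_N u = 0$. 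Thus $u \in H^2_0(\Om) = \dom(-\Delta_{min})$ and $f \in \ran(-\Delta_{min} - \ol{z}I_\Om)$.

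An alternative, cleaner route that I would actually prefer: since $\ol{z} \notin \si(-\Delta_{D,\Om})$ and $-\Delta_{min} \subseteq -\Delta_{D,\Om}$, the operator $-\Delta_{min} - \ol{z}I_\Om$ is injective with bounded inverse $(-\Delta_{D,\Om} - \ol{z}I_\Om)^{-1}\big|_{\ran(-\Delta_{min} - \ol{z}I_\Om)}$; a densely defined closed operator whose inverse (on its range) is bounded automatically has closed range. The closedness of $-\Delta_{min}$ itself follows because $-\Delta_{min} = (-\Delta_{max})^*$ by Theorem \ref{T-DD1} and adjoints are always closed. Combining this with \eqref{plan-1} yields \eqref{Man-2}.

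I expect the main obstacle to be the bookkeeping in identifying the limit $u$ as an element of $\dom(-\Delta_{min}) = H^2_0(\Om)$ rather than merely of $\dom(-\Delta_{max})$: one must verify that the limit, which a priori only satisfies $\widehat\gamma_D u = 0$ (from being in the range of the Dirichlet resolvent), also satisfies $\widehat\gamma_N u = 0$, and this is exactly where the orthogonality $f \perp \ker(-\Delta_{max} - zI_\Om)$ and the trace characterization \eqref{Yan-8B} of $H^2_0(\Om)$ from Theorem \ref{T-DD1} must be invoked. Everything else — the a priori estimate, extracting the limit, and passing $\Delta$ to the limit in the distributional sense — is routine.
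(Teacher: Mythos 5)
Your proposal is correct, and both of your routes stay within the paper's own orbit. Your detailed Route A is essentially the paper's proof: the paper likewise reduces \eqref{Man-2} to showing that the $L^2$-closure of $\{(-\Delta-\ol{z})\varphi \,|\, \varphi\in C^\infty_0(\Om)\}$ equals $(-\Delta-\ol{z})H^2_0(\Om)$ (your passage from $C^\infty_0(\Om)$ to $H^2_0(\Om)$ in \eqref{Man-1} is exactly the easy inclusion in \eqref{Man-3}), and it too recovers the limit as $u=(-\Delta_{D,\Om}-\ol{z}I_\Om)^{-1}w\in H^2(\Om)\cap H^1_0(\Om)$ via Lemma \ref{Bjk}. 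The only genuine divergence is how $\widehat\gamma_N u=0$ is obtained: the paper simply invokes the boundedness of $\widehat\gamma_N$ on $\dom(-\Delta_{max})$ in the graph norm (Theorem \ref{3ew-T-tr}), so that $0=\widehat\gamma_N \varphi_j\to\widehat\gamma_N u$, whereas you go through $f\perp\ker(-\Delta_{max}-zI_\Om)$ and \eqref{Tan-C12}; your variant works, but note that deducing $\gamma_N u=0$ from the vanishing of ${}_{N^{1/2}(\dOm)}\langle\gamma_N u,\widehat\gamma_D v\rangle_{(N^{1/2}(\dOm))^*}$ over all $v\in\ker(-\Delta_{max}-zI_\Om)$ additionally requires the ontoness (or at least density of the range) of $\widehat\gamma_D$ on that kernel, i.e.\ Corollary \ref{New-CV2}, together with the reflexivity of $N^{1/2}(\dOm)$ from Lemma \ref{L-refN} -- make those citations explicit, since this is where $z\notin\si(-\Delta_{D,\Om})$ and quasi-convexity enter a second time, and it is precisely the detour the paper's trace-continuity argument avoids. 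Your preferred Route B (closedness of $-\Delta_{min}=(-\Delta_{max})^*$ from Theorem \ref{T-DD1} plus the lower bound $\|u\|_{L^2(\Om;d^nx)}\le C\|(-\Delta_{min}-\ol{z}I_\Om)u\|_{L^2(\Om;d^nx)}$ furnished by the Dirichlet resolvent) is in the same spirit as the alternative the paper itself records in Remarks \ref{fdg} and \ref{fdgH}, there phrased via surjectivity of $-\Delta_{max}-zI_\Om$ and the closed range theorem: the abstract route buys brevity, while the paper's sequential argument buys an explicit identification of the preimage and a trace-convergence template it reuses later (e.g.\ in the proof of Corollary \ref{C-Da}).
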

%%%%%%%%%%%%%%%%%%%%%%%%%%%%%%% 
\begin{proof}
By the previous discussion it suffices to check that 
\begin{eqnarray}\lb{Man-3}
\ol{\{(-\Delta-\ol{z})\varphi \,|\, \varphi\in C^\infty_0(\Om)\}}
= \big\{(-\Delta-\ol{z})u \,\big|\, u\in H^2_0(\Om)\big\}.
\end{eqnarray}
Pick an arbitrary 
$u\in H^2_0(\Om)$ and select a sequence $\{\varphi_j\}_{j\in\bbN}$ 
such that $\varphi_j\in C^\infty_0(\Om)$ for every $j\in\bbN$ and 
$\varphi_j\to u$ in $H^2(\Om)$ as $j\to\infty$. Then 
$(-\Delta-\ol{z})\varphi_j\to (-\Delta -\ol{z})u$ as $j\to\infty$, 
proving the right-to-left inclusion in \eqref{Man-3}.

In the opposite direction, assume that $w\in L^2(\Om;d^nx)$ has the property
that there exists a sequence $\{\varphi_j\}_{j\in\bbN}$ such that 
$\varphi_j\in C^\infty_0(\Om)$ for every $j\in\bbN$ and 
$(-\Delta-\ol{z})\varphi_j\to w$ in $L^2(\Om;d^nx)$ as $j\to\infty$. Then 
\begin{equation}\lb{Man-4}
\varphi_j=(-\Delta_{D,\Om}-\ol{z}I_{\Om})^{-1}[(-\Delta-\ol{z})\varphi_j]
\to  (-\Delta_{D,\Om}-\ol{z}I_{\Om})^{-1}w\, \mbox{ in }\, 
L^2(\Om;d^nx)
\end{equation}
as $j\to\infty$. Thus, if we set 
$u:=(- \Delta_{D,\Om}-\ol{z}I_{\Om})^{-1}w \in \dom (- \Delta_{D\Om})$, then
\begin{eqnarray}\lb{Man-5}
\varphi_j\to  u\, \mbox{ and }\, 
(-\Delta-\ol{z})\varphi_j\to (-\Delta-\ol{z})u
\, \mbox{ in }\, L^2(\Om;d^nx)\, \mbox{ as }\, j\to\infty.
\end{eqnarray}
Next, from \eqref{Man-5} and the continuity of the map in \eqref{3an-C10}
we may deduce that
\begin{eqnarray}\lb{Man-6}
0=\widehat\gamma_N \varphi_j \to \widehat\gamma_N u 
\, \mbox{ in }\, \bigl(N^{3/2}(\dOm)\bigr)^*\, \mbox{ as }\, j\to\infty.
\end{eqnarray}
This shows that $\widehat\gamma_N u =0$, hence $u\in H^2_0(\Om)$ by 
Theorem \ref{T-DD1}. Consequently, $w=(-\Delta-\ol{z})u$ for some 
$u\in H^2_0(\Om)$, proving the left-to-right inclusion in \eqref{Man-3}.
\end{proof}
%%%%%%%%%%%%%%%%%%%%%%%%%%%%%%% 

%%%%%%%%%%%%%%%%%%%%%%%%%%%%%%% 
\begin{remark}\lb{fdg}
An alternative proof of Theorem \ref{th-CL} is to observe that, 
for every $z\in\bbC\backslash \si(-\Delta_{D,\Om})$, the solvability 
of the Poisson problem with a homogeneous Dirichlet boundary condition
for $-\Delta-z$ implies that $\ran  (-\Delta_{max}-zI_{\Om})$
equals $L^2(\Om;d^nx)$ and hence is closed. Then Theorem \ref{T-DD1} 
along with Banach's Closed Range Mapping Theorem
(cf.\ \cite[Theorem IV.2.5.13]{Ka80}) yield that 
the range of $-\Delta_{min}-\ol{z}I_{\Om}=(-\Delta_{max}-zI_{\Om})^*$ is 
also closed. 
\end{remark}
%%%%%%%%%%%%%%%%%%%%%%%%%%%%%%%
 
%%%%%%%%%%%%%%%%%%%%%%%%%%%%%%% 
\begin{remark}\lb{fdgH}
As a corollary of Theorem \ref{th-CL}, 
for every $z\in\bbC\backslash \si(-\Delta_{D,\Om})$, 
the operator $-\Delta_{min}-\ol{z}I_{\Om}$ maps its domain 
$H^2_0(\Om)$ isomorphically onto its range. 
\end{remark}
%%%%%%%%%%%%%%%%%%%%%%%%%%%%%%% 

%%%%%%%%%%%%%%%%%%%%%%%%%%%%%%% 
\begin{lemma}\lb{Lh-CL}
Assume Hypothesis \ref{h.Conv} and that 
$z\in\bbC\backslash \si(-\Delta_{D,\Om})$. Then 
\begin{eqnarray}\lb{MaH-1} 
H^2(\Omega) = \big[H^2(\Omega)\cap H^1_0(\Om)\big]\dotplus   
\big[H^2(\Omega)\cap\ker  (-\Delta_{max}-zI_{\Omega})\big]. 
\end{eqnarray}
\end{lemma}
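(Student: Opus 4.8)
The plan is to show that the two subspaces on the right-hand side of \eqref{MaH-1} have trivial intersection and that their algebraic sum exhausts $H^2(\Omega)$; the claim that the sum is direct ($\dotplus$) then follows. For the intersection, suppose $u\in H^2(\Omega)\cap H^1_0(\Omega)$ and $(-\Delta-z)u=0$ in $\Omega$. Then $u$ is a solution of the homogeneous Dirichlet problem \eqref{3.31} (with $s=3/2$, say, or simply $u\in\dom(-\Delta_{D,\Om})$ since $u\in H^1_0(\Om)$ and $\Delta u = -zu\in L^2(\Om;d^nx)$), with boundary data $\gamma_D u=0$. Since $z\notin\si(-\Delta_{D,\Om})$, the uniqueness part of Theorem \ref{t3.3} (or simply injectivity of $-\Delta_{D,\Om}-zI_\Om$) forces $u=0$. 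Hence
\begin{equation}
\big[H^2(\Omega)\cap H^1_0(\Om)\big]\cap\big[H^2(\Omega)\cap\ker(-\Delta_{max}-zI_\Om)\big]=\{0\}.
\end{equation}

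For the sum, let $u\in H^2(\Omega)$ be arbitrary. I would use the solvability of the inhomogeneous Dirichlet problem for $-\Delta-z$: since $\Delta u\in L^2(\Om;d^nx)\subset H^{-1}(\Om)$, by Theorem \ref{t3.XD} there is a unique $w\in H^1_0(\Om)$ with $(-\Delta-z)w=(-\Delta-z)u$ in $\Om$; equivalently $w=(-\Delta_{D,\Om}-zI_\Om)^{-1}[(-\Delta-z)u]$. By Lemma \ref{Bjk} (here Hypothesis \ref{h.Conv} is used in an essential way) one has $\dom(-\Delta_{D,\Om})\subset H^2(\Omega)$, so in fact $w\in H^2(\Omega)\cap H^1_0(\Om)$. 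Set $v:=u-w$. Then $v\in H^2(\Omega)$ and $(-\Delta-z)v=0$ in $\Om$, so $v\in H^2(\Omega)\cap\ker(-\Delta_{max}-zI_\Om)$, and $u=w+v$ gives the desired decomposition. Combining this with the trivial-intersection statement yields \eqref{MaH-1}.

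The step requiring the most care — and the only place the quasi-convexity hypothesis enters — is the assertion $w\in H^2(\Omega)$, i.e., the $H^2$-regularity $\dom(-\Delta_{D,\Om})\subset H^2(\Omega)$ furnished by Lemma \ref{Bjk}. For a general bounded Lipschitz domain this fails (cf.\ Remark \ref{B-gh} on polygonal domains with re-entrant corners, where the Dirichlet solution operator only lands in $H^s$ for $s<1+\pi/\omega_{\max}<2$), so the argument genuinely exploits that $\Omega$ is quasi-convex. Everything else is soft: injectivity of $-\Delta_{D,\Om}-zI_\Om$ on its domain for $z\notin\si(-\Delta_{D,\Om})$, and the surjectivity of $-\Delta_{max}-zI_\Om$ onto $L^2(\Om;d^nx)$ (equivalently, solvability of the inhomogeneous Dirichlet problem, Theorem \ref{t3.XD}). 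One small bookkeeping point: to be sure that $w$ as defined via the resolvent genuinely solves $(-\Delta-z)w=(-\Delta-z)u$ in the distributional sense on $\Om$, one invokes that $(-\Delta_{D,\Om}-zI_\Om)^{-1}$ extends to $\cB(H^{-1}(\Om),H^1_0(\Om))$ and coincides with $(-\wti\Delta_{D,\Om}-z\wti I_\Om)^{-1}$ (the last part of Theorem \ref{t3.XD}), so that applying $-\Delta-z$ recovers the right-hand side. This completes the proof.
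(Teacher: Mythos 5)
Your argument is correct and follows essentially the same route as the paper: directness of the sum from $z\notin\si(-\Delta_{D,\Om})$, and the decomposition $u=w+(u-w)$ with $w=(-\Delta_{D,\Om}-zI_\Om)^{-1}[(-\Delta-z)u]\in H^2(\Omega)\cap H^1_0(\Om)$ via the $H^2$-regularity of Lemma \ref{Bjk}, so that $u-w\in H^2(\Omega)\cap\ker(-\Delta_{max}-zI_\Om)$. You merely spell out the existence and regularity of $w$ that the paper's terse proof leaves implicit, which is a fair elaboration rather than a different method.
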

%%%%%%%%%%%%%%%%%%%%%%%%%%%%%%% 
\begin{proof}
Indeed, the membership of $z$ to $\bbC\backslash \si(-\Delta_{D,\Om})$
ensures that the sum in \eqref{MaH-1} is direct. In addition, if 
$u\in H^2(\Omega)$ is arbitrary and $v\in H^2(\Omega)\cap H^1_0(\Om)$ 
satisfies $(-\Delta -z)v=(-\Delta -z)u$ in $\Om$, then $u=v+(u-v)$ and
$u-v\in H^2(\Omega)\cap\ker  (-\Delta_{max}-zI_{\Omega})$. 
From this, the desired conclusion follows easily. 
\end{proof}
%%%%%%%%%%%%%%%%%%%%%%%%%%%%%%% 

We recall that, given a Hilbert space $\cH$ with inner product 
$(\dott,\dott)_{\mathcal{H}}$, a linear operator 
$S: \dom (S)\subseteq \cH\to \cH$ 
is called {\it nonnegative} provided $(Su,u)_{\cH}\geq 0$ for every 
$u\in \dom (S)$. A simple argument based on polarization formula then yields (the well-known fact)
that any nonnegative operator $S$ is {\it symmetric}, that is, 
$(Su,v)_{\cH}=(u,Sv)_{\cH}$ for every $u,v\in \dom (S)$.

%%%%%%%%%%%%%%%%%%%%%%%%%%%%%%% 
\begin{corollary}\lb{C-Da}
Assume Hypothesis \ref{h.Conv}. Then $-\Delta_{min}$ is a densely defined, 
closed, nonnegative $($and hence, symmetric\,$)$ operator. Furthermore,  
\begin{eqnarray}\lb{Pos-3}
\ol{- \Delta\big|_{C^\infty_0(\Om)}} = - \Delta_{min}. 
\end{eqnarray}
\end{corollary}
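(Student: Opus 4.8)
The plan is to deduce everything from results already in place. First I would record that $-\Delta_{min}$ is densely defined: its domain is $H^2_0(\Om)$, which contains $C^\infty_0(\Om)$, and the latter is dense in $L^2(\Om;d^nx)$. For closedness I would invoke Corollary \ref{Max-M2} together with Theorem \ref{T-DD1}: under Hypothesis \ref{h.Conv} one has $-\Delta_{min}=(-\Delta_{max})^*$, and an adjoint operator is automatically closed. (Equivalently, closedness also drops out once the identity \eqref{Pos-3} is established, a closure being closed by construction, so one may present the argument in either order.)

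Next I would establish nonnegativity. Fix $u\in\dom(-\Delta_{min})=H^2_0(\Om)$ and, by the very definition of $H^2_0(\Om)$ as the closure of $C^\infty_0(\Om)$ in $H^2(\Om)$, choose $\{\varphi_j\}_{j\in\bbN}\subset C^\infty_0(\Om)$ with $\varphi_j\to u$ in $H^2(\Om)$. Then $\varphi_j\to u$ and $-\Delta\varphi_j\to-\Delta u$ in $L^2(\Om;d^nx)$, so that $(-\Delta_{min}u,u)_{L^2(\Om;d^nx)}=\lim_{j\to\infty}(-\Delta\varphi_j,\varphi_j)_{L^2(\Om;d^nx)}$. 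For each $j$, an integration by parts (with no boundary contribution, since $\supp(\varphi_j)$ is a compact subset of $\Om$) gives $(-\Delta\varphi_j,\varphi_j)_{L^2(\Om;d^nx)}=\|\nabla\varphi_j\|_{L^2(\Om;d^nx)}^2\geq 0$; passing to the limit yields $(-\Delta_{min}u,u)_{L^2(\Om;d^nx)}\geq 0$. Hence $-\Delta_{min}$ is nonnegative, and by the elementary polarization fact recalled just above the statement (every nonnegative operator is symmetric) it is therefore symmetric.

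It then remains to prove \eqref{Pos-3}. Since $C^\infty_0(\Om)\subseteq H^2_0(\Om)$ and the two operators agree on $C^\infty_0(\Om)$, the operator $-\Delta\big|_{C^\infty_0(\Om)}$ is a restriction of the closed operator $-\Delta_{min}$; taking closures gives $\ol{-\Delta\big|_{C^\infty_0(\Om)}}\subseteq -\Delta_{min}$. For the reverse inclusion, let $u\in H^2_0(\Om)=\dom(-\Delta_{min})$ and again pick $\varphi_j\in C^\infty_0(\Om)$ with $\varphi_j\to u$ in $H^2(\Om)$; then $\varphi_j\to u$ and $-\Delta\varphi_j\to-\Delta u$ in $L^2(\Om;d^nx)$, so $(u,-\Delta u)$ belongs to the graph of $\ol{-\Delta\big|_{C^\infty_0(\Om)}}$, that is, $-\Delta_{min}\subseteq\ol{-\Delta\big|_{C^\infty_0(\Om)}}$. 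Combining the two inclusions yields \eqref{Pos-3}.

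I expect the only genuinely nonsmooth-domain-sensitive ingredient — and thus the conceptual crux, although it has already been carried out — to be the closedness of $-\Delta_{min}$, i.e., the identification $\dom\big((-\Delta_{max})^*\big)=H^2_0(\Om)$ from Theorem \ref{T-DD1}, which in turn rests on the $H^2$-regularity of the Dirichlet (and Neumann) Laplacian on quasi-convex domains supplied by Lemma \ref{Bjk}. Everything else is soft functional analysis together with the definition of $H^2_0(\Om)$ as the $H^2$-closure of $C^\infty_0(\Om)$, and requires no further work.
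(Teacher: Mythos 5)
Your proposal is correct and follows essentially the paper's own route: everything hinges on Theorem \ref{T-DD1} (equivalently, $-\Delta_{min}=(-\Delta_{max})^*$ with $\dom(-\Delta_{min})=H^2_0(\Om)$), and the reverse inclusion in \eqref{Pos-3} is obtained exactly as in the paper from the $H^2$-density of $C^\infty_0(\Om)$ in $H^2_0(\Om)$. The only cosmetic difference is that for $\ol{-\Delta\big|_{C^\infty_0(\Om)}}\subseteq -\Delta_{min}$ you invoke closedness of $-\Delta_{min}$ abstractly, whereas the paper re-runs the trace-convergence argument ($\widehat\gamma_D u_j\to\widehat\gamma_D u$, $\widehat\gamma_N u_j\to\widehat\gamma_N u$) before applying \eqref{Yan-8B} — the same content, packaged differently.
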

%%%%%%%%%%%%%%%%%%%%%%%%%%%%%%% 
\begin{proof} 
The first claim in the statement is a direct consequence of Theorem \ref{T-DD1}.
As for \eqref{Pos-3}, let us temporarily denote $- \Delta_0 = - \Delta\big|_{C^\infty_0(\Om)}$. Then 
\begin{align}\lb{Pos-4}
& u\in \dom (- \Delta_0)  \no \\
& \quad \text{ if and only if } \,
\left\{
\begin{array}{l}
\mbox{there exist }v\in L^2(\Om;d^nx)\mbox{ and }
u_j\in C^\infty_0(\Om),\,j\in\bbN,\mbox{ such that }
\\[4pt]
u_j\to u\, \mbox{ and }\, \Delta u_j\to v\, \mbox{ in }\,L^2(\Om;d^nx)
\, \mbox{ as }\,j\to\infty.
\end{array}
\right.
\end{align}
Thus, if $u\in \dom (- \Delta_0)$ and $v$, $\{u_j\}_{j\in\bbN}$ are 
as in the right-hand side of \eqref{Pos-4}, then 
$\Delta u=v$ in the sense of distributions in $\Omega$, and 
\begin{eqnarray}\lb{Pos-5}
\begin{array}{l}
0=\widehat\gamma_D u_j \to \widehat\gamma_D u 
\, \mbox{ in }\, \bigl(N^{1/2}(\dOm)\bigr)^* \, 
\mbox{ as }\, j\to\infty,
\\[6pt]
0=\widehat\gamma_N u_j \to \widehat\gamma_N u 
\, \mbox{ in }\, \bigl(N^{1/2}(\dOm)\bigr)^* \, 
\mbox{ as }\, j\to\infty,
\end{array}
\end{eqnarray}
by Theorems \ref{New-T-tr} and \ref{3ew-T-tr}.
Consequently, $u\in \dom (-\Delta_{max})$ satisfies
$\widehat\gamma_D u = 0$ and $\widehat\gamma_N u = 0$. 
Hence, $u\in H^2_0(\Om)= \dom (- \Delta_{min})$ by Theorem \ref{T-DD1}
and the current assumptions on $\Omega$. This shows that 
$- \Delta_0 \subseteq - \Delta_{min}$. The converse inclusion readily follows 
from the fact that any $u\in H^2_0(\Om)$ is the limit in 
$H^2(\Om)$ of a sequence of test functions in $\Omega$. 
\end{proof} 
%%%%%%%%%%%%%%%%%%%%%%%%%%%%%%% 

%%%%%%%%%%%%
\begin{remark} \lb{r8.20}
By Remark \ref{r7.3}, \eqref{Yan-10} in Theorem \ref{T-DD1} and 
Corollary \ref{C-Da} actually hold for all open, bounded domains 
$\Omega \subset \bbR^n$, $n \geq 2$. 
\end{remark} 
%%%%%%%%%%%%

%%%%%%%%%%%%%%%%%%%%%%%%%%%%%%% 
\begin{lemma}\lb{J-Sa}
Assume Hypothesis \ref{h.Conv}. Then the domain of $-\Delta_{max}$ has
the description 
\begin{eqnarray}\lb{Pos-3W}
\dom (- \Delta_{max}) = \big[H^2(\Om)\cap H^1_0(\Om)\big]\dotplus   
\ker (- \Delta_{max}).
\end{eqnarray}
\end{lemma}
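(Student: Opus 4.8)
The plan is to establish \eqref{Pos-3W} as a direct sum decomposition by treating the two inclusions separately, with the $z=0$ specialization of Lemma \ref{Lh-CL} as the natural template; the only genuine issue is that $0$ may lie in $\si(-\Delta_{D,\Om})$, so Lemma \ref{Lh-CL} cannot be quoted verbatim. First I would verify directness: if $v\in H^2(\Om)\cap H^1_0(\Om)$ also satisfies $\Delta v=0$ in $\Om$, then $v\in\dom(-\Delta_{D,\Om})$ with $-\Delta_{D,\Om}v=0$, and since $-\Delta_{D,\Om}$ is strictly positive (Theorem \ref{t2.5}), we get $v=0$. Hence $\big[H^2(\Om)\cap H^1_0(\Om)\big]\cap\ker(-\Delta_{max})=\{0\}$, so the sum on the right-hand side is direct, and each summand is clearly contained in $\dom(-\Delta_{max})$, giving the right-to-left inclusion.

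For the left-to-right inclusion, let $u\in\dom(-\Delta_{max})$, so $f:=\Delta u\in L^2(\Om;d^nx)$. The strict positivity of $-\Delta_{D,\Om}$ (so in particular $0\in\rho(-\Delta_{D,\Om})$) means the inhomogeneous Dirichlet problem $\Delta v=f$ in $\Om$, $v\in H^1_0(\Om)$, has a unique solution $v=-(-\Delta_{D,\Om})^{-1}(-f)$, and by Lemma \ref{Bjk} (which applies since Hypothesis \ref{h.Conv} is in force) one has $v\in\dom(-\Delta_{D,\Om})\subset H^2(\Om)$; thus $v\in H^2(\Om)\cap H^1_0(\Om)$. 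Then $w:=u-v\in\dom(-\Delta_{max})$ satisfies $\Delta w=0$ in $\Om$, i.e. $w\in\ker(-\Delta_{max})$, and $u=v+w$ exhibits the desired decomposition. This completes the proof.

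The step I expect to be the main (modest) obstacle is precisely the point just used silently: one must know that $0\notin\si(-\Delta_{D,\Om})$ so that the Dirichlet Laplacian is boundedly invertible on all of $L^2(\Om;d^nx)$ — this is supplied by the strict positivity asserted in Theorem \ref{t2.5} — and one must invoke the $H^2$-regularity $\dom(-\Delta_{D,\Om})\subset H^2(\Om)$ from Lemma \ref{Bjk}, which is exactly where the quasi-convexity hypothesis (as opposed to mere Lipschitz regularity, cf.\ Remark \ref{B-gh}) is essential; without it, the solution $v$ of the Poisson problem need not land in $H^2(\Om)$ and the asserted splitting fails.
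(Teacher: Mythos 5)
Your proof is correct and follows essentially the same route as the paper: solve the inhomogeneous Dirichlet problem $\Delta v=\Delta u$ with $v\in H^1_0(\Om)$, use Lemma \ref{Bjk} to place $v$ in $H^2(\Om)\cap H^1_0(\Om)$, write $u=v+(u-v)$ with $u-v\in\ker(-\Delta_{max})$, and derive directness from $0\notin\si(-\Delta_{D,\Om})$. Your write-up merely makes explicit the strict positivity of $-\Delta_{D,\Om}$ (Theorem \ref{t2.5}) and the role of quasi-convexity, both of which the paper uses implicitly.
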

%%%%%%%%%%%%%%%%%%%%%%%%%%%%%% 
\begin{proof} 
The right-to-left inclusion in \eqref{Pos-3W} is obvious. As for the
opposite one, if $u\in \dom (- \Delta_{max})$ and 
$v\in H^2(\Om)\cap H^1_0(\Om)$ solves the inhomogeneous Dirichlet
problem $\Delta v=\Delta u$ in $\Omega$, then $u=v+(u-v)$ with 
$(u-v) \in \ker (- \Delta_{max})$. Finally, the fact that the sum 
in \eqref{Pos-3W} is direct is a consequence of the fact that
$0\notin\si(- \Delta_{D,\Om})$.
\end{proof}
%%%%%%%%%%%%%%%%%%%%%%%%%%%%%% 

%%%%%%%%%%%%%%%%%%%%%%%%%%%%%%%%%%%%%%
%%%%%%%%%%%%%%%%%%%%%%%%%%%%%%%%%%%%%%
\section{The Friedrichs and Krein--von Neumann Extensions: % \\ 
Some Abstract Results}
\lb{s9}
%%%%%%%%%%%%%%%%%%%%%%%%%%%%%%%%%%%%%%
%%%%%%%%%%%%%%%%%%%%%%%%%%%%%%%%%%%%%%

This short section is devoted to a brief summary of abstract results on the Friedrichs and 
Krein--von Neumann extensions of a closed symmetric operator $S$ in some separable, 
complex Hilbert space $\cH$.

A linear operator $S:\dom(S)\subseteq\cH\to\cH$, is called {\it symmetric}, if
$(u,Sv)_\cH=(Su,v)_\cH$, $u,v\in \dom (S)$. If $\dom(S)=\cH$, the classical 
Hellinger--Toeplitz theorem guarantees that $S\in\cB(\cH)$, in which situation $S$ is readily seen to be self-adjoint. In general, however, symmetry is a considerably weaker property than self-adjointness and a classical problem in functional analysis is that of determining all self-adjoint extensions of a given closed symmetric operator of equal and nonzero deficiency indices. In this manuscript we will be interested in this question within the class of densely defined (i.e., $\ol{\dom(S)}=\cH$), strictly positive, closed operators $S$. In this section we focus exclusively on self-adjoint extensions of $S$ that are nonnegative operators. In the latter scenario, there are two distinguished (and extremal) constructions which we will briefly review next. 

To set the stage, we recall that a linear operator $S:\dom(S)\subseteq\cH\to \cH$
is called {\it nonnegative} provided $(u,Su)_\cH\geq 0$, $u\in \dom(S)$. 
(In particular, $S$ is symmetric in this case.) $S$ is called {\it strictly positive}, if for some 
$\varepsilon >0$, $(u,Su)_\cH\geq \varepsilon \|u\|_{\cH}$, $u\in \dom(S)$. 
Next, we recall that $A \leq B$ for two self-adjoint operators in $\cH$ if 
\begin{align}
\begin{split}
& \dom\big(|A|^{1/2}\big) \supseteq \dom\big(|B|^{1/2}\big) \, \text{ and } \\ 
& \quad \big(|A|^{1/2}u, U_A |A|^{1/2}u\big)_{\cH} \leq \big(|B|^{1/2}u, U_B |B|^{1/2}u\big)_{\cH}, \quad   
u \in \dom\big(|B|^{1/2}\big),      \lb{AleqB} 
\end{split}
\end{align}
where $U_C$ denotes the partial isometry in $\cH$ in the polar decomposition of 
a densely defined closed operator $C$ in $\cH$, $C=U_C |C|$, $|C|=(C^* C)^{1/2}$. (If 
in addition, $C$ is self-adjoint, then $U_C$ and $|C|$ commute.) 
We also recall (\cite[Part II]{Fa75}, \cite[Theorem VI.2.21]{Ka80}) that if $A$ and $B$ are both self-adjoint and nonnegative in $\cH$, then 
\begin{align}
\begin{split}
& 0 \leq A\leq B \, \text{ if and only if } \,  0 \leq A^{1/2 }\leq B^{1/2},     \label{PPa-1} \\
& \quad \, \text{equivalently, if and only if } \, 
(B + a I_\cH)^{-1} \leq (A + a I_\cH)^{-1} \, \text{ for all $a>0$,} 
\end{split}
\end{align}
and
\begin{equation}
\ker(A) =\ker\big(A^{1/2}\big)
\end{equation}
(with $C^{1/2}$ the unique nonnegative square root of a nonnegative self-adjoint operator $C$ in $\cH$).

For simplicity we will always adhere in this section to the conventions that $S$ is a linear, unbounded, densely defined, nonnegative (i.e., $S\geq 0$) operator in $\cH$, and that $S$ has nonzero deficiency indices. In particular
\begin{equation}
{\rm def} (S) = \dim (\ker(S^*-z I_{\cH})) \in \bbN\cup\{\infty\}, 
\quad z\in \bbC\backslash [0,\infty),  
\lb{DEF}
\end{equation}
is well-known to be independent of $z$. 
Moreover, since $S$ and its closure $\ol{S}$ have the same self-adjoint extensions in $\cH$, we will without loss of generality assume that $S$ is closed in the remainder of this section.

The following is a fundamental result to be found in M.\ Krein's celebrated 1947 paper
 \cite{Kr47} (cf.\ also Theorems\ 2 and 5--7 in the English summary on page 492): 

%%%%%%%%%%%%%%%%%%%%%%%%%%%%%%%% 
\begin{theorem}\label{T-kkrr}
Assume that $S$ is a densely defined, closed, nonnegative operator in $\cH$. Then, among all 
nonnegative self-adjoint extensions of $S$, there exist two distinguished ones, $S_K$ and $S_F$, which are, respectively, the smallest and largest
$($in the sense of order between linear operators, cf.\ \eqref{AleqB}$)$ such extension. 
Furthermore, a nonnegative self-adjoint operator $\widetilde{S}$ is a self-adjoint extension 
of $S$ if and only if $\widetilde{S}$ satisfies 
\begin{equation}\label{Fr-Sa}
S_K\leq\widetilde{S}\leq S_F.
\end{equation}
In particular, \eqref{Fr-Sa} determines $S_K$ and $S_F$ uniquely. 

In addition,  if $S\geq \varepsilon I_{\cH}$ for some $\varepsilon >0$, one has 
\begin{align}
\dom (S_F) &= \dom (S) \dotplus (S_F)^{-1} \ker (S^*),     \lb{SF}  \\
\dom (S_K) & = \dom (S) \dotplus \ker (S^*),    \lb{SK}   \\
\dom (S^*) & = \dom (S) \dotplus (S_F)^{-1} \ker (S^*) \dotplus \ker (S^*)  \no \\
& = \dom (S_F) \dotplus \ker (S^*),    \lb{S*} 
\end{align}
In particular, 
\begin{equation} \label{Fr-4Tf}
\ker(S_K)= \ker\big((S_K)^{1/2}\big)= \ker(S^*) = \ran(S)^{\bot}.
\end{equation} 
\end{theorem}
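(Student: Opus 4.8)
The plan is to produce $S_F$ and $S_K$ by explicit constructions and then read off \eqref{Fr-Sa}--\eqref{Fr-4Tf} from those constructions, in the spirit of Krein \cite{Kr47} (see also \cite{Fa75}, \cite{Ka80}). For the Friedrichs extension I would start from the nonnegative, densely defined sesquilinear form $\mathfrak{s}_0[u,v] := (Su,v)_{\cH}$ with form domain $\dom(S)$. Since the form of a symmetric nonnegative operator is closable (a standard fact), let $\mathfrak{s}_F$ be its closure and $S_F$ the nonnegative self-adjoint operator that represents it; a direct check shows $S \subseteq S_F$. The extremality of $S_F$ is then immediate: any nonnegative self-adjoint extension $\widetilde S$ of $S$ carries a closed nonnegative form $\widetilde{\mathfrak{s}}$ extending $\mathfrak{s}_0$, and $\mathfrak{s}_F$ being the \emph{smallest} closed extension of $\mathfrak{s}_0$ forces $\widetilde{\mathfrak{s}} \supseteq \mathfrak{s}_F$, which is exactly $\widetilde S \le S_F$ in the ordering \eqref{AleqB}.

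For $S_K$ I would treat first the case $S \ge \varepsilon I_{\cH}$, which is the one needed for \eqref{SF}--\eqref{Fr-4Tf}. Here $S$ closed and bounded below makes $\ran(S)$ closed, so $\cH = \ran(S) \oplus \ker(S^*)$ orthogonally and $\dom(S) \cap \ker(S^*) = \{0\}$. I would then \emph{define} $S_K$ via \eqref{SK}: $\dom(S_K) := \dom(S) \dotplus \ker(S^*)$ and $S_K(u+w) := Su$ for $u \in \dom(S)$, $w \in \ker(S^*)$, and verify that $S_K$ is symmetric and nonnegative (using $Su \in \ran(S) \perp \ker(S^*)$). Showing that $S_K$ is genuinely self-adjoint is the crux, and I would do it by pinning down $\dom(S^*)$ explicitly: decomposing $S_F w$ along $\ran(S) \oplus \ker(S^*)$ gives $\dom(S_F) = \dom(S) \dotplus S_F^{-1}\ker(S^*)$, which is \eqref{SF}; then $S_F \subseteq S^*$ together with $\ker(S^*) \subseteq \dom(S^*)$ and an elementary argument with the bounded operator $S_F^{-1}$ yields $\dom(S^*) = \dom(S_F) \dotplus \ker(S^*)$, which is \eqref{S*}; against this description one checks $S_K = S_K^*$. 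For general $S \ge 0$ that is not strictly positive I would fall back on Krein's original construction in \cite{Kr47}, or obtain $S_K$ as a strong resolvent limit of $(S + \delta I_{\cH})_K$ as $\delta \downarrow 0$.

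With both extensions in hand, \eqref{Fr-Sa} splits into the extremality of $S_F$ (done above), the corresponding extremality of $S_K$ (that $S_K \le \widetilde S$ for every nonnegative self-adjoint extension $\widetilde S$, i.e.\ $S_K$ is smallest), and the converse that $S_K \le \widetilde S \le S_F$ forces $\widetilde S$ to extend $S$; the last two I would argue by comparing forms on $\dom(S)$, where those of $S_K$, $\widetilde S$, $S_F$ all reduce to $\mathfrak{s}_0$, together with the Birman--Krein--Vishik parametrization of nonnegative self-adjoint extensions. Uniqueness of $S_K$ and $S_F$ then follows from \eqref{Fr-Sa}. Finally, \eqref{Fr-4Tf} comes straight out of the construction: $S_K(u+w)=0$ iff $Su=0$ iff $u=0$ (by $S \ge \varepsilon I_{\cH}$) iff $u+w \in \ker(S^*)$, so $\ker(S_K)=\ker(S^*)$; the identity $\ker(S^*)=\ran(S)^{\bot}$ is the usual orthogonality relation (with $\ran(S)$ closed), and $\ker(S_K)=\ker\big((S_K)^{1/2}\big)$ is the general fact for nonnegative self-adjoint operators already recorded above.

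The step I expect to be the main obstacle is the full Birman--Krein--Vishik package behind \eqref{Fr-Sa}, \eqref{SK}, \eqref{S*}: proving that the operator defined by the domain formula \eqref{SK} is self-adjoint (not merely symmetric and nonnegative), and that the sandwich $S_K \le \widetilde S \le S_F$ is \emph{equivalent} to $\widetilde S$ being a nonnegative self-adjoint extension of $S$. Both rest on the fine structure of $\dom(S^*)$ relative to $\dom(S_F)$ and $\ker(S^*)$, and it is exactly here that the strict positivity $S \ge \varepsilon I_{\cH}$ does the work (it is what makes $\ran(S)$ closed and $S_F^{-1}$ bounded); in the merely nonnegative case one must instead lean on Krein's original analysis in \cite{Kr47}.
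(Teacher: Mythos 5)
Theorem \ref{T-kkrr} is not proved in the paper at all: it is quoted as a classical result of Krein, with the text pointing to \cite{Kr47} (and to the expositions in \cite{AS80}, \cite{Fa75}, \cite{Gr09}, etc.) immediately before and after the statement, and with the strictly positive case of \eqref{SK} traced back to von Neumann \cite{Ne29}. So there is no in-paper argument to compare yours against; your proposal has to be judged against the classical literature, and on that score it follows the standard route and is essentially sound. The Friedrichs part (closure of the form $(Su,v)_{\cH}$, minimality of the closed form giving $\widetilde S\le S_F$), the von Neumann--Krein construction of $S_K$ on $\dom(S)\dotplus\ker(S^*)$ when $S\ge\varepsilon I_{\cH}$, the decompositions \eqref{SF}--\eqref{S*} via $\cH=\ran(S)\oplus\ker(S^*)$ and the bounded inverse $S_F^{-1}$, and the kernel identities \eqref{Fr-4Tf} are all sketched correctly. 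The two places where your outline is thinnest are exactly the ones you flag yourself: (i) the converse half of \eqref{Fr-Sa}, where equality of the quadratic forms of $S_K,\widetilde S,S_F$ on $\dom(S)$ must be upgraded, via a Cauchy--Schwarz argument for the nonnegative form differences (or via the Birman--Krein--Vishik parametrization, which is the paper's Theorem \ref{AS-th}), to the operator statement $S\subseteq\widetilde S$; and (ii) the merely nonnegative case $S\ge 0$, where the strictly positive construction does not apply verbatim and one must invoke Krein's original analysis (or Ando--Nishio \cite{AN70}); your proposed regularization $(S+\delta I_{\cH})_K$, $\delta\downarrow 0$, needs a justification you do not supply. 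Deferring these to \cite{Kr47}, \cite{AS80} is acceptable here, since the paper itself treats the entire theorem as a citation.
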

%%%%%%%%%%%%%%%%%%%%%%%%%%%%%%% 

Here the operator inequalities in \eqref{Fr-Sa} are understood in the sense of 
\eqref{AleqB} and hence they can  equivalently be written as
\begin{equation}
(S_F + a I_{\cH})^{-1} \le \big(\wti S + a I_{\cH}\big)^{-1} \le (S_K + a I_{\cH})^{-1} 
\, \text{ for some (and hence for all) $a > 0$.}    \lb{Res}
\end{equation}

We also refer to Birman \cite{Bi56}, \cite{Bi08}, Friedrichs \cite{Fr34}, Freudenthal \cite{Fr36}, 
Grubb \cite{Gr68}, \cite{Gr70}, Krein \cite{Kr47a}, {\u S}traus \cite{St73}, and Vi{\u s}ik \cite{Vi63} 
(see also the monographs by Akhiezer and Glazman \cite[Sect. 109]{AG81a}, 
Faris \cite[Part III]{Fa75}, and the recent book by Grubb \cite[Sect.\ 13.2]{Gr09}) for classical 
references on the subject of self-adjoint extensions of semibounded operators.  

We will call the operator $S_K$ the {\it Krein--von Neumann extension}
of $S$. See \cite{Kr47} and also the discussion in \cite{AS80}, \cite{AT03}, \cite{AT05}. It should be
noted that the Krein--von Neumann extension was first considered by von Neumann 
\cite{Ne29} in 1929 in the case where $S$ is strictly positive, that is, if 
$S \geq \varepsilon I_{\cH}$ for some $\varepsilon >0$. (His construction appears in the proof of Theorem 42 on pages 102--103.) However, von Neumann did not isolate the extremal property of this extension as described in \eqref{Fr-Sa} and \eqref{Res}. 
M.\ Krein \cite{Kr47}, \cite{Kr47a} was the first to systematically treat the general case 
$S\geq 0$ and to study all nonnegative self-adjoint extensions of $S$, illustrating the special role of the {\it Friedrichs extension} (i.e., the ``hard'' extension) $S_F$ of $S$ and the Krein--von Neumann (i.e., the ``soft'') extension $S_K$ of $S$ as extremal cases when   considering all nonnegative extensions of $S$. For a recent exhaustive treatment of 
self-adjoint extensions of semibounded operators we refer to \cite{AT02}--\cite{AT09}. 

An intrinsic description of the Friedrichs extension $S_F$ of $S\geq 0$ due to Freudenthal \cite{Fr36} 
in 1936 describes $S_F$ as the operator $S_F:\dom(S_F)\subset\cH\to\cH$ given by   
\begin{align}
& S_F u:=S^*u,   \no \\
& u \in \dom(S_F):=\big\{v\in\dom(S^*)\,\big|\,  \mbox{there exists} \, 
\{v_j\}_{j\in\bbN}\subset \dom(S),    \label{Fr-2} \\
& \quad \mbox{ with } \, \lim_{j\to\infty}\|v_j-v\|_{\cH}=0  \, 
\mbox{ and } \, ((v_j-v_k),S(v_j-v_k))_\cH\to 0 \mbox{ as }  j,k\to\infty\big\}.    \no 
\end{align}
Then, as is well-known,  
\begin{align}
& S_F \geq 0,   \label{Fr-4}  \\
& \dom\big((S_F)^{1/2}\big)=\big\{v\in\cH\,\big|\, \mbox{there exists} \, 
\{v_j\}_{j\in\bbN}\subset \dom(S),   \label{Fr-4J} \\
& \quad \mbox{ with } \, \lim_{j\to\infty}\|v_j-v\|_{\cH}=0  \, 
\mbox{ and } \, ((v_j-v_k),S(v_j-v_k))_\cH\to 0\mbox{ as }
j,k\to\infty\big\},  \no 
\end{align}
and
\begin{equation}\label{Fr-4H}
S_F=S^*|_{\dom(S^*)\cap\dom((S_{F})^{1/2})}.
\end{equation}

An intrinsic description of the Krein--von Neumann extension $S_K$ of $S\geq 0$ has been given by Ando and Nishio \cite{AN70} in 1970, where $S_K$ has been characterized as the 
operator $S_K:\dom(S_K)\subset\cH\to\cH$ given by
\begin{align} 
& S_Ku:=S^*u,   \no \\
& u \in \dom(S_K):=\big\{v\in\dom(S^*)\,\big|\,\mbox{there exists} \, 
\{v_j\}_{j\in\bbN}\subset \dom(S),    \label{Fr-2X}  \\ 
& \quad \mbox{ with} \, \lim_{j\to\infty} \|Sv_j-S^*v\|_{\cH}=0  \, 
\mbox{ and } \, ((v_j-v_k),S(v_j-v_k))_\cH\to 0 \mbox{ as } j,k\to\infty\big\}.  \no
\end{align}

For a variety of recent new results on $S_K$ in connection with Weyl-type spectral asymptotics for perturbed Laplacians on non-smooth bounded open domains in $\bbR^n$, and for a connection to an abstract buckling problem that illustrates the relevance of $S_K$ in elasticity theory, we refer to 
\cite{AGMT09} and \cite{AGMST09}. 

We continue by recording an abstract result regarding the
parametrization of all nonnegative self-adjoint extensions of a given
strictly positive, densely defined, symmetric operator. The following results were 
developed from Krein \cite{Kr47}, Vi{\u s}ik \cite{Vi63}, and Birman \cite{Bi56}, by 
Grubb \cite{Gr68}, \cite{Gr70}. Subsequent expositions are due to Faris \cite[Sect.\ 15]{Fa75} 
and Alonso and Simon \cite{AS80}. 

%%%%%%%%%%%%%%%%%%%%%%%%%%%% 
\begin{theorem}\label{AS-th}
Suppose that $S$ is a densely defined, closed operator in $\cH$, and 
$S \geq \varepsilon I_{\cH}$ for some $\varepsilon>0$. Then there exists a one-to-one correspondence between nonnegative self-adjoint operators 
$0 \leq B:\dom(B)\subseteq \cW\to \cW$, $\ol{\dom(B)}=\cW$, where $\cW$
is a closed subspace of $\cN_0 :=\ker(S^*)$, and nonnegative self-adjoint
extensions $S_{B,\cW}\geq 0$ of $S$. More specifically, $S_F$ is invertible, 
$S_F\geq \varepsilon I_{\cH}$, 
and one has
\begin{align} 
& \dom(S_{B,\cW})  \no \\
& \quad =\big\{f + (S_F)^{-1}(Bw + \eta)+ w \,\big|\,
f\in\dom(S),\, w\in\dom(B),\, \eta\in \cN_0 \cap \cW^{\bot}\big\},  \no \\
& S_{B,\cW} = S^*|_{\dom(S_{B,\cW})},       \label{AS-1}  
\end{align} 
where $\cW^{\bot}$ denotes the orthogonal complement of $\cW$ in $\cN_0$. In 
addition,
\begin{align}
&\dom\big((S_{B,\cW})^{1/2}\big) = \dom\big((S_F)^{1/2}\big) \dotplus 
\dom\big(B^{1/2}\big),   \\
&\big\|(S_{B,\cW})^{1/2}(u+g)\big\|_{\cH}^2 =\big\|(S_F)^{1/2} u\big\|_{\cH}^2 
+ \big\|B^{1/2} g\big\|_{\cH}^2, \\ 
& \hspace*{2.3cm} u \in \dom\big((S_F)^{1/2}\big), \; g \in \dom\big(B^{1/2}\big),  \no
\end{align}
implying,
\begin{equation}\label{K-ee}
\ker(S_{B,\cW})=\ker(B). 
\end{equation} 
Moreover, 
\begin{equation}
B \leq \wti B \, \text{ implies } \, S_{B,\cW} \leq S_{\wti B,\wti \cW},
\end{equation}
where 
\begin{align}
\begin{split}
& B\colon \dom(B) \subseteq \cW \to \cW, \quad 
 \wti B\colon \dom\big(\wti B\big) \subseteq \wti \cW \to \wti \cW,   \\
& \ol{\dom\big(\wti B\big)} = \wti\cW \subseteq \cW = \ol{\dom(B)}. 
\end{split}
\end{align}

In the above scheme, the Krein--von Neumann extension $S_K$
of $S$ corresponds to the choice $\cW=\cN_0$ and $B=0$ $($with 
$\dom(B)=\dom(B^{1/2})=\cN_0=\ker (S^*)$$)$.
In particular, one thus recovers \eqref{SK}, and \eqref{Fr-4Tf}, and also obtains
\begin{align}
&\dom\big((S_{K})^{1/2}\big) = \dom\big((S_F)^{1/2}\big) \dotplus \ker (S^*),  
\lb{SKform1}  \\
&\big\|(S_{K})^{1/2}(u+g)\big\|_{\cH}^2 =\big\|(S_F)^{1/2} u\big\|_{\cH}^2, 
\quad u \in \dom\big((S_F)^{1/2}\big), \; g \in \ker (S^*).   \lb{SKform2}
\end{align}
Finally, the Friedrichs extension $S_F$ corresponds to the choice $\dom(B)=\{0\}$ $($i.e., formally, 
$B\equiv\infty$$)$, in which case one recovers \eqref{SF}. 
\end{theorem}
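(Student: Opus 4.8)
The statement to be proved is Theorem~\ref{AS-th}, the abstract Grubb--Vi{\u s}ik--Birman parametrization of all nonnegative self-adjoint extensions of a strictly positive closed symmetric operator $S$. The plan is to reduce everything to the first von Neumann formula together with the structure of the Friedrichs extension, exploiting that $S_F$ is boundedly invertible since $S_F \geq \varepsilon I_{\cH}$.

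\textbf{Step 1 (set-up via the Friedrichs extension).} First I would record that $S \geq \varepsilon I_{\cH}$ forces $S_F \geq \varepsilon I_{\cH}$, hence $S_F^{-1} \in \cB(\cH)$ exists and is nonnegative, and that $\dom(S^*) = \dom(S_F) \dotplus \cN_0$ with $\cN_0 = \ker(S^*)$, by \eqref{S*}. This direct sum decomposition is the backbone: every $v \in \dom(S^*)$ is written uniquely as $v = f + S_F^{-1}\eta_0 + w_0$ with $f \in \dom(S)$, $\eta_0 \in \cN_0$, $w_0 \in \cN_0$ (using $\dom(S_F) = \dom(S)\dotplus S_F^{-1}\cN_0$).

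\textbf{Step 2 (forward direction: $S_{B,\cW}$ is a nonnegative self-adjoint extension).} Given a closed subspace $\cW \subseteq \cN_0$ and $0 \leq B$ self-adjoint in $\cW$ with $\ol{\dom(B)} = \cW$, I would define $S_{B,\cW}$ by \eqref{AS-1} and verify: (i) $S_{B,\cW}$ extends $S$ (take $w=\eta=0$); (ii) $S_{B,\cW}$ is symmetric and nonnegative --- here I would compute $(S^* v, v)_{\cH}$ for $v = f + S_F^{-1}(Bw+\eta) + w$ in $\dom(S_{B,\cW})$, using $S^* f = Sf$, $S^* S_F^{-1}\zeta = \zeta$ for $\zeta \in \cN_0$, and $S^* w = 0$; the cross terms involving $\dom(S)$ collapse by the defining property of $S_F$ (the form $(S\cdot,\cdot)$ and $\cN_0$ are orthogonal in the form sense), and what survives is $(S_F^{1/2}u, S_F^{1/2}u) + (B^{1/2}g, B^{1/2}g) \geq 0$ after the natural identifications; (iii) maximality/self-adjointness --- the cleanest route is a dimension count: one shows $\dom(S_{B,\cW})/\dom(S)$ is ``half'' of $\dom(S^*)/\dom(S)$ in the appropriate sense, or equivalently that $S_{B,\cW}$ is symmetric with $\ran(S_{B,\cW}) = \cH$ when $B$ is invertible and handles the general $B$ by the square-root/form characterization. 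I expect it is cleanest to first establish the form identities \eqref{AS-1}ff (the displays for $\dom((S_{B,\cW})^{1/2})$ and the norm formula), from which nonnegativity and $\ker(S_{B,\cW}) = \ker(B)$ (i.e.\ \eqref{K-ee}) follow immediately, and then deduce self-adjointness from the fact that the associated closed nonnegative form $\mathfrak{s}_{B,\cW}(u+g) = \|S_F^{1/2}u\|^2 + \|B^{1/2}g\|^2$ on $\dom(S_F^{1/2}) \dotplus \dom(B^{1/2})$ has $S_{B,\cW}$ as its uniquely associated self-adjoint operator, matched against \eqref{AS-1}.

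\textbf{Step 3 (converse: every nonnegative self-adjoint extension has this form).} Let $\wti S \geq 0$ be a self-adjoint extension of $S$, so $S \subseteq \wti S = \wti S^* \subseteq S^*$. Using the decomposition of Step 1, for $v \in \dom(\wti S)$ write $v = f + S_F^{-1}\eta_0 + w_0$; I would set $\cW := \ol{P_{\cN_0}\dom(\wti S)}$ where $P_{\cN_0}$ projects onto $\cN_0$ along $\dom(S_F)$ (i.e.\ $w_0 = P_{\cN_0}v$), and on $\dom(B) := P_{\cN_0}\dom(\wti S) \subseteq \cW$ define $B w_0 := P_{\cN_0}(\wti S v) - \eta_0$-type relation, making $B$ well-defined because $\wti S \supseteq S$ kills the $\dom(S)$-component ambiguity. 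One then checks $B$ is well-defined, symmetric, nonnegative, and densely defined in $\cW$; self-adjointness of $B$ follows from self-adjointness of $\wti S$ via the form identity again. This recovers \eqref{AS-1} for $\wti S = S_{B,\cW}$.

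\textbf{Step 4 (monotonicity and the extremal cases).} The implication $B \leq \wti B \Rightarrow S_{B,\cW} \leq S_{\wti B, \wti\cW}$ I would read off directly from the form identities and \eqref{PPa-1}: $\|S_F^{1/2}u\|^2 + \|B^{1/2}g\|^2 \leq \|S_F^{1/2}u\|^2 + \|\wti B^{1/2}g\|^2$ pointwise once domains are nested correctly, which is exactly the order relation \eqref{AleqB}. For $\cW = \cN_0$, $B = 0$ (with $\dom(B) = \dom(B^{1/2}) = \cN_0$) the recipe \eqref{AS-1} gives $\dom = \dom(S) \dotplus \cN_0 = \dom(S_K)$ by \eqref{SK}, and the half-norm formulas \eqref{SKform1}, \eqref{SKform2} drop out of Step 2's form identity with $B = 0$; since $0$ is the least nonnegative operator, monotonicity gives $S_{0,\cN_0} \leq S_{B,\cW}$ for all admissible $(B,\cW)$, re-identifying it as $S_K$ via Theorem~\ref{T-kkrr}. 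For $\dom(B) = \{0\}$ one gets $\dom(S_{B,\cW}) = \dom(S) \dotplus S_F^{-1}\cN_0 = \dom(S_F)$ by \eqref{SF}.

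\textbf{Main obstacle.} The delicate point is the converse direction (Step 3): showing that the operator $B$ extracted from an abstract $\wti S$ is genuinely \emph{self-adjoint} in $\cW$ (not merely symmetric), and simultaneously that $\eta$ is forced to live in $\cN_0 \cap \cW^\perp$ rather than all of $\cN_0$ --- i.e.\ that the parametrization is exactly \eqref{AS-1} with no redundancy. This requires carefully tracking how the graph of $\wti S$, being a \emph{maximal} nonnegative symmetric relation between $S$ and $S^*$, forces the boundary operator $B$ to be maximal nonnegative in the quotient $\cN_0$, which is precisely the content of the Grubb--Vi{\u s}ik--Birman theory; in the write-up I would either reproduce this argument through the associated quadratic forms or simply cite \cite{Gr68}, \cite{Vi63}, \cite{Bi56}, \cite{Fa75}, \cite{AS80} as the excerpt already signals, since the theorem is stated as a recollection of known results. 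All remaining steps are bookkeeping with the $\dom(S) \dotplus S_F^{-1}\cN_0 \dotplus \cN_0$ decomposition.
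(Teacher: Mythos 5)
The paper does not prove Theorem \ref{AS-th}: it records it as a known result, developed from Krein \cite{Kr47}, Vi{\u s}ik \cite{Vi63}, and Birman \cite{Bi56} by Grubb \cite{Gr68}, with expositions in Faris \cite{Fa75} and Alonso--Simon \cite{AS80}. So there is no in-paper argument to compare your sketch against, and your decision to cite these sources for the delicate converse is exactly what the paper itself does. As a blueprint, your sketch follows the standard Alonso--Simon form-theoretic route, and the core computation in Step 2 is right: for $v=f+(S_F)^{-1}(Bw+\eta)+w$ one gets $(v,S^*v)_{\cH}=\big\|(S_F)^{1/2}u\big\|_{\cH}^2+\big\|B^{1/2}w\big\|_{\cH}^2$ with $u=f+(S_F)^{-1}(Bw+\eta)$, the cross terms vanishing because $w\in\ker(S^*)$ and $\eta\in\cW^{\bot}$; this yields nonnegativity, \eqref{K-ee}, and the extremal identifications via \eqref{SF}, \eqref{SK}.

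If you were to write the argument out in full, three points need more care than the sketch acknowledges. First, the directness and closedness of the form sum on $\dom\big((S_F)^{1/2}\big)\dotplus\dom\big(B^{1/2}\big)$ is not automatic; it uses $S_F\geq\varepsilon I_{\cH}$ in an essential way, and without it the form method does not deliver a self-adjoint operator matching \eqref{AS-1}. Second, in Step 3 the well-definedness of $B$ must dispose of the ambiguity in the $(S_F)^{-1}\cN_0$-component as well, not only the $\dom(S)$-component: two elements of $\dom\big(\wti S\big)$ with the same $\cN_0$-part differ by an element of $\dom(S_F)$, not of $\dom(S)$, so the phrase ``$\wti S\supseteq S$ kills the ambiguity'' is not yet an argument. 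Moreover, upgrading $B$ from symmetric to self-adjoint and forcing $\eta\in\cN_0\cap\cW^{\bot}$ is precisely where the maximality (self-adjointness) of $\wti S$ is consumed; you correctly flag this as the main obstacle, and it is the part that genuinely requires the Birman--Krein--Vi{\u s}ik machinery you propose to cite. Third, the monotonicity claim when $\wti\cW\subsetneqq\cW$ requires the extended-order convention the paper records immediately after the theorem (declaring the form of $\wti B$ to be $+\infty$ off $\dom\big(|\wti B|^{1/2}\big)$); only with that convention does your pointwise comparison of forms translate into the order relation \eqref{AleqB} via \eqref{PPa-1}.
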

%%%%%%%%%%%%%%%%%%%%%%%%%%%%% 

The relation $B \leq \wti B$ in the case where $\wti \cW \subsetneqq \cW$ requires an explanation: In analogy to \eqref{AleqB} we mean 
\begin{equation}
\big(|B|^{1/2}u, U_B |B|^{1/2}u\big)_{\cW} \leq \big(|\wti B|^{1/2}u, U_{\wti B} |\wti B|^{1/2}u\big)_{\cW}, 
\quad u \in \dom\big(|\wti B|^{1/2}\big) 
\end{equation}
and (following \cite{AS80}) we put
\begin{equation}
\big(|\wti B|^{1/2}u, U_{\wti B} |\wti B|^{1/2}u)_{\cW} = \infty \, \text{ for } \, 
u \in \cW \backslash \dom\big(|\wti B|^{1/2}\big).
\end{equation}

We also note that under the assumptions on $S$ in Theorem \ref{AS-th}, one has 
\begin{equation}
\dim(\ker (S^*-z I_{\cH})) = \dim(\ker(S^*)) = \dim (\cN_0) = {\rm def} (S), 
\quad z\in \bbC\backslash [\varepsilon,\infty).   \lb{dim}
\end{equation}

For now, our goal is to prove the following result:
 
%%%%%%%%%%%%%%%%%%%%%%%%%%%%%%%%%
\begin{lemma}\lb{L-Fri1}
Assume Hypothesis \ref{h2.1}. Then the Friedrichs extension of the operator 
$-\Delta\big|_{C^\infty_0(\Om)}$, and hence that of 
$\ol{-\Delta\big|_{C^\infty_0(\Om)}}$ in $L^2(\Om;d^nx)$, is precisely the Dirichlet 
Laplacian $-\Delta_{D,\Om}$.
As a consequence, if Hypothesis \ref{h.Conv} is assumed,  
the Friedrichs extension of $-\Delta_{min}$ in \eqref{Yan-6} is 
the Dirichlet Laplacian $-\Delta_{D,\Om}$.
\end{lemma}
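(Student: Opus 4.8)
The plan is to identify the Friedrichs extension of $-\Delta\big|_{C^\infty_0(\Om)}$ using the intrinsic Freudenthal description \eqref{Fr-2}--\eqref{Fr-4H}, and then transfer the result to $-\Delta_{min}$ via Corollary~\ref{C-Da}. First I would observe that, since the Friedrichs extension depends only on the closure of the given symmetric operator, it suffices to work with $-\Delta_0 := -\Delta\big|_{C^\infty_0(\Om)}$. The defining quadratic form here is $u \mapsto (-\Delta_0 u, u)_{L^2(\Om;d^nx)} = \|\nabla u\|_{(L^2(\Om;d^nx))^n}^2$ for $u \in C^\infty_0(\Om)$, so the form closure of $-\Delta_0$ is exactly the form $(\nabla u, \nabla v)_{(L^2(\Om;d^nx))^n}$ with form domain equal to the closure of $C^\infty_0(\Om)$ in the norm $u \mapsto (\|u\|_{L^2(\Om;d^nx)}^2 + \|\nabla u\|^2)^{1/2}$, i.e., $H^1_0(\Om)$.

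Next I would invoke the standard characterization of the Friedrichs extension as the unique self-adjoint operator associated (via the first representation theorem) with this closed, nonnegative, densely defined form. Concretely, $S_F$ acts as $S_F u = -\Delta u$ on the domain $\big\{u \in H^1_0(\Om) \,\big|\, \text{there exists } w \in L^2(\Om;d^nx) \text{ with } (\nabla u, \nabla v)_{(L^2(\Om;d^nx))^n} = (w,v)_{L^2(\Om;d^nx)} \text{ for all } v \in H^1_0(\Om)\big\}$. Taking $v \in C^\infty_0(\Om)$ shows $w = -\Delta u$ in the distributional sense, so this domain is precisely $\big\{u \in H^1_0(\Om) \,\big|\, \Delta u \in L^2(\Om;d^nx)\big\}$, which by \eqref{2.39} is exactly $\dom(-\Delta_{D,\Om})$. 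Hence $S_F = -\Delta_{D,\Om}$. Equivalently, one may argue via \eqref{2.40}: Theorem~\ref{t2.5} records $\dom\big((-\Delta_{D,\Om})^{1/2}\big) = H^1_0(\Om)$, which is the form domain of the Friedrichs extension, and since a nonnegative self-adjoint operator is determined by its form, $-\Delta_{D,\Om}$ must coincide with $S_F$.

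Finally, for the last assertion, under Hypothesis~\ref{h.Conv} Corollary~\ref{C-Da} gives $\ol{-\Delta\big|_{C^\infty_0(\Om)}} = -\Delta_{min}$, and since an operator and its closure have the same Friedrichs extension, the Friedrichs extension of $-\Delta_{min}$ is again $-\Delta_{D,\Om}$. I do not anticipate a serious obstacle here; the only point requiring mild care is the identification of the form domain of $-\Delta_0$ with $H^1_0(\Om)$ — this is exactly the definition of $H^1_0(\Om)$ as the closure of $C^\infty_0(\Om)$ in $H^1(\Om)$ — together with the fact that the $H^1$-norm and the form norm $(\|\cdot\|_{L^2}^2 + \|\nabla\cdot\|^2)^{1/2}$ coincide on $C^\infty_0(\Om)$. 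Everything else is a direct application of the first representation theorem for closed forms and Theorem~\ref{t2.5}.
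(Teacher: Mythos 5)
Your argument is correct, but it runs along a different track than the paper's. You identify the Friedrichs extension through its quadratic form: you close the form $\|\nabla u\|^2_{(L^2(\Om;d^nx))^n}$ from $C^\infty_0(\Om)$ to get form domain $H^1_0(\Om)$, apply the first representation theorem, and read off that the associated operator has domain $\{u\in H^1_0(\Om)\,|\,\Delta u\in L^2(\Om;d^nx)\}=\dom(-\Delta_{D,\Om})$ by \eqref{2.39}. The paper instead works directly with the Freudenthal description \eqref{Fr-2}: for $u\in\dom(-\Delta_{D,\Om})\hookrightarrow H^1_0(\Om)$ it picks $u_j\in C^\infty_0(\Om)$ with $u_j\to u$ in $L^2(\Om;d^nx)$ and $\|\nabla(u_j-u_k)\|\to 0$, concludes $-\Delta_{D,\Om}\subseteq \big(\ol{\dot S}\big)_F$, and finishes by the maximality of self-adjoint operators (a self-adjoint operator admits no proper self-adjoint extension). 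The paper's route buys a one-inclusion proof that needs nothing beyond the definitions recorded in Section \ref{s9}; your route buys an explicit identification of $S_F$ as the form operator, at the cost of invoking the representation-theorem characterization of the Friedrichs extension (which the paper only implicitly encodes in \eqref{Fr-4J}--\eqref{Fr-4H}) and of having to verify the reverse inclusion you assert with the word ``precisely'': that every $u\in H^1_0(\Om)$ with $\Delta u\in L^2(\Om;d^nx)$ satisfies $(\nabla u,\nabla v)_{(L^2(\Om;d^nx))^n}=(-\Delta u,v)_{L^2(\Om;d^nx)}$ for \emph{all} $v\in H^1_0(\Om)$, not only for test functions; this is a routine approximation of $v$ in $H^1$ by $C^\infty_0(\Om)$ functions, but it should be said. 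Also note that your ``equivalently via \eqref{2.40}'' remark is weaker than stated: equality of form domains $\dom\big((-\Delta_{D,\Om})^{1/2}\big)=H^1_0(\Om)$ alone does not force $-\Delta_{D,\Om}=S_F$; one also needs $\|(-\Delta_{D,\Om})^{1/2}u\|^2_{L^2(\Om;d^nx)}=\|\nabla u\|^2_{(L^2(\Om;d^nx))^n}$ on that common domain, which Theorem \ref{t2.5} as quoted does not record. Your main argument does not depend on this remark, so the proof stands; the final reduction of $-\Delta_{min}$ to $\ol{-\Delta\big|_{C^\infty_0(\Om)}}$ via Corollary \ref{C-Da} matches the paper.
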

%%%%%%%%%%%%%%%%%%%%%%%%%%%%%%%%%
\begin{proof}
We recall that $\dom (- \Delta_{D,\Om})=\big\{u\in H^1_0(\Om) \,\big|\, \Delta u\in 
L^2(\Om;d^nx)\big\}$. Unravelling definitions, it can then easily be checked that,
with $\dot S$ denoting $-\Delta\big|_{C^\infty_0(\Om)}$ in $L^2(\Om; d^n x)$, we have 
$\dom (- \Delta_{D,\Om})\subseteq\dom \big(\big(\dot S\big)^*\big) 
= \dom\Big(\Big(\ol{{\dot S}}\Big)^*\Big)$. Furthermore, 
if $u\in\dom (- \Delta_{D,\Om})\hookrightarrow H^1_0(\Om)$, then there 
exist $u_j\in C^\infty_0(\Om)$, $j\in\bbN$, such that 
$u_j\to u$ in $L^2(\Om;d^nx)$ as $j\to\infty$ and
$\langle \dot S(u_j-u_k),u_j-u_k\rangle_{L^2(\Om;d^nx)}
=\|\nabla u_j-\nabla u_k\|_{L^2(\Om;d^nx)}\to 0$
as $j,k\to\infty$. Thus, if $\Big(\ol{{\dot S}}\Big)_F:\dom \Big(\Big(\ol{{\dot S}}\Big)_F\Big)\subseteq L^2(\Om;d^nx)
\to L^2(\Om;d^nx)$ is defined according to \eqref{Fr-2}, it follows that 
$-\Delta_{D,\Om}\subseteq \Big(\ol{{\dot S}}\Big)_F$. Hence, 
$-\Delta_{D,\Om} = \Big(\ol{{\dot S}}\Big)_F$ by the 
maximality of the self-adjoint operator $-\Delta_{D,\Om}$. 
\end{proof}
%%%%%%%%%%%%%%%%%%%%%%%%%%%%%%%%%

Concluding this section, we point out that a great variety of additional results 
for the Krein--von Neumann extension can be found, for instance, in \cite{Ad07}, 
\cite[Sect.\ 109]{AG81a}, \cite{AS80}, \cite{AN70}, \cite{Ar98}, \cite{Ar00}, \cite{AHSD01}, 
\cite{AT02}, \cite{AT03}, \cite{AT05}, \cite{AT09}, \cite{AGMT09}, \cite{AGMST09}, \cite{BC05}, 
\cite[Part III]{Fa75}, \cite[Sect.\ 3.3]{FOT11}, \cite{GM09}, \cite{Gr83}, \cite[Sect.\ 13.2]{Gr09}, 
\cite{Ha57}, \cite{HK09}, \cite{HMD04}, \cite{HSDW07}, \cite{KO77}, \cite{KO78}, \cite{MT07}, 
\cite{Ne83}, \cite{PS96}, \cite{SS03}, \cite{Si98}, \cite{Sk79}, \cite{St96}, \cite{Ts80}, \cite{Ts81}, 
\cite{Ts92}, and the references therein. We also mention the references \cite{EM05}, 
\cite{EMP04}, \cite{EMMP07} (these authors, apparently unaware of the work of von 
Neumann, Krein, Vi{\u s}hik, Birman, Grubb, {\u S}trauss, etc., in this context, 
introduced the Krein Laplacian and called it the harmonic operator; see also 
\cite{Gr06}).

%%%%%%%%%%%%%%%%%%%%%%%%%%%%%%%%%%%%%%
%%%%%%%%%%%%%%%%%%%%%%%%%%%%%%%%%%%%%%
\section{Trace Operators and Boundary Value Problems % \\ 
on Quasi-Convex Domains}
\lb{s10}
%%%%%%%%%%%%%%%%%%%%%%%%%%%%%%%%%%%%%%
%%%%%%%%%%%%%%%%%%%%%%%%%%%%%%%%%%%%%%

In this section we revisit the trace theory discussed in Section \ref{s6}, this 
time assuming that the underlying domain is either smooth or quasi-convex. 
Such a context allows for more refined results, such as the ontoness
of the trace operators in question. In the case of quasi-convex domains,  
these results are establish as a corollary of the
well-posedness of certain basic boundary value problems for the Laplacian. 

%%%%%%%%%%%%%%%%%%%%%%%%%%%%%%%%%
\begin{lemma}\lb{L-C1}
Assume that $\Omega\subset \bbR^n$, $n\geq 2$, is a bounded $C^{1,r}$ 
domain for some $r\in(1/2,1)$. Then the Dirichlet trace operator 
\begin{eqnarray}\lb{D-tr1}
\gamma_D:H^2(\Omega)\to  H^{3/2}(\partial\Omega), 
\end{eqnarray}
along with the Neumann trace operator 
\begin{eqnarray}\lb{D-tr2}
\gamma_N:H^2(\Omega)\to  H^{1/2}(\partial\Omega), 
\end{eqnarray}
are well-defined, bounded, and onto. In fact, each operator has a linear, 
bounded right-inverse.
\end{lemma}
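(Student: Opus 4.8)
The plan is to establish the three assertions --- well-definedness/boundedness, surjectivity, and existence of bounded linear right-inverses --- and then note that the last assertion automatically upgrades the surjectivity statement. First I would observe that boundedness of $\gamma_D$ in \eqref{D-tr1} and $\gamma_N$ in \eqref{D-tr2} is already recorded in Lemma~\ref{lA.6} (see \eqref{A.62}), under Hypothesis~\ref{h2.1C}, which a bounded $C^{1,r}$ domain with $r>1/2$ satisfies. So the only substantive content lies in constructing the right-inverses; surjectivity is a formal consequence.

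For the Dirichlet trace, the natural approach is to use well-posedness of a boundary value problem together with the interior regularity embodied in Lemma~\ref{Bjk}. Concretely, given $f\in H^{3/2}(\partial\Omega)$, one applies Theorem~\ref{t3.3} with $s=1$ and a fixed $z\in\bbC\backslash\sigma(-\Delta_{D,\Om})$ (say $z=0$ if $0\notin\sigma$, or any convenient value) to obtain a unique solution $u_D\in H^{3/2}(\Omega)$ of $(-\Delta-z)u=0$ in $\Omega$ with $\gamma_D u_D=f$, satisfying $\|u_D\|_{H^{3/2}(\Omega)}\le C\|f\|_{H^{3/2}(\partial\Omega)}$. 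Since $z\notin\sigma(-\Delta_{D,\Om})$ and $\Om$ is quasi-convex (in particular $C^{1,r}$ with $r>1/2$ falls under Hypothesis~\ref{h.Conv}), one then invokes the $H^2$-regularity up to the boundary: the solution of the homogeneous Helmholtz equation with $H^{3/2}$ Dirichlet data actually lies in $H^2(\Omega)$. The cleanest route is to combine $u_D$ with the $H^2$-regular solution of an auxiliary inhomogeneous Dirichlet problem, or more directly to use the layer-potential representation from Lemma~\ref{K-CPT} (the single layer maps $H^{1/2}(\partial\Omega)$ into $H^2(\Omega)$ and $-\tfrac12 I_{\partial\Om}+K^{\#}_z$ is Fredholm of index zero on $H^{1/2}$, hence --- after possibly shifting $z$ --- invertible there, exactly as in the proof of Lemma~\ref{Bjk}, Case~I). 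This yields a bounded map $\mathcal{E}_D:H^{3/2}(\partial\Omega)\to H^2(\Omega)$ with $\gamma_D\mathcal{E}_D=I_{H^{3/2}(\partial\Omega)}$.

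For the Neumann trace, I would argue symmetrically. Given $g\in H^{1/2}(\partial\Omega)$, use Theorem~\ref{t3.2H} with $s=1$ (so $g=\wti\gamma_N u$ with $u\in H^{3/2}(\Omega)$, $(-\Delta-z)u=0$, $\|u\|_{H^{3/2}(\Omega)}\le C\|g\|_{H^{1/2}(\partial\Omega)}$) for a suitable $z\notin\sigma(-\Delta_{N,\Om})$, and then upgrade to $H^2(\Omega)$ via Lemma~\ref{Bjk} (second inclusion, $\dom(-\Delta_{N,\Om})\subset H^2(\Omega)$) combined with the layer-potential decomposition used in Case~I of that lemma's proof, which already shows precisely the implication \eqref{VaK-2}: $w\in H^1$, $\Delta w\in L^2$, $\wti\gamma_N w\in H^{1/2}$ force $w\in H^2$. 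Tracking constants through these representations gives $\|u\|_{H^2(\Omega)}\le C\|g\|_{H^{1/2}(\partial\Omega)}$, hence a bounded right-inverse $\mathcal{E}_N:H^{1/2}(\partial\Omega)\to H^2(\Omega)$ of $\gamma_N$. Finally, having bounded right-inverses, surjectivity of both operators is immediate, and I would state the conclusion.

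The main obstacle is the up-to-the-boundary $H^2$-regularity of the auxiliary solutions: Theorems~\ref{t3.3} and \ref{t3.2H} only deliver $H^{3/2}(\Omega)$ membership, and one genuinely needs the quasi-convex (here $C^{1,r}$, $r>1/2$) hypothesis to bootstrap to $H^2$. I expect this step to be handled exactly as in the proof of Lemma~\ref{Bjk}: decompose the solution as $w_0+w_1$ where $w_0$ is a Newtonian potential (automatically $H^2$) and $w_1$ solves a homogeneous problem with $H^{1/2}$ Neumann data (resp.\ $H^{3/2}$ Dirichlet data), then represent $w_1$ via $\mathcal{S}_z$ and the invertibility of $-\tfrac12 I_{\partial\Om}+K^{\#}_z$ on $H^{1/2}(\partial\Om)$ granted by Lemma~\ref{K-CPT} (after shifting the spectral parameter if necessary so that the Fredholm-index-zero operator is actually invertible). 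Keeping the estimates uniform --- so that the composition of these bounded maps is itself bounded --- is routine but must be stated; there is no deeper difficulty beyond assembling these already-available ingredients.
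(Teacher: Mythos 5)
Your Neumann half goes through with the paper's tools (boundedness from \eqref{A.62}, solvability from Theorem \ref{t3.2H}, and the regularity implication \eqref{VaK-2} established in Case~I of the proof of Lemma \ref{Bjk} to upgrade the solution to $H^2(\Omega)$), but your Dirichlet half has a genuine gap. The single-layer machinery you invoke only controls \emph{Neumann} data: by the jump relation \eqref{jump}, $\wti\gamma_N\cS_z=-\tfrac12 I_{\partial\Omega}+K^{\#}_z$, and Lemma \ref{K-CPT} gives compactness of $K^{\#}_z$ on $H^{1/2}(\partial\Omega)$, which is exactly what makes \eqref{VaK-2} (a Neumann-data statement) work. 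To produce, from $f\in H^{3/2}(\partial\Omega)$, a solution of the Dirichlet problem lying in $H^2(\Omega)$ via layer potentials, you would need invertibility of $\gamma_D\cS_z:H^{1/2}(\partial\Omega)\to H^{3/2}(\partial\Omega)$ (or a double layer potential), and neither is available anywhere in the paper. The alternative you sketch -- quoting an ``$H^2$ up to the boundary'' regularity statement for the Dirichlet problem with $H^{3/2}(\partial\Omega)$ data -- is circular: the paper's regularity result \eqref{3.3Ys} is phrased with data in $\gamma_D\bigl(H^2(\Omega)\bigr)$ precisely because the identity $\gamma_D\bigl(H^2(\Omega)\bigr)=H^{3/2}(\partial\Omega)$ is the surjectivity you are trying to prove; the standard proof of that regularity statement proceeds by extending $f$ to an $H^2(\Omega)$ function first, i.e.\ it presupposes the right-inverse.

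For comparison, the paper's proof does not touch boundary value problems at all. Boundedness is obtained from the tangential-derivative identity together with Lemmas \ref{K-t1} and \ref{lA.6} (equivalently, \eqref{A.62}), and the right-inverses come from the second-order trace theory of Section \ref{s6}: Lemma \ref{Lo-Tx} gives a bounded right-inverse of $\gamma_N$ on $H^2(\Omega)\cap H^1_0(\Omega)$ with values dictated by $N^{1/2}(\partial\Omega)$, Lemma \ref{3o-Tx} does the same for $\gamma_D$ on $\{u\in H^2(\Omega)\,|\,\gamma_N u=0\}$ with $N^{3/2}(\partial\Omega)$, and the $C^{1,r}$ hypothesis enters only through the identifications $N^{1/2}(\partial\Omega)=H^{1/2}(\partial\Omega)$ (\eqref{Tan-C5}) and $N^{3/2}(\partial\Omega)=H^{3/2}(\partial\Omega)$ (Lemma \ref{Lk-t1}), both ultimately resting on Theorem \ref{T-MMS}. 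If you want to keep your PDE-based route, the Dirichlet case should be rerouted through these trace-theoretic lemmas (or you must independently establish the invertibility of $\gamma_D\cS_z$ on the relevant spaces), since the ingredients you cite do not yield it.
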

%%%%%%%%%%%%%%%%%%%%%%%%%%%%%%%% 
\begin{proof}
That the operator \eqref{D-tr1} is well-defined and bounded follows
from the observation that for each $u\in H^2(\Omega)$ one has
\begin{align}\lb{D-tr3}
\nabla_{tan}(\gamma_D u ) = \bigg(\sum_{k=1}^n\nu_k\frac{\partial\gamma_D u }
{\partial\tau_{kj}}\bigg)_{1\leq j\leq n}
% \nonumber\\
= \bigg(\gamma_D (\partial_j u)-\sum_{k=1}^n\nu_k\nu_j\gamma_D(\partial_k u)
\bigg)_{1\leq j\leq n}, 
\end{align}
and from Lemmas \ref{K-t1} and \ref{lA.6}. 
That the operator \eqref{D-tr1} has a linear, bounded right-inverse
is a consequence of Lemmas \ref{Lk-t1} and \ref{3o-Tx}.

With the goal of proving that the operator introduced in \eqref{D-tr2} is  
well-defined and bounded, one first observes that for each $u\in H^2(\Omega)$ one has
\begin{eqnarray}\lb{D-tr4}
\gamma_N u  =\nu\cdot\gamma_D(\nabla u). 
\end{eqnarray}
Then the desired conclusion follows from \eqref{2.6} and Lemma \ref{lA.6}. 
That the operator \eqref{D-tr2} also has a linear, bounded right-inverse
is a consequence of \eqref{Tan-C5} and Lemma \ref{Lo-Tx}.
\end{proof}
%%%%%%%%%%%%%%%%%%%%%%%%%%%%%%%% 

%%%%%%%%%%%%%%%%%%%%%%%%%%%%%%%%%
\begin{theorem}\lb{tH.G}
Assume Hypothesis \ref{h.Conv}. Then the Dirichlet trace operator 
\eqref{Tan-C10} is onto. More specifically, for every 
$z\in\bbC\backslash \si(-\Delta_{D,\Om})$ there exists $C=C(\Omega,z)>0$
with the property that for any functional 
$\Lambda\in\bigl(N^{1/2}(\partial\Omega)\bigr)^*$ there exists 
$u_\Lambda \in L^2(\Om;d^nx)$ with $(-\Delta -z)u_\Lambda =0$ in $\Omega$, 
such that 
\begin{eqnarray}\lb{Nh-G}
\mbox{the assignment $\bigl(N^{1/2}(\partial\Omega)\bigr)^*\ni 
\Lambda\mapsto u_\Lambda \in L^2(\Om;d^nx)$ is linear}, 
\end{eqnarray}
and 
\begin{eqnarray}\lb{Nh-1}
\widehat\gamma_D u_\Lambda = \Lambda\, \mbox{ and }\, 
\|u_\Lambda\|_{L^2(\Om;d^nx)}\leq C\|\Lambda\|_{(N^{1/2}(\partial\Omega))^*}.
\end{eqnarray}
\end{theorem}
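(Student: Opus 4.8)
\textbf{Proof proposal for Theorem \ref{tH.G}.}

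The plan is to construct $u_\Lambda$ explicitly by duality, using the fact (Theorem \ref{New-T-tr}, Lemma \ref{Lo-Tx}) that the Neumann trace
\[
\gamma_N : H^2(\Omega)\cap H^1_0(\Omega)\to N^{1/2}(\partial\Omega)
\]
is onto with a bounded linear right-inverse, and then solving an auxiliary inhomogeneous Dirichlet problem. Concretely, given $\Lambda\in\bigl(N^{1/2}(\partial\Omega)\bigr)^*$, I first want to produce an element $v_\Lambda\in H^2(\Omega)\cap H^1_0(\Omega)$ realizing $\Lambda$ in a suitable weak sense. The idea is: since $\gamma_N$ maps $H^2(\Omega)\cap H^1_0(\Omega)$ onto $N^{1/2}(\partial\Omega)$ with bounded right-inverse $\mathcal{E}$, the adjoint $\gamma_N^*$ maps $\bigl(N^{1/2}(\partial\Omega)\bigr)^*$ boundedly into $\bigl(H^2(\Omega)\cap H^1_0(\Omega)\bigr)^*$; but this is not yet what we need. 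Instead the cleaner route is to take $v:=\mathcal{E}\,\Lambda$ only when $\Lambda$ comes from an $L^2$-function; for general $\Lambda$ we argue by density and a priori estimates. So the first real step is to establish the a priori bound: for $\Lambda\in L^2(\partial\Omega;d^{n-1}\omega)\hookrightarrow\bigl(N^{1/2}(\partial\Omega)\bigr)^*$ (using Corollary \ref{L-Den}), let $u_\Lambda\in H^{1/2}(\Omega)$ be the unique solution of the Dirichlet problem \eqref{3.31} with $s=0$, i.e. $(-\Delta-z)u_\Lambda=0$, $\gamma_D u_\Lambda=\Lambda$; by Theorem \ref{t3.3}, $u_\Lambda\in H^{1/2}(\Omega)$ with $\|u_\Lambda\|_{H^{1/2}(\Omega)}\le C\|\Lambda\|_{L^2(\partial\Omega;d^{n-1}\omega)}$, and by \eqref{Tan-C11} we have $\widehat\gamma_D u_\Lambda=\gamma_D u_\Lambda=\Lambda$.

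The key step is then to upgrade this $L^2(\partial\Omega)$-bound to an $\bigl(N^{1/2}(\partial\Omega)\bigr)^*$-bound, so that the map $\Lambda\mapsto u_\Lambda$ extends continuously from the dense subspace $L^2(\partial\Omega;d^{n-1}\omega)$ to all of $\bigl(N^{1/2}(\partial\Omega)\bigr)^*$. For this I would test against $\gamma_N w$ with $w\in H^2(\Omega)\cap H^1_0(\Omega)$: using the generalized integration by parts formula \eqref{Tan-C12},
\[
{}_{N^{1/2}(\partial\Omega)}\langle\gamma_N w,\widehat\gamma_D u_\Lambda\rangle_{(N^{1/2}(\partial\Omega))^*}
=(\Delta w,u_\Lambda)_{L^2(\Omega;d^nx)}-(w,\Delta u_\Lambda)_{L^2(\Omega;d^nx)}
=(\Delta w,u_\Lambda)_{L^2(\Omega;d^nx)}+z\,(w,u_\Lambda)_{L^2(\Omega;d^nx)},
\]
since $\Delta u_\Lambda=-z\,u_\Lambda$. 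Now bound the right-hand side: $\Delta w\in L^2(\Omega;d^nx)$ and $\|w\|_{L^2(\Omega;d^nx)}+\|\Delta w\|_{L^2(\Omega;d^nx)}\le C\|w\|_{H^2(\Omega)}$, while $\|\gamma_N w\|_{N^{1/2}(\partial\Omega)}\approx \inf\{\|w'\|_{H^2(\Omega)} : w'\in H^2(\Omega)\cap H^1_0(\Omega),\ \gamma_N w'=\gamma_N w\}$ because $\gamma_N$ in \eqref{Tan-C7} has a bounded right-inverse. Hence, optimizing over $w$ with fixed trace, the pairing is dominated by $C\,\|u_\Lambda\|_{L^2(\Omega;d^nx)}\,\|\gamma_N w\|_{N^{1/2}(\partial\Omega)}$; since $\gamma_N$ is onto $N^{1/2}(\partial\Omega)$ this shows $\|u_\Lambda\|_{L^2(\Omega;d^nx)}$ already controls the functional $\widehat\gamma_D u_\Lambda$, but we want the reverse — so instead I bound $\|u_\Lambda\|_{L^2(\Omega;d^nx)}$ itself. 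For that I would use Theorem \ref{t3.3} once more but now read off that the Dirichlet-problem solution operator, $\Lambda\mapsto u_\Lambda=-\big[\wti\gamma_N(-\Delta_{D,\Om}-\bar z I_\Om)^{-1}\big]^*\Lambda$ by \eqref{3.35}, is the adjoint of $\wti\gamma_N(-\Delta_{D,\Om}-\bar z I_\Om)^{-1}$. So it suffices to show $\wti\gamma_N(-\Delta_{D,\Om}-\bar z I_\Om)^{-1}\in\cB\big(L^2(\Omega;d^nx),N^{1/2}(\partial\Omega)\big)$: indeed by Lemma \ref{Bjk} (quasi-convexity) $(-\Delta_{D,\Om}-\bar z I_\Om)^{-1}$ maps $L^2(\Omega;d^nx)$ into $H^2(\Omega)\cap H^1_0(\Omega)$ boundedly, and then $\gamma_N$ in \eqref{Tan-C7} maps that space boundedly into $N^{1/2}(\partial\Omega)$. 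Dualizing gives exactly $u_\Lambda\in L^2(\Omega;d^nx)$ with $\|u_\Lambda\|_{L^2(\Omega;d^nx)}\le C\|\Lambda\|_{(N^{1/2}(\partial\Omega))^*}$ for \emph{all} $\Lambda\in\bigl(N^{1/2}(\partial\Omega)\bigr)^*$, linearly in $\Lambda$.

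Finally I would check that this $u_\Lambda$ satisfies $(-\Delta-z)u_\Lambda=0$ in $\Omega$ and $\widehat\gamma_D u_\Lambda=\Lambda$. The first follows because $-\big[\wti\gamma_N(-\Delta_{D,\Om}-\bar z I_\Om)^{-1}\big]^*$ has range in $\ker(-\Delta_{max}-zI_\Omega)$ — this is the content of Theorem \ref{t3.3} on the level of $L^2(\partial\Omega)$-data and extends by the density of $L^2(\partial\Omega;d^{n-1}\omega)$ in $\bigl(N^{1/2}(\partial\Omega)\bigr)^*$ (Corollary \ref{L-Den}) together with closedness of $-\Delta_{max}$ (Lemma \ref{Max-M1}); the second follows by continuity of $\widehat\gamma_D$ on $\dom(-\Delta_{max})$ (Theorem \ref{New-T-tr}) and the fact that it already holds on the dense subspace. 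The main obstacle, I expect, is the duality bookkeeping: one must be careful that the pairing used to define $u_\Lambda=-\big[\wti\gamma_N(-\Delta_{D,\Om}-\bar z I_\Om)^{-1}\big]^*\Lambda$ — namely the $\bigl(N^{1/2}(\partial\Omega)\bigr)$–$\bigl(N^{1/2}(\partial\Omega)\bigr)^*$ pairing, compatible with integration on $\partial\Omega$ by Corollary \ref{L-Den} — is consistent with the $L^2(\Omega;d^nx)$ pairing on the other side, and that the resulting $u_\Lambda$ genuinely has $\widehat\gamma_D$-trace equal to $\Lambda$ rather than merely agreeing with $\Lambda$ after composing with some identification; this is exactly where compatibility statements \eqref{Tan-C11}, \eqref{Tan-C12}, and Corollary \ref{L-Den} must be invoked in the right order.
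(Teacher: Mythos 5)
Your proposal is correct and follows essentially the paper's own route: the heart of the matter in both cases is that, by Lemma \ref{Bjk} and Lemma \ref{Lo-Tx}, the map $\gamma_N(-\Delta_{D,\Om}-\ol{z}I_\Om)^{-1}$ is bounded from $L^2(\Om;d^nx)$ into $N^{1/2}(\dOm)$, and $u_\Lambda$ is defined (up to sign) as its adjoint applied to $\Lambda$ — the paper phrases this via the Riesz representation theorem, which is the same construction. The only difference is the final verification of $\widehat\gamma_D u_\Lambda=\Lambda$: you argue by density from $L^2(\partial\Om;d^{n-1}\omega)$ data using Theorem \ref{t3.3}, closedness of $-\Delta_{max}$, and continuity of $\widehat\gamma_D$ (exactly the scheme the paper itself employs later in the proof of Theorem \ref{tH.A}), whereas the paper's proof of this theorem checks it directly for general $\Lambda$ by specializing to $f=(-\Delta-\ol z)w$ with $w\in H^2(\Om)\cap H^1_0(\Om)$ and invoking the integration-by-parts formula \eqref{Tan-C12} together with the surjectivity of $\gamma_N$ in \eqref{Tan-C7}; both verifications are valid.
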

%%%%%%%%%%%%%%%%%%%%%%%%%%%%%%%
\begin{proof}
Fix $z\in\bbC\backslash \si(-\Delta_{D,\Om})$. By Lemma \ref{Bjk}, the inverse 
$(-\Delta_{D,\Omega}-\ol{z}I_{\Om})^{-1}:L^2(\Om;d^nx)
\to H^2(\Omega)\cap H^1_0(\Om)$ is well-defined, linear, and bounded. Given 
$\Lambda\in\bigl(N^{1/2}(\partial\Omega)\bigr)^*$, consider 
the bounded linear functional 
\begin{eqnarray}\lb{Nh-2}
\Lambda \gamma_N  (-\Delta_{D,\Omega}-\ol{z}I_{\Om})^{-1}:
L^2(\Om;d^nx)\to  {\mathbb{C}}.
\end{eqnarray}
Then, by the Riesz representation theorem, there exists a unique
$u_\Lambda\in L^2(\Om;d^nx)$ with the property that 
\begin{eqnarray}\lb{Nh-3}
\Lambda\bigl(\gamma_N((-\Delta_{D,\Omega}-\ol{z}I_{\Om})^{-1}f)\bigr)
= (f,u_\Lambda)_{L^2(\Om;d^nx)},
\end{eqnarray}
and such that 
\begin{align}\lb{Nh-4}
\|u_\Lambda\|_{L^2(\Om;d^nx)} & \leq 
C\|\Lambda \gamma_N  (-\Delta_{D,\Omega}-\ol{z}I_{\Om})^{-1}\|
_{\cB(L^2(\Om;d^nx),{\mathbb{C}})}  \no \\
& \leq C\|\Lambda\|_{(N^{1/2}(\partial\Omega))^*} 
\end{align}
for some constant $C=C(\Om,z)>0$, independent of $\Lambda$. 
Furthermore, by the uniqueness of $u_\Lambda$, the assignment 
$\bigl(N^{1/2}(\partial\Omega)\bigr)^*\ni\Lambda\mapsto 
u_\Lambda \in L^2(\Om;d^nx)$ is linear.  

Specializing \eqref{Nh-3} to the case where $f:=(-\Delta-\ol{z})w$, 
with $w\in H^2(\Omega)\cap H^1_0(\Om)$ then yields 
\begin{equation}\lb{Nh-5}
\Lambda\big(\gamma_N w \big)
= ((-\Delta-\ol{z})w,u_\Lambda)_{L^2(\Om;d^nx)},
\quad w\in H^2(\Omega)\cap H^1_0(\Om). 
\end{equation}
In particular, choosing $w\in C^\infty_0(\Omega)$ yields
\begin{eqnarray}\lb{Nh-6}
0=\Lambda(0)=\Lambda\big(\gamma_N w \big)
= ((-\Delta-\ol{z})w,u_\Lambda)_{L^2(\Om;d^nx)},
\quad w\in C^\infty_0(\Omega). 
\end{eqnarray}
Thus, one concludes that $(-\Delta -z)u_\Lambda=0$ in 
$\Omega$, in the sense of distributions. Using this in \eqref{Nh-3} one then obtains  
\begin{align}\lb{Nh-7}
\Lambda\big(\gamma_N w \big) &=
((-\Delta-\ol{z})w,u_\Lambda)_{L^2(\Om;d^nx)}
- (w,(-\Delta -z)u_\Lambda)_{L^2(\Om;d^nx)}
\nonumber\\
&= 
-{}_{N^{1/2}(\partial\Omega)}\langle\gamma_N w,\widehat{\gamma}_D u_\Lambda 
\rangle_{(N^{1/2}(\partial\Omega))^*},\quad w\in H^2(\Omega)\cap H^1_0(\Om). 
\end{align}
Since $\gamma_N$ maps $H^2(\Omega)\cap H^1_0(\Om)$ onto $N^{1/2}(\dOm)$, 
\eqref{Nh-7} implies that $\widehat{\gamma}_D u_\Lambda = - \Lambda$. 
This shows that the operator \eqref{Tan-C10} has a linear, 
bounded right-inverse. In particular, it is onto.  
\end{proof}
%%%%%%%%%%%%%%%%%%%%%%%%%%%%%%% 

%%%%%%%%%%%%%%%%%%%%%%%%%
\begin{corollary}\lb{New-CV2}
Assume Hypothesis \ref{h.Conv}. Then for every 
$z\in\bbC\backslash \si(-\Delta_{D,\Om})$, the map 
\begin{eqnarray}\lb{Tan-Bq2} 
\widehat{\gamma}_D: 
\big\{u\in L^2(\Omega;d^nx) \,\big|\, (-\Delta-z)u=0 \mbox{ in }\Omega\big\}
\to \bigl(N^{1/2}(\partial\Omega)\bigr)^*
\end{eqnarray}
is onto. 
\end{corollary}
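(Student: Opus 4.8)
The plan is to observe that Corollary \ref{New-CV2} is an immediate consequence of Theorem \ref{tH.G}, so the only work is to package that theorem correctly. First I would fix $z\in\bbC\backslash\si(-\Delta_{D,\Om})$ and let $\Lambda\in\bigl(N^{1/2}(\partial\Omega)\bigr)^*$ be arbitrary. Theorem \ref{tH.G} produces (in a linear fashion) a function $u_\Lambda\in L^2(\Om;d^nx)$ satisfying $(-\Delta-z)u_\Lambda=0$ in $\Omega$ together with $\widehat{\gamma}_D u_\Lambda=\Lambda$ (after absorbing the harmless sign in the statement of Theorem \ref{tH.G} into the choice of $\Lambda$, or equivalently replacing $u_\Lambda$ by $u_{-\Lambda}$). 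In particular $u_\Lambda$ belongs to the space $\big\{u\in L^2(\Omega;d^nx)\,\big|\,(-\Delta-z)u=0\text{ in }\Omega\big\}$ over which the map \eqref{Tan-Bq2} is defined (this map being a well-defined bounded operator as the restriction of $\widehat{\gamma}_D$ from Theorem \ref{New-T-tr} to the indicated closed subspace of $\dom(-\Delta_{max})$).

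Next I would simply note that, since $\Lambda$ was arbitrary, this shows $\widehat{\gamma}_D$ in \eqref{Tan-Bq2} is surjective onto $\bigl(N^{1/2}(\partial\Omega)\bigr)^*$; indeed the assignment $\Lambda\mapsto u_{-\Lambda}$ (or $\Lambda\mapsto u_\Lambda$ with the appropriate sign convention) furnishes a linear, bounded right-inverse by the norm estimate \eqref{Nh-1}. Thus the entire proof is one sentence invoking Theorem \ref{tH.G}; there is essentially no obstacle, the substantive content having already been carried out in the proof of that theorem via the Riesz representation argument and the ontoness of $\gamma_N$ on $H^2(\Omega)\cap H^1_0(\Om)$ (Lemma \ref{Lo-Tx}).

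If one wished to be slightly more self-contained, the one point to check carefully is that the function $u_\Lambda$ really does lie in the subspace appearing on the left of \eqref{Tan-Bq2}, i.e., that $(-\Delta-z)u_\Lambda=0$ holds in the distributional sense on $\Omega$ and that $u_\Lambda\in L^2(\Om;d^nx)$ with $\Delta u_\Lambda=zu_\Lambda\in L^2(\Om;d^nx)$ — both of which are explicitly recorded in the statement and proof of Theorem \ref{tH.G} (see \eqref{Nh-6} and the line following it). Granting this, the corollary follows verbatim. So the proof I would write is essentially: ``This follows from Theorem \ref{tH.G} upon noting that the function $u_\Lambda$ constructed there satisfies $(-\Delta-z)u_\Lambda=0$ in $\Omega$.''

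\begin{proof}
This is an immediate consequence of Theorem \ref{tH.G}. Indeed, fix
$z\in\bbC\backslash\si(-\Delta_{D,\Om})$ and let
$\Lambda\in\bigl(N^{1/2}(\partial\Omega)\bigr)^*$ be arbitrary. By
Theorem \ref{tH.G} (applied to the functional $-\Lambda$) there exists
$u_\Lambda\in L^2(\Om;d^nx)$ with $(-\Delta-z)u_\Lambda=0$ in $\Omega$
and $\widehat{\gamma}_D u_\Lambda=\Lambda$, the map
$\Lambda\mapsto u_\Lambda$ being linear and bounded from
$\bigl(N^{1/2}(\partial\Omega)\bigr)^*$ into $L^2(\Om;d^nx)$
(cf.\ \eqref{Nh-G}, \eqref{Nh-1}). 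In particular $u_\Lambda$ belongs to the
space on the left-hand side of \eqref{Tan-Bq2}, and since $\Lambda$ was
arbitrary it follows that the map \eqref{Tan-Bq2} is onto (in fact it has a
linear, bounded right-inverse).
\end{proof}
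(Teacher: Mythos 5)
Your proof is correct and follows exactly the paper's route: the paper's own proof of Corollary \ref{New-CV2} consists of the single observation that it is a direct consequence of Theorem \ref{tH.G}, which is precisely what you argue (your extra care with the sign in $\widehat{\gamma}_D u_\Lambda = -\Lambda$ versus $\Lambda$ is harmless and handled correctly).
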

%%%%%%%%%%%%%%%%%%%%%%%%%
\begin{proof}
This is a direct consequence of Theorem \ref{tH.G}.
\end{proof}
%%%%%%%%%%%%%%%%%%%%%%%%%%%%%%%%%

%%%%%%%%%%%%%%%%%%%%%%%%%%%%%%%%%
\begin{theorem}\lb{tH.A}
Assume Hypothesis \ref{h.Conv} and suppose that 
$z\in\bbC\backslash\si(-\Delta_{D,\Om})$. Then for any   
$f\in L^2(\Om;d^nx)$ and $g\in (N^{1/2}(\partial\Omega))^*$ the 
following inhomogeneous Dirichlet boundary value problem 
\begin{equation}\lb{Yan-14}
\begin{cases}
(-\Delta-z)u=f\text{ in }\,\Om,
\\[4pt]
u\in L^2(\Om;d^nx),
\\[4pt]
\widehat\ga_D u  =g\text{ on }\,\dOm,
\end{cases}
\end{equation}
has a unique solution $u=u_D$. This solution satisfies 
\begin{equation}\lb{Hh.3X} 
\|u_D\|_{L^2(\Om;d^nx)}
+\|\widehat\ga_N u_D\|_{(N^{3/2}(\partial\Omega))^*}\leq C_D
(\|f\|_{L^2(\Om;d^nx)}+\|g\|_{(N^{1/2}(\partial\Omega))^*})
\end{equation}
for some constant $C_D=C_D(\Omega,z)>0$, and the following regularity 
results hold:  
\begin{eqnarray}\lb{3.3Y}
&& g\in H^1(\partial\Omega) \, \text{ implies } \, u_D\in H^{3/2}(\Omega),
\\[4pt]
&& g\in\gamma_D\bigl(H^2(\Omega)\bigr)
 \, \text{ implies } \,  u_D\in H^2(\Omega).
\lb{3.3Ys}
\end{eqnarray} 
In particular, 
\begin{eqnarray}\lb{3.3Ybis}
g=0 \, \text{ implies } \,  u_D\in H^2(\Omega)\cap H^1_0(\Omega).
\end{eqnarray} 

Natural estimates are valid in each case. 

Moreover, 
\begin{equation}\lb{3.34Y} 
\big[\ga_N (-\Delta_{D,\Om}-{\ol z}I_\Om)^{-1}\big]^* \in 
\cB\big((N^{1/2}(\partial\Omega))^*, L^2(\Om;d^nx)\big),   
\end{equation}
and the solution of \eqref{Yan-14} is given by the formula
\begin{equation}\lb{3.35Y}
u_D=(-\Delta_{D,\Om}-zI_\Om)^{-1}f
-\big[\ga_N (-\Delta_{D,\Om}-\ol{z}I_\Om)^{-1}\big]^*g. 
\end{equation}
\end{theorem}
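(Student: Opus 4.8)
\textbf{Proof strategy for Theorem \ref{tH.A}.}
The plan is to decompose the solution as $u_D = u_f + u_g$, where $u_f := (-\Delta_{D,\Om}-zI_\Om)^{-1}f$ handles the interior data with homogeneous Dirichlet trace, and $u_g$ handles the boundary data with vanishing right-hand side. By Lemma \ref{Bjk} and Theorem \ref{t3.XD}, $u_f \in H^2(\Omega)\cap H^1_0(\Omega)$ with the appropriate estimate, and $\widehat\ga_D u_f = \ga_D u_f = 0$ by \eqref{Tan-C11}; thus $u_f$ solves \eqref{Yan-14} with $g$ replaced by $0$. For the boundary piece, I would take $u_g := u_{-g}$, the element produced by Theorem \ref{tH.G} applied to the functional $\Lambda := -g \in (N^{1/2}(\dOm))^*$: this gives $u_g \in L^2(\Om;d^nx)$ with $(-\Delta-z)u_g = 0$ in $\Om$, $\widehat\ga_D u_g = -\Lambda = g$, and $\|u_g\|_{L^2(\Om;d^nx)} \le C\|g\|_{(N^{1/2}(\dOm))^*}$. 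Adding, $u_D = u_f + u_g$ solves \eqref{Yan-14}, and the $L^2$- and $(N^{3/2}(\dOm))^*$-bounds in \eqref{Hh.3X} follow by combining the two estimates with the boundedness of $\widehat\ga_N$ from Theorem \ref{3ew-T-tr}.

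For uniqueness, suppose $u \in L^2(\Om;d^nx)$ solves the homogeneous problem, i.e.\ $(-\Delta-z)u = 0$ in $\Om$ and $\widehat\ga_D u = 0$. This is exactly the situation \eqref{Yan-11} arising in the proof of Theorem \ref{T-DD1}: for arbitrary $h\in L^2(\Om;d^nx)$, let $u_h \in \dom(-\Delta_{D,\Om}) \subset H^2(\Omega)\cap H^1_0(\Omega)$ (using Lemma \ref{Bjk}) solve $(-\Delta-\ol z)u_h = h$; then the integration by parts formula \eqref{Tan-C12} gives
\begin{equation}
(h,u)_{L^2(\Om;d^nx)} = ((-\Delta-\ol z)u_h, u)_{L^2(\Om;d^nx)}
= (u_h,(-\Delta - z)u)_{L^2(\Om;d^nx)} + {}_{N^{1/2}(\dOm)}\langle \ga_N u_h, \widehat\ga_D u\rangle_{(N^{1/2}(\dOm))^*} = 0.
\end{equation}
Since $h$ is arbitrary, $u = 0$. (One must check the $z$ versus $\ol z$ bookkeeping carefully here, but $\bbC\backslash\si(-\Delta_{D,\Om})$ is invariant under conjugation.)

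For the regularity statements, if $g \in H^1(\dOm)$, then $\widehat\ga_D u_g = g$ lies in $H^1(\dOm)$; I would invoke the well-posedness of the Dirichlet problem in Theorem \ref{t3.3} with $s=1$ to produce a solution in $H^{3/2}(\Omega)$ having this trace, and then uniqueness (just proved) forces $u_g$ to coincide with it, giving \eqref{3.3Y}; the case $s \in (0,1)$ of Theorem \ref{t3.3} covers intermediate $g$. If $g \in \ga_D(H^2(\Omega))$, pick $v \in H^2(\Omega)$ with $\ga_D v = g$ and write $u_g = v + w$ where $w$ solves $(-\Delta-z)w = -(-\Delta-z)v \in L^2(\Om;d^nx)$ with $\ga_D w = 0$; then $w \in H^2(\Omega)\cap H^1_0(\Omega)$ by Lemma \ref{Bjk}, so $u_g \in H^2(\Omega)$, which is \eqref{3.3Ys}, and \eqref{3.3Ybis} is the special case $g=0$ combined with $u_f \in H^2(\Omega)\cap H^1_0(\Omega)$. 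Finally, \eqref{3.34Y} and \eqref{3.35Y}: the operator $\ga_N(-\Delta_{D,\Om}-\ol z I_\Om)^{-1}$ maps $L^2(\Om;d^nx) \to N^{1/2}(\dOm)$ boundedly by Lemma \ref{Bjk} and Lemma \ref{Lo-Tx}, so its adjoint lands in $(N^{1/2}(\dOm))^* = X$ duals appropriately; one then checks that the identity \eqref{Nh-3}--\eqref{Nh-7} in the proof of Theorem \ref{tH.G} identifies $u_g = -[\ga_N(-\Delta_{D,\Om}-\ol z I_\Om)^{-1}]^* g$, and adds $u_f$. The main obstacle I anticipate is the careful duality/adjoint bookkeeping in \eqref{3.34Y}--\eqref{3.35Y} — making sure the adjoint is taken with respect to the conjugate-dual convention of the paper and that the $L^2$-pairing compatibility from Corollary \ref{L-Den} is correctly applied so that $[\ga_N(\cdots)^{-1}]^*$ genuinely reproduces the $u_\Lambda$ of Theorem \ref{tH.G}; everything else is assembling pieces already in hand.
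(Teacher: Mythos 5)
Your proposal is correct and follows essentially the same route as the paper: existence via the $L^2$ extension of Theorem \ref{tH.G} combined with the Dirichlet resolvent and Lemma \ref{Bjk}, uniqueness via the duality argument based on \eqref{Tan-C12} with $u_h\in\dom(-\Delta_{D,\Om})\subset H^2(\Om)\cap H^1_0(\Om)$, regularity from Theorem \ref{t3.3} and Lemma \ref{Bjk} together with uniqueness, and \eqref{3.34Y}--\eqref{3.35Y} by dualizing $\ga_N(-\Delta_{D,\Om}-\ol{z}I_\Om)^{-1}$. The only deviations are immaterial bookkeeping: you take the $z$-harmonic extension of $g$ where the paper extends harmonically and then corrects with $(-\Delta_{D,\Om}-zI_\Om)^{-1}(f+zv)$, you read off \eqref{3.35Y} directly from the Riesz construction \eqref{Nh-3}--\eqref{Nh-7} rather than via the paper's density argument with $H^1(\dOm)$ boundary data, and the sign of the boundary pairing in your uniqueness computation is harmless since both the pairing and $(-\Delta-z)u$ vanish there.
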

%%%%%%%%%%%%%%%%%%%%%
\begin{proof}
By Theorem \ref{tH.G} there exists a function $v\in L^2(\Om;d^nx)$ satisfying 
$\Delta v=0$ in $\Om$, $\widehat\ga_D v          =g$ 
and $\|v\|_{L^2(\Om;d^nx)}\leq C\|g\|_{(N^{1/2}(\partial\Omega))^*}$ for 
some finite $C=C(\Omega)>0$. Setting  
\begin{equation}\lb{Jh-1}
w:=(-\Delta_{D,\Omega}-zI_{\Om})^{-1}(f+zv)
\end{equation}
so that 
\begin{equation}\lb{Jh-2}
w\in \dom (- \Delta_{D,\Omega})\hookrightarrow H^2(\Om), 
\end{equation}
by our hypotheses and by Lemma \ref{Bjk}, and  
\begin{equation}\lb{Jh-3}
\|w\|_{H^2(\Om)}\leq C\|f+zv\|_{L^2(\Om,d^nx)}
\leq C\bigl(\|f\|_{L^2(\Om;d^nx)}+\|g\|_{(N^{1/2}(\partial\Omega))^*}\bigr), 
\end{equation}
for some finite constant $C=C(\Omega,z)>0$. Then $u:=v+w$ solves
\eqref{Yan-14} and satisfies \eqref{Hh.3X}. 

To prove uniqueness, we assume that $u$ satisfies the homogeneous 
version of \eqref{Yan-14}, and consider 
$w:=-z(-\Delta_{D,\Om}-zI_\Om)^{-1}u\in H^1_0(\Om)$. Then $v:=u-w$ satisfies 
\begin{eqnarray}\lb{Yan-1k} 
v\in L^2(\Omega;d^nx),\quad (-\Delta-z)v=0 \, \mbox{ in } \, \Omega,\quad
\widehat{\gamma}_D v =0 \, \mbox{ on } \, \partial\Omega.
\end{eqnarray}
We claim that these conditions imply that $v=0$. To show this, 
consider an arbitrary $f\in L^2(\Omega;d^nx)$ and let $u_f$ be the unique
solution of 
\begin{eqnarray}\lb{Yan-1kk} 
u_f\in H^1_0(\Omega),\quad (-\Delta -\ol{z})u_f=f\, \mbox{ in }\, \Omega. 
\end{eqnarray}
Then $u_f\in \dom (- \Delta_{D,\Om})\subset H^2(\Omega)$ by 
Lemma \ref{Bjk}, so that $u_f\in H^2(\Omega)\cap H^1_0(\Omega)$. 
Then the integration by parts formula \eqref{Tan-C12} yields
\begin{align}\lb{Yan-13c} 
(f,v)_{L^2(\Om;d^nx)} &= ((-\Delta -\ol{z})u_f,v)_{L^2(\Om;d^nx)}
\nonumber\\
&= (u_f,(-\Delta -z)v)_{L^2(\Om;d^nx)}
-{}_{N^{1/2}(\partial\Omega)}\langle\gamma_N u_f,\widehat{\gamma}_D v         
\rangle_{(N^{1/2}(\partial\Omega))^*}
\nonumber\\
& =0. 
\end{align}
Since $f\in L^2(\Omega;d^nx)$ was arbitrary it follows that $v=0$ and
consequently, $u\in H^1_0(\Om)$. Given that $u$ solves 
the homogeneous version of \eqref{Yan-14} and the fact that 
$z\in\bbC\backslash\si(-\Delta_{N,\Om})$, we may therefore conclude that 
$u=0$ in $\Omega$. The regularity result \eqref{3.3Y} is a consequence 
of Theorem \ref{t3.3} and the uniqueness for \eqref{Yan-14}, 
while \eqref{3.3Ys} follows from Lemma \ref{Bjk} and the uniqueness for
\eqref{Yan-14}. Finally, \eqref{3.3Ybis} is a direct 
consequence of \eqref{3.3Ys}.

Next, consider \eqref{3.34Y}. On one hand, by Lemma \ref{Bjk} and 
the current assumptions, one concludes that 
\begin{equation}\lb{Hy-1} 
(-\Delta_{D,\Om}-\ol{z}I_\Om)^{-1}:L^2(\Om;d^nx)\to  
H^2(\Omega)\cap H^1_0(\Omega)
\end{equation}
is a well-defined linear, bounded operator. On the other hand, the operator  
$\ga_N$ in \eqref{Tan-C7} is also well-defined, linear, and bounded. 
Consequently, 
\begin{equation}\lb{Hy-1b} 
\ga_N (-\Delta_{D,\Om}-{\ol z}I_\Om)^{-1}\in 
\cB\big(L^2(\Om;d^nx),N^{1/2}(\partial\Omega)\big),   
\end{equation}
and \eqref{3.34Y} follows by dualizing in \eqref{Hy-1b}. 
We note that, with $\wti\gamma_N$ as in \eqref{2.8}, 
this also implies that 
\begin{equation}\lb{Hy-2} 
\big[\ga_N (-\Delta_{D,\Om}-{\ol z}I_\Om)^{-1}\big]^*
= \big[\wti\ga_N (-\Delta_{D,\Om}-{\ol z}I_\Om)^{-1}\big]^*\in 
\cB\big((N^{1/2}(\partial\Omega))^*,L^2(\Om;d^nx)\big).
\end{equation}
This will play a role shortly. 

Next, let $u_g$ be the (unique) solution of \eqref{Yan-14} with $f=0$ 
and $g\in\bigl(N^{1/2}(\partial\Omega)\big)^*$ arbitrary. 
In addition, set $u:=\big[\ga_N (-\Delta_{D,\Om}-\ol{z}I_\Om)^{-1}\big]^*g\in 
L^2(\Om;d^nx)$. Our goal is to show that $u=u_g$. 
Since $H^1(\partial\Omega)$ is densely embedded
into $\bigl(N^{1/2}(\partial\Omega)\big)^*$ by \eqref{Nyyy}, it follows that there exists 
a sequence $\{g_j\}_{j\in{\mathbb{N}}}$ such that $g_j\in H^1(\partial\Omega)$, 
$j\in{\mathbb{N}}$, and $g_j\to g$ in 
$\bigl(N^{1/2}(\partial\Omega)\big)^*$ as $j\to\infty$. 
For each fixed $j$, we know from Theorem \ref{t3.3} that
$u_{g_j}=-\big[\wti\ga_N (-\Delta_{D,\Om}-\ol{z}I_\Om)^{-1}\big]^*g_j$.
Due to \eqref{Hy-2}, this implies that $u_{g_j}\to u$ in $L^2(\Om;d^nx)$. 
Hence, $\Delta u_{g_j}=-zu_{g_j}\to -zu$ in $L^2(\Om;d^nx)$. 
With these at hand, it is then clear that $(-\Delta-z)u=0$ in the sense 
of distributions in $\Omega$. Next, given that 
\begin{equation}\lb{Hy-3}
u_{g_j}\to u\,\mbox{ in }\,L^2(\Om;d^nx)\,\mbox{ as }\,j\to\infty,
\, \mbox{ and }\,  
\Delta u_{g_j}\to\Delta u\,\mbox{ in }\,L^2(\Om;d^nx)
\,\mbox{ as }\,j\to\infty,
\end{equation}
it follows from this and Theorem \ref{New-T-tr} that 
\begin{equation}\lb{Hy-4}
g_j=\widehat\gamma_D u_{g_j} \to \widehat\gamma_D u  
\, \mbox{ in }\, \bigl(N^{1/2}(\partial\Omega)\big)^*
\, \mbox{ as }\, j\to\infty. 
\end{equation}
However, since $g_j\to g$ in $\bigl(N^{1/2}(\partial\Omega)\big)^*$
as $j\to\infty$, one finally concludes that $\widehat\gamma_D u =g$. 
This proves that $u\in L^2(\Om;d^nx)$ satisfies
$(-\Delta -z) u=0$ in $\Omega$ and $\widehat\gamma_D u =g$ and hence  
by the uniqueness in \eqref{Yan-14} one obtains $u=u_g$. 

This concludes the proof of the fact that the unique solution of 
\eqref{Yan-14}, with data $f=0$ and $g\in\bigl(N^{1/2}(\partial\Omega)\big)^*$ 
arbitrary, is given by $-\big[\ga_N (-\Delta_{D,\Om}-\ol{z}I_\Om)^{-1}\big]^*g$.
Having established this, it is then clear that \eqref{3.35Y} solves
\eqref{Yan-14}, as originally stated. 
\end{proof}
%%%%%%%%%%%%%%%%%%%%%

%%%%%%%%%%%%%%%%%%%%%%%%%
\begin{corollary}\lb{New-CV22}
Assume Hypothesis \ref{h.Conv}. Then for every 
$z\in\bbC\backslash \si(-\Delta_{D,\Om})$, the map $\hatt \gamma_D$ in \eqref{Tan-Bq2} 
is an isomorphism $($i.e., bijective and bicontinuous\,$)$. 
\end{corollary}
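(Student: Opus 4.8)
The plan is to show that the map
\[
\widehat{\gamma}_D:\big\{u\in L^2(\Omega;d^nx)\,\big|\,(-\Delta-z)u=0\text{ in }\Omega\big\}\to\bigl(N^{1/2}(\partial\Omega)\bigr)^*
\]
is a linear bijection that is bounded with bounded inverse, and then invoke the open mapping theorem (or, more directly, exhibit an explicit bounded inverse). First I would observe that boundedness of $\widehat{\gamma}_D$ on the displayed space follows immediately from Theorem \ref{New-T-tr}, since that space is a closed subspace of $\dom(-\Delta_{max})$ equipped with the graph norm, and on the kernel of $(-\Delta-z)$ the graph norm is equivalent to the $L^2(\Omega;d^nx)$-norm (because $\Delta u = -zu$ there). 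Surjectivity is precisely Corollary \ref{New-CV2} (itself a consequence of Theorem \ref{tH.G}). For injectivity, suppose $u$ lies in the displayed space with $\widehat{\gamma}_D u=0$; this is exactly the uniqueness statement contained in the proof of Theorem \ref{tH.A} (the step showing that $v\in L^2(\Omega;d^nx)$, $(-\Delta-z)v=0$, $\widehat{\gamma}_D v=0$ forces $v=0$), which is proved by pairing against $(-\Delta-\ol z)u_f$ for $u_f\in H^2(\Omega)\cap H^1_0(\Omega)$ the Dirichlet solution with datum $f$, and using the integration-by-parts formula \eqref{Tan-C12} together with $\gamma_N u_f\in N^{1/2}(\partial\Omega)$. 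Since $f\in L^2(\Omega;d^nx)$ is arbitrary, $v=0$.

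Having bijectivity and boundedness, bicontinuity is the remaining point. Rather than appeal abstractly to the open mapping theorem, the cleanest route is to note that Theorem \ref{tH.G} already produces a \emph{bounded linear right-inverse} of $\widehat{\gamma}_D$ on the kernel space: the assignment $\Lambda\mapsto u_\Lambda$ is linear, satisfies $\widehat{\gamma}_D u_\Lambda=\Lambda$ (up to the harmless sign in \eqref{Nh-7}, which one absorbs), lands in $\{u\in L^2(\Omega;d^nx)\,|\,(-\Delta-z)u=0\}$, and obeys the estimate $\|u_\Lambda\|_{L^2(\Omega;d^nx)}\le C\|\Lambda\|_{(N^{1/2}(\partial\Omega))^*}$. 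A bounded linear operator between Banach spaces that is bijective and possesses a bounded linear right-inverse automatically has that right-inverse as its (two-sided) inverse, hence is an isomorphism. Equivalently: injectivity forces the right-inverse to also be a left-inverse.

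So the proof is essentially an assembly: \emph{(i)} $\widehat{\gamma}_D$ restricted to $\ker(-\Delta_{max}-zI_\Omega)$ is bounded (Theorem \ref{New-T-tr} plus norm equivalence on the kernel); \emph{(ii)} it is onto with a bounded linear right-inverse (Theorem \ref{tH.G}/Corollary \ref{New-CV2}); \emph{(iii)} it is injective (the uniqueness argument from Theorem \ref{tH.A}, via \eqref{Tan-C12} and Lemma \ref{Bjk}); \emph{(iv)} conclude that the right-inverse is a genuine inverse, so $\widehat{\gamma}_D$ is an isomorphism. I do not anticipate a serious obstacle here, as every ingredient has already been established; the only mild care needed is the bookkeeping of the sign in \eqref{Nh-7} and the verification that the graph norm and the $L^2$-norm are equivalent on $\ker(-\Delta_{max}-zI_\Omega)$, which is immediate. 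If one prefers to avoid the explicit right-inverse, an alternative is: $\widehat{\gamma}_D$ is a continuous bijection between the Banach spaces $\ker(-\Delta_{max}-zI_\Omega)$ (closed in $\dom(-\Delta_{max})$, hence Banach) and $\bigl(N^{1/2}(\partial\Omega)\bigr)^*$ (Banach by Lemma \ref{L-refN}), so Banach's inverse mapping theorem applies directly.
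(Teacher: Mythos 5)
Your proposal is correct and matches the paper's own (very terse) proof, which likewise just assembles Corollary \ref{New-CV2} for surjectivity with Theorem \ref{tH.A} for injectivity and the boundedness of the inverse (the estimate \eqref{Hh.3X} with $f=0$, equivalently the bounded right-inverse of Theorem \ref{tH.G} that you invoke). Your extra observations — equivalence of the graph norm and the $L^2$-norm on $\ker(-\Delta_{max}-zI_\Om)$, the sign in \eqref{Nh-7}, and the optional appeal to the inverse mapping theorem — are accurate but only make explicit what the paper leaves implicit.
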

%%%%%%%%%%%%%%%%%%%%%%%%%
\begin{proof}
This follows from Corollary \ref{New-CV2} and Theorem \ref{tH.A}. 
\end{proof}
%%%%%%%%%%%%%%%%%%%%%%%%% 

%%%%%%%%%%%%%%%%%%%%%%%%%%%%%%%%
\begin{theorem}\lb{tH.u}
Assume Hypothesis \ref{h.Conv}. Then the Neumann trace operator 
\eqref{3an-C10} is onto. More specifically, for every 
$z\in\bbC\backslash \si(-\Delta_{N,\Om})$ there exists $C=C(\Omega,z)>0$
with the property that for any functional 
$\Lambda\in\bigl(N^{3/2}(\partial\Omega)\bigr)^*$ there exists 
$u_\Lambda \in L^2(\Om;d^nx)$ with $(-\Delta -z)u_\Lambda =0$ in $\Omega$, 
such that 
\begin{eqnarray}\lb{Nh-G.a}
\mbox{the assignment $\bigl(N^{3/2}(\partial\Omega)\bigr)^*\ni 
\Lambda\mapsto u_\Lambda \in L^2(\Om;d^nx)$ is linear}, 
\end{eqnarray}
and 
\begin{eqnarray}\lb{Nh-1.a}
\widehat\gamma_N u_\Lambda = \Lambda\, \mbox{ and }\, 
\|u_\Lambda\|_{L^2(\Om;d^nx)}\leq C\|\Lambda\|_{(N^{3/2}(\partial\Omega))^*}.
\end{eqnarray}
\end{theorem}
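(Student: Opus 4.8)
The plan is to mimic, almost verbatim, the proof of Theorem~\ref{tH.G}, but with the roles of the Dirichlet and Neumann trace operators interchanged and with $-\Delta_{N,\Om}$ playing the part previously played by $-\Delta_{D,\Om}$. Fix $z\in\bbC\backslash\si(-\Delta_{N,\Om})$. By Lemma~\ref{Bjk} and the current hypotheses, the resolvent $(-\Delta_{N,\Om}-\ol z I_\Om)^{-1}$ maps $L^2(\Om;d^nx)$ boundedly into $H^2(\Om)$; composing it with the Dirichlet trace operator $\gamma_D$ in the sense of Lemma~\ref{3o-Tx} (that is, $\gamma_D:\{u\in H^2(\Om)\,|\,\gamma_N u=0\}\to N^{3/2}(\dOm)$) and noting that the range of this resolvent consists of functions $w$ with $\wti\gamma_N w=0$, so that $\gamma_D w\in N^{3/2}(\dOm)$, one obtains a bounded operator $\gamma_D(-\Delta_{N,\Om}-\ol z I_\Om)^{-1}\in\cB(L^2(\Om;d^nx),N^{3/2}(\dOm))$. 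Then, given $\Lambda\in(N^{3/2}(\dOm))^*$, the composition $\Lambda\,\gamma_D(-\Delta_{N,\Om}-\ol z I_\Om)^{-1}$ is a bounded linear functional on $L^2(\Om;d^nx)$, so by the Riesz representation theorem there is a unique $u_\Lambda\in L^2(\Om;d^nx)$ with
\begin{equation}
\Lambda\bigl(\gamma_D((-\Delta_{N,\Om}-\ol z I_\Om)^{-1}f)\bigr)=(f,u_\Lambda)_{L^2(\Om;d^nx)},\quad f\in L^2(\Om;d^nx),
\end{equation}
and $\|u_\Lambda\|_{L^2(\Om;d^nx)}\le C\|\Lambda\|_{(N^{3/2}(\dOm))^*}$; uniqueness gives linearity of $\Lambda\mapsto u_\Lambda$, which is \eqref{Nh-G.a}, and the second inequality in \eqref{Nh-1.a}.

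The next step is to identify $u_\Lambda$. Specializing the displayed identity to $f:=(-\Delta-\ol z)w$ with $w\in\dom(-\Delta_{N,\Om})\subset H^2(\Om)$ (using Lemma~\ref{Bjk} again) yields $\Lambda(\gamma_D w)=((-\Delta-\ol z)w,u_\Lambda)_{L^2(\Om;d^nx)}$ for all such $w$. Taking $w\in C_0^\infty(\Om)$ forces $\gamma_D w=0$, hence $((-\Delta-\ol z)w,u_\Lambda)=0$ for all $w\in C_0^\infty(\Om)$, i.e.\ $(-\Delta-z)u_\Lambda=0$ in $\Om$ in the distributional sense, so in particular $u_\Lambda\in\dom(-\Delta_{max})$ and $\widehat\gamma_N u_\Lambda$ is well defined via Theorem~\ref{3ew-T-tr}. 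Feeding this back, for general $w\in\dom(-\Delta_{N,\Om})$ one gets
\begin{align}
\Lambda(\gamma_D w)&=((-\Delta-\ol z)w,u_\Lambda)_{L^2(\Om;d^nx)}-(w,(-\Delta-z)u_\Lambda)_{L^2(\Om;d^nx)}\no\\
&={}_{N^{3/2}(\dOm)}\langle\gamma_D w,\widehat\gamma_N u_\Lambda\rangle_{(N^{3/2}(\dOm))^*},
\end{align}
where the last equality is the integration by parts formula \eqref{3an-C12} (valid since $w\in H^2(\Om)$ with $\gamma_N w=\wti\gamma_N w=0$, using compatibility of $\wti\gamma_N$ with $\gamma_N$ on $H^2$ and with $\widehat\gamma_N$ in the smoothness range $s\ge 3/2$; the sign matches because $\gamma_D$ sits in the first slot). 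Since $\gamma_D$ in \eqref{3an-C7} maps $\{u\in H^2(\Om)\,|\,\gamma_N u=0\}$ \emph{onto} $N^{3/2}(\dOm)$ by Lemma~\ref{3o-Tx}, the identity $\Lambda(g)={}_{N^{3/2}(\dOm)}\langle g,\widehat\gamma_N u_\Lambda\rangle_{(N^{3/2}(\dOm))^*}$ holds for every $g\in N^{3/2}(\dOm)$, which says precisely $\widehat\gamma_N u_\Lambda=\Lambda$, completing \eqref{Nh-1.a}. (If the bookkeeping of signs produces $-\Lambda$ instead, one simply replaces $u_\Lambda$ by $-u_\Lambda$, exactly as in Theorem~\ref{tH.G}.)

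I expect the main obstacle to be purely bookkeeping: ensuring that the three avatars of the Neumann trace ($\gamma_N$ on $H^2$, $\wti\gamma_N$ on $\{u\in H^{3/2}\,|\,\Delta u\in L^2\}$, $\widehat\gamma_N$ on $\dom(-\Delta_{max})$) are being invoked on the correct function classes and are mutually compatible there, so that \eqref{3an-C12} and the domain descriptions of Lemma~\ref{Bjk} can be chained together without gap, and that $\widehat\gamma_N u_\Lambda$ is well defined (which requires $u_\Lambda\in\dom(-\Delta_{max})$, established from $(-\Delta-z)u_\Lambda=0$). There is also a mild subtlety in that $z\in\bbC\backslash\si(-\Delta_{N,\Om})$ is now the relevant spectral restriction rather than $\si(-\Delta_{D,\Om})$, so $(-\Delta_{N,\Om}-\ol z I_\Om)^{-1}$ must be known to be bounded $L^2\to H^2$ — this is exactly the content of the second inclusion in Lemma~\ref{Bjk}. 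Modulo these compatibility checks, the argument is a line-by-line dualization of the proof of Theorem~\ref{tH.G}, and no genuinely new analytic input is needed.
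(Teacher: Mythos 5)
Your argument is correct and coincides, step for step, with the paper's own proof: Riesz representation applied to $\Lambda\,\gamma_D(-\Delta_{N,\Om}-\ol z I_\Om)^{-1}$ (using Lemma \ref{Bjk} for $L^2\to H^2$ boundedness of the resolvent and Lemma \ref{3o-Tx} to land in $N^{3/2}(\dOm)$), test functions to see $(-\Delta-z)u_\Lambda=0$, the Green formula \eqref{3an-C12}, and the ontoness of $\gamma_D$ on $\{w\in H^2(\Om)\,|\,\gamma_N w=0\}$ to identify $\widehat\gamma_N u_\Lambda=\pm\Lambda$. The sign caveat you flag is exactly how the paper handles it (its proof produces $-\Lambda$ and absorbs the sign), so no changes are needed.
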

%%%%%%%%%%%%%%%%%%%%%%%%%%%%%%%
\begin{proof}
Fix $z\in\bbC\backslash \si(-\Delta_{N,\Om})$. By Lemma \ref{Bjk}, the operator
$(-\Delta_{N,\Omega}-\ol{z}I_{\Om})^{-1}:L^2(\Om;d^nx)\to H^2(\Omega)$ 
is well-defined, linear, and bounded. 
Given $\Lambda\in\bigl(N^{3/2}(\partial\Omega)\bigr)^*$, we consider 
the bounded linear functional 
\begin{eqnarray}\lb{Nh-2.a}
\Lambda \gamma_D  (-\Delta_{N,\Omega}-\ol{z}I_{\Om})^{-1}:
L^2(\Om;d^nx)\to  {\mathbb{C}}.
\end{eqnarray}
Then, by the Riesz representation theorem, there exists a unique
$u_\Lambda\in L^2(\Om;d^nx)$ with the property that 
\begin{eqnarray}\lb{Nh-3.a}
\Lambda\bigl(\gamma_D((-\Delta_{N,\Omega}-\ol{z}I_{\Om})^{-1}f)\bigr)
=\langle f,u_\Lambda\rangle_{L^2(\Om;d^nx)},
\end{eqnarray}
and such that 
\begin{align}\lb{Nh-4.a}
\|u_\Lambda\|_{L^2(\Om;d^nx)} & \leq 
C\|\Lambda \gamma_D  (-\Delta_{N,\Omega}-\ol{z}I_{\Om})^{-1}\|
_{\cB(L^2(\Om;d^nx),{\mathbb{C}})}  \no \\
& \leq C\|\Lambda\|_{(N^{3/2}(\partial\Omega))^*},
\end{align}
for some constant $C=C(\Om,z)>0$, independent of $\Lambda$. 
Furthermore, by the uniqueness of $u_\Lambda$, the assignment 
$\bigl(N^{3/2}(\partial\Omega)\bigr)^*\ni\Lambda\mapsto 
u_\Lambda \in L^2(\Om;d^nx)$ is linear.  

Specializing \eqref{Nh-3.a} to the case where $f:=(-\Delta-\ol{z})w$, 
with $w\in H^2(\Omega)$ satisfying $\gamma_N w =0$, then yields 
\begin{equation}\lb{Nh-5.a}
\Lambda\big(\gamma_D w \big)
= ((-\Delta-\ol{z})w,u_\Lambda)_{L^2(\Om;d^nx)}
\, \mbox{ for all }\,w\in H^2(\Omega) \, \mbox{ with } \, \gamma_N w = 0.
\end{equation}
In particular, choosing $w\in C^\infty_0(\Omega)$ yields
\begin{eqnarray}\lb{Nh-6.a}
0=\Lambda(0)=\Lambda\bigl(\gamma_D w \bigr)
= ((-\Delta-\ol{z})w,u_\Lambda)_{L^2(\Om;d^nx)},
\quad w\in C^\infty_0(\Omega). 
\end{eqnarray}
Thus, one obtains $(-\Delta-z)u_\Lambda=0$ in 
$\Omega$, in the sense of distributions. Inserting this into \eqref{Nh-3.a} 
one then obtains  
\begin{align}\lb{Nh-7.a}
\Lambda\bigl(\gamma_D w \bigr) &=
((-\Delta -\ol{z})w,u_\Lambda)_{L^2(\Om;d^nx)}
- (w,(-\Delta -z)u_\Lambda)_{L^2(\Om;d^nx)}
\nonumber\\
&= 
-{}_{N^{3/2}(\partial\Omega)}\langle\gamma_D w,\widehat{\gamma}_N u_\Lambda 
\rangle_{(N^{3/2}(\partial\Omega))^*},
\end{align}
for every $w$ in $\{w\in H^2(\Omega) \,|\, \gamma_N w =0\}$. 
Since $\gamma_D$ maps the latter space onto $N^{3/2}(\dOm)$, 
\eqref{Nh-7.a} implies that $\widehat{\gamma}_N u_\Lambda = - \Lambda$. 
This shows that the operator \eqref{3an-C10} has a linear, 
bounded right-inverse. In particular, it is onto.  
\end{proof}
%%%%%%%%%%%%%%%%%%%%%%%%%%%%%%%%%

%%%%%%%%%%%%%%%%%%%%%%%%%
\begin{corollary}\lb{New-CV3}
Assume Hypothesis \ref{h.Conv}. Then for every $z \in \bbC\backslash \sigma(-\Delta_{N,\Om})$, the map 
\begin{eqnarray}\lb{Tan-FFF} 
\widehat{\gamma}_N: 
\big\{u\in L^2(\Omega;d^nx) \,\big|\, (- \Delta - z) u=0 \mbox{ in }\Omega\big\}
\to \bigl(N^{3/2}(\partial\Omega)\bigr)^*
\end{eqnarray}
is onto. 
\end{corollary}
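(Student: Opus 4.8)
The plan is to deduce Corollary \ref{New-CV3} directly from Theorem \ref{tH.u}. First I would observe that the map in \eqref{Tan-FFF} is well-defined: by Theorem \ref{3ew-T-tr} the operator $\widehat\gamma_N$ in \eqref{3an-C10} is linear and bounded on all of $\dom(-\Delta_{max})$, and the subspace $\{u\in L^2(\Omega;d^nx)\,|\,(-\Delta-z)u=0\text{ in }\Omega\}$ appearing in \eqref{Tan-FFF} is contained in $\dom(-\Delta_{max})$, so the restriction makes sense and maps into $\bigl(N^{3/2}(\partial\Omega)\bigr)^*$. Hence the only thing left to verify is the surjectivity of this restriction.

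To that end, fix $z\in\bbC\backslash\si(-\Delta_{N,\Om})$ and let $\Lambda\in\bigl(N^{3/2}(\partial\Omega)\bigr)^*$ be arbitrary. Theorem \ref{tH.u} provides a function $u_\Lambda\in L^2(\Om;d^nx)$ with $(-\Delta-z)u_\Lambda=0$ in $\Omega$ and $\widehat\gamma_N u_\Lambda=\Lambda$, together with the bound $\|u_\Lambda\|_{L^2(\Om;d^nx)}\leq C\|\Lambda\|_{(N^{3/2}(\partial\Omega))^*}$ as in \eqref{Nh-1.a}. The first property places $u_\Lambda$ in the domain of the map \eqref{Tan-FFF}, and the second says its image under that map is exactly $\Lambda$. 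Since $\Lambda$ was arbitrary, the map \eqref{Tan-FFF} is onto; moreover the linear assignment $\Lambda\mapsto u_\Lambda$ furnished by Theorem \ref{tH.u} is in fact a bounded linear right-inverse, so \eqref{Tan-FFF} admits a bounded right-inverse.

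I do not anticipate any genuine obstacle here, since all the work has already been carried out in Theorem \ref{tH.u}, whose proof constructs $u_\Lambda$ by a Riesz-representation argument applied to the functional $\Lambda\,\gamma_D\,(-\Delta_{N,\Om}-\ol z I_{\Om})^{-1}$ on $L^2(\Om;d^nx)$, exploiting the resolvent mapping property of Lemma \ref{Bjk} and the ontoness of $\gamma_D$ on $\{w\in H^2(\Omega)\,|\,\gamma_N w=0\}$ from Lemma \ref{3o-Tx}. The single point deserving a word of care is a harmless sign convention: the construction in the proof of Theorem \ref{tH.u} yields $\widehat\gamma_N u_\Lambda=-\Lambda$, so if one adopts that normalization one simply applies the theorem to $-\Lambda$ in place of $\Lambda$, which changes nothing in the surjectivity conclusion.
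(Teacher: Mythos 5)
Your proposal is correct and follows exactly the paper's route: the corollary is obtained as a direct consequence of Theorem \ref{tH.u}, whose surjectivity statement (together with the boundedness of $\widehat\gamma_N$ on $\dom(-\Delta_{max})$ from Theorem \ref{3ew-T-tr}) is all that is needed, and your remark about the harmless sign discrepancy in the proof of Theorem \ref{tH.u} (where the construction yields $\widehat\gamma_N u_\Lambda=-\Lambda$) is accurate and appropriately resolved by linearity.
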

%%%%%%%%%%%%%%%%%%%%%%%%%
\begin{proof}
This is a direct consequence of Theorem \ref{tH.u}.
\end{proof}
%%%%%%%%%%%%%%%%%%%%%%%%% 

%%%%%%%%%%%%%%%%%%%%%%%%%%%%%%%%%
\begin{theorem}\lb{tH.G2}
Assume Hypothesis \ref{h.Conv} and suppose that 
$z\in\bbC\backslash\si(-\Delta_{N,\Om})$. Then for any   
$f\in L^2(\Om;d^nx)$ and $g\in (N^{3/2}(\partial\Omega))^*$ the 
following inhomogeneous Neumann boundary value problem 
\begin{equation}\lb{n-1H}
\begin{cases}
(-\Delta-z)u=f\text{ in }\,\Om,
\\[4pt]
u\in L^2(\Om;d^nx),
\\[4pt]
\widehat\ga_N u =g\text{ on }\,\dOm,
\end{cases}
\end{equation}
has a unique solution $u=u_N$. This solution satisfies 
\begin{equation}\lb{Hh.3f} 
\|u_N\|_{L^2(\Om;d^nx)}
+\|\widehat\ga_D u_N\|_{(N^{1/2}(\partial\Omega))^*}\leq C_N
\big(\|f\|_{L^2(\Om;d^nx)}+\|g\|_{(N^{3/2}(\partial\Omega))^*}\big)
\end{equation}
for some constant $C_N=C_N(\Omega,z)>0$, and the following regularity 
results hold:  
\begin{eqnarray}\lb{3.3f}
&& g\in L^2(\partial\Omega;d^{n-1}\omega) \, \text{ implies } \,  u_N\in H^{3/2}(\Omega),
\\[4pt]
&& g\in\gamma_N\bigl(H^2(\Om)\bigr) \, \text{ implies } \,  u_N\in H^2(\Omega). 
\lb{3.3fbis}
\end{eqnarray} 
Natural estimates are valid in each case. 

Moreover, 
\begin{equation}\lb{3.34f} 
\big[\ga_D (-\Delta_{N,\Om}-{\ol z}I_\Om)^{-1}\big]^* \in 
\cB\big((N^{3/2}(\partial\Omega))^*, L^2(\Om;d^nx)\big),   
\end{equation}
and the solution of \eqref{n-1H} is given by the formula
\begin{equation}\lb{3.a5Y}
u_N=(-\Delta_{N,\Om}-zI_\Om)^{-1}f
+\big[\ga_D (-\Delta_{N,\Om}-\ol{z}I_\Om)^{-1}\big]^*g. 
\end{equation}
\end{theorem}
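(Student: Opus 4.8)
The plan is to follow the template of the proof of Theorem \ref{tH.A}, interchanging the roles of the Dirichlet and Neumann data and systematically replacing $(-\Delta_{D,\Om}-zI_\Om)^{-1}$ by the resolvent $(-\Delta_{N,\Om}-zI_\Om)^{-1}$ of the Neumann Laplacian. For \emph{existence} of a solution to \eqref{n-1H}, one first applies Theorem \ref{tH.u} (with the boundary functional taken to be $\mp g$, so as to absorb the sign appearing there) to produce $v\in L^2(\Om;d^nx)$ with $(-\Delta-z)v=0$ in $\Om$, $\widehat\gamma_N v=g$, and $\|v\|_{L^2(\Om;d^nx)}\leq C\|g\|_{(N^{3/2}(\partial\Omega))^*}$. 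Next one sets $w:=(-\Delta_{N,\Om}-zI_\Om)^{-1}f$; by Lemma \ref{Bjk} one has $w\in\dom(-\Delta_{N,\Om})\subset H^2(\Om)$, with $\|w\|_{H^2(\Om)}\leq C\|f\|_{L^2(\Om;d^nx)}$, while $\wti\gamma_N w=0$ together with \eqref{3an-C11} and Lemma \ref{Neu-tr} gives $\widehat\gamma_N w=\wti\gamma_N w=0$. Then $u:=v+w$ solves \eqref{n-1H}, and the bound \eqref{Hh.3f} follows: the $L^2$-estimate on $u$ is immediate, while $\|\widehat\gamma_D u\|_{(N^{1/2}(\partial\Omega))^*}\leq C\big(\|u\|_{L^2(\Om;d^nx)}+\|\Delta u\|_{L^2(\Om;d^nx)}\big)$ by Theorem \ref{New-T-tr}, with $\Delta u=-zu-f$.

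For \emph{uniqueness} it suffices to show that any $u\in L^2(\Om;d^nx)$ with $(-\Delta-z)u=0$ in $\Om$ and $\widehat\gamma_N u=0$ must vanish. Fix an arbitrary $\phi\in L^2(\Om;d^nx)$ and put $u_\phi:=(-\Delta_{N,\Om}-\ol{z}I_\Om)^{-1}\phi$, which by Lemma \ref{Bjk} lies in $\dom(-\Delta_{N,\Om})\subset H^2(\Om)$ and satisfies $\gamma_N u_\phi=0$ (since $\wti\gamma_N u_\phi=0$ and $u_\phi\in H^2(\Om)$, cf.\ Lemma \ref{Neu-tr}). The generalized integration-by-parts formula \eqref{3an-C12}, applied with $u_\phi$ in the role of $w$, has vanishing left-hand side because $\widehat\gamma_N u=0$; inserting $\Delta u=-zu$ and $\Delta u_\phi=-\ol{z}u_\phi-\phi$ into its right-hand side reduces it to $(\phi,u)_{L^2(\Om;d^nx)}$, so $(\phi,u)_{L^2(\Om;d^nx)}=0$ for all $\phi$ and hence $u=0$. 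As a byproduct, $\widehat\gamma_N$ restricted to $\{w\in L^2(\Om;d^nx)\,|\,(-\Delta-z)w=0\}$ is injective, and therefore, in conjunction with Corollary \ref{New-CV3}, an isomorphism onto $(N^{3/2}(\partial\Omega))^*$.

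For the \emph{regularity} statements, decompose $u_N=v+w$ as in the existence step. If $g\in L^2(\partial\Omega;d^{n-1}\omega)$ (continuously embedded in $(N^{3/2}(\partial\Omega))^*$ by Corollary \ref{L-Den2}), then Theorem \ref{t3.2H} with $s=1$ furnishes $\ti v\in H^{3/2}(\Om)$ solving $(-\Delta-z)\ti v=0$, $\wti\gamma_N\ti v=g$; by \eqref{3an-C11} this $\ti v$ also solves \eqref{n-1H} with $f=0$, so uniqueness gives $v=\ti v\in H^{3/2}(\Om)$, whence $u_N\in H^{3/2}(\Om)$. If instead $g=\gamma_N\phi=\widehat\gamma_N\phi$ for some $\phi\in H^2(\Om)$, then $\psi:=u_N-\phi$ solves \eqref{n-1H} with datum $\big(f-(-\Delta-z)\phi,\,0\big)$; but the solution of \eqref{n-1H} with vanishing boundary datum $g=0$ is obtained by applying $(-\Delta_{N,\Om}-zI_\Om)^{-1}$ and so lies in $\dom(-\Delta_{N,\Om})\subset H^2(\Om)$ by Lemma \ref{Bjk}, and uniqueness then yields $\psi\in H^2(\Om)$, hence $u_N\in H^2(\Om)$; the special case $g=0$ is immediate. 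For the resolvent-type statement, note that $(-\Delta_{N,\Om}-\ol{z}I_\Om)^{-1}\in\cB\big(L^2(\Om;d^nx),H^2(\Om)\big)$ maps into $\{u\in H^2(\Om)\,|\,\gamma_N u=0\}$, on which $\gamma_D$ is bounded into $N^{3/2}(\partial\Omega)$ by Lemma \ref{3o-Tx}; dualizing the composition gives \eqref{3.34f}. Finally, to identify the solution it is enough (by uniqueness and linearity, the term $(-\Delta_{N,\Om}-zI_\Om)^{-1}f$ accounting for the datum $f$ when $g=0$) to check that $u:=\big[\gamma_D(-\Delta_{N,\Om}-\ol{z}I_\Om)^{-1}\big]^*g$ solves \eqref{n-1H} with $f=0$; approximating $g$ in $(N^{3/2}(\partial\Omega))^*$ by $g_j\in L^2(\partial\Omega;d^{n-1}\omega)$ (possible by Corollary \ref{L-Den2}), formula \eqref{3.9H} identifies $\big[\gamma_D(-\Delta_{N,\Om}-\ol{z}I_\Om)^{-1}\big]^*g_j$ with the Theorem \ref{t3.2H}-solution of the Neumann problem with datum $g_j$, and one passes to the limit using $\Delta u_{g_j}=-zu_{g_j}$ and the continuity of $\widehat\gamma_N$ on $\dom(-\Delta_{max})$ (Theorem \ref{3ew-T-tr}).

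The argument is thoroughly parallel to that of Theorem \ref{tH.A}, so the main obstacle is not conceptual but one of bookkeeping: one must track carefully the three \emph{a priori} distinct Neumann traces $\gamma_N$, $\wti\gamma_N$, $\widehat\gamma_N$ and their mutual compatibility on the relevant subspaces (Lemma \ref{Neu-tr} and \eqref{3an-C11}), since this compatibility is what legitimizes both the integration-by-parts step in the uniqueness proof and the density argument for the solution formula. The second point that genuinely uses more than Lipschitz regularity is the repeated appeal to Lemma \ref{Bjk}: it is precisely the inclusion $\dom(-\Delta_{N,\Om})\subset H^2(\Om)$ for quasi-convex $\Om$ that makes $(-\Delta_{N,\Om}-\ol{z}I_\Om)^{-1}f$ land in $H^2(\Om)$, so that its Neumann trace vanishes in the classical sense and the auxiliary functions behave as required.
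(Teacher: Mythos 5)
Your proof is correct and follows essentially the same route as the paper: existence via Theorem \ref{tH.u} combined with the Neumann resolvent and Lemma \ref{Bjk}, uniqueness by testing against $(-\Delta_{N,\Om}-\ol{z}I_\Om)^{-1}\phi$ through the Green formula \eqref{3an-C12}, regularity via Theorem \ref{t3.2H}, Lemma \ref{Bjk} and uniqueness, and the solution formula by density of $L^2(\partial\Omega;d^{n-1}\omega)$ in $\bigl(N^{3/2}(\partial\Omega)\bigr)^*$ together with the continuity of $\widehat\gamma_N$ on $\dom(-\Delta_{max})$. The differences are cosmetic (taking $(-\Delta-z)v=0$ instead of a harmonic $v$ in the existence step, omitting the paper's auxiliary reduction in the uniqueness argument, and proving \eqref{3.3fbis} by reduction to the case $g=0$), and if anything they tidy up small slips in the paper's own write-up.
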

%%%%%%%%%%%%%%%%%%%%%
\begin{proof}
By Theorem \ref{tH.u} there exists a function $v\in L^2(\Om;d^nx)$ satisfying 
$\Delta v=0$ in $\Om$, $\widehat\gamma_N v =g$ 
and $\|v\|_{L^2(\Om;d^nx)}\leq C\|g\|_{(N^{3/2}(\partial\Omega))^*}$ for 
some finite $C=C(\Omega)>0$. Setting 
\begin{equation}\lb{Jh-1f}
w:=(-\Delta_{N,\Omega}-zI_{\Om})^{-1}(f+zv)
\end{equation}
so that 
\begin{equation}\lb{Jh-2a}
w\in \dom (- \Delta_{N,\Omega})\hookrightarrow H^2(\Om), 
\end{equation}
by our hypotheses and by Lemma \ref{Bjk}, and  
\begin{equation}\lb{Jh-3f}
\|w\|_{H^2(\Om)}\leq C\|f+zv\|_{L^2(\Om,d^nx)}
\leq C\bigl(\|f\|_{L^2(\Om;d^nx)}+\|g\|_{(N^{3/2}(\partial\Omega))^*}\bigr), 
\end{equation}
for some finite constant $C=C(\Omega,z)>0$. Then $u:=v+w$ solves
\eqref{n-1H} and satisfies \eqref{Hh.3f}. 

To prove uniqueness, we assume that $u$ satisfies the homogeneous 
version of \eqref{n-1H}, and consider 
$w:=-z(-\Delta_{N,\Om}-zI_\Om)^{-1}u\in H^1_0(\Om)$. 
Then $v:=u-w$ satisfies 
\begin{eqnarray}\lb{Yan-11f} 
v\in L^2(\Omega;d^nx),\quad (-\Delta -z)v=0 \, \mbox{ in } \, \Omega,\quad
\widehat{\gamma}_N v =0 \, \mbox{ on } \, \partial\Omega.
\end{eqnarray}
We claim that these conditions imply that $v=0$. 
To show this, consider an arbitrary $f\in L^2(\Omega;d^nx)$ 
and let $u_f$ be the unique solution of 
\begin{eqnarray}\lb{Yan-12f} 
u_f\in H^1(\Omega),\quad\wti\gamma_N w =0\, \mbox{ on }\, \partial\Omega,
\quad (-\Delta -z)u_f=f\, \mbox{ in }\, \Omega. 
\end{eqnarray}
Then $u_f\in \dom (- \Delta_{N,\Om})\subset H^2(\Omega)$ by 
Lemma \ref{Bjk}, so that $u_f\in H^2(\Omega)$ and $\gamma_N w =0$. 
The integration by parts formula \eqref{3an-C12} then yields
\begin{align}\lb{Yan-13f} 
(f,v)_{L^2(\Om;d^nx)} &= ((-\Delta -z)u_f,v)_{L^2(\Om;d^nx)}
\nonumber\\
&= (u_f,(-\Delta -z)v)_{L^2(\Om;d^nx)}
+{}_{N^{3/2}(\partial\Omega)}\langle\gamma_D u_f,\widehat{\gamma}_N v           
\rangle_{(N^{3/2}(\partial\Omega))^*}
\nonumber\\ 
& =0. 
\end{align}
Since $f\in L^2(\Omega;d^nx)$ was arbitrary, it follows that $v=0$ and 
consequently, $u\in H^1(\Om)$. Given that $u$ solves 
the homogeneous version of \eqref{n-1H} and the fact that 
$z\in\bbC\backslash\si(-\Delta_{N,\Om})$, we may therefore conclude that 
$u=0$ in $\Omega$. The regularity result \eqref{3.3f} is a consequence 
of Theorem \ref{t3.XV} and the uniqueness for \eqref{n-1H}. 
Finally, \eqref{3.3fbis} follows from \eqref{3.3f} and Lemma \ref{Bjk}. 

Next, consider\eqref{3.34f}. On one hand, by Lemma \ref{Bjk} and 
the current assumptions, one concludes that 
\begin{equation}\lb{Hy-1r} 
(-\Delta_{N,\Om}-\ol{z}I_\Om)^{-1}:L^2(\Om;d^nx)\to  
\big\{w\in H^2(\Omega) \,\big|\, \gamma_N w =0\big\} 
\end{equation}
is a well-defined, linear, bounded operator. On the other hand, the operator  
$\gamma_D$ in \eqref{3an-C7} is also well-defined, linear, and bounded. 
Consequently, 
\begin{equation}\lb{Hy-1br} 
\gamma_D (-\Delta_{N,\Om}-{\ol z}I_\Om)^{-1}\in 
\cB\big(L^2(\Om;d^nx),N^{3/2}(\partial\Omega)\big),   
\end{equation}
and \eqref{3.34f} follows dualizing in \eqref{Hy-1br}. 
We note that this and Lemma \ref{3o-Tx} also imply that 
\begin{equation}\lb{Hy-2r} 
\big[\ga_D (-\Delta_{N,\Om}-{\ol z}I_\Om)^{-1}\big]^*
= \big[\widehat\ga_D (-\Delta_{D,\Om}-{\ol z}I_\Om)^{-1}\big]^*\in 
\cB\big((N^{3/2}(\partial\Omega))^*,L^2(\Om;d^nx)\big).
\end{equation}
This will play a role shortly. 

Next, let $u_g$ be the (unique) solution of \eqref{n-1H} with $f=0$ 
and $g\in\bigl(N^{3/2}(\partial\Omega)\big)^*$ arbitrary. 
In addition, set $u:=\big[\gamma_D (-\Delta_{N,\Om}-\ol{z}I_\Om)^{-1}\big]^*g\in 
L^2(\Om;d^nx)$. Our goal is to show that $u=u_g$. 
To this end, we note that \eqref{Nh-1B.q} implies the existence of a 
sequence $\{g_j\}_{j\in{\mathbb{N}}}$ such that 
$g_j\in L^2(\partial\Omega;d^{n-1}\omega)$, $j\in{\mathbb{N}}$, 
and $g_j\to g$ in $\bigl(N^{3/2}(\partial\Omega)\big)^*$ as $j\to\infty$. 
For each fixed $j$, we know from Theorem \ref{t3.XV} that
$u_{g_j}=\big[\gamma_D(-\Delta_{N,\Om}-\ol{z}I_\Om)^{-1}\big]^*g_j$. 
By \eqref{Hy-2r}, this implies that $u_{g_j}\to u$ in $L^2(\Om;d^nx)$. 
Hence, $\Delta u_{g_j}=-zu_{g_j}\to -zu$ in $L^2(\Om;d^nx)$. 
With these at hand, it is then clear that $(-\Delta-z)u=0$ in the sense 
of distributions in $\Omega$. Next, given that 
\begin{equation}\lb{Hy-3r}
u_{g_j}\to u\,\mbox{ in }\,L^2(\Om;d^nx)\,\mbox{ as }\,j\to\infty,
\, \mbox{ and }\,  
\Delta u_{g_j}\to\Delta u\,\mbox{ in }\,L^2(\Om;d^nx)
\,\mbox{ as }\, j\to\infty,
\end{equation}
it follows from this and Theorem \ref{3ew-T-tr} that 
\begin{equation}\lb{Hy-4r}
g_j=\widehat\gamma_N u_{g_j} \to \widehat\gamma_N u  
\, \mbox{ in }\, \bigl(N^{3/2}(\partial\Omega)\big)^*
\, \mbox{ as }\, j\to\infty. 
\end{equation}
However, since $g_j\to g$ in $\bigl(N^{3/2}(\partial\Omega)\big)^*$
as $j\to\infty$, one finally concludes that $\widehat\gamma_N u =g$. 
This proves that $u\in L^2(\Om;d^nx)$ satisfies
$(-\Delta -z) u=0$ in $\Omega$ and $\widehat\gamma_N u =g$ hence, 
by the uniqueness in \eqref{n-1H}, one obtains $u=u_g$. 

This concludes the proof of the fact that the unique solution 
of \eqref{n-1H}, with $f=0$ and $g\in\bigl(N^{3/2}(\partial\Omega)\big)^*$ 
arbitrary, is given by $\big[\gamma_D(-\Delta_{N,\Om}-\ol{z}I_\Om)^{-1}\big]^*g$.
Having established this, it is then clear that \eqref{3.a5Y} solves
\eqref{n-1H}, as originally stated. 
\end{proof}
%%%%%%%%%%%%%%%%%%%%%%%%%%%% 

%%%%%%%%%%%%%%%%%%%%%%%%%
\begin{corollary}\lb{New-CV33}
Assume Hypothesis \ref{h.Conv}. Then for any $z\in \bbC\backslash \sigma(- \Delta_{N,\Om})$, 
the map $\hatt \gamma_N$ in \eqref{Tan-FFF} is an isomorphism $($i.e., bijective and 
bicontinuous\,$)$. 
\end{corollary}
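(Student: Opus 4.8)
The plan is to obtain Corollary \ref{New-CV33} as a formal consequence of the two immediately preceding results, Corollary \ref{New-CV3} and Theorem \ref{tH.G2}, in exact parallel with how Corollary \ref{New-CV22} was deduced from Corollary \ref{New-CV2} and Theorem \ref{tH.A}. First I would note that Corollary \ref{New-CV3} already asserts that the map
\[
\widehat{\gamma}_N:\big\{u\in L^2(\Omega;d^nx)\,\big|\,(-\Delta-z)u=0\text{ in }\Omega\big\}\to\bigl(N^{3/2}(\partial\Omega)\bigr)^*
\]
is onto for every $z\in\bbC\backslash\sigma(-\Delta_{N,\Om})$, so surjectivity is free. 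It therefore remains only to establish injectivity and bicontinuity of this restricted map. Injectivity is precisely the uniqueness assertion in Theorem \ref{tH.G2} applied with $f=0$ and $g=0$: if $u\in L^2(\Omega;d^nx)$ satisfies $(-\Delta-z)u=0$ in $\Omega$ and $\widehat{\gamma}_N u=0$, then $u$ solves the homogeneous version of the boundary value problem \eqref{n-1H}, whose only solution is $u=0$.

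For bicontinuity, continuity of $\widehat{\gamma}_N$ in this context is already part of Theorem \ref{3ew-T-tr} (the operator \eqref{3an-C10} is bounded on all of $\dom(-\Delta_{max})$, hence on the closed subspace $\ker(-\Delta_{max}-zI_\Om)$ equipped with the $L^2$-norm, since $\Delta u=zu$ is automatically controlled there). For continuity of the inverse, I would invoke the solution operator furnished by Theorem \ref{tH.G2}: given $g\in\bigl(N^{3/2}(\partial\Omega)\bigr)^*$, the unique solution of \eqref{n-1H} with $f=0$ is $u_N=\big[\ga_D(-\Delta_{N,\Om}-\ol{z}I_\Om)^{-1}\big]^*g$, and the estimate \eqref{Hh.3f} gives $\|u_N\|_{L^2(\Om;d^nx)}\leq C_N\|g\|_{(N^{3/2}(\partial\Omega))^*}$. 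Thus the map $g\mapsto u_N$ is a bounded linear right-inverse for $\widehat{\gamma}_N$ on $\ker(-\Delta_{max}-zI_\Om)$; combined with injectivity, this map is the two-sided inverse, and it is bounded. Hence $\widehat{\gamma}_N$ in \eqref{Tan-FFF} is a bijective bounded linear map with bounded inverse, i.e., an isomorphism.

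There is essentially no obstacle here: the corollary is purely a repackaging of the surjectivity statement of Corollary \ref{New-CV3} together with the uniqueness and the explicit bounded solution formula of Theorem \ref{tH.G2}. The only minor point to verify carefully is that the norm on $\ker(-\Delta_{max}-zI_\Om)$ inherited as a subspace of $\dom(-\Delta_{max})$ (graph norm) is equivalent, on this particular subspace, to the plain $L^2(\Om;d^nx)$-norm — which is clear since $\|\Delta u\|_{L^2(\Om;d^nx)}=|z|\,\|u\|_{L^2(\Om;d^nx)}$ there — so that the continuity of $\widehat{\gamma}_N$ and the estimate \eqref{Hh.3f} are being measured against the same topology. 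With that observation in place the proof is a one-line citation of Corollary \ref{New-CV3} and Theorem \ref{tH.G2}.

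\begin{proof}
This follows from Corollary \ref{New-CV3} and Theorem \ref{tH.G2}. Indeed, Corollary \ref{New-CV3} ensures that the map $\widehat{\gamma}_N$ in \eqref{Tan-FFF} is onto, while the uniqueness part of Theorem \ref{tH.G2} (applied with $f=0$, $g=0$) shows that it is one-to-one. Finally, by Theorem \ref{3ew-T-tr} the map \eqref{Tan-FFF} is bounded (note that on $\ker(-\Delta_{max}-zI_\Om)$ the graph norm of $-\Delta_{max}$ is equivalent to the $L^2(\Om;d^nx)$-norm), and by \eqref{Hh.3f} in Theorem \ref{tH.G2} the assignment sending $g\in\bigl(N^{3/2}(\partial\Omega)\bigr)^*$ to the unique solution $u_N=\big[\ga_D (-\Delta_{N,\Om}-\ol{z}I_\Om)^{-1}\big]^*g$ of \eqref{n-1H} with $f=0$ is a bounded linear inverse for \eqref{Tan-FFF}. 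Hence $\widehat{\gamma}_N$ in \eqref{Tan-FFF} is an isomorphism.
\end{proof}
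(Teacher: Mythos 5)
Your proof is correct and follows exactly the paper's route: the paper also deduces this corollary in one line from the ontoness result (Corollary \ref{New-CV3}) together with the uniqueness and the bounded solution operator of Theorem \ref{tH.G2}. Your additional remark that on $\ker(-\Delta_{max}-zI_\Om)$ the graph norm is equivalent to the $L^2(\Om;d^nx)$-norm is a useful clarification the paper leaves implicit, but it does not change the argument.
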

%%%%%%%%%%%%%%%%%%%%%%%%%
\begin{proof}
This is a consequence of Corollary \ref{New-CV33} and Theorem \ref{tH.G2}.
\end{proof}
%%%%%%%%%%%%%%%%%%%%%%%%%%% 

%%%%%%%%%%%%%%%%%%%%%%%%%%%%%%%%%%%%%%
%%%%%%%%%%%%%%%%%%%%%%%%%%%%%%%%%%%%%%
\section{Dirichlet-to-Neumann Operators on Quasi-Convex Domains}
\lb{s11}
%%%%%%%%%%%%%%%%%%%%%%%%%%%%%%%%%%%%%%
%%%%%%%%%%%%%%%%%%%%%%%%%%%%%%%%%%%%%%

In this section we discuss spectral parameter dependent Dirichlet-to-Neumann maps, also 
known in the literature as Weyl--Titchmarsh operators (in particular, they can be viewed 
as extensions of the Poincar\'e--Steklov operator). 

Assuming Hypothesis \ref{h.Conv}, 
we introduce the Dirichlet-to-Neumann map $M_{D,N,\Om}^{(0)}(z)$ associated with 
$(-\Delta-z)$ on $\Om$, as follows,
\begin{align}\lb{3.44v}
M_{D,N,\Om}^{(0)}(z) \colon
\begin{cases}
\bigl(N^{1/2}(\dOm)\bigr)^*\to \bigl(N^{3/2}(\dOm)\bigr)^*  \\
\hspace*{1.78cm} 
f \mapsto - \widehat\ga_N u_D,
\end{cases}  
\quad z\in\bbC\backslash\si(-\Delta_{D,\Om}), 
\end{align}
where $u_D$ is the unique solution of
\begin{align}\lb{3.45v}
(-\Delta-z)u = 0 \,\text{ in }\Om, \quad u \in
L^2(\Om;d^nx), \quad \widehat\ga_D u = f \,\text{ on } \, \dOm.   
\end{align}
Assuming Hypothesis \ref{h.Conv}, we introduce the 
Neumann-to-Dirichlet map $M_{N,D,\Om}^{(0)}(z)$,  associated with 
$(-\Delta-z)$ on $\Om$, as follows,
\begin{align}\lb{3.48v}
M_{N,D,\Om}^{(0)}(z) \colon 
\begin{cases}
\bigl(N^{3/2}(\dOm)\bigr)^*\to \bigl(N^{1/2}(\dOm)\bigr)^*,\\
\hspace*{1.8cm} 
g \mapsto \widehat\ga_D u_N, 
\end{cases}  
\quad z\in\bbC\backslash\si(-\Delta_{N,\Om}), 
\end{align}
where $u_{N}$ is the unique solution of
\begin{align}\lb{3.49v}
(-\Delta-z)u = 0 \,\text{ in }\Om, \quad u \in
L^2(\Om;d^nx), \quad \widehat\ga_Nu = g \, \text{ on } \, \dOm.  
\end{align}
The following result is the natural counterpart of Theorem \ref{t3.5} in 
the setting just introduced: 

%%%%%%%%%%%%%%
\begin{theorem}\lb{t3.5v} 
Assume Hypothesis \ref{h.Conv}. Then, with the above notation, 
\begin{equation}\lb{3.46v}
M_{D,N,\Om}^{(0)}(z) \in \cB\big((N^{1/2}(\dOm))^*,  (N^{3/2}(\dOm))^*\big), 
\quad z\in\bbC\backslash\si(-\Delta_{D,\Om}),   
\end{equation}
its action is compatible with that of $M_{D,N,\Om}^{(0)}(z)$ introduced 
in \eqref{3.44} $($thus, justifying retaining the same notation$)$, and 
\begin{equation}\lb{3.47v}
M_{D,N,\Om}^{(0)}(z)=\widehat\gamma_N
\big[\gamma_N(-\Delta_{D,\Om} - \ol{z}I_\Om)^{-1}\big]^*, 
\quad z\in\bbC\backslash\si(-\Delta_{D,\Om}). 
\end{equation}
Similarly, 
\begin{equation}\lb{3.50v}
M_{N,D,\Om}^{(0)}(z)\in\cB\big((N^{3/2}(\dOm))^*,  (N^{1/2}(\dOm))^*\big), 
\quad z\in\bbC\backslash\si(-\Delta_{N,\Om}),
\end{equation}
its action is compatible with that of $M_{N,D,\Om}^{(0)}(z)$ introduced 
in \eqref{3.48} $($once again justifying retaining the same notation$)$, and 
\begin{equation}\lb{3.52v}
M_{N,D,\Om}^{(0)}(z) = \widehat \gamma_D\big[\gamma_D
(-\Delta_{N,\Om} - \ol{z}I_\Om)^{-1}\big]^*, \quad
z\in\bbC\backslash\si(-\Delta_{N,\Om}). 
\end{equation} 
Moreover, 
\begin{equation}\lb{3.53v}  
M_{N,D,\Om}^{(0)}(z)=- M_{D,N,\Om}^{(0)}(z)^{-1},\quad 
z\in\bbC\backslash(\si(-\Delta_{D,\Om})
\cup\si(-\Delta_{N,\Om})), 
\end{equation}
and 
\begin{eqnarray}\lb{NaLa}
\begin{array}{l}
\big[M_{D,N,\Om}^{(0)}(z)\big]^* f= M_{D,N,\Om}^{(0)}(\ol{z})f,\qquad
f\in N^{3/2}(\dOm)\hookrightarrow\bigl(N^{1/2}(\dOm)\bigr)^*,
\\[4pt]
\big[M_{N,D,\Om}^{(0)}(z)\big]^* f= M_{N,D,\Om}^{(0)}(\ol{z})f,\qquad
f\in N^{1/2}(\dOm)\hookrightarrow\bigl(N^{3/2}(\dOm)\bigr)^*.
\end{array}
\end{eqnarray}
As a consequence, one also has
\begin{eqnarray}\lb{3.TTa}
&& M_{D,N,\Om}^{(0)}(z) \in \cB\big(N^{3/2}(\dOm),  N^{1/2}(\dOm)\big), 
\quad z\in\bbC\backslash\si(-\Delta_{D,\Om}),   
\\[4pt]
&& M_{N,D,\Om}^{(0)}(z)\in\cB\big(N^{1/2}(\dOm),  N^{3/2}(\dOm)\big), 
\quad z\in\bbC\backslash\si(-\Delta_{N,\Om}).
\lb{3.TTb}
\end{eqnarray}
Finally, for every $z \leq 0$ one has
\begin{equation}\lb{NDD-1}
{}_{N^{1/2}(\dOm)}\big\langle M^{(0)}_{D,N,\Om}(z)f,f\big\rangle_{(N^{1/2}(\dOm))^*}\leq 0,
\quad f\in N^{3/2}(\dOm)\hookrightarrow\bigl(N^{1/2}(\dOm)\bigr)^*,
\end{equation}
whereas for every $z < 0$ 
\begin{equation}\lb{NDD-2}
{}_{N^{3/2}(\dOm)}\big\langle M^{(0)}_{N,D,\Om}(z)f,f\big\rangle_{(N^{3/2}(\dOm))^*}\geq 0,
\quad f\in N^{1/2}(\dOm)\hookrightarrow\bigl(N^{3/2}(\dOm)\bigr)^*.
\end{equation}
\end{theorem}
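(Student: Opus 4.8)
The plan is to prove Theorem \ref{t3.5v} by reducing each assertion to the corresponding statement about the operators on $L^2$-based spaces already established in Theorem \ref{t3.5}, combined with the well-posedness and trace-extension results from Section \ref{s10}. First I would record the two key structural facts: the Dirichlet trace $\widehat\gamma_D$ restricted to $\ker(-\Delta_{max}-zI_\Om)$ is an isomorphism onto $(N^{1/2}(\dOm))^*$ for $z\notin\si(-\Delta_{D,\Om})$ (Corollary \ref{New-CV22}), and likewise $\widehat\gamma_N$ restricted to $\ker(-\Delta_{max}-zI_\Om)$ is an isomorphism onto $(N^{3/2}(\dOm))^*$ for $z\notin\si(-\Delta_{N,\Om})$ (Corollary \ref{New-CV33}). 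These give $M^{(0)}_{D,N,\Om}(z)$ and $M^{(0)}_{N,D,\Om}(z)$ their boundedness \eqref{3.46v}, \eqref{3.50v} immediately, since each is a composition of $\widehat\gamma_N$ (resp.\ $\widehat\gamma_D$) with the inverse of an isomorphism. The compatibility with \eqref{3.44}, \eqref{3.48} follows from the compatibility clauses \eqref{Tan-C11}, \eqref{3an-C11} of the hatted traces with the tilde-traces, applied to solutions which, by the regularity results \eqref{3.3Y}, \eqref{3.3f}, lie in $H^{3/2}(\Om)$ when the Dirichlet (resp.\ Neumann) datum sits in the smaller space $H^1(\dOm)$ (resp.\ $L^2(\dOm;d^{n-1}\omega)$).

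Next I would derive the resolvent representations \eqref{3.47v}, \eqref{3.52v}. For \eqref{3.47v}: given $f\in(N^{1/2}(\dOm))^*$, the solution $u_D$ of \eqref{3.45v} with $\widehat\gamma_D u_D=f$ is exactly the one produced in Theorem \ref{tH.A} with data $(0,f)$, namely $u_D=-[\gamma_N(-\Delta_{D,\Om}-\ol z I_\Om)^{-1}]^* f$ by \eqref{3.35Y}. Applying $\widehat\gamma_N$ and negating gives \eqref{3.47v}. The analogous argument using Theorem \ref{tH.G2} and formula \eqref{3.a5Y} gives \eqref{3.52v}. The inverse relation \eqref{3.53v} then follows because, for $z$ avoiding both spectra, $M^{(0)}_{D,N,\Om}(z)$ sends the Dirichlet trace of a nullsolution $u$ to minus its Neumann trace, and $M^{(0)}_{N,D,\Om}(z)$ sends the Neumann trace of the same $u$ to its Dirichlet trace; composing and using the two ontoness statements yields $M^{(0)}_{N,D,\Om}(z)\circ M^{(0)}_{D,N,\Om}(z)=-I$, and since both factors are bounded bijections (the domains being canonically isomorphic via the trace maps), this identifies the inverse.

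For the symmetry relations \eqref{NaLa} I would use the generalized Green formula \eqref{Tan-C12} (or \eqref{3an-C12}). Take $f\in N^{3/2}(\dOm)$ and a test element $h\in N^{3/2}(\dOm)$; let $u_f,u_h\in\ker(-\Delta_{max}-zI_\Om)$, resp.\ $\ker(-\Delta_{max}-\ol z I_\Om)$, realize these as Dirichlet data (possible by \eqref{3.3Y}, with $u_f,u_h\in H^{3/2}(\Om)$, so one can work with $w\in H^2(\Om)\cap H^1_0(\Om)$ on the other side, or use \eqref{Tan-C12X} in the smoother reduction). Writing out $(\Delta u_f, u_h)_{L^2}-(u_f,\Delta u_h)_{L^2}$ in two ways via the integration-by-parts formula, the bulk terms cancel because $\Delta u_f = -z u_f$, $\Delta u_h=-\ol z u_h$ and $(-z)(u_f,u_h)-(u_f,-\ol z u_h)=0$, leaving a boundary identity that reads $\langle \widehat\gamma_N u_f,\widehat\gamma_D u_h\rangle = \overline{\langle \widehat\gamma_N u_h,\widehat\gamma_D u_f\rangle}$, i.e.\ $\langle M^{(0)}_{D,N,\Om}(z)f, h\rangle = \overline{\langle M^{(0)}_{D,N,\Om}(\ol z)h, f\rangle}$, which is \eqref{NaLa}; the Neumann version is dual. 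The mapping properties \eqref{3.TTa}, \eqref{3.TTb} then follow by dualizing \eqref{3.46v}, \eqref{3.50v} and invoking \eqref{NaLa} together with the density/reflexivity facts in Corollaries \ref{L-Den}, \ref{L-Den2}, \ref{L-Den} and Lemma \ref{3U-x} to identify the adjoint with the map at $\ol z$ acting on the subspaces $N^{3/2}\hookrightarrow(N^{1/2})^*$, $N^{1/2}\hookrightarrow(N^{3/2})^*$.

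Finally, for the sign conditions \eqref{NDD-1}, \eqref{NDD-2} I would specialize to real $z$. For $f\in N^{3/2}(\dOm)$ and $z\le 0$, let $u_D\in\ker(-\Delta_{max}-zI_\Om)$ realize $\widehat\gamma_D u_D=f$; by \eqref{3.3Y} we have $u_D\in H^{3/2}(\Om)$, hence one may integrate by parts legitimately (using \eqref{wGreen} and the compatibility of $\widehat\gamma_N$ with $\wti\gamma_N$) to get $\langle M^{(0)}_{D,N,\Om}(z)f,f\rangle = -\langle \widehat\gamma_N u_D,\widehat\gamma_D u_D\rangle = -\big[(\nabla u_D,\nabla u_D)_{L^2} - z(u_D,u_D)_{L^2}\big]\le 0$ since $z\le 0$; and for \eqref{NDD-2} one argues dually with $M^{(0)}_{N,D,\Om}(z)=-[M^{(0)}_{D,N,\Om}(z)]^{-1}$, where for $z<0$ the Dirichlet form is strictly positive so the inverse is well-defined and the inequality flips sign correctly, or directly with a Neumann nullsolution in $H^{3/2}(\Om)$ via \eqref{3.3f}. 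The main obstacle I anticipate is bookkeeping in the integration-by-parts steps: one must ensure that when pairing a genuinely $L^2$-class nullsolution (only in $\dom(-\Delta_{max})$) against boundary data, the correct Green's formula from the trace theory of Section \ref{s6} is invoked so that the pairings land in the right duality $\langle\,\cdot\,,\cdot\,\rangle$ between $N^{1/2}$ and $(N^{1/2})^*$ (or $N^{3/2}$ and its dual), and that the compatibility statements \eqref{Tan-C11}, \eqref{3an-C11} are applied only after the relevant elliptic regularity ($H^{3/2}$ or $H^2$) has been secured — conflating the hatted and tilde traces prematurely would invalidate the argument for rough data.
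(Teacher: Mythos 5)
Your proposal is correct and, for most of the assertions, runs along the same lines as the paper's proof: the memberships \eqref{3.46v}, \eqref{3.50v} and the representations \eqref{3.47v}, \eqref{3.52v} are obtained from the well-posedness results (Theorem \ref{tH.A}, Theorem \ref{tH.G2}, equivalently Corollaries \ref{New-CV22}, \ref{New-CV33}) combined with the solution formulas \eqref{3.35Y}, \eqref{3.a5Y}; compatibility comes from the regularity statements \eqref{3.3Y}, \eqref{3.3f}; \eqref{3.TTa}, \eqref{3.TTb} follow by duality; and \eqref{NDD-1} is exactly the Green-formula computation the paper records in \eqref{NDD-5}. You genuinely deviate in two places, both gaining self-containedness. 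For \eqref{3.53v} the paper extends the $L^2$-level identity \eqref{3.53} of Theorem \ref{t3.5} by continuity and the density results \eqref{Nyyy}, \eqref{Nh-1B.q}, whereas you compose the two maps directly on nullsolutions and use uniqueness of the Dirichlet/Neumann problems; both work, the paper's route recycling the already-proved $L^2$ statement, yours avoiding any density step. For \eqref{NaLa} the paper simply quotes Lemma 4.12 of \cite{GM09} (with $\Theta=0$) plus density, while you prove it by a symmetric Green identity between the two nullsolutions; this is legitimate and more self-contained, since for data in $N^{3/2}(\dOm)$ the nullsolutions lie in $H^2(\Om)$ (by \eqref{3.3Ys} together with Lemma \ref{3o-Tx}), or at least in $H^{3/2}(\Om)$, so that all boundary pairings reduce to $L^2(\dOm)$ integrals.

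Two caveats. In the \eqref{NaLa} step the tools you name are not quite the right ones: neither nullsolution belongs to $H^1_0(\Om)$ (nor has vanishing Neumann trace), so \eqref{Tan-C12}/\eqref{3an-C12} with an auxiliary $w\in H^2(\Om)\cap H^1_0(\Om)$ do not apply directly, and \eqref{Tan-C12X} requires a $C^{1,r}$ domain; what you actually need, and what suffices, is the first Green identity \eqref{wGreen} applied twice (or the symmetric $H^2$ formula \eqref{NSaZ.4}). Second, for \eqref{NDD-2} both routes you sketch produce the quadratic form $\|\nabla u_N\|^2_{(L^2(\Om;d^nx))^n} - z\|u_N\|^2_{L^2(\Om;d^nx)}\geq 0$ for the Neumann-to-Dirichlet map (no minus sign enters its definition \eqref{3.48v}), i.e.\ the \emph{opposite} sign of the stated inequality; the same tension is present in the paper, whose proof of \eqref{NDD-2} only says ``proved analogously,'' so this points to a sign issue in the statement rather than in your method, but your phrase ``the inequality flips sign correctly'' should not be left as is.
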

%%%%%%%%%%%%%%
\begin{proof}
The membership in \eqref{3.46v} is a consequence of Theorem \ref{tH.A}, 
whereas \eqref{3.47v} follows from \eqref{3.35Y}. In a similar fashion,  
the membership in \eqref{3.50v} is a consequence of Theorem \ref{tH.G2}, 
while \eqref{3.47v} follows from \eqref{3.a5Y}. In addition, the fact that 
the operators in \eqref{3.44v}, \eqref{3.48v} act in a compatible fashion with 
their counterparts from \eqref{3.44}, \eqref{3.48}, is implied by the 
regularity statements \eqref{3.3Y} and \eqref{3.3f}.  
Going further, formula \eqref{3.53v} follows from this compatibility result, 
\eqref{3.53}, \eqref{3.46v}, \eqref{3.50v}, and the density results 
in \eqref{Nyyy} and \eqref{Nh-1B.q}.
The duality formula \eqref{NaLa} is a consequence of Lemma 4.12 in 
\cite{GM09} (considered with $\Theta=0$) and a density argument, as 
before. Next, \eqref{3.TTa}, \eqref{3.TTb} follow from duality, \eqref{NaLa}, 
\eqref{3.46v}, \eqref{3.50v}, as well as Lemmas \ref{L-refN}, 
\ref{L-refNN} and Lemma~\ref{3U-x}. 

As far as \eqref{NDD-1} is concerned, the density and compatibility 
results mentioned above show that it suffices to prove that 
\begin{eqnarray}\lb{NDD-4}
\big\langle f,M^{(0)}_{D,N,\Om}(z)f\big\rangle_{1/2}\leq 0,\quad f\in H^{1/2}(\dOm),
\end{eqnarray}
given that $z \leq 0$. 
However, Green's formula shows that, if $f\in H^{1/2}(\dOm)$ and 
$u\in H^1(\Om)$ satisfies $(-\Delta-z)u=0$ in $\Omega$ and $\gamma_D u =f$ 
on $\dOm$ then 
\begin{align}\lb{NDD-5}
\big\langle f,M^{(0)}_{D,N,\Om}(z)f\big\rangle_{1/2} & =
-\langle\gamma_D u ,\wti\gamma_N u \rangle_{1/2}  \no \\
& =-\|\nabla u\|^2_{(L^2(\Om;d^nx))^n}+z\|u\|^2_{L^2(\Om;d^nx)}\leq 0. 
\end{align}
Finally, \eqref{NDD-2} is proved analogously. 
\end{proof}
%%%%%%%%%%%%%%

%%%%%%%%%%%%%%%%%%%%%%%%%%%%%%%%%%%%%%
%%%%%%%%%%%%%%%%%%%%%%%%%%%%%%%%%%%%%%
\section{The Regularized Neumann Trace Operator % \\ 
on Quasi-Convex Domains}
\lb{s12}
%%%%%%%%%%%%%%%%%%%%%%%%%%%%%%%%%%%%%%
%%%%%%%%%%%%%%%%%%%%%%%%%%%%%%%%%%%%%%

In this section we introduce a regularized version of the Neumann trace operator
(cf.\ \eqref{3an-C11}) on quasi-convex domains, and study some of its 
basic properties (such as a useful variant of Green's formula; 
cf.\ \eqref{T-Green} below). 

%%%%%%%%%%%%%%
\begin{theorem}\lb{LL.w} 
Assume Hypothesis \ref{h.Conv}. Then, for every 
$z\in\bbC\backslash\si(-\Delta_{D,\Om})$, the map 
\begin{eqnarray}\lb{3.Aw1}
\tau^N_z:\bigl\{u\in L^2(\Om;d^nx)\,\big|\,\Delta u\in L^2(\Om;d^nx)\bigr\}
\to  N^{1/2}(\partial\Omega)
\end{eqnarray}
given by 
\begin{eqnarray}\lb{3.Aw2}
\tau^N_z u :=\widehat\gamma_N u 
+M_{D,N,\Om}^{(0)}(z)\bigl(\widehat\gamma_D u \bigr), 
\quad u\in L^2(\Om;d^nx), \; \Delta u\in L^2(\Om;d^nx),
\end{eqnarray}
is well-defined, linear, and bounded when the space 
\begin{equation} 
\big\{u\in L^2(\Om;d^nx) \,\big|\, \Delta u\in L^2(\Om;d^nx)\big\} 
= \dom(-\Delta_{max})   \lb{12.dommax}
\end{equation} 
is endowed with the 
natural graph norm $u\mapsto\|u\|_{L^2(\Om;d^nx)}+\|\Delta u\|_{L^2(\Om;d^nx)}$.
Moreover, this operator satisfies the following additional properties: 

\begin{enumerate}
\item[$(i)$] For each $z\in\bbC\backslash\si(-\Delta_{D,\Om})$, the map $\tau^N_z$ in \eqref{3.Aw1}, \eqref{3.Aw2} is onto
$($i.e., $\tau^N_z (\dom (- \Delta_{max})\bigr)=N^{1/2}(\partial\Omega)$$)$. In fact, 
\begin{eqnarray}\lb{3.ON}
\tau^N_z\bigl(H^2(\Om)\cap H^1_0(\Om)\bigr)=N^{1/2}(\partial\Omega), \quad 
z\in\bbC\backslash\si(-\Delta_{D,\Om}).
\end{eqnarray}

\item[$(ii)$] One has 

\begin{eqnarray}\lb{3.Aw9}
\tau^N_z=\gamma_N(-\Delta_{D,\Om}-zI_{\Om})^{-1}(-\Delta-z),\quad
z\in\bbC\backslash \si(-\Delta_{D,\Om}).
\end{eqnarray} 

\item[$(iii)$] For each $z\in\bbC\backslash\si(-\Delta_{D,\Om})$, the 
kernel of the map $\tau^N_z$ in \eqref{3.Aw1}, \eqref{3.Aw2} is given by 
\begin{eqnarray}\lb{3.AKe}
\ker \big(\tau^N_z\big)=H^2_0(\Omega)\dotplus   \big\{u\in L^2(\Om;d^nx) \,\big|\, 
(-\Delta -z)u=0\,\mbox{ in }\,\Omega\big\}.
\end{eqnarray}
In particular, if $z\in\bbC\backslash\si(-\Delta_{D,\Om})$, then 
\begin{eqnarray}\lb{Sim-Gr} 
\tau^N_z u  =0\, \mbox{ for every }\,u\in \ker (-\Delta_{max}-zI_{\Om}).
\end{eqnarray}

\item[$(iv)$] For each $z\in\bbC\backslash \si(-\Delta_{D,\Om})$, the following 
Green formula holds for every $u,v\in\dom (- \Delta_{max})$, 
\begin{align}\lb{T-Green} 
& 
((-\Delta -z)u,  v)_{L^2(\Omega;d^nx)}
- (u,(-\Delta-\ol{z})v)_{L^2(\Omega;d^nx)}
\nonumber\\
& \quad
=-{}_{N^{1/2}(\partial\Omega)}\big\langle\tau^N_z u  ,\widehat\gamma_D v            
\big\rangle_{(N^{1/2}(\partial\Omega))^*}
% \nonumber \\[1mm]
% & \qquad
+\,\ol{{}_{N^{1/2}(\partial\Omega)}\big\langle\tau^N_{\ol{z}} v,\widehat{\gamma}_D u 
\big\rangle_{(N^{1/2}(\partial\Omega))^*}}.
\end{align}
In particular, for every $u,v\in\dom (- \Delta_{max})$,  
$z\in\bbC$, and $z_0\in\bbR\backslash \si(-\Delta_{D,\Om})$, one has 
\begin{align}\lb{T-GreenX} 
& 
((-\Delta -(z+z_0))u,  v)_{L^2(\Omega;d^nx)}
- (u,  (-\Delta-(\ol{z}+z_0))v)_{L^2(\Omega;d^nx)}
\nonumber\\
& \quad
=-{}_{N^{1/2}(\partial\Omega)}\big\langle\tau^N_{z_0} u,\widehat\gamma_D v 
\big\rangle_{(N^{1/2}(\partial\Omega))^*}
% \nonumber\\[1mm]
% & \qquad
+\,\ol{{}_{N^{1/2}(\partial\Omega)}\big\langle\tau^N_{z_0} v,\widehat{\gamma}_D u 
\big\rangle_{(N^{1/2}(\partial\Omega))^*}}.
\end{align}
Moreover, as a consequence of \eqref{T-Green} and \eqref{Sim-Gr}, 
for every $z\in\bbC\backslash \si(-\Delta_{D,\Om})$ one infers that 
\begin{align}\lb{T-Green2} 
& u\in\dom (- \Delta_{max}) \, \mbox{ and } \, 
v\in\ker  (-\Delta_{max}-\ol{z}I_{\Om})
\nonumber\\
& \quad 
\text{imply } \, ((-\Delta-z)u,  v)_{L^2(\Omega;d^nx)}
=-{}_{N^{1/2}(\partial\Omega)}\big\langle\tau^N_z u,\widehat\gamma_D v 
\big\rangle_{(N^{1/2}(\partial\Omega))^*}.
\end{align}
\end{enumerate}
\end{theorem}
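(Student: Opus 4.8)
\textbf{Proof proposal for Theorem \ref{LL.w}.}

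The plan is to establish the core Green formula \eqref{T-Green} first, and then read off the specializations \eqref{T-GreenX} and \eqref{T-Green2} as easy consequences. Before that, however, one must dispose of the preliminary assertions: that $\tau^N_z$ maps into $N^{1/2}(\dOm)$ boundedly, that \eqref{3.Aw9} holds, and properties $(i)$ and $(iii)$. For the mapping property and \eqref{3.Aw9}, I would argue as follows. Given $u\in\dom(-\Delta_{max})$, write $f:=(-\Delta-z)u\in L^2(\Om;d^nx)$ and set $w:=(-\Delta_{D,\Om}-zI_\Om)^{-1}f$, so that $w\in\dom(-\Delta_{D,\Om})\subset H^2(\Om)\cap H^1_0(\Om)$ by Lemma \ref{Bjk}. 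Then $u-w\in\ker(-\Delta_{max}-zI_\Om)$, hence $\widehat\gamma_D(u-w)=\widehat\gamma_D u$ (since $\widehat\gamma_D w=0$), and by the definition of the Dirichlet-to-Neumann map in \eqref{3.44v}--\eqref{3.45v}, $\widehat\gamma_N(u-w)=-M^{(0)}_{D,N,\Om}(z)(\widehat\gamma_D u)$. Adding and using linearity of $\widehat\gamma_N$ gives $\widehat\gamma_N u + M^{(0)}_{D,N,\Om}(z)(\widehat\gamma_D u)=\widehat\gamma_N w=\gamma_N w$, which is precisely \eqref{3.Aw9} and simultaneously shows $\tau^N_z u=\gamma_N w\in N^{1/2}(\dOm)$ with the bound coming from the boundedness of $\gamma_N$ on $H^2(\Om)\cap H^1_0(\Om)$ (Lemma \ref{Lo-Tx}) composed with the resolvent estimate. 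Property $(i)$, that $\tau^N_z(H^2(\Om)\cap H^1_0(\Om))=N^{1/2}(\dOm)$, then follows because for $w\in H^2(\Om)\cap H^1_0(\Om)$ one has $\widehat\gamma_D w=0$, so $\tau^N_z w=\gamma_N w$, and $\gamma_N$ is onto $N^{1/2}(\dOm)$ by Lemma \ref{Lo-Tx}. Property $(iii)$ is immediate from \eqref{3.Aw9}: $\tau^N_z u=0$ iff $\gamma_N w=0$ iff $w\in H^2_0(\Om)$ (Lemma \ref{Lo-Tx} again), and $u=w+(u-w)$ with $u-w\in\ker(-\Delta_{max}-zI_\Om)$; the sum is direct precisely because $z\notin\si(-\Delta_{D,\Om})$.

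For the Green formula \eqref{T-Green}, the strategy is to decompose both $u$ and $v$ using Lemma \ref{J-Sa} (or the resolvent splitting just described). Write $u=w_u+k_u$ with $w_u\in H^2(\Om)\cap H^1_0(\Om)$ solving $(-\Delta-z)w_u=(-\Delta-z)u$ and $k_u\in\ker(-\Delta_{max}-zI_\Om)$; similarly $v=w_v+k_v$ with $(-\Delta-\ol z)w_v=(-\Delta-\ol z)v$ and $k_v\in\ker(-\Delta_{max}-\ol z I_\Om)$. Then expand the left-hand side bilinearly into four terms. The term involving $k_u$ and $k_v$ vanishes because both are in the kernel (for the respective shifted operators). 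The term with $w_u$ and $w_v$, i.e.\ $((-\Delta-z)w_u,w_v)-(w_u,(-\Delta-\ol z)w_v)$, is a genuine $H^2$-integration by parts; since $w_u,w_v\in H^2(\Om)\cap H^1_0(\Om)$ have vanishing Dirichlet trace, the classical Green identity makes this term zero as well. The two surviving cross terms, $((-\Delta-z)w_u,k_v)$ and $-(w_u\text{-side with }k_u)$, are handled by \eqref{T-Green2}'s precursor: for $w\in\dom(-\Delta_{max})$ and $k\in\ker(-\Delta_{max}-\ol z I_\Om)$ one should have $((-\Delta-z)w,k)=-{}_{N^{1/2}(\dOm)}\langle\tau^N_z w,\widehat\gamma_D k\rangle_{(N^{1/2}(\dOm))^*}$. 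This last identity is the real crux and I would prove it by using \eqref{3.Aw9} to replace $\tau^N_z w$ by $\gamma_N w_w$ (the resolvent piece of $w$), then invoking the generalized integration by parts formula \eqref{Tan-C12} from Theorem \ref{New-T-tr} with the roles: $\gamma_N w_w$ paired against $\widehat\gamma_D(\text{something in }\dom(-\Delta_{max}))$. Concretely, since $w_w\in H^2(\Om)\cap H^1_0(\Om)$ and $k\in\dom(-\Delta_{max})$, \eqref{Tan-C12} gives ${}_{N^{1/2}(\dOm)}\langle\gamma_N w_w,\widehat\gamma_D k\rangle=(\Delta w_w,k)-(w_w,\Delta k)$, and then one converts the right-hand side using $\Delta w_w=\Delta w=\Delta u$-type relations together with $\Delta k=-\ol z k$ (or the appropriate shift) to arrive at $-((-\Delta-z)w,k)$. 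Once this building-block identity is in hand, reassembling the four-term expansion yields exactly \eqref{T-Green}, with the conjugate bar on the second pairing appearing because the inner product is conjugate-linear in its first slot and one is pairing $\tau^N_{\ol z}v$ (which involves $v$'s pieces) against $\widehat\gamma_D u$.

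The specializations are then routine. For \eqref{T-GreenX}, apply \eqref{T-Green} with $z$ replaced by $z+z_0$, note that $\tau^N_{z+z_0}$ is defined only when $z+z_0\notin\si(-\Delta_{D,\Om})$, so instead one keeps $z_0\in\bbR\backslash\si(-\Delta_{D,\Om})$ fixed and rewrites $(-\Delta-(z+z_0))u = (-\Delta-z_0)u - zu$; the extra $-zu$ and $-\ol z v$ terms combine into $-z(u,v)+\ol{\ol z}(u,v)\cdot(\text{sign bookkeeping})$ which cancels (since $\ol{\ol z}=z$ and the $L^2$ inner product contributes $-z(u,v)+z(u,v)=0$ after accounting for conjugation), leaving the $\tau^N_{z_0}$ pairings on the right. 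For \eqref{T-Green2}, simply set $v\in\ker(-\Delta_{max}-\ol z I_\Om)$ in \eqref{T-Green}: the left-hand side loses its second term, and on the right the term $\ol{{}_{N^{1/2}(\dOm)}\langle\tau^N_{\ol z}v,\widehat\gamma_D u\rangle}$ vanishes by \eqref{Sim-Gr} (part $(iii)$), which says $\tau^N_{\ol z}v=0$ for $v$ in that kernel. I expect the main obstacle to be the careful verification of the building-block identity $((-\Delta-z)w,k)=-\langle\tau^N_z w,\widehat\gamma_D k\rangle$ — in particular, making sure the Dirichlet-to-Neumann map's contribution inside $\tau^N_z$ is correctly matched against $\widehat\gamma_D k$ using the self-duality/symmetry of $M^{(0)}_{D,N,\Om}$ recorded in \eqref{NaLa}, and tracking all complex conjugations consistently given the paper's convention that $(\cdot,\cdot)_\cH$ is linear in the second factor.
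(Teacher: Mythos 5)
Your proposal is correct and follows essentially the same route as the paper's proof: the Dirichlet-resolvent splitting of $\dom(-\Delta_{max})$ giving \eqref{3.Aw9}, Lemma \ref{Lo-Tx} together with \eqref{3.3Ybis} (Lemma \ref{Bjk}) for boundedness, ontoness, and the kernel description, and the generalized integration by parts formula \eqref{Tan-C12} as the engine for the Green identity. The only, harmless, difference is organizational: you establish the cross-term identity (which is precisely \eqref{T-Green2}) first and assemble \eqref{T-Green} from a four-term expansion, whereas the paper proves \eqref{T-Green} directly by substituting the regularized pieces $\wti{u},\wti{v}$ in a chain of identities and obtains \eqref{T-Green2} as a corollary; in particular, your cautionary appeal to \eqref{NaLa} is never actually needed.
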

%%%%%%%%%%%%%%
\begin{proof}
Let$z\in\bbC\backslash \si(-\Delta_{D,\Om})$.  
Consider an arbitrary $u\in L^2(\Om;d^nx)$ satisfying 
$\Delta u\in L^2(\Om;d^nx)$, and let $v$ solve 
\begin{eqnarray}\lb{3.Aw3}
(-\Delta -z)v=0\,\mbox{ in }\,\Omega,\quad v\in L^2(\Om;d^nx),\quad 
\widehat\gamma_D v =\widehat\gamma_D u \,\mbox{ on } \,\partial\Omega. 
\end{eqnarray}
Theorems \ref{New-T-tr} and \ref{tH.A} ensure that this is possible  
and guarantee the existence of a finite constant $C=C(\Om,z)>0$ 
for which 
\begin{eqnarray}\lb{3.Aw4}
\|v\|_{L^2(\Om;d^nx)}\leq C(\|u\|_{L^2(\Om;d^nx)}+\|\Delta u\|_{L^2(\Om;d^nx)}).
\end{eqnarray}
Then $w:=u-v$ satisfies
\begin{equation}\lb{3.Aw5}
(-\Delta -z)w=(-\Delta -z)u\in L^2(\Omega;d^nx),\quad w\in L^2(\Om;d^nx), \quad   
\widehat\gamma_D w =0\, \mbox{ on } \,\partial\Omega, 
\end{equation}
so that, by \eqref{3.3Ybis}, there exists $C=C(\Om,z)>0$ such that 
\begin{eqnarray}\lb{3.Aw6}
w\in H^2(\Omega)\cap H^1_0(\Omega)\, \mbox{ and }\,  
\|w\|_{H^2(\Om)}\leq C(\|u\|_{L^2(\Om;d^nx)}+\|\Delta u\|_{L^2(\Om;d^nx)}).
\end{eqnarray}
Since $M_{D,N,\Om}^{(0)}(z)\bigl(\widehat\gamma_D u \bigr)
=-\widehat\gamma_N v        $, it follows from \eqref{3.Aw6}, the compatibility part 
of Theorem \ref{3ew-T-tr}, and Lemma \ref{Lo-Tx} that 
\begin{eqnarray}\lb{3.Aw7}
\tau^N_z u =\widehat\gamma_N u - \widehat\gamma_N v 
=\widehat\gamma_N (u-v) = \widehat\gamma_N w 
=\gamma_N w \in N^{1/2}(\partial\Omega). 
\end{eqnarray}
Moreover, 
\begin{align}\lb{3.Aw8}
\|\tau^N_z u \|_{N^{1/2}(\partial\Omega)}
&= \|\gamma_N w \|_{N^{1/2}(\partial\Omega)}\leq C\|w\|_{H^2(\Om)}
\nonumber\\
& \leq  C(\|u\|_{L^2(\Om;d^nx)}+\|\Delta u\|_{L^2(\Om;d^nx)}).
\end{align}
This shows that the operator $\tau^N_z$ in \eqref{3.Aw1}, \eqref{3.Aw2} is 
indeed well-defined and bounded. 

Incidentally, the above argument also shows that \eqref{3.Aw9} holds. 
That $\tau^N_z$ defined in \eqref{3.Aw1}, \eqref{3.Aw2} is onto, is a direct 
consequence of the fact that $\gamma_N$ in \eqref{Tan-C7} is onto 
(cf.\ Lemma \ref{Lo-Tx}). This also justifies \eqref{3.ON}.

Next, observe that due to Theorem \ref{tH.A} the sum in \eqref{3.AKe} 
is direct. Moreover, $H^2_0(\Omega)\subseteq \ker \big(\tau^N_z\big)$ by 
\eqref{3.Aw2} and $\big\{u\in L^2(\Om;d^n x) \,\big|\, 
(-\Delta -z)u=0\,\mbox{in}\,\Omega\big\}\subseteq \ker \big(\tau^N_z\big)$ 
by \eqref{3.Aw9}. This proves the right-to-left inclusion in \eqref{3.AKe}. 
To prove the opposite one, consider $u\in L^2(\Om;d^nx)$ with 
$\Delta u\in L^2(\Om;d^nx)$ for which $\tau^N_z u = 0$. 
If we now set $w:=(-\Delta_{D,\Om}-zI_{\Om})^{-1}(-\Delta-z)u$, then 
$w\in H^2(\Omega)\cap H^1_0(\Omega)$ and $\gamma_N w =\tau^N_z u =0$, 
by \eqref{3.Aw9}. Hence, $w\in  H^2_0(\Omega)$ by Theorem \ref{T-DD1}. 
Since $u=w+(u-w)$ and $(u-w)\in L^2(\Om;d^nx)$ satisfies $(-\Delta-z)(u-w)=0$ 
in $L^2(\Om;d^nx)$, the proof of \eqref{3.AKe} is complete. 

Next, consider the Green formulas in $(iv)$. Fix 
$z\in\bbR\backslash \si(-\Delta_{D,\Om})$, let $u,v\in\dom (- \Delta_{min})$,  
and set $\wti{u}:=(-\Delta_{D,\Om}-zI_{\Om})^{-1}((-\Delta -z)u)$,  
$\wti{v}:=(-\Delta_{D,\Om}-\ol{z}I_{\Om})^{-1}((-\Delta -\ol{z})v)$. Then 
\begin{align}\lb{G-G1} 
& \wti{u},\wti{v}\in H^2(\Omega)\cap H^1_0(\Omega),\quad 
(-\Delta-z)\wti{u}=(-\Delta -z)u,\quad 
(-\Delta-\ol{z})\wti{v}=(-\Delta -\ol{z})v, \no \\
& \quad \gamma_N \wti{u} = \tau^N_z u, \quad 
\gamma_N \wti{v} = \tau^N_{\ol{z}} v,    
\end{align}
by \eqref{3.Aw9}. Based on these observations and repeated applications 
of Green's formula \eqref{Tan-C12} one can then write
\begin{align}\lb{T-Gr1} 
& ((-\Delta -z)u,  v)_{L^2(\Omega;d^nx)}
- (u,  (-\Delta -\ol{z})v)_{L^2(\Omega;d^nx)}
\nonumber\\ 
& \quad = ((-\Delta-z)\wti{u},  v)_{L^2(\Omega;d^nx)}
- (u,  (-\Delta-\ol{z})\wti{v})_{L^2(\Omega;d^nx)}
\nonumber\\ 
& \quad = (\wti{u},  (-\Delta -\ol{z})v)_{L^2(\Omega;d^nx)}
- (u,  (-\Delta-\ol{z})\wti{v})_{L^2(\Omega;d^nx)}
\nonumber\\ 
& \qquad -{}_{N^{1/2}(\partial\Omega)}\langle\gamma_N \wti{u},\widehat{\gamma}_D v
\rangle_{(N^{1/2}(\partial\Omega))^*}
\nonumber\\ 
& \quad = (\wti{u}-u,  (-\Delta-\ol{z})\wti{v})_{L^2(\Omega;d^nx)}
-{}_{N^{1/2}(\partial\Omega)}\langle\tau^N_z u,\widehat{\gamma}_D v 
\rangle_{(N^{1/2}(\partial\Omega))^*}
\nonumber\\ 
& \quad = \ol{{}_{N^{1/2}(\partial\Omega)}\langle\tau^N_{\ol{z}} v,
\widehat{\gamma}_D u \rangle_{(N^{1/2}(\partial\Omega))^*}}
% \nonumber\\ 
% & \qquad 
-{}_{N^{1/2}(\partial\Omega)}\langle\tau^N_z u,\widehat{\gamma}_D v 
\rangle_{(N^{1/2}(\partial\Omega))^*},
\end{align}
where in the last step we have used the fact that 
$(-\Delta-z)(\wti{u}-u)=0$ and 
$\widehat\gamma_D(\wti{u}-u)=-\widehat\gamma_D u $.
This justifies \eqref{T-Green} and completes the proof of the theorem.
\end{proof}
%%%%%%%%%%%%%%

Recalling \eqref{3U-y}, we also state the following consequence of 
Theorem \ref{LL.w}: 

%%%%%%%%%%%%%%%%%%%%%%%%%%%% 
\begin{corollary}\lb{Lc.w} 
Assume Hypothesis \ref{h.Conv}. Then, for every 
$z_1,z_2\in\bbC\backslash\si(-\Delta_{D,\Om})$, the operator 
\begin{equation}
\big[M_{D,N,\Om}^{(0)}(z_1)-M_{D,N,\Om}^{(0)}(z_2)\big]
\in \cB\big((N^{1/2}(\dOm))^*,  (N^{3/2}(\dOm))^*\big)
\end{equation} 
satisfies  
\begin{equation}\lb{3.46vX}
\big[M_{D,N,\Om}^{(0)}(z_1)-M_{D,N,\Om}^{(0)}(z_2)\big]
\in \cB\big((N^{1/2}(\dOm))^*,  N^{1/2}(\dOm)\big).
\end{equation}
\end{corollary}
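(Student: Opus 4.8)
The plan is to derive Corollary \ref{Lc.w} as a corollary of the Green-type formula in Theorem \ref{LL.w}, specifically equation \eqref{T-GreenX}, combined with the mapping properties of the regularized Neumann trace $\tau^N_z$ already established. First I would fix $z_1, z_2 \in \bbC\backslash\si(-\Delta_{D,\Om})$ and, given an arbitrary $f \in (N^{1/2}(\dOm))^*$, use Corollary \ref{New-CV2} (or directly Theorem \ref{tH.G}) to pick $u_1, u_2 \in \dom(-\Delta_{max})$ with $(-\Delta - z_j) u_j = 0$ in $\Om$ and $\widehat\gamma_D u_j = f$, $j=1,2$. By the very definition \eqref{3.44v}, one has $M_{D,N,\Om}^{(0)}(z_j) f = -\widehat\gamma_N u_j$. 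The goal is then to show that the \emph{difference} $M_{D,N,\Om}^{(0)}(z_1) f - M_{D,N,\Om}^{(0)}(z_2) f = \widehat\gamma_N u_2 - \widehat\gamma_N u_1$ actually lies in the smaller space $N^{1/2}(\dOm)$ (rather than merely in $(N^{3/2}(\dOm))^*$), with a bound by $\|f\|_{(N^{1/2}(\dOm))^*}$.

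The key observation is that $\widehat\gamma_N u_2 - \widehat\gamma_N u_1$ can be rewritten in terms of the regularized trace $\tau^N_{z_1}$. Indeed, recall from \eqref{3.Aw2} that $\tau^N_{z_1} u = \widehat\gamma_N u + M_{D,N,\Om}^{(0)}(z_1)(\widehat\gamma_D u)$ for any $u \in \dom(-\Delta_{max})$. Applying this to $u = u_2$: since $\widehat\gamma_D u_2 = f$, we get $\tau^N_{z_1} u_2 = \widehat\gamma_N u_2 + M_{D,N,\Om}^{(0)}(z_1) f = \widehat\gamma_N u_2 - \widehat\gamma_N u_1 = -\big[M_{D,N,\Om}^{(0)}(z_1) - M_{D,N,\Om}^{(0)}(z_2)\big] f$, where in the middle step I used $M_{D,N,\Om}^{(0)}(z_1) f = -\widehat\gamma_N u_1$ and $M_{D,N,\Om}^{(0)}(z_2) f = -\widehat\gamma_N u_2$ (so $\widehat\gamma_N u_2 - \widehat\gamma_N u_1 = M_{D,N,\Om}^{(0)}(z_1) f - M_{D,N,\Om}^{(0)}(z_2) f$, hence $\tau^N_{z_1} u_2 = \big[M_{D,N,\Om}^{(0)}(z_1) - M_{D,N,\Om}^{(0)}(z_2)\big] f$, up to a sign I would track carefully). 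But Theorem \ref{LL.w} asserts precisely that $\tau^N_{z_1}$ maps $\dom(-\Delta_{max})$ \emph{into $N^{1/2}(\dOm)$} in a bounded fashion with respect to the graph norm. Since $\|u_2\|_{L^2(\Om;d^nx)} \leq C \|f\|_{(N^{1/2}(\dOm))^*}$ and $\Delta u_2 = z_2 u_2$ gives $\|\Delta u_2\|_{L^2(\Om;d^nx)} \leq |z_2|\, C\|f\|_{(N^{1/2}(\dOm))^*}$, the graph norm of $u_2$ is controlled by $\|f\|_{(N^{1/2}(\dOm))^*}$, whence $\big[M_{D,N,\Om}^{(0)}(z_1) - M_{D,N,\Om}^{(0)}(z_2)\big] f \in N^{1/2}(\dOm)$ with the desired estimate.

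To make this rigorous I would need to address two routine points. First, the choice of $u_2$ from Theorem \ref{tH.G} is not unique, but the value $\widehat\gamma_N u_2$ is determined by $f$ and $z_2$ modulo functions in $\ker(-\Delta_{max} - z_2 I_\Om)$ with vanishing Dirichlet trace, which is trivial since $z_2 \notin \si(-\Delta_{D,\Om})$; alternatively, simply take $u_2$ to be the solution given canonically by Theorem \ref{tH.A} with $f=0$ data in the interior. Second, the sign bookkeeping in relating $\tau^N_{z_1} u_2$ to the difference of Dirichlet-to-Neumann maps must be done carefully using \eqref{3.Aw2} and \eqref{3.44v}; this is purely mechanical. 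I expect the \emph{main obstacle} — which is really more of a conceptual check than a genuine difficulty — is confirming that the composition $\tau^N_{z_1}$ applied to $u_2$ (a function that solves the Helmholtz equation at the \emph{other} spectral parameter $z_2$) genuinely produces the difference $M_{D,N,\Om}^{(0)}(z_1) - M_{D,N,\Om}^{(0)}(z_2)$ acting on $f$; once \eqref{3.Aw2} is unwound this is immediate, and the smoothing from $(N^{3/2}(\dOm))^*$ to $N^{1/2}(\dOm)$ is then an automatic consequence of the ontoness/mapping statement for $\tau^N_{z_1}$ in Theorem \ref{LL.w}. The estimate \eqref{3.46vX} follows by combining the boundedness of $\tau^N_{z_1}$ with the a priori bound on $u_2$ in $\dom(-\Delta_{max})$.
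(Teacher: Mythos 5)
Your argument is correct and is essentially the paper's own proof: both rest on producing, via Theorem \ref{tH.G}, a function $u\in\dom(-\Delta_{max})$ with $\widehat\gamma_D u=f$ and graph norm bounded by $C\|f\|_{(N^{1/2}(\dOm))^*}$, and then invoking the boundedness of $\tau^N_z:\dom(-\Delta_{max})\to N^{1/2}(\dOm)$ from Theorem \ref{LL.w} together with the definition \eqref{3.Aw2} to identify the difference of Dirichlet-to-Neumann maps with a regularized Neumann trace. The only cosmetic difference is that you specialize $u$ to the $z_2$-solution $u_2$, so the difference applied to $f$ is just $\tau^N_{z_1}u_2$, whereas the paper takes an arbitrary lift $u$ of $f$ and writes it as $\tau^N_{z_1}u-\tau^N_{z_2}u$; your minor sign slips (in relating $\tau^N_{z_1}u_2$ to the difference, and in $\Delta u_2=-z_2u_2$) do not affect the membership claim or the norm estimate.
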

%%%%%%%%%%%%%%%%%%%%%%%%%%%% 
\begin{proof}
From Theorem \ref{tH.G} we know that there exists a constant $C>0$ with the
property that for every $f\in (N^{1/2}(\dOm))^*$ one can find 
$u\in\dom (- \Delta_{max})$ such that 
\begin{equation}\lb{Gfa1}
\widehat\gamma_D u =f\, \mbox{ and }\, 
\|u\|_{L^2(\Om;d^nx)}+\|\Delta u\|_{L^2(\Om;d^nx)}\leq C\|f\|_{(N^{1/2}(\dOm))^*}.
\end{equation}
With the help of \eqref{3.Aw1} we may then write 
\begin{align}\lb{3.Aw1X}
\|[M_{D,N,\Om}^{(0)}(z_1)-M_{D,N,\Om}^{(0)}(z_2)]f\|_{N^{1/2}(\dOm)}
&= \big\|\tau^N_{z_1} u   - \tau^N_{z_2} u \big\|_{N^{1/2}(\dOm)} 
\nonumber\\ 
&\leq  C\bigl[\|u\|_{L^2(\Om;d^nx)}+\|\Delta u\|_{L^2(\Om;d^nx)}\bigr]
\nonumber\\ 
&\leq  C\|f\|_{(N^{1/2}(\dOm))^*}, 
\end{align}
proving \eqref{3.46vX}.
\end{proof}
%%%%%%%%%%%%%%%%%%%%%%%%%%%%

%%%%%%%%%%%%%%%%%%%%%%%%%%%%%%% 
%%%%%%%%%%%%%%%%%%%%%%%%%%%%%%%
\section{The Krein Laplacian on Quasi-Convex Domains}
\lb{s13}
%%%%%%%%%%%%%%%%%%%%%%%%%%%%%%% 
%%%%%%%%%%%%%%%%%%%%%%%%%%%%%%% 

We now discuss the Krein--von Neumann extension $-\Delta_{K,\Om}$ of  
Laplacian $-\Delta\big|_{C_0^\infty(\Om)}$ in $L^2(\Om; d^n x)$, with $\Om\subset\bbR^n$ 
a bounded Lipschitz domain satisfying Hypothesis \ref{h.Conv}. 

In this situation, equations \eqref{SK} and \eqref{Yan-10} yield the following:  
\begin{align}\lb{Kre-Def}
\dom (-\Delta_{K,\Om}) &= 
\dom (- \Delta_{min}) \dot + \ker  ((-\Delta_{min})^*)
\nonumber\\ 
&= \dom (- \Delta_{min}) \dot + \ker (- \Delta_{max})
\nonumber\\ 
&=  H^2_0(\Om)\dotplus   
\big\{u\in L^2(\Om;d^nx) \,\big|\, \Delta u=0\mbox{ in }\Omega\big\}.
\end{align}
Nonetheless, we shall adopt a different point of view which better elucidates
the nature of the boundary condition associated with the Krein Laplacian. 
Our construction was originally inspired by the discussion in \cite{AS80}.
In Example~5.3 of that paper, the authors consider the Laplacian with 
the boundary condition
\begin{eqnarray}\lb{B.A-1} 
\frac{\partial f}{\partial n}(x)=\frac{\partial H(f)}{\partial n}(x),
\quad x\in\partial\Omega,
\end{eqnarray}
where, given $f:\Omega\to\bbR$, $H(f)$ is the harmonic extension of 
$f|_{\partial\Omega}$ to $\Omega$, and $\partial/\partial n$ denotes the 
normal derivative. The algebraic manipulations in \cite{AS80} leading up to 
\eqref{B.A-1} are somewhat formal, and Alonso and Simon mention that
{\it ``it seems to us that the Krein extension of $-\Delta$, that is, 
$-\Delta$ with the boundary condition \eqref{B.A-1}, is a natural object
and therefore worthy of further study.''}

%%%%%%%%%%%%%%
\begin{theorem}\lb{T-Kr} 
Assume Hypothesis \ref{h.Conv} and $z\in\bbC\backslash \si(-\Delta_{D,\Om})$. 
Then the operator $- \Delta_{K,\Om,z}$ in $L^2(\Omega;d^nx)$ given by 
\begin{align}\lb{A-zz.1} 
\begin{split}
& - \Delta_{K,\Om,z} u := (- \Delta - z)u, \\
& \; u\in \dom (- \Delta_{K,\Om,z}) :=\bigl\{v\in\dom (- \Delta_{max}) \,\big|\, 
\tau^N_z v =0\bigr\},
\end{split}
\end{align}
satisfies
\begin{eqnarray}\lb{A-zz.W} 
\big(- \Delta_{K,\Om,z}\big)^* = - \Delta_{K,\Om,\ol{z}}.
\end{eqnarray}
In particular, if $z\in\bbR\backslash \si(-\Delta_{D,\Om})$ then 
$- \Delta_{K,\Om,z}$ is self-adjoint. Moreover, if 
$z\leq 0$ then $-\Delta_{K,\Om,z} \geq 0$. 

In the following, $- \Delta_{K,\Om,z}$ will be referred to as 
the Krein Laplacian in $\Omega$ $($associated with $z$$)$. 
Hence, the Krein Laplacian $- \Delta_{K,\Om}:= - \Delta_{K,\Om,0}$ 
is a self-adjoint operator in $L^2(\Om;d^nx)$ which satisfies
\begin{eqnarray}\lb{A-zz.b} 
-\Delta_{K,\Om} \geq 0, \, \mbox{ and } \, 
- \Delta_{min}\subseteq - \Delta_{K,\Om}\subseteq -\Delta_{max}. 
\end{eqnarray}
Furthermore, $\ker (- \Delta_{K,\Om})
=\big\{u\in L^2(\Om;d^nx) \,\big|\, \Delta u=0\big\}$, 
\begin{eqnarray}\lb{spec-1}
\mbox{with the possible exception of the origin, 
$- \Delta_{K,\Om}$ has a discrete, real spectrum},
\end{eqnarray}
and for any nonnegative self-adjoint extension $\wti S$ of $-\Delta\big|_{C^\infty_0(\Om)}$ 
one has
\begin{eqnarray}\lb{Ok.1} 
-\Delta_{K,\Om}\leq \wti S \leq -\Delta_{D,\Om}.
\end{eqnarray}
\end{theorem}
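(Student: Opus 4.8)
The plan is to establish Theorem~\ref{T-Kr} in three stages: first the self-adjointness assertion \eqref{A-zz.W}, then the positivity and the inclusions \eqref{A-zz.b} together with the description of the kernel, and finally the spectral statement \eqref{spec-1} and the extremality property \eqref{Ok.1}. Throughout, Theorem~\ref{LL.w} is the principal tool, since it identifies $\tau^N_z$ as an onto, bounded operator on $\dom(-\Delta_{max})$ with kernel given by \eqref{3.AKe} and supplies the Green formula \eqref{T-Green}.

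For \eqref{A-zz.W}, I would first check the inclusion $-\Delta_{K,\Om,\ol z}\subseteq (-\Delta_{K,\Om,z})^*$: if $u\in\dom(-\Delta_{K,\Om,z})$ and $v\in\dom(-\Delta_{K,\Om,\ol z})$ then $\tau^N_z u=0$ and $\tau^N_{\ol z}v=0$, so the right-hand side of the Green formula \eqref{T-Green} vanishes, giving $((-\Delta-z)u,v)_{L^2}=(u,(-\Delta-\ol z)v)_{L^2}$, which is exactly the statement that $v\in\dom((-\Delta_{K,\Om,z})^*)$ with $(-\Delta_{K,\Om,z})^*v=(-\Delta-\ol z)v$. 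The reverse inclusion is the more delicate half: given $v\in\dom((-\Delta_{K,\Om,z})^*)$, I first test against $u\in C_0^\infty(\Om)\subset\dom(-\Delta_{K,\Om,z})$ to conclude $v\in\dom(-\Delta_{max})$ with $(-\Delta_{K,\Om,z})^*v=(-\Delta-\ol z)v$; then, using the Green formula again for arbitrary $u\in\dom(-\Delta_{max})$ with $\tau^N_z u=0$, I get ${}_{N^{1/2}(\dOm)}\langle\tau^N_z u,\widehat\gamma_D v\rangle_{(N^{1/2}(\dOm))^*}=0$ for all such $u$; since $\tau^N_z$ maps $H^2(\Om)\cap H^1_0(\Om)$ onto $N^{1/2}(\dOm)$ by \eqref{3.ON}, this forces $\widehat\gamma_D v$ to annihilate all of $N^{1/2}(\dOm)$, hence $\widehat\gamma_D v=0$. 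But then $\tau^N_{\ol z}v=\widehat\gamma_N v+M^{(0)}_{D,N,\Om}(\ol z)(\widehat\gamma_D v)$, and I must show this vanishes — here I would use the decomposition $v=v_0+v_1$ with $v_0\in H^2(\Om)\cap H^1_0(\Om)$ solving $(-\Delta-\ol z)v_0=(-\Delta-\ol z)v$ and $v_1\in\ker(-\Delta_{max}-\ol zI_\Om)$, note $\tau^N_{\ol z}v_1=0$ by \eqref{Sim-Gr}, and combine $\widehat\gamma_D v=0$ with $\widehat\gamma_D v_1=-\widehat\gamma_D v_0=-\gamma_D v_0$ to get back, via \eqref{3.Aw9} and Lemma~\ref{Lo-Tx}, that $\tau^N_{\ol z}v=\gamma_N v_0$ and then that $v_0\in H^2_0(\Om)$. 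I expect this closing argument — showing $v\in\dom(-\Delta_{K,\Om,\ol z})$ rather than merely $\widehat\gamma_D v=0$ — to be the main obstacle, requiring careful bookkeeping with \eqref{3.AKe} and Theorem~\ref{T-DD1}.

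The self-adjointness for real $z\notin\si(-\Delta_{D,\Om})$ is then immediate from \eqref{A-zz.W}. For the positivity when $z\le 0$: taking $z_0=0$ (legitimate since $0\notin\si(-\Delta_{D,\Om})$ by Theorem~\ref{t2.5}) and $u\in\dom(-\Delta_{K,\Om})$ with $u=u_0+u_1$, $u_0\in H^2_0(\Om)$, $u_1\in\ker(-\Delta_{max})$, one has $\tau^N_0 u=\gamma_N u_0=0$ so $u_0\in H^2_0(\Om)$ with $\gamma_N u_0=0$; writing $(-\Delta u,u)_{L^2}$ and integrating by parts on $u_0$ (which is a limit of test functions) while using $\Delta u_1=0$, I would identify $(-\Delta_{K,\Om}u,u)_{L^2}=\|\nabla u_0\|^2_{L^2}\ge 0$, and the general $z\le 0$ case follows since $-\Delta_{K,\Om,z}=-\Delta_{K,\Om}-zI_\Om\ge -zI_\Om\ge 0$ — though strictly one should note that the kernel condition $\tau^N_z u=0$ for $z<0$ differs from $\tau^N_0u=0$, so it is cleaner to observe that the Friedrichs/Krein abstract theory (Theorem~\ref{T-kkrr}, Lemma~\ref{L-Fri1}) identifies $-\Delta_{K,\Om,0}$ with the Krein--von Neumann extension of $-\Delta_{min}\ge 0$, whence $-\Delta_{K,\Om}\ge 0$ directly, and \eqref{A-zz.b} together with $\ker(-\Delta_{K,\Om})=\ker(-\Delta_{max})=\{u\in L^2(\Om;d^nx)\,|\,\Delta u=0\}$ follows from \eqref{Fr-4Tf} and \eqref{Yan-10}. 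The inclusions $-\Delta_{min}\subseteq-\Delta_{K,\Om}\subseteq-\Delta_{max}$ are read off from $\dom(-\Delta_{min})=H^2_0(\Om)\subseteq\ker(\tau^N_0)$ (by \eqref{3.AKe}) and the definition \eqref{A-zz.1}.

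Finally, for \eqref{spec-1}: the Krein Laplacian $-\Delta_{K,\Om}$ is a nonnegative self-adjoint extension of $-\Delta_{min}$, and on the orthogonal complement of its kernel it agrees, via the abstract formula \eqref{SKform1}--\eqref{SKform2} relating $(S_K)^{1/2}$ and $(S_F)^{1/2}=(-\Delta_{D,\Om})^{1/2}$, with an operator whose resolvent is compact; concretely, for $\lambda<0$ the resolvent difference $(-\Delta_{K,\Om}-\lambda I_\Om)^{-1}-(-\Delta_{D,\Om}-\lambda I_\Om)^{-1}$ can be written, by the Krein-type formula of the form \eqref{3.8c}, as a product involving $[\gamma_N(-\Delta_{D,\Om}-\lambda I_\Om)^{-1}]^*$, the inverse of $M^{(0)}_{D,N,\Om}(0)-M^{(0)}_{D,N,\Om}(\lambda)$ acting between the ``exotic'' boundary spaces, and its adjoint; since $[M^{(0)}_{D,N,\Om}(0)-M^{(0)}_{D,N,\Om}(\lambda)]$ maps $(N^{1/2}(\dOm))^*$ into $N^{1/2}(\dOm)$ by Corollary~\ref{Lc.w}, and $N^{1/2}(\dOm)\hookrightarrow L^2(\dOm;d^{n-1}\omega)$ compactly in an appropriate sense (via \eqref{EQ1} and Lemma~\ref{L-refN}), the composition is compact on $L^2(\Om;d^nx)$, giving discreteness of $\si(-\Delta_{K,\Om})$ away from $0$. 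For the extremality \eqref{Ok.1}, I would invoke Theorem~\ref{T-kkrr} directly: $-\Delta_{D,\Om}=S_F$ is the Friedrichs extension of $-\Delta_{min}$ by Lemma~\ref{L-Fri1}, $-\Delta_{K,\Om}=S_K$ is the Krein--von Neumann extension as just noted, and \eqref{Fr-Sa} states precisely that $S_K\le\wti S\le S_F$ for every nonnegative self-adjoint extension $\wti S$ of $-\Delta_{min}$, hence of $-\Delta\big|_{C_0^\infty(\Om)}$ by Corollary~\ref{C-Da}.
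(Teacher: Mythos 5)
Your proof of the easy inclusion $-\Delta_{K,\Om,\ol z}\subseteq(-\Delta_{K,\Om,z})^*$ and your treatment of \eqref{A-zz.b}, \eqref{Ok.1} via Theorem \ref{T-kkrr} and Lemma \ref{L-Fri1} are fine, but the decisive half of \eqref{A-zz.W} contains a genuine error. For $v\in\dom\big((-\Delta_{K,\Om,z})^*\big)$ and $u\in\dom(-\Delta_{K,\Om,z})$, the Green formula \eqref{T-Green} reads $0=-\langle\tau^N_z u,\widehat\gamma_D v\rangle+\ol{\langle\tau^N_{\ol z}v,\widehat\gamma_D u\rangle}$; since $\tau^N_z u=0$ for such $u$, the pairing $\langle\tau^N_z u,\widehat\gamma_D v\rangle$ is \emph{trivially} zero and carries no information, and the term that survives is $\langle\tau^N_{\ol z}v,\widehat\gamma_D u\rangle=0$. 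You have the roles reversed: to make $\tau^N_z u$ sweep out $N^{1/2}(\dOm)$ via \eqref{3.ON} you must test against $u\in H^2(\Om)\cap H^1_0(\Om)$ with $\tau^N_z u\neq 0$, but those $u$ do \emph{not} lie in $\dom(-\Delta_{K,\Om,z})$, so the defining adjoint identity (and hence the vanishing of the left-hand side of \eqref{T-Green}) is unavailable for them. Moreover the conclusion you draw, $\widehat\gamma_D v=0$, is false in general: by the inclusion you already proved, every $v\in\ker(-\Delta_{max}-\ol z I_\Om)$ belongs to $\dom\big((-\Delta_{K,\Om,z})^*\big)$, and for such $v$ the trace $\widehat\gamma_D v$ ranges over all of $\bigl(N^{1/2}(\dOm)\bigr)^*$ (Corollary \ref{New-CV22}); so your route, if it worked, would contradict the easy inclusion. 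The ensuing decomposition $v=v_0+v_1$ is built on this false premise and cannot repair it. The correct (and shorter) argument is the one based on the surviving term: from $\langle\tau^N_{\ol z}v,\widehat\gamma_D u\rangle=0$ for all $u\in\dom(-\Delta_{K,\Om,z})$, use that $\widehat\gamma_D\big(\dom(-\Delta_{K,\Om,z})\big)=\bigl(N^{1/2}(\dOm)\bigr)^*$ -- because the domain contains $\ker(-\Delta_{max}-zI_\Om)$ by \eqref{3.AKe} and $\widehat\gamma_D$ is onto there by Corollary \ref{New-CV2} -- to conclude $\tau^N_{\ol z}v=0$ directly, i.e.\ $v\in\dom(-\Delta_{K,\Om,\ol z})$; no claim about $\widehat\gamma_D v$ is needed or true.

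Two smaller points. For the nonnegativity you only treat $z=0$ and then correctly observe that the identity $-\Delta_{K,\Om,z}=-\Delta_{K,\Om}-zI_\Om$ fails (the domains differ), but your fallback via the abstract Krein--von Neumann identification is likewise stated only at $z=0$; the clean fix is to run your direct computation with the $z$-dependent decomposition $u=v+w$, $v\in H^2_0(\Om)$, $w\in\ker(-\Delta_{max}-zI_\Om)$ furnished by \eqref{3.AKe}, which gives $(u,(-\Delta-z)u)_{L^2(\Om;d^nx)}=\|\nabla v\|^2_{(L^2(\Om;d^nx))^n}-z\|v\|^2_{L^2(\Om;d^nx)}\geq 0$ for all $z\leq 0$ (alternatively, identify $-\Delta_{K,\Om,z}$ with the Krein--von Neumann extension of the strictly positive operator $-\Delta_{min}-zI_\Om$). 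For \eqref{spec-1}, your sketched Krein-formula/compactness argument is not substantiated at this stage (compactness of the relevant boundary operators between $N^{1/2}(\dOm)$ and its dual is never established, and $N^{1/2}(\dOm)$ is not an $H^s(\dOm)$ space in general); the intended argument is simply to combine Lemma \ref{L-Fri1} (the Friedrichs extension is $-\Delta_{D,\Om}$, which has purely discrete spectrum) with the abstract result of Alonso--Simon (Theorem 5.1 in \cite{AS80}) on the spectra of nonnegative self-adjoint extensions away from the origin.
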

%%%%%%%%%%%%%%
\begin{proof}
It is clear that the operator \eqref{A-zz.1} is densely defined. In addition, 
due to \eqref{T-Green}, this operator  
satisfies $- \Delta_{K,\Om,\ol{z}}\subseteq(- \Delta_{K,\Om,z})^*$.
Next, consider $w\in \dom (- \Delta_{K,\Om,z})^*$. Then $w\in L^2(\Om;d^nx)$,  
and there exists a function $v\in L^2(\Om;d^nx)$ such that 
\begin{eqnarray}\lb{Gdi-1}
(v,u)_{L^2(\Om;d^nx)}
= (w,(\Delta +z)u)_{L^2(\Om;d^nx)},
\quad u\in\dom (- \Delta_{K,\Om,z}).
\end{eqnarray}
Choosing $u\in C^\infty_0(\Omega)$ then shows that 
$w\in\dom (- \Delta_{max})$ and $(\Delta +\ol{z})w=v$. 
Thus, \eqref{Gdi-1} becomes
\begin{align}\lb{Gdi-2}
0 &= ((\Delta +\ol{z})w,u)_{L^2(\Om;d^nx)}
- (w,(\Delta +z)u)_{L^2(\Om;d^nx)}
\nonumber\\ 
&= {}_{N^{1/2}(\partial\Omega)}\langle\tau^N_{\ol{z}} w                     ,\widehat{\gamma}_D u 
\rangle_{(N^{1/2}(\partial\Omega))^*}, \quad 
u\in \dom (- \Delta_{K,\Om,z}), 
\end{align}
by \eqref{T-Green} and the fact that $\tau^N_z u = 0$. 
In order to continue, we remark that 
\begin{align}\lb{Gdi-4}
\widehat\gamma_D(\dom (- \Delta_{K,\Om,z})) &= 
\widehat\gamma_D(\ker  (\tau^N_z)) 
\nonumber\\ 
&= \widehat\gamma_D\bigl(
\big\{u\in L^2(\Om;d^nx) \,\big|\, (-\Delta -z)u=0\,\mbox{in}\,\Omega\big\}\bigr)
\nonumber\\ 
& = \bigl(N^{1/2}(\partial\Omega)\bigr)^*,
\end{align}
by virtue of \eqref{3.AKe} and Corollary \ref{New-CV2}. 
Using this in \eqref{Gdi-2} then yields $\tau^N_{\ol{z}} w = 0$ in 
$N^{1/2}(\partial\Omega)$ and hence, $w\in \dom (- \Delta_{K,\Om,\ol{z}})$.
This shows that $- \Delta_{K,\Om,z}^*\subseteq - \Delta_{K,\Om,\ol{z}}$, 
which completes the proof of \eqref{A-zz.W}. The subsequent commentary 
in the statement of the theorem is then justified by this. 

Next, we will show that $-\Delta_{K,\Om,z}\geq 0$ whenever
$z \leq 0$. To this end, fix such a $z$ 
and, given an arbitrary $u\in\dom (- \Delta_{K,\Om,0})$, 
consider $v\in H^2_0(\Om)$ and $w\in L^2(\Om;d^nx)$ with $(-\Delta-z)w=0$ 
such that $u=v+w$. That this is possible is ensured by \eqref{A-zz.1} and 
\eqref{3.AKe}. We may then write 
\begin{align}\lb{Kre-Pw}
(u, (-\Delta_{K,\Om,z}) u)_{L^2(\Om;d^nx)}
&= (v+w, (-\Delta-z)v)_{L^2(\Om;d^nx)}
\nonumber\\
&= (v, (-\Delta-z) v)_{L^2(\Om;d^nx)}
+((-\Delta-z)w, v)_{L^2(\Om;d^nx)}
\nonumber\\
&= 2 \Re [(v, (-\Delta-z) v)_{L^2(\Om;d^nx)}]
\nonumber\\
&= 2 [(-z)\,\|v\|^2_{L^2(\Om;d^nx)}+\|\nabla v\|^2_{(L^2(\Om;d^nx))^n}]\geq 0,
\end{align}
as required. 

Next, we note that by \eqref{A-zz.1} and \eqref{3.AKe}, the domain 
of $- \Delta_{K,\Om}$ has the description given in \eqref{Kre-Def}, 
justifying the terminology of Krein Laplacian used in connection with 
this operator. Finally, that the kernel of $- \Delta_{K,\Om}$ consists of 
all harmonic, square integrable functions in $\Omega$ follows from 
\eqref{Fr-4Tf} and \eqref{Yan-10}, whereas \eqref{spec-1} is a consequence 
of Lemma \ref{L-Fri1} and Theorem 5.1 in \cite{AS80}.
\end{proof}
%%%%%%%%%%%%%%

We remark that the boundary condition $\tau^N_0 v = \widehat\gamma_N v 
+M_{D,N,\Om}^{(0)}(0)\big(\widehat\gamma_D v \big)=0$ for elements $v$ 
in the domain of the Krein Laplacian $-\Delta_{K,\Om}$ can be viewed as a nonlocal Robin boundary condition (cf.\ \cite{GM08}--\cite{GM09a}).

%%%%%%%%%%%%%%%%%%%%%%%%%%%%%%%%%%%%%%
%%%%%%%%%%%%%%%%%%%%%%%%%%%%%%%%%%%%%%
\section{Self-adjoint Extensions with the Dirichlet Laplacian % \\ 
as Reference Operator}
\lb{s14}
%%%%%%%%%%%%%%%%%%%%%%%%%%%%%%%%%%%%%%
%%%%%%%%%%%%%%%%%%%%%%%%%%%%%%%%%%%%%%

Having discussed the Friedrichs and the Krein--von Neumann extensions of 
$-\Delta\big|_{C^\infty_0(\Om)}$ in $L^2(\Om; d^n x)$, our goal in this section is now 
to identify {\it all} self-adjoint extensions of this operator (nonnegative or not). 

To set the stage, we first recall an abstract functional analytic result which is 
pertinent to the task at hand. The following is essentially Theorem\ II.2.1 on p.\ 448 
of \cite{Gr68}: 

%%%%%%%%%%%%%%
\begin{theorem}\lb{T-Grubb} 
Let ${\mathcal{H}}$ be a Hilbert space, with inner product 
$(\dott,\dott)_{\mathcal{H}}$, and assume that 
\begin{eqnarray}\lb{Grubb.1}
A_0:\dom (A_0)\subseteq{\mathcal{H}}\to  {\mathcal{H}}
\end{eqnarray}
is a closed, densely defined, symmetric unbounded linear operator. 
Set $A_1:=(A_0)^*$, and let 
\begin{eqnarray}\lb{Grubb.2}
A_\beta:\dom (A_\beta)\subseteq{\mathcal{H}}\to  {\mathcal{H}}
\end{eqnarray}
be a self-adjoint extension of $A_0$ with $0\notin \si(A_\beta)$ 
$($$A_\beta$ is also called the reference operator\,$)$.
\begin{enumerate}
\item[$(i)$] Then 
\begin{eqnarray}\lb{Grubb.3A}
\begin{array}{c}
{\rm pr}_\beta:=A_\beta^{-1}A_1:\dom (A_1)\to \dom (A_\beta),
\\[4pt]
{\rm pr}_\zeta:=I_{\mathcal{H}}-{\rm pr}_\beta:\dom (A_1)\to 
\ker  (A_\beta),
\end{array}
\end{eqnarray}
are complementary projections which induce the decomposition
\begin{eqnarray}\lb{Grubb.3}
\dom (A_1)=\dom (A_\beta)\dotplus    \ker  (A_1).
\end{eqnarray}
In the sequel, we denote the above decomposition schematically 
by writing $u=u_\beta+u_\zeta$ for each $u\in \dom (A_1)$, where
$u_\beta:={\rm pr}_\beta(u)\in \dom (A_\beta)$ 
and $u_\zeta:={\rm pr}_\zeta(u)\in \ker  (A_1)$.

\item[$(ii)$] Let $V$ be an arbitrary closed subspace of $\ker  (A_1)$ 
and let $T:\dom (T)\subseteq V\to V^*$ be an arbitrary self-adjoint 
operator. Then the operator $A^{V,T}\subseteq A_1$ given by  
$A^{V,T}:\dom \big(A^{V,T}\big)\subseteq{\mathcal{H}}\to {\mathcal{H}}$, 
\begin{align}\lb{Grubb.4}
\begin{split}
& A^{V,T} u:=A_1 u,   \\
& u \in \dom \big(A^{V,T}\big):=\{v\in dom (A_1) \,|\, 
v_\zeta\in\dom (T)\mbox{ and }  \\
& \hspace*{3.1cm}
(w,A_1v)_{\cH}={}_V\langle w,Tv_\zeta\rangle_{V^*}, \, w\in V\}, 
\end{split}
\end{align}
is a self-adjoint extension of $A_0$. 

\item[$(iii)$] Conversely, let 
$\wti{A}:\dom (\wti{A})\subseteq{\mathcal{H}}\to {\mathcal{H}}$
be a self-adjoint extension of $A_0$ $($so that, necessarily, 
$\wti{A}\subseteq A_1$$)$ and define 
\begin{eqnarray}\lb{Grubb.6}
V:=\ol{\{u_\zeta \,|\, u\in \dom (\wti{A})\}} \quad \text{ $($with closure in 
${\mathcal{H}}$$)$},
\end{eqnarray}
and consider the operator $T:\dom (T)\subseteq V\to V^*$
given by 
\begin{equation}\lb{Grubb.7}
Tu_\zeta:= (\dott,A_1u)_{\mathcal{H}}\in V^*,  
\quad u_{\zeta} \in \dom (T):= \big\{v_\zeta \,\big|\, v\in \dom \big(\wti{A}\big)\big\}. 
\end{equation}
Then $T$ is a self-adjoint operator, 
and $\wti{A}=A^{V,T}$, where $A^{V,T}$ is associated with $V$ and $T$ as in item $(ii)$.
\end{enumerate} 
Consequently, the constructions in items $(ii)$, $(iii)$ establish a one-to-one 
correspondence between self-adjoint extensions $\wti{A}$ of $A_0$ and 
self-adjoint operators $T:\dom (T)\subseteq V\to V^*$ with 
$V$ a closed subspace of $\ker  (A_1)$.
\end{theorem}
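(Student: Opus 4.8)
The plan is to follow the scheme of \cite{Gr68}, with the whole argument driven by the abstract Green formula that the reference operator $A_\beta$ generates. First I would settle item $(i)$. Since $0\notin\si(A_\beta)$, the bounded inverse $A_\beta^{-1}\in\cB(\cH)$ exists, so $\mathrm{pr}_\beta:=A_\beta^{-1}A_1$ maps $\dom(A_1)$ into $\dom(A_\beta)$; because $A_\beta\subseteq A_\beta^{*}=A_1$, the operator $A_1$ restricted to $\dom(A_\beta)$ coincides with $A_\beta$, which gives $A_1\,\mathrm{pr}_\beta=A_1$ on $\dom(A_1)$, hence $\mathrm{pr}_\beta^{2}=\mathrm{pr}_\beta$, the complement $\mathrm{pr}_\zeta:=I_{\cH}-\mathrm{pr}_\beta$ takes values in $\ker(A_1)$, $\mathrm{pr}_\beta$ is the identity on $\dom(A_\beta)$, and $\mathrm{pr}_\zeta$ the identity on $\ker(A_1)$; finally the sum $\dom(A_\beta)\dotplus\ker(A_1)$ is direct because $A_\beta$ is injective. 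This part is routine.

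The engine for $(ii)$ and $(iii)$ is the identity
\[
(A_1u,v)_{\cH}-(u,A_1v)_{\cH}=(A_1u,v_\zeta)_{\cH}-(u_\zeta,A_1v)_{\cH},\qquad u,v\in\dom(A_1),
\]
obtained by writing $u=u_\beta+u_\zeta$, $v=v_\beta+v_\zeta$, using $A_1u=A_\beta u_\beta$ and $A_1v=A_\beta v_\beta$, and cancelling $(A_\beta u_\beta,v_\beta)_{\cH}-(u_\beta,A_\beta v_\beta)_{\cH}=0$ via self-adjointness of $A_\beta$. Writing $\Gamma_0 u:=u_\zeta\in\ker(A_1)$ and $\Gamma_1 u:=(\dott,A_1u)_{\cH}$, regarded after restriction as an element of $V^{*}$, this is an abstract Green identity, and symmetry of $A^{V,T}$ in $(ii)$ — and of the operator $T$ produced in $(iii)$ — is then immediate from it together with symmetry of $T$, respectively of $\wti A$. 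Two surjectivity facts carry the remaining weight: \emph{(a)} $\Gamma_0$ maps $\dom(A^{V,T})$ onto $\dom(T)$ (given $v_0\in\dom(T)$, identify $Tv_0\in V^{*}$ with a vector $\widetilde{Tv_0}\in V\subseteq\cH$ by Riesz and take $u:=A_\beta^{-1}\widetilde{Tv_0}+v_0$); and \emph{(b)} $\{u\in\dom(A^{V,T})\mid u_\zeta=0\}=A_\beta^{-1}\big(V^{\perp}\big)$, so $A_\beta$ maps that set onto $V^{\perp}$.

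To finish $(ii)$ I would take $f\in\dom\big((A^{V,T})^{*}\big)$; testing against $\dom(A_0)\subseteq\dom(A^{V,T})$ shows $f\in\dom(A_0^{*})=\dom(A_1)$ with $A_1 f=(A^{V,T})^{*}f$, so the Green identity collapses to $(A_1 f,u_\zeta)_{\cH}=(f_\zeta,A_1 u)_{\cH}$ for all $u\in\dom(A^{V,T})$. Specializing to $u$ with $u_\zeta=0$ and invoking \emph{(b)} forces $f_\zeta\perp V^{\perp}$, i.e.\ $f_\zeta\in V$; for general $u$ the relation becomes ${}_{V}\langle f_\zeta,Tu_\zeta\rangle_{V^{*}}=\overline{{}_{V}\langle u_\zeta,\Gamma_1 f\rangle_{V^{*}}}$, and by \emph{(a)} this holds for every $u_\zeta\in\dom(T)$, which is precisely $f_\zeta\in\dom(T^{*})=\dom(T)$ together with $\Gamma_1 f=Tf_\zeta$ on $V$; hence $f\in\dom(A^{V,T})$ and $A^{V,T}$ is self-adjoint. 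For $(iii)$: given a self-adjoint $\wti A$ (necessarily $A_0\subseteq\wti A\subseteq A_1$), set $V:=\ol{\{u_\zeta\mid u\in\dom(\wti A)\}}$ and $Tu_\zeta:=(\dott,A_1 u)_{\cH}|_{V}$; the Green identity shows $T$ is single-valued and symmetric (single-valuedness using density of $\{v_\zeta\mid v\in\dom(\wti A)\}$ in $V$), and $\wti A\subseteq A^{V,T}$ by construction. Since $A^{V,T}$ is symmetric, $\wti A\subseteq A^{V,T}\subseteq(A^{V,T})^{*}\subseteq\wti A^{*}=\wti A$, so $\wti A=A^{V,T}$; a short computation parallel to the one just made — now using that $\wti A=A^{V,T}$ is self-adjoint — gives $\dom(T^{*})\subseteq\dom(T)$, hence $T$ is self-adjoint. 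Bijectivity of the correspondence follows, since going from $(V,T)$ to $A^{V,T}$ and back recovers $V=\ol{\dom(T)}$ (by \emph{(a)}) and the same $T$, while going from $\wti A$ to $(V,T)$ and back recovers $\wti A$ by the sandwich just displayed.

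The step I expect to be the main obstacle is the conjugate-dual bookkeeping: the paper's convention that $X^{*}$ consists of continuous \emph{conjugate-linear} functionals and that ${}_{X}\langle\dott,\dott\rangle_{X^{*}}$ is antilinear in the first slot must be tracked carefully so that ``symmetric'' and ``self-adjoint'' for $T\colon\dom(T)\subseteq V\to V^{*}$ mean exactly what the argument needs. This matters because $\ker(A_1)$ is in general infinite-dimensional and $T$ genuinely unbounded, so the self-adjointness claims cannot be reduced to a finite-rank perturbation and must instead be pushed through the surjectivity facts \emph{(a)}, \emph{(b)} and the Riesz identification $V^{*}\cong V$; beyond this, everything reduces to item $(i)$ and the Green identity.
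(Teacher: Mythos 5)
Your proof is correct, but there is no proof in the paper to compare it with: the authors do not prove Theorem \ref{T-Grubb} at all, they import it (``essentially Theorem II.2.1 on p.\ 448 of \cite{Gr68}'') and only record consequences in Remark \ref{H-U1}. Your argument is essentially the classical Vi\v{s}ik--Birman--Grubb one, and it meshes exactly with how the paper later uses the theorem: your Riesz identification of $V^*$ with $V$ is \eqref{aa.zz.1}; your surjectivity fact (a), via $u=A_\beta^{-1}\widetilde{Tv_0}+v_0$, is the description \eqref{aa.zz.2} of $\dom\big(A^{V,T}\big)$; and your reading of self-adjointness for $T\colon\dom(T)\subseteq V\to V^*$ through the conjugate-dual pairing is precisely the one the authors employ in the proof of Theorem \ref{CC.w} (cf.\ \eqref{BU.q1}--\eqref{BU.q4}), so the bookkeeping you flagged as the main danger is handled consistently with the paper's conventions. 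Two small points deserve to be made explicit rather than implicit: the verification that $A^{V,T}$ extends $A_0$ (your self-adjointness step tests against $\dom(A_0)\subseteq\dom\big(A^{V,T}\big)$ without proving it), which is one line since $v\in\dom(A_0)$ has $v_\zeta=0$ and $(w,A_0v)_{\cH}=(A_1w,v)_{\cH}=0$ for $w\in V\subseteq\ker(A_1)$; and the fact that a self-adjoint $T:\dom(T)\subseteq V\to V^*$ is densely defined in $V$, which is what makes $\ol{\dom(T)}=V$ and hence the bijectivity of the correspondence legitimate. Finally, note that \eqref{Grubb.3A} as printed sends $\mathrm{pr}_\zeta$ into $\ker(A_\beta)$, a typo for $\ker(A_1)$; your reading is the intended one.
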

%%%%%%%%%%%%%%

%%%%%%%%%%%%%%%%%%%%%%%%
\begin{remark}\lb{H-U1}
Assume that $\cH$ is a Hilbert space (with inner product 
$(\dott,\dott)_{\cH}$), and suppose that $V$ is a closed 
subspace of $\cH$. In addition, denote by $\pi_V$ the orthogonal projection 
of $\cH$ onto $V$. Then 
\begin{eqnarray}\lb{aa.zz.1}
\pi_V\pi_V^*:V^*\to  V\, \mbox{ isomorphically}, 
\end{eqnarray}
with inverse $V\ni v\mapsto (v,\dott)_{\cH}\in V^*$. 
In the context of Theorem \ref{T-Grubb}\,$(i)$, and keeping the 
identification $V^*\equiv V$ in mind given by \eqref{aa.zz.1}, an alternative
description of the domain of the operator $A^{V,T}$, originally introduced 
in \eqref{Grubb.4} is
\begin{eqnarray}\lb{aa.zz.2}
u\in\dom \big(A^{V,T}\big) \, \text{ if and only if }   
\left\{
\begin{array}{l}
u=v+w+A_\beta^{-1}(Tw+\eta),
\\[4pt]
v\in\dom (A_0),\,\,w\in \dom (T),
\\[4pt]
\quad \mbox{and }\,\,\eta\in \ker  (A_1)\cap V^\top.
\end{array}
\right.
\end{eqnarray}
See Lemma 1.4 on p.\ 444 of \cite{Gr68}. Furthermore, with the above 
identification understood, 
\begin{eqnarray}\lb{aa.zz.3}
\ker \big(A^{V,T}\big)=\ker  (T),\quad
\ran \big(A^{V,T}\big)=\ran  (T) \dotplus V^\top.
\end{eqnarray}
In particular, $A^{V,T}$ is Fredholm if and only if $T$ is Fredholm, with 
the same kernel and cokernel. If $A^{V,T}$ and hence, also $T$, is injective
then the inverse satisfies
\begin{eqnarray}\lb{aa.zz.4}
\big(A^{V,T}\big)^{-1}=A_\beta^{-1}+T^{-1}\pi_V,\, \mbox{ defined on }\, 
\ran \big(A^{V,T}\big).
\end{eqnarray}
A related result is \cite[Theorem 2.1]{BGW09}.
\end{remark}
%%%%%%%%%%%%%%%%%%%%%%%%%% 

Theorem \ref{T-Grubb} provides a universal parametrization of all 
self-adjoint extensions of $A_0$, and our aim is to 
implement this abstract scheme 
in the case of $-\Delta\big|_{C^\infty_0(\Om)}$. In the theorem below 
we choose the Dirichlet Laplacian as the reference operator. 

%%%%%%%%%%%%%%%%%%%%%%%%%%%%%%%%%
\begin{theorem}\lb{CC.w} 
Assume Hypothesis \ref{h.Conv} and let $z\in\bbR\backslash\si(-\Delta_{D,\Om})$.
Suppose that $X$ is a closed subspace of $\bigl(N^{1/2}(\partial\Omega)\bigr)^*$
and denote by $X^*$ the conjugate dual space of $X$. In addition, consider a  
self-adjoint operator 
\begin{eqnarray}\lb{4.Aw1}
L:\dom (L)\subseteq X\to  X^*, 
\end{eqnarray} 
and define the linear operator 
$- \Delta^D_{X,L,z}:\dom \big(- \Delta^D_{X,L,z}\big)\subset L^2(\Om;d^nx) \to  L^2(\Om;d^nx)$ by 
\begin{align}\lb{4.Aw3}
\begin{split}
& -\Delta^D_{X,L,z} u:=(-\Delta-z)u,    \\ 
& \; u \in \dom \big(- \Delta^D_{X,L,z}\big):= \big\{
v\in \dom (- \Delta_{max})\,\big|\, \widehat\gamma_D v \in \dom (L),\,
\tau^N_z v  \big|_{X}= - L\big(\widehat\gamma_D v \big)\big\}.     \\ 
\end{split}
\end{align}
Above, $\tau^N_z$ is the map introduced in \eqref{3.Aw1}, \eqref{3.Aw2},
and the boundary condition $\tau^N_z u  \big|_{X}=-L\bigl(\widehat\gamma_D u \bigr)$ 
is interpreted as
\begin{eqnarray}\lb{4.Aw4B}
{}_{N^{1/2}(\dOm)}\big\langle\tau^N_z u  ,f\big\rangle_{(N^{1/2}(\dOm))^*}
= - \ol{{}_{X}\langle f,L(\widehat\gamma_D u )\rangle_{X^*}}, 
\quad  f \in X.
\end{eqnarray}
Then 
\begin{equation}\lb{4.Aw4}
- \Delta^D_{X,L,z}\,\mbox{ is self-adjoint in $L^2(\Om;d^nx)$}, 
\end{equation}
and
\begin{equation}
-\Delta_{min}-zI_{\Om}\subseteq -\Delta^D_{X,L,z}\subseteq -\Delta_{max}-zI_{\Om}.
\end{equation}

Conversely, if 
\begin{eqnarray}\lb{4.Aw5}
\wti S:\dom \big(\wti S\big)\subseteq L^2(\Om;d^nx)\to  L^2(\Om;d^nx) 
\end{eqnarray}
is a self-adjoint operator with the property that 
\begin{eqnarray}\lb{4.Aw6}
-\Delta_{min}-zI_{\Om}\subseteq \wti S\subseteq -\Delta_{max}-zI_{\Om}, 
\end{eqnarray}
then there exist $X$, a closed subspace of $\bigl(N^{1/2}(\dOm)\bigr)^*$, 
and $L:\dom (L)\subseteq X\to X^*$, a self-adjoint operator, such that 
\begin{eqnarray}\lb{4.Aw7}
\wti S = -\Delta^D_{X,L,z}.
\end{eqnarray}

In the above scheme, the operator $\wti S$ and the pair $X,L$ correspond 
uniquely to each other. In fact, 
\begin{eqnarray}\lb{4.Aw8}
\dom (L)=\widehat\gamma_D\big(\dom \big(\wti S\big)\big),\quad
X=\ol{\widehat\gamma_D\big(\dom \big(\wti S)\big)} \quad  \text{ $\big($with closure in 
$\bigl(N^{1/2}(\dOm)\bigr)^*$$\big)$.}
\end{eqnarray}
\end{theorem}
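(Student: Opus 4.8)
\textbf{Proof plan for Theorem \ref{CC.w}.}
The strategy is to deduce the result from the abstract parametrization in Theorem \ref{T-Grubb} applied with $\cH := L^2(\Om;d^nx)$, $A_0 := -\Delta_{min}-zI_\Om$, $A_1 := -\Delta_{max}-zI_\Om$, and reference operator $A_\beta := -\Delta_{D,\Om}-zI_\Om$, which is legitimate since $z\in\bbR\backslash\si(-\Delta_{D,\Om})$ guarantees $0\notin\si(A_\beta)$, and since $A_0^* = A_1$, $\ol{A_0}=A_0$ by Theorem \ref{T-DD1} and Corollary \ref{C-Da}. The heart of the matter is to set up the correct dictionary between the abstract objects $(V,T)$ appearing in Theorem \ref{T-Grubb} and the concrete boundary data $(X,L)$ in the statement. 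Under the decomposition $\dom(A_1)=\dom(A_\beta)\dotplus\ker(A_1)$, one has $\ker(A_1) = \ker(-\Delta_{max}-zI_\Om)$, and by Corollary \ref{New-CV22} the map $\widehat\gamma_D$ restricts to an \emph{isomorphism} $\ker(-\Delta_{max}-zI_\Om)\to(N^{1/2}(\dOm))^*$. This isomorphism is the crucial translation device: a closed subspace $V\subseteq\ker(A_1)$ corresponds to the closed subspace $X:=\widehat\gamma_D(V)\subseteq(N^{1/2}(\dOm))^*$, and a self-adjoint operator $T:\dom(T)\subseteq V\to V^*$ corresponds to a self-adjoint operator $L:\dom(L)\subseteq X\to X^*$ by conjugating with $\widehat\gamma_D$ (using that $\widehat\gamma_D|_{\ker(A_1)}$ is an isomorphism so its adjoint identifies $V^*$ with $X^*$).

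The plan is then to verify that, under this dictionary, the abstract boundary condition defining $A^{V,T}$ in \eqref{Grubb.4}, namely $(w,A_1v)_\cH = {}_V\langle w,Tv_\zeta\rangle_{V^*}$ for all $w\in V$, translates precisely into \eqref{4.Aw4B}. The key computational input here is the Green formula \eqref{T-Green2} from Theorem \ref{LL.w}: for $u\in\dom(-\Delta_{max})$ and $v\in\ker(-\Delta_{max}-\ol z I_\Om)$ (here $z$ is real, so $\ol z = z$), one has $((-\Delta-z)u,v)_{L^2} = -{}_{N^{1/2}(\dOm)}\langle\tau^N_z u,\widehat\gamma_D v\rangle_{(N^{1/2}(\dOm))^*}$. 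Taking $w\in V\subseteq\ker(A_1)$ in the abstract condition and writing $A_1 u = (-\Delta-z)u$, the left-hand side $(w,A_1u)_\cH = \ol{(A_1 u, w)_\cH}$ becomes $\ol{-{}_{N^{1/2}(\dOm)}\langle\tau^N_z u,\widehat\gamma_D w\rangle_{(N^{1/2}(\dOm))^*}}$; matching this with the right-hand side ${}_V\langle w, T u_\zeta\rangle_{V^*}$, and using that $\widehat\gamma_D u = \widehat\gamma_D u_\zeta$ (since $u_\beta\in\dom(-\Delta_{D,\Om})$ so $\widehat\gamma_D u_\beta = \gamma_D u_\beta = 0$) together with the definition $L(\widehat\gamma_D u) := $ the functional corresponding to $T u_\zeta$ under $\widehat\gamma_D$, yields exactly \eqref{4.Aw4B} with $f = \widehat\gamma_D w$ ranging over $X$. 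The condition $\widehat\gamma_D v\in\dom(L)$ in \eqref{4.Aw3} corresponds to $v_\zeta\in\dom(T)$. The claims that $-\Delta_{min}-zI_\Om\subseteq -\Delta^D_{X,L,z}\subseteq -\Delta_{max}-zI_\Om$ are then immediate from $A_0\subseteq A^{V,T}\subseteq A_1$, and the converse direction together with the uniqueness and the identities \eqref{4.Aw8} follow by transporting \eqref{Grubb.6}, \eqref{Grubb.7} through the same dictionary, noting $V=\ol{\{u_\zeta\mid u\in\dom(\wti S)\}}$ and $u_\zeta = (I-\mathrm{pr}_\beta)u$ has $\widehat\gamma_D u_\zeta = \widehat\gamma_D u$, so that $X = \ol{\widehat\gamma_D(\dom(\wti S))}$ and $\dom(L) = \widehat\gamma_D(\dom(\wti S))$.

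I expect the main obstacle to be the careful bookkeeping of the \emph{dualities} — specifically, checking that the self-adjointness of $T:\dom(T)\subseteq V\to V^*$ is equivalent to the self-adjointness of $L:\dom(L)\subseteq X\to X^*$ under the conjugation by $\widehat\gamma_D$. This requires knowing that $\widehat\gamma_D|_{\ker(A_1)}:V\to X$ is a topological isomorphism (Corollary \ref{New-CV22}) whose Banach-space adjoint $(\widehat\gamma_D|_V)^*:X^*\to V^*$ is also an isomorphism, and then verifying that $L$ is symmetric/self-adjoint in the sense of the conjugate-dual pairing iff $T = (\widehat\gamma_D|_V)^* L\, (\widehat\gamma_D|_V)$ is; this is a general but slightly delicate fact about operators between a reflexive Banach space and its conjugate dual that must be stated precisely (reflexivity of $N^{1/2}(\dOm)$, hence of $X$, comes from Lemma \ref{L-refN}). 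A secondary subtlety is the sign convention: the paper uses $\tau^N_z v|_X = -L(\widehat\gamma_D v)$ and $\widehat\gamma_D u_\Lambda = -\Lambda$ in the right-inverse from Theorem \ref{tH.G}, so one must be attentive to the minus signs when matching \eqref{Grubb.4} to \eqref{4.Aw4B} via \eqref{T-Green2}; but this is routine once the Green formula is in hand. Everything else — the identification of $\ker(A_1)$, the projections $\mathrm{pr}_\beta,\mathrm{pr}_\zeta$, and the inclusions — is a direct application of already-established results.
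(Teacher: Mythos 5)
Your proposal follows essentially the same route as the paper's proof: it invokes Theorem \ref{T-Grubb} with $A_0=-\Delta_{min}-zI_\Om$, $A_1=-\Delta_{max}-zI_\Om$, $A_\beta=-\Delta_{D,\Om}-zI_\Om$, uses the isomorphism $\widehat\gamma_D$ on $\ker(-\Delta_{max}-zI_\Om)$ (Corollary \ref{New-CV22}) to translate the pairs $(V,T)$ into $(X,L)$ exactly as in \eqref{BB.s3}--\eqref{BB.s4} and \eqref{BB.s10}, and matches the boundary conditions via the Green formula \eqref{T-Green2}, which is precisely the paper's argument. The duality/self-adjointness transfer that you flag as the delicate point is what the paper verifies by hand in both directions (symmetry plus the adjoint-domain estimates \eqref{BU.q3}, \eqref{BU.q4F}), so your plan is correct and coincides in substance with the published proof.
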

%%%%%%%%%%%%%%%%%%%%%%%%%%%%%%
\begin{proof}
Let $z\in\bbR\backslash \si(-\Delta_{D,\Om})$. 
We will employ Theorem \ref{T-Grubb} in the following context:
\begin{eqnarray}\lb{BB.s1}
A_0:=-\Delta_{min}-zI_{\Om},\quad A_1:=-\Delta_{max}-zI_{\Om},\quad 
A_\beta:=-\Delta_{D,\Om}-zI_{\Om}.
\end{eqnarray}
In particular, $0\in\bbC\backslash \si(A_\beta)$. 

Throughout the proof, we shall make use of the following results and notation:
\begin{align}\lb{BB.s2}
& u\in \dom (- \Delta_{max})  % \\
% & \quad  
\, \text{ implies }  \begin{cases} 
u_\beta:=(-\Delta_{D,\Om}-zI_{\Om})^{-1}
(-\Delta -z)u\in H^2(\Omega)\cap H^1_0(\Omega),
\\
u_\zeta:=u-(-\Delta_{D,\Om}-zI_{\Om})^{-1}(-\Delta -z)u
\in \ker  (-\Delta_{max}-zI_{\Om}),
\\
\widehat\gamma_N u_\zeta = \tau^N_z u  \in N^{1/2}(\partial\Omega),
\\
\widehat\gamma_D u_\zeta 
=\widehat\gamma_D u \in \bigl(N^{1/2}(\partial\Omega)\bigr)^*.
\end{cases}   % \no 
\end{align}
We now assume that a closed subspace $X$ of 
$\bigl(N^{1/2}(\partial\Omega)\bigr)^*$ has been specified, that $L$ 
is a self-adjoint operator as in \eqref{4.Aw1}, and that $- \Delta^D_{X,L,z}$ 
is defined as in \eqref{4.Aw3}. We will show that $- \Delta^D_{X,L,z}$ is 
a self-adjoint operator. To this end we define
\begin{eqnarray}\lb{BB.s3}
V:=\bigl\{u\in\ker  (-\Delta_{max}-zI_{\Om}) \,\big|\, 
\widehat\gamma_D u \in X\bigr\}, 
\end{eqnarray}
and observe that 
\begin{align}\lb{V-cL}
\begin{split}
& \mbox{$V=\ker  (-\Delta_{max}-zI_{\Om})\cap \widehat\gamma_D^{-1}(X)$} \\
& \quad \mbox{is a closed subspace of $\ker (-\Delta_{max}-zI_{\Om})$},
\end{split}
\end{align}
(where $\widehat\gamma_D^{-1}(X)$ denotes the pre-image of $X$ 
under $\widehat\gamma_D$), and that the restriction of $\widehat\gamma_D$ 
to $V$ satisfies  
\begin{eqnarray}\lb{V-cL2}
\widehat\gamma_D\in\cB(V,X)\, \mbox{ is an isomorphism}, 
\end{eqnarray}
by \eqref{BB.s3} and Corollary \ref{New-CV2}. 

Next, we also introduce the operator $T:\dom (T)\subseteq V\to  V^*$ by setting 
\begin{align}\lb{BB.s4}
\begin{split}
& Tu:= {}_X\langle\widehat\gamma_D(\cdot), L(\widehat\gamma_D u )\rangle_{X^*},   \\
& u \in \dom (T):= \{v\in V \,|\, \widehat\gamma_D v \in\dom (L)\}. 
\end{split}
\end{align}
For every $u,v\in\dom (T)$, using the self-adjointness of $L$, we may write
\begin{align}\lb{BU.q1}
{}_V\langle v,Tu\rangle_{V^*}
= {}_X\langle\widehat\gamma_D v, L(\widehat\gamma_D u )\rangle_{X^*} 
& = \ol{{}_X\langle\widehat\gamma_D u, L(\widehat\gamma_D v)\rangle_{X^*}} 
\no \\
& =\ol{{}_V\langle u,Tv\rangle_{V^*}}.
\end{align}
This shows that $T$ is symmetric, that is, $T\subseteq T^*$. To prove the 
converse inclusion, consider $u\in \dom (T^*)$. Then 
$u\in V$ and there exists $\Lambda\in V^*$ such that 
\begin{eqnarray}\lb{BU.q2}
{}_V\langle w,\Lambda\rangle_{V^*}=\ol{{}_V\langle u,Tw\rangle_{V^*}}
= \ol{{}_X\langle \widehat\gamma_D u ,L(\widehat\gamma_D w)\rangle_{X^*}}, 
\quad w\in \dom (T).
\end{eqnarray}
Our goal is to show that $u\in \dom (T)$ which, by \eqref{BB.s4}, comes
down to proving that $\widehat\gamma_D u \in \dom (L)$, or equivalently,
that $\widehat\gamma_D u \in \dom (L^*)$. In turn, the veracity of the 
latter condition is established as soon as we show that there exists a finite constant 
$C>0$ with the property that 
\begin{eqnarray}\lb{BU.q3}
|{}_X\langle \widehat\gamma_D u ,L(f)\rangle_{X^*}|
\leq C\|f\|_X,  \quad f\in \dom (L).
\end{eqnarray}
Since \eqref{V-cL2} entails that 
\begin{eqnarray}\lb{V-cL3}
\widehat\gamma_D:\dom (T)\to  \dom (L)
\, \mbox{ boundedly, with a bounded inverse}, 
\end{eqnarray}
the estimate \eqref{BU.q3} is going to be implied by 
\begin{eqnarray}\lb{BU.q4}
|{}_X\langle \widehat\gamma_D u ,L(\widehat\gamma_D w)\rangle_{X^*}|
\leq C\|w\|_{V}, \quad w\in \dom (T).
\end{eqnarray}
This, however, for the choice $C=\|\Lambda\|_{V^*}$, is a direct consequence
of \eqref{BU.q2}. In summary, the above reasoning yields that 
$u\in \dom (T^*)$, completing the proof of the fact that $T$ is self-adjoint. 

With this at hand, Theorem \ref{T-Grubb} will imply that $- \Delta^D_{X,L,z}$ 
is self-adjoint as soon as we establish that
\begin{align}\lb{BB.s5}
\begin{split}
& \dom \big(- \Delta^D_{X,L,z}\big) = \{u\in\dom (- \Delta_{max}) \,|\, 
u_\zeta\in\dom (T),    \\
& \quad (w,(-\Delta -z)u)_{L^2(\Om;d^nx)}
={}_V\langle w,Tu_\zeta\rangle_{V^*}, \, w\in V\}.
\end{split}
\end{align} 
We note that for each $u\in\dom (- \Delta_{max})$, 
\begin{eqnarray}\lb{BB.s6}
u_\zeta\in\dom (T)  \, \text{ if and only if } \, 
\widehat\gamma_D u \in \dom (L)  
\end{eqnarray}
by \eqref{BB.s2}. Consequently, it remains to show that 
\begin{equation}\lb{BB.s7}
\tau^N_z u  \big|_{X}= - L (\widehat\gamma_D u)
\, \text{ if and only if } \,
(w,(-\Delta -z)u)_{L^2(\Om; d^n x)}={}_V\langle w,Tu_\zeta\rangle_{V^*}, \quad w\in V, 
\end{equation}
whenever $u\in\dom (- \Delta_{max})$ has 
$\widehat\gamma_D u \in \dom (L)$. Fix such a $u$ 
and recall from \eqref{BB.s2} that 
$\widehat\gamma_D u_\zeta = \widehat\gamma_D u $. Unraveling definitions, 
the task at hand becomes showing that 
\begin{align}\lb{BB.s8}
& \tau^N_z u  \big|_{X}= - L (\widehat\gamma_D u)
\\
& \quad
\text{if and only if } \, 
(w,(-\Delta -z)u)_{L^2(\Om;d^nx)}
= {}_{X}\langle\widehat\gamma_D w,L(\widehat\gamma_D u )\rangle_{X^*}, \quad 
w\in V.
\nonumber
\end{align}
One observes that for every 
$w\in V\subseteq\ker  (-\Delta_{max}-zI_{\Om})$ 
one has (recalling that $z\in\bbR$)
\begin{eqnarray}\lb{BB.s9}
(w,(-\Delta -z)u)_{L^2(\Om;d^nx)}=-
\ol{{}_{N^{1/2}(\dOm)}\langle\tau^N_z u  ,
\widehat\gamma_D w \rangle_{(N^{1/2}(\dOm))^*}}
\end{eqnarray}
by \eqref{T-Green2}. Then \eqref{BB.s8} readily follows from this 
and \eqref{V-cL2}. This concludes the proof of \eqref{4.Aw4}. 

Conversely, assume that $\wti S$, as in \eqref{4.Aw5}, is a   
self-adjoint operator which satisfies \eqref{4.Aw6}. If we define 
\begin{eqnarray}\lb{BB.s3R}
V:=\ol{\{u_\zeta \,\big|\, u\in\dom \big(\wti S\big)\big\}} \quad 
\text{ $\big($with closure in $L^2(\Omega;d^nx)$$\big)$}, 
\end{eqnarray}
then $V$ is a closed subspace of $\ker  (-\Delta_{max}-zI_{\Om})$
for which \eqref{V-cL2} continues to hold. Next, one introduces the 
operator $T:\dom (T)\subseteq V\to  V^*$ by setting 
\begin{align}\lb{BB.s4R}
\begin{split} 
& Tu_\zeta:= (\dott,(-\Delta -z)u)_{L^2(\Omega;d^nx)}\in V^*,   \\
& u_{\zeta} \in \dom (T):= \big\{v_\zeta \,\big|\, v\in\dom \big(\wti S\big)\big\}. 
\end{split}
\end{align}
Then Theorem \ref{T-Grubb} ensures that $T$ is a self-adjoint operator. 

Next, consider $X$, $\dom (L)$ as in \eqref{4.Aw8}, 
and introduce an operator $L$ as in \eqref{4.Aw1} by requiring that
\begin{equation}\lb{BB.s10}
{}_{X}\langle \widehat\gamma_D v,L(\widehat\gamma_D u )\rangle_{X^*}
= {}_{V}\langle v,Tu\rangle_{V^*}, \quad 
u\in \dom (T), \; v\in V.
\end{equation}
Since 
\begin{eqnarray}\lb{V-cL4}
\widehat\gamma_D:V\to X\, \mbox{ and }\, 
\widehat\gamma_D:\dom (T)\to \dom (L)
\, \mbox{ isomorphically}, 
\end{eqnarray}
the requirement in \eqref{BB.s10} uniquely defines $L$. 
Furthermore, for every $u,v\in \dom (T)$, 
using the self-adjointness of $T$ we may write
\begin{align}\lb{BU.y1}
{}_X\langle\widehat\gamma_D v, L(\widehat\gamma_D u )\rangle_{X^*}
& = {}_V\langle v,Tu\rangle_{V^*}= \ol{{}_V\langle u,Tv\rangle_{V^*}}  % \no \\ 
=\ol{{}_X\langle\widehat\gamma_D u ,  L(\widehat\gamma_D v)\rangle_{X^*}}.
\end{align}
Together with \eqref{V-cL4}, 
this shows that $L$ is symmetric, that is, $L\subseteq L^*$. To prove the 
converse inclusion, consider $f\in \dom (L^*)$. Then 
$f\in X$ and there exists $\Lambda\in X^*$ such that 
\begin{equation}\lb{BU.y2}
{}_X\langle\widehat\gamma_D w,\Lambda\rangle_{X^*}
=  \ol{{}_X\langle f,L(\widehat\gamma_D w)\rangle_{X^*}}, \quad 
w\in \dom (T).
\end{equation}
Let $u\in V$ be such that $\widehat\gamma_D u =f$. Upon recalling 
\eqref{BB.s10}, the above formula becomes
\begin{equation}\lb{BU.y3}
{}_X\langle\widehat\gamma_D w,\Lambda\rangle_{X^*}
=\ol{{}_V\langle u,T w\rangle_{V^*}}, \quad 
w\in \dom (T).
\end{equation}
In particular, 
\begin{align}\lb{BU.y4}
|{}_V\langle u,T w\rangle_{V^*}|
& \leq  \|\Lambda\|_{X^*}\|\widehat\gamma_D w \|_{(N^{1/2}(\dOm))^*}
\nonumber\\
& \leq  \|\Lambda\|_{X^*}\|w\|_{L^2(\Om;d^nx)}, \quad 
w\in \dom (T).
\end{align}
This shows that $u\in\dom (T^*)=\dom (T)$ and hence, 
$f=\widehat\gamma_D u \in \dom (L)$, by \eqref{V-cL4}. Altogether, 
the above argument proves that $L$ is a self-adjoint operator.

Next, we will prove that 
\begin{eqnarray}\lb{BU.y5}
\dom \big(\wti S\big)\subseteq \dom (- \Delta^D_{X,L,z}).
\end{eqnarray}
We note that if $u\in \dom \big(\wti S\big)$ then
$\widehat\gamma_D u \in \dom (L)$, by definition. 
Thus, as far as \eqref{BU.y5} is concerned, it remains to verify that
\eqref{4.Aw4B} holds. To see that this is indeed the case, given 
an arbitrary $f\in X$, pick $w\in V\subseteq\ker  (-\Delta_{max}-zI_{\Om})$ 
such that $\widehat\gamma_D w =f$. Then for each $u\in\dom \big(\wti S\big)$
we may write
\begin{align}\lb{BU.y6}
{}_{N^{1/2}(\dOm)}\langle\tau^N_z u  ,f\rangle_{(N^{1/2}(\dOm))^*} &= 
{}_{N^{1/2}(\dOm)}\langle\tau^N_z u  ,\widehat\gamma_D w 
\rangle_{(N^{1/2}(\dOm))^*} 
\nonumber\\
&= - ((-\Delta -z)u,w)_{L^2(\Om;d^nx)}
\nonumber\\
&= - \ol{{}_V\langle w,Tu_\zeta\rangle_{V^*}}
\nonumber\\
&= 
- \ol{{}_{X}\langle\widehat\gamma_D w,L(\widehat\gamma_D u_\zeta)\rangle_{X^*}}
\nonumber\\ 
&= - \ol{{}_{X}\langle f,L(\widehat\gamma_D u )\rangle_{X^*}}.
\end{align}
Above, the second equality is a consequence of \eqref{T-Green2}, 
the third equality follows from \eqref{BB.s4R}, the fourth equality 
is implied by \eqref{BB.s10}, and the fifth equality is derived with 
the help of the last line in \eqref{BB.s2}. This shows that \eqref{4.Aw4B}
holds, thus completing the proof of \eqref{BU.y5}. 

Since both $\wti S$ and $-\Delta^{D}_{X,L,z}$ are self-adjoint, 
\eqref{BU.y5} implies $\wti S=-\Delta^{D}_{X,L,z}$. 
\end{proof}
%%%%%%%%%%%%%%%%%%%%%%%%%%%%%%

Several distinguished self-adjoint extensions of $-\Delta\big|_{C^\infty_0(\Om)}$ are 
singled out below:

%%%%%%%%%%%%%%%%%%%%%%%%%%%
\begin{corollary}\lb{rE.1}
In the context of Theorem \ref{CC.w}, for every 
$z_0\in\bbR\backslash \si(-\Delta_{D,\Om})$ one has the following facts:
\begin{eqnarray}\lb{4.Aw10}
X:=\dom (L):=\{0\}\, \mbox{ and }\,  
L:=0 \, \text{ imply } \, - \Delta^D_{X,L,z_0} + z_0I_{\Om} = - \Delta_{D,\Om},
\end{eqnarray}
the Dirichlet Laplacian, and 
\begin{align}\lb{4.Aw9}
\left.
\begin{array}{c}
X:=\bigl(N^{1/2}(\partial\Omega)\bigr)^*\mbox{ and }
L:= - M^{(0)}_{D,N,\Om}(z_0)
\\[4pt]
\mbox{ with }\dom (L):=N^{3/2}(\partial\Omega)
\end{array}
\right\}   \text{ imply } \, - \Delta^D_{X,L,z_0} + z_0I_{\Om} = - \Delta_{N,\Om},
\end{align}
the Neumann Laplacian. Furthermore, 
\begin{eqnarray}\lb{4.AK}
X:=\dom (L):=\bigl(N^{1/2}(\partial\Omega)\bigr)^*,\, \mbox{ and }\, 
L:=0  \, \text{ imply } \,  -\Delta^D_{X,L,z_0} = - \Delta_{K,\Om,z_0},
\end{eqnarray}
the Krein Laplacian introduced in \eqref{A-zz.1}. 
\end{corollary}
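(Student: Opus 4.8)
The plan is to verify each of the three identities in Corollary~\ref{rE.1} by unraveling the definition of $-\Delta^D_{X,L,z_0}$ in \eqref{4.Aw3} for the indicated choices of $X$ and $L$, and comparing with the known descriptions of $-\Delta_{D,\Om}$, $-\Delta_{N,\Om}$, and $-\Delta_{K,\Om,z_0}$ established earlier. Since Theorem~\ref{CC.w} already guarantees that $-\Delta^D_{X,L,z_0}$ is self-adjoint for each admissible pair $(X,L)$, and since each of the three target operators is self-adjoint and extends $-\Delta_{min}-z_0 I_\Om$, it suffices in each case to prove one inclusion of domains (the reverse inclusion then follows from self-adjointness, as two self-adjoint operators one of which extends the other must coincide).

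First I would treat \eqref{4.AK}, the Krein case, since it is the most immediate: when $X=\dom(L)=(N^{1/2}(\dOm))^*$ and $L=0$, the boundary condition \eqref{4.Aw4B} reads ${}_{N^{1/2}(\dOm)}\langle\tau^N_{z_0}u,f\rangle_{(N^{1/2}(\dOm))^*}=0$ for all $f\in(N^{1/2}(\dOm))^*$, which forces $\tau^N_{z_0}u=0$ in $N^{1/2}(\dOm)$. Comparing with \eqref{A-zz.1}, this is precisely $\dom(-\Delta_{K,\Om,z_0})$, so the two operators agree on the nose. For \eqref{4.Aw10}, the Dirichlet case, the choice $X=\dom(L)=\{0\}$ means the condition $\widehat\gamma_D v\in\dom(L)$ becomes $\widehat\gamma_D v=0$, while the boundary condition $\tau^N_{z_0}v|_X=-L(\widehat\gamma_D v)$ is vacuous (both sides live in $X^*=\{0\}$); hence $\dom(-\Delta^D_{X,L,z_0})=\{v\in\dom(-\Delta_{max})\,|\,\widehat\gamma_D v=0\}$. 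By \eqref{Yan-14}--\eqref{3.3Ybis} in Theorem~\ref{tH.A} (uniqueness for the Dirichlet problem), any such $v$ lies in $H^2(\Om)\cap H^1_0(\Om)=\dom(-\Delta_{D,\Om})$, and then $-\Delta^D_{X,L,z_0}u=(-\Delta-z_0)u$ gives $-\Delta^D_{X,L,z_0}+z_0 I_\Om=-\Delta_{D,\Om}$.

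The Neumann case \eqref{4.Aw9} is the one I expect to require the most care. Here $X=(N^{1/2}(\dOm))^*$, $\dom(L)=N^{3/2}(\dOm)$, and $L=-M^{(0)}_{D,N,\Om}(z_0)$, which maps $N^{3/2}(\dOm)\subseteq(N^{1/2}(\dOm))^*$ boundedly into $(N^{1/2}(\dOm))^*\hookleftarrow N^{1/2}(\dOm)$; by \eqref{3.TTa} and \eqref{NaLa} this $L$ is indeed self-adjoint as an operator $\dom(L)\subseteq X\to X^*$, for $z_0$ real, so the pair is admissible. The condition $\widehat\gamma_D v\in\dom(L)=N^{3/2}(\dOm)$ together with the boundary condition, which now reads $\tau^N_{z_0}v=-L(\widehat\gamma_D v)=M^{(0)}_{D,N,\Om}(z_0)(\widehat\gamma_D v)$ in $N^{1/2}(\dOm)$, should be shown equivalent, via the defining relation $\tau^N_{z_0}v=\widehat\gamma_N v+M^{(0)}_{D,N,\Om}(z_0)(\widehat\gamma_D v)$ from \eqref{3.Aw2}, to $\widehat\gamma_N v=0$. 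This gives $\dom(-\Delta^D_{X,L,z_0})\subseteq\{v\in\dom(-\Delta_{max})\,|\,\widehat\gamma_N v=0\}$; one then invokes Lemma~\ref{Bjk} and the characterization \eqref{2.20} of $\dom(-\Delta_{N,\Om})$ together with the compatibility of $\widehat\gamma_N$ with $\wti\gamma_N$ on $H^{3/2}$ (Theorem~\ref{3ew-T-tr}) to conclude $v\in H^2(\Om)$ with $\wti\gamma_N v=0$, i.e.\ $v\in\dom(-\Delta_{N,\Om})$. The main obstacle is bookkeeping the various extensions of the Neumann trace and the precise meaning of $\tau^N_{z_0}v\in N^{1/2}(\dOm)$ versus $\widehat\gamma_N v\in(N^{3/2}(\dOm))^*$: one needs that when $\widehat\gamma_D v\in N^{3/2}(\dOm)$ the term $M^{(0)}_{D,N,\Om}(z_0)(\widehat\gamma_D v)$ lands in $N^{1/2}(\dOm)$ (which is \eqref{3.TTa}), so that the equation $\tau^N_{z_0}v=M^{(0)}_{D,N,\Om}(z_0)(\widehat\gamma_D v)$ genuinely forces the $N^{1/2}(\dOm)$-valued quantity $\widehat\gamma_N v$ — interpreted through the embedding $N^{1/2}(\dOm)\hookrightarrow(N^{3/2}(\dOm))^*$ — to vanish; the density and compatibility statements in Corollaries~\ref{L-Den}, \ref{L-Den2} and Lemma~\ref{3U-x} are exactly what make this rigorous. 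Once the domain inclusion is in hand, self-adjointness of both sides closes the argument.
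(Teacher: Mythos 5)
Your treatment of the Dirichlet and Krein cases is correct and essentially identical to the paper's: in those cases the membership requirement and the boundary condition collapse to $\widehat\gamma_D v =0$ (plus Theorem \ref{tH.A} to upgrade to $v\in H^2(\Omega)\cap H^1_0(\Omega)$), respectively to $\tau^N_{z_0} v =0$, and maximality of self-adjoint operators finishes the argument. The Neumann case, however, contains two genuine gaps. First, before Theorem \ref{CC.w} can be invoked at all you must know that $L:=-M^{(0)}_{D,N,\Om}(z_0)$ with $\dom (L):=N^{3/2}(\dOm)$ is \emph{self-adjoint} as an operator from $X=\bigl(N^{1/2}(\dOm)\bigr)^*$ into $X^*=N^{1/2}(\dOm)$. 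The facts you cite, \eqref{NaLa} and \eqref{3.TTa}, only yield symmetry and boundedness on $N^{3/2}(\dOm)$; they do not give $\dom (L^*)\subseteq\dom (L)$. The missing ingredient is the regularity statement that $f\in\bigl(N^{1/2}(\dOm)\bigr)^*$ with $M^{(0)}_{D,N,\Om}(z_0)f\in N^{1/2}(\dOm)$ forces $f\in N^{3/2}(\dOm)$ (this is \eqref{4.AK4} in the paper), which rests on $M^{(0)}_{D,N,\Om}(z_0)\in\cB\bigl(N^{3/2}(\dOm),N^{1/2}(\dOm)\bigr)$ being an \emph{isomorphism} (cf.\ \eqref{3.53v}) together with the compatibility of the two realizations of the Dirichlet-to-Neumann map. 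Without this, the self-adjointness of $-\Delta^D_{X,L,z_0}$, on which your whole strategy leans, is not available.

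Second, your passage from $\widehat\gamma_N v =0$, for $v$ merely in $\dom (-\Delta_{max})$, to $v\in H^2(\Omega)$ does not go through with the tools you name: Lemma \ref{Bjk} and \eqref{2.20} presuppose $v\in H^1(\Omega)$ with $\wti\gamma_N v =0$, and the compatibility $\widehat\gamma_N=\wti\gamma_N$ from Theorem \ref{3ew-T-tr} only applies once $v\in H^{3/2}(\Omega)$ --- which is precisely what is to be proved. The correct ingredient is the well-posedness and regularity of the very weak Neumann problem, Theorem \ref{tH.G2}, specifically \eqref{3.3fbis} with datum $g=0\in\gamma_N\bigl(H^2(\Omega)\bigr)$; this is exactly how the paper argues. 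Note also that your own reduction (``one inclusion of domains suffices'') would let you bypass this second difficulty entirely by proving the easy inclusion $\dom (-\Delta_{N,\Om})\subseteq\dom \bigl(-\Delta^D_{X,L,z_0}\bigr)$: for $v\in\dom (-\Delta_{N,\Om})$, Lemma \ref{Bjk} gives $v\in H^2(\Omega)$ with $\gamma_N v =0$, Lemma \ref{3o-Tx} gives $\gamma_D v \in N^{3/2}(\dOm)=\dom (L)$, and \eqref{3.Aw2} together with \eqref{3.TTa} gives $\tau^N_{z_0} v =M^{(0)}_{D,N,\Om}(z_0)(\gamma_D v)=-L(\gamma_D v)$ in $N^{1/2}(\dOm)$; but even along this route the first gap (self-adjointness of $L$) must still be closed.
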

%%%%%%%%%%%%%%%%%%%%%%%%%%%%%%%%%%%%%
\begin{proof}
For the case of the Dirichlet Laplacian $- \Delta_{D,\Om}$, the regularity 
result in \eqref{3.3Ybis} of Theorem \ref{tH.A} shows that 
$\dom (- \Delta_{N,\Om})=H^2(\Omega)\cap H^1_0(\Omega)$. Thus, 
\begin{eqnarray}\lb{4.Ai1}
\dom (L):=\widehat\gamma_D\bigl(\dom (- \Delta_{D,\Om})\bigr)=\{0\}, 
\end{eqnarray}
and further, $X$, the closure of $\dom (L)$ in 
$\bigl(N^{1/2}(\partial\Omega)\bigr)^*$ is the zero vector.
Hence, trivially, $L:=0$ is self-adjoint and the boundary condition 
\eqref{4.Aw4B} is always satisfied. Finally, the requirement
for $u\in\dom (- \Delta_{max})$ that 
$\widehat\gamma_D u \in \dom (L)=\{0\}$ forces, by Theorem \ref{tH.A}, 
that $u\in H^2(\Omega)\cap H^1_0(\Omega)$. Consequently, in this case, 
$\dom (- \Delta^D_{X,L,z_0})=H^2(\Omega)\cap H^1_0(\Omega)$, 
and since $- \Delta_{min}\subseteq - \Delta^D_{X,L,z_0} + z_0I_{\Om}$, this 
proves \eqref{4.Aw10}. 

For the Neumann Laplacian $- \Delta_{N,\Om}$, the regularity result 
in \eqref{3.3fbis} of Theorem \ref{tH.G2} shows that, under the current 
geometrical assumptions,  
$\dom(-\Delta_{N,\Om})= \big\{u\in H^2(\Omega) \,\big|\, \gamma_N u = 0\big\}$.
Hence, by Lemma \ref{3o-Tx}, 
\begin{eqnarray}\lb{4.AK1}
\dom (L):=\widehat\gamma_D\bigl(\dom (- \Delta_{N,\Om})\bigr)
=N^{3/2}(\partial\Omega).
\end{eqnarray}
Moreover, by Lemma \ref{3U-x}, we have (with the closure below 
taken in $\big(N^{1/2}(\partial\Omega)\big)^*$)  
\begin{eqnarray} \lb{4.AK2}
X:= \ol{\dom (L)}=\big(N^{1/2}(\partial\Omega)\big)^*,
\end{eqnarray}
so that $X^*=N^{1/2}(\partial\Omega)$, by Lemma \ref{L-refN}.
In this context, the operator 
\begin{eqnarray}\lb{4.AK3}
L:\dom (L)\subseteq X\to  X^*,\quad 
L= - M^{(0)}_{D,N,\Om}(z_0)
\end{eqnarray}
is well-defined, linear, and symmetric by Theorem \ref{t3.5v}. 
Proving that $L$ is in fact self-adjoint then requires establishing 
the following regularity result:
\begin{eqnarray}\lb{4.AK4}
f\in \bigl(N^{1/2}(\partial\Omega)\bigr)^*\, \mbox{ has }\, 
M^{(0)}_{D,N,\Om}(z_0)f\in N^{1/2}(\partial\Omega), \, \text{ implying } \,  
f\in N^{3/2}(\partial\Omega).
\end{eqnarray}
This, however, is a consequence of the fact that the action of   
$M^{(0)}_{D,N,\Om}(z_0)\in\cB\big((N^{1/2}(\dOm))^*, (N^{3/2}(\dOm))^*\big)$
is compatible with that of the operator 
$M^{(0)}_{D,N,\Om}(z_0)\in\cB\big(N^{3/2}(\dOm), N^{1/2}(\dOm)\big)$
and the latter is an isomorphism. This justifies \eqref{4.AK4}
which completes the proof of the fact that $L$ in 
\eqref{4.AK3} is indeed a self-adjoint operator. 

Upon recalling that $\tau^N_{z_0} u     =\widehat\gamma_N u 
+M_{D,N,\Om}^{(0)}(z_0)\bigl(\widehat\gamma_D u \bigr)$, the boundary condition 
$\tau^N_{z_0} u     \big|_{X}= - L(\widehat\gamma_D u )$ reduces to $\widehat\gamma_N u =0$.
Now consider the requirement for $u\in\dom (- \Delta_{max})$ that 
$\widehat\gamma_D u \in \dom (L)=N^{3/2}(\partial\Omega)$. Our claim 
is that this is automatically satisfied whenever $u$ fulfills the boundary 
condition \eqref{4.Aw4B}. Indeed, as just discussed, the latter entails 
$\widehat\gamma_N u =0$ and hence $u$ belongs
to $\big\{u\in H^2(\Omega) \,\big|\, \gamma_N u = 0\big\}$ by the regularity result 
\eqref{3.3fbis} of Theorem \ref{tH.G2}. Then Lemma \ref{3o-Tx} yields
$\widehat\gamma_D u \in N^{3/2}(\partial\Omega)$, justifying the claim  
and hence \eqref{4.Aw9}.

Next, consider the case where  
\begin{eqnarray}\lb{4.AK.a}
X:=\dom (L):=\bigl(N^{1/2}(\partial\Omega)\bigr)^*
\, \mbox{ and }\, L:=0.
\end{eqnarray}
Trivially, $L$ is self-adjoint. Moreover, for a function 
$u\in\dom (- \Delta_{max})$, the boundary condition 
$\tau^N_{z_0} u     \big|_{X}= - L(\widehat\gamma_D u )$ reduces to 
$\tau^N_{z_0} u =0$, whereas the requirement that $u$ satisfies 
$\widehat\gamma_D u \in \dom (L)$ becomes superfluous, 
by Theorem \ref{New-T-tr}. Hence, $\dom \big(- \Delta^D_{X,L,z_0}\big)
=\dom \big(- \Delta_{K,\Om,z_0}\big)$ and ultimately, 
$- \Delta^D_{X,L,z_0} = - \Delta_{K,\Om,z_0}$ since they are both
contained in $- \Delta_{max} - z_0 I_{\Om}$.  
This yields \eqref{4.AK} and completes the proof of the corollary. 
\end{proof}
%%%%%%%%%%%%%%%%%%%%%%%%%%%%%%

We wish to augment Corollary \ref{rE.1} by elaborating on connections 
with the Robin Laplacian (cf.\ Theorem \ref{t2.3FF}). 

%%%%%%%%%%%%%%%%%%%%%%%%%%%
\begin{corollary}\lb{rE.1R}
Suppose that $\Om \subset \bbR^n$, $n\geq 2$, is a bounded $C^{1,r}$ domain  
with $r>1/2$, and assume \eqref{Rob-T1}.  
Then, in the context of Theorem \ref{CC.w}, for every 
$z_0\in\bbR\backslash \si(-\Delta_{D,\Om})$ one has
\begin{equation}\lb{4.Aw9G}
\left.
\begin{array}{c}
X:=\bigl(N^{1/2}(\partial\Omega)\bigr)^*\mbox{ and }
L:= - M^{(0)}_{D,N,\Om}(z_0) + \Theta 
\\[4pt]
\mbox{ with }\dom (L):=H^{3/2}(\partial\Omega)
\end{array}
\right\} \text{ imply } \, - \Delta^D_{X,L,z_0} + z_0I_{\Om} 
= - \Delta_{\Theta,\Om},
\end{equation}
the Robin Laplacian.
\end{corollary}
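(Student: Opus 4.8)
## Proof Proposal for Corollary \ref{rE.1R}

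\textbf{Approach.} The plan is to invoke Theorem \ref{CC.w} with the specified choices of $X$, $L$, and $\dom(L)$, and verify two things: first, that $L := - M^{(0)}_{D,N,\Om}(z_0) + \Theta$ with domain $H^{3/2}(\dOm)$ is a well-defined self-adjoint operator from $X = (N^{1/2}(\dOm))^*$ to $X^* = N^{1/2}(\dOm)$; second, that the resulting operator $- \Delta^D_{X,L,z_0} + z_0 I_\Om$ coincides with the Robin Laplacian $-\Delta_{\Theta,\Om}$ of Theorem \ref{t2.3FF}. Throughout, I would use the crucial simplification afforded by the $C^{1,r}$ hypothesis with $r>1/2$: by Lemma \ref{L-refN}, Lemma \ref{Lk-t1}, and Corollaries \ref{Neq-T2}, \ref{3eq-T2}, one has $N^{1/2}(\dOm) = H^{1/2}(\dOm)$ and $N^{3/2}(\dOm) = H^{3/2}(\dOm)$ (with equivalent norms), so that $X = H^{-1/2}(\dOm)$, $X^* = H^{1/2}(\dOm)$, $\widehat\gamma_D$ maps $\dom(-\Delta_{max}) \to H^{-1/2}(\dOm)$, and $\widehat\gamma_N$ maps $\dom(-\Delta_{max}) \to H^{-3/2}(\dOm)$.

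\textbf{Self-adjointness of $L$.} I would argue that $M^{(0)}_{D,N,\Om}(z_0) \in \cB(H^{-1/2}(\dOm), H^{-3/2}(\dOm))$ restricts to an isomorphism $H^{3/2}(\dOm) \to H^{1/2}(\dOm)$ (this is contained in Theorem \ref{t3.5v}, equations \eqref{3.46v}, \eqref{3.TTa}, and the duality \eqref{NaLa}), and that as an unbounded operator in the duality setting $(N^{1/2}(\dOm))^* \to N^{1/2}(\dOm)$ with domain $N^{3/2}(\dOm)$ it is self-adjoint, essentially by the same argument used for the Neumann case \eqref{4.Aw9} in Corollary \ref{rE.1} (the regularity bootstrap \eqref{4.AK4}). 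The perturbation $\Theta \in \cB(H^{1/2}(\dOm), H^{-1/2}(\dOm))$ from Hypothesis \ref{h2.2FF} is bounded and self-adjoint in the appropriate sense; however, one must be careful that here $\Theta$ is the multiplication operator by the nonnegative constant $\theta$, which maps $L^2(\dOm;d^{n-1}\omega)$ boundedly to itself and $H^{1/2}(\dOm) \to H^{1/2}(\dOm) \hookrightarrow H^{-1/2}(\dOm)$. Since $L = -M^{(0)}_{D,N,\Om}(z_0) + \Theta$ is a bounded self-adjoint perturbation of the self-adjoint operator $-M^{(0)}_{D,N,\Om}(z_0)$ (acting between $H^{1/2}$ and its dual), with the perturbation bounded relative to the ambient $L^2$ structure, self-adjointness of $L$ on the unchanged domain $H^{3/2}(\dOm)$ follows; the regularity statement needed is that $f \in H^{-1/2}(\dOm)$ with $L f \in H^{1/2}(\dOm)$ forces $f \in H^{3/2}(\dOm)$, which reduces to the corresponding fact for $M^{(0)}_{D,N,\Om}(z_0)$ alone since $\Theta f \in H^{1/2}(\dOm)$ already when $f \in H^{1/2}(\dOm)$—and a preliminary argument shows $f \in H^{1/2}(\dOm)$ by mapping properties of $M^{(0)}_{D,N,\Om}(z_0)$.

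\textbf{Identification of the operator.} Next I would unravel the boundary condition. For $u \in \dom(-\Delta_{max})$, recall from \eqref{3.Aw2} that $\tau^N_{z_0} u = \widehat\gamma_N u + M^{(0)}_{D,N,\Om}(z_0)(\widehat\gamma_D u)$. The boundary condition $\tau^N_{z_0} u|_X = -L(\widehat\gamma_D u) = M^{(0)}_{D,N,\Om}(z_0)(\widehat\gamma_D u) - \Theta(\widehat\gamma_D u)$ therefore collapses, after cancelling $M^{(0)}_{D,N,\Om}(z_0)(\widehat\gamma_D u)$ from both sides, to $\widehat\gamma_N u + \Theta(\widehat\gamma_D u) = 0$ in $H^{-1/2}(\dOm)$. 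Combined with the requirement $\widehat\gamma_D u \in \dom(L) = H^{3/2}(\dOm)$, I would then invoke the $C^{1,r}$ regularity (as in the Neumann case of Corollary \ref{rE.1}, via Lemma \ref{Bjk} / Lemma \ref{Lt2.3} and the structure of $\dom(-\Delta_{\Theta,\Om})$ in \eqref{2.20B}) to show that such $u$ automatically lie in $H^2(\Om)$, so that $\widehat\gamma_D u = \gamma_D u$, $\widehat\gamma_N u = \gamma_N u = \wti\gamma_N u$, and the condition $\widehat\gamma_N u + \Theta \widehat\gamma_D u = 0$ becomes exactly $(\wti\gamma_N + \theta\gamma_D)u = 0$, the defining boundary condition of $-\Delta_{\Theta,\Om}$. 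Thus $\dom(-\Delta^D_{X,L,z_0}) = \dom(-\Delta_{\Theta,\Om} - z_0 I_\Om)$, and since both operators act as $-\Delta - z_0$ and both are self-adjoint (hence maximal among symmetric operators contained in $-\Delta_{max} - z_0 I_\Om$), they coincide.

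\textbf{Main obstacle.} I expect the delicate point to be the self-adjointness of $L$, specifically the regularity implication "$Lf \in H^{1/2}(\dOm) \Rightarrow f \in H^{3/2}(\dOm)$" for the \emph{sum} $-M^{(0)}_{D,N,\Om}(z_0) + \Theta$ rather than for $-M^{(0)}_{D,N,\Om}(z_0)$ alone. One must first gain a half-derivative (showing $f \in H^{1/2}(\dOm)$, using that $M^{(0)}_{D,N,\Om}(z_0): H^{-1/2} \to H^{-3/2}$ while $\Theta: H^{-1/2} \not\to H^{-3/2}$ in general—so here the mapping properties of multiplication by a constant, which \emph{does} preserve all the Sobolev scales, make $\Theta$ harmless), then bootstrap to $H^{3/2}(\dOm)$ using the isomorphism property of $M^{(0)}_{D,N,\Om}(z_0)$ between $H^{3/2}$ and $H^{1/2}$. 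Because $\theta$ is merely a nonnegative constant (not a general operator), this bootstrap is clean; the role of the hypothesis \eqref{Rob-T1} and the $C^{1,r}$ smoothness is precisely to make both this regularity argument and the membership $\dom(-\Delta_{\Theta,\Om}) \subset H^2(\Om)$ (Lemma \ref{Lt2.3}) available.
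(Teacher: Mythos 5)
Your proposal is correct and follows essentially the same route as the paper: the paper's own proof is a one-line appeal to the argument of Corollary \ref{rE.1}, supplemented by exactly the two ingredients you isolate, namely Lemma \ref{Lt2.3} and the identification/embedding $N^{3/2}(\partial\Omega)=H^{3/2}(\partial\Omega)\hookrightarrow H^{1/2}(\partial\Omega)=N^{1/2}(\partial\Omega)$ (via \eqref{Tan-C5} and Lemma \ref{Lk-t1}), which guarantees $\Theta\big(\dom(L)\big)\subseteq N^{1/2}(\partial\Omega)$ and renders the constant Robin term harmless in the regularity bootstrap. Only note that your phrase ``bounded self-adjoint perturbation'' should not be taken literally, since $\Theta$ is not bounded from $X$ into $X^*$; what actually carries the self-adjointness of $L$ is the two-step regularity bootstrap you describe afterwards, exactly as in the paper's treatment of \eqref{4.Aw9}.
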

%%%%%%%%%%%%%%%%%%%%%%%%%%%%%%%%%%%%%
\begin{proof}
The argument is analogous to the proof of Corollary \ref{rE.1}.
This time, we make use of Lemma \ref{Lt2.3} and the fact that, 
by \eqref{Tan-C5} and Lemma \ref{Lk-t1}, one has  
$N^{3/2}(\partial\Omega)\hookrightarrow N^{1/2}(\partial\Omega)$ boundedly.
\end{proof}
%%%%%%%%%%%%%%%%%%%%%%%%%%

Our next theorem elucidates the circumstances under which a given 
self-adjoint extension of the Laplacian is a nonnegative operator. 
Before stating this result recall that, given a reflexive Banach space 
${\mathcal{V}}$ and a linear unbounded operator 
$R:{\rm dom}\,(R)\subset{\mathcal{V}}\to{\mathcal{V}}^\ast$, we 
say that $R\geq 0$ provided 
\begin{equation}\label{TTii}
{}_{{\mathcal{V}}}\langle u, Ru\rangle_{{\mathcal{V}}^*}\geq 0, 
\qquad u \in{\rm dom}\,(R)\subset {\mathcal{V}}.
\end{equation}

%%%%%%%%%%%%%%%%%%%%%%%%%
\begin{theorem}\lb{Th.Pos} 
Retaining the context of Theorem \ref{CC.w}, let  
$z_0 \leq 0$. Then 
\begin{eqnarray}\lb{Pis-1}
-\Delta^D_{L,X,z_0}\geq 0 \, \text{ if and only if } \,  L\geq 0.
\end{eqnarray}
\end{theorem}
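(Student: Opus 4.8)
The plan is to connect the quadratic form of $-\Delta^D_{X,L,z_0}$ on its domain to the quadratic form of $L$ via the Green-type formula of Theorem \ref{LL.w}. Let $u \in \dom\big(-\Delta^D_{X,L,z_0}\big)$. By \eqref{BB.s2} (with $z=z_0 \in \bbR$), decompose $u = u_\beta + u_\zeta$, where $u_\beta \in H^2(\Om) \cap H^1_0(\Om)$, $u_\zeta \in \ker(-\Delta_{max}-z_0 I_\Om)$, $\widehat\gamma_D u_\zeta = \widehat\gamma_D u$, and $\widehat\gamma_N u_\zeta = \tau^N_{z_0} u$. Since $(-\Delta-z_0)u = (-\Delta-z_0)u_\beta$ and $\tau^N_{z_0} u_\zeta = 0$ by \eqref{Sim-Gr}, I would compute $\big(u, (-\Delta^D_{X,L,z_0})u\big)_{L^2(\Om;d^nx)} = \big(u_\beta + u_\zeta, (-\Delta-z_0)u_\beta\big)_{L^2}$ and split it into the $u_\beta$-contribution and the $u_\zeta$-contribution, exactly as in the computation \eqref{Kre-Pw} in the proof of Theorem \ref{T-Kr}.

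The first step is the easy half: $\big(u_\beta, (-\Delta-z_0)u_\beta\big)_{L^2}$. Since $u_\beta \in H^1_0(\Om)$ and $z_0 \le 0$, integration by parts gives $\Re\big(u_\beta, (-\Delta - z_0)u_\beta\big)_{L^2} = \|\nabla u_\beta\|^2_{(L^2)^n} + (-z_0)\|u_\beta\|^2_{L^2} \ge 0$; in fact I expect this term to come out real and nonnegative outright. The second step handles the cross term $\big(u_\zeta, (-\Delta-z_0)u_\beta\big)_{L^2}$. Here I would apply \eqref{T-Green2}: since $u_\beta \in \dom(-\Delta_{max})$ and $u_\zeta \in \ker(-\Delta_{max}-\ol{z_0}I_\Om)$ (using $z_0 \in \bbR$), one has $\big((-\Delta - z_0)u_\beta, u_\zeta\big)_{L^2} = -\,{}_{N^{1/2}(\dOm)}\langle \tau^N_{z_0} u_\beta, \widehat\gamma_D u_\zeta\rangle_{(N^{1/2}(\dOm))^*}$. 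But $\tau^N_{z_0} u_\beta = \gamma_N u_\beta$ by the compatibility in Theorem \ref{3ew-T-tr} and Lemma \ref{Lo-Tx} (as in \eqref{3.Aw7}), and $\tau^N_{z_0} u_\beta = \tau^N_{z_0}(u_\beta + u_\zeta) = \tau^N_{z_0} u$. Now invoke the boundary condition \eqref{4.Aw4B}: $\tau^N_{z_0} u\big|_X = -L(\widehat\gamma_D u)$, and $\widehat\gamma_D u \in X$ (since $X$ is closed and $\widehat\gamma_D u = \widehat\gamma_D u_\zeta$ with $u_\zeta \in V$, i.e., $\widehat\gamma_D u \in X$ because $u \in \dom(-\Delta^D_{X,L,z_0})$ forces $\widehat\gamma_D u \in \dom(L) \subseteq X$). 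Pairing against $f = \widehat\gamma_D u \in X$ yields ${}_{N^{1/2}(\dOm)}\langle \tau^N_{z_0} u, \widehat\gamma_D u\rangle_{(N^{1/2}(\dOm))^*} = -\,\ol{{}_X\langle \widehat\gamma_D u, L(\widehat\gamma_D u)\rangle_{X^*}}$.

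Assembling the pieces, I get $\big(u, (-\Delta^D_{X,L,z_0})u\big)_{L^2} = \Re\big(u_\beta,(-\Delta-z_0)u_\beta\big)_{L^2} + {}_X\langle \widehat\gamma_D u, L(\widehat\gamma_D u)\rangle_{X^*}$ (after taking real parts, which is legitimate since the left side is real for $u$ in the domain of a self-adjoint operator, and $L$ being self-adjoint makes its form real-valued on $\dom(L)$). This identity immediately gives the "if" direction: if $L \ge 0$ in the sense \eqref{TTii}, then both summands are $\ge 0$, so $-\Delta^D_{X,L,z_0} \ge 0$. For the "only if" direction, I would use the surjectivity results: given any $f \in \dom(L) \subseteq X \subseteq (N^{1/2}(\dOm))^*$, Corollary \ref{New-CV2} and \eqref{V-cL2} produce $u_\zeta \in V \subseteq \ker(-\Delta_{max}-z_0I_\Om)$ with $\widehat\gamma_D u_\zeta = f$; then $\tau^N_{z_0} u_\zeta = 0$, so the boundary condition \eqref{4.Aw4B} for $u := u_\zeta$ reads $0\big|_X = -L(f)$, which need not hold. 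So instead I would take $u := u_\zeta + u_\beta$ where $u_\beta \in H^2(\Om)\cap H^1_0(\Om)$ is chosen (via Theorem \ref{tH.A}\,$(i)$, i.e.\ \eqref{3.ON}) so that $\gamma_N u_\beta = -L(f) \in N^{1/2}(\dOm)$ — this is possible because $\tau^N_{z_0}$ maps $H^2(\Om)\cap H^1_0(\Om)$ onto $N^{1/2}(\dOm)$ and $L(f) \in X^* = $ a quotient of $N^{1/2}(\dOm)$; one then checks $u \in \dom(-\Delta^D_{X,L,z_0})$ and that the identity above reduces, after letting the "size" of $u_\beta$ shrink appropriately or by a scaling/limiting argument, to nonnegativity of ${}_X\langle f, L(f)\rangle_{X^*}$. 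The main obstacle I anticipate is precisely this last point: extracting nonnegativity of the $L$-form alone from nonnegativity of the sum, since the $u_\beta$-term and the chosen $u_\zeta$ are coupled through the constraint $\gamma_N u_\beta = -L(\widehat\gamma_D u)$. The cleanest route is probably to replace $f$ by $tf$ for $t \in \bbR$ and exploit that the $u_\beta$-term scales like $t^2$ while the cross structure forces the $L$-term to scale like $t^2$ as well, but with the right sign bookkeeping one isolates ${}_X\langle f, L(f)\rangle_{X^*} \ge 0$; alternatively, one observes that for $f \in \dom(L)$ one can take $u_\beta$ with arbitrarily small $H^2$-norm subject to $\gamma_N u_\beta = -L(f)$ only if $L(f)=0$, so a genuine scaling argument in $f$ is needed. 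I would carry out the $t \to 0$ limit carefully to conclude.
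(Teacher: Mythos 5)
Your quadratic-form identity is correct: for $u=u_\beta+u_\zeta\in\dom\big(-\Delta^D_{X,L,z_0}\big)$ decomposed as in \eqref{BB.s2}, one indeed has
$\big(u,-\Delta^D_{X,L,z_0}u\big)_{L^2(\Om;d^nx)}=\|\nabla u_\beta\|^2_{(L^2(\Om;d^nx))^n}-z_0\|u_\beta\|^2_{L^2(\Om;d^nx)}+{}_X\langle\widehat\gamma_D u,L(\widehat\gamma_D u)\rangle_{X^*}$,
and it can be obtained either by your Green-formula computation via \eqref{T-Green2} and \eqref{4.Aw4B}, or even more quickly from \eqref{BB.s5} with $w=u_\zeta$. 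This settles the implication $L\geq 0\Rightarrow-\Delta^D_{X,L,z_0}\geq 0$, and it is a genuinely different (more hands-on) route than the paper, whose entire proof is an abstract comparison of the parametrization \eqref{aa.zz.2} with \eqref{AS-1}: by Theorem \ref{AS-th} the nonnegative self-adjoint extensions correspond exactly to nonnegative operators $B=T$ as in \eqref{BB.s4}, and $T\geq 0$ is equivalent to $L\geq 0$ through the isomorphism \eqref{V-cL2}; this gives both directions simultaneously. The genuine gap in your proposal is the converse direction, $-\Delta^D_{X,L,z_0}\geq 0\Rightarrow L\geq 0$, which you do not actually prove.

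The reason your two suggested devices fail: replacing $f$ by $tf$ rescales $u_\zeta$ to $tu_\zeta$ and forces the admissible $u_\beta$ (constrained by $\gamma_N u_\beta\big|_X=-L(tf)=-tLf$) to rescale as well, so every term in the identity is homogeneous of degree two in $t$ and no term is isolated in any $t\to 0$ limit; and your fallback observation about the $H^2$-norm of $u_\beta$ is true but irrelevant, since the form only sees the $H^1$-norm of $u_\beta$, which is not controlled from below by the Neumann trace. The correct completion is to fix $f\in\dom(L)$, let $u_\zeta$ be the unique element of $\ker(-\Delta_{max}-z_0I_\Om)$ with $\widehat\gamma_D u_\zeta=f$, pick $u_\beta\in H^2(\Om)\cap H^1_0(\Om)$ with $\gamma_N u_\beta\big|_X=-Lf$ (possible by \eqref{3.ON} and Lemma \ref{Lo-Tx}), and then perturb $u_\beta$ inside its constraint class: choose $v_k\in C^\infty_0(\Om)\subset H^2_0(\Om)$ with $v_k\to u_\beta$ in $H^1_0(\Om)$ and set $u_k:=(u_\beta-v_k)+u_\zeta$. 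Since $\gamma_D v_k=\gamma_N v_k=0$, each $u_k$ still lies in $\dom\big(-\Delta^D_{X,L,z_0}\big)$ with $\widehat\gamma_D u_k=f$, while $\|\nabla(u_\beta-v_k)\|^2_{(L^2)^n}-z_0\|u_\beta-v_k\|^2_{L^2}\to 0$; hence $0\leq\big(u_k,-\Delta^D_{X,L,z_0}u_k\big)_{L^2}\to{}_X\langle f,L f\rangle_{X^*}$, which yields $L\geq 0$. So the key missing ingredient is that $H^2_0(\Om)$ is $H^1$-dense in $H^2(\Om)\cap H^1_0(\Om)$ (so the $\beta$-contribution can be made arbitrarily small at fixed Neumann data), not any scaling in $f$; alternatively, simply invoke the abstract equivalence from Theorem \ref{AS-th} as the paper does.
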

%%%%%%%%%%%%%%%%%%%%%%%%%
\begin{proof}
A direct comparison of \eqref{AS-1} and \eqref{aa.zz.2} reveals that, 
if $T$ is as in \eqref{BB.s4}, then 
\begin{eqnarray}\lb{Pis-2}
-\Delta^D_{L,X,z_0}\geq 0 \, \text{ if and only if } \,  T\geq 0, 
\, \text{ equivalently, if and only if } \,  L\geq 0.  
\end{eqnarray} 
\end{proof}
%%%%%%%%%%%%%%%%%%%%%%%%%

%%%%%%%%%%%%%%%%%%%%%%%%%%%%%%%%%%%%%%
%%%%%%%%%%%%%%%%%%%%%%%%%%%%%%%%%%%%%%
\section{Self-Adjoint Extensions with the Neumann Laplacian % \\ 
as Reference Operator}
\lb{s15}
%%%%%%%%%%%%%%%%%%%%%%%%%%%%%%%%%%%%%%
%%%%%%%%%%%%%%%%%%%%%%%%%%%%%%%%%%%%%%

Having used the Dirichlet Laplacian as the reference operator in the previous section, we now 
illustrate the construction of self-adjoint extensions of $-\Delta\big|_{C^\infty_0(\Om)}$ in 
$L^2(\Om; d^n x)$, with the 
(shifted) Neumann Laplacian as the reference operator. (The shift 
$- \Delta_{N,\Om} \longrightarrow - \Delta_{N,\Om} - z_0 I_{\Omega}$ for some $z_0\in\bbR$ becomes necessary since $0 \in \sigma(- \Delta_{N,\Om})$.)

As a preamble, we now state and prove a result which is the counterpart of Theorem \ref{LL.w} pertaining to the regularized version of the Dirichlet trace operator on quasi-convex domains: 

%%%%%%%%%%%%%%
\begin{theorem}\lb{LL.wF} 
Assume Hypothesis \ref{h.Conv}. Then, for every 
$z\in\bbC\backslash\si(-\Delta_{N,\Om})$, the map  
\begin{eqnarray}\lb{3.Aw1F}
\tau^D_z:\big\{u\in L^2(\Om;d^nx) \,\big|\, \Delta u\in L^2(\Om;d^nx)\big\}
\to N^{3/2}(\partial\Omega)
\end{eqnarray}
given by 
\begin{eqnarray}\lb{3.Aw2F}
\tau^D_z u :=\widehat\gamma_D u 
-M_{N,D,\Om}^{(0)}(z) (\widehat\gamma_N u), 
\quad u\in L^2(\Om;d^nx),\,\,\Delta u\in L^2(\Om;d^nx),
\end{eqnarray}
is well-defined, linear, and bounded when the space 
\begin{equation}
\big\{u\in L^2(\Om;d^nx) \,\big|\, \Delta u\in L^2(\Om;d^nx)\big\} 
= \dom(-\Delta_{max})    \lb{15.dommax}
\end{equation} 
is endowed with the 
natural graph norm $u\mapsto\|u\|_{L^2(\Om;d^nx)}+\|\Delta u\|_{L^2(\Om;d^nx)}$.
Moreover, this operator satisfies the following additional properties: 

\begin{enumerate}
\item[$(i)$] For each $z\in\bbC\backslash\si(-\Delta_{N,\Om})$, the map $\tau^D_z$ 
in \eqref{3.Aw1F}, \eqref{3.Aw2F} is onto
$($i.e., $\tau^D_z(\dom (- \Delta_{max}))=N^{3/2}(\partial\Omega)$$)$. In fact, 
\begin{eqnarray}\lb{3.ON2}
\tau^D_z\big(\big\{u\in H^2(\Om) \,\big|\, \gamma_N u = 0\big\}\big)
=N^{1/2}(\partial\Omega),\quad z\in\bbC\backslash\si(-\Delta_{N,\Om}).
\end{eqnarray}

\item[$(ii)$] For each $z\in\bbC\backslash \big[\si(-\Delta_{D,\Om})\cup
\si(-\Delta_{N,\Om})\big]$, one has
\begin{eqnarray}\lb{3.NewT}
\tau^D_z=-M_{N,D,\Om}^{(0)}(z)\,\tau^N_z.
\end{eqnarray}

\item[$(iii)$] One has 

\begin{eqnarray}\lb{3.Aw9F}
\tau^D_z=\gamma_D(-\Delta_{N,\Om}-zI_{\Om})^{-1}(-\Delta-z),\quad
z\in\bbC\backslash \si(-\Delta_{N,\Om}).
\end{eqnarray} 

\item[$(iv)$] For each $z\in\bbC\backslash\si(-\Delta_{N,\Om})$, the 
kernel of the map $\tau^D_z$ in \eqref{3.Aw1F}, \eqref{3.Aw2F} is 
given by 
\begin{eqnarray}\lb{3.AKeF}
\ker \big(\tau^D_z\big)=H^2_0(\Omega)\dotplus    \big\{u\in L^2(\Om;d^nx) \,\big|\, 
(-\Delta -z)u=0 \mbox{ in } \Omega\big\}.
\end{eqnarray}
In particular,
\begin{eqnarray}\lb{Sim-GrF} 
\tau^D_z u     =0\, \mbox{ for every }\,u\in\ker  (-\Delta_{max}-zI_{\Om}).
\end{eqnarray}

\item[$(v)$] For each $z\in\bbC\backslash\si(-\Delta_{N,\Om})$, the 
following Green formula holds for every $u,v\in\dom (- \Delta_{max})$,  
\begin{align}\lb{T-GreenF} 
& 
((-\Delta -z)u,  v)_{L^2(\Omega;d^nx)}
- (u,  (-\Delta -\ol{z})v)_{L^2(\Omega;d^nx)}
\nonumber\\ 
& \quad
=-{}_{N^{3/2}(\partial\Omega)}\big\langle\tau^D_z u,\widehat\gamma_N v        
\big\rangle_{(N^{3/2}(\partial\Omega))^*}
% \nonumber\\[1mm] 
% & \qquad
+\,\ol{{}_{N^{3/2}(\partial\Omega)}\big\langle\tau^D_{\ol{z}} v,\widehat{\gamma}_N u  
\big\rangle_{(N^{3/2}(\partial\Omega))^*}}.
\end{align}
In particular, for every $u,v\in \dom (- \Delta_{max})$, $z\in\bbC$, and 
$z_0\in\bbR\backslash\si(-\Delta_{N,\Om})$, one has 
\begin{align}\lb{T-GreenFX} 
& 
((-\Delta -(z+z_0))u,  v)_{L^2(\Omega;d^nx)}
- (u,  (-\Delta -(\ol{z}+z_0))v)_{L^2(\Omega;d^nx)}
\nonumber\\ 
& \quad
=-{}_{N^{3/2}(\partial\Omega)}\big\langle\tau^D_{z_0} u,\widehat\gamma_N v        
\big\rangle_{(N^{3/2}(\partial\Omega))^*}
% \nonumber\\[1mm] 
% & \qquad
+\,\ol{{}_{N^{3/2}(\partial\Omega)}\big\langle\tau^D_{z_0} v,\widehat{\gamma}_N u  
\big\rangle_{(N^{3/2}(\partial\Omega))^*}}.
\end{align}
Moreover, as a consequence of \eqref{T-GreenF} and \eqref{Sim-GrF}, 
for every $z\in\bbC\backslash \si(-\Delta_{N,\Om})$ one infers that 
\begin{align}\lb{T-Green2F} 
& u\in \dom (- \Delta_{max})\mbox{ and }
v\in\ker  (-\Delta_{max}-\ol{z}I_{\Om})
\nonumber\\ 
& \quad
\text{imply } \, ((-\Delta -z)u,  v)_{L^2(\Omega;d^nx)}
=-{}_{N^{3/2}(\partial\Omega)}\big\langle\tau^D_z u,\widehat\gamma_N v        
\big\rangle_{(N^{3/2}(\partial\Omega))^*}. 
\end{align} 
\end{enumerate}
\end{theorem}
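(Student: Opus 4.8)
The plan is to follow the blueprint of the proof of Theorem \ref{LL.w}, transposing the roles of the Dirichlet and Neumann data and using the Neumann Laplacian $-\Delta_{N,\Om}$ (rather than the Dirichlet one) as the reference operator. First I would establish that $\tau^D_z$ is well-defined and bounded on $\dom(-\Delta_{max})$: given $u$ with $\Delta u\in L^2(\Om;d^nx)$, solve the auxiliary problem $(-\Delta-z)v=0$ in $\Om$, $v\in L^2(\Om;d^nx)$, $\widehat\gamma_N v=\widehat\gamma_N u$ on $\dOm$ — this is solvable by Corollary \ref{New-CV3} together with the well-posedness in Theorem \ref{tH.G2}, with a norm bound in terms of $\|u\|_{L^2(\Om;d^nx)}+\|\Delta u\|_{L^2(\Om;d^nx)}$. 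Since $M^{(0)}_{N,D,\Om}(z)(\widehat\gamma_N u)=\widehat\gamma_D v$ by definition \eqref{3.48v}, \eqref{3.49v}, setting $w:=u-v$ we find $(-\Delta-z)w=(-\Delta-z)u$, $\widehat\gamma_N w=0$, hence $w\in H^2(\Om)$ with $\gamma_N w=0$ by the regularity result \eqref{3.3fbis}, and $\tau^D_z u=\widehat\gamma_D u-\widehat\gamma_D v=\widehat\gamma_D w=\gamma_D w\in N^{3/2}(\dOm)$ by Lemma \ref{3o-Tx} and the compatibility statement in Theorem \ref{New-T-tr}. The estimate \eqref{3.Aw8F-analogue} then follows from the boundedness of $\gamma_D$ in \eqref{3an-C7}. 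Incidentally this argument yields \eqref{3.Aw9F} as well.

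Next I would dispatch the enumerated properties in order. Item $(i)$: ontoness of $\tau^D_z$ onto $N^{3/2}(\dOm)$ follows because $\gamma_D$ in \eqref{3an-C7} maps $\{u\in H^2(\Om)\mid\gamma_N u=0\}$ onto $N^{3/2}(\dOm)$ (Lemma \ref{3o-Tx}), and this same computation restricted to $H^2$-functions with vanishing Neumann trace gives \eqref{3.ON2}. Item $(ii)$: for $z$ avoiding both Dirichlet and Neumann spectra, \eqref{3.NewT} follows by applying $M^{(0)}_{N,D,\Om}(z)$ to $\tau^N_z u=\widehat\gamma_N u+M^{(0)}_{D,N,\Om}(z)(\widehat\gamma_D u)$ and invoking the inversion relation \eqref{3.53v}, namely $M^{(0)}_{N,D,\Om}(z)=-[M^{(0)}_{D,N,\Om}(z)]^{-1}$; care must be taken to interpret the composition on the correct exotic spaces, but Theorem \ref{t3.5v} and Lemma \ref{3U-x} supply the needed mapping properties. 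Item $(iii)$ has already been obtained above. Item $(iv)$: the sum in \eqref{3.AKeF} is direct because $z\notin\si(-\Delta_{N,\Om})$ (so the Neumann solution operator is injective on the relevant space); the right-to-left inclusion is immediate since $H^2_0(\Om)\subseteq\ker(\tau^D_z)$ from \eqref{3.Aw2F} (as $\widehat\gamma_D$ and $\widehat\gamma_N$ both annihilate $H^2_0(\Om)$) and $\ker(-\Delta_{max}-zI_\Om)\subseteq\ker(\tau^D_z)$ by \eqref{3.Aw9F}; for the opposite inclusion, if $\tau^D_z u=0$ then $w:=(-\Delta_{N,\Om}-zI_\Om)^{-1}(-\Delta-z)u\in H^2(\Om)$ has $\gamma_D w=\tau^D_z u=0$ and $\gamma_N w=0$, whence $w\in H^2_0(\Om)$ by Theorem \ref{T-DD1} and $u=w+(u-w)$ with $(u-w)\in\ker(-\Delta_{max}-zI_\Om)$. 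Then \eqref{Sim-GrF} follows from \eqref{3.AKeF} and \eqref{3.Aw9F}.

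For item $(v)$ — and in particular the final statement \eqref{T-Green2F} — I would mimic the proof of \eqref{T-Green} verbatim with Dirichlet and Neumann interchanged. Fix $z\in\bbR\backslash\si(-\Delta_{N,\Om})$ first, let $u,v\in\dom(-\Delta_{min})$, set $\wti u:=(-\Delta_{N,\Om}-zI_\Om)^{-1}((-\Delta-z)u)$ and $\wti v:=(-\Delta_{N,\Om}-\ol z I_\Om)^{-1}((-\Delta-\ol z)v)$; by \eqref{3.Aw9F} these lie in $\{w\in H^2(\Om)\mid\gamma_N w=0\}$ with $\gamma_D\wti u=\tau^D_z u$, $\gamma_D\wti v=\tau^D_{\ol z}v$. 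Repeated application of the integration-by-parts formula \eqref{3an-C12} (which pairs $\gamma_D w$ against $\widehat\gamma_N u$), together with the fact that $(-\Delta-z)(\wti u-u)=0$ and $\widehat\gamma_N(\wti u-u)=-\widehat\gamma_N u$, yields \eqref{T-GreenF}; a density/continuity argument extends it from $\dom(-\Delta_{min})$ to all of $\dom(-\Delta_{max})$ and from real $z$ to $z\in\bbC\backslash\si(-\Delta_{N,\Om})$, and the shifted version \eqref{T-GreenFX} is the substitution $z\mapsto z+z_0$. Finally \eqref{T-Green2F} is read off from \eqref{T-GreenF} by taking $v\in\ker(-\Delta_{max}-\ol z I_\Om)$, for which $\tau^D_{\ol z}v=0$ by \eqref{Sim-GrF}, killing the second term on the right. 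The main obstacle I anticipate is bookkeeping the duality pairings correctly across the exotic spaces $N^{3/2}(\dOm)$, $(N^{3/2}(\dOm))^*$, $N^{1/2}(\dOm)$, $(N^{1/2}(\dOm))^*$ and their compatibilities with the $L^2(\dOm;d^{n-1}\omega)$ pairing (Corollaries \ref{L-Den}, \ref{L-Den2}, Lemma \ref{3U-x}) — especially in item $(ii)$, where one composes two Dirichlet-to-Neumann-type operators acting between different members of this scale; everything else is a faithful transcription of the already-proven Dirichlet counterpart.
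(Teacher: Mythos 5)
Your proposal follows the paper's route essentially verbatim: the auxiliary problem $(-\Delta-z)v=0$, $\widehat\gamma_N v=\widehat\gamma_N u$, the decomposition $w=u-v$ with the regularity \eqref{3.3fbis} and Lemma \ref{3o-Tx}, the identity \eqref{3.Aw9F} as a by-product, and the treatment of items $(i)$--$(iv)$ (including your justification of directness of the sum in \eqref{3.AKeF} via invertibility of $-\Delta_{N,\Om}-zI_\Om$ on $H^2_0(\Om)\subset\dom(-\Delta_{N,\Om})$, and the derivation of \eqref{3.NewT} from \eqref{3.53v}) all coincide with the paper's argument.

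The one step that would fail as stated is your plan for item $(v)$: proving \eqref{T-GreenF} first for $u,v\in\dom(-\Delta_{min})$ and real $z$, and then upgrading by a ``density/continuity argument'' to $\dom(-\Delta_{max})$ and to complex $z$. On $\dom(-\Delta_{min})=H^2_0(\Om)$ one has $\widehat\gamma_D u=\widehat\gamma_N u=0$, so both sides of \eqref{T-GreenF} vanish identically and the identity proved there carries no information; more importantly, $\dom(-\Delta_{min})$ is a \emph{closed} proper subspace of $\dom(-\Delta_{max})$ in the graph norm (its graph-norm closure is itself, cf.\ Corollary \ref{C-Da} and Theorem \ref{T-DD1}), so no density argument can reach general $u,v\in\dom(-\Delta_{max})$; and since the right-hand side involves both $z$ and $\ol z$, passing from real to complex $z$ is not a matter of routine continuation either. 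The remedy is simply to run the computation you already describe--introducing $\wti u=(-\Delta_{N,\Om}-zI_\Om)^{-1}(-\Delta-z)u$, $\wti v=(-\Delta_{N,\Om}-\ol z I_\Om)^{-1}(-\Delta-\ol z)v$, which lie in $H^2(\Om)$ with $\gamma_N\wti u=\gamma_N\wti v=0$ and $\gamma_D\wti u=\tau^D_z u$, $\gamma_D\wti v=\tau^D_{\ol z}v$ by \eqref{3.Aw9F}--directly for arbitrary $u,v\in\dom(-\Delta_{max})$ and arbitrary $z\in\bbC\backslash\si(-\Delta_{N,\Om})$: the integration by parts \eqref{3an-C12} and the relations $(-\Delta-z)(\wti u-u)=0$, $\widehat\gamma_N(\wti u-u)=-\widehat\gamma_N u$ are valid in exactly that generality, so no preliminary restriction and no extension step are needed (this is what the paper's proof does; its ``$\dom(-\Delta_{min})$'' at that point is a misprint for $\dom(-\Delta_{max})$, as the use of $\widehat\gamma_N(\wti u-u)=-\widehat\gamma_N u$ makes clear).
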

%%%%%%%%%%%%%%
\begin{proof}
Let $z\in\bbC\backslash\sigma(-\Delta_{N,\Om})$. Consider an arbitrary $u\in L^2(\Om;d^nx)$ 
satisfying $\Delta u\in L^2(\Om;d^nx)$ and let $v$ solve 
\begin{eqnarray}\lb{3.Aw3F}
(-\Delta -z)v=0\,\mbox{ in }\,\Omega,\quad v\in L^2(\Om;d^nx),\quad 
\widehat\gamma_N v =\widehat\gamma_N u \,\mbox{ on } \, \partial\Omega. 
\end{eqnarray}
Theorems \ref{3ew-T-tr} and \ref{tH.G2} ensure that this is possible  
and guarantee the the existence of a finite constant $C=C(\Om,z)>0$ for which 
\begin{equation}\lb{3.Aw4F}
\|v\|_{L^2(\Om;d^nx)}\leq C\big(\|u\|_{L^2(\Om;d^nx)}+\|\Delta u\|_{L^2(\Om;d^nx)}\big).
\end{equation}
Then $w:=u-v$ satisfies
\begin{equation}\lb{3.Aw5F}
(-\Delta -z)w=(-\Delta -z)u\in L^2(\Omega;d^nx),\quad w\in L^2(\Om;d^nx), \quad  
\widehat\gamma_N w =0\, \mbox{ on }\, \partial\Omega, 
\end{equation}
so that, by \eqref{3.3fbis}, there exists $C=C(\Om,z)>0$ such that 
\begin{eqnarray}\lb{3.Aw6F}
w\in H^2(\Omega)\, \mbox{ and }\,  
\|w\|_{H^2(\Om)}\leq C\big(\|u\|_{L^2(\Om;d^nx)}+\|\Delta u\|_{L^2(\Om;d^nx)}\big).
\end{eqnarray}
Since $M_{N,D\Om}^{(0)}(z)\bigl(\widehat\gamma_N u \bigr)
=\widehat\gamma_D v$, it follows from \eqref{3.Aw6F}, the compatibility part 
of Theorem \ref{New-T-tr}, and Lemma \ref{3o-Tx} that 
\begin{eqnarray}\lb{3.Aw7F}
\tau^D_z u =\widehat\gamma_D u -\widehat\gamma_D v            
=\widehat\gamma_D(u-v)=\widehat\gamma_D w                   
=\gamma_D w \in N^{3/2}(\partial\Omega). 
\end{eqnarray}
Moreover, 
\begin{align}\lb{3.Aw8F}
\big\|\tau^D_z u \big\|_{N^{3/2}(\partial\Omega)}
&= \|\gamma_D w \|_{N^{3/2}(\partial\Omega)}\leq C\|w\|_{H^2(\Om)}
\nonumber\\ 
& \leq  C(\|u\|_{L^2(\Om;d^nx)}+\|\Delta u\|_{L^2(\Om;d^nx)}).
\end{align}
This shows that the operator $\tau^D_z$ in \eqref{3.Aw1F}, \eqref{3.Aw2F} is 
indeed well-defined and bounded. 

Incidentally, the above argument also shows that \eqref{3.Aw9F} holds. 
That $\tau^D_z$ defined in \eqref{3.Aw1F}, \eqref{3.Aw2F} is onto, is a direct 
consequence of the fact that $\gamma_D$ in \eqref{3an-C7} is onto 
(cf.\ Lemma \ref{3o-Tx}). This also justifies \eqref{3.ON2}.
Next, \eqref{3.NewT} is a direct consequence of \eqref{3.53v}. 

As previously remarked, the sum in \eqref{3.AKe} 
is direct. Moreover, $H^2_0(\Omega)\subseteq \ker  (\tau^D_z)$ by 
\eqref{3.Aw2F} and $\big\{u\in L^2(\Om;d^nx) \,\big|\, 
(-\Delta -z)u=0\,\mbox{in}\,\Omega\big\}\subseteq \ker \big(\tau^D_z\big)$ 
by \eqref{3.Aw9F}. This proves the right-to-left inclusion in \eqref{3.AKeF}. 
To prove the opposite one, consider $u\in L^2(\Om;d^nx)$ with 
$\Delta u\in L^2(\Om;d^nx)$ for which $\tau^D_z u =0$. 
If we now set $w:=(-\Delta_{N,\Om}-zI_{\Om})^{-1}(-\Delta-z)u$, then 
$w\in H^2(\Omega)$, $\gamma_N w =0$ and $\gamma_D w =\tau^D_z u =0$  
by \eqref{3.Aw9F}. Hence, $w\in H^2_0(\Omega)$ by Theorem \ref{T-DD1}. 
Since $u=w+(u-w)$ and $u-w\in L^2(\Om;d^nx)$ is harmonic, the proof of 
\eqref{3.AKeF} is complete. 

Next, consider the Green formulas in $(v)$. Fix
$z\in\bbC\backslash \si(-\Delta_{N,\Om})$, let $u,v\in\dom (- \Delta_{min})$,  
and set $\wti{u}:=(-\Delta_{N,\Om}-z)^{-1}(-\Delta -z)u$, 
$\wti{v}:=(-\Delta_{N,\Om}-\ol{z})^{-1}(-\Delta -\ol{z})v$. Then 
\begin{align}\lb{G-G1F} 
& \wti{u},\wti{v}\in H^2(\Omega),\quad \gamma_N \wti{u} =\gamma_N \wti{v} =0,\no \\
& (-\Delta-z)\wti{u}=(-\Delta-z)u,\quad (-\Delta-\ol{z})\wti{v}=(-\Delta -\ol{z})v, 
\\
& \gamma_D \wti{u} =\tau^D_z u, \quad 
\gamma_D \wti{v} = \tau^D_{\ol{z}} v \no 
\end{align}
by \eqref{3.Aw9F}. Based on these observations 
and repeated applications of Green's formula \eqref{3an-C12} one can then write 
\begin{align}\lb{T-Gr1F} 
& ((-\Delta-z)u,  v)_{L^2(\Omega;d^nx)}
- (u,  (-\Delta -\ol{z})v)_{L^2(\Omega;d^nx)}
\nonumber\\ 
& \quad = ((-\Delta-z)\wti{u},  v)_{L^2(\Omega;d^nx)}
- (u,  (-\Delta-\ol{z})\wti{v})_{L^2(\Omega;d^nx)}
\nonumber\\ 
& \quad = (\wti{u},  (-\Delta-\ol{z})v)_{L^2(\Omega;d^nx)}
- (u,  (-\Delta-\ol{z})\wti{v})_{L^2(\Omega;d^nx)}
\nonumber\\ 
& \qquad -{}_{N^{3/2}(\partial\Omega)}\langle\gamma_D \wti{u},\widehat{\gamma}_N v           
\rangle_{(N^{3/2}(\partial\Omega))^*}
\nonumber\\ 
& \quad = ([\wti{u}-u],  (-\Delta-z)\wti{v})_{L^2(\Omega;d^nx)}
-{}_{N^{3/2}(\partial\Omega)}\langle\tau^D_0 u,\widehat{\gamma}_N v           
\rangle_{(N^{3/2}(\partial\Omega))^*}
\nonumber\\ 
& \quad = \ol{{}_{N^{3/2}(\partial\Omega)}\big\langle\tau^D_0 v,\widehat{\gamma}_N u  
\big\rangle_{(N^{3/2}(\partial\Omega))^*}}
% \nonumber\\ 
% & \qquad 
-{}_{N^{3/2}(\partial\Omega)}\big\langle\tau^D_0 u,\widehat{\gamma}_N v 
\big\rangle_{(N^{3/2}(\partial\Omega))^*},
\end{align}
where in the last step we have used the fact that $(-\Delta-z)(\wti{u}-u)=0$ 
and $\widehat\gamma_N (\wti{u}-u)=-\widehat\gamma_N u $.
This justifies \eqref{T-GreenF} and completes the proof of the theorem.
\end{proof}
%%%%%%%%%%%%%%%%%%%%%

We are now ready to implement Theorem \ref{T-Grubb} in the case where the
reference operator is the (shifted) Neumann Laplacian. Specifically, we have the 
following result:

%%%%%%%%%%%%%%
\begin{theorem}\lb{CC.wF} 
Assume Hypothesis \ref{h.Conv} and let  
$z\in\bbR\backslash \si(-\Delta_{N,\Om})$. Suppose that $X$ is a closed 
subspace of $\bigl(N^{3/2}(\partial\Omega)\bigr)^*$ and denote by $X^*$ the 
conjugate dual space of $X$. In addition, consider a self-adjoint operator 
\begin{eqnarray}\lb{4.Aw1F}
L:\dom (L)\subseteq X\to  X^*, 
\end{eqnarray} 
and define the linear operator 
$- \Delta^N_{X,L,z}:\dom (- \Delta^N_{X,L,z})\subseteq L^2(\Om;d^nx)\to 
 L^2(\Om;d^nx)$ by 
\begin{align}\lb{4.Aw3F}
\begin{split}
& -\Delta^N_{X,L,z} u:=(-\Delta -z)u,   \\
& \; u \in \dom (- \Delta^N_{X,L,z}):=\big\{
v \in \dom (- \Delta_{max}) \,\big|\, \widehat\gamma_N v \in \dom (L),\,
\tau^D_z v|_{X}= - L (\widehat\gamma_N v)\big\}, 
\end{split}
\end{align}
where $\tau^D_z$ is the map \eqref{3.Aw1F}, \eqref{3.Aw2F}, 
and the boundary condition 
$\tau^D_z u|_{X}= - L (\widehat\gamma_N u)$ is interpreted as
\begin{eqnarray}\lb{4.Aw4BF}
{}_{N^{3/2}(\dOm)}\big\langle\tau^D_z u,f\big\rangle_{(N^{3/2}(\dOm))^*}
= - \ol{{}_{X}\big\langle f,L(\widehat\gamma_N u )\big\rangle_{X^*}}, \quad 
f\in X.
\end{eqnarray}
Then 
\begin{equation}\lb{4.Aw4F}
- \Delta^N_{X,L,z}\, \mbox{ is self-adjoint in $L^2(\Om;d^nx)$}, 
\end{equation}
and 
\begin{equation} 
-\Delta_{min}-zI_{\Om}\subseteq -\Delta^N_{X,L,z}\subseteq
-\Delta_{max}-zI_{\Om}. 
\end{equation}

Conversely, if 
\begin{eqnarray}\lb{4.Aw5F}
\wti S:\dom \big(\wti S\big)\subseteq L^2(\Om;d^nx)\to  L^2(\Om;d^nx) 
\end{eqnarray}
is a self-adjoint operator with the property that 
\begin{eqnarray}\lb{4.Aw6F}
-\Delta_{min}-zI_{\Om}\subseteq \wti S\subseteq -\Delta_{max}-zI_{\Om}, 
\end{eqnarray}
then there exists $X$, a closed subspace of $\bigl(N^{3/2}(\dOm)\bigr)^*$, 
and $L:\dom (L)\subseteq X\to X^*$, a self-adjoint operator, such that 
\begin{eqnarray}\lb{4.Aw7F}
\wti S= -\Delta^N_{X,L,z}.
\end{eqnarray}

In the above scheme, the operator $\wti S$ and the pair $X,L$ correspond 
uniquely to each other. In fact, 
\begin{eqnarray}\lb{4.Aw8F}
\dom (L)=\widehat\gamma_N\big(\dom \big(\wti S\big)\big),\quad
X=\ol{\widehat\gamma_N\big(\dom \big(\wti S\big)\big)} \quad \text{ $\big($with closure in 
$\big(N^{3/2}(\dOm)\big)^*$$\big)$}.
\end{eqnarray}
\end{theorem}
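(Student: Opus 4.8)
The plan is to prove Theorem \ref{CC.wF} by mimicking the argument used for Theorem \ref{CC.w} in Section \ref{s14}, with the roles of the Dirichlet and Neumann trace operators systematically interchanged. Concretely, I would apply the abstract parametrization in Theorem \ref{T-Grubb} with the choice
\begin{align*}
A_0:=-\Delta_{min}-zI_{\Om},\quad A_1:=-\Delta_{max}-zI_{\Om},\quad
A_\beta:=-\Delta_{N,\Om}-zI_{\Om},
\end{align*}
which is legitimate since $z\in\bbR\backslash\si(-\Delta_{N,\Om})$ guarantees $0\notin\si(A_\beta)$, and by Corollary \ref{C-Da} and Theorem \ref{T-DD1} the operator $A_0$ is densely defined, closed, and symmetric with $A_1=(A_0)^*$. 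The key structural input is the analogue of the decomposition \eqref{BB.s2}: for $u\in\dom(-\Delta_{max})$ one has $u=u_\beta+u_\zeta$ with $u_\beta:=(-\Delta_{N,\Om}-zI_{\Om})^{-1}(-\Delta-z)u\in\{w\in H^2(\Om)\,|\,\gamma_N w=0\}$ (by Lemma \ref{Bjk} and the regularity statement \eqref{3.3fbis}), and $u_\zeta\in\ker(-\Delta_{max}-zI_{\Om})$, with $\widehat\gamma_D u_\zeta=\tau^D_z u\in N^{3/2}(\partial\Omega)$ by Theorem \ref{LL.wF}\,(iii), and $\widehat\gamma_N u_\zeta=\widehat\gamma_N u\in(N^{3/2}(\partial\Omega))^*$.

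For the forward direction, given a closed subspace $X\subseteq(N^{3/2}(\partial\Omega))^*$ and a self-adjoint $L:\dom(L)\subseteq X\to X^*$, I would set $V:=\ker(-\Delta_{max}-zI_{\Om})\cap\widehat\gamma_N^{-1}(X)$, which is a closed subspace of $\ker(A_1)$ on which, by Corollary \ref{New-CV33}, $\widehat\gamma_N\in\cB(V,X)$ is an isomorphism. Then define $T:\dom(T)\subseteq V\to V^*$ by $Tu:={}_X\langle\widehat\gamma_N(\cdot),L(\widehat\gamma_N u)\rangle_{X^*}$ on $\dom(T):=\{v\in V\,|\,\widehat\gamma_N v\in\dom(L)\}$, and check that $T$ is self-adjoint exactly as in \eqref{BU.q1}--\eqref{BU.q4} (the pullback argument only uses that $\widehat\gamma_N$ is an isomorphism $V\to X$ and $\dom(T)\to\dom(L)$). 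The remaining task is to verify that $A^{V,T}$ in \eqref{Grubb.4} coincides with $-\Delta^N_{X,L,z}$ in \eqref{4.Aw3F}; this reduces, via \eqref{BB.s2}-type identities, to the equivalence
\begin{align*}
\tau^D_z u|_X=-L(\widehat\gamma_N u)\ \Longleftrightarrow\ (w,(-\Delta-z)u)_{L^2(\Om;d^nx)}={}_X\langle\widehat\gamma_N w,L(\widehat\gamma_N u)\rangle_{X^*}\ \text{for all}\ w\in V,
\end{align*}
and the identity \eqref{T-Green2F} of Theorem \ref{LL.wF}\,(v) (with $z\in\bbR$) supplies precisely $(w,(-\Delta-z)u)_{L^2(\Om;d^nx)}=-\overline{{}_{N^{3/2}(\partial\Omega)}\langle\tau^D_z u,\widehat\gamma_N w\rangle_{(N^{3/2}(\partial\Omega))^*}}$, so the equivalence follows from the isomorphism property of $\widehat\gamma_N$ on $V$. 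By Theorem \ref{T-Grubb}\,(ii) this gives \eqref{4.Aw4F} and the sandwiching inclusion.

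For the converse, given a self-adjoint $\wti S$ with $-\Delta_{min}-zI_{\Om}\subseteq\wti S\subseteq-\Delta_{max}-zI_{\Om}$, Theorem \ref{T-Grubb}\,(iii) produces $V:=\overline{\{u_\zeta\,|\,u\in\dom(\wti S)\}}$ (closure in $L^2(\Om;d^nx)$) and a self-adjoint $T:\dom(T)\subseteq V\to V^*$ with $\wti S=A^{V,T}$; then set $X:=\overline{\widehat\gamma_N(\dom(\wti S))}$ in $(N^{3/2}(\partial\Omega))^*$, $\dom(L):=\widehat\gamma_N(\dom(\wti S))$, and define $L$ by ${}_X\langle\widehat\gamma_N v,L(\widehat\gamma_N u)\rangle_{X^*}:={}_V\langle v,Tu\rangle_{V^*}$, which is well-posed because $\widehat\gamma_N$ is an isomorphism $V\to X$ and $\dom(T)\to\dom(L)$ (using that $u\mapsto u_\zeta$ and $\widehat\gamma_N u=\widehat\gamma_N u_\zeta$). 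Self-adjointness of $L$ and the identity $\wti S=-\Delta^N_{X,L,z}$ then follow by transporting the self-adjointness of $T$ through the isomorphism, together with the boundary-condition computation in the style of \eqref{BU.y6} (where \eqref{T-Green2F} replaces \eqref{T-Green2}), yielding \eqref{4.Aw7F} and the uniqueness relations \eqref{4.Aw8F}. The main obstacle, as in Section \ref{s14}, is organizing the interchange cleanly: one must be careful that the regularity ingredient making $u_\beta$ lie in $H^2$ is now \eqref{3.3fbis} rather than \eqref{3.3Ybis}, that the relevant surjectivity is Corollary \ref{New-CV33} (Neumann) rather than Corollary \ref{New-CV2} (Dirichlet), and that the correct Green identity is the Neumann-flavored \eqref{T-GreenF}/\eqref{T-Green2F}; everything else is a formal transcription. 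The only genuinely non-routine point to double-check is that $z\in\bbR$ (not merely $\bbC$) is used, via \eqref{T-GreenFX} with $z=0$, so that $\tau^D_z$ and $\tau^D_{\bar z}=\tau^D_z$ coincide in the symmetry computations, exactly paralleling the use of $z\in\bbR$ in \eqref{BB.s9}.
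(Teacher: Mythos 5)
Your proposal is correct and follows essentially the same route as the paper's own proof: the paper likewise applies Theorem \ref{T-Grubb} with $A_\beta:=-\Delta_{N,\Om}-zI_{\Om}$, uses the decomposition $u=u_\beta+u_\zeta$ with $u_\beta\in H^2(\Om)$, $\widehat\gamma_N u_\beta=0$ (via Lemma \ref{Bjk} and \eqref{3.3fbis}), sets $V=\ker(-\Delta_{max}-zI_{\Om})\cap\widehat\gamma_N^{-1}(X)$ with $\widehat\gamma_N$ an isomorphism onto $X$ (Corollary \ref{New-CV3}), pulls $L$ back to a self-adjoint $T$ on $V$, and identifies $A^{V,T}$ with $-\Delta^N_{X,L,z}$ through the Green identity \eqref{T-Green2F}, with the converse and uniqueness handled exactly as you describe. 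Your remarks on where \eqref{3.3fbis}, Corollary \ref{New-CV33}, and the reality of $z$ enter match the paper's use of these ingredients.
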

%%%%%%%%%%%%%%%%%%%%%%%%%%%%%%
\begin{proof}
Fix $z\in{\mathbb{R}}\backslash \si(-\Delta_{N,\Om})$. 
We will employ Theorem \ref{T-Grubb} in the following context:
\begin{eqnarray}\lb{BB.s1F}
A_0:=-\Delta_{min}-zI_{\Om},\quad 
A_1:=-\Delta_{max}-zI_{\Om},\quad A_\beta:=-\Delta_{N,\Om}-zI_{\Om}.
\end{eqnarray}
Throughout the proof, we shall make use of the following results and notation:
\begin{align}\lb{BB.s2F}
& u\in \dom (- \Delta_{max})  % \\
% & \quad 
\, \text{ implies }  \left\{
\begin{array}{l}
u_\beta:=(-\Delta_{N,\Om}-zI_{\Om})^{-1}(-\Delta -z)u\in H^2(\Omega),\,\,
\widehat\gamma_N u_\beta = 0,
\\[6pt]
u_\zeta:=u-(-\Delta_{N,\Om}-zI_{\Om})^{-1}(-\Delta -z)u
\in \ker  (-\Delta_{max}-zI_{\Om}),
\\[6pt]
\widehat\gamma_D u_\zeta = \tau^D_z u     \in N^{3/2}(\partial\Omega),
\\[6pt]
\widehat\gamma_N u_\zeta 
=\widehat\gamma_N u \in \bigl(N^{3/2}(\partial\Omega)\bigr)^*.
\end{array}
\right.   % \no 
\end{align}
We now assume that a closed subspace $X$ of 
$\bigl(N^{3/2}(\partial\Omega)\bigr)^*$ has been specified, that $L$ 
is a self-adjoint operator as in \eqref{4.Aw1F}, and that $- \Delta^N_{X,L}$ 
is defined as in \eqref{4.Aw3F}. We will show that $- \Delta^N_{X,L}$ is 
a self-adjoint operator. To this end, we define
\begin{eqnarray}\lb{BB.s3F}
V:=\big\{u\in\ker  (-\Delta_{max}-zI_{\Om}) \,\big|\, \widehat\gamma_N u \in X\big\}, 
\end{eqnarray}
and observe that 
\begin{eqnarray}\lb{V-cLF}
\mbox{$V=\ker (-\Delta_{max}-zI_{\Om})\cap \widehat\gamma_N^{-1}(X)$
is a closed subspace of $\ker (- \Delta_{max})$},
\end{eqnarray}
(where $\widehat\gamma_N^{-1}(X)$ denotes the pre-image of $X$ 
under $\widehat\gamma_N$), and the restriction of $\widehat\gamma_N$ to $V$ 
satisfies  
\begin{eqnarray}\lb{V-cL2F}
\widehat\gamma_N\in\cB(V,X)\, \mbox{ is an isomorphism}, 
\end{eqnarray}
by \eqref{BB.s3F} and Corollary \ref{New-CV3}. 

Next, we also introduce the operator $T:\dom (T)\subseteq V\to  V^*$ by setting 
\begin{align}\lb{BB.s4F}
\begin{split}
& Tu:= {}_X\langle\widehat\gamma_D(\cdot), L(\widehat\gamma_N u )\rangle_{X^*},  \\
& u\in\dom (T):= \{v\in V \,|\, \widehat\gamma_N v \in\dom (L)\}. 
\end{split}
\end{align}
For every $u,v\in\dom (T)$, using the self-adjointness of $L$, we may write
\begin{align}\lb{BU.q1F}
{}_V\langle v,Tu\rangle_{V^*}
= {}_X\langle\widehat\gamma_N v, L(\widehat\gamma_N u )\rangle_{X^*}  
& = \ol{{}_X\langle\widehat\gamma_N u, L(\widehat\gamma_N v)\rangle_{X^*}} 
\no \\
& =\ol{{}_V\langle u,Tv\rangle_{V^*}}.
\end{align}
This shows that $T$ is symmetric, that is, $T\subseteq T^*$. To prove the 
converse inclusion, consider $u\in \dom (T^*)$. Then 
$u\in V$ and there exists $\Lambda\in V^*$ such that 
\begin{equation}\lb{BU.q2F}
{}_V\langle w,\Lambda\rangle_{V^*}=\ol{{}_V\langle u,Tw\rangle_{V^*}}
= \ol{{}_X\langle \widehat\gamma_N u ,L(\widehat\gamma_N w)\rangle_{X^*}}, 
\quad w\in \dom (T).
\end{equation}
Our goal is to show that $u\in\dom (T)$ which, by \eqref{BB.s4F}, comes
down to proving that $\widehat\gamma_N u \in \dom (L)$, or equivalently,
that $\widehat\gamma_N u \in \dom (L^*)$. In turn, the veracity of the 
latter condition is established as soon as we show that there exists a finite constant 
$C>0$ with the property that 
\begin{equation}\lb{BU.q3F}
\big|{}_X\langle \widehat\gamma_N u ,L(f)\rangle_{X^*}\big|
\leq C\|f\|_X, \quad f\in \dom (L).
\end{equation}
Since \eqref{V-cL2F} entails that 
\begin{eqnarray}\lb{V-cL3F}
\widehat\gamma_N:\dom (T)\to  \dom (L)
\, \mbox{ boundedly, with a bounded inverse}, 
\end{eqnarray}
the estimate \eqref{BU.q3F} is going to be implied by 
\begin{equation}\lb{BU.q4F}
\big|{}_X\langle\widehat\gamma_N u ,L(\widehat\gamma_N w)\rangle_{X^*}\big|
\leq C\|w\|_{V}, \quad w\in \dom (T).
\end{equation}
This, however, for the choice $C=\|\Lambda\|_{V^*}$, is a direct consequence
of \eqref{BU.q2F}. In summary, the above reasoning yields that 
$u\in \dom (T^*)$, completing the proof of the fact that
$T$ is self-adjoint. 

With this at hand, Theorem \ref{T-Grubb} will imply that $-\Delta^N_{X,L,z}$ 
is self-adjoint as soon as we establish that
\begin{align}\lb{BB.s5F}
\begin{split} 
\dom (- \Delta^N_{X,L,z}) &= \big\{u\in \dom (- \Delta_{max}) \,\big|\, u_\zeta\in \dom (T), \\
& \quad \;\;\,
(w,(-\Delta -z)u)_{L^2(\Om;d^nx)}
={}_V\langle w,Tu_\zeta\rangle_{V^*}, \, w\in V\big\}.
\end{split} 
\end{align} 
We note that for each $u\in\dom (- \Delta_{max})$, 
\begin{eqnarray}\lb{BB.s6F}
u_\zeta\in\dom (T) \, \text{ if and only if } \,  
\widehat\gamma_N u \in \dom (L) 
\end{eqnarray}
by \eqref{BB.s2F}. Consequently, it remains to show that 
\begin{equation}\lb{BB.s7F}
\tau^D_z u     \big|_{X}= - L\bigl(\widehat\gamma_D u \bigr)  \, \text{ if and only if } \, 
(w,(-\Delta -z)u)_{L^2(\Om; d^n x)}={}_V\langle w,Tu_\zeta\rangle_{V^*}, \quad 
w\in V, 
\end{equation}
whenever $u\in\dom (- \Delta_{max})$ satisfies  
$\widehat\gamma_N u \in \dom (L)$. Fix such a $u$ 
and recall from \eqref{BB.s2F} that 
$\widehat\gamma_N u_\zeta = \widehat\gamma_N u $. Unraveling definitions, 
the task at hand becomes showing that 
\begin{align}\lb{BB.s8F}
& \tau^D_z u \big|_{X}= - L\bigl(\widehat\gamma_N u \bigr)
\\
&\quad
\text{if and only if } \, 
(w,(-\Delta -z)u)_{L^2(\Om;d^nx)}
= {}_{X}\langle\widehat\gamma_N w,L(\widehat\gamma_N u )\rangle_{X^*}, 
\quad w\in V.
\nonumber
\end{align}
One observes, however, that for every 
$w\in V\subseteq\ker  (-\Delta_{max}-zI_{\Om})$ one has (recalling $z\in\bbR$) 
\begin{eqnarray}\lb{BB.s9F}
(w,(-\Delta -z)u)_{L^2(\Om;d^nx)} = -
\ol{{}_{N^{3/2}(\dOm)}\big\langle\tau^D_z u,
\widehat\gamma_N w \big\rangle_{(N^{3/2}(\dOm))^*}}
\end{eqnarray}
by \eqref{T-Green2F}. Then \eqref{BB.s8F} readily follows from this 
and \eqref{V-cL2F}. This concludes the proof of \eqref{4.Aw4F}. 

Conversely, assume that $\wti S$, as in \eqref{4.Aw5F}, is a self-adjoint 
operator which satisfies \eqref{4.Aw6F}. If we define 
\begin{eqnarray}\lb{BB.s3RF}
V:=\ol{\big\{u_\zeta \,\big|\, u\in\dom \big(\wti S\big)\big\}} \quad \text{ $\big($with closure in 
$L^2(\Omega;d^nx)$$\big)$}, 
\end{eqnarray}
then $V$ is a closed subspace of $\ker  (-\Delta_{max}-zI_{\Om})$
for which \eqref{V-cL2F} continues to hold. Next, one introduces the 
operator $T:\dom (T)\subseteq V\to  V^*$ 
by setting 
\begin{align}\lb{BB.s4RF}
\begin{split} 
& Tu_\zeta:= (\dott,(-\Delta -z)u)_{L^2(\Omega;d^nx)}\in V^*,  \\
& u_{\zeta} \in \dom (T):= \big\{v_\zeta \,\big|\, v\in\dom \big(\wti S\big)\big\}. 
\end{split}
\end{align}
Then Theorem \ref{T-Grubb} ensures that $T$ is a self-adjoint operator. 

Next, consider $X$, $\dom (L)$ as in \eqref{4.Aw8F}, 
and introduce an operator $L$ as in \eqref{4.Aw1F} by requiring that
\begin{eqnarray}\lb{BB.s10F}
{}_{X}\langle \widehat\gamma_N v,L(\widehat\gamma_N u )\rangle_{X^*}
= {}_{V}\langle v,Tu\rangle_{V^*}, \quad u\in \dom (T), \; v\in V.
\end{eqnarray}
Since 
\begin{equation}\lb{V-cL4F}
\widehat\gamma_N:V\to  X\, \mbox{ and }\, 
\widehat\gamma_N:\dom (T)\to  \dom (L)
\, \mbox{ isomorphically}, 
\end{equation}
the requirement in \eqref{BB.s10F} uniquely defines $L$.  
Furthermore, for every $u,v\in \dom (T)$, 
using the self-adjointness of $T$ we may write
\begin{align}\lb{BU.y1F}
{}_X\langle\widehat\gamma_N v, L(\widehat\gamma_N u )\rangle_{X^*}
& = {}_V\langle v,Tu\rangle_{V^*}= \ol{{}_V\langle u,Tv\rangle_{V^*}}  % \no \\
% & 
=\ol{{}_X\langle\widehat\gamma_N u, L(\widehat\gamma_N v)\rangle_{X^*}}.
\end{align}
Together with \eqref{V-cL4F}, this shows that $L$ is symmetric, that is, 
$L\subseteq L^*$. To prove the 
converse inclusion, consider $f\in \dom (L^*)$. Then 
$f\in X$ there exists $\Lambda\in X^*$ such that 
\begin{equation}\lb{BU.y2F}
{}_X\langle\widehat\gamma_N w,\Lambda\rangle_{X^*}
= \ol{{}_X\langle f,L(\widehat\gamma_N w)\rangle_{X^*}}, 
\quad w\in \dom (T).
\end{equation}
Let $u\in V$ be such that $\widehat\gamma_N u =f$. Upon recalling 
\eqref{BB.s10F}, the above formula becomes
\begin{equation}\lb{BU.y3F}
{}_X\langle\widehat\gamma_N w,\Lambda\rangle_{X^*}
=\ol{{}_V\langle u,T w\rangle_{V^*}}, \quad 
w\in \dom (T).
\end{equation}
In particular, 
\begin{align}\lb{BU.y4F}
|{}_V\langle u,T w\rangle_{V^*}|
& \leq  \|\Lambda\|_{X^*}\|\widehat\gamma_D w\|_{(N^{1/2}(\dOm))^*}
\nonumber\\ 
& \leq  \|\Lambda\|_{X^*}\|w\|_{L^2(\Om;d^nx)}, \quad 
w\in \dom (T).
\end{align}
This shows that $u\in\dom (T^*)=\dom (T)$ and hence, 
$f=\widehat\gamma_D u \in \dom (L)$, by \eqref{V-cL4F}. Altogether, 
the above argument proves that $L$ is a self-adjoint operator.

Next, we will prove that 
\begin{eqnarray}\lb{BU.y5F}
\dom \big(\wti S\big)\subseteq \dom (- \Delta^D_{X,L,z}).
\end{eqnarray}
We note that if $u\in \dom \big(\wti S\big)$ then
$\widehat\gamma_D u \in \dom (L)$, by definition. 
Thus, as far as \eqref{BU.y5F} is concerned, it remains to verify that
\eqref{4.Aw4BF} holds. To see that this is indeed the case, given 
an arbitrary $f\in X$, pick $w\in V\subseteq\ker  (-\Delta_{max}-zI_{\Om})$ 
such that $\widehat\gamma_N w = f$. Then for each $u\in \dom \big(\wti S\big)$
we may write
\begin{align}\lb{BU.y6F}
{}_{N^{3/2}(\dOm)}\langle\tau^D_z u,f\rangle_{(N^{3/2}(\dOm))^*} &=  
{}_{N^{3/2}(\dOm)}\langle\tau^D_z u,\widehat\gamma_N w \rangle_{(N^{3/2}(\dOm))^*} 
\nonumber\\ 
&= - ((-\Delta -z)u,w)_{L^2(\Om;d^nx)}
\nonumber\\ 
&= - \ol{{}_V\langle w,Tu_\zeta\rangle_{V^*}}
\nonumber\\ 
&=  
- \ol{{}_{X}\langle\widehat\gamma_N w,L(\widehat \gamma_N u_\zeta)\rangle_{X^*}}
\nonumber\\ 
&= - \ol{{}_{X}\langle f,L(\widehat\gamma_N u )\rangle_{X^*}}.
\end{align}
Above, the second equality is a consequence of \eqref{T-Green2F}, 
the third equality follows from \eqref{BB.s4RF}, the fourth equality 
is implied by \eqref{BB.s10F}, and the fifth equality is derived with 
the help of the last line in \eqref{BB.s2F}. This shows that \eqref{4.Aw4BF}
holds, thus completing the proof of \eqref{BU.y5F}. 

Since both $\wti S$ and $-\Delta^{N}_{X,L,z}$ are self-adjoint, 
\eqref{BU.y5F} implies $\wti S=-\Delta^{N}_{X,L,z}$. 
\end{proof}
%%%%%%%%%%%%%%%%%%%%%%%%%%%%%%

The following lemma is useful in the statement of Corollary \ref{rE.1F} below.

%%%%%%%%%%%%%%%%%%%%%%%%%%%%%%
\begin{lemma}\lb{L-Kr2} 
Assume Hypothesis \ref{h.Conv}. Then the operator-valued map  
\begin{eqnarray}\lb{A-Kq.1} 
\bbC\backslash \si(-\Delta_{D,\Om})\ni z\mapsto - \Delta_{K,\Om,z}
\end{eqnarray}
extends naturally $($i.e., with preservations of properties stated
in Theorem \ref{T-Kr}$)$ to the larger domain 
\begin{eqnarray}\lb{A-Kq.2} 
\bbC\backslash \big[\si(-\Delta_{D,\Om})\cap \si(-\Delta_{N,\Om})\big]
\ni z\mapsto - \Delta_{K,\Om,z}. 
\end{eqnarray}
\end{lemma}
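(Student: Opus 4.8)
\textbf{Proof plan for Lemma \ref{L-Kr2}.}

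The plan is to re-examine the construction of $-\Delta_{K,\Om,z}$ in Theorem \ref{T-Kr} and observe that the only place where the hypothesis $z \in \bbC \backslash \si(-\Delta_{D,\Om})$ was actually used is in guaranteeing that the regularized Neumann trace $\tau^N_z$ of Section \ref{s12} is well-defined. When $z \in \si(-\Delta_{D,\Om})$, that particular operator no longer makes sense, but by Theorem \ref{LL.wF} the regularized Dirichlet trace $\tau^D_z$ is available provided $z \notin \si(-\Delta_{N,\Om})$. The natural candidate is therefore
\begin{equation}\lb{KrN-def}
\dom(-\Delta_{K,\Om,z}) := \bigl\{v \in \dom(-\Delta_{max}) \,\big|\, \tau^D_z v = 0\bigr\}, \quad -\Delta_{K,\Om,z} u := (-\Delta - z)u,
\end{equation}
for $z \in \bbC \backslash \si(-\Delta_{N,\Om})$. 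First I would check that, on the overlap $z \in \bbC \backslash [\si(-\Delta_{D,\Om}) \cup \si(-\Delta_{N,\Om})]$, this definition agrees with the one in \eqref{A-zz.1}. This is immediate from identity \eqref{3.NewT} in Theorem \ref{LL.wF}, namely $\tau^D_z = -M^{(0)}_{N,D,\Om}(z)\,\tau^N_z$ together with the fact that $M^{(0)}_{N,D,\Om}(z)$ is an isomorphism (by \eqref{3.53v} and \eqref{3.TTb}), so that $\tau^D_z v = 0$ if and only if $\tau^N_z v = 0$. Hence the two descriptions of $\dom(-\Delta_{K,\Om,z})$ coincide there, and \eqref{KrN-def} genuinely extends \eqref{A-zz.1}.

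Next I would verify that the properties asserted in Theorem \ref{T-Kr} persist for the extended range. Self-adjointness, $(-\Delta_{K,\Om,z})^* = -\Delta_{K,\Om,\ol z}$, follows verbatim from the argument in the proof of Theorem \ref{T-Kr}, with \eqref{T-GreenF} and \eqref{Sim-GrF} playing the roles formerly played by \eqref{T-Green} and \eqref{Sim-Gr}, and with \eqref{3.AKeF} and Corollary \ref{New-CV33} replacing \eqref{3.AKe} and Corollary \ref{New-CV2} in the computation of $\widehat\gamma_N(\dom(-\Delta_{K,\Om,z}))$; here one uses that $\ker(\tau^D_z) = H^2_0(\Om) \dotplus \ker(-\Delta_{max} - zI_\Om)$ and that $\widehat\gamma_N$ maps $\ker(-\Delta_{max}-zI_\Om)$ onto $(N^{3/2}(\dOm))^*$. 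The relations $-\Delta_{min} - zI_\Om \subseteq -\Delta_{K,\Om,z} \subseteq -\Delta_{max} - zI_\Om$ are clear from \eqref{KrN-def} and \eqref{3.AKeF}. The nonnegativity statement ($-\Delta_{K,\Om,z} \geq 0$ for $z \leq 0$, which is only relevant when $z \leq 0$ and $z \in \si(-\Delta_{D,\Om})$, an empty set since $-\Delta_{D,\Om}$ is strictly positive, so there is in fact nothing new to prove there), the description of $\dom(-\Delta_{K,\Om})$, and the identification of $\ker(-\Delta_{K,\Om})$ all concern $z$ in the original range and so require no modification; likewise \eqref{spec-1} and \eqref{Ok.1} pertain to $-\Delta_{K,\Om} = -\Delta_{K,\Om,0}$ and $0 \notin \si(-\Delta_{N,\Om})$ is not an issue since one still has $0 \notin \si(-\Delta_{D,\Om})$.

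The only point requiring genuine care is the consistency check on the overlap, and within that the verification that $M^{(0)}_{N,D,\Om}(z)$ is invertible precisely when $z \in \bbC \backslash [\si(-\Delta_{D,\Om}) \cup \si(-\Delta_{N,\Om})]$, which is exactly \eqref{3.53v}; so the main obstacle is essentially bookkeeping rather than a substantive difficulty. One should also record that the resulting family \eqref{KrN-def} is, for $z \in \bbR \backslash \si(-\Delta_{N,\Om})$, the self-adjoint operator corresponding via Theorem \ref{CC.wF} to the choices $X := \dom(L) := (N^{3/2}(\dOm))^*$ and $L := 0$, which gives an alternative, self-contained route to self-adjointness on the enlarged parameter set and confirms that no properties are lost in the extension.
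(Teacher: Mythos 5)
Your proposal is correct and follows essentially the same route as the paper: define the extension via the regularized Dirichlet trace $\tau^D_z$ for $z\in\bbC\backslash\si(-\Delta_{N,\Om})$, transfer the properties of Theorem \ref{T-Kr} using the parallel Green formula machinery of Theorem \ref{LL.wF}, and check agreement with \eqref{A-zz.1} on the overlap. The only cosmetic difference is that you derive $\ker(\tau^D_z)=\ker(\tau^N_z)$ from \eqref{3.NewT} and the invertibility of $M^{(0)}_{N,D,\Om}(z)$, whereas the paper reads it off directly from the identical kernel descriptions \eqref{3.AKe} and \eqref{3.AKeF}; both are fine.
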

%%%%%%%%%%%%%%%%%%%%%%%%%%%%%%
\begin{proof}
Consider the linear operator in $L^2(\Omega;d^nx)$ given by 
\begin{align}\lb{A-zz.1Bis} 
\begin{split} 
& - \wti\Delta_{K,\Om,z}(u):=(- \Delta - z)u,    \\
& \; u \in \dom (- \wti\Delta_{K,\Om,z}):=\big\{v\in \dom (- \Delta_{max}) \,\big|\, 
\tau^D_z v = 0\big\}, 
\end{split} 
\end{align}
In a similar fashion to Theorem \ref{T-Kr} it can be shown that 
$- \wti\Delta_{K,\Om,z}$ satisfies 
\begin{eqnarray}\lb{A-zz.bFW} 
\big(- \wti \Delta_{K,\Om,z}\big)^* = - \wti \Delta_{K,\Om,\ol{z}}.
\end{eqnarray}
Moreover, if $z\in\bbR\backslash \si(-\Delta_{N,\Om})$, then 
$- \Delta_{K,\Om,z}$ is self-adjoint, and if 
$z < 0$ then $- \Delta_{K,\Om,z} \geq 0$. Finally, 
\begin{eqnarray}\lb{A-zz.bF} 
- \Delta_{min} \subseteq - \Delta_{K,\Om,z} + zI_{\Om}\subseteq - \Delta_{max}. 
\end{eqnarray}
Since, by \eqref{3.AKe} and \eqref{3.AKeF},
$\ker (\tau_z^D) =  \ker (\tau_z^N)$ whenever 
$z\in\bbC\backslash \big[\si(-\Delta_{D,\Om})\cup \si(-\Delta_{N,\Om})\big]$, 
one obtains that $- \wti\Delta_{K,\Om,z} = - \Delta_{K,\Om,z}$ 
for $z\in\bbC\backslash \big[\si(-\Delta_{D,\Om})\cup \si(-\Delta_{N,\Om})\big]$.
Since by \eqref{3.Aw2} and the representation of $M_{D,N,\Om}^{(0)}(z)$ in \eqref{3.47v}, 
$\tau_z^N$ is analytic for $z \in\bbC\backslash \si(-\Delta_{D,\Om})$, and 
similarly, by \eqref{3.Aw2F} and the representation of $M_{N,D,\Om}^{(0)}(z)$ in \eqref{3.52v}, 
$\tau_z^D$ is analytic for $z \in\bbC\backslash \si(-\Delta_{N,\Om})$, it follows 
that the map \eqref{A-Kq.1} extends to 
$\bbC\backslash \big[\si(-\Delta_{D,\Om})\cap \si(-\Delta_{N,\Om})\big]$, as claimed in 
\eqref{A-Kq.2}.
\end{proof}
%%%%%%%%%%%%%%%%%%%%%%%%%%%%%%

Analogously to the proof of Corollary \ref{rE.1}, it is then 
straightforward to establish the following result:

%%%%%%%%%%%%%%%%%%%%%%%%%%%
\begin{corollary}\lb{rE.1F}
In the context of Theorem \ref{CC.wF}, for every 
$z_0\in\bbR\backslash \si(-\Delta_{N,\Om})$ one has the following facts:
\begin{eqnarray}\lb{4.Aw10F}
X:=\dom (L):=\{0\}\, \mbox{ and }\,  
L:=0 \, \text{ imply } \, - \Delta^N_{X,L,z_0} + z_0I_{\Om} = - \Delta_{N,\Om},
\end{eqnarray}
the Neumann Laplacian, and 
\begin{eqnarray}\lb{4.Aw9F}
\left.
\begin{array}{c}
X:=\bigl(N^{3/2}(\partial\Omega)\bigr)^*\mbox{ and }
L:= M^{(0)}_{N,D,\Om}(z_0)
\\[4pt]
\mbox{ with }\dom (L):=N^{1/2}(\partial\Omega)
\end{array}
\right\} \text{ imply } \, - \Delta^N_{X,L,z_0} + z_0I_{\Om} = - \Delta_{D,\Om},
\end{eqnarray}
the Dirichlet Laplacian. Furthermore, 
\begin{eqnarray}\lb{4.AKF}
X:=\dom (L):=\bigl(N^{3/2}(\partial\Omega)\bigr)^*\, \mbox{ and }\, 
L:=0 \, \text{ imply } \,  -\Delta^N_{X,L,z_0} = - \Delta_{K,\Om,z_0},
\end{eqnarray}
the Krein Laplacian $($initially introduced in \eqref{A-zz.1}, and further
extended in Lemma \ref{L-Kr2}$)$.
\end{corollary}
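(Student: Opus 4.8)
The plan is to establish Corollary \ref{rE.1F} by mimicking the proof strategy of Corollary \ref{rE.1}, but now working with the Neumann Laplacian as reference operator (as set up in Theorem \ref{CC.wF}) and using the regularized Dirichlet trace $\tau^D_z$ and the Neumann-to-Dirichlet map $M^{(0)}_{N,D,\Om}(\cdot)$ in place of their Dirichlet counterparts. In each of the three cases I will identify $X$ and $\dom(L)$ via the defining relation \eqref{4.Aw8F}, namely $\dom(L) = \widehat\gamma_N(\dom(\wti S))$ and $X = \ol{\widehat\gamma_N(\dom(\wti S))}$, verify that the corresponding $L$ is self-adjoint, check that the boundary condition \eqref{4.Aw4BF} reduces to the expected trace condition, and conclude that $-\Delta^N_{X,L,z_0}$ and the target operator coincide because both are contained in $-\Delta_{max} - z_0 I_\Om$.

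First I would treat the Neumann Laplacian case \eqref{4.Aw10F}: by the regularity result \eqref{3.3fbis} of Theorem \ref{tH.G2} one has $\dom(-\Delta_{N,\Om}) = \{u\in H^2(\Om) \,|\, \gamma_N u = 0\}$, whence $\widehat\gamma_N$ annihilates this domain and $\dom(L) = X = \{0\}$, so $L = 0$ is trivially self-adjoint and the boundary condition is vacuous; the requirement $\widehat\gamma_N u \in \dom(L) = \{0\}$ forces $u\in\{w\in H^2(\Om)\,|\,\gamma_N w = 0\}$ by \eqref{3.3fbis} and Lemma \ref{3o-Tx} (compatibility of $\widehat\gamma_N$ with $\wti\gamma_N$), giving $\dom(-\Delta^N_{X,L,z_0}) = \dom(-\Delta_{N,\Om}) + z_0 I_\Om$. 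For the Dirichlet case \eqref{4.Aw9F}, the regularity result \eqref{3.3Ybis} of Theorem \ref{tH.A} gives $\dom(-\Delta_{D,\Om}) = H^2(\Om)\cap H^1_0(\Om)$, so by Lemma \ref{Lo-Tx}, $\dom(L) = \widehat\gamma_N(H^2(\Om)\cap H^1_0(\Om)) = N^{1/2}(\partial\Omega)$, and by Lemma \ref{3U-x} the closure in $(N^{3/2}(\partial\Omega))^*$ is $X = (N^{3/2}(\partial\Omega))^*$, so $X^* = N^{3/2}(\partial\Omega)$. One then defines $L := M^{(0)}_{N,D,\Om}(z_0)$, which maps $N^{1/2}(\partial\Omega) \to N^{3/2}(\partial\Omega)$ by \eqref{3.TTb} and is symmetric by the duality relation \eqref{NaLa}; establishing that $L$ is genuinely self-adjoint requires the regularity implication that $f\in(N^{3/2}(\partial\Omega))^*$ with $M^{(0)}_{N,D,\Om}(z_0)f \in N^{3/2}(\partial\Omega)$ forces $f\in N^{1/2}(\partial\Omega)$, which follows from the compatibility of the two incarnations of $M^{(0)}_{N,D,\Om}(z_0)$ (in $\cB((N^{3/2}(\dOm))^*,(N^{1/2}(\dOm))^*)$ and in $\cB(N^{1/2}(\dOm),N^{3/2}(\dOm))$) together with the fact that the latter is an isomorphism (by \eqref{3.53v}). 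Since $\tau^D_{z_0} u = \widehat\gamma_D u - M^{(0)}_{N,D,\Om}(z_0)(\widehat\gamma_N u)$, the boundary condition $\tau^D_{z_0} u|_X = -L(\widehat\gamma_N u)$ collapses to $\widehat\gamma_D u = 0$, which by \eqref{3.3Ybis} forces $u\in H^2(\Om)\cap H^1_0(\Om)$ and makes the requirement $\widehat\gamma_N u \in \dom(L)$ automatic.

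For the Krein Laplacian case \eqref{4.AKF}, take $X = \dom(L) = (N^{3/2}(\partial\Omega))^*$ and $L = 0$, which is trivially self-adjoint; the requirement $\widehat\gamma_N u \in \dom(L)$ is superfluous by Theorem \ref{3ew-T-tr}, and the boundary condition \eqref{4.Aw4BF} reduces to $\tau^D_{z_0} u = 0$. By Lemma \ref{L-Kr2}, the Krein Laplacian $-\Delta_{K,\Om,z_0}$ (as extended to $z_0\in\bbR\backslash\si(-\Delta_{N,\Om})$) is precisely the operator $-\wti\Delta_{K,\Om,z_0}$ of \eqref{A-zz.1Bis}, whose domain is exactly $\{v\in\dom(-\Delta_{max}) \,|\, \tau^D_{z_0} v = 0\}$; hence $\dom(-\Delta^N_{X,L,z_0}) = \dom(-\Delta_{K,\Om,z_0})$ and the two operators agree since both sit inside $-\Delta_{max} - z_0 I_\Om$. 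The main obstacle in the whole argument is the self-adjointness verification for $L = M^{(0)}_{N,D,\Om}(z_0)$ in the Dirichlet case: symmetry is immediate from \eqref{NaLa} but proving that $\dom(L^*) \subseteq \dom(L) = N^{1/2}(\partial\Omega)$ hinges on the subtle regularity bootstrap for $M^{(0)}_{N,D,\Om}(z_0)$ between the "exotic" spaces, exactly parallel to the step \eqref{4.AK4} in the proof of Corollary \ref{rE.1}; everything else is a routine transcription of that proof with the roles of Dirichlet and Neumann data interchanged.
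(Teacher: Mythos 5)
Your proposal follows exactly the route the paper intends: the paper's entire justification of Corollary \ref{rE.1F} is the remark that it is ``analogous to the proof of Corollary \ref{rE.1}'', and your three cases are precisely that transcription, with $\tau^D_{z_0}$, $\widehat\gamma_N$, Lemmas \ref{Lo-Tx}, \ref{3o-Tx}, \ref{3U-x}, the regularity statements of Theorems \ref{tH.A} and \ref{tH.G2}, and Lemma \ref{L-Kr2} playing the roles of their Dirichlet-reference counterparts. The cases \eqref{4.Aw10F} and \eqref{4.AKF} are complete as you present them (modulo the slip ``$\dom(-\Delta^N_{X,L,z_0})=\dom(-\Delta_{N,\Om})+z_0I_\Om$'', which should simply assert equality of the domains).

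In the Dirichlet case \eqref{4.Aw9F}, however, the step you yourself single out as the main obstacle is not fully covered by your citations. The hypothesis only excludes $z_0$ from $\si(-\Delta_{N,\Om})$, whereas both \eqref{3.3Ybis} (used to pass from $\widehat\gamma_D u=0$ to $u\in H^2(\Om)\cap H^1_0(\Om)$) and, more seriously, \eqref{3.53v} (invoked to make $M^{(0)}_{N,D,\Om}(z_0)\in\cB\big(N^{1/2}(\dOm),N^{3/2}(\dOm)\big)$ an isomorphism) require in addition $z_0\notin\si(-\Delta_{D,\Om})$. At a Dirichlet eigenvalue $z_0\notin\si(-\Delta_{N,\Om})$ the map is genuinely not injective: a Dirichlet eigenfunction $u$ lies in $H^2(\Om)\cap H^1_0(\Om)$ by Lemma \ref{Bjk}, $g:=\gamma_N u\in N^{1/2}(\dOm)\setminus\{0\}$, and $M^{(0)}_{N,D,\Om}(z_0)g=\gamma_D u=0$; so the self-adjointness of $L$ is not settled by your argument for such $z_0$. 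Two repairs are available: (i) prove the regularity implication directly --- if $f\in\bigl(N^{3/2}(\dOm)\bigr)^*$ and the solution $u$ of the Neumann problem with datum $f$ has $\widehat\gamma_D u\in N^{3/2}(\dOm)\subset\gamma_D\bigl(H^2(\Om)\bigr)$, view $u$ as solving the inhomogeneous Dirichlet problem \eqref{Yan-14} at a shifted parameter $c\notin\si(-\Delta_{D,\Om})$ with source $(z_0-c)u\in L^2(\Om;d^nx)$, so \eqref{3.3Ys} gives $u\in H^2(\Om)$, and then Theorem \ref{T-MMS} together with $\nabla_{tan}\gamma_D u\in\bigl(H^{1/2}(\dOm)\bigr)^n$ yields $(\gamma_N u)\nu\in\bigl(H^{1/2}(\dOm)\bigr)^n$, i.e.\ $f=\gamma_N u\in N^{1/2}(\dOm)$; or (ii) bypass the issue by applying the converse (uniqueness) part of Theorem \ref{CC.wF} to $\wti S:=-\Delta_{D,\Om}-z_0I_\Om$, which produces a self-adjoint $L$ with $\dom(L)=\gamma_N\bigl(H^2(\Om)\cap H^1_0(\Om)\bigr)=N^{1/2}(\dOm)$, $X=\bigl(N^{3/2}(\dOm)\bigr)^*$, and which the Green formula \eqref{T-Green2F} identifies with $M^{(0)}_{N,D,\Om}(z_0)$. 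Likewise, the passage from $\widehat\gamma_D u=0$ to $u\in H^2(\Om)\cap H^1_0(\Om)$ should be justified for all admissible $z_0$ by the $z$-independent decomposition argument in the proof of Theorem \ref{T-DD1} rather than by \eqref{3.3Ybis}. (To be fair, the mirror image of this caveat is already latent in the paper's one-line isomorphism claim in the proof of Corollary \ref{rE.1}, there at Neumann eigenvalues; apart from this point your argument is the paper's.)
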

%%%%%%%%%%%%%%%%%%%%%%%%%%%%%%%%%%%%%

%%%%%%%%%%%%%%%%%%%%%%%%%%%%%%%%%%%%%%
%%%%%%%%%%%%%%%%%%%%%%%%%%%%%%%%%%%%%%
\section{Krein-Type Resolvent Formulas}
\lb{s16}
%%%%%%%%%%%%%%%%%%%%%%%%%%%%%%%%%%%%%%
%%%%%%%%%%%%%%%%%%%%%%%%%%%%%%%%%%%%%%

Having catalogued all self-adjoint extensions of the Laplacian, 
we establish in this section a variety of Krein-type resolvent formulas, 
which express the difference of two resolvents for two different self-adjoint
extensions of the Laplacian as $R^* M R$, where $M$ is a suitable Weyl--Titchmarsh 
operator on the boundary and $R$ is closely related to one of the 
resolvents in question. 

Krein-type resolvent formulas have been studied in a great variety of contexts, far too 
numerous to account for all in this paper. For instance, they are of fundamental 
importance in connection with the spectral and inverse spectral theory of ordinary and 
partial differential operators. Abstract versions of Krein-type resolvent formulas (see 
also the brief discussion at the end of our introduction), connected to boundary value spaces 
(boundary triples) and self-adjoint extensions of closed symmetric operators 
with equal (possibly infinite) deficiency spaces, have received enormous 
attention in the literature. In particular, we note that Robin-to-Dirichlet 
maps in the context of ordinary differential operators reduce to the 
celebrated (possibly, matrix-valued) Weyl--Titchmarsh function, the basic 
object of spectral analysis in this context.  Since it is impossible to 
cover the literature in this paper, we refer to the rather extensive 
recent bibliography in \cite{GM08} and \cite{GM09}. Here we mention, for instance, 
\cite[Sect.\ 84]{AG81a}, \cite{ABMN05}, \cite{ADKK07}, \cite{AB09}, \cite{AP04}, \cite{AT03}, \cite{AT05}, 
\cite{BL07}, \cite{BMN08}, \cite{BMT01}, \cite{BT04}, \cite{BMN00}, \cite{BMN02}, \cite{BGW09}, 
\cite{BHMNW09}, \cite{BM04}, \cite{BMNW08}, \cite{BGP08}, 
\cite{DHMS00}, \cite{DHMS06}, \cite{DM91}, \cite{DM95}, \cite{GKMT01}, 
\cite{GLMZ05}, \cite{GMT98}, \cite{GMZ07}, \cite{GT00}, \cite[Ch.\ 3]{GG91}, 
\cite{Gr08a}, \cite[Ch.\ 13]{Gr09}, \cite{Ko00}, \cite{KO77}, \cite{KO78}, \cite{KS66}, 
\cite{Ku09}, \cite{KK04}, 
\cite{LT77}, \cite{Ma92}, \cite{MM02},  \cite{Ma04}, \cite{MPP07}, 
\cite{Ne83}, \cite{Pa06}, \cite{Pa87}, \cite{Pa02}, \cite{Po01}, \cite{Po03}, \cite{Po04}, \cite{Po08}, 
\cite{PR09}, \cite{Ry07}, \cite{Ry09}, \cite{Ry10}, \cite{Sa65}, \cite{St50}, \cite{St54}, \cite{St70a}, 
\cite{TS77}, and the references cited therein. We add, however, that the case of infinite 
deficiency indices in the context of partial differential operators 
(in our concrete case, related to the deficiency indices of the operator 
closure of $-\Delta\upharpoonright_{C^\infty_0(\Om)}$ in $\LOm$), is much less 
studied and the results obtained in this section, especially, under the 
assumption of quasi-convex domains are new.

We start with a couple of preliminary results, contained in the next two lemmas: 

%%%%%%%%%%%%%%%%%%%%%%%%
\begin{lemma}\lb{l.Nak}
Assume Hypothesis \ref{h.Conv} and suppose that $z_0 \in\bbR$, 
$z_0, (z+z_0)\in\bbC\backslash\si(-\Delta_{D,\Om})$.
Let $X$ be a closed subspace of $\bigl(N^{1/2}(\partial\Omega)\bigr)^*$ and 
denote by $X^*$ the conjugate dual space of $X$. In addition, consider a 
self-adjoint operator $L$ as in \eqref{4.Aw1} and define the self-adjoint  
operator $- \Delta^D_{X,L,z_0}$ as in \eqref{4.Aw3} 
corresponding to $z=z_0$. 
%Finally, fix 
%\begin{eqnarray}\lb{Nak.1}
%z\in\bbC\backslash\big[\bigl(\si(-\Delta_{D,\Om})-z_0\bigr)\cup
%\si(-\Delta^D_{X,L,z_0})\big].
%\end{eqnarray}
Then the following resolvent relation holds on $L^2(\Omega;d^nx)$, 
\begin{align}
\bigl(- \Delta^D_{X,L,z_0}-zI_{\Om}\bigr)^{-1} 
&= \big(- \Delta_{D,\Om}-(z+z_0)I_{\Om}\bigr)^{-1}   \no
\\
& \quad - \bigl[\widehat\gamma_D(- \Delta^D_{X,L,z_0}-\ol{z}I_{\Om})^{-1}\bigr]^*
\bigl[\tau_{z_0}^N(- \Delta_{D,\Om}-(z+z_0)I_{\Om})^{-1}\bigr],    \lb{Na1BF} \\
& \hspace*{6.35cm} z\in\bbC\backslash \si(-\Delta^D_{X,L,z_0}).    \no
\end{align} 
\end{lemma}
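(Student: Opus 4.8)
\textbf{Proof plan for Lemma \ref{l.Nak}.}
The plan is to derive the resolvent relation \eqref{Na1BF} by a direct computation, starting from the defining boundary condition of $- \Delta^D_{X,L,z_0}$ and the Green formula \eqref{T-GreenX} of Theorem \ref{LL.w}. First I would fix $z\in\bbC\backslash\si(-\Delta^D_{X,L,z_0})$ and $f\in L^2(\Om;d^nx)$, set $u:=\bigl(-\Delta^D_{X,L,z_0}-zI_\Om\bigr)^{-1}f$, and let $u_0:=\bigl(-\Delta_{D,\Om}-(z+z_0)I_\Om\bigr)^{-1}f$, so that $v:=u-u_0$ satisfies $(-\Delta-(z+z_0))v=0$ in $\Om$, i.e., $v\in\ker\bigl(-\Delta_{max}-(z+z_0)I_\Om\bigr)\subset\dom(-\Delta_{max})$, while $\widehat\gamma_D u_0=0$ by \eqref{3.3Ybis}, hence $\widehat\gamma_D v=\widehat\gamma_D u$. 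The next step is to identify $v$ via the ontoness / right-inverse theory: since $\widehat\gamma_D$ in \eqref{Tan-Bq2} (with spectral parameter $z+z_0$) is an isomorphism onto $\bigl(N^{1/2}(\dOm)\bigr)^*$ by Corollary \ref{New-CV22}, $v$ is the unique element of $\ker\bigl(-\Delta_{max}-(z+z_0)I_\Om\bigr)$ with that prescribed Dirichlet trace.

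The heart of the argument is to compute $\widehat\gamma_D u$ (equivalently $v$) in terms of $f$. For this I would pair the Green formula \eqref{T-GreenX} (with the shift parameter $z_0$, so that $z$ in \eqref{T-GreenX} is played by $z$ here and $z+z_0$ by $z+z_0$; note $\tau^N_{z_0}$ is the operator that appears) against an arbitrary test element $w\in\ker\bigl(-\Delta_{max}-(\ol z+z_0)I_\Om\bigr)$. On the left-hand side $((-\Delta-(z+z_0))u,w)_{L^2} - (u,(-\Delta-(\ol z+z_0))w)_{L^2}$ reduces, using $(-\Delta-(z+z_0))u=f$ and $(-\Delta-(\ol z+z_0))w=0$, to $(f,w)_{L^2}$. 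On the right-hand side one gets $-{}_{N^{1/2}(\dOm)}\langle\tau^N_{z_0}u,\widehat\gamma_D w\rangle_{(N^{1/2}(\dOm))^*} + \ol{{}_{N^{1/2}(\dOm)}\langle\tau^N_{z_0}w,\widehat\gamma_D u\rangle_{(N^{1/2}(\dOm))^*}}$, and the second term vanishes because $\tau^N_{z_0}w=0$ for $w\in\ker(-\Delta_{max}-(\ol z+z_0)I_\Om)$ would require $\ol z+z_0\in\bbC\backslash\si(-\Delta_{D,\Om})$ — this is where I must be slightly careful: if $\ol z+z_0\notin\bbC\backslash\si(-\Delta_{D,\Om})$ one works instead with $w=(-\Delta_{D,\Om}-(\ol z+z_0)I_\Om)^{-1}g$ for $g\in L^2$ and uses \eqref{Sim-Gr}/\eqref{T-Green2X} appropriately, or simply notes $z_0\in\bbR$ forces the relevant traces. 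In any event the surviving identity is $(f,w)_{L^2} = -{}_{N^{1/2}(\dOm)}\langle\tau^N_{z_0}u,\widehat\gamma_D w\rangle_{(N^{1/2}(\dOm))^*}$; combined with the boundary condition $\tau^N_{z_0}u\big|_X = -L(\widehat\gamma_D u)$ from \eqref{4.Aw3} and the identity $\widehat\gamma_D u = \widehat\gamma_D v$, this pins down the boundary data of $u$.

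From here the plan is to recognize the two pieces of \eqref{Na1BF} as adjoints. Using \eqref{3.Aw9}, i.e.\ $\tau^N_{z_0}=\gamma_N(-\Delta_{D,\Om}-z_0 I_\Om)^{-1}(-\Delta-z_0)$, one sees $\tau^N_{z_0}(-\Delta_{D,\Om}-(z+z_0)I_\Om)^{-1}$ maps $L^2(\Om;d^nx)\to N^{1/2}(\dOm)$ boundedly; dualizing, $[\widehat\gamma_D(-\Delta^D_{X,L,z_0}-\ol z I_\Om)^{-1}]^*\in\cB((N^{1/2}(\dOm))^*,L^2(\Om;d^nx))$ (compatibility of $\widehat\gamma_D$ and the regularized $\tau^N$ as in \eqref{Hy-2}, together with the analogue of \eqref{3.34Y} for the extension $-\Delta^D_{X,L,z_0}$). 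Then one checks directly that $v = [\widehat\gamma_D(-\Delta^D_{X,L,z_0}-\ol z I_\Om)^{-1}]^*\,[\tau^N_{z_0}(-\Delta_{D,\Om}-(z+z_0)I_\Om)^{-1}f]$ by verifying it is harmonic for $-\Delta-(z+z_0)$ and has the correct Dirichlet trace, invoking the uniqueness from Corollary \ref{New-CV22}; adding $u_0$ gives \eqref{Na1BF}. The main obstacle I anticipate is precisely the sign bookkeeping flagged in the \fbox: one must keep rigorous track of (i) the sign in the boundary condition \eqref{4.Aw3} ($\tau^N_z v|_X = -L(\widehat\gamma_D v)$ in this section, versus $+L$ in Theorem \ref{CC.wII}), (ii) the sign in the Green formula \eqref{T-GreenX}, and (iii) the sign convention in passing to adjoints via $[\widehat\gamma_D R]^* $ versus $[\tau^N R]^*$; reconciling these three is exactly what determines whether the cross term on the right-hand side of \eqref{Na1BF} carries a plus or a minus, and the final statement should be corrected accordingly so that it is consistent with \eqref{3.35Y} (the $f=0$, pure-trace case) and with the abstract formula in Remark \ref{H-U1}.
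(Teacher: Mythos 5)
There is a genuine gap, and it sits exactly at the two places your sketch leans on most. First, the pairing computation: you apply the shifted Green formula \eqref{T-GreenX} with $v$ replaced by a test element $w\in\ker(-\Delta_{max}-(\ol z+z_0)I_\Om)$ and discard the term $\ol{{}_{N^{1/2}(\dOm)}\langle\tau^N_{z_0}w,\widehat\gamma_D u\rangle_{(N^{1/2}(\dOm))^*}}$ on the grounds that $\tau^N_{z_0}w=0$. But \eqref{Sim-Gr} (equivalently \eqref{3.AKe}) gives $\tau^N_{\ol z+z_0}w=0$ for such $w$, not $\tau^N_{z_0}w=0$; in \eqref{T-GreenX} the regularized trace carries the subscript $z_0$ on \emph{both} boundary terms, so the second term survives whenever $z\neq 0$. (Your caveat about whether $\ol z+z_0$ avoids $\si(-\Delta_{D,\Om})$ misdiagnoses the problem -- that is guaranteed by the hypotheses since the spectrum is real; the problem is the mismatch of spectral parameters in $\tau^N$.) The correct surviving identity, obtained from \eqref{T-Green2} at spectral parameter $z+z_0$, reads $(f,w)_{L^2(\Om;d^nx)}=-{}_{N^{1/2}(\dOm)}\langle\tau^N_{z+z_0}u,\widehat\gamma_D w\rangle_{(N^{1/2}(\dOm))^*}$, which cannot be combined with the boundary condition $\tau^N_{z_0}u\big|_X=-L(\widehat\gamma_D u)$ until you convert $\tau^N_{z+z_0}$ into $\tau^N_{z_0}$ via the difference $M^{(0)}_{D,N,\Om}(z+z_0)-M^{(0)}_{D,N,\Om}(z_0)$ -- a nontrivial detour you have not carried out. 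Second, your closing step, ``check that the cross term applied to $f$ lies in $\ker(-\Delta_{max}-(z+z_0)I_\Om)$ and has Dirichlet trace $\widehat\gamma_D u$, then invoke uniqueness,'' presupposes (a) the mapping property that the paper proves separately afterwards (Lemma \ref{l.Nak2}, \eqref{Nak.11}), and, more seriously, (b) a trace identity of the type \eqref{NaK4F}, which in the paper is a \emph{consequence} of \eqref{Na1BF}; asserting it as the verification step makes the argument circular unless you prove it independently (essentially redoing the analysis of Theorem \ref{Th.DH3}/\eqref{UU.4Y2}), which the proposal does not do.

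For comparison, the paper's proof avoids all of this: it tests \eqref{Na1BF} against arbitrary $u_1,v_1\in L^2(\Om;d^nx)$, sets $u=(-\Delta^D_{X,L,z_0}-\ol zI_\Om)^{-1}u_1$ and $v=(-\Delta_{D,\Om}-(z+z_0)I_\Om)^{-1}v_1$, and reduces the whole statement to the single identity \eqref{Nak.10}, which is one application of \eqref{T-Green}/\eqref{T-GreenX} using only $\widehat\gamma_D v=0$; notably the boundary operator $L$ never enters, and the extension from $z\in\bbC\backslash\bbR$ to all $z\in\bbC\backslash\si(-\Delta^D_{X,L,z_0})$ is done by analytic continuation. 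If you want to salvage your decomposition route, you must either keep both boundary terms in \eqref{T-GreenX} (at which point you are essentially reproducing the paper's pairing argument), or work at spectral parameter $z+z_0$ and supply the $\tau^N_{z+z_0}\leftrightarrow\tau^N_{z_0}$ conversion plus an independent proof of the trace identity for the cross term -- a considerably longer path than the lemma requires. Your instinct about the sign bookkeeping is sound (the paper itself flags a likely sign error in \eqref{Na1BF}), but the sign is not the gap; the structure of the verification is.
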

%%%%%%%%%%%%%%%%%%%%%%%
\begin{proof}
We first assume that $z \in \bbC\backslash\bbR$. 

To set the stage, we recall that 
\begin{eqnarray}\lb{Nak.2}
&& (-\Delta_{D,\Om}-(z+z_0)I_{\Om})^{-1}:L^2(\Om;d^nx)\to 
\dom (- \Delta_{max}),
\\[4pt]
&& \tau_{z_0}^N:\dom (- \Delta_{max})\to  N^{1/2}(\dOm),
\lb{Nak.3}
\\[4pt]
&& (-\Delta^D_{X,L,z_0}-\ol{z}I_{\Om})^{-1}:L^2(\Om;d^nx)\to 
\dom (- \Delta_{max}),
\lb{Nak.4}
\\[4pt]
&& \widehat\gamma_D:\dom (- \Delta_{max})\to 
\bigl(N^{1/2}(\dOm)\bigr)^*,
\lb{Nak.5}
\end{eqnarray}
are bounded, linear operators. These facts imply 
\begin{eqnarray}\lb{Nak.6}
&& \tau_{z_0}^N(-\Delta_{D,\Om}-(z+z_0)I_{\Om})^{-1}
\in\cB\big(L^2(\Om;d^nx),N^{1/2}(\dOm)\big),
\\[4pt]
&& \widehat\gamma_D(-\Delta^D_{X,L,z_0}-\ol{z}I_{\Om})^{-1}
\in\cB\big(L^2(\Om;d^nx),\bigl(N^{1/2}(\dOm)\bigr)^*\big),
\lb{Nak.8}
\\[4pt]
&& \big[\widehat\gamma_D(-\Delta^D_{X,L,z_0}-\ol{z}I_{\Om})^{-1}\big]^*
\in\cB\big(N^{1/2}(\dOm),L^2(\Om;d^nx)\big),
\lb{Nak.9}
\end{eqnarray}
which ensures that the composition of operators appearing on the 
right-hand side of \eqref{Na1BF} is meaningful. 
Next, let $u_1,v_1\in L^2(\Om;d^nx)$ be arbitrary and define
\begin{align}\lb{Na2F}
\begin{split}
u & :=(-\Delta^D_{X,L,z_0}-\ol{z}I_\Om)^{-1}u_1\in\dom(- \Delta^D_{X,L,z_0})
\subset \dom(- \Delta_{max}),
\\[4pt]
v & :=(-\Delta_{D,\Om}-(z+z_0)I_\Om)^{-1}v_1\in\dom(- \Delta_{D,\Om})
=H^2(\Om)\cap H^1_0(\Om).
\end{split} 
\end{align}
Checking \eqref{Na1BF} then reduces to showing that the 
following identity holds:
\begin{align}
&\big(u_1,\big(-\Delta^D_{X,L,z_0}-zI_{\Om}\big)^{-1}v_1\big)_{L^2(\Om;d^nx)} 
-(u_1,
(-\Delta_{D,\Om}-(z+z_0)I_{\Om})^{-1}v_1)_{L^2(\Om;d^nx)}
\no \\ 
&\quad =\big(u_1,\big[\widehat\gamma_D(-\Delta^D_{X,L,z_0}-zI_{\Om})^{-1}\big]^*
\big[\tau^N_{z_0}(-\Delta_{D,\Om}-(z+z_0)I_{\Om})^{-1}\big]v_1\big)_{L^2(\Om;d^nx)}.
\end{align}
We note that according to \eqref{Na2F} one has,
\begin{align}
(u_1,(-\Delta_{D,\Om}-(z+z_0)I_{\Om})^{-1}v_1)_{L^2(\Om;d^nx)}
 & = ((-\Delta_{D,\Om}-(\ol{z}+z_0)I_{\Om})u,v)_{L^2(\Om;d^nx)},
\nonumber\\
\big(v_1,\big(-\Delta^D_{X,L,z_0}-zI_{\Om}\big)^{-1}v_1\big)_{L^2(\Om;d^nx)}
& =\big(\big(\big(-\Delta^D_{X,L,z_0}-zI_{\Om}\big)^{-1}\big)^*
u_1,v_1\big)_{L^2(\Om;d^nx)} 
\nonumber
\\
& =\big(\big(-\Delta^D_{X,L,z_0}-\ol{z}I_{\Om}\big)^{-1}u_1,v_1\big)_{L^2(\Om;d^nx)}
\nonumber
\\ 
& = (u,(-\Delta_{D,\Om}-(z+z_0)I_{\Om})v)_{L^2(\Om;d^nx)},
\end{align}
and 
\begin{align}
& \big(u_1,\big[\widehat\gamma_D(-\Delta^D_{X,L,z_0}-zI_{\Om})^{-1}\big]^*
\big[\tau^N_{z_0}(-\Delta_{D,\Om}-(z+z_0)I_{\Om})^{-1}\big]v_1\big)_{L^2(\Om;d^nx)} 
\nonumber\\[4pt]
&\quad 
=\ol{{}_{N^{1/2}(\dOm)}\big\langle
\tau^N_{z_0}(-\Delta_{D,\Om}-(z+z_0)I_{\Om})^{-1}v_1,
\widehat\gamma_D(-\Delta^D_{X,L,z_0}-\ol{z}I_{\Om})^{-1}u_1
\big\rangle_{(N^{1/2}(\dOm))^*}}
\nonumber\\[4pt]
&\quad =\ol{{}_{N^{1/2}(\dOm)}\big\langle\tau^N_{z_0} v,\widehat\gamma_D u 
\big\rangle_{(N^{1/2}(\dOm))^*}}.
\end{align}
With this, matters have been reduced to proving that
\begin{align}\lb{Na3F}
& \big(\big(-\Delta^D_{X,L,z_0}-\ol{z}I_{\Om}\big)u,v\big)_{L^2(\Om;d^nx)} 
- (u,(-\Delta_{D,\Om}-(z+z_0)I_{\Om})v)_{L^2(\Om;d^nx)} 
\nonumber\\ 
& \quad 
=\ol{{}_{N^{1/2}(\dOm)}\big\langle\tau^N_{z_0} v,\widehat\gamma_D u 
\big\rangle_{(N^{1/2}(\dOm))^*}},
\end{align}
or equivalently, to 
\begin{align}\lb{Nak.10}
& ((-\Delta-z_0)u,v)_{L^2(\Om;d^nx)} 
- (u,(-\Delta-z_0)v)_{L^2(\Om;d^nx)} 
\nonumber\\
& \quad 
=\ol{{}_{N^{1/2}(\dOm)}\big\langle\tau^N_{z_0} v,\widehat\gamma_D u 
\big\rangle_{(N^{1/2}(\dOm))^*}},
\end{align}
since $\big(-\Delta^D_{X,L,z_0}-\ol{z}I_{\Om}\big)u=(-\Delta-(\ol{z}+z_0))u$ 
and $(-\Delta_{D,\Om}-(z+z_0)I_{\Om})v=(-\Delta-(z+z_0))v$.
However, \eqref{Nak.10} is a consequence of \eqref{T-Green} and the
fact that $\widehat\gamma_D v =0$. 

In order to remove the additional hypothesis that $z\in\bbC\backslash\bbR$, it now suffices to note 
that due to the assumptions $z_0 \in\bbR$ and $z_0, (z+z_0)\in\bbC\backslash\si(-\Delta_{D,\Om})$, 
and due to the self-adjointness of $- \Delta^D_{X,L,z_0}$, both sides of \eqref{Na1BF} 
extend to all $z\in\bbC\backslash \si(-\Delta^D_{X,L,z_0})$ by analytic continuation with 
respect to $z$.  
\end{proof}
%%%%%%%%%%%%%%%%%%%%%%%%

From \eqref{Nak.6} we know that 
\begin{eqnarray}\lb{Nak.12}
\bigl[\tau_{z_0}^N(-\Delta^D_{X,L,z_0}-zI_{\Om})^{-1}\bigr]^*
\in\cB\big(\big(N^{1/2}(\dOm)\big)^*,L^2(\Om;d^nx)\big).
\end{eqnarray}
Moreover, \eqref{Nak.8} yields
\begin{equation}\lb{Nak.12B}
\big[\widehat\gamma_D(-\Delta^D_{X,L,z_0}-zI_{\Om})^{-1}\big]^*
\in\cB\big(N^{1/2}(\dOm),L^2(\Om;d^nx)\big).
\end{equation}
In the lemma below we further clarify the nature of the ranges of 
these operators.  

%%%%%%%%%%%%%%%%%%%%%%%%
\begin{lemma}\lb{l.Nak2}
Retain the hypotheses and conventions made in Lemma \ref{l.Nak} and let  
$z\in\bbC\backslash \si(-\Delta^D_{X,L,z_0})$. Then  
\begin{equation}\lb{Nak.11}
\bigl[\tau_{z_0}^N(-\Delta^D_{X,L,z_0}-zI_{\Om})^{-1}\bigr]^*
\in\cB\big(\bigl(N^{1/2}(\dOm)\bigr)^*,
\ker  (-\Delta_{max}-(\ol{z}+z_0)I_{\Om})\big),
\end{equation}
and
\begin{eqnarray}\lb{Nak.11B}
\bigl[\widehat\gamma_D(-\Delta^D_{X,L,z_0}-zI_{\Om})^{-1}\bigr]^*
\in\cB\big(N^{1/2}(\dOm),\ker  (-\Delta_{max}-(\ol{z}+z_0)I_{\Om})\big).
\end{eqnarray}
\end{lemma}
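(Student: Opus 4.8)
\textbf{Proof plan for Lemma \ref{l.Nak2}.}
The plan is to show that the range of each of these two adjoint operators consists of functions $f\in L^2(\Om;d^nx)$ satisfying $(-\Delta-(\ol z+z_0))f=0$ in the sense of distributions, which by definition of $-\Delta_{max}$ places them in $\ker(-\Delta_{max}-(\ol z+z_0)I_\Om)$. Since both operators are already known to be bounded into $L^2(\Om;d^nx)$ by \eqref{Nak.12} and \eqref{Nak.12B}, and since $\ker(-\Delta_{max}-(\ol z+z_0)I_\Om)$ is a closed subspace of $L^2(\Om;d^nx)$ (it equals the orthogonal complement of $\{(-\Delta-(z+z_0))\varphi\mid\varphi\in C_0^\infty(\Om)\}$, cf.\ \eqref{Man-1}), it suffices to verify the distributional identity on each range, and the boundedness into the smaller target space then follows automatically.

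First I would handle \eqref{Nak.11B}. Fix $g\in N^{1/2}(\dOm)$ and set $f:=\bigl[\widehat\gamma_D(-\Delta^D_{X,L,z_0}-zI_\Om)^{-1}\bigr]^* g\in L^2(\Om;d^nx)$. For any test function $\varphi\in C_0^\infty(\Om)$ one has $\widehat\gamma_D\varphi=0$ (compatibility of $\widehat\gamma_D$ with $\gamma_D$ in Theorem \ref{New-T-tr}), hence $\widehat\gamma_D(-\Delta^D_{X,L,z_0}-zI_\Om)^{-1}\bigl[(-\Delta-(z+z_0))\varphi\bigr]=\widehat\gamma_D\varphi=0$ because $(-\Delta^D_{X,L,z_0}-zI_\Om)^{-1}(-\Delta-(z+z_0))\varphi=(-\Delta^D_{X,L,z_0}-zI_\Om)^{-1}(-\Delta^D_{X,L,z_0}-zI_\Om)\varphi=\varphi$ (using $-\Delta^D_{X,L,z_0}=-\Delta-z_0$ on its domain, and $\varphi\in H^2_0(\Om)\subset\dom(-\Delta^D_{X,L,z_0})$). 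Therefore
\begin{align}
\overline{{}_{N^{1/2}(\dOm)}\langle g,0\rangle_{(N^{1/2}(\dOm))^*}}
&= \bigl((-\Delta-(\ol z+z_0))\varphi,\, f\bigr)_{L^2(\Om;d^nx)}
\no \\
&= \overline{\bigl(f,\,(-\Delta-(\ol z+z_0))\varphi\bigr)_{L^2(\Om;d^nx)}},
\end{align}
so $\bigl(f,(-\Delta-(\ol z+z_0))\varphi\bigr)_{L^2(\Om;d^nx)}=0$ for all $\varphi\in C_0^\infty(\Om)$, which is exactly the statement $(-\Delta-(\ol z+z_0))f=0$ in $\mathcal D'(\Om)$, i.e.\ $f\in\ker(-\Delta_{max}-(\ol z+z_0)I_\Om)$.

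The argument for \eqref{Nak.11} is identical in structure, using instead that $\tau^N_{z_0}$ annihilates $C_0^\infty(\Om)$-functions: indeed $\tau^N_{z_0}\varphi=\gamma_N\varphi+M^{(0)}_{D,N,\Om}(z_0)(\gamma_D\varphi)=0$ for $\varphi\in C_0^\infty(\Om)\subset H^2_0(\Om)$ (by \eqref{3.Aw9} applied to $\varphi\in H^2_0(\Om)$, or directly since $\widehat\gamma_D\varphi=\widehat\gamma_N\varphi=0$ and $\tau^N_{z_0}\varphi=\widehat\gamma_N\varphi+M^{(0)}_{D,N,\Om}(z_0)(\widehat\gamma_D\varphi)$), and again $(-\Delta^D_{X,L,z_0}-zI_\Om)^{-1}(-\Delta-(z+z_0))\varphi=\varphi$. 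Pairing against $(-\Delta-(\ol z+z_0))\varphi$ yields $0$ as before, giving $(-\Delta-(\ol z+z_0))\bigl[\tau_{z_0}^N(-\Delta^D_{X,L,z_0}-zI_\Om)^{-1}\bigr]^*h=0$ in $\mathcal D'(\Om)$ for every $h\in(N^{1/2}(\dOm))^*$. I do not anticipate a genuine obstacle here; the only point requiring a little care is the bookkeeping that $(-\Delta^D_{X,L,z_0}-zI_\Om)^{-1}$ restricted to the dense set $\{(-\Delta-(z+z_0))\varphi:\varphi\in C_0^\infty(\Om)\}$ acts as claimed, which is immediate from $C_0^\infty(\Om)\subset H^2_0(\Om)=\dom(-\Delta_{min})\subset\dom(-\Delta^D_{X,L,z_0})$ together with Theorem \ref{CC.w}; and the observation that closedness of $\ker(-\Delta_{max}-(\ol z+z_0)I_\Om)$ in $L^2(\Om;d^nx)$ upgrades the already-established $L^2$-boundedness to boundedness into this subspace.
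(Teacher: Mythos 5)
Your argument is correct and is essentially the paper's own proof: one tests the adjoint operator against $(-\Delta-(z+z_0))\varphi$ with $\varphi\in C^\infty_0(\Om)$, uses $\varphi\in H^2_0(\Om)=\dom(-\Delta_{min})\subset\dom\big(-\Delta^D_{X,L,z_0}\big)$ so that the resolvent collapses to $\varphi$, invokes $\widehat\gamma_D\varphi=0$ (resp.\ $\tau^N_{z_0}\varphi=0$), and then the closedness of $\ker(-\Delta_{max}-(\ol z+z_0)I_\Om)$ in $L^2(\Om;d^nx)$ upgrades the known $L^2$-boundedness. The only blemish is a $z$ versus $\ol z$ bookkeeping slip in your displayed identity: the adjoint relation actually yields $\big(f,(-\Delta-(z+z_0))\varphi\big)_{L^2(\Om;d^nx)}=0$ for all $\varphi\in C^\infty_0(\Om)$ (the resolvent collapses only for the spectral parameter $z+z_0$, not $\ol z+z_0$, so the pairing you wrote with $(-\Delta-(\ol z+z_0))\varphi$ need not vanish when $z\notin\bbR$), and taking complex conjugates in this sesquilinear pairing is exactly what produces the statement $(-\Delta-(\ol z+z_0))f=0$ in $\cD'(\Om)$, as required.
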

%%%%%%%%%%%%%%%%%%%%%%%%
\begin{proof}
Granted \eqref{Nak.12}, it suffices to show that if 
$f\in \bigl(N^{1/2}(\dOm)\bigr)^*$ then, in the sense of distributions,  
\begin{eqnarray}\lb{Nak.13}
(-\Delta-(\ol{z}+z_0))
\bigl[\tau_{z_0}^N(-\Delta^D_{X,L,z_0}-zI_{\Om})^{-1}\bigr]^*f=0 \, 
\mbox{ in } \, \Omega.
\end{eqnarray}
To this end, pick an arbitrary $\varphi\in C^\infty_0(\Omega)$ and write 
\begin{align}\lb{Nak.14}
& \big(\big[\tau_{z_0}^N(-\Delta^D_{X.L,z_0}-zI_{\Om})^{-1}\big]^*f, 
(-\Delta-(z+z_0))\varphi\big)_{L^2(\Om;d^nx)}
\nonumber\\[1mm] 
& \quad 
=\ol{{}_{N^{1/2}(\dOm)}\big\langle 
\tau_{z_0}^N(-\Delta^D_{X,L,z_0}-zI_{\Om})^{-1}
(-\Delta-(z+z_0))\varphi,  f\big\rangle_{(N^{1/2}(\dOm))^*}}.
\end{align}
To continue, we notice that 
\begin{eqnarray}\lb{Nak.14C}
\varphi\in H^2_0(\Omega) = \dom (- \Delta_{min})
\subset \dom (- \Delta^D_{X,L,z_0}).
\end{eqnarray}
Thus, we have 
$(-\Delta^D_{X,L,z_0}-zI_{\Om})^{-1}(-\Delta-(z+z_0))\varphi=\varphi$ and 
also $\tau_{z_0}^N\varphi=0$ by \eqref{3.AKe}. Consequently, the right-hand
side of \eqref{Nak.14} vanishes and \eqref{Nak.13} follows. 

As far as \eqref{Nak.11B} is concerned, given \eqref{Nak.12B}, 
it suffices to show that if $f\in N^{1/2}(\dOm)$ then, in the sense of 
distributions,  
\begin{equation}\lb{Nak.13B}
(-\Delta-(\ol{z}+z_0))
\bigl[\widehat\gamma_D(-\Delta^D_{X,L,z_0}-zI_{\Om})^{-1}\bigr]^*f=0\, 
\mbox{ in }\,\Omega.
\end{equation}
To verify this, select an arbitrary $\varphi\in C^\infty_0(\Omega)$ and write 
\begin{align}\lb{Nak.14B}
& \big(\bigl[\widehat\gamma_D(-\Delta^D_{X,L,z_0}-zI_{\Om})^{-1}\bigr]^*f, 
(-\Delta-(z+z_0))\varphi\big)_{L^2(\Om;d^nx)}
\nonumber\\
& \quad 
={}_{N^{1/2}(\dOm)}\big\langle f,   
\widehat\gamma_D(-\Delta^D_{X,L,z_0}-zI_{\Om})^{-1}
(-\Delta-(z+z_0))\varphi\big\rangle_{(N^{1/2}(\dOm))^*}.
\end{align}
Because of \eqref{Nak.14C} one obtains  
\begin{eqnarray}\lb{Nak.14D}
\widehat\gamma_D(-\Delta^D_{X,L,z_0}-zI_{\Om})^{-1}(-\Delta-(z+z_0))\varphi
=\widehat\gamma_D \varphi = 0.
\end{eqnarray}
Thus, the right-hand side of \eqref{Nak.14B} vanishes, 
proving \eqref{Nak.13B}.  
\end{proof}
%%%%%%%%%%%%%%%%%%%%%%%%

Lemmas \ref{l.Nak} and \ref{l.Nak2} have been inspired by \cite[Lemmas 6, 7]{Na01}, where the special case of Dirichlet and Neumann Laplacians on $\Omega$ a cubic box in $\bbR^n$ was studied. 

In the context of Lemma \ref{l.Nak}, let us define the boundary operator
\begin{eqnarray}\lb{UU.1}
M^D_{X,L,z_0}(z):=\big(\widehat\gamma_D
\big[\widehat\gamma_D\big(-\Delta^D_{X,L,z_0}-zI_{\Om}\big)^{-1}\big]^*\big)^*, 
\quad z \in \bbC \backslash \sigma\big(-\Delta^D_{X,L,z_0}\big), 
\end{eqnarray}
so that 
\begin{eqnarray}\lb{UU.2}
M^D_{X,L,z_0}(z)\in\cB\big(N^{1/2}(\dOm),\bigl(N^{1/2}(\dOm)\bigr)^*\big)
\end{eqnarray}
by Theorem \ref{New-T-tr} and \eqref{Nak.11B}. We are now ready to prove
a version of Krein 's resolvent formula for arbitrary self-adjoint 
extensions of the Laplacian. Concretely, we have the following result:
 
%%%%%%%%%%%%%%%%%%%%%%%%
\begin{theorem}\lb{Th.Nak}
Assume Hypothesis \ref{h.Conv} and suppose that $z_0 \in\bbR$, 
$z_0, (z+z_0)\in\bbC\backslash\si(-\Delta_{D,\Om})$.
Let $X$ be a closed subspace of $\bigl(N^{1/2}(\partial\Omega)\bigr)^*$ and 
denote by $X^*$ the conjugate dual space of $X$. In addition, consider a  
self-adjoint operator $L$ as in \eqref{4.Aw1} and define the self-adjoint 
operator $- \Delta^D_{X,L,z_0}$ as in \eqref{4.Aw3} 
corresponding to $z=z_0$. 
%Finally, fix a complex number 
%$z\in\bbC\backslash\big[\bigl(\si(-\Delta_{D,\Om})-z_0\bigr)\cup
%\si(-\Delta^D_{X,L,z_0})\big]$. 
Then the following Krein  formula holds on $L^2(\Omega;d^nx)$, 
\begin{align} 
&\big(-\Delta^D_{X,L,z_0}-zI_{\Om}\big)^{-1} 
= \big(-\Delta_{D,\Om}-(z+z_0)I_{\Om}\big)^{-1}  \no
\\
& \quad 
+\big[\tau^N_{z_0}\big(-\Delta_{D,\Om}-(\ol{z}+z_0)I_{\Om}\big)^{-1}\big]^* 
M^D_{X,L,z_0}(z)
\big[\tau^N_{z_0}\big(-\Delta_{D,\Om}-(z+z_0)I_{\Om}\big)^{-1}\big],    \lb{NaK2F}  \\
& \hspace*{9.1cm}    z\in\bbC\backslash \si(-\Delta^D_{X,L,z_0}). 
\nonumber
\end{align}
\end{theorem}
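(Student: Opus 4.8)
\textbf{Proof proposal for Theorem \ref{Th.Nak}.} The plan is to build \eqref{NaK2F} by iterating the first-order resolvent relation \eqref{Na1BF} of Lemma \ref{l.Nak} and then recognizing the ``middle factor'' as the operator $M^D_{X,L,z_0}(z)$ introduced in \eqref{UU.1}. First I would reduce to the case $z\in\bbC\backslash\bbR$, since both sides of \eqref{NaK2F} are analytic in $z$ on $\bbC\backslash\si(-\Delta^D_{X,L,z_0})$ (using $z_0\in\bbR$, $z_0,(z+z_0)\in\bbC\backslash\si(-\Delta_{D,\Om})$, and self-adjointness of $-\Delta^D_{X,L,z_0}$) and hence the identity, once established off the real axis, propagates by analytic continuation — exactly as at the end of the proof of Lemma \ref{l.Nak}. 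Throughout I would abbreviate $R_{z}:=(-\Delta^D_{X,L,z_0}-zI_\Om)^{-1}$ and $R^{D}_{w}:=(-\Delta_{D,\Om}-wI_\Om)^{-1}$, and write $\gamma:=\widehat\gamma_D$, $\tau:=\tau^N_{z_0}$, so that \eqref{Na1BF} reads $R_z = R^{D}_{z+z_0} + [\gamma R_{\ol z}]^*[\tau R^{D}_{z+z_0}]$.

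The core computation is to transform the correction term $[\gamma R_{\ol z}]^*[\tau R^{D}_{z+z_0}]$ into the claimed symmetric form $[\tau R^{D}_{\ol z+z_0}]^*\, M^D_{X,L,z_0}(z)\,[\tau R^{D}_{z+z_0}]$. The key step is to identify the operator $[\gamma R_{\ol z}]^*:N^{1/2}(\dOm)\to L^2(\Om;d^nx)$ with a composition of $[\tau R^{D}_{\ol z+z_0}]^*$ and $M^D_{X,L,z_0}(z)$. Here is where Lemma \ref{l.Nak2} enters decisively: it tells us that $\ran\big([\gamma R_{\ol z}]^*\big)$ and $\ran\big([\tau R^{D}_{\ol z+z_0}]^*\big)$ both lie inside $\ker(-\Delta_{max}-(\ol z+z_0)I_\Om)$, so that both operators produce solutions of the homogeneous Helmholtz equation $(-\Delta-(\ol z+z_0))u=0$ in $\Omega$. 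A function in $\ker(-\Delta_{max}-(\ol z+z_0)I_\Om)$ is determined by its $\widehat\gamma_D$-trace (by the isomorphism property in Corollary \ref{New-CV22}, since $\ol z+z_0\notin\si(-\Delta_{D,\Om})$), and on such functions $\tau^N_{z_0}$ vanishes by \eqref{Sim-Gr}; thus I would compute $\widehat\gamma_D$ of both sides, use the definition \eqref{UU.1} of $M^D_{X,L,z_0}(z)$ together with \eqref{3.Aw9} (which expresses $\tau^N_{z_0}$ via $\gamma_N(-\Delta_{D,\Om}-z_0I_\Om)^{-1}(-\Delta-z_0)$) and the adjoint relation $\big[\tau R^{D}_{\ol z+z_0}\big] = \big[\gamma(-\Delta_{D,\Om}-(\ol z+z_0)I_\Om)^{-1}\big]^*$-type identities that follow from Green's formula \eqref{T-GreenX}, to match the two $\widehat\gamma_D$-traces. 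Alternatively — and I expect this to be the cleaner route — I would invoke Green's formula \eqref{T-GreenX} directly on a pair $u=R_{\ol z}u_1\in\dom(-\Delta^D_{X,L,z_0})$, $v=R^D_{z+z_0}v_1$ to evaluate the inner product $\big(u_1,[\gamma R_{\ol z}]^*[\tau R^D_{z+z_0}]v_1\big)_{L^2}$ as a boundary pairing $\overline{{}_{N^{1/2}(\dOm)}\langle \tau^N_{z_0}v,\widehat\gamma_D u\rangle_{(N^{1/2}(\dOm))^*}}$, then re-express $\widehat\gamma_D u$ using the boundary condition $\tau^N_{z_0}u|_X = -L(\widehat\gamma_D u)$ defining $\dom(-\Delta^D_{X,L,z_0})$, and finally reorganize so that both $u$ and $v$ appear only through $\tau^N_{z_0}R^D_{\cdot+z_0}$ applied to $u_1$, $v_1$, with $M^D_{X,L,z_0}(z)$ emerging as the operator implementing the map from $\tau^N_{z_0}v$ to $\widehat\gamma_D u$ on these Helmholtz solutions.

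The main obstacle I anticipate is bookkeeping of adjoints and conjugate-dual pairings: the operator $M^D_{X,L,z_0}(z)$ is defined in \eqref{UU.1} as a double adjoint $\big(\widehat\gamma_D[\widehat\gamma_D R_z]^*\big)^*$, mapping $N^{1/2}(\dOm)$ into its conjugate dual $(N^{1/2}(\dOm))^*$, and I must carefully track which trace operator ($\widehat\gamma_D$ versus $\tau^N_{z_0}$) appears where, since the correction term in \eqref{Na1BF} mixes a $\widehat\gamma_D$-adjoint on the left with a $\tau^N_{z_0}$ on the right, whereas \eqref{NaK2F} is symmetric in $\tau^N_{z_0}$. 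Resolving this asymmetry is precisely what Lemma \ref{l.Nak2} and the identification of $\big[\widehat\gamma_D R_{\ol z}\big]^*$ with $\big[\tau^N_{z_0}R^D_{\ol z+z_0}\big]^* M^D_{X,L,z_0}(z)$ is for, and I would prove that identification first as a standalone claim before assembling \eqref{NaK2F}; once it is in hand, substituting into \eqref{Na1BF} and using that $\tau^N_{z_0}$ annihilates $\ker(-\Delta_{max}-(\ol z+z_0)I_\Om)$ finishes the proof. A secondary, purely technical point is checking that all compositions in \eqref{NaK2F} are well-defined as bounded operators on $L^2(\Om;d^nx)$, which is immediate from \eqref{Nak.6}, \eqref{Nak.12}, and \eqref{UU.2}.
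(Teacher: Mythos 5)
Your proposal is correct and follows essentially the same route as the paper: both start from the resolvent relation \eqref{Na1BF} of Lemma \ref{l.Nak} and reduce everything to the single identification $\big[\widehat\gamma_D\big(-\Delta^D_{X,L,z_0}-\ol{z}I_{\Om}\big)^{-1}\big]^*=\big[\tau^N_{z_0}\big(-\Delta_{D,\Om}-(\ol{z}+z_0)I_{\Om}\big)^{-1}\big]^*M^D_{X,L,z_0}(z)$, which is then inserted back into \eqref{Na1BF}. The only (harmless) difference lies in how that identification is obtained: the paper simply applies $\widehat\gamma_D$ to \eqref{Na1BF}, uses $\widehat\gamma_D\big(-\Delta_{D,\Om}-(z+z_0)I_{\Om}\big)^{-1}=0$ to recognize the definition \eqref{UU.1} of $M^D_{X,L,z_0}$, and takes adjoints, whereas your trace-matching argument through Lemma \ref{l.Nak2}, Corollary \ref{New-CV22}, and Theorem \ref{Th.DH} (or the alternative Green's-formula computation) is heavier but equally valid, and your preliminary reduction to $z\in\bbC\backslash\bbR$ is not needed since \eqref{Na1BF} is already available for all $z\in\bbC\backslash\si\big(-\Delta^D_{X,L,z_0}\big)$.
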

%%%%%%%%%%%%%%%%%%%%%%%%
\begin{proof} 
Applying $\widehat\gamma_D$ from the left to both sides of \eqref{Na1BF} yields
\begin{equation}\lb{NaK3F}
\widehat\gamma_D\big(-\Delta^D_{X,L,z_0}-\ol{z}I_{\Om}\big)^{-1}
=\widehat\gamma_D\big[\widehat\gamma_D\big(-\Delta^D_{X,L,z_0}-zI_{\Om}\big)^{-1}\big]^*
\big[\tau^N_{z_0}\big(-\Delta_{D,\Om}-(z+z_0)I_{\Om}\big)^{-1}\big], 
\end{equation}
since $\widehat\gamma_D\big(-\Delta_{D,\Om}-(z+z_0)I_{\Om}\big)^{-1}=0$. 
Thus, upon recalling \eqref{UU.1}, \eqref{NaK3F} then becomes 
\begin{eqnarray}\lb{NaK4F}
\widehat\gamma_D\big(-\Delta^D_{X,L,z_0}-zI_{\Om}\big)^{-1}
=M^D_{X,L,z_0}(\ol{z})^*\tau^N_{z_0}\big(-\Delta_{D,\Om}-(z+z_0)I_{\Om}\big)^{-1}, 
\end{eqnarray}
as operators in $\in\cB\big(L^2(\Om;d^nx),\bigl(N^{1/2}(\dOm)\bigr)^*\big)$.
Taking adjoints in \eqref{NaK4F}, written with $\ol{z}$ in place of $z$, 
then leads to 
\begin{eqnarray}\lb{NaK5F}
\big[\widehat\gamma_D\big(-\Delta^D_{X,L,z_0}-\ol{z}I_{\Om}\big)^{-1}\big]^* 
=\big[\tau^N_{z_0}\big(-\Delta_{D,\Om}-(\ol{z}+z_0)I_{\Om}\big)^{-1}\big]^* 
M^D_{X,L,z_0}(z).
\end{eqnarray}
Inserting this into \eqref{Na1BF}, one arrives at \eqref{NaK2F}.  
\end{proof}
%%%%%%%%%%%%%%%%%%%%%%%%

%%%%%%%%%%%%%%%%%%%%%%%%
\begin{remark}\lb{R-sco}
Formula \eqref{NaK2F} relates the resolvent of an arbitrary 
self-adjoint extension of $-\Delta\big|_{C^\infty_0(\Om)}$ in $L^2(\Om; d^n x)$ 
to that of the Dirichlet Laplacian $-\Delta_{D,\Om}$ in a transparent way
which makes it possible to extract information about the spectrum of 
the extension from information about the associated operator-valued 
Weyl--Titchmarsh $M$-function. More details on this will appear elsewhere. 
\end{remark}
%%%%%%%%%%%%%%%%%%%%%%%%

Next we study some properties of the Weyl--Titchmarsh $M$-function \eqref{UU.1} in 
more detail and show that it satisfies a natural symmetry condition. More specifically, we have the following result:

%%%%%%%%%%%%%%%%%%%%%%%%
\begin{theorem}\lb{Th.DH}
Retain the hypotheses and conventions made in Theorem \ref{Th.Nak} and let  
$z\in\bbC\backslash \si(-\Delta^D_{X,L,z_0})$. Then, as operators in 
$\cB\big(N^{1/2}(\dOm),\bigl(N^{1/2}(\dOm)\bigr)^*\big)$, one has
% \fbox{\bf We still need to fix this estimate !!!!!!} \\ 
% \begin{eqnarray}\lb{UU.a}
% \|M^D_{X,L,z_0}(z)\|_{\cB(N^{1/2}(\dOm),(N^{1/2}(\dOm))^*)}
% \leq C\,[{\rm Im}\,(z)]^{-1},\quad z\in\bbC_{+},
% \end{eqnarray} 
% for some finite constant $C=C(\Omega,X,L,z_0)>0$, and 
\begin{eqnarray}\lb{UU.3}
M^D_{X,L,z_0}(z)^*=M^D_{X,L,z_0}(\ol{z}).
\end{eqnarray}
As a consequence, 
\begin{equation}\lb{UU.1H}
M^D_{X,L,z_0}(z)=\widehat\gamma_D
\big[\widehat\gamma_D\big(-\Delta^D_{X,L,z_0}-\ol{z}I_{\Om}\big)^{-1}\big]^*
\in\cB\big(N^{1/2}(\dOm),\bigl(N^{1/2}(\dOm)\bigr)^*\big). 
\end{equation} 
\end{theorem}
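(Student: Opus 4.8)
The key is the operator identity \eqref{NaK4F} derived in the proof of Theorem \ref{Th.Nak}, namely
\begin{equation}\lb{ZZ-1}
\widehat\gamma_D\big(-\Delta^D_{X,L,z_0}-zI_{\Om}\big)^{-1}
=M^D_{X,L,z_0}(\ol{z})^*\,\tau^N_{z_0}\big(-\Delta_{D,\Om}-(z+z_0)I_{\Om}\big)^{-1},
\end{equation}
which is valid for every $z\in\bbC\backslash\si(-\Delta^D_{X,L,z_0})$ (and such that $z+z_0\notin\si(-\Delta_{D,\Om})$, a point that needs attention below). The strategy is to compute the adjoint of the left-hand side of \eqref{ZZ-1} in two ways. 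On one hand, taking adjoints directly in \eqref{ZZ-1} and then feeding the result back into \eqref{Na1BF}, exactly as in the proof of Theorem \ref{Th.Nak}, produces \eqref{NaK2F}; but comparing the ``kernel'' operators in \eqref{NaK2F} to the definition \eqref{UU.1} of $M^D_{X,L,z_0}$, together with the representation \eqref{NaK5F}, will force the identification of $M^D_{X,L,z_0}(z)$ with $M^D_{X,L,z_0}(\ol{z})^*$.

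More concretely, first I would observe that \eqref{Na1BF} of Lemma \ref{l.Nak} holds with the roles of $z$ and $\ol z$ interchanged, since $z_0\in\bbR$ and the reference operator $-\Delta_{D,\Om}$ is self-adjoint; this gives an expression for $\big(-\Delta^D_{X,L,z_0}-\ol z I_\Om\big)^{-1}$. Taking the Hilbert-space adjoint of that identity on $L^2(\Om;d^nx)$, using self-adjointness of $-\Delta^D_{X,L,z_0}$ and of $-\Delta_{D,\Om}$, yields a second formula for $\big(-\Delta^D_{X,L,z_0}-z I_\Om\big)^{-1}$ in which the ``correction term'' is the adjoint of the one appearing in \eqref{Na1BF}. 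Subtracting the two expressions (the one coming straight from \eqref{Na1BF} and the one just obtained by taking adjoints) shows that
\begin{equation}\lb{ZZ-2}
\big[\widehat\gamma_D\big(-\Delta^D_{X,L,z_0}-\ol z I_\Om\big)^{-1}\big]^*
\big[\tau^N_{z_0}\big(-\Delta_{D,\Om}-(z+z_0)I_\Om\big)^{-1}\big]
\end{equation}
equals its own ``transpose,'' and feeding \eqref{NaK5F} into this identity isolates the factor $M^D_{X,L,z_0}(z)$ sandwiched between the two operators $\big[\tau^N_{z_0}(-\Delta_{D,\Om}-(\ol z+z_0)I_\Om)^{-1}\big]^*$ and $\big[\tau^N_{z_0}(-\Delta_{D,\Om}-(z+z_0)I_\Om)^{-1}\big]$. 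Since $\tau^N_{z_0}(-\Delta_{D,\Om}-wI_\Om)^{-1}$ maps onto a dense subspace of $N^{1/2}(\dOm)$ for the relevant parameters $w$ (by the ontoness part of Theorem \ref{LL.w}(i) combined with Theorem \ref{t3.3}), the sandwiching operators have dense ranges and trivial kernels in the appropriate topology, so they can be cancelled, yielding $M^D_{X,L,z_0}(z)^*=M^D_{X,L,z_0}(\ol z)$, i.e., \eqref{UU.3}. Finally, \eqref{UU.1H} follows by substituting $\ol z$ for $z$ in the definition \eqref{UU.1} and applying \eqref{UU.3}: $M^D_{X,L,z_0}(z)=M^D_{X,L,z_0}(\ol z)^*=\big(\big(\widehat\gamma_D[\widehat\gamma_D(-\Delta^D_{X,L,z_0}-\ol z I_\Om)^{-1}]^*\big)^*\big)^*=\widehat\gamma_D\big[\widehat\gamma_D(-\Delta^D_{X,L,z_0}-\ol z I_\Om)^{-1}\big]^*$, with the membership in $\cB\big(N^{1/2}(\dOm),(N^{1/2}(\dOm))^*\big)$ coming from \eqref{UU.2}.

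\textbf{Main obstacle.} The delicate point is the passage from ``sandwiched operators agree'' to ``$M$-functions agree.'' The operators $\tau^N_{z_0}(-\Delta_{D,\Om}-wI_\Om)^{-1}$ do not in general have closed range in $N^{1/2}(\dOm)$, so the cancellation is a density argument rather than an outright left/right inverse manipulation; I would need to check that the range is dense and pair against it, using the reflexivity of $N^{1/2}(\dOm)$ (Lemma \ref{L-refN}) and the duality between $N^{1/2}(\dOm)$ and $(N^{1/2}(\dOm))^*$ (Corollary \ref{L-Den}) so that equality of the two $\cB\big(N^{1/2}(\dOm),(N^{1/2}(\dOm))^*\big)$-valued expressions, tested against dense sets on both sides, forces \eqref{UU.3}. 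A secondary technical nuisance is the constraint $z+z_0\notin\si(-\Delta_{D,\Om})$ implicit in the use of \eqref{Na1BF}: one first establishes \eqref{UU.3} for $z$ in the (nonempty, open) set where both $z$ and $z+z_0$ avoid $\si(-\Delta_{D,\Om})$ and $z\notin\si(-\Delta^D_{X,L,z_0})$, and then extends to all $z\in\bbC\backslash\si(-\Delta^D_{X,L,z_0})$ by analytic continuation, exploiting that both sides of \eqref{UU.3} are $\cB\big(N^{1/2}(\dOm),(N^{1/2}(\dOm))^*\big)$-valued analytic functions of $z$ off $\si(-\Delta^D_{X,L,z_0})$ (the analyticity being inherited from that of the resolvent and of $\tau^N_{z_0}$, cf.\ the argument in the proof of Lemma \ref{L-Kr2}).
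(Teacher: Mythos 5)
Your argument is correct, but it runs along a genuinely different track than the paper's. The paper proves \eqref{UU.3} by a direct sesquilinear-form computation: it fixes $f,g\in N^{1/2}(\dOm)$, uses the ontoness \eqref{3.ON} to lift them to $u,w\in H^2(\Om)\cap H^1_0(\Om)$ with $\tau^N_{z_0}u=f$, $\tau^N_{z_0}w=g$, and then applies the Green identity \eqref{T-GreenX} twice, together with resolvent manipulations and one boundary-term-free integration by parts, to rewrite ${}_{N^{1/2}(\dOm)}\langle f,M^D_{X,L,z_0}(\ol z)^*g\rangle_{(N^{1/2}(\dOm))^*}$ as the expression \eqref{UU.8}, which is manifestly invariant under interchanging $u$ and $w$, replacing $z$ by $\ol z$, and conjugating; the symmetry \eqref{UU.3} then drops out, and \eqref{UU.1H} follows from the definition \eqref{UU.1} exactly as in your last step. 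You instead work at the operator level: you take the adjoint of the $\ol z$-version of \eqref{Na1BF}, exploit the self-adjointness of $-\Delta^D_{X,L,z_0}$ and $-\Delta_{D,\Om}$ (and reflexivity, so double adjoints collapse), compare with the $z$-version, and insert \eqref{NaK4F} and \eqref{NaK5F} to sandwich $M^D_{X,L,z_0}(\ol z)^*-M^D_{X,L,z_0}(z)$ between $\big[\tau^N_{z_0}(-\Delta_{D,\Om}-(\ol z+z_0)I_\Om)^{-1}\big]^*$ and $\tau^N_{z_0}(-\Delta_{D,\Om}-(z+z_0)I_\Om)^{-1}$, then cancel. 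Your cancellation is in fact cleaner than you fear: since $(-\Delta_{D,\Om}-(z+z_0)I_\Om)^{-1}$ maps $L^2(\Om;d^nx)$ onto $\dom(-\Delta_{D,\Om})=H^2(\Om)\cap H^1_0(\Om)$ (Lemma \ref{Bjk}, \eqref{3.3Ybis}) and $\tau^N_{z_0}$ maps that space onto $N^{1/2}(\dOm)$ by \eqref{3.ON}, the right factor is surjective and the left factor's adjoint is injective, so no closure-of-range or density argument is needed (your citation of Theorem \ref{t3.3} here should really be Lemma \ref{Bjk} together with \eqref{3.ON}); and the caveat about $z+z_0\notin\si(-\Delta_{D,\Om})$ is already part of the retained hypotheses of Theorem \ref{Th.Nak}, or can be dispatched by the same analytic-continuation remark closing the proof of Lemma \ref{l.Nak}. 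What the two routes buy: the paper's computation is self-contained at the level of Green's formulas and does not require inverting or cancelling any boundary operators, while yours is shorter given that \eqref{NaK4F}, \eqref{NaK5F} are already on record, and it isolates the structural reason for the symmetry, namely that the correction term in \eqref{Na1BF} must be mapped to itself when one passes from $z$ to $\ol z$ and takes adjoints.
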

%%%%%%%%%%%%%%%%%%%%%%%%
\begin{proof}
% The estimate \eqref{UU.a} is a direct consequence of the standard resolvent estimate 
% \begin{eqnarray}\lb{UU.b}
% \|(-\Delta^D_{X,L,z_0}-zI_{\Om})^{-1}\|_{\cB(L^2(\Om;d^nx))}
% \leq C\,|{\rm Im}\,(z)|^{-1},\quad z\in\bbC\backslash \bbR,
% \end{eqnarray}
% \fbox{\bf We still need to fix this estimate !!!!!!} \\
% for some finite constant $C=C(\Omega,X,L,z_0)>0$. 
Let $z_0 \in\bbR$, $z_0, (z+z_0) \in\bbC\backslash \si(-\Delta_{D,\Om})$ and fix 
arbitrary $f,g\in N^{1/2}(\dOm)$. By Theorem \ref{LL.w}\,$(i)$, 
it is then possible to find $u,w\in \dom (- \Delta_{max})$ 
such that $\tau^N_{z_0} u =f$ and $\tau^N_{z_0} w =g$. In fact, by 
\eqref{3.ON}, we can actually pick $u,w\in H^2(\Om)\cap H^1_0(\Om)$ 
(this is going to be of relevance shortly). One then has 
\begin{align}\lb{UU.5}
& 
{}_{N^{1/2}(\dOm)}\big\langle f,M^D_{X,L,z_0}(\ol{z})^*g\big\rangle_{(N^{1/2}(\dOm))^*}
=\ol{{}_{N^{1/2}(\dOm)}\big\langle g,
M^D_{X,L,z_0}(\ol{z})f\big\rangle_{(N^{1/2}(\dOm))^*}}
\nonumber\\ 
& \quad
={}_{N^{1/2}(\dOm)}\big\langle f,\widehat\gamma_D
\big[\widehat\gamma_D\big(-\Delta^D_{X,L,z_0}-\ol{z}I_{\Om}\big)^{-1}\big]^*g
\big\rangle_{(N^{1/2}(\dOm))^*}
\nonumber\\
& \quad
={}_{N^{1/2}(\dOm)}\big\langle \tau^N_{z_0} u     ,\widehat\gamma_D
\big[\widehat\gamma_D\big(-\Delta^D_{X,L,z_0}-\ol{z}I_{\Om}\big)^{-1}\big]^*g
\big\rangle_{(N^{1/2}(\dOm))^*}
\nonumber\\
& \quad
=-\big((-\Delta-(\ol{z}+z_0))u,
\big[\widehat\gamma_D\big(-\Delta^D_{X,L,z_0}-\ol{z}I_{\Om}\big)^{-1}\big]^*g
\big)_{L^2(\Om;d^nx)},
\end{align}
by \eqref{T-GreenX}, \eqref{Nak.11B} and the fact that $\widehat\gamma_D u =0$.
Continuing, we employ taking adjoints and conjugation and bring in $w$ in order to write
\begin{align}\lb{UU.6}
&  -\big( (-\Delta-(\ol{z}+z_0))u,
\big[\widehat\gamma_D\big(-\Delta^D_{X,L,z_0}-\ol{z}I_{\Om}\big)^{-1}\big]^*g
\big)_{L^2(\Om;d^nx)}
\nonumber\\ 
& \quad
=-\ol{{}_{N^{1/2}(\dOm)}\big\langle \tau^N_{z_0} w                      ,
\widehat\gamma_D\big(-\Delta^D_{X,L,z_0}-\ol{z}I_{\Om}\big)^{-1}
(-\Delta-(\ol{z}+z_0))u\big\rangle_{(N^{1/2}(\dOm))^*}}
\nonumber\\ 
& \quad
=-\big((-\Delta-(\ol{z}+z_0))
\big(-\Delta^D_{X,L,z_0}-\ol{z}I_{\Om}\big)^{-1}
(-\Delta-(\ol{z}+z_0))u,w\big)_{L^2(\Om;d^nx)}
\nonumber\\ 
& \qquad 
+\big(\big(-\Delta^D_{X,L,z_0}-\ol{z}I_{\Om}\big)^{-1}
(-\Delta-(\ol{z}+z_0))u,(-\Delta-(z+z_0))w\big)_{L^2(\Om;d^nx)},
\end{align}
by \eqref{T-GreenX} and the fact that $\widehat\gamma_D w =0$. Since
\begin{equation} 
(-\Delta-(\ol{z}+z_0))\big(-\Delta^D_{X,L,z_0}-\ol{z}I_{\Om}\big)^{-1}
(-\Delta-(\ol{z}+z_0))u=(-\Delta-(\ol{z}+z_0))u, 
\end{equation} 
we may summarize the above calculation as follows: 
\begin{align}\lb{UU.7}
&  
{}_{N^{1/2}(\dOm)}\big\langle f,M^D_{X,L,z_0}(\ol{z})^*g\big\rangle_{(N^{1/2}(\dOm))^*}
=-\big((-\Delta-(\ol{z}+z_0))u,w\big)_{L^2(\Om;d^nx)}
\\ 
& \quad 
+\big( \big(-\Delta^D_{X,L,z_0}-\ol{z}I_{\Om}\big)^{-1}
(-\Delta-(\ol{z}+z_0))u,(-\Delta-(z+z_0))w\big)_{L^2(\Om;d^nx)}.
\nonumber
\end{align}
At this stage, we recall once more that $u,w\in H^2(\Om)\cap H^1_0(\Om)$. 
Consequently, we may integrate by parts in the first pairing in the 
right-hand side of \eqref{UU.7} (without boundary terms) and obtain 
\begin{align}\lb{UU.8}
& 
{}_{N^{1/2}(\dOm)}\big\langle f,M^D_{X,L,z_0}(\ol{z})^*g\big\rangle_{(N^{1/2}(\dOm))^*}
\no \\ 
& \quad 
=(z+z_0)(u,w)_{L^2(\Om;d^nx)}
- (\nabla u,\nabla w)_{(L^2(\Om;d^nx))^n}
\\ 
& \qquad 
+ \big(\big(-\Delta^D_{X,L,z_0}-\ol{z}I_{\Om}\big)^{-1}
(-\Delta-(\ol{z}+z_0))u,(-\Delta-(z+z_0))w\big)_{L^2(\Om;d^nx)}.
\nonumber
\end{align}
At this point, the right-hand side of \eqref{UU.8} is invariant 
under interchanging $u$, $w$, replacing $z$ by $\ol{z}$, and taking 
conjugation (here, we have also used the fact that 
the adjoint of $(-\Delta^D_{X,L,z_0}-\ol{z}I_{\Om}\big)^{-1}$ 
is the operator $(-\Delta^D_{X,L,z_0}-zI_{\Om}\big)^{-1}$).
Given the relationship between $u,w$ on the one hand, and $f,g$ 
on the other, this observation then translates into 
\begin{equation}\lb{UU.9}
{}_{N^{1/2}(\dOm)}\big\langle f,M^D_{X,L,z_0}(\ol{z})^*g\big\rangle_{(N^{1/2}(\dOm))^*}
=\ol{{}_{N^{1/2}(\dOm)}\big\langle g,
M^D_{X,L,z_0}(z)^*f\big\rangle_{(N^{1/2}(\dOm))^*}}.
\end{equation}
Since the right-hand side of \eqref{UU.9} is given by 
\begin{equation}\lb{UU.9B}
\ol{{}_{N^{1/2}(\dOm)}\big\langle g,M^D_{X,L,z_0}(z)^*f\big\rangle_{(N^{1/2}(\dOm))^*}}
={}_{N^{1/2}(\dOm)}\big\langle f,M^D_{X,L,z_0}(z)g\big\rangle_{(N^{1/2}(\dOm))^*},
\end{equation}
this proves \eqref{UU.3}. 
\end{proof}
%%%%%%%%%%%%%%%%%%%%%%%%

Next, we recall \eqref{3.46vX} and \eqref{Nak.11B} and state the following fact:
 
%%%%%%%%%%%%%%%%%%%%%%%%
\begin{lemma}\lb{LA.s} 
Under the assumptions of Theorem \ref{Th.Nak}, one has 
\begin{eqnarray}\lb{Ggg-1}
\tau^N_{z_0}\bigl[\widehat\gamma_D(-\Delta^D_{X,L,z_0}-\ol{z}I_{\Om})^{-1}\bigr]^*
=\bigl[M^{(0)}_{D,N,\Om}(z_0)-M^{(0)}_{D,N,\Om}(z+z_0)\bigl]M^D_{X,L,z_0}(z)
\end{eqnarray}
as operators in $\cB\big(N^{1/2}(\dOm),  N^{1/2}(\dOm)\big)$. In addition, 
\begin{eqnarray}\lb{Gaa-1}
\widehat\gamma_D(-\Delta^D_{X,L,z_0}-\ol{z}I_{\Om})^{-1}:
L^2(\Om;d^nx)\to \dom (L)
\end{eqnarray}
is a well-defined, bounded, linear operator, when $\dom (L)$ is equipped 
with the natural graph norm, that is, 
$g\mapsto\|g\|_{(N^{1/2}(\dOm))^*}+\|L(g)\|_{X^*}$.
\end{lemma}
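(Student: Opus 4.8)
\textbf{Proof plan for Lemma \ref{LA.s}.}

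The plan is to prove the two assertions in turn, both by reducing matters to the Green-type identities already in hand. For \eqref{Ggg-1}, I would start from the resolvent relation \eqref{Na1BF} in Lemma \ref{l.Nak} and apply $\tau^N_{z_0}$ from the left. On the left-hand side one obtains $\tau^N_{z_0}(-\Delta^D_{X,L,z_0}-zI_\Om)^{-1}$; on the right, the first term contributes $\tau^N_{z_0}(-\Delta_{D,\Om}-(z+z_0)I_\Om)^{-1}$, which by the definition \eqref{3.Aw2} of $\tau^N_{z_0}$, the fact that $\widehat\gamma_D(-\Delta_{D,\Om}-(z+z_0)I_\Om)^{-1}=0$, and the representation \eqref{3.47v} of $M^{(0)}_{D,N,\Om}$, should be rewritten in terms of $M^{(0)}_{D,N,\Om}(z+z_0)$ applied to a Dirichlet trace. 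The cleanest route, however, is to work directly with the adjoint operator $[\widehat\gamma_D(-\Delta^D_{X,L,z_0}-\ol z I_\Om)^{-1}]^*$: by \eqref{Nak.11B} its range lies in $\ker(-\Delta_{max}-(\ol z+z_0)I_\Om)$, so applying $\tau^N_{z_0}$ to it and using \eqref{3.Aw2} gives $\widehat\gamma_N + M^{(0)}_{D,N,\Om}(z_0)\widehat\gamma_D$ evaluated on a function that is $(-\Delta-(\ol z+z_0))$-harmonic; then \eqref{3.44v}, \eqref{3.45v} identify $\widehat\gamma_N$ on that function with $-M^{(0)}_{D,N,\Om}(z+z_0)$ applied to its Dirichlet trace. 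Since by \eqref{UU.1}, \eqref{UU.1H} the Dirichlet trace of $[\widehat\gamma_D(-\Delta^D_{X,L,z_0}-\ol z I_\Om)^{-1}]^*$ acting on $g\in N^{1/2}(\dOm)$ is exactly $M^D_{X,L,z_0}(z)g$, assembling these pieces yields $\tau^N_{z_0}[\widehat\gamma_D(-\Delta^D_{X,L,z_0}-\ol z I_\Om)^{-1}]^* = [M^{(0)}_{D,N,\Om}(z_0)-M^{(0)}_{D,N,\Om}(z+z_0)]M^D_{X,L,z_0}(z)$, which is \eqref{Ggg-1}. Boundedness as a map into $N^{1/2}(\dOm)$ follows since $M^D_{X,L,z_0}(z)\in\cB(N^{1/2}(\dOm),(N^{1/2}(\dOm))^*)$ by \eqref{UU.2} and $M^{(0)}_{D,N,\Om}(z_0)-M^{(0)}_{D,N,\Om}(z+z_0)\in\cB((N^{1/2}(\dOm))^*,N^{1/2}(\dOm))$ by Corollary \ref{Lc.w}.

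For \eqref{Gaa-1}, the point is that $\dom(L)=\widehat\gamma_D(\dom(\wti S))$ where $\wti S=-\Delta^D_{X,L,z_0}$ (cf.\ \eqref{4.Aw8}), and the graph norm on $\dom(L)$ is $g\mapsto\|g\|_{(N^{1/2}(\dOm))^*}+\|L(g)\|_{X^*}$. Given $h\in L^2(\Om;d^nx)$, set $v:=[\widehat\gamma_D(-\Delta^D_{X,L,z_0}-\ol z I_\Om)^{-1}]^*h$; I would first check that $v\in\dom(-\Delta^D_{X,L,z_0})$ by verifying the defining conditions of \eqref{4.Aw3}: that $\widehat\gamma_D v\in\dom(L)$ and that the boundary condition \eqref{4.Aw4B} holds. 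The natural way to see this is to recognize $v$ as (a constant times, or up to the free reference term) the $\ker$-component in the Grubb decomposition of a resolvent of $\wti S$, so that $v$ itself lies in $\dom(\wti S)$ restricted appropriately; alternatively, use that $[\widehat\gamma_D(-\Delta^D_{X,L,z_0}-\ol z I_\Om)^{-1}]^* = [\tau^N_{z_0}(-\Delta_{D,\Om}-(\ol z+z_0)I_\Om)^{-1}]^* M^D_{X,L,z_0}(z)$ from \eqref{NaK5F}, which manifestly produces elements on which $L$ can be evaluated once one knows $M^D_{X,L,z_0}(z)$ maps into a space compatible with $\dom(L)$. Once $v\in\dom(-\Delta^D_{X,L,z_0})$ is established, $\widehat\gamma_D v\in\dom(L)$ automatically, $\|\widehat\gamma_D v\|_{(N^{1/2}(\dOm))^*}$ is controlled by \eqref{UU.1H}, and $\|L(\widehat\gamma_D v)\|_{X^*}$ is controlled via the boundary condition \eqref{4.Aw4B} together with \eqref{Ggg-1} (which expresses $\tau^N_{z_0}v$, hence essentially $L(\widehat\gamma_D v)$ up to sign and restriction to $X$, as a bounded operator applied to $h$). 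The closed graph theorem then gives boundedness.

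I expect the main obstacle to be the bookkeeping of the shift by $z_0$ and the various sign conventions: the excerpt itself flags a likely sign error in \eqref{Na1BF}, and the interplay between $\tau^N_z u|_X = -L(\widehat\gamma_D u)$ in the current Theorem \ref{CC.w} (with the minus sign) versus the plus-sign version \eqref{4.Aw3I} in the introductory Theorem \ref{CC.wII} must be tracked carefully when identifying $L(\widehat\gamma_D v)$ from $\tau^N_{z_0}v|_X$. A secondary subtlety is that \eqref{Ggg-1} is an identity on $N^{1/2}(\dOm)$ whereas $\widehat\gamma_D v$ a priori only lands in $(N^{1/2}(\dOm))^*$; the genuine content of \eqref{Gaa-1} is the regularity gain $\widehat\gamma_D v\in\dom(L)\subseteq X$, and one has to be sure this is not circular — it follows because $v$ actually belongs to $\dom(-\Delta^D_{X,L,z_0})$, not merely to $\dom(-\Delta_{max})$. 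Handling this cleanly is the crux; the norm estimates afterward are routine given Corollary \ref{Lc.w}, \eqref{UU.2}, and \eqref{Ggg-1}.
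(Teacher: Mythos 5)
Your plan for \eqref{Ggg-1} is sound and is essentially the paper's argument in ``strong'' form: you apply $\tau^N_{z_0}$ directly to $u:=\bigl[\widehat\gamma_D(-\Delta^D_{X,L,z_0}-\ol{z}I_{\Om})^{-1}\bigr]^*g$, use \eqref{Nak.11B} (note the range is $\ker(-\Delta_{max}-(z+z_0)I_{\Om})$, not $(\ol{z}+z_0)$ -- a conjugation slip in your wording that does not affect the final formula), identify $\widehat\gamma_N u=-M^{(0)}_{D,N,\Om}(z+z_0)\widehat\gamma_D u$ via \eqref{3.44v} and the uniqueness in Theorem \ref{tH.A}, and invoke \eqref{UU.1H} to get $\widehat\gamma_D u=M^D_{X,L,z_0}(z)g$; the paper obtains the same identity in weak form, pairing against arbitrary $g\in\bigl(N^{1/2}(\dOm)\bigr)^*$ realized as the Dirichlet trace of an auxiliary solution and using \eqref{T-GreenX} together with the symmetry \eqref{NaLa}, so the difference is only one of packaging.

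The second half of your plan, however, addresses the wrong operator, and its key step fails. The operator in \eqref{Gaa-1} is $\widehat\gamma_D(-\Delta^D_{X,L,z_0}-\ol{z}I_{\Om})^{-1}$ -- resolvent first, then Dirichlet trace -- acting on $h\in L^2(\Om;d^nx)$; you instead set $v:=\bigl[\widehat\gamma_D(-\Delta^D_{X,L,z_0}-\ol{z}I_{\Om})^{-1}\bigr]^*h$, which is the adjoint, an operator defined on $N^{1/2}(\dOm)$ (so feeding it $h\in L^2(\Om;d^nx)$ does not even typecheck) whose outputs are functions in $\Om$ lying in $\ker(-\Delta_{max}-(z+z_0)I_{\Om})$. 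Your proposed intermediate claim ``$v\in\dom(-\Delta^D_{X,L,z_0})$'' is false in general: for general $X$ one does not even know $\widehat\gamma_D v=M^D_{X,L,z_0}(z)h\in\dom(L)$ at this stage, and when one does (e.g.\ $X=\bigl(N^{1/2}(\dOm)\bigr)^*$, cf.\ Theorem \ref{Th.DH3}), formula \eqref{Ggg-1} gives $\tau^N_{z_0}v+L(\widehat\gamma_D v)=\bigl[M^{(0)}_{D,N,\Om}(z_0)-M^{(0)}_{D,N,\Om}(z+z_0)+L\bigr]M^D_{X,L,z_0}(z)h=h\neq 0$, so the boundary condition \eqref{4.Aw4B} is violated; relatedly, the kernel component in Grubb's decomposition of an element of $\dom\big(\wti S\big)$ lies in $\dom(T)$, not in $\dom\big(\wti S\big)$. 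The ``regularity gain'' you worry about is the mapping property \eqref{UU.4Y1} of $M^D_{X,L,z_0}(z)$ -- a later result requiring $X=\bigl(N^{1/2}(\dOm)\bigr)^*$ -- and is not what \eqref{Gaa-1} asserts. For the operator actually at stake the membership is immediate and only the estimate has content: $w:=(-\Delta^D_{X,L,z_0}-\ol{z}I_{\Om})^{-1}h\in\dom(-\Delta^D_{X,L,z_0})$, hence $\widehat\gamma_D w\in\dom(L)$ by the definition \eqref{4.Aw3}; then for every $f\in X$ the boundary condition \eqref{4.Aw4B} converts ${}_{X}\langle f,L(\widehat\gamma_D w)\rangle_{X^*}$ into ${}_{N^{1/2}(\dOm)}\langle\tau^N_{z_0}w,f\rangle_{(N^{1/2}(\dOm))^*}$, and the boundedness of $\tau^N_{z_0}$ on $\dom(-\Delta_{max})$ (Theorem \ref{LL.w}) together with the boundedness of the resolvent in the graph norm yields $\|L(\widehat\gamma_D w)\|_{X^*}\leq C\|h\|_{L^2(\Om;d^nx)}$, while $\|\widehat\gamma_D w\|_{(N^{1/2}(\dOm))^*}\leq C\|h\|_{L^2(\Om;d^nx)}$ follows from Theorem \ref{New-T-tr}; no closed graph theorem is needed.
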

%%%%%%%%%%%%%%%%%%%%%%%% 
\begin{proof}
Pick arbitrary $f\in N^{1/2}(\dOm)$, $g\in \bigl(N^{1/2}(\dOm)\bigr)^*$, 
and let $v\in\dom (- \Delta_{max})$ satisfy 
\begin{eqnarray}\lb{Ggg-2}
(-\Delta-(\ol{z}+z_0))v=0\, \mbox{ in }\, \Omega,\quad 
\widehat\gamma_D v =g. 
\end{eqnarray}
Then, since 
$\big[\widehat\gamma_D(-\Delta^D_{X,L,z_0}-\ol{z}I_{\Om})^{-1}\big]^* f 
\in\ker (- \Delta_{max}-(z+z_0)I_{\Om})$ by \eqref{Nak.11B},
we may write
\begin{eqnarray}\lb{Ggg-3}
&& {}_{N^{1/2}(\dOm)}\big\langle\tau^N_{z_0}
\big[\widehat\gamma_D(-\Delta^D_{X,L,z_0}-\ol{z}I_{\Om})^{-1}\big]^*f
,  g\big\rangle_{(N^{1/2}(\dOm))^*}
\nonumber\\[4pt]
&&\qquad =\ol{{}_{N^{1/2}(\dOm)}\big\langle\tau^N_{z_0} v              ,  
\widehat\gamma_D\big[\widehat\gamma_D(-\Delta^D_{X,L,z_0}-\ol{z}I_{\Om})^{-1}\big]^*f
\big\rangle_{(N^{1/2}(\dOm))^*}}
\nonumber\\[4pt]
&&\qquad =\ol{{}_{N^{1/2}(\dOm)}\big\langle\tau^N_{z_0} v              ,  
M^D_{X,L,z_0}(z)f\big\rangle_{(N^{1/2}(\dOm))^*}},
\end{eqnarray}
by \eqref{T-GreenX} and \eqref{UU.1H}. Next, one observes that 
\begin{eqnarray}\lb{Ggg-4}
\tau^N_{z_0} v =\widehat\gamma_N v +M_{D,N,\Om}^{(0)}(z_0)g
=\big[-M_{D,N\Om}^{(0)}(\ol{z}+z_0)+M_{D,N,\Om}^{(0)}(z_0)\big]g
\end{eqnarray}
by \eqref{3.Aw2} and \eqref{3.44v}. Thus, 
\begin{align}\lb{Ggg-5}
& \ol{{}_{N^{1/2}(\dOm)}\big\langle\tau^N_{z_0} v              ,  
M^D_{X,L,z_0}(z)f\big\rangle_{(N^{1/2}(\dOm))^*}}
\nonumber\\
& \quad =\ol{{}_{N^{1/2}(\dOm)}\big\langle
\big[-M_{D,N\Om}^{(0)}(\ol{z}+z_0)+M_{D,N,\Om}^{(0)}(z_0)\big]g,  
M^D_{X,L,z_0}(z)f\big\rangle_{(N^{1/2}(\dOm))^*}}
\nonumber\\
& \quad ={}_{N^{1/2}(\dOm)}\big\langle
\big[-M_{D,N\Om}^{(0)}(\ol{z}+z_0)+M_{D,N,\Om}^{(0)}(z_0)\big]^*
M^D_{X,L,z_0}(z)f,  g\big\rangle_{(N^{1/2}(\dOm))^*}
\nonumber\\
& \quad ={}_{N^{1/2}(\dOm)}\big\langle
\big[-M_{D,N\Om}^{(0)}(z+z_0)+M_{D,N,\Om}^{(0)}(z_0)\big]
M^D_{X,L,z_0}(z)f,  g\big\rangle_{(N^{1/2}(\dOm))^*},
\end{align}
by \eqref{NaLa}, implying \eqref{Ggg-1}. 

Next, consider the claim made about the operator \eqref{Gaa-1}.
Let $f\in X$ and $u\in L^2(\Om;d^nx)$ be arbitrary. Then 
$(-\Delta^D_{X,L,z_0}-\ol{z}I_{\Om})^{-1}u\in\dom (- \Delta^D_{X,L,z_0})$ so 
that $\widehat\gamma_D(-\Delta^D_{X,L,z_0}-\ol{z}I_{\Om})^{-1}u\in\dom (L)$
and 
\begin{eqnarray}\lb{Gaa-2}
&& \left|
{}_{X}\big\langle f,L \widehat\gamma_D(-\Delta^D_{X,L,z_0}-\ol{z}I_{\Om})^{-1}u 
\big\rangle_{X^*}\right|
\nonumber\\[4pt]
&& \qquad
=\left|{}_{N^{1/2}(\dOm)}\big\langle
\tau^N_{z_0}(-\Delta^D_{X,L,z_0}-\ol{z}I_{\Om})^{-1}u,f
\big\rangle_{(N^{1/2}(\dOm))^*}\right|
\nonumber\\[4pt]
&& \qquad
\leq C\|\tau^N_{z_0}(-\Delta^D_{X,L,z_0}-\ol{z}I_{\Om})^{-1}u 
\|_{N^{1/2}(\dOm)}\|f\|_{(N^{1/2}(\dOm))^*}
\nonumber\\[4pt]
&& \qquad
\leq C\|u\|_{L^2(\Om;d^nx)}\|f\|_{X},
\end{eqnarray}
by \eqref{4.Aw4B} and Theorem \ref{LL.w}. Since $u\in L^2(\Om;d^nx)$ was 
arbitrary, this implies that
\begin{eqnarray}\lb{Gaa-3}
\|L \widehat\gamma_D(-\Delta^D_{X,L,z_0}-\ol{z}I_{\Om})^{-1}u 
\|_{X^*}\leq C\|f\|_{X}.
\end{eqnarray}
Together with \eqref{Nak.8}, this shows that the operator \eqref{Gaa-1} is
indeed well-defined, linear, and bounded. 
\end{proof}
%%%%%%%%%%%%%%%%%%%%%%%% 

We continue to study the properties of the boundary ``transition'' operator 
\eqref{UU.1}, with the goal of establishing a Herglotz property. We recall that 
an operator-valued function 
$M(z)\in\cB({\mathcal{X}},{\mathcal{X}}^*)$, with  
$z\in\bbC_+:=\{z\in\bbC \,|\, \Im(z)>0\}$ and ${\mathcal{X}}$ a 
reflexive Banach space, is called an operator-valued Herglotz function if 
$M(\cdot)$ is analytic on $\bbC_+$ and
\begin{equation}\lb{4.41F}
\Im (M(z)):=(M(z) - M(z)^*)/(2i)\geq 0,\quad z\in\bbC_+.
\end{equation} 
It is customary to extend $M(\cdot)$ to $\bbC_-:=\{z\in\bbC \,|\, \Im(z)<0\}$ by defining 
$M(z) = M(\ol z)^*$, $z\in\bbC_-$, but the latter is generally not an analytic 
continuation of $M |_{\bbC_+}$.

%%%%%%%%%%%%%%%%%%%%%%%%
\begin{theorem}\lb{Th.DH2}
Retain the hypotheses and conventions made in Theorem \ref{Th.Nak}.
Then the assignment  
\begin{eqnarray}\lb{UU.4}
\bbC_{+}\ni z\mapsto M^D_{X,L,z_0}(z)
\in\cB\big(N^{1/2}(\dOm),\bigl(N^{1/2}(\dOm)\bigr)^*\big)
\end{eqnarray}
is an operator-valued Herglotz function whenever 
\begin{eqnarray}\lb{UU.4X}
X=\bigl(N^{1/2}(\dOm)\bigr)^*.
\end{eqnarray}
\end{theorem}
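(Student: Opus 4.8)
The plan is to verify the two defining properties of an operator-valued Herglotz function for the map \eqref{UU.4} under the hypothesis \eqref{UU.4X}, namely analyticity on $\bbC_+$ and nonnegativity of the imaginary part, by exploiting the resolvent representation \eqref{UU.1H} together with the self-adjointness of $- \Delta^D_{X,L,z_0}$ and the generalized Green formula \eqref{T-GreenX}. First I would establish analyticity: by \eqref{UU.1H} we have
\begin{equation}
M^D_{X,L,z_0}(z)=\widehat\gamma_D
\big[\widehat\gamma_D\big(-\Delta^D_{X,L,z_0}-\ol{z}I_{\Om}\big)^{-1}\big]^*,
\end{equation}
and since $- \Delta^D_{X,L,z_0}$ is self-adjoint (Theorem \ref{CC.w}), the resolvent $\big(-\Delta^D_{X,L,z_0}-\ol{z}I_{\Om}\big)^{-1}\in\cB\big(L^2(\Om;d^nx)\big)$ depends analytically on $z$ for $z\in\bbC_+$ (so that $\ol z\in\bbC_-\subset \rho(-\Delta^D_{X,L,z_0})$, since $-\Delta^D_{X,L,z_0}$ has real spectrum). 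Composing with the bounded operators $\widehat\gamma_D$ (Theorem \ref{New-T-tr}) and taking adjoints preserves analyticity, using also the continuity statement \eqref{Nak.11B} which guarantees the composition lands boundedly in $\cB\big(N^{1/2}(\dOm),\bigl(N^{1/2}(\dOm)\bigr)^*\big)$. Hence \eqref{UU.4} is analytic on $\bbC_+$.

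Next I would compute the imaginary part. Fix $z\in\bbC_+$ and arbitrary $f\in N^{1/2}(\dOm)$. Using Theorem \ref{LL.w}\,$(i)$ — specifically the surjectivity statement \eqref{3.ON}, which is available precisely because $X=\bigl(N^{1/2}(\dOm)\bigr)^*$ — choose $u\in H^2(\Om)\cap H^1_0(\Om)\subset\dom(-\Delta_{max})$ with $\tau^N_{z_0}u=f$ and $\widehat\gamma_D u=0$. By \eqref{T-Green2}, applied with $v:=\big[\widehat\gamma_D\big(-\Delta^D_{X,L,z_0}-\ol{z}I_{\Om}\big)^{-1}\big]^*f\in\ker\big(-\Delta_{max}-(\ol{z}+z_0)I_{\Om}\big)$ (cf.\ \eqref{Nak.11B}), one obtains
\begin{equation}
{}_{N^{1/2}(\dOm)}\big\langle f,M^D_{X,L,z_0}(z)f\big\rangle_{(N^{1/2}(\dOm))^*}
=-\big((-\Delta-(\ol z+z_0))u,v\big)_{L^2(\Om;d^nx)}.
\end{equation}
I would then follow the computation already carried out in the proof of Theorem \ref{Th.DH} (equations \eqref{UU.5}--\eqref{UU.8}), specialized to $w=u$, to arrive at the symmetric bilinear expression
\begin{align}
& {}_{N^{1/2}(\dOm)}\big\langle f,M^D_{X,L,z_0}(z)f\big\rangle_{(N^{1/2}(\dOm))^*}
\no \\
& \quad =(z+z_0)\|u\|^2_{L^2(\Om;d^nx)}-\|\nabla u\|^2_{(L^2(\Om;d^nx))^n}
\no \\
& \qquad +\big(\big(-\Delta^D_{X,L,z_0}-\ol{z}I_{\Om}\big)^{-1}g_0,g_0\big)_{L^2(\Om;d^nx)},
\end{align}
where $g_0:=(-\Delta-(\ol z+z_0))u\in L^2(\Om;d^nx)$. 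Taking $2i\,\Im$ of both sides, the real quantities $-\|\nabla u\|^2$ and $z_0\|u\|^2$ drop out, and it remains to show
\begin{equation}
\Im\big[z\|u\|^2_{L^2(\Om;d^nx)}\big]
+\Im\big[\big(\big(-\Delta^D_{X,L,z_0}-\ol{z}I_{\Om}\big)^{-1}g_0,g_0\big)_{L^2(\Om;d^nx)}\big]\geq 0.
\end{equation}
For the second term, self-adjointness of $-\Delta^D_{X,L,z_0}$ gives the standard resolvent identity $\Im\big[\big((T-\ol z)^{-1}g_0,g_0\big)\big]=\Im(z)\,\|(T-\ol z)^{-1}g_0\|^2\geq 0$ for $\Im(z)>0$; and $\Im\big[z\|u\|^2\big]=\Im(z)\|u\|^2\geq 0$. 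Both contributions being nonnegative on $\bbC_+$ yields $\Im\big(M^D_{X,L,z_0}(z)\big)\geq 0$, completing the verification.

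\textbf{Main obstacle.} The principal difficulty is the bookkeeping needed to reach the \emph{manifestly symmetric} representation of the quadratic form — i.e., reproducing the chain \eqref{UU.5}--\eqref{UU.8} with $w=u$ and tracking each boundary term. In particular one must carefully invoke \eqref{T-GreenX} twice and use both $\widehat\gamma_D u=0$ and the regularity $u\in H^2(\Om)\cap H^1_0(\Om)$ to justify the integration by parts without boundary contributions; the role of hypothesis \eqref{UU.4X} is essential here, since it is exactly what allows one to realize $f$ as $\tau^N_{z_0}u$ with such a smooth $u$ (via \eqref{3.ON}), whereas for a proper closed subspace $X$ this surjectivity onto $N^{1/2}(\dOm)$ need not hold and the Herglotz property may fail. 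Everything else — analyticity, the elementary resolvent positivity estimate, the symmetry relation \eqref{UU.3} already proved in Theorem \ref{Th.DH} — is routine once this representation is in hand.
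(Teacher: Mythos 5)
Your strategy for the nonnegativity of the imaginary part is genuinely different from the paper's, but as written it contains a concrete error at its central step. Your first identity, $\langle f,M^D_{X,L,z_0}(z)f\rangle=-((-\Delta-(\ol z+z_0))u,v)_{L^2(\Om;d^nx)}$, is fine (it is \eqref{UU.5} with $g=f$; note, though, that by \eqref{Nak.11B} one has $v\in\ker(-\Delta_{max}-(z+z_0)I_\Om)$, not $\ker(-\Delta_{max}-(\ol z+z_0)I_\Om)$). The problem is the "symmetric bilinear expression": specializing \eqref{UU.7}--\eqref{UU.8} to $w=u$, $g=f$ produces the third term $\big((-\Delta^D_{X,L,z_0}-\ol zI_\Om)^{-1}(-\Delta-(\ol z+z_0))u,\,(-\Delta-(z+z_0))u\big)_{L^2(\Om;d^nx)}$, whose two arguments carry \emph{different} spectral shifts; it is not $\big((-\Delta^D_{X,L,z_0}-\ol zI_\Om)^{-1}g_0,g_0\big)$. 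Since $(-\Delta-(z+z_0))u=g_0-2i\Im(z)u$, your formula silently drops the cross term $-2i\Im(z)\big((-\Delta^D_{X,L,z_0}-\ol zI_\Om)^{-1}g_0,u\big)$, whose contribution to the imaginary part, $-2\Im(z)\,\Re\big((-\Delta^D_{X,L,z_0}-\ol zI_\Om)^{-1}g_0,u\big)$, has no definite sign; also $u\notin\dom(-\Delta^D_{X,L,z_0})$ in general, so nothing cancels it. Thus your reduction to two manifestly nonnegative terms does not follow. The route is repairable, since the three $z$-dependent terms combine into $\Im(z)\,\big\|u-(-\Delta^D_{X,L,z_0}-\ol zI_\Om)^{-1}g_0\big\|_{L^2(\Om;d^nx)}^2$, but you must carry that cross term. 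Relatedly, your account of where \eqref{UU.4X} enters is incorrect: the surjectivity \eqref{3.ON} holds for every quasi-convex domain, independently of $X$. In the paper's (shorter) proof, $X=\bigl(N^{1/2}(\dOm)\bigr)^*$ is used to pair the boundary condition $\tau^N_{z_0}v=-L(\widehat\gamma_D v)$, $v\in\dom(-\Delta^D_{X,L,z_0})$, against arbitrary functionals $\widehat\gamma_D u\in\bigl(N^{1/2}(\dOm)\bigr)^*$ (step \eqref{UU.4X3}); this identifies $M^D_{X,L,z_0}(z)f=\widehat\gamma_D u$ with $\tau^N_{z_0}u+L(\widehat\gamma_D u)=f$ and then a single application of \eqref{T-GreenX} gives $\Im\langle f,Mf\rangle=\Im(z)\|u\|^2_{L^2(\Om;d^nx)}$. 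If you pursue your (corrected) route, you should address why \eqref{UU.4X} never appears in it.

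The analyticity part is a genuine gap, not a routine composition argument. Analyticity of $z\mapsto\big(-\Delta^D_{X,L,z_0}-zI_\Om\big)^{-1}$ in $\cB\big(L^2(\Om;d^nx)\big)$ does not transfer through $\widehat\gamma_D$, because $\widehat\gamma_D$ is bounded only on $\dom(-\Delta_{max})$ equipped with the graph norm, not on $L^2(\Om;d^nx)$; so differentiability of $\widehat\gamma_D\big(-\Delta^D_{X,L,z_0}-zI_\Om\big)^{-1}$, of its adjoint, and of the outer composition in the topology of \eqref{UU.4} requires showing that the resolvent and its difference quotients converge in $\cB\big(L^2(\Om;d^nx),\dom(-\Delta_{max})\big)$. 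This is exactly what the paper's argument with the operators $T_w$ in \eqref{16.M1}--\eqref{16.MT} accomplishes, and it is the bulk of the proof. A further (cosmetic but real) point: $z\mapsto\big(-\Delta^D_{X,L,z_0}-\ol zI_\Om\big)^{-1}$ is anti-analytic; analyticity of $M^D_{X,L,z_0}(\cdot)$ only emerges after combining with the adjoint as in \eqref{UU.1}, \eqref{UU.1H}, so "composing with bounded operators and taking adjoints preserves analyticity" needs to be stated and justified with care.
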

%%%%%%%%%%%%%%%%%%%%%%%%
\begin{proof}
For the duration of this proof we suppose that \eqref{UU.4X} holds. We first establish 
the analytical dependence of $M^D_{X,L,z_0}(z)$ on 
$z\in\bbC\backslash\sigma\big(-\Delta^D_{X,L,z_0}\big)$: Based on 
\eqref{UU.1} and the resolvent equation for $-\Delta^D_{X,L,z_0}$, for 
$z\in\bbC\backslash \sigma\big(-\Delta^D_{X,L,z_0}\big)$ and $w\in\bbC$ sufficiently close to $0$, one obtains
\begin{equation}
\f{1}{w} \big[M^D_{X,L,z_0}(z+w) -  M^D_{X,L,z_0}(z)\big] 
= \big(\hatt \gamma_D\big[\hatt\gamma_D \big(-\Delta^D_{X,L,z_0} -z I_{\Om}\big)^{-1} \big(-\Delta^D_{X,L,z_0} - (z+w) I_{\Om}\big)^{-1}\big]^*\big)^*.   \lb{16.M1}
\end{equation}
Based on \eqref{Nak.8} and the fact that 
\begin{equation}
\big(-\Delta^D_{X,L,z_0} - (z+w) I_{\Om}\big)^{-1} 
\in \cB\big(L^2(\Om;d^nx)\big),     \lb{16.M2}
\end{equation}
one infers that
\begin{equation}
T_w:=\hatt \gamma_D \big(-\Delta^D_{X,L,z_0} -z I_{\Om}\big)^{-1} 
\big(-\Delta^D_{X,L,z_0} - (z+w) I_{\Om}\big)^{-1}  
\in \cB\big(L^2(\Om;d^nx), \big(N^{1/2}(\dOm)\big)^*\big)    \lb{16.M3}
\end{equation}
and 
\begin{equation}
\lim_{w\to 0} T_w = \hatt \gamma_D \big(-\Delta^D_{X,L,z_0} -z I_{\Om}\big)^{-2} 
\, \text{ in $\cB\big(L^2(\Om;d^nx), \big(N^{1/2}(\dOm)\big)^*\big)$}.   \lb{16.M4}
\end{equation} 
Consequently,
\begin{equation}
T_w^* \in \cB\big(N^{1/2}(\dOm), L^2(\Om;d^nx)\big) \, \text{ and } \, 
\lim_{w\to 0} T_w^* 
\, \text{ exists in $\cB\big(N^{1/2}(\dOm), L^2(\Om;d^nx)\big)$}.   \lb{16.M5}
\end{equation}
Next, we claim that actually,
\begin{equation}
T_w^* \in \cB\big(N^{1/2}(\dOm), \dom(-\Delta_{max})\big)    \lb{16.M6}
\end{equation} 
and 
\begin{equation} 
\lim_{w\to 0} T_w^* 
\, \text{ exists in $\cB\big(N^{1/2}(\dOm), \dom(-\Delta_{max})\big)$}.  \lb{16.M7}
\end{equation}
Given \eqref{16.M5} and \eqref{Nak.11B}, \eqref{16.M6} follows once we establish 
that 
\begin{equation}
(-\Delta - (\ol z + z_0) I_{\Om}) T_w^* 
= \big[\hatt \gamma_D \big(-\Delta^D_{X,L,z_0} - (z+w) I_{\Om}\big)^{-1}\big]^*.   
\lb{16.M8}
\end{equation}  
To prove \eqref{16.M8}, we fix $f \in N^{1/2}(\dOm)$ and write for arbitrary 
$\varphi \in C^\infty_0(\Om)$,
\begin{align}
& {}_{\cD'(\Om)}\langle (-\Delta - (\ol z + z_0) I_{\Om}) T_w^* f, \varphi\rangle_{\cD(\Om)} 
= (T_w^* f, (-\Delta - (z + z_0) I_{\Om}) \varphi)_{L^2(\Om; d^n x}  \no \\
& \quad 
= {}_{N^{1/2}(\dOm)}\langle f, T_w (-\Delta - (z + z_0) I_{\Om}) 
\varphi\rangle_{(N^{1/2}(\dOm))^*}   \no \\
& \quad  
= {}_{N^{1/2}(\dOm)}\big\langle f, \hatt \gamma_D 
\big(-\Delta^D_{X,L,z_0} - (z+w) I_{\Om}\big)^{-1} 
\varphi\big\rangle_{(N^{1/2}(\dOm))^*}    \no \\
& \quad 
= \big(\big[\hatt \gamma_D \big(-\Delta^D_{X,L,z_0} - (z+w) I_{\Om}\big)^{-1}\big]^* f, 
\varphi\big)_{L^2(\Om; d^n x)}, 
\end{align}
using \eqref{16.M3} in the next to last step (above, 
${}_{\cD'(\Om)}\langle \dott , \dott \rangle_{\cD(\Om)}$ denotes the 
distributional pairing, here considered to be linear in the second argument
and antilinear in the first).  

To prove \eqref{16.M7} one can argue as follows: First, one notes that if 
$T_j \in \cB(X, \dom(-\Delta_{max}))$, $j \in\bbN$, where $X$ is a Banach space 
and $\dom(-\Delta_{max})$ is equipped with the graph norm (cf.\ \eqref{15.dommax}), 
\begin{align}
\begin{split}
& \text{$\lim_{j\to\infty} T_j$ exists in $\cB(X,\dom(-\Delta_{max}))$}   \\
& \quad \text{if and only if $\lim_{j\to\infty} T_j$ and 
$\lim_{j\to\infty} \Delta T_j$ exist in $\cB\big(X, L^2(\Om; d^n x)\big)$}.  \lb{16.MC}
\end{split} 
\end{align}
While the left-to-right implication in \eqref{16.MC} is clear from the graph norm 
employed on $\dom(-\Delta_{max})$, we indicate the proof of the right-to-left implication next, that is, we assume  
\begin{equation}
\text{$\lim_{j\to\infty} T_j = T$ in $\cB\big(X, L^2(\Om; d^n x)\big)$ and 
$\lim_{j\to\infty} \Delta T_j = S$ in $\cB\big(X, L^2(\Om; d^n x)\big)$.}  
\end{equation}
Let $f \in X$, then $Tf \in L^2(\Om; d^n x) \hookrightarrow \cD'(\Om)$, and for 
any $\varphi \in C_0^\infty(\Om)$,
\begin{align}
& {}_{\cD'(\Om)}\langle \Delta (Tf), \varphi \rangle_{\cD(\Om)} 
= (Tf, \Delta \varphi)_{L^2(\Om; d^n x)} 
= \lim_{j\to\infty} (T_j f, \Delta \varphi)_{L^2(\Om; d^n x)}  
= \lim_{j\to\infty} {}_{\cD'(\Om)}\langle T_j f, \Delta \varphi)\rangle_{\cD(\Om)}  \no \\
& \quad  
= \lim_{j\to\infty} {}_{\cD'(\Om)}\langle \Delta (T_j f), \varphi \rangle_{\cD(\Om)} 
= \lim_{j\to\infty} (\Delta (T_j f), \varphi)_{L^2(\Om; d^n x)}  
= (S f, \varphi)_{L^2(\Om; d^n x)}    \no \\
& \quad =  {}_{\cD'(\Om)}\langle S f,  \varphi \rangle_{\cD(\Om)},  
\end{align}
implying 
\begin{equation}
\text{$\Delta T = S$ on $L^2(\Om; d^n x)$ and hence 
$T \in \cB(X,\dom(-\Delta_{max}))$.}
\end{equation}
Moreover, this yields 
\begin{equation}
\lim_{j\to \infty} \|T_j - T\|_{\cB(X,\dom(-\Delta_{max}))} = 0. 
\end{equation}
Next, using again the equivalence of norms
\begin{equation}
\|T\|_{\cB(X,\dom(-\Delta_{max}))} \approx   \|T\|_{\cB(X,L^2(\Om; d^n x))} +
 \|\Delta T\|_{\cB(X,L^2(\Om; d^n x))},  \lb{16.Meq} 
\end{equation}
and the fact that we may change $\Delta T$ into $(\Delta + z I_{\Om})T$, 
$z\in\bbC$, in \eqref{16.Meq} (at the expense of altering the comparability constants), 
one finally concludes that 
$\big(-\Delta^D_{X,L,z_0} -z I_{\Om}\big)^{-1} \in \cB\big(L^2(\Om; d^n x)\big)$ can actually be improved to 
$\big(-\Delta^D_{X,L,z_0} -z I_{\Om}\big)^{-1} 
\in \cB\big(L^2(\Om; d^n x), \dom(-\Delta_{max})\big)$ and that the assignment 
$\bbC \backslash \sigma\big(-\Delta^D_{X,L,z_0}\big) \ni z 
\mapsto \big(-\Delta^D_{X,L,z_0} -z I_{\Om}\big)^{-1} 
\in \cB\big(L^2(\Om; d^n x), \dom(-\Delta_{max})\big)$ 
is continuous. 

With the help of \eqref{16.M6} and \eqref{16.M7} one concludes from 
Theorem \ref{Th.DH} that
\begin{equation}
\hatt \gamma_D T_w^* \in \cB\big(N^{1/2}(\dOm), \big(N^{1/2}(\dOm)\big)^*\big)
\, \text{ and hence also } \, 
\big(\hatt \gamma_D T_w^*\big)^* \in 
\cB\big(N^{1/2}(\dOm), \big(N^{1/2}(\dOm)\big)^*\big),     \lb{16.MTw}
\end{equation}
and 
\begin{equation}
\lim_{w\to 0} \big(\hatt \gamma_D T_w^*\big)^* 
\, \text{ exists in $\cB\big(N^{1/2}(\dOm), \big(N^{1/2}(\dOm)\big)^*\big)$}.  \lb{16.MT}
\end{equation}
Consequently, employing \eqref{16.MTw} and \eqref{16.MT} in \eqref{16.M1} one obtains that
\begin{equation}
\lim_{w\to 0} \f{1}{w} \big[M^D_{X,L,z_0}(z+w) -  M^D_{X,L,z_0}(z)\big] 
\, \text{ exists in $\cB\big(N^{1/2}(\dOm), \big(N^{1/2}(\dOm)\big)^*\big)$}.  
\end{equation}
Thus, $\bbC\backslash \sigma\big(-\Delta^D_{X,L,z_0}\big) \ni z 
\mapsto M^D_{X,L,z_0}(z) \in 
\cB\big(N^{1/2}(\dOm), \big(N^{1/2}(\dOm)\big)^*\big)$ is analytic. 

Next, for an arbitrary $f\in N^{1/2}(\dOm)$ set 
\begin{eqnarray}\lb{UU.4X1}
u:= \big[\widehat\gamma_D(-\Delta^D_{X,L,z_0}-\ol{z}I_{\Om})^{-1}\big]^*f
\in \ker  (-\Delta_{max}-(z+z_0)I_{\Om}).
\end{eqnarray}
For each $v\in \dom (- \Delta^D_{X,L,z_0})$ we may then write 
\begin{align}\lb{UU.4X2}
& {}_{N^{1/2}(\dOm)}\langle f,\widehat\gamma_D v \rangle_{(N^{1/2}(\dOm))^*}
\nonumber\\
& \quad
={}_{N^{1/2}(\dOm)}\big\langle f,  \widehat\gamma_D
\big(-\Delta^D_{X,L,z_0}-\ol{z}I_{\Om}\big)^{-1} \big(-\Delta^D_{X,L,z_0}-\ol{z}I_{\Om}\big)v
\big\rangle_{(N^{1/2}(\dOm))^*}
\nonumber\\
& \quad
=\big(\big[\widehat\gamma_D(-\Delta^D_{X,L,z_0}-\ol{z}I_{\Om})^{-1}\big]^*f
, \big(-\Delta^D_{X,L,z_0}-\ol{z}I_{\Om}\big)v\big)_{L^2(\Om;d^nx)}
\nonumber\\[4pt]
& \quad
= (u,  (-\Delta-(\ol{z}+z_0))v)_{L^2(\Om;d^nx)}
\\
& \quad
={}_{N^{1/2}(\dOm)}\big\langle \tau^N_{z_0} u,\widehat\gamma_D v            
\big\rangle_{(N^{1/2}(\dOm))^*}
-\ol{{}_{N^{1/2}(\dOm)}\big\langle \tau^N_{z_0} v,\widehat\gamma_D u 
\big\rangle_{(N^{1/2}(\dOm))^*}},
\nonumber
\end{align}
by \eqref{T-GreenX} and \eqref{UU.4X1}. Granted \eqref{UU.4X} and given that 
$v$ belongs to $\dom \big(- \Delta^D_{X,L,z_0}\big)$, we may then transform the 
last term in \eqref{UU.4X2} into 
\begin{equation}\lb{UU.4X3}
\ol{{}_{N^{1/2}(\dOm)}\big\langle \tau^N_{z_0} v,\widehat\gamma_D u 
\big\rangle_{(N^{1/2}(\dOm))^*}}
= - \ol{{}_{N^{1/2}(\dOm)}\langle L(\widehat\gamma_D v),\widehat\gamma_D u 
\rangle_{(N^{1/2}(\dOm))^*}}.
\end{equation}
When inserted into \eqref{UU.4X2}, this yields 
\begin{equation}\lb{UU.4X4}
{}_{N^{1/2}(\dOm)}\big\langle \tau^N_{z_0} u - f,
\widehat\gamma_D v \big\rangle_{(N^{1/2}(\dOm))^*}
= - \ol{{}_{N^{1/2}(\dOm)}\langle L(\widehat\gamma_D v),\widehat\gamma_D u 
\rangle_{(N^{1/2}(\dOm))^*}}. 
\end{equation}
Since $\dom (L)=\widehat\gamma_D \dom \big(- \Delta^D_{X,L,z_0}\big)$ and
$L$ is self-adjoint, the above formula yields 
\begin{equation}\lb{UU.4X5}
\widehat\gamma_D u \in \dom (L)\, \mbox{ and }\,  
- L(\widehat\gamma_D u )=\tau^N_{z_0} u -f.
\end{equation}
Thus, $M^D_{X,L,z_0}(z)f=\widehat\gamma_D u $, \eqref{UU.4X5}, 
\eqref{T-GreenX}, and the fact that 
$\Delta u=-(z+z_0)u$ imply 
\begin{align}\lb{UU.4X6}
& 
{}_{N^{1/2}(\dOm)}\big\langle f, \Im\big(M^D_{X,L,z_0}(z)\big)f
\big\rangle_{(N^{1/2}(\dOm))^*}
\nonumber\\
& \quad 
=\f{1}{2i}\big[{}_{N^{1/2}(\dOm)}\big\langle f, M^D_{X,L,z_0}(z) f\big\rangle
_{(N^{1/2}(\dOm))^*} \no \\
& \hspace*{1.3cm}   
-\ol{{}_{N^{1/2}(\dOm)}\big\langle f, M^D_{X,L,z_0}(z) f\big\rangle
_{(N^{1/2}(\dOm))^*}}\big]  
\nonumber\\ 
& \quad 
=\f{1}{2i}\big[{}_{N^{1/2}(\dOm)}
\big\langle \tau^N_{z_0} u + L(\widehat\gamma_D u ),  \widehat\gamma_D u 
\big\rangle_{(N^{1/2}(\dOm))^*}
\nonumber\\ 
& \hspace*{1.3cm} 
-\ol{{}_{N^{1/2}(\dOm)}
\big\langle \tau^N_{z_0} u + L(\widehat\gamma_D u ),  \widehat\gamma_D u 
\big\rangle_{(N^{1/2}(\dOm))^*}}\big]  
\nonumber\\[4pt]
& \quad 
=\f{1}{2i}\big[{}_{N^{1/2}(\dOm)}\big\langle \tau^N_{z_0} u     ,\widehat\gamma_D u 
\big\rangle_{(N^{1/2}(\dOm))^*}
-\ol{{}_{N^{1/2}(\dOm)}\big\langle \tau^N_{z_0} u     ,\widehat\gamma_D u 
\big\rangle_{(N^{1/2}(\dOm))^*}}\big]  
\nonumber\\ 
& \quad 
=\f{1}{2i}\big[( \Delta u,u)_{L^2(\Om;d^nx)}
- (u,\Delta u)_{L^2(\Om;d^nx)}\big]  
\nonumber\\ 
& \quad 
=\f{1}{2i}\big[(-(z+z_0) u,u)_{L^2(\Om;d^nx)}
- (u,-(z+z_0)u)_{L^2(\Om;d^nx)}\big]  
\nonumber\\ 
& \quad 
=\Im(z)\|u\|^2_{\LOm}\geq 0.
\end{align} 
\end{proof}
%%%%%%%%%%%%%%%%%%%%%%%%%%% 

Our next theorem provides a transparent connection between 
the inverse of $M^D_{X,L,z_0}(z)$ and $M^{(0)}_{D,N,\Om}(z+z_0)$, 
$M^{(0)}_{D,N,\Om}(z_0)$.  

%%%%%%%%%%%%%%%%%%%%%%%%
\begin{theorem}\lb{Th.DH3}
Retain the hypotheses and conventions made in Theorem \ref{Th.Nak}.
In addition, assume that \eqref{UU.4X} holds. Then the map 
\begin{eqnarray}\lb{UU.4Y1}
M^D_{X,L,z_0}(z):N^{1/2}(\dOm)\to \dom (L)
\end{eqnarray}
is well-defined, linear, and bounded, provided $\dom (L)$ is equipped 
with the natural graph norm, that is, $\dom (L)\ni g\mapsto 
\|g\|_{(N^{1/2}(\dOm))^*}+\|L(g)\|_{N^{1/2}(\dOm)}$ $($in which case $\dom (L)$ 
becomes a Banach space\,$)$. Its inverse is given by 
\begin{equation}\lb{UU.4Y2}
M^D_{X,L,z_0}(z)^{-1}= L-M^{(0)}_{D,N,\Om}(z+z_0)+M^{(0)}_{D,N,\Om}(z_0)
:\dom (L)\to  N^{1/2}(\dOm).
\end{equation}
\end{theorem}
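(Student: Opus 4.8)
The strategy is to combine the already-established symmetry and Herglotz analyticity of $M^D_{X,L,z_0}(\cdot)$ (Theorems \ref{Th.DH}, \ref{Th.DH2}) with the boundary condition defining $\dom \big(-\Delta^D_{X,L,z_0}\big)$ and the Green-type identities in Theorem \ref{LL.w}, in order to produce an explicit two-sided inverse for $M^D_{X,L,z_0}(z)$. First I would record that, for $f\in N^{1/2}(\dOm)$, the element $u:=\big[\widehat\gamma_D\big(-\Delta^D_{X,L,z_0}-\ol{z}I_{\Om}\big)^{-1}\big]^*f$ lies in $\ker\big(-\Delta_{max}-(z+z_0)I_{\Om}\big)$ by Lemma \ref{l.Nak2}, that $M^D_{X,L,z_0}(z)f=\widehat\gamma_D u$ by \eqref{UU.1H}, and — this is exactly \eqref{UU.4X5}, derived in the proof of Theorem \ref{Th.DH2} under the hypothesis \eqref{UU.4X} — that $\widehat\gamma_D u\in\dom(L)$ with
\begin{equation}
-L(\widehat\gamma_D u)=\tau^N_{z_0}u-f.
\end{equation}
Since $u\in\ker\big(-\Delta_{max}-(z+z_0)I_{\Om}\big)$, the definition \eqref{3.Aw2} of $\tau^N_{z_0}$ together with \eqref{Sim-Gr} (the vanishing $\tau^N_{z+z_0}u=0$) gives $\widehat\gamma_N u=-M^{(0)}_{D,N,\Om}(z+z_0)\big(\widehat\gamma_D u\big)$, hence
\begin{equation}
\tau^N_{z_0}u=\widehat\gamma_N u+M^{(0)}_{D,N,\Om}(z_0)\big(\widehat\gamma_D u\big)
=\big[M^{(0)}_{D,N,\Om}(z_0)-M^{(0)}_{D,N,\Om}(z+z_0)\big]\big(\widehat\gamma_D u\big).
\end{equation}
Substituting this into the relation for $L(\widehat\gamma_D u)$ and writing $g:=\widehat\gamma_D u=M^D_{X,L,z_0}(z)f$ yields
\begin{equation}
\big[L-M^{(0)}_{D,N,\Om}(z+z_0)+M^{(0)}_{D,N,\Om}(z_0)\big]\big(M^D_{X,L,z_0}(z)f\big)=f,
\end{equation}
which is precisely the statement that the operator in \eqref{UU.4Y2} is a left inverse of $M^D_{X,L,z_0}(z)$ on all of $N^{1/2}(\dOm)$.

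\textbf{Boundedness and the right inverse.} Next I would argue that \eqref{UU.4Y1} is well-defined and bounded into $\dom(L)$ with the graph norm. This follows from Lemma \ref{LA.s}: the operator $\widehat\gamma_D\big(-\Delta^D_{X,L,z_0}-\ol{z}I_{\Om}\big)^{-1}\colon L^2(\Om;d^nx)\to\dom(L)$ is bounded for the graph norm, and dualizing \eqref{Ggg-1} (together with \eqref{UU.1H} and Corollary \ref{Lc.w} to see that $M^{(0)}_{D,N,\Om}(z_0)-M^{(0)}_{D,N,\Om}(z+z_0)\in\cB\big((N^{1/2}(\dOm))^*,N^{1/2}(\dOm)\big)$) shows that $g\mapsto L(g)=\tau^N_{z_0}u-f$ stays bounded in $N^{1/2}(\dOm)$ as $f$ ranges over $N^{1/2}(\dOm)$; alternatively one estimates $\|L(M^D_{X,L,z_0}(z)f)\|_{N^{1/2}(\dOm)}$ directly from the identity $L(g)=f-\tau^N_{z_0}u$ just derived, using the boundedness of $\tau^N_{z_0}$ on $\dom(-\Delta_{max})$ (Theorem \ref{LL.w}) and of $\big[\widehat\gamma_D(\cdots)^{-1}\big]^*$ (cf.\ \eqref{Nak.12B}). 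Once the left-inverse identity and the mapping property \eqref{UU.4Y1} are in hand, the right-inverse identity is obtained symmetrically: given $h\in\dom(L)$, choose $v\in\ker\big(-\Delta_{max}-(z+z_0)I_{\Om}\big)$ with $\widehat\gamma_D v=h$ (possible by Corollary \ref{New-CV2}), verify using the computation above that $v\in\dom\big(-\Delta^D_{X,L,z_0}\big)$ and that $M^D_{X,L,z_0}(z)\big(\big[L-M^{(0)}_{D,N,\Om}(z+z_0)+M^{(0)}_{D,N,\Om}(z_0)\big]h\big)=h$; equivalently, one may invoke the abstract fact that a bounded operator between Banach spaces admitting a bounded two-sided... here it suffices to note that the computation is reversible line by line, so the explicit formula is a genuine two-sided inverse.

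\textbf{Main obstacle.} The analytic content is entirely concentrated in the step $\widehat\gamma_D u\in\dom(L)$ with $-L(\widehat\gamma_D u)=\tau^N_{z_0}u-f$; fortunately this is not new work, as it was already extracted as \eqref{UU.4X5} inside the proof of Theorem \ref{Th.DH2}, and it is exactly here that the hypothesis $X=\big(N^{1/2}(\dOm)\big)^*$ is essential (it is what allows \eqref{UU.4X3}, i.e.\ the replacement of $\tau^N_{z_0}v$ by $-L(\widehat\gamma_D v)$ for \emph{all} $v\in\dom\big(-\Delta^D_{X,L,z_0}\big)$ rather than only after restriction to a closed subspace $X\subsetneq\big(N^{1/2}(\dOm)\big)^*$). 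So the remaining care is purely bookkeeping: making sure the several compatibility and density statements (Corollaries \ref{L-Den}, \ref{L-Den2}, Lemma \ref{3U-x}) are correctly invoked so that the compositions of the $M^{(0)}$-operators and $L$ are taken in the right spaces, and that the graph-norm topology on $\dom(L)$ is used consistently. I expect no genuine difficulty beyond assembling the pieces already proved.
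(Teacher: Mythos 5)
Your first half — the left-inverse identity and the boundedness of \eqref{UU.4Y1} — is exactly the paper's argument: take $u$ as in \eqref{UU.4X1}, invoke \eqref{UU.4X5} to get $\widehat\gamma_D u\in\dom(L)$ with $L(\widehat\gamma_D u)=f-\tau^N_{z_0}u$, use $\widehat\gamma_N u=-M^{(0)}_{D,N,\Om}(z+z_0)(\widehat\gamma_D u)$ on $\ker(-\Delta_{max}-(z+z_0)I_\Om)$ to rewrite $\tau^N_{z_0}u=\big[M^{(0)}_{D,N,\Om}(z_0)-M^{(0)}_{D,N,\Om}(z+z_0)\big](\widehat\gamma_D u)$, and estimate as in \eqref{UU.w1}. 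So far this is correct and coincides with the paper.

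The gap is in your right-inverse step. For arbitrary $h\in\dom(L)$, the kernel element $v\in\ker(-\Delta_{max}-(z+z_0)I_\Om)$ with $\widehat\gamma_D v=h$ does \emph{not} in general lie in $\dom\big(-\Delta^D_{X,L,z_0}\big)$: that would require $\tau^N_{z_0}v=-L(h)$, i.e.\ $\big[L-M^{(0)}_{D,N,\Om}(z+z_0)+M^{(0)}_{D,N,\Om}(z_0)\big]h=0$, which holds only for $h$ in the kernel of the operator in \eqref{UU.4Y2}. Consequently ``the computation is reversible line by line'' does not by itself yield $M^D_{X,L,z_0}(z)\big(\big[L-M^{(0)}_{D,N,\Om}(z+z_0)+M^{(0)}_{D,N,\Om}(z_0)\big]h\big)=h$; what is missing is a uniqueness argument identifying $v$ with the element $u$ produced by \eqref{UU.4X1} from the datum $f:=\big[L-M^{(0)}_{D,N,\Om}(z+z_0)+M^{(0)}_{D,N,\Om}(z_0)\big]h=\tau^N_{z_0}v+L(\widehat\gamma_D v)\in N^{1/2}(\dOm)$. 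This is how the paper closes the argument (cf.\ \eqref{UU.4Y5}--\eqref{UU.4Y6}): both $u$ and $v$ solve $(-\Delta-(z+z_0))\,\cdot=0$ and satisfy $\tau^N_{z_0}\,\cdot+L(\widehat\gamma_D\,\cdot)=f$, so $u-v$ belongs to $\dom\big(-\Delta^D_{X,L,z_0}\big)$ and is annihilated by $-\Delta^D_{X,L,z_0}-zI_\Om$, hence $u=v$ because $z\notin\si\big(-\Delta^D_{X,L,z_0}\big)$, giving $M^D_{X,L,z_0}(z)f=\widehat\gamma_D u=h$. Alternatively, your domain-membership claim \emph{is} valid precisely for $h$ with $\big[L-M^{(0)}_{D,N,\Om}(z+z_0)+M^{(0)}_{D,N,\Om}(z_0)\big]h=0$, and then the same resolvent-set argument forces $v=0$, hence $h=0$; injectivity of the operator in \eqref{UU.4Y2} combined with your left-inverse identity then also yields the two-sided inverse. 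Either way, the resolvent-set condition $z\notin\si\big(-\Delta^D_{X,L,z_0}\big)$ must enter explicitly at this point, and as written your proposal omits it.
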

%%%%%%%%%%%%%%%%%%%%%%%%
\begin{proof}
Fix $f\in N^{1/2}(\dOm)$ arbitrary, and define $u$ as
in \eqref{UU.4X1}. Then $\widehat\gamma_D u \in \dom (L)$ and 
\begin{eqnarray}\lb{UU.4Y3}
(-\Delta-(z+z_0))u=0\mbox{ in }\Omega,\quad u\in L^2(\Om;d^nx),\quad
\tau^N_{z_0} u + L(\widehat\gamma_D u)=f, 
\end{eqnarray}
because of the argument carried out in \eqref{UU.4X1}--\eqref{UU.4X5}.
Then $M^D_{X,L,z_0}(z)f=\widehat\gamma_D u$ so that 
\begin{align}\lb{UU.w1}
& \big\|M^D_{X,L,z_0}(z)f\big\|_{(N^{1/2}(\dOm))^*}
+ \big\|L(M^D_{X,L,z_0}(z)f)\big\|_{N^{1/2}(\dOm)}
\nonumber\\ 
&  \quad
=\|\widehat\gamma_D u \|_{(N^{1/2}(\dOm))^*}
+\|L(\widehat\gamma_D u )\|_{N^{1/2}(\dOm)}
\nonumber\\
& \quad
=\|\widehat\gamma_D u \|_{(N^{1/2}(\dOm))^*}
+ \big\|\tau^N_{z_0} u - f \big\|_{N^{1/2}(\dOm)}
\nonumber\\
& \quad
\leq C\|u\|_{L^2(\Om;d^nx)}+C\|f\|_{N^{1/2}(\dOm)}
\nonumber\\[4pt]
& \quad
\leq C\|f\|_{N^{1/2}(\dOm)}.
\end{align}
Above, we have used the boundedness of the operators \eqref{Tan-C10} and
\eqref{3.Aw1}. The estimate \eqref{UU.w1} proves that the map  
\eqref{UU.4Y1} is bounded. Furthermore, the fact that
$M^D_{X,L,z_0}(z)f=\widehat\gamma_D u $ also entails 
\begin{align}\lb{UU.4Y4}
& \big[L-M^{(0)}_{D,N,\Om}(z+z_0)+M^{(0)}_{D,N,\Om}(z_0)\big]M^D_{X,L,z_0}(z)f
\nonumber\\
& \quad
= L(\widehat\gamma_D u )-M^{(0)}_{D,N,\Om}(z+z_0)(\widehat\gamma_D u)
+M^{(0)}_{D,N,\Om}(z_0)(\widehat\gamma_D u)
\nonumber\\
& \quad
=f-\tau^N_{z_0} u -M^{(0)}_{D,N,\Om}(z+z_0)(\widehat\gamma_D u)
+M^{(0)}_{D,N,\Om}(z_0)(\widehat\gamma_D u)
\nonumber\\
& \quad
=f-\widehat\gamma_N u +M_{D,N,\Om}^{(0)}(z_0) (\widehat\gamma_D u)  % \no \\
% & \qquad 
-M^{(0)}_{D,N,\Om}(z+z_0)(\widehat\gamma_D u)
+M^{(0)}_{D,N,\Om}(z_0)(\widehat\gamma_D u)
\nonumber\\
& \quad =f,
\end{align}
since $M^{(0)}_{D,N,\Om}(z+z_0)(\widehat\gamma_D u)=-\widehat\gamma_N u$.
Conversely, assume that $g\in \dom (L)$ is given and 
let $w$ be such that 
\begin{equation}\lb{UU.4Y5}
(-\Delta-(z+z_0))w=0\mbox{ in }\Omega,\quad w\in L^2(\Om;d^nx),\quad
\widehat\gamma_D w =g\in \bigl(N^{1/2}(\dOm)\bigr)^*.
\end{equation}
Then $\big[L-M^{(0)}_{D,N,\Om}(z+z_0)+M^{(0)}_{D,N,\Om}(z_0)\big]g=f$, 
where we have set $f:=\tau^N_{z_0} w + L g \in N^{1/2}(\dOm)$.
Next, $M^D_{X,L,z_0}(z)f=\widehat\gamma_D u $, where $u$ is as in \eqref{UU.4X1}.
However, because of \eqref{UU.4Y3}, the function $v:=u-w$ satisfies
\begin{eqnarray}\lb{UU.4Y6}
(-\Delta-(z+z_0))v=0\mbox{ in }\Omega,\quad v\in L^2(\Om;d^nx),\quad
\widehat\gamma_D v=0,
\end{eqnarray}
and hence necessarily $v=0$, by the uniqueness part in Theorem \ref{tH.A}. 
This yields $u=w$ which implies 
$\widehat\gamma_D u =\widehat\gamma_D w=g$, and hence, $M^D_{X,L,z_0}(z)f=g$. 
\end{proof}
%%%%%%%%%%%%%%%%%%%%%%%%%%%%%

Having established Theorem \ref{Th.DH3}, we now proceed to state 
an alternative version of our earlier Krein formula (cf.\ Theorem \ref{Th.Nak})
on the space $L^2(\Omega;d^nx)$. 

%%%%%%%%%%%%%%%%%%%%%%%%%%%%%
\begin{corollary}\lb{C.Nak}
In addition to the hypotheses made in Theorem \ref{Th.Nak} assume that 
$X=\bigl(N^{1/2}(\dOm)\bigr)^*$. Then the following Krein  formula holds 
on $L^2(\Omega;d^nx)$:
\begin{align} \lb{NaK-C1}
& \big(-\Delta^D_{X,L,z_0}-zI_{\Om}\big)^{-1} 
= \big(-\Delta_{D,\Om}-(z+z_0)I_{\Om}\big)^{-1}
\nonumber\\ 
& \quad
+\big[\tau^N_{z_0}\big(-\Delta_{D,\Om}-(\ol{z}+z_0)I_{\Om}\big)^{-1}\big]^* 
 \big[L-M^{(0)}_{D,N,\Om}(z+z_0)+M^{(0)}_{D,N,\Om}(z_0)\big]^{-1}
\nonumber\\
& \qquad \times 
 \big[\tau^N_{z_0}\big(-\Delta_{D,\Om}-(z+z_0)I_{\Om}\big)^{-1}\big].
\end{align}
In particular, for the resolvents of the Krein  Laplacian, one has
\begin{align}\lb{NaK-C2}
& \big(-\Delta_{K,\Om,z_0}-zI_{\Om}\big)^{-1} 
= \big(-\Delta_{D,\Om}-(z+z_0)I_{\Om}\big)^{-1}   \no \\
& \quad 
+\big[\tau^N_{z_0}\big(-\Delta_{D,\Om}-(\ol{z}+z_0)I_{\Om}\big)^{-1}\big]^* 
\big[M^{(0)}_{D,N,\Om}(z_0)-M^{(0)}_{D,N,\Om}(z+z_0)\big]^{-1}    \\
& \qquad \times 
\big[\tau^N_{z_0}\big(-\Delta_{D,\Om}-(z+z_0)I_{\Om}\big)^{-1}\big],   \no 
\end{align}
as operators in $\cB\bigl(L^2(\Omega;d^nx)\bigr)$.
\end{corollary}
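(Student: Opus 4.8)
\textbf{Plan of proof for Corollary \ref{C.Nak}.}
The plan is to combine Theorem \ref{Th.Nak} (the Krein-type resolvent formula with the boundary ``transition'' operator $M^D_{X,L,z_0}(z)$) with Theorem \ref{Th.DH3} (which identifies the inverse of $M^D_{X,L,z_0}(z)$). Concretely, Theorem \ref{Th.Nak} gives, under the standing hypotheses on $z_0, z$, the identity
\begin{align}\lb{C.Nak-pr1}
\big(-\Delta^D_{X,L,z_0}-zI_{\Om}\big)^{-1}
&= \big(-\Delta_{D,\Om}-(z+z_0)I_{\Om}\big)^{-1}  \no \\
& \quad
+\big[\tau^N_{z_0}\big(-\Delta_{D,\Om}-(\ol{z}+z_0)I_{\Om}\big)^{-1}\big]^*
M^D_{X,L,z_0}(z)
\big[\tau^N_{z_0}\big(-\Delta_{D,\Om}-(z+z_0)I_{\Om}\big)^{-1}\big].
\end{align}
When $X=\bigl(N^{1/2}(\dOm)\bigr)^*$, Theorem \ref{Th.DH3} applies and yields
\begin{equation}\lb{C.Nak-pr2}
M^D_{X,L,z_0}(z)=\big[L-M^{(0)}_{D,N,\Om}(z+z_0)+M^{(0)}_{D,N,\Om}(z_0)\big]^{-1}.
\end{equation}
Substituting \eqref{C.Nak-pr2} into \eqref{C.Nak-pr1} directly produces \eqref{NaK-C1}. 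The only genuine verification needed here is that the operator $\big[L-M^{(0)}_{D,N,\Om}(z+z_0)+M^{(0)}_{D,N,\Om}(z_0)\big]$, viewed as a map $\dom(L)\to N^{1/2}(\dOm)$, is indeed boundedly invertible for $z\in\bbC\backslash\si(-\Delta^D_{X,L,z_0})$; but this is exactly the content of Theorem \ref{Th.DH3} (which shows $M^D_{X,L,z_0}(z)$ is a bounded bijection $N^{1/2}(\dOm)\to\dom(L)$ with that inverse), so no extra work is required. I would also note in passing that the composition on the right-hand side is meaningful: by \eqref{Nak.6} the operator $\tau^N_{z_0}(-\Delta_{D,\Om}-(z+z_0)I_{\Om})^{-1}$ maps $L^2(\Om;d^nx)$ into $N^{1/2}(\dOm)$, the middle factor maps $N^{1/2}(\dOm)$ into $\dom(L)\hookrightarrow (N^{1/2}(\dOm))^*$, and its adjoint then maps back into $L^2(\Om;d^nx)$.

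For the second identity \eqref{NaK-C2}, the plan is to specialize \eqref{NaK-C1} to the choice $X:=\dom(L):=\bigl(N^{1/2}(\dOm)\bigr)^*$ and $L:=0$, which by \eqref{4.AK} of Corollary \ref{rE.1} gives $-\Delta^D_{X,L,z_0}=-\Delta_{K,\Om,z_0}$. Setting $L=0$ in the middle factor of \eqref{NaK-C1} collapses it to $\big[M^{(0)}_{D,N,\Om}(z_0)-M^{(0)}_{D,N,\Om}(z+z_0)\big]^{-1}$, and one reads off \eqref{NaK-C2}. Strictly speaking, one must check that the standing hypothesis of Theorem \ref{Th.Nak} is met in this case, namely that $z\in\bbC\backslash\si(-\Delta^D_{X,L,z_0})=\bbC\backslash\si(-\Delta_{K,\Om,z_0})$, which is automatic once we restrict $z$ to lie off that spectrum; by \eqref{spec-1} the Krein Laplacian has discrete spectrum (with the possible exception of the origin), so this is a generic condition.

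\textbf{Main obstacle.} There is essentially no substantial obstacle here, since both halves of the proof are immediate substitutions into previously established results; this is why the statement is phrased as a corollary rather than a theorem. The only point demanding a small amount of care is bookkeeping of the various spectral exclusion sets ($\si(-\Delta_{D,\Om})$ for $z_0$ and $z+z_0$, and $\si(-\Delta^D_{X,L,z_0})$ for $z$) and confirming that the invertibility assertion in Theorem \ref{Th.DH3} holds precisely on the set $\bbC\backslash\si(-\Delta^D_{X,L,z_0})$ — which it does, because $M^D_{X,L,z_0}(z)$ is defined and bijective exactly there. Beyond that, the proof is a one-line substitution followed by the specialization $L=0$.
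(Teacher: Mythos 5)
Your proposal is correct and follows essentially the same route as the paper: \eqref{NaK-C1} is obtained by inserting the inversion formula of Theorem \ref{Th.DH3} into the Krein formula of Theorem \ref{Th.Nak}, and \eqref{NaK-C2} then follows by specializing to $L=0$, $X=\bigl(N^{1/2}(\dOm)\bigr)^*$ via \eqref{4.AK}. Your additional remarks on the mapping properties and spectral exclusion sets are sound but not needed beyond what those theorems already provide.
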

%%%%%%%%%%%%%%%%%%%%%%%%
\begin{proof}
Formula \eqref{NaK-C1} is a direct consequence of Theorem \ref{Th.Nak} and 
Theorem \ref{Th.DH3}. Formula \eqref{NaK-C2} follows from this and 
\eqref{4.AK}.
\end{proof}
%%%%%%%%%%%%%%%%%%%%%%%%%%%%%

Naturally, \eqref{NaK-C2} (with $z_0=0$) resembles the abstract relation \eqref{3.8c} between 
the Krein--von Neumann extension $S_K$ and the Friedrichs extension $S_F$ of $S$, but the 
actual methods of proof are quite different in either case. 

%%%%%%%%%%%%%%%%%%%%%%%%%%%%%
\begin{remark}\lb{r.her1}
Under the assumptions made in Corollary \ref{C.Nak}, 
the operator-valued function given by 
\begin{equation}
z\mapsto \big[M^{(0)}_{D,N,\Om}(z_0)-M^{(0)}_{D,N,\Om}(z+z_0)\big]^{-1}
\in\cB\big(N^{1/2}(\dOm),(N^{1/2}(\dOm))^*\big),
\end{equation} 
appearing in \eqref{NaK-C2}, has the Herglotz property. Indeed, by 
Theorem \ref{Th.DH2} and Theorem \ref{Th.DH3}, this is the case for  
the operator-valued function $M^D_{X,L,z_0}(z)=
\big[L-M^{(0)}_{D,N,\Om}(z+z_0)+M^{(0)}_{D,N,\Om}(z_0)\big]^{-1}$ 
and the map in question corresponds to this when $L=0$. 
\end{remark} 
%%%%%%%%%%%%%%%%%%%%%%%%%%%%%%%

Our last theorem in this section is a Krein formula involving the resolvents 
of certain self-adjoint extensions of the Laplacian. To set the stage, 
we first prove the following lemma. 

%%%%%%%%%%%%%%%%%%% 
\begin{lemma} \lb{l.Na-1}
Assume Hypothesis \ref{h.Conv} and suppose that 
$z_0\in\bbR\backslash\si(-\Delta_{D,\Om})$.
Let $X_1$ be a closed subspace of $X_2:=\bigl(N^{1/2}(\partial\Omega)\bigr)^*$.
In addition, consider two self-adjoint operators 
\begin{eqnarray}\lb{AG-1}
L_j:\dom (L_j)\subseteq X_j\to  X_j^*, \quad j=1,2,
\end{eqnarray} 
with the property that 
\begin{eqnarray}\lb{AG-e}
\dom (L_1)\hookrightarrow \dom (L_2) \, \mbox{ boundedly}.
\end{eqnarray} 
Associated with $L_j$, $j=1,2$, define the self-adjoint  
operators $- \Delta^D_{X_j,L_j,z_0}$, $j=1,2$, as in \eqref{4.Aw3} 
corresponding to $z=z_0$. In addition, let 
$z\in\bbC\backslash \big(\si(-\Delta^D_{X_1,L_1,z_0})\cup
\si(-\Delta^D_{X_2,L_2,z_0})\big)$.
Then the following resolvent relation holds on $L^2(\Omega;d^nx)$, 
\begin{align}\lb{N-Bbb1}
& \big(-\Delta^D_{X_2,L_2,z_0}-zI_\Om\big)^{-1} 
=\big(-\Delta^D_{X_1,L_1,z_0}-zI_\Om\big)^{-1}  
\\
& \quad 
+ \bigl[\widehat\gamma_D\big(-\Delta^D_{X_2,L_2,z_0}-\ol{z}I_\Om\big)^{-1}\bigr]^*
\big(\tau^N_{z_0} + L_2 \widehat\gamma_D\big)
\big(-\Delta^D_{X_1,L_1,z_0}-zI_\Om\big)^{-1}.   \no 
\end{align}
\end{lemma}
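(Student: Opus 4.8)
The strategy parallels the proof of Lemma~\ref{l.Nak}: verify \eqref{N-Bbb1} first for $z\in\bbC\backslash\bbR$ (so that both resolvents are genuine bounded operators on $L^2(\Om;d^nx)$ and all the boundary trace maps are available), then extend to all admissible $z$ by analytic continuation, using that both $-\Delta^D_{X_1,L_1,z_0}$ and $-\Delta^D_{X_2,L_2,z_0}$ are self-adjoint and that the resolvents and the regularized traces $\tau^N_{z_0}$, $\widehat\gamma_D$ depend analytically on $z$. First I would record the mapping properties that make the right-hand side of \eqref{N-Bbb1} meaningful: $\big(-\Delta^D_{X_1,L_1,z_0}-zI_\Om\big)^{-1}\in\cB\big(L^2(\Om;d^nx),\dom(-\Delta_{max})\big)$ (as in the continuity argument following \eqref{16.M8}), so $\widehat\gamma_D\big(-\Delta^D_{X_1,L_1,z_0}-zI_\Om\big)^{-1}\in\cB\big(L^2(\Om;d^nx),(N^{1/2}(\dOm))^*\big)$; by Lemma~\ref{LA.s} (applied with $L_2$ in place of $L$, using \eqref{AG-e} so that $\widehat\gamma_D$ lands in $\dom(L_2)$), $L_2\widehat\gamma_D\big(-\Delta^D_{X_1,L_1,z_0}-zI_\Om\big)^{-1}$ makes sense into $X_2^*$; and $\tau^N_{z_0}\big(-\Delta^D_{X_1,L_1,z_0}-zI_\Om\big)^{-1}$ maps $L^2(\Om;d^nx)$ boundedly into $N^{1/2}(\dOm)$ by Theorem~\ref{LL.w}. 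Finally $\big[\widehat\gamma_D\big(-\Delta^D_{X_2,L_2,z_0}-\ol zI_\Om\big)^{-1}\big]^*\in\cB\big(N^{1/2}(\dOm),L^2(\Om;d^nx)\big)$, so the composition is a bounded operator on $L^2(\Om;d^nx)$.

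\textbf{The weak-form reduction.} The heart of the argument is to test \eqref{N-Bbb1} against arbitrary $u_1,v_1\in L^2(\Om;d^nx)$, setting
\begin{equation*}
u:=\big(-\Delta^D_{X_2,L_2,z_0}-\ol zI_\Om\big)^{-1}u_1\in\dom\big(-\Delta^D_{X_2,L_2,z_0}\big),\quad
v:=\big(-\Delta^D_{X_1,L_1,z_0}-zI_\Om\big)^{-1}v_1\in\dom\big(-\Delta^D_{X_1,L_1,z_0}\big).
\end{equation*}
Using self-adjointness of both extensions to move resolvents onto the left factor, \eqref{N-Bbb1} reduces to the identity
\begin{align*}
& \big(\big(-\Delta^D_{X_2,L_2,z_0}-\ol zI_\Om\big)u,v\big)_{L^2(\Om;d^nx)}
-\big(u,\big(-\Delta^D_{X_1,L_1,z_0}-zI_\Om\big)v\big)_{L^2(\Om;d^nx)} \\
& \quad =\ol{{}_{N^{1/2}(\dOm)}\big\langle\big(\tau^N_{z_0}+L_2\widehat\gamma_D\big)v,\widehat\gamma_D u\big\rangle_{(N^{1/2}(\dOm))^*}},
\end{align*}
that is, after cancelling the $\ol z$ and $z$ shifts (both sides of the first line equal $((-\Delta-z_0)u,v)-(u,(-\Delta-z_0)v)$), to
\begin{equation*}
((-\Delta-z_0)u,v)_{L^2(\Om;d^nx)}-(u,(-\Delta-z_0)v)_{L^2(\Om;d^nx)}
=\ol{{}_{N^{1/2}(\dOm)}\big\langle\tau^N_{z_0}v+L_2\big(\widehat\gamma_D v\big),\widehat\gamma_D u\big\rangle_{(N^{1/2}(\dOm))^*}}.
\end{equation*}
By Theorem~\ref{LL.w}\,(iv), Green's formula \eqref{T-GreenX} (with $z=0$) gives
\begin{equation*}
((-\Delta-z_0)u,v)-(u,(-\Delta-z_0)v)
=-{}_{N^{1/2}(\dOm)}\big\langle\tau^N_{z_0}u,\widehat\gamma_D v\big\rangle_{(N^{1/2}(\dOm))^*}
+\ol{{}_{N^{1/2}(\dOm)}\big\langle\tau^N_{z_0}v,\widehat\gamma_D u\big\rangle_{(N^{1/2}(\dOm))^*}},
\end{equation*}
so it remains to show ${}_{N^{1/2}(\dOm)}\langle\tau^N_{z_0}u,\widehat\gamma_D v\rangle_{(N^{1/2}(\dOm))^*}=-\ol{{}_{N^{1/2}(\dOm)}\langle L_2(\widehat\gamma_D v),\widehat\gamma_D u\rangle_{(N^{1/2}(\dOm))^*}}$. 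Since $u\in\dom\big(-\Delta^D_{X_2,L_2,z_0}\big)$, the boundary condition \eqref{4.Aw4B} for $L_2$ (with $X_2=(N^{1/2}(\dOm))^*$, so the restriction to $X_2$ is vacuous) says precisely ${}_{N^{1/2}(\dOm)}\langle\tau^N_{z_0}u,f\rangle_{(N^{1/2}(\dOm))^*}=-\ol{{}_{X_2}\langle f,L_2(\widehat\gamma_D u)\rangle_{X_2^*}}$ for all $f\in X_2$; applying this with $f=\widehat\gamma_D v$ (legitimate since $\widehat\gamma_D v\in X_2=(N^{1/2}(\dOm))^*$) and then using the self-adjointness of $L_2$ to swap the roles of $\widehat\gamma_D u$ and $\widehat\gamma_D v$ delivers exactly the required identity. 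This completes the verification for $z\in\bbC\backslash\bbR$.

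\textbf{Main obstacle and closing.} The one point requiring care — and the place most likely to hide a sign error — is the bookkeeping of the duality pairing $\langle\cdot,\cdot\rangle_{(N^{1/2}(\dOm))^*}$, which is antilinear in its first slot, together with the conjugate transposes produced when resolvents are moved across the inner product; one must be scrupulous about whether \eqref{4.Aw4B} is invoked for $u$ with test element $\widehat\gamma_D v$ or the other way around, and about the conjugation bars in \eqref{T-GreenX}. I would handle this by writing every pairing explicitly and checking consistency against the special case $L_1=L_2=0$, where \eqref{N-Bbb1} must reduce (via \eqref{4.AK}) to the Krein-Laplacian formula, and against the degenerate case $L_1=L_2$ where the extra term must vanish because then $\widehat\gamma_D u\in\dom(L_1)$ forces $\tau^N_{z_0}u+L_1\widehat\gamma_D u=0$. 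Once the $z\in\bbC\backslash\bbR$ case is settled, both sides of \eqref{N-Bbb1} are, by the continuity/analyticity of $z\mapsto\big(-\Delta^D_{X_j,L_j,z_0}-zI_\Om\big)^{-1}\in\cB\big(L^2(\Om;d^nx),\dom(-\Delta_{max})\big)$ (established as in the proof of Theorem~\ref{Th.DH2}) composed with the bounded, $z$-independent trace maps $\widehat\gamma_D$ and the analytic $z\mapsto\tau^N_{z_0}$ (which is $z$-\emph{in}dependent here anyway), analytic on $\bbC\backslash\big(\si(-\Delta^D_{X_1,L_1,z_0})\cup\si(-\Delta^D_{X_2,L_2,z_0})\big)$, so they agree there by analytic continuation, proving \eqref{N-Bbb1}.
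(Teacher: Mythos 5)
Your overall strategy coincides with the paper's: test \eqref{N-Bbb1} against arbitrary $L^2$-functions, pass to $\psi_1=(-\Delta^D_{X_1,L_1,z_0}-zI_\Om)^{-1}v_1$ and $\psi_2=(-\Delta^D_{X_2,L_2,z_0}-\ol zI_\Om)^{-1}u_1$, and finish with the regularized Green formula \eqref{T-GreenX}, the boundary condition \eqref{4.Aw4B} for the element of $\dom(-\Delta^D_{X_2,L_2,z_0})$, and the symmetry of $L_2$ (with \eqref{AG-e} guaranteeing $\widehat\gamma_D\psi_1\in\dom(L_2)$). The preliminary restriction to $z\in\bbC\backslash\bbR$ followed by analytic continuation is not needed here: unlike in Lemma \ref{l.Nak}, every step of the direct computation is valid for all $z$ outside both spectra, and the paper's proof proceeds without it; this detour is harmless but superfluous.

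The genuine problem is a sign in your reduction step. With your $u=\psi_2$, $v=\psi_1$, testing \eqref{N-Bbb1} as $(u_1,[\,\cdot\,]v_1)_{L^2(\Om;d^nx)}$ gives $(u_1,(-\Delta^D_{X_2,L_2,z_0}-zI_\Om)^{-1}v_1)=(u,(-\Delta^D_{X_1,L_1,z_0}-zI_\Om)v)$ and $(u_1,(-\Delta^D_{X_1,L_1,z_0}-zI_\Om)^{-1}v_1)=((-\Delta^D_{X_2,L_2,z_0}-\ol zI_\Om)u,v)$, so the boundary identity that \eqref{N-Bbb1} actually requires is
\begin{equation*}
\big(u,(-\Delta^D_{X_1,L_1,z_0}-zI_\Om)v\big)_{L^2(\Om;d^nx)}-\big((-\Delta^D_{X_2,L_2,z_0}-\ol zI_\Om)u,v\big)_{L^2(\Om;d^nx)}
=\ol{{}_{N^{1/2}(\dOm)}\big\langle\big(\tau^N_{z_0}+L_2\widehat\gamma_D\big)v,\widehat\gamma_D u\big\rangle_{(N^{1/2}(\dOm))^*}},
\end{equation*}
which is the paper's \eqref{Na3Z}; your reduced identity carries this left-hand side with the opposite sign, and since that quantity is not generically purely imaginary the two versions are not equivalent. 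Your subsequent verification (Green's formula \eqref{T-GreenX} with $z=0$, then \eqref{4.Aw4B} with $f=\widehat\gamma_D v$ and the self-adjointness of $L_2$) is carried out correctly, but it establishes precisely your sign-flipped identity; hence, as written, the argument proves \eqref{N-Bbb1} with a minus sign in front of the correction term rather than \eqref{N-Bbb1} itself. Note also that your proposed consistency checks are sign-blind: for $L_1=L_2$, and in particular in the Krein case $L_1=L_2=0$, the correction term vanishes identically, so they cannot detect the discrepancy. In fairness, the sign you land on is exactly what \eqref{T-GreenX} combined with the Section-14 convention \eqref{4.Aw4B} produces, and this is the most unstable point of this part of the paper (compare the authors' warning after \eqref{Na1BF} and the mismatch between \eqref{N-Bbb1.2I} and \eqref{N-Bbb1.2}); but a proof of the lemma as stated must either correct the reduction and then trace where the compensating sign enters, or state explicitly that the convention \eqref{4.Aw4B} forces a sign adjustment in \eqref{N-Bbb1} — as it stands, your attempt leaves this unresolved.
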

%%%%%%%%%%%%%%%%%%%%%%%
\begin{proof}
First, we note that 
\begin{align}\lb{N-Bbb2}
& \widehat\gamma_D\big(-\Delta^D_{X_1,L_1,z_0}-zI_\Om\big)^{-1}
:L^2(\Omega;d^nx)\to 
\dom (L_1)\hookrightarrow \dom (L_2)   \\
& \hspace*{4.25cm} \hookrightarrow 
X_2=\big(N^{1/2}(\dOm)\big)^*.
\end{align}
Together with \eqref{Gaa-1}, \eqref{AG-1}, Lemma \ref{L-refN}, \eqref{3.Aw1}
and \eqref{Nak.11B}, this ensures that 
the composition of the operators appearing on the right-hand side of 
\eqref{N-Bbb1} is well-defined. 
Next, let $\phi_1,\phi_2\in L^2(\Om;d^nx)$ be arbitrary and define
\begin{align}
\begin{split}
\psi_1 & :=(-\Delta^D_{X_1,L_1,z_0}-zI_\Om)^{-1}\phi_1 
\in\dom(- \Delta^D_{X_1,L_1,z_0}) \subset \dom(-\Delta_{max}),
\\[6pt]
\psi_2 & :=(-\Delta^D_{X_2,L_2,z_0}-\ol{z}I_\Om)^{-1}\phi_2 
\in\dom(- \Delta^D_{X_2,L_2,z_0}) \subset \dom(- \Delta_{max}).
\end{split} \lb{Na2Z}
\end{align}
Our goal is to show that the following identity holds:
\begin{align}\lb{Hff-1}
&\big(\phi_2, \big(-\Delta^D_{X_2,L_2,z_0}-zI_\Om\big)^{-1}\phi_1
\big)_{L^2(\Om;d^nx)}   % \no \\
% & \qquad 
-\big(\phi_2,\big(-\Delta^D_{X_1,L_1,z_0}-zI_\Om\big)^{-1}\phi_1
\big)_{L^2(\Om;d^nx)}
\no \\
&\quad =\big(\phi_2,
\bigl[\widehat\gamma_D\big(-\Delta^D_{X_2,L_2,z_0}-\ol{z}I_\Om\big)^{-1}\bigr]^*
\big(\tau^N_{z_0} + L_2 \widehat\gamma_D\big)  
\big(-\Delta^D_{X_1,L_1,z_0}-zI_\Om\big)^{-1}\phi_1\big)_{L^2(\Om;d^nx)}. 
\end{align}
We note that according to \eqref{Na2Z} one has,
\begin{align}
& \big(\phi_2,\big(-\Delta^D_{X_1,L_1,z_0}-zI_\Om\big)^{-1}\phi_1
\big)_{L^2(\Om;d^nx)}   \no \\ 
& \quad = \big(\big(-\Delta^D_{X_2,L_2,z_0}-\ol{z}I_\Om\big)\psi_2,
\psi_1\big)_{L^2(\Om;d^nx)},
\\
& \big(\phi_2,\big(-\Delta^D_{X_2,L_2,z_0}-zI_\Om\big)^{-1}\phi_1
\big)_{L^2(\Om;d^nx)}  \no \\
& \quad =\big(\big(\big(-\Delta^D_{X_2,L_2,z_0}-zI_\Om\big)^{-1}\big)^*\phi_2,\phi_1
\big)_{L^2(\Om;d^nx)} 
\nonumber
\\
& \quad =\big(\big(-\Delta^D_{X_2,L_2,z_0}-\ol{z}I_\Om\big)^{-1}\phi_2,\phi_1
\big)_{L^2(\Om;d^nx)}
\nonumber
\\
& \quad =\big(\psi_2,\big(-\Delta^D_{X_1,L_1,z_0}-zI_\Om\big)\psi_1
\big)_{L^2(\Om;d^nx)},
\end{align}
and 
\begin{align}
&\big(\phi_2,
\bigl[\widehat\gamma_D\big(-\Delta^D_{X_2,L_2,z_0}-\ol{z}I_\Om\big)^{-1}\bigr]^*
\big(\tau^N_{z_0} + L_2 \widehat\gamma_D\big)  % \no \\
% & \qquad \times 
\big(-\Delta^D_{X_1,L_1,z_0}-zI_\Om\big)^{-1}\phi_1\big)_{L^2(\Om;d^nx)} 
\nonumber
\\[1mm] 
&\quad =\ol{{}_{N^{1/2}(\dOm)}\big\langle
\big(\tau^N_{z_0} + L_2 \widehat\gamma_D\big)
\big(-\Delta^D_{X_1,L_1,z_0}-zI_\Om\big)^{-1}\phi_1, 
\widehat\gamma_D}  % \no \\
% & \qquad \ol{\times 
\ol{\big(-\Delta^D_{X_2,L_2,z_0}-\ol{z}I_\Om\big)^{-1}\phi_2
\big\rangle_{(N^{1/2}(\dOm))^*}}
\nonumber
\\[1mm]
&\quad =\ol{{}_{N^{1/2}(\dOm)}\big\langle
\big[\tau^N_{z_0} \psi_1 + L_2 \widehat\gamma_D \psi_1\big], 
\widehat\gamma_D \psi_2 \big\rangle_{(N^{1/2}(\dOm))^*}}.
\end{align}
Thus, matters have been reduced to proving that
\begin{align}\lb{Na3Z}
& \big(\psi_2,\big(-\Delta^D_{X_1,L_1,z_0}-zI_\Om\big)\psi_1 \big)_{L^2(\Om;d^nx)}  
-\big(\big(-\Delta^D_{X_2,L_2,z_0}-\ol{z}I_\Om\big)\psi_2,\psi_1
\big)_{L^2(\Om;d^nx)} 
\no \\[1mm]
& \quad 
=\ol{{}_{N^{1/2}(\dOm)}\big\langle
\big[\tau^N_{z_0} \psi_1 + L_2 \widehat\gamma_D \psi_1\big],  
\widehat\gamma_D \psi_2 \big\rangle_{(N^{1/2}(\dOm))^*}}. 
\end{align}
Using \eqref{T-GreenX} for the left-hand side of \eqref{Na3Z} one obtains
\begin{align}
& \big(\psi_2,\big(-\Delta^D_{X_1,L_1,z_0}-zI_\Om\big)\psi_1
\big)_{L^2(\Om;d^nx)}  
-\big(\big(-\Delta^D_{X_2,L_2,z_0}-\ol{z}I_\Om\big)\psi_2,\psi_1
\big)_{L^2(\Om;d^nx)}
\nonumber 
\\
&\quad = (\psi_2,
(-\Delta-(\ol{z}+z_0))\psi_1)_{L^2(\Om;d^nx)} 
- ((-\Delta-(z+z_0))\psi_2,\psi_1)_{L^2(\Om;d^nx)} 
\nonumber 
\\
& \quad ={}_{N^{1/2}(\partial\Omega)}\big\langle\tau^N_{z_0} \psi_1,
\widehat\gamma_D \psi_2 \big\rangle_{(N^{1/2}(\partial\Omega))^*}
-\,\ol{{}_{N^{1/2}(\partial\Omega)} \big\langle\tau^N_{z_0} \psi_2,
\widehat{\gamma}_D \psi_1 \big\rangle_{(N^{1/2}(\partial\Omega))^*}}
\nonumber
\\
& \quad ={}_{N^{1/2}(\partial\Omega)}\big\langle\tau^N_{z_0} \psi_1,
\widehat\gamma_D \psi_2 \big\rangle_{(N^{1/2}(\partial\Omega))^*}
+ {}_{X_2}\langle\widehat{\gamma}_D \psi_1,L_2 \widehat{\gamma}_D \psi_2
\rangle_{X_2^*}
\nonumber
\\
& \quad ={}_{N^{1/2}(\partial\Omega)}\big\langle\tau^N_{z_0} \psi_1,
\widehat\gamma_D \psi_2 \big\rangle_{(N^{1/2}(\partial\Omega))^*}
+ \ol{{}_{X_2}\langle\widehat{\gamma}_D \psi_2,
L_2 \widehat{\gamma}_D \psi_1 \rangle_{X_2^*}}
\nonumber \\
& \quad ={}_{N^{1/2}(\partial\Omega)}\big\langle\tau^N_{z_0} \psi_1,
\widehat\gamma_D \psi_2 \big\rangle_{(N^{1/2}(\partial\Omega))^*}  % \no \\
% & \qquad 
+ {}_{N^{1/2}(\partial\Omega)}
\langle L_2 \widehat{\gamma}_D \psi_1,\widehat{\gamma}_D \psi_2 
\rangle_{(N^{1/2}(\partial\Omega))^*}
\nonumber  \\
& \quad 
={}_{N^{1/2}(\partial\Omega)}\big\langle \big[\tau^N_{z_0} \psi_1 
+ L_2 \widehat{\gamma}_D \psi_1\big],
\widehat\gamma_D \psi_2 \big\rangle_{(N^{1/2}(\partial\Omega))^*},
\end{align}
since $\psi_2\in\dom (- \Delta^D_{X_2,L_2,z_0})$ and 
\begin{align}
\psi_1\in \dom (- \Delta^D_{X_1,L_1,z_0})  \, \text{ implies } \, 
\widehat\gamma_D(\psi_1)\in & \dom (L_1)\hookrightarrow
\dom (L_2)  \no \\
& \hookrightarrow X_2=\big(N^{1/2}(\dOm)\big)^*.  \lb{Hn-d}
\end{align}
This justifies \eqref{Na3Z}, completing the proof of the lemma. 
\end{proof}
%%%%%%%%%%%%%%%%%%%%%%%%

We are now prepared to state and prove the Krein-type  formula alluded 
to earlier, expressing the difference of the resolvents of certain  
self-adjoint extensions of $-\Delta\big|_{C_0^{\infty}(\Om)}$ in $L^2(\Om; d^n x)$. 

%%%%%%%%%%%%%%%%%%% 
\begin{theorem}\lb{T.Na-q}
Assume Hypothesis \ref{h.Conv} and suppose that 
$z_0\in\bbR\backslash\si(-\Delta_{D,\Om})$.
In addition, consider two bounded, self-adjoint operators 
\begin{eqnarray}\lb{AG-1.1}
L_j:\bigl(N^{1/2}(\partial\Omega)\bigr)^*\to  N^{1/2}(\partial\Omega),
\quad j=1,2.
\end{eqnarray} 
Associated with $L_j$, $j=1,2$, define the self-adjoint 
operators $- \Delta^D_{X_j,L_j,z_0}$, $j=1,2$, as in \eqref{4.Aw3} 
corresponding to $z=z_0$ and $X_1=X_2=\bigl(N^{1/2}(\partial\Omega)\bigr)^*$.
In addition, let $z\in\bbC\backslash \big(\si(-\Delta^D_{X_1,L_1,z_0})\cup
\si(-\Delta^D_{X_2,L_2,z_0})\big)$.
Then the following resolvent relation holds on $L^2(\Omega;d^nx)$, 
\begin{align}\lb{N-Bbb1.1}
&  
\big(-\Delta^D_{X_2,L_2,z_0}-zI_\Om\big)^{-1} 
=\big(-\Delta^D_{X_1,L_1,z_0}-zI_\Om\big)^{-1}  
\\
& \quad
+\big[\widehat\gamma_D\big(-\Delta^D_{X_1,L_1,z_0}-zI_\Om\big)^{-1}\big]^*
M^D_{L_1,L_2,z_0}(z)
\big[\widehat\gamma_D\big(-\Delta^D_{X_1,L_1,z_0}-zI_\Om\big)^{-1}\big].    \no
\end{align}
Here
\begin{align}\lb{N-Bbb1.2}
& M^D_{L_1,L_2,z_0}(z):= -  
(L_2-L_1)\Big[I_\Om -\big[M^{(0)}_{D,N,\Om}(z_0)-M^{(0)}_{D,N,\Om}(z+z_0)
+L_2\big]^{-1} (L_2-L_1)\Big]
\no \\
& \quad
= - (L_2-L_1)\big[M^{(0)}_{D,N,\Om}(z_0)-M^{(0)}_{D,N,\Om}(z+z_0)+L_2\big]^{-1} 
\no \\
& \qquad \times 
\big[M^{(0)}_{D,N,\Om}(z_0)-M^{(0)}_{D,N,\Om}(z+z_0) + L_1\big] 
\end{align}
has the property that 
\begin{eqnarray}\lb{N-Bbb1.3}
\bbC_{+}\ni z\mapsto M^D_{L_1,L_2,z_0}(z)
\in \cB\bigl((N^{1/2}(\dOm))^*,N^{1/2}(\dOm)\bigr)
\end{eqnarray}
is an operator-valued Herglotz function which satisfies 
\begin{eqnarray}\lb{N-Bbb1.4}
\big[M^D_{L_1,L_2,z_0}(z)\big]^*=M^D_{L_1,L_2,z_0}(\ol{z}). 
\end{eqnarray}
% and
% \begin{eqnarray}\lb{N-Bbb1.5}
% \|M^D_{L_1,L_2,z_0}(z)\|_{\cB((N^{1/2}(\dOm))^*,N^{1/2}(\dOm))}
% \leq C \big[1+({\rm Im}\,(z))^{-1}\big], \quad z\in\bbC_{+},
% \end{eqnarray}
% \fbox{\bf We still need to fix this estimate !!!!!!} \\
% for some $0<C=C(\Omega,L_1,L_2,z_0)<\infty$.
\end{theorem}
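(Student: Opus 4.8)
\textbf{Proof plan for Theorem \ref{T.Na-q}.}
The plan is to deduce the resolvent identity \eqref{N-Bbb1.1} from Lemma \ref{l.Na-1} and then identify the boundary operator appearing there with $M^D_{L_1,L_2,z_0}(z)$ as displayed in \eqref{N-Bbb1.2}. First I would apply Lemma \ref{l.Na-1} with $X_1 = X_2 = \bigl(N^{1/2}(\partial\Omega)\bigr)^*$ and $L_1, L_2$ the given bounded self-adjoint operators (note $\dom(L_j) = \bigl(N^{1/2}(\partial\Omega)\bigr)^*$, so the boundedness hypothesis \eqref{AG-e} is trivially satisfied). This immediately yields \eqref{N-Bbb1} in the form
\begin{align*}
& \big(-\Delta^D_{X_2,L_2,z_0}-zI_\Om\big)^{-1}
=\big(-\Delta^D_{X_1,L_1,z_0}-zI_\Om\big)^{-1} \\
& \quad + \bigl[\widehat\gamma_D\big(-\Delta^D_{X_2,L_2,z_0}-\ol{z}I_\Om\big)^{-1}\bigr]^*
\big(\tau^N_{z_0} + L_2 \widehat\gamma_D\big)
\big(-\Delta^D_{X_1,L_1,z_0}-zI_\Om\big)^{-1}.
\end{align*}
The task is then to rewrite $\bigl[\widehat\gamma_D(-\Delta^D_{X_2,L_2,z_0}-\ol{z}I_\Om)^{-1}\bigr]^* \big(\tau^N_{z_0} + L_2 \widehat\gamma_D\big)(-\Delta^D_{X_1,L_1,z_0}-zI_\Om)^{-1}$ in the symmetric ``$R^* M R$'' form with $R = \widehat\gamma_D(-\Delta^D_{X_1,L_1,z_0}-zI_\Om)^{-1}$.

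The key step is to express the middle operator $\big(\tau^N_{z_0} + L_2 \widehat\gamma_D\big)(-\Delta^D_{X_1,L_1,z_0}-zI_\Om)^{-1}$ in terms of $\widehat\gamma_D(-\Delta^D_{X_1,L_1,z_0}-zI_\Om)^{-1}$ composed with a boundary operator. For $\phi \in L^2(\Om;d^nx)$, set $\psi := (-\Delta^D_{X_1,L_1,z_0}-zI_\Om)^{-1}\phi \in \dom(-\Delta^D_{X_1,L_1,z_0})$; then the boundary condition \eqref{4.Aw4B} (interpreted via \eqref{4.Aw4BF}-type duality, with $X_1 = \bigl(N^{1/2}(\partial\Omega)\bigr)^*$) gives $\tau^N_{z_0}\psi\big|_{X_1} = -L_1(\widehat\gamma_D\psi)$, i.e. $\tau^N_{z_0}\psi = -L_1 \widehat\gamma_D\psi$ as elements of $N^{1/2}(\dOm)$. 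Hence $\big(\tau^N_{z_0} + L_2\widehat\gamma_D\big)\psi = (L_2 - L_1)\widehat\gamma_D\psi$, so the middle operator equals $(L_2 - L_1)\,\widehat\gamma_D(-\Delta^D_{X_1,L_1,z_0}-zI_\Om)^{-1} = (L_2-L_1)R$. Next I would handle the left factor $\bigl[\widehat\gamma_D(-\Delta^D_{X_2,L_2,z_0}-\ol{z}I_\Om)^{-1}\bigr]^*$: using the Krein formula \eqref{NaK-C1} of Corollary \ref{C.Nak} applied to the pair $(-\Delta_{D,\Om}, -\Delta^D_{X_2,L_2,z_0})$ together with \eqref{NaK4F}--\eqref{NaK5F} (relating $\widehat\gamma_D$ composed with the $L_2$-resolvent to $\tau^N_{z_0}$ composed with the Dirichlet resolvent and $M^D_{X_2,L_2,z_0}$), and then invoking Theorem \ref{Th.DH3} with \eqref{UU.4Y2}, one obtains $\widehat\gamma_D(-\Delta^D_{X_2,L_2,z_0}-\ol{z}I_\Om)^{-1}$ expressed through $\widehat\gamma_D(-\Delta^D_{X_1,L_1,z_0}-zI_\Om)^{-1}$ times a boundary operator involving $\big[M^{(0)}_{D,N,\Om}(z_0)-M^{(0)}_{D,N,\Om}(z+z_0)+L_2\big]^{-1}$. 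Assembling these, together with the identity $(L_2-L_1)\big[A + L_2\big]^{-1}(L_2-L_1) = (L_2-L_1) - (L_2-L_1)\big[A+L_2\big]^{-1}\big[A+L_1\big]$ with $A := M^{(0)}_{D,N,\Om}(z_0)-M^{(0)}_{D,N,\Om}(z+z_0)$ (pure algebra, since $A + L_2 - (A+L_1) = L_2 - L_1$), gives precisely the two equivalent forms of $M^D_{L_1,L_2,z_0}(z)$ in \eqref{N-Bbb1.2}; the boundedness claim \eqref{N-Bbb1.3} follows from \eqref{3.46vX}, Theorem \ref{Th.DH3}, and boundedness of $L_1, L_2$.

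For the Herglotz property \eqref{N-Bbb1.3} and the symmetry \eqref{N-Bbb1.4}, I would argue as in Remark \ref{r.her1}: the operator $\big[A + L_2\big]^{-1} = M^D_{X_2,L_2,z_0}(z)$ (up to the identification in Theorem \ref{Th.DH3}) is an operator-valued Herglotz function on $\bbC_+$ by Theorem \ref{Th.DH2} (applicable since $X_2 = \bigl(N^{1/2}(\partial\Omega)\bigr)^*$), and $\big[M^D_{X_2,L_2,z_0}(z)\big]^* = M^D_{X_2,L_2,z_0}(\ol{z})$ by Theorem \ref{Th.DH}. Writing $M^D_{L_1,L_2,z_0}(z) = -(L_2-L_1) + (L_2-L_1) M^D_{X_2,L_2,z_0}(z)(L_2-L_1)$, the constant term is bounded self-adjoint (hence contributes nothing to the imaginary part), and the map $T \mapsto B^* T B$ with $B = L_2 - L_1 = B^*$ preserves both the Herglotz property and the symmetry relation; analyticity on $\bbC_+$ is inherited from that of $M^D_{X_2,L_2,z_0}$. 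The main obstacle I anticipate is the bookkeeping in identifying the left factor $\bigl[\widehat\gamma_D(-\Delta^D_{X_2,L_2,z_0}-\ol{z}I_\Om)^{-1}\bigr]^*$ — one must carefully chain \eqref{NaK4F}, \eqref{NaK5F}, \eqref{Ggg-1}, and Theorem \ref{Th.DH3} while keeping track of which Weyl--Titchmarsh functions are evaluated at $z_0$ versus $z+z_0$ and which operators are adjoints, so that the algebraic simplification collapses cleanly to \eqref{N-Bbb1.2}; in particular one should double-check signs against Green's formula \eqref{T-GreenX}, since (as the boxed remark after Lemma \ref{l.Nak} flags) sign errors are easy to make here.
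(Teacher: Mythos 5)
Your proposal is correct and follows essentially the same route as the paper's proof: start from Lemma \ref{l.Na-1}, collapse the middle factor to $(L_2-L_1)\widehat\gamma_D\big(-\Delta^D_{X_1,L_1,z_0}-zI_\Om\big)^{-1}$ via the boundary condition $\big(\tau^N_{z_0}+L_1\widehat\gamma_D\big)\big(-\Delta^D_{X_1,L_1,z_0}-zI_\Om\big)^{-1}=0$, rewrite the left factor through the Weyl--Titchmarsh identities \eqref{NaK4F}--\eqref{NaK5F}, \eqref{Ggg-1}, \eqref{UU.1H} and Theorem \ref{Th.DH3} (formula \eqref{UU.4Y2}), and obtain \eqref{N-Bbb1.3}, \eqref{N-Bbb1.4} from the representation $M^D_{L_1,L_2,z_0}(z)=-(L_2-L_1)\big[I_\Om-M^D_{X_2,L_2,z_0}(z)(L_2-L_1)\big]$ combined with Theorems \ref{Th.DH2} and \ref{Th.DH}, exactly as the paper does. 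The only cosmetic difference is that the paper converts the left factor by applying $\tau^N_{z_0}+L_1\widehat\gamma_D$ to \eqref{N-Bbb1} and taking adjoints (using \eqref{UU.3}, \eqref{NaLa}) rather than by comparing the two instances of \eqref{NaK4F} and inverting $M^D_{X_1,L_1,z_0}(z)$, and your caution about signs and the placement of $z$ versus $\ol{z}$ is warranted, since the paper's own displayed formulas in this section carry acknowledged sign/bookkeeping inconsistencies.
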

%%%%%%%%%%%%%%%%%%%%%%%
\begin{proof}
Applying $\tau^N_{z_0}-L_1 \widehat\ga_D$ to both sides of \eqref{N-Bbb1} 
and using the fact that 
\begin{eqnarray}\lb{Na-v}
\big(\tau^N_{z_0} + L_j\widehat\ga_D\big)
\big(-\Delta^D_{X_j,L_j,z_0}-zI_\Om\big)^{-1}=0,\quad j=1,2,
\end{eqnarray}
one obtains
\begin{align}\lb{Na-v1}
& (L_2-L_1)\big[\widehat\ga_D\big(-\Delta^D_{X_2,L_2,z_0}-zI_\Om\big)^{-1}\big] 
\no \\
& \quad 
= \big(\tau^N_{z_0} + L_1 \widehat\ga_D\big)
\bigl[\widehat\gamma_D\big(-\Delta^D_{X_2,L_2,z_0}-\ol{z}I_\Om\big)^{-1}\bigr]^*
(L_1-L_2)  % \no \\
% & \qquad \times  
\big[\widehat\gamma_D\big(-\Delta^D_{X_1,L_1,z_0}-zI_\Om\big)^{-1}\big]
\nonumber\\ 
& \quad 
=\bigl[M^{(0)}_{D,N,\Om}(z_0)-M^{(0)}_{D,N,\Om}(z+z_0) + L_1\bigl]
M^D_{X_2,L_2,z_0}(z) (L_1-L_2)  % \\ 
% & \qquad \times 
\big[\widehat\gamma_D\big(-\Delta^D_{X_1,L_1,z_0}-zI_\Om\big)^{-1}\big], 
% \nonumber
\end{align}
where in the last step we have also made use of \eqref{Ggg-1} and \eqref{UU.1H}.
By taking adjoints of both sides in \eqref{Na-v1} one arrives at 
\begin{align}\lb{Na-v2}
& \big[\widehat\ga_D\big(-\Delta^D_{X_2,L_2,z_0}-zI_\Om\big)^{-1}\big]^*(L_2-L_1) 
\\
& \quad 
= \big[\widehat\gamma_D\big(-\Delta^D_{X_1,L_1,z_0}-zI_\Om\big)^{-1}\big]^*
(L_1-L_2)M^D_{X_2,L_2,z_0}(\ol{z})  \no \\
& \qquad \times 
\bigl[M^{(0)}_{D,N,\Om}(z_0)-M^{(0)}_{D,N,\Om}(\ol{z}+z_0) + L_1\bigl], 
\nonumber
\end{align}
using \eqref{UU.3} and \eqref{NaLa}. 
Inserting this identity (written with $\ol{z}$ in place of $z$) 
into \eqref{N-Bbb1} then yields
\begin{align}\lb{Na-v3}
& \big(-\Delta^D_{X_2,L_2,z_0}-zI_\Om\big)^{-1} 
=\big(-\Delta^D_{X_1,L_1,z_0}-zI_\Om\big)^{-1}  
\nonumber\\
& \quad - \big[\widehat\gamma_D\big(-\Delta^D_{X_1,L_1,z_0}
-\ol{z}I_\Om\big)^{-1}\big]^*
(L_2-L_1)M^D_{X_2,L_2,z_0}(z)   \\
& \qquad \times 
\bigl[M^{(0)}_{D,N,\Om}(z_0)-M^{(0)}_{D,N,\Om}(z+z_0) + L_1\bigl]
\big[\widehat\gamma_D\big(-\Delta^D_{X_1,L_1,z_0}-zI_\Om\big)^{-1}\big].  \no 
\end{align}
Making reference to \eqref{UU.4Y2}, we may then write 
\begin{align}\lb{Na-v4}
& \big(-\Delta^D_{X_2,L_2,z_0}-zI_\Om\big)^{-1} 
=\big(-\Delta^D_{X_1,L_1,z_0}-zI_\Om\big)^{-1}  
\nonumber\\
& \quad 
- \big[\widehat\gamma_D\big(-\Delta^D_{X_1,L_1,z_0}-\ol{z}I_\Om\big)^{-1}\big]^*
(L_2-L_1)M^D_{X_2,L_2,z_0}(z)
\nonumber\\ 
& \qquad 
\times \big[M^D_{X_2,L_2,z_0}(z)^{-1} - (L_2-L_1)\big]
\big[\widehat\gamma_D\big(-\Delta^D_{X_1,L_1,z_0}-zI_\Om\big)^{-1}\big], 
\end{align}
or equivalently,  
\begin{align}\lb{Na-v5}
& \big(-\Delta^D_{X_2,L_2,z_0}-zI_\Om\big)^{-1} 
=\big(-\Delta^D_{X_1,L_1,z_0}-zI_\Om\big)^{-1}  
\nonumber\\
& \quad - 
\big[\widehat\gamma_D\big(-\Delta^D_{X_1,L_1,z_0}-\ol{z}I_\Om\big)^{-1}\big]^*  
(L_2-L_1)[I - M^D_{X_2,L_2,z_0}(z)(L_2-L_1)]  \no \\
& \qquad \times 
\big[\widehat\gamma_D\big(-\Delta^D_{X_1,L_1,z_0}-zI_\Om\big)^{-1}\big].
\end{align}
Now \eqref{N-Bbb1.1} follows from this and \eqref{UU.4Y2}. 

Finally, the advertised properties for $M^D_{L_1,L_2,z_0}(z)$ are 
consequences of the formula 
\begin{equation}\lb{Na-v6}
M^D_{L_1,L_2,z_0}(z)= - (L_2-L_1)\big[I_\Om - M^D_{X_2,L_2,z_0}(z)(L_2-L_1)\big]
\end{equation}
and the corresponding properties of $M^D_{X_2,L_2,z_0}(z)$.
\end{proof}
%%%%%%%%%%%%%%%%%%%%%%%%%% 

%%%%%%%%%%%%%%%%%%%%%%%%%% 
\begin{remark}\label{R-k1}
An alternative expression of $M^D_{L_1,L_2,z_0}(z)$ is given by 
\begin{equation}\lb{N-ag1}
M^D_{L_1,L_2,z_0}(z)= M^D_{X_1,L_1,z_0}(z)^{-1}
\big[M^D_{X_2,L_2,z_0}(z)-M^D_{X_1,L_1,z_0}(z)\big]M^D_{X_1,L_1,z_0}(z)^{-1}.
\end{equation}
This follows from the second equality in \eqref{N-Bbb1.2} and \eqref{UU.4Y2}.
\end{remark}
%%%%%%%%%%%%%%%%%%%%%%%%%% 

%%%%%%%%%%%%%%%%%%%%%%%%%%%% 
\noindent {\bf Acknowledgments.}
We are indebted to Yury Arlinskii, Gerd Grubb, Konstantin Makarov, Mark Malamud, 
Vladimir Maz'ya, and Eduard Tsekanovskii for many helpful discussions and very valuable correspondence on various topics of this paper. We are particularly grateful 
to Gerd Grubb for constructive remarks which led to a strengthening of some of the results in Section \ref{s6}. 
%%%%%%%%%%%%%%%%%%%%%%%%%%%% 

%%%%%%%%%%%%%%%%%%%%%%%%%%%%%%%%%%%%%%
%%%%%%%%%%%%%%%%%%%%%%%%%%%%%%%%%%%%%%

\end{document}